\newtheorem{theorem}{Theorem}[section]
\newtheorem{lemma}[theorem]{Lemma}
\newtheorem{proposition}[theorem]{Proposition}
\newtheorem{definition}[theorem]{Definition}
\theoremstyle{remark}
\newtheorem{remark}[theorem]{\it \bf{Remark}\/}
\numberwithin{equation}{section}
\def\section{\@startsection{section}{1}%
  \z@{1.5\linespacing\@plus\linespacing}{.5\linespacing}%
  {\normalfont\bfseries\large\centering}}
\newcommand{\be}{\begin{equation}}
	\newcommand{\ee}{\end{equation}}
\newcommand{\bea}{\begin{eqnarray}}
	\newcommand{\eea}{\end{eqnarray}}
\newcommand{\bee}{\begin{eqnarray*}}
	\newcommand{\eee}{\end{eqnarray*}}
\def\pa{\partial}
\def\Ai{\mathbf{Ai}}
\def\Bi{\mathbf{Bi}}
\def\CC{\mathbb{C}}
\def\NN{\mathbb{N}}
\def\RR{\mathbb{R}}
\def\ZZ{\mathbb{Z}}
\def\SS{\mathcal{S}}
\def\ti{\tilde I}
\def\tk{\tilde K}
\def\calC{{\mathcal C}}
\def\calB{{\mathcal B}}
\def\calA{{\mathcal A}}
\def\calH{{\mathcal H}}
\def\calI{{\mathcal I}}
\def\calK{{\mathcal K}}
\def\calL{{\mathcal L}}
\def\calO{{\mathcal O}}
\def\calR{{\mathcal R}}
\def\calT{{\mathcal T}}
\def\calV{{\mathcal V}}
\def\calW{{\mathcal W}}
\def\calZ{{\mathcal Z}}
\def\calV{{\mathcal V}}
\def\supess{\mathop{\operator@font Sup\,ess}}
\def\CC{\mathbb{C}}
\def\HH{\mathbb{H}}
\def\NN{\mathbb{N}}
\def\PP{\mathbb{P}}
\def\RR{\mathbb{R}}
\def\SS{\mathbb{S}}
\def\XX{\mathbb{X}}
\def\ZZ{\mathbb{Z}}
\def\PP{\mathbb{P}}
\def\ZZ{\mathbb{Z}}
\def\a{\alpha}
\def\frakc{\mathfrak{c}}
\def\frakE{\mathfrak{E}}
\def\e{\varepsilon}
\def\bar#1{{\overline #1}}
\def\R2+{\RR ^2_+}
\def\tb{\tilde b}
\def\sgn{{\rm sgn}}
\def\pa{\partial}
\def\lim{\mathop{\rm lim}}
\def\sup{\mathop{\rm sup}}
\def\exp{{\rm exp}}
\def\l{\lambda}
\def\log{{\rm log}}
\def\rpa{\mathring{\pa}}
\def\pa{\partial}
\def\pa{\partial}
\def\la{\langle}
\def\ra{\rangle}
\def\Wfr{\mathfrak{W}}
\begin{document}

\title[]{Mode stability for self-similar blowup of slightly supercritical NLS: I. low-energy spectrum}


\author[Z. Li]{Zexing Li}
\address{Laboratoire Analyse, G\'eom\'etrie et Mod\'elisation,
CY Cergy Paris Universit\'e,
2 avenue Adolphe Chauvin, 95300, Pontoise, France}
\email{zexing.li@cyu.fr}

\maketitle

\begin{abstract}
    We consider self-similar blowup for (NLS) $i\pa_t u + \Delta u + u|u|^{p-1} = 0$ in $d \ge 1$ and slightly mass-supercritical range $0 < s_c := \frac d2 - \frac{2}{p-1} \ll 1$. The existence and stability of such dynamics \cite{MR2729284} and construction of suitable profiles \cite{MR4250747} lead to the question of asymptotic stability. Based on our previous work \cite{li2023stability}, this nonlinear problem is reduced to linear mode stability of the matrix linearized operator 
    $$\mathcal{H}_b = \left( \begin{array}{cc} \Delta_b -1 &  \\ &  -\Delta_{-b} +1\end{array} \right) -ibs_c + \left( \begin{array}{cc} W_{1,b} & W_{2,b}  \\ -\overline{W_{2,b}} & -W_{1,b}   \end{array} \right),$$
	$0 < b = b(s_c, d) \ll 1$ and
	$\Delta_b = \Delta + ib\left(\frac d2 + x\cdot\nabla\right)$. 
    In this work, we prove mode stability for the low-energy spectrum of $\calH_b$ in $d \ge 1$ as a perturbation of the linearized operator around ground state for mass-critical NLS. The main difficulty of this spectral bifurcation problem arises from the non-self-adjoint, relatively unbounded and high-dimensional nature, for which we exploit the Jost function argument from \cite{MR1852922}, qualitative WKB analysis generalized from \cite{MR4250747}, matched asymptotics method and uniform estimates for high spherical classes based on special functions.
\end{abstract}

\tableofcontents

\section{Introduction}

\subsection{Setting and background}
We consider the nonlinear Schr\"odinger equation
\be \left\{\begin{array}{l} i\pa_t u+\Delta u+u|u|^{p-1}=0\\ u_{|t=0}=u_0\end{array}\right. \tag{NLS} \label{eqNLS}\ee 
for spatial dimension $d \ge 1$ and $p > 1$. The equation is invariant under a $(2d+2)$-dim symmetry group of spatial translation, phase rotation, Galilean transform and scaling: if $u = u(t, x)$ is a solution, then so is
\be
  \tilde u(t, x) := \l^{\frac{2}{p-1}} u(\l^2 t, \l x - \l^2 t v - x_0) e^{i\left( \frac{\l x\cdot v}{2} - \frac{\l^2 |v|^2}{4}t + \gamma_0 \right)}\label{eqsymmetry}
\ee
with $x_0, v \in \RR^d$, $\l > 0$ and $\gamma_0 \in \RR$. 
The scaling symmetry
allows us to compute the critical Sobolev norm
\[ \| u_\l (t, \cdot) \|_{\dot{H}^{s_c}} = \| u (\l^2 t, \cdot) \|_{\dot{H}^{s_c}},\quad s_c := \frac{d}{2} - \frac{2}{p-1}.\]
In this paper, we are interested in the asymptotic stability of a singularity formation mechanism for nonlinearities slightly above the mass conservation law:
\be
\label{ranges}
0<s_c\ll 1.
\ee

\mbox{}

\noindent \textit{Type I blowup.} We say a solution blows up self-similarily, or of type I, if it saturates the self-similar law
\be \| u(t)\|_{\dot{H}^{\sigma}} \sim (T-t)^{-\frac{\sigma - s_c}{2}},\quad s_c < \sigma \le 1  \label{eqscalingbis}.
\ee
In the supercritical range \eqref{ranges}, type I blowup is conjectured to exist and generate stable blow up dynamics, \cite{MR1696311,MR1487664}. More precisely, the self-similar renormalization 
$$ u(t, x) = \frac{e^{i\gamma(t)}}{(\l(t))^{\frac{2}{p-1}}} v\left(\tau,y\right)$$
with the explicit law\footnote{Here $b$ is a constant independent of $t$.} 
\be
\label{eqlaw}
\lambda(t) = \sqrt{2b(T-t)},\quad \gamma(t) = \tau(t), \ \  \tau(t) = -\frac{1}{2b}\ln (T- t), \ \ y=\frac{x}{\lambda(t)}
\ee
maps \eqref{eqNLS} onto the renormalized flow 
\be
\label{enoivnevneiovnovneonvi}
i\pa_\tau v+\Delta v -v+ib\left(\frac{2}{p-1}v+y\cdot\nabla v\right)+v|v|^{p-1}=0.
\ee In the range \eqref{ranges}, admissible self-similar profiles are expected to exist from numerical grounds, \cite{mclaughlin1986focusing,lemesurier1988focusing,landman1991stability,MR1487664}. Moreover, Merle and Rapha\"el have obtained an a priori lower bound for critical norm of blowup \cite{MR2427005}, seemingly indicating that any one-bubble concentration blowup when $s_c \in (0, 1)$ should be type I. 

Besides, Type II solutions which blow up strictly faster than \eqref{eqscalingbis} have been constructed in the supercritical case through the derivation of "ring" solutions which concentrate on a sphere \cite{MR2370365, MR2248831,MR3161317,MR3324912}.

\mbox{}

\noindent\textit{Description and stability of self-similar blowup.}
The existence and stability of such self-similar dynamics was first obtained in the work of Merle-Rapha\"el-Szeftel \cite{MR2729284} for $0<s_c\ll 1$ using a bifurcation argument from the analysis of log-log blowup in critical case $s_c=0$. Nevertheless, the sharp description and asymptotic stability of the singularity, as observed numerically \cite{MR1487664}, are still missing. 

To resolve this question, the first step is to construct admissible self-similar profiles, which was done by Bahri-Martel-Rapha\"el \cite{MR4250747} in the range $0<s_c\ll1$.\footnote{We record their conclusions, namely the existence and estimates of the self-similar profile, in Proposition \ref{propQbasymp}.} Let $Q$ be the \textit{ground state} of mass-critical NLS, namely the positive radial $H^1$ solution\footnote{The existence and uniqueness of ground state are well-known (see for example \cite{MR1696311}, \cite{MR2233925} and \cite{MR2002047}). Some further properties will be discussed in Subsection \ref{sec21}.} of 
\be \Delta Q - Q + Q^{\frac 4d + 1} = 0\quad {\rm on\,\,} \RR^d. \label{eqgroundstatemasscritical} \ee
They found $0 < b = b(d, s_c) \ll 1$ and stationary solutions $Q_b$ to the elliptic equation
\be \Delta Q_b-Q_b+ib\left(\frac{2}{p-1}Q_b+y \cdot \nabla Q_b\right)+Q_b|Q_b|^{p-1}=0 \label{eqselfsimilar} \ee
by bifurcating from $Q$, see also \cite{MR3311593} for the case of (gKdV). 

Later, a finite-codimensional asymptotic stability result was obtained \cite{li2023stability}. The core of the proof is, following Beceanu's strategy \cite{beceanu2011new}, a Strichartz estimate for the linearized operator of \eqref{enoivnevneiovnovneonvi} around $Q_b$
\be \calH_b = \left( \begin{array}{cc} \Delta_b -1 &  \\ &  -\Delta_{-b} +1\end{array} \right) -ibs_c + \left( \begin{array}{cc} W_{1, b} & W_{2, b}  \\ -\overline{W_{2, b}} & -W_{1, b}   \end{array} \right),\label{eqdefcalH} \ee
	with 
	\be  \Delta_b = \Delta + ib \Lambda_0, \quad \Lambda_0 := \frac d2 + x\cdot\nabla \label{eqdefDeltabLambda0} \ee
    \be  W_{1, b} = \frac{p+1}{2}|Q_b|^{p-1}, \quad W_{2, b} = \frac{p-1}{2} |Q_b|^{p-3}Q_b^2. \label{eqdefW1bW2b} \ee 
Based on \cite{li2023stability}, the missing piece towards asymptotic stability of $Q_b$ is a sharp spectral analysis to count exactly the unstable directions, or rather, to derive \textit{mode stability} of this linearized operator. 


Noticing that when $s_c \to 0$, the profile of B-M-R \cite{MR4250747} satisfies $b(s_c) \to 0$ and $Q_b \to Q$ in $\dot H^1$, the operator $\calH_b$ can be viewed as a bifurcation from $\calH_0$, the linearized operator around the ground state $Q$ in mass-critical NLS
\bea
 \calH_0 = \left(\begin{array}{cc}
     \Delta - 1 &  \\
      & -\Delta + 1
 \end{array}  \right) + \left(\begin{array}{cc}
     W_1 & W_2 \\
     -W_2 & -W_1
 \end{array}  \right) \label{eqdefH0} \\
 W_1 = \frac{p_0+1}{2} Q^{p_0-1},\quad W_2 = \frac{p_0-1}{2} Q^{p_0-1}. \label{eqdefW1W20}
\eea

\mbox{}

\noindent \textit{Mode stability in bifurcation regime.} Briefly, mode stability means that all unstable modes of the linearized operator arise from symmetries of the equation, so that they are removable by choosing a correct reference frame during the nonlinear evolution. It can be viewed as a \textit{uniqueness} result for eigenvalues and eigenfunctions. 
This spectral problem has gone through extensive study these years: see \cite{MR3986939,MR4685953,arXiv:2501.07073,MR4126325} for parabolic problems and \cite{MR3623242,MR3475668,zbMATH07928637,MR3537340,arXiv:2503.02632,zbMATH07801696} for wave-type problems.



Notably, the self-similar profile with its linearized operator in self-similar variable is a \textit{bifurcation branch} from the ground state with its linearized operator in the original variable. In particular, the bifurcation parameter $b$ becomes a small factor for the coupled scaling generator $\Lambda_0$ in \eqref{eqdefcalH}. Most literature for mode stability listed above, on the contrary, investigates one particular self-similar solution. Despite the lack of a limiting target, one could use some numerical help in that scenario. For bifurcation problems, due to the degeneracy as $b \to 0$, it usually involves constructing the bifurcated eigenmodes and analytically computing the asymptotics of degenerate eigenvalues to identify whether they are stable.

Therefore, our problem also closely relates to the spectral analysis for Type II blowup, where the blowup profile concentrates slower, leading to a similar smallness of coupled scaling generator. We refer the readers to \cite{MR4012340,MR4073868,MR4400117,arXiv:2409.05363} for parabolic problems and \cite{MR4630477} for nonlinear wave equation, and Perelman's construction of log-log blowup for 1D mass-critical NLS \cite{MR1852922}. 

We also mention another sequence of results discussing spectrum of linearized operator around NLS ground state for $s_c \approx 0$ \cite{MR1334139,MR2219305,chang2008spectra}. This can be viewed as another bifurcation branch from $\calH_0$ by relatively compact perturbation, where more tools are applicable. 

To our knowledge, the only work involving spectral analysis for Schr\"odinger-type linearized operator with scaling generator is Perelman's pioneering work \cite{MR1852922}, from which we borrowed one crucial idea - the Jost function argument (see \textit{Comment 3}).

\subsection{Main result}

In this paper, we will perform the mode stability analysis for low-energy spectrum of $\calH_b$, namely those spectral points with small absolute values. The complete mode stability result will be established in the second paper \cite{Limodestability2}. 









\begin{theorem}[Mode stability of $\calH_b$ for low-energy spectrum]\label{thmmodestabsmallspec}
    For any $d \ge 1$, there exist $0 < s_c^*(d) \ll 1$ and $0 < \delta = \delta(d) \ll 1$, such that for $s_c \in (0, s_c^*(d))$ and $b, Q_b$ constructed as in \cite{MR4250747}, there exists $0 < \epsilon^*(s_c) \ll 1$ such that for any $0 < \sigma - s_c \le \epsilon^*(s_c)$, the discrete spectrum of $\calH_b$ satisfies 
       \be
      \sigma_{\rm disc}\left( \calH_b \big|_{(\dot H^\sigma(\RR^d))^2} \right) \cap \left\{ z\in\CC: \Im z < b(\sigma - s_c), |z| \le \delta \right\} = \{ 0, -bi, -2bi \}, \label{eqcharacspecHb}
    \ee
     with the corresponding Riesz projections satisfying
    \be
    \dim {\rm Ran} P_0 = 1,\quad \dim {\rm Ran} P_{-bi} = d, \quad \dim {\rm Ran} P_{-2bi} = 1. \label{eqRieszchar}
    \ee
\end{theorem}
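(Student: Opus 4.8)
\medskip
\noindent\emph{Proof strategy.} The plan is a spherical-harmonic reduction followed by a matched-asymptotics/Jost-function analysis in each sector. Since $Q_b$ is radial and $\Delta_b = \Delta + ib\Lambda_0$ commutes with the $SO(d)$-action, $\calH_b = \bigoplus_{\ell\ge 0}\calH_b^{(\ell)}$, where $\calH_b^{(\ell)}$ is a $2\times 2$ radial ODE system carrying the centrifugal term $\tfrac{\ell(\ell+d-2)}{r^2}$, and the $\ell$-sector contributes to the spectrum with multiplicity $\dim Y_\ell$ (in particular $\dim Y_1 = d$). One first checks that $\{|z|\le\delta\}$ contains no essential spectrum, so every point there is either in the resolvent set or an isolated eigenvalue of some $\calH_b^{(\ell)}$. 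Since $W_{1,b},W_{2,b}$ are exponentially localized, for $r\to\infty$ the system $\calH_b^{(\ell)}\phi = z\phi$ reduces to the decoupled equations $(\Delta_b - 1 - ibs_c - z)\phi_1 = 0$ and $(-\Delta_{-b} + 1 - ibs_c - z)\phi_2 = 0$, whose fundamental solutions have $z$-dependent homogeneity at infinity; the line $\Im z = b(\sigma - s_c)$ is precisely the threshold below which exactly one such behavior is $\dot H^\sigma$-admissible (at $z = 0$ this is the borderline decay $r^{-(d/2 - s_c)}$ of $Q_b$ itself). Encoding ``regular at $r = 0$'' and ``$\dot H^\sigma$-admissible at $r = \infty$'' as subspaces of the $4$-dimensional solution space, one defines the Jost function $\calW^{(\ell)}(z)$ as the Wronskian pairing them (following \cite{MR1852922}): it is analytic in $z$, its zeros in $\{\Im z < b(\sigma-s_c),\,|z|\le\delta\}$ are exactly the eigenvalues, and the order of a zero equals the algebraic multiplicity $\dim{\rm Ran}\,P_z$.

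\smallskip
\noindent\emph{Inner and outer regions.} Evaluating $\calW^{(\ell)}$ requires gluing two descriptions. In the \emph{inner region} $r\lesssim 1$ one has $Q_b = Q + o(1)$ and the relatively unbounded term $ib\Lambda_0$ is subordinate, so $\calH_b^{(\ell)}$ is a controlled deformation of $\calH_0^{(\ell)}$; one then continues the regular-at-origin solution perturbatively in $(z,b)$ using the classical generalized null space of $\calH_0$ for mass-critical NLS: for $\ell = 0$ the length-four Jordan chain generated by the phase, scaling and pseudoconformal directions (whose members include $iQ$, $\Lambda_0 Q$, $i|x|^2 Q$), for each degree-one harmonic the length-two chain generated by translation and Galilean boost (i.e.\ $\nabla Q$ and $ixQ$), and \emph{no} null space for $\ell \ge 2$. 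In the \emph{outer region} $r \gtrsim 1$ the potentials are exponentially small and the system is a $b$-singular perturbation of the free decoupled equations, with a WKB turning point at $r\sim b^{-1}$; generalizing the qualitative WKB analysis of \cite{MR4250747} to $\ell\ge 1$ and to $z$ near $0$, one produces the two fundamental asymptotics, selects the $\dot H^\sigma$-admissible solution $\phi_\infty^{(\ell)}$, and transports it across the turning point with a connection formula uniform in $(z,\ell)$. Matching expresses $\calW^{(\ell)}(z)$ as a leading term determined by the $\calH_0$-structure, plus an $O(b)$ correction from $ib\Lambda_0$ and the shift $-ibs_c$, plus an $O(\|Q_b - Q\|)$ correction from the potentials.

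\smallskip
\noindent\emph{Counting zeros and multiplicities.} For $\ell = 0$ the leading term of $\calW^{(0)}$ vanishes at $z = 0$ to order four, and for $\ell = 1$ to order two, matching the dimensions of the generalized kernels of $\calH_0^{(\ell)}$; the degeneracy is resolved by an effective matrix of the corresponding size built from solvability conditions against $\ker(\calH_0^{(\ell)})^*$, whose spectrum is computed to leading order in $b$. The expectation is: for $\ell = 0$, two roots located at $0$ and $\approx -2bi$ together with two roots whose imaginary part is at least a fixed positive multiple of $b$ (the pseudoconformal pair, pushed to the stable side), so that only $\{0,-2bi\}$ lie below $\Im z = b(\sigma - s_c)$; for $\ell = 1$, one root at $\approx -bi$ and one strictly above the line (the Galilean partner). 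Moreover $0,-bi,-2bi$ are \emph{exact} eigenvalues, forced by the exact symmetries of the renormalized flow — phase, giving $\calH_b(iQ_b) = 0$; and translation and blow-up-time translation, giving explicit eigenmodes of $\calH_b$ — and each is algebraically simple in its sector because the bilinear pairing responsible for the Jordan blocks at $b = 0$ becomes nondegenerate for $b > 0$; summing over $Y_\ell$ yields $\dim{\rm Ran}\,P_0 = 1$, $\dim{\rm Ran}\,P_{-bi} = d$, $\dim{\rm Ran}\,P_{-2bi} = 1$. Finally, for $\ell \ge 2$, $\calH_0^{(\ell)}$ has a spectral gap about $0$, and one must show $\calW^{(\ell)}(z)\ne 0$ on $\{|z|\le\delta\}$ \emph{uniformly in $\ell$}: for bounded $\ell$ this is finitely many perturbative checks, while for large $\ell$ the centrifugal barrier dominates the exponentially small potential and the Jost solutions are controlled via Bessel / confluent-hypergeometric asymptotics, giving a lower bound for $|\calW^{(\ell)}(z)|$ uniform in $\ell$.

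\smallskip
\noindent\emph{Main obstacle.} The crux is that $\calH_b$ is non-self-adjoint with a relatively unbounded perturbation $ib\Lambda_0$, so neither the spectral theorem nor analytic perturbation theory applies and one must work through the Jost-function/WKB machinery. Within it the two genuinely hard points will be: (a) the turning-point connection analysis at $r\sim b^{-1}$ carried out simultaneously uniformly in $z$ near $0$ and in all $\ell$, which needs sharp special-function asymptotics and careful bookkeeping of the small parameters $s_c$, $b = b(s_c)\ll 1$ and $\epsilon^* = \sigma - s_c\ll 1$ (chosen in this order); and (b) resolving the order-four degeneracy in the $\ell = 0$ sector precisely enough to certify that two eigenvalues land on or below the line, at $0$ and $-2bi$, while the remaining two cross strictly above it — this requires the effective $4\times 4$ matrix, its secular equation, and the symmetry-induced modes to be computed with matching precision.
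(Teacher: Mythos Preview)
Your overall architecture---spherical decomposition, Jost function per sector, inner/outer matched asymptotics via WKB, uniform treatment of large $\ell$---is indeed what the paper does. But there are two substantive gaps.

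\smallskip
\textbf{The exotic mode and the exponentially small eigenvalue.} Your description of the radial generalized kernel of $\calH_0$ lists only three directions (phase $iQ$, scaling $\Lambda_0 Q$, pseudoconformal $i|x|^2Q$), but the kernel is four-dimensional: there is a fourth ``exotic'' element $\rho = L_+^{-1}(|x|^2 Q)$ completing the Jordan chain. Correspondingly, your expectation that the two stable radial roots have imaginary part ``at least a fixed positive multiple of $b$'' is wrong. The paper constructs the two bifurcated radial eigenvalues explicitly (Section~6): one sits at $\approx 2bi$ as you predict, but the other, coming from the exotic mode, sits at $i\lambda_{3,b}$ with $\lambda_{3,b} \sim b^{-3}e^{-\pi/b}$---exponentially small, not $O(b)$. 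This is precisely why $\epsilon^*(s_c)$ must depend on $s_c$ and be taken as small as $b^{-1}\lambda_{3,b} \sim b^{-4}e^{-\pi/b}$: the threshold line $\Im z = b(\sigma-s_c)$ has to squeeze between $0$ and $\lambda_{3,b}$. Your ``fixed positive multiple of $b$'' picture would allow $\epsilon^*$ independent of $s_c$, which the statement does not claim and which is false. Relatedly, the potentials $W_{1,b},W_{2,b}$ are \emph{not} exponentially localized: they decay only like $r^{-2}$ (since $|Q_b|\sim r^{-d/2+s_c}$ at infinity), which is listed in the paper as a separate technical difficulty for the outer construction.

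\smallskip
\textbf{Counting zeros on a degenerating domain.} Your plan to resolve the fourth-order degeneracy by an ``effective $4\times 4$ matrix / secular equation'' implicitly assumes you can run a Rouch\'e or argument-principle argument on a fixed contour around $0$. But the Jost function $\calW^{(\ell)}_b(z)$ is only defined on the shrinking region $\{|z|\le\delta,\ \Im z \le I_0 b\}$ (the exterior fundamental solutions with the correct oscillatory tail exist only there), so no $b$-independent contour is available. The paper handles this by (i) \emph{first} constructing all four bifurcated zeros explicitly, with their precise asymptotics, and (ii) \emph{then} proving a bespoke zero-uniqueness lemma (Lemma~\ref{lemuniqzero}) using Lagrange interpolation in place of contour integration, which needs as input both the existence of the zeros and $C^{k_0+1}$-closeness of $\calW_b$ to $\calW_0$. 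Your proposal skips step (i) and treats step (ii) as routine; neither is.
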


\mbox{}

\noindent \textit{Comments on Theorem \ref{thmmodestabsmallspec}}

\mbox{}

\noindent \textit{1. On the unstable spectrum.} The eigenfunctions related to eigenvalues $0, -bi, -2bi$ are generated by phase rotation, spatial translation or scaling symmetry respectively (see \eqref{eqdefxi01b}, \eqref{eqeigencalHb1} and \eqref{eqeigencalHb3} for their explicit formula). We refer to them as unstable modes of $e^{it\calH_b}$ since the essential spectrum of $\calH_b$ is $\sigma_{\rm ess}(\calH_b \big|_{(\dot H^\sigma)^2}) = \RR + ib(\sigma -s_c)$ (see \cite[Proposition 4.5]{li2023stability}).

\mbox{}

\noindent \textit{2. Structure of low-energy spectrum.}

Notice that $Q$ and $Q_b$ are radial symmetric and thereafter $\calH_0$ and $\calH_b$ preserve spherical harmonics decomposition (see \eqref{eqsphedecomp}), we can restrict the eigenequation in each spherical class. 

Classical results (see Proposition \ref{propspecH0} and the references therein) show that $\calH_0$ has $(2d+4)$-dim generalized eigenspace at $0$. Here $2d$ directions belong to the first spherical class, generated by spatial translation and Galilean transform; while the other $4$ eigenmodes belong to the radial class, with 3 generated by phase rotation, rescaling and pseudo-conformal transform (see \cite[P.35]{MR1696311}) plus one exotic mode. 

For $0 < s_c \ll 1$, the $(2d+2)$-dim symmetry group \eqref{eqsymmetry} results in $(2d+2)$ explicit eigenmodes for $\calH_b$ (see \eqref{eqeigencalHb1} and \eqref{eqeigencalHb3}). In this work, in addition to Theorem \ref{thmmodestabsmallspec}, we verify the following statements (proven at the end of Section \ref{sec72}):
\begin{enumerate}
    \item \textit{Existence of bifurcated eigenmodes:} There exist two eigen pairs $(i\l_{j, b}, Z_{j, b}) \in i\RR_+ \times C^\infty_{rad}(\RR^d; \CC^2)$ for $j \in \{ 2, 3 \}$ such that $(\calH_b - i\l_{j, b}) Z_{j, b} = 0$,
    and
    \bee
      \l_{2, b} = 2b(1 + o_{s_c \to 0}(1)),\quad \l_{3, b} = \frac{4\pi \kappa_Q^2}{\int_0^\infty Q^2 r^{d+1} dr} b^{-3}e^{-\frac{\pi}{b}}  (1 + o_{s_c \to 0}(1)); \\
     \left| \pa_r^N Z_{j, b}(r) \right| \lesssim_{b, N} r^{-\frac d2 + \frac{\l_{j, b}}{b} + s_c - N},\quad \forall\,\,N \ge 0, \,\,r \ge 4b^{-1}. 
    \eee
    where $\kappa_Q = \lim_{r\to \infty} Q r^{\frac{d-1}{2}}e^r > 0$.
    \item \textit{Uniqueness of eigenmodes:} There exists no other eigen pairs $(i\l, Z) \in \{z \in \CC: |z| < \delta, \Im z < 10 b\} \times \left( C^\infty \cap \left( \dot H^{11}_{0} \oplus\dot H^{11}_{1} \right)\right)$ of $\calH_b$ except $(i\l_{j, b}, Z_{j, b})_{j = 2, 3}$ and the $(2d+2)$ pairs from \eqref{eqeigencalHb1} and \eqref{eqeigencalHb3}, and these eigenvalues admit no generalized eigenfunctions.\footnote{The upper bound $\Im \l \le 10b$ here can be replaced by $I_0b$ for any fixed $I_0 > 0$. Moreover, we have non-existence of eigenvalues for higher spherical classes for $\{ |z| \le \delta, \Im z < \frac 12b\}$, and it can also be extended to $I_0 b$ with some additional efforts (see Remark \ref{rmkextfundh}).} Here $\dot H^\sigma_l$ is $\dot H^\sigma$ restricted to $l$-th spherical classes, see \eqref{eqdefsphclass}.
\end{enumerate}
	\begin{figure}[h!]
		\centering
		\begin{subfigure}[b]{0.45\linewidth}
			\includegraphics[width=\linewidth]{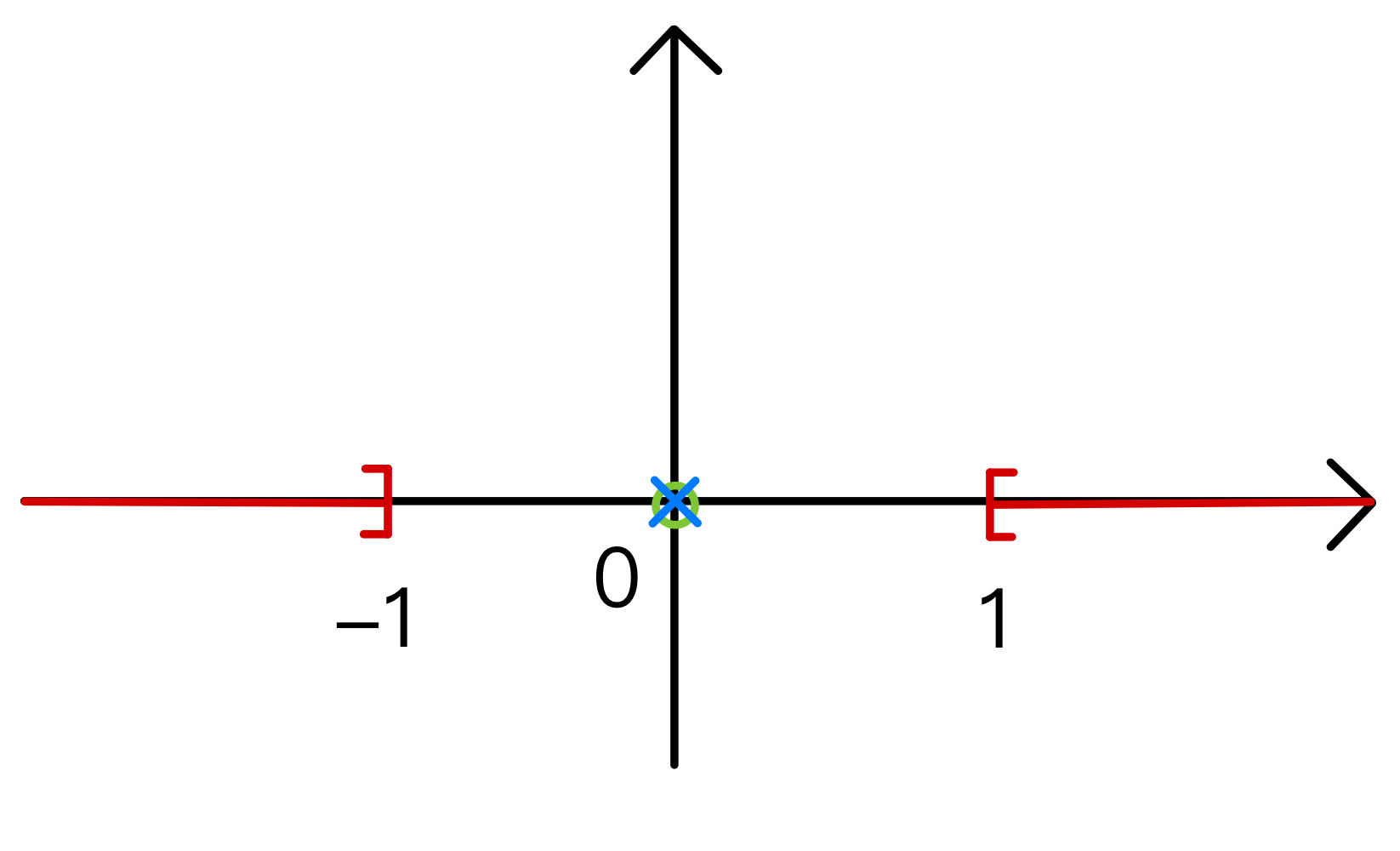}
			\caption{$\sigma(\calH_0 \big|_{(L^2)^2})$}
		\end{subfigure}
		\begin{subfigure}[b]{0.45\linewidth}
			\includegraphics[width=\linewidth]{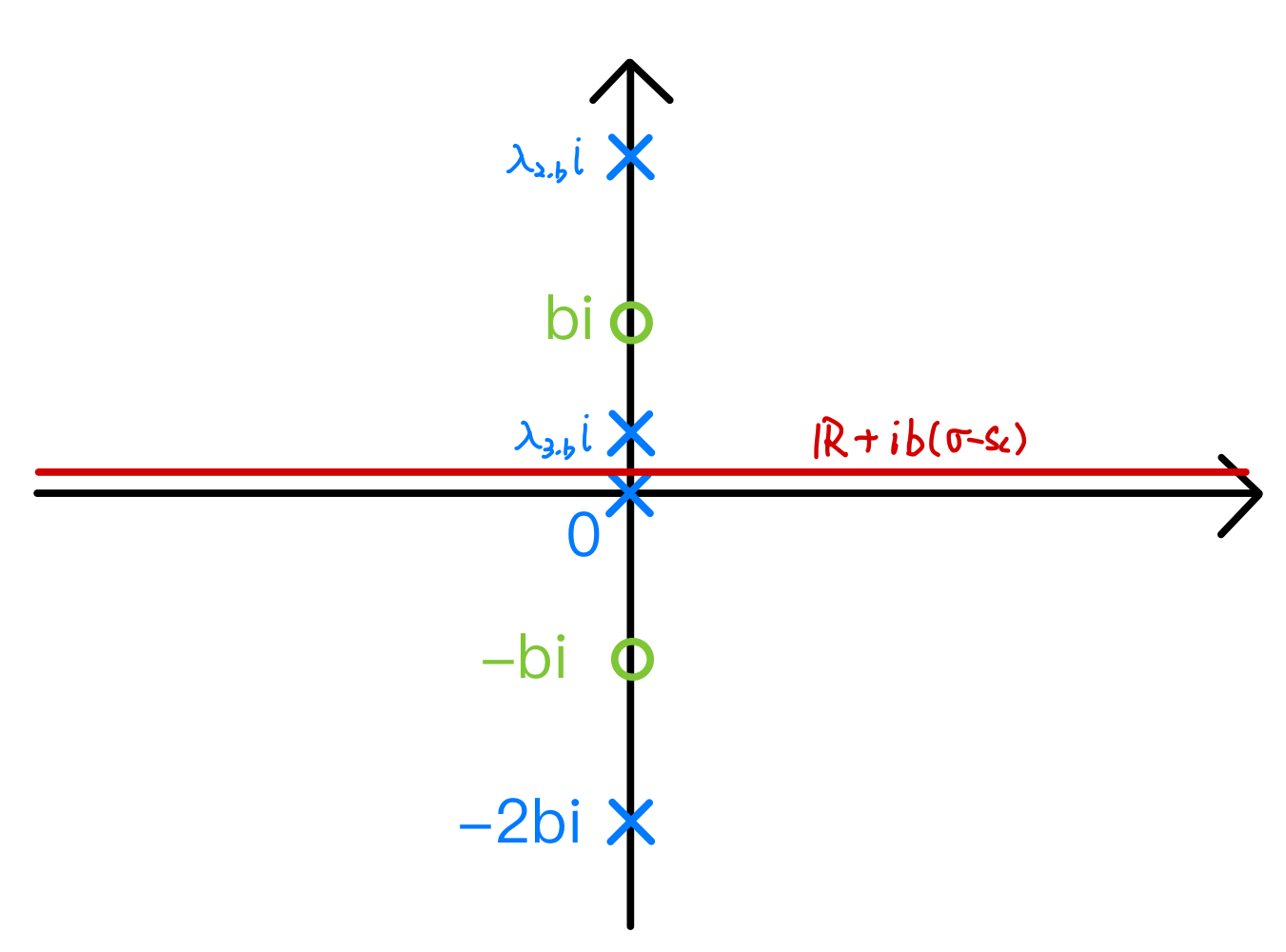}
			\caption{$\sigma(\calH_b \big|_{(\dot H^\sigma)^2})$}
		\end{subfigure}
		\caption{Spectrum of $\calH_0$ and $\calH_b$ near the origin: red line for $\sigma_{\rm ess}$, blue cross for eigenpairs in radial class and green circle for eigenpairs in first spherical classes.}
		\label{fig:coffee}
	\end{figure}

In particular, we stress that there do not exist quantized stable eigenvalues near the origin, which is very different from the spectrum of linearized operator in critical/supercritical blowup regime \cite{MR4012340,MR4073868,MR4400117,arXiv:2409.05363,MR3435273}. However, we expect quantized eigenvalues would appear near $\pm 1 + i \RR_+$. For example, in 1D radial case, they should be close to $\pm 1 + ib(\frac 32 + 2n)$ for $n \ge 0$ as indicated in Perelman's work \cite[P. 653]{MR1852922}. This feature is credited to the energy-subcritical nature.

\mbox{}

\noindent\textit{3. Difficulties and novelties of Theorem \ref{thmmodestabsmallspec}.} We first summarize the main challenges of this spectral bifurcation problem.
\begin{enumerate}
    \item \textit{Non-self-adjoint matrix operator}: We have no help from Sturm-Liouville theory or coercivity for self-adjoint operator as in the parabolic cases \cite{MR4012340,MR4073868,MR4400117,arXiv:2409.05363,MR3986939}. It is possible to encounter embedded eigenvalues and resonances, and spectrum appearing in $\CC$.
    \item \textit{Relatively unbounded perturbation}\footnote{The perturbation $ib\Lambda$ in $\calH_b$ is posed on the low frequency when formulated as a matrix Schr\"odinger operator (see \eqref{eqdefHHb}), which seems different from a semi-classical bifurcation.}: Since $ib\Lambda_0$ is not relatively bounded by $1 - \Delta$, resolvent and thus Riesz projection are no longer bounded perturbation as shown in the drastic change of essential spectrum. Moreover, the fundamental solutions of $\calH_b - \l$ exhibits $O(\frac{\l}{b})$-order polynomial behavior near infinity (see Proposition \ref{propextfund}), making it difficult to make sense of fundamental solutions (let alone resolvent) on any $b$-independent contour.
    \item \textit{Multiscale problem}: As in other Type II spectral analysis, the bifurcation parameter $b \ll 1$ determines the soliton scale $r \lesssim 1$ and the self-similar scale $r \gtrsim b^{-1}$. However, the mass-critical nature introduced another turning point $r \approx \frac 2b$ distinguishing the elliptic behavior and hyperbolic behavior, which adding to the complexity of matching asymptotics. 
    \item \textit{Polynomial decay of potential:} The potential terms $W_{1, b}$, $W_{2, b}$ \eqref{eqdefW1bW2b} in $\calH_b$ behave like $r^{-2}$ near infinity, unlike $W_1$, $W_2$ \eqref{eqdefW1W20} in $\calH_0$ enjoying exponential decay. This further poses a technical difficulty for constructing solutions with growing polynomial tails (for example $Z_{2,b}$). In particular, this forbits the direct power series expansion in $b$ used in Perelman's work \cite{MR1852922}.
    \item \textit{High dimensions}: Uniform analysis in high spherical harmonic classes is necessary. We stress that with the spherical class $l \gg 1$, the angular momentum term $(l+\frac{d-1}{2})^2r^{-2}$ will compete with the scaling generator $ib\Lambda_0$, hence shifting the turning point and creating another scale for analysis in high spherical classes. Moreover, coercivity argument as in parabolic cases seems impossible due to (1) and (2). 
\end{enumerate}

Among these difficulties, (1) and (2) cause essential trouble for uniqueness, while (3)-(5) are mainly technical difficulties for construction. Fortunately, Perelman \cite{MR1852922} tackled (1)-(2) via the introduction of \textit{Jost function}, and (3) can be resolved with the now standard matching asymptotics strategy (for example \cite{MR4073868,MR4400117}), with \textit{quantitative WKB analysis} to deal with the turning point as in \cite{MR4250747} (see also \cite{zbMATH00204189}). We brief our strategies based on these ideas plus treatment of (4)-(5). 
\begin{enumerate}
    \item \textit{Jost function argument and counting zeros of degenerate analytic function families}: Restricted to a fixed spherical harmonic class, the eigen problem can be reduced to the matching of admissible exterior solutions with interior solutions. Their Wronskian as a function of $\l \in \CC$ is referred to as Jost function, whose zeros and their multiplicities determine the existence of eigenvalue and dimension of generalized eigenspace. We prove its continuity w.r.t. $b \ge 0$, and introduce an elementary argument to show the uniqueness of zeros in degenerate regions $\{  |\l| \le \delta_0, \Im \l \le 10b \}$ without contour integral (Lemma \ref{lemuniqzero}), which might have its own interest. 

    The application of Jost function is classical for self-adjoint operators (see \cite[Chap. XI, 8.E]{MR0529429}), while for non-self-adjoint operators, its property as an indicator of generalized eigenspace was claimed in \cite{MR1334139,MR1852922} and we provide a direct and elementary proof as Lemma \ref{lemvanishJW} in our case. This property without multiplicity was also built in \cite[Lemma 5.27]{MR2219305}.  
    \item \textit{Quantitative WKB method with $\CC$-argument and exterior resolvent in admissible oscillation class}: We generalize the quantitative WKB construction in \cite{MR4250747} by using $\CC$-argument Airy function (Proposition \ref{propWKB}), and designing the inversion operator by exploiting the algebraic structure of the quadratic oscillation class (Lemma \ref{leminvtildeHext} and Appendix \ref{appA1}). We remark that the requirement to analyze eigenvalues with $O(b)$-size imaginary part makes $\RR$-argument Airy function insufficient to capture accurate asymptotics.
    \item \textit{Uniform estimates in high spherical classes}: For high spherical classes, we show the potential from spherical Laplacian will dominate the potential $W_{1,b}$, $W_{2,b}$ in the operator, leading to interior and exterior admissible solutions asymptotically free and mismatching. We employ the Tur\'an type estimates of modified Bessel functions \cite{MR2685149} in the interior region (Lemma \ref{lemmodBessel} (4)) and construct new WKB approximate solutions (Section \ref{sec42}) in the exterior region. 
\end{enumerate}




\subsection{Outline of proof}\label{sec13}

Let us set up and sketch the proof. 

\mbox{}

\underline{\textbf{Step 0. Setup: Reduction to ODE problem.}}
The target of this paper is to consider the existence and uniquenss of $Z \in \dot{H}^\sigma$ and $\l \in \CC$ such that
\be (\calH_b - \l)Z = 0 \label{eqeigenHb} \ee
where $0 < b(s_c) \ll 1$. Since $Q_b$ is radial symmetric and thereafter $\calH_b$ preserves spherical harmonics decomposition (see \eqref{eqsphedecomp}), we can restrict \eqref{eqeigenHb} to each spherical class and consider 
\be (\calH_{b, l, d} - \l) Z_{l,m} = 0 \label{eqcalHld} \ee
with $l \ge 0, 1 \le m \le N_{d, l}$, the radial vector-valued function $Z_{l, m}$, 
and 
\[   \calH_{b, l, d} = 
\left[ \left( \pa_r^2 + \frac{d-1}{r}\pa_r - 1 - \frac{l(l+d-2)}{r^2}\right)\sigma_3 + ib\left(\frac d2 - s_c + r\pa_r \right) + \left( \begin{array}{cc} W_{1, b} & W_{2, b}  \\ -\overline{W_{2, b}} & -W_{1, b}   \end{array} \right)  \right]
\]
Let
\be \Phi_{l,m} = \left( \begin{array}{c} \Phi_{l,m}^1 \\ \Phi_{l,m}^2 \end{array} \right) 
=  r^{\frac{d-1}{2}}
\left( \begin{array}{c}  e^{i\frac{br^2}{4}} Z_{l,m}^1 \\ e^{-i\frac{br^2}{4}} Z_{l,m}^2 \end{array} \right) \label{eqZPhi}
\ee
and 
\be E_\pm = 1 \pm (\l + ibs_c), \label{eqdefEpm} \ee
then the radial vector-valued function $\Phi_{l,m}$ satisfies 
\be
  (\HH_{b, \nu} - \l)\Phi = 0 \label{eqnu}
\ee
with 
\be
\HH_{b, \nu} = \left(\pa_r^2 - 1 + \frac{b^2 r^2}{4} - \frac{\nu^2 - \frac 14}{r^2}\right) \sigma_3 + \left( \begin{array}{cc} -ibs_c + W_{1, b} &e^{i\frac{br^2}{2}} W_{2, b}  \\ -e^{-i\frac{br^2}{2}}\overline{W_{2, b}} & -ibs_c -W_{1, b}   \end{array} \right). \label{eqdefHHb}
\ee
and\footnote{Notice that the equation only depends on $\nu^2$, we will merely consider $\nu \ge 0$ in the constructions.}
\be \nu = \nu (l,d) = l + \frac{d-2}{2}  \in \frac 12 \NN_{\ge 0} \cup \left\{ -\frac 12 \right\}
\label{eqdefnu} \ee
such that $\nu^2 - \frac 14 = \frac{(d-1)(d-3)}{4} + l(l+d-2)$. 
When $b = 0$, the above definition \eqref{eqeigenHb} becomes $\calH_0$ in \eqref{eqdefH0} restricted to spherical class $l$ (depending on $\nu, d$), namely 
\be
\HH_{0, \nu} = \left(\pa_r^2 - 1 - \frac{\nu^2 - \frac 14}{r^2}\right) \left( \begin{array}{cc}  1 & \\ & -1 \end{array} \right) + \left( \begin{array}{cc}  W_{1} & W_2  \\ -W_{2} &  -W_{1}   \end{array} \right).\label{eqdefHH0}
\ee
We remark that $\HH_{b, \nu}$ for $0 \le b \le b_0$ also depends on the dimension $d$ from the potentials, but we will usually omit the subscript $d$ for simplicity. We also remark that the spherical harmonics decomposition in $d=1$ case becomes odd-even decomposition, with spherical classes $l \in \{0, 1\}$ related to $\nu \in \{ \pm \frac 12\} $. 


\mbox{}

\underline{\textbf{Step 1. Construction of (approximate) fundamental solutions}}

In Part \ref{part1}, we focus on designing different scalar approximate linear operators for $\calH_b$ near origin or infinity, so that we can obtain scalar approximate fundamental solution, scalar inversion operator and thereafter construct the fundamental solutions for the original system \eqref{eqnu}. Here we want to emphasize the route map and therefore will structure this outline differently from the paper's order. 

\mbox{}

\underline{Step 1.1. Scalar approximation schemes.}

We consider the following $2 \times 2$ scalar approximation regime to $\calH_b$ (or $\calH_b - \l$). The parameter $E$ below will always be close to $1$. 

\mbox{}

\noindent(1) Interior region\footnote{The specific choices of interior/exterior regions will only be clarified later until we construct fundamental solutions (Step 1.2) or bifurcated eigenfunctions (Step 2.1) through matching asymptotics. Here the region of each regime is taken to be maximal where the constructions and estimates hold.}: 
     \begin{enumerate}
         \item[(1-a)] \textit{Soliton approximation regime} (Subsection \ref{sec31}): In the radial class $l = 0$, we consider the anti-diagonalization of $\calH_b + ibs_c$ (neglecting the diagonal part $\pm \Im (e^{i\frac{br^2}{4}} W_{2, b})$):
         \be 
  L_{\pm, b} = -\Delta - \frac{b^2 r^2}{4} + 1 - W_{1, b} \mp \Re \left( e^{i\frac{br^2}{2}} W_{2, b}\right). \tag{\eqref{eqdefLpmb}}
\ee
        Therefore, it is well-approximated by that of $\calH_0$ (namely $L_\pm$ \eqref{eqdefLpm}) when $r \ll b^{-\frac 12}$. 
         \item[(1-b)] \textit{Free approximation regime} (Subsection \ref{sec32}): In any spherical classes $l \ge 0$, we drop the matrix profile potential and growing potential $\frac{b^2 r^2}{4}$ to consider 
         \be 
  L_{\nu, E} = \pa_r^2 - E - \frac{\nu^2 - \frac 14}{r^2},  \tag{\eqref{eqdefLnuE}}
\ee
whose fundamental solutions are given by modified Bessel functions (see \eqref{eqdeftildeIK}). 
     \end{enumerate}
    (2) Exterior region\footnote{Note that our exterior regions here include both sides of the turning point for $H_{b,E}$ or $H_{b, E, \nu}$ operators, (which equals roughly the union of the intermediate and exterior regions in \cite{MR4250747}).}:
         \begin{enumerate}
         \item[(2-a)] \textit{Low spherical classes} (Subsection \ref{sec41}): We drop the  matrix profile potential and angular momentum potential $\frac{\nu^2 -\frac 14}{r^2}$ to consider (see \eqref{eqscalar})
         \[ H_{b, E} = \pa_r^2 - E + \frac{b^2 r^2}{4}. \]
         Through a change of variable suggested in \cite{MR109898} (also used in \cite{MR4250747}), we can construct approximate fundamental solutions (Definition \ref{defWKBappsolu}) using $\CC$-valued Airy function to a perturbed operator $\tilde H_{b, E}$ \eqref{eqdeftildeHbE}. 
         \item[(2-b)] \textit{High spherical classes} (Subsection \ref{sec42}): Merely dropping the matrix profile potential, we consider (see \eqref{eqscalarh})
         \[ H_{b, E, \nu} = \pa_r^2 - E - \frac{\nu^2 - \frac 14}{r^2} + \frac{b^2 r^2}{4}. \]
         A similar change of variable implies the construction of approximate fundamental solutions (Definition \ref{defWKBappsoluh}) to another perturbed operator $\tilde H_{b, E, \nu}$ \eqref{eqdeftildeHbEh}. We stress that the perturbation is bounded uniformly in $\nu$ (see \eqref{eqbddhh}). 
     \end{enumerate}

\mbox{}

For each scheme, the scalar operator under consideration ($L_{\pm, b}, L_{\nu, E}, \tilde H_{b, E}, \tilde H_{b, E, \nu}$) is a Schr\"odinger operator, for which we discuss two aspects: 
\begin{itemize}
\item Find the fundamental solutions and identify their asymptotics;
\item Construct the inversion operators through Duhamel formula.
\end{itemize}
Besides, a direct application of (2-a) is the refined asymptotics of $Q_b$ near infinity (Proposition \ref{propQbasympref}).


Those two interior schemes are easy to analyze, thanks to the abundant knowledge of $L_\pm$ (Proposition \ref{lemLpm}) for (1-a) and of modified Bessel functions (Lemma \ref{lemmodBessel}) for (1-b). 

The exterior schemes are more delicate. For the first task, the Airy function will take values from a trajectory in the $\CC$-plane which requires a careful analysis. 
For the second task, we will construct inversion operators on both sides of the turning point using different branches of fundamental solutions. Moreover, for the region near infinity, the $e^{\pm i\frac{br^2}{4}}$ oscillation (see \eqref{eqpsibderiv2}) and $r^{\pm \frac{\Im E}{b}}$ polynomial growth/decay of fundamental solutions motivate us to define the corresponding function space (Definition \ref{defdiffopspace}) and rewrite the Duhamel formula (Lemma \ref{leminvtildeHext}), so as to single out the branch with correct oscillation and integrate over polynomial growing functions with quadratic oscillation.\footnote{We only did this for the low spherical class regime (2)(a) for simplicity. We believe similar construction holds for regime (2)(b), which would lead to non-existence of eigenvalues for $|\Im \l| \le Nb$ in high spherical classes.}

\mbox{}

\underline{Step 1.2. Fundamental solutions for the original system \eqref{eqnu}.}

Next, we will apply the scalar approximate scheme (1-b) and (2-a)-(2-b) to construct 4 linear independent vector fundamental solutions of the original system \eqref{eqnu} in the interior (Proposition \ref{propintfund}) and exterior region (Section \ref{sec5}) respectively. Note that they are all free approximation schemes (namely without the profile potential). We discuss the low/high spherical class cases separately.

\mbox{}

(1) \textit{Low spherical class} (Proposition \ref{propintfund} for interior, Proposition \ref{propextfund} and Proposition \ref{propextfundin} for exterior): Fixing a $\nu_0 > 0$, for $\nu \le \nu_0$, $0 \le b \ll 1$ and $|\l| \ll 1, \Im \l \lesssim b$, we determine an $1 \ll x_* \le b^{-\frac 12}$ independent of $b$ and construct fundamental solutions of \eqref{eqnu} on $[0, x_*]$ and $[x_*, \infty)$ respectively. We mainly examine the following two properties of these fundamental solutions:
  \begin{itemize}
  \item \textit{Free asymptotics near 0 or $\infty$}: We verify they are determined by the corresponding scalar approximate fundamental solutions. In particular, the asymptotics distinguishes two admissible branches, for which the corresponding original variable $Z$ \eqref{eqZPhi} is smooth near $0$ or has no quadratic oscillation near infinity. 
  \item \textit{Continuity w.r.t. $b$ of admissible branches}: The admissible branches and its $\pa_r$-derivative evaluated at $x_*$ are continuous w.r.t. $b$ at $b = 0$. Moreover, such continuity holds for its $\pa_\l^k$-derivatives with $0 \le k \le 4$. 
  \end{itemize}



  With the $b$-independent scalar approximate scheme (1-b), we can construct the interior solutions near origin via contraction and propagate the continuity estimate to $x_*$. However, due to the turning point nature for (2-a) and the continuity w.r.t. $b$ holding merely for $r \ll b^{-1}$ (see Proposition \ref{propWKB} (5) and \eqref{eqetaconv}), the exterior construction requires more delicate contraction in different zones, matching asymptotics and normalization. The non-analyticity of turning points (Lemma \eqref{lemWKBeta}) causes extra complexity to estimate its $\pa_\l^k$-derivatives.

\mbox{}

  (2) \textit{High spherical class} (Proposition \ref{propintfund} for interior, Proposition \ref{propextfundh} and Proposition \ref{propextfundin} for exterior): For $d \ge 2$, we show that there exists $\nu_0(d) \gg 1$ (independent of $b$), such that for $\nu \ge \nu_0(d)$, $0 < b \ll 1$ and $|\l| \ll 1, \Im \l \le \frac 12 b$, we can construct fundamental solutions of \eqref{eqnu} on $[0, b^{-\frac 12}]$ and $[b^{-\frac 12}, \infty)$ with the following two properties: 
  \begin{itemize}
      \item \textit{Free asymptotics near 0 or $\infty$}.
      \item \textit{Almost free asymptotics at matching point for admissible branches}: The admissible branches and its $\pa_r$-derivative evaluated at $b^{-\frac 12}$ behaves like the corresponding scalar approximate fundamental solutions. In particular, the interior/exterior ones exhibit exponential growth/decay.  
  \end{itemize}
  
The main task is to derive uniform estimates for scalar approximate fundamental solutions in high spherical classes so as to beat the effect of profile potential. For interior case (Proposition \ref{propintfund} (4)), we use the Tur\'an type estimates for modified Bessel functions (Lemma \ref{lemmodBessel}(4)); for exterior case (Proposition \ref{propextfundh}), we rely on detailed analysis of the approximate fundamental solutions (Subsection \ref{sec42}).  

\mbox{}

\underline{\textbf{Step 2. Existence and uniqueness of bifurcated eigenmodes.}}

In Part \ref{part2}, we apply the results in Part \ref{part1} to construct the bifurcated eigenpairs and prove their uniqueness. The core arguments are \textit{matching asymptotics} and \textit{Jost function argument} respectively. 

\mbox{}

\underline{Step 2.1. Construction of radial bifurcated eigenmodes.} 

In Section \ref{sec6}, we construct two bifurcated radial eigenpairs of $\calH_b$ (Proposition \ref{propbifeigen}). We will set up the asymptotic expansion ansatz with the coefficient depending nonlinearly on $\l$, and solve for the residual term and $\l$ by matching asymptotics at $x_* \sim  |\log b|$. The expansion exploits the bifurcation of exotic mode for $\calH_0$, which will be constructed beforehand (Lemma \ref{lemrhob}). 

The construction of interior and exterior solutions are based on the scalar approximate scheme (1-a) and (2-a) in Step 1.1 respectively. To extract the leading order of $\l$ (and determine its instability), we need the sharp asymptotics of $\Re Q_b$, $\Im Q_b$ on $[b^{-\frac 12}, 2b^{-1}]$ and the oscillation of $Q_b$ on $[2b^{-1}(1 + o(1)), \infty)$ from Proposition \ref{propQbasympref}. Besides, we mention that the matching process here is a two-scale matching, due to the exponential nature of asymptotics. 

\mbox{}

\underline{Step 2.2. Uniqueness of bifurcated eigenmodes.} 

In Section \ref{sec7}, we conclude the proof of Theorem \ref{thmmodestabsmallspec} using Jost function. 

For uniqueness in low spherical classes, we first define the Jost function $F_b(\l)$ in Lemma \ref{lemJostWronskian} via the fundamental solutions for \eqref{eqnu}, showing its continuity w.r.t. $b$ at $b=0$ up to 4 order of $\pa_\l$-derivatives. Since zeros of $F_b(\cdot)$ correspond to the eigenmodes of $\calH_b$ counting multiplicity (Lemma \ref{lemvanishJW}), the uniqueness of eigenmodes of $\calH_0$ (Proposition \ref{propspecH0}) can be translated into uniqueness of zeros of $F_0(\cdot)$, which propagates to $F_b(\cdot)$ for $0 < b \ll 1$ and implies the uniqueness of eigenmodes for $\calH_b$. One particular difficulty is that $F_b(\l)$ is only defined on degenerate regions $\{|\l| \ll 1, \Im \l \lesssim b \}$ for $b > 0$, forbidding using $b$-independent contour integral to show continuity of zeros. Hence we employ a complex analysis argument (Lemma \ref{lemuniqzero}) via Lagrangian interpolation polynomial, which requires the existence of all bifurcated zeros from Step 2.1.

For high enough spherical classes (independent of $b$), the almost free asymptotics of fundamental solutions from Step 1.2 indicates the mismatching of admissible branches, resulting in the non-existence of eigenmodes.

\subsection{Notations}

We use the following traditional notations: 
\begin{itemize}
    \item Japanese bracket $\la x \ra = (1 + |x|^2)^\frac 12$.
    \item The scalar Wronskian for scalar or vector-valued functions as 
\be
\calW[f, g] = 
  f\cdot g' - f' \cdot g,\quad {\rm where}\,\,\,f, g: (0,\infty) \to \CC^N,\,\, N \ge 1.
 \label{eqdefWronskian}
\ee 
\item Ball of radius $\delta$ in Banach space $X$ is denoted as $B_\delta^X = \{ x \in X: \| x \|_X < \delta \}$. The superscript may be omitted if $X = \RR^d$ and  no ambiguity occurs.
\item Pauli matrix $\sigma_3 = \left( \begin{array}{cc}
    1 &  \\
     & -1
\end{array} \right)$. 
\item Spherical harmonics decomposition in $\dot H^\sigma(\RR^d)$:
For $d \ge 2$, let $\{Y_{d, l, m}\}_{l \ge 0, 1 \le m \le N_{d, l}}$ be the spherical harmonics, namely $-\Delta_{\mathbb{S}^{d-1}} Y_{d, l, m} = l(l+d-2)Y_{d, l, m}$. In particular, the numbers of spherical harmonics at the first two classes satisfy $N_{d, 0} = 1$, $N_{d, 1} = d$. 
For $0 \le \sigma < \frac d2$, we have the spherical class decomposition for $f \in \dot H^\sigma(\RR^d)$ as
\be
  f(r\omega) =  \sum_{l = 0}^\infty \sum_{m = 1}^{N_{d, l}} f_{l, m}(r) Y_{d, l, m}(\omega),\quad {\rm with\,\,}  f_{l, m}(|\cdot|) Y_{d, l, m}(\cdot/|\cdot|) \in \dot H^\sigma_l(\RR^d).\label{eqsphedecomp}
\ee
Here the spherical class $\dot H^\sigma_l(\RR^d)$ is defined as 
\be  \dot H^\sigma_l(\RR^d) = \left\{  f \in \dot H^\sigma(\RR^d): \exists f_{l,m} \in L^1_{loc, rad}(\RR^d) \,\,{\rm s.t.}\,\, f = \sum_{m = 1}^{N_{d, l}} f_{l, m}(r) Y_{d, l, m}(\omega). \right\} \label{eqdefsphclass} \ee
so that $\dot H^\sigma (\RR^d) = \bigoplus_{l \ge 0} \dot H^\sigma_l(\RR^d)$.

For $d = 1$, $0 \le \sigma < \frac 12$, we denote $\dot H^\sigma_0 (\RR) := \dot H^\sigma_{\rm even}(\RR)$, $\dot H^\sigma_1(\RR) := \dot H^\sigma_{\rm odd}(\RR)$. Correspondingly, we write $Y_{1, 0, 1}(x) = 1$, $Y_{1, 1, 1} = \sgn(x)$ for $x \in \RR$. 
\item Weighted $C^k$ spaces: $\| f \|_{\dot C^k_\rho} = \| |\nabla^k f|/\rho\|_{C^0}$, 
  $\| f \|_{C^k_{\rho}} = \sum_{j = 0}^k \| f\|_{\dot C^j_\rho}$, and complex analytic function space $C^{\omega_\CC}(\Omega)$. Here $\nabla^k f = (\pa^{\vec \a} f)_{\substack{\vec \a \in \NN^d\\ \sum_i \a_i=k}}$ is understood as a vector-valued function.
  \item Product of Banach spaces: $\| (f, g) \|_{X_1 \times \a X_2} := \max \{ \| f \|_{X_1}, \a \| g \|_{X_2}\} $.
\item Kronecker notation $\delta_{a, b} = \left| \begin{array}{ll}
   1  &  a = b \\
   0  &  a \neq b
\end{array}\right.$.
\item $(s)_+ := \max\{s , 0 \} $.
\item Smooth radial cut-off function: Let 
\be \chi(r) = \left| \begin{array}{ll}
    1 & r \le 1 \\
    0 & r \ge \frac 32
\end{array}\right. \in  C^\infty_{c, rad}(\RR^d),\quad \chi_R = \chi(R^{-1}\cdot). \label{eqdefchiR} \ee
\end{itemize}

The following are special notations and conventions: 
\begin{itemize}
\item For parameters: We assume throughout this paper that $0 < s_c \ll 1$ so that $b = b(s_c)$ and $Q_b$ from Proposition \ref{propQbasymp} exists. For simplicity, we will also usually omit the dependence of $d$ for everything, and the dependence of $s_c$ for $b$ considering the self-similar profile $Q_b$ and $W_{1, b}$, $W_{2, b}$, $\calH_b$. 
\item Regions in $\CC$: Interior truncated region for $\delta, b, N > 0$  (used in Section \ref{sec5}, \ref{sec7})
\bee
 \Omega_{\delta, N;b} := \{z \in \CC: |z| < \delta, \Im z < b N \}, 
\eee
and conic region (used in Subsection \ref{sec42})
\be \calC_{\theta_0}:= \{z = \rho e^{i\theta} \in \CC: \rho > 0, |\theta| \le \theta_0  \}.\label{eqdefcalCtheta} \ee
\item Auxiliary functions:  $S_b(r) = \int_{\min\{ r, \frac 2b \}}^{\frac 2b} \left( 1 - \frac{b^2 s^2}{4} \right)^\frac 12$,  $S_{b, \a}(r) = S_{b}(\frac{r}{\sqrt\a}) \a$ for $\a \in \RR$, and $\omega_{b, E}^\pm$ as \eqref{eqomegapm}. 
\item Auxiliary function spaces: $Z_{\pm, \a; x_*}, \tilde Z_{\pm, \a; x_*}$ \eqref{eqdefZpm} in the interior region, and $X^{\a, N, \pm}_{r_1;b, E}, X^{\a, N, \pm}_{r_0, r_1;b, E}$ \eqref{eqdefXBanach} in the exterior region.
\end{itemize}

\mbox{}

\textbf{Acknowledgments.} The author is supported by the ERC advanced grant SWAT and ERC starting grant project FloWAS (Grant agreement No.101117820). He sincerely thanks his supervisor Pierre Rapha\"el for his constant encouragement and support. The author is also grateful to Charles Collot, Yvan Martel, and Cl\'ement Mouhot for valuable suggestions, and to Zhiyan Sun for helpful discussions.

\part{Preparation} \label{part1}

\section{Preliminary of spectral properties}\label{sec2}

\subsection{Linearization around the ground state}
\label{sec21}

\mbox{}

\underline{Ground state}

For each $p < 2^*$, it is well-known (see for example \cite{MR1696311,MR2233925,MR2002047}) that there uniquely exists a positive radial $H^1$ solution $Q$ for 
\[ \Delta Q - Q + Q^p = 0 \] 
called the ground state. By further standard ODE analysis, we have the following asymptotics and estimates.
\begin{lemma}[Asymptotics and estimates for ground state]\label{lemQasymp}
    For $d \ge 1$ and $p < 2^*$, there exists $\kappa_Q > 0$ and $c_Q \in \RR$, such that for $k = 0, 1$, 
\be
\pa_r^k\left( r^{\frac{d-1}{2}} Q(r)\right) =  (-1)^k \kappa_Q e^{-r} \left(1 + c_Q r^{-1} + O(r^{-2}) \right),\quad \forall\, r \ge 1,
 \label{eqQdecay1}
 \ee
 and 
 \be
 Q(x) \sim \la x \ra^{-\frac{d-1}{2}} e^{-|x|},\quad  \left| \nabla^n Q(x) \right| \lesssim_n \la x \ra^{-\frac{d-1}{2}} e^{-|x|},\quad \forall\, x \in \RR^d,\,\,n\ge 1. \label{eqsolitondecay}
\ee
\end{lemma}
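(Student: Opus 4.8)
The plan is to run a classical ODE analysis on the radial profile, reducing the problem to a one-dimensional asymptotic study after a Liouville-type substitution. First I would recall that the ground state $Q$ is positive, radially decreasing, smooth, and that $Q(r),Q'(r)\to 0$ as $r\to\infty$ (this is standard from $Q\in H^1$, positivity and the monotonicity encoded in $Q'' + \frac{d-1}{r}Q' - Q + Q^p = 0$). Since $p>1$, once $Q$ is small the nonlinear term $Q^p$ is negligible compared with $Q$, so a barrier/Gr\"onwall comparison with the linear equation $Q''+\frac{d-1}{r}Q'-Q=0$ yields $Q(r)\lesssim \la r\ra^{-\frac{d-1}{2}}e^{-(1-\e)r}$ for every $\e>0$, which after one bootstrap (feeding the improved decay back into $Q^p$) sharpens to $Q(r)\lesssim \la r\ra^{-\frac{d-1}{2}}e^{-r}$ together with the matching bound for $Q'$.

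Next I would perform the Liouville substitution $w(r)=r^{\frac{d-1}{2}}Q(r)$, under which a direct computation turns the radial equation into
\[
w'' - w - \frac{(d-1)(d-3)}{4r^2}\,w + r^{-\frac{(p-1)(d-1)}{2}}\,w^p = 0,
\]
where by Step 1 both $w$ and $w'$ decay like $e^{-r}$ and the last term is $O(e^{-pr})$, hence super-exponentially small. Writing $w=e^{-r}v$ with $v$ bounded, this becomes
\[
v'' - 2v' = \frac{(d-1)(d-3)}{4r^2}\,v - r^{-\frac{(p-1)(d-1)}{2}}e^{-(p-1)r}\,v^p .
\]
The two homogeneous solutions of $v''-2v'=0$ are $1$ and $e^{2r}$; selecting the bounded branch through the integral representation $v'(r)=-e^{2r}\int_r^\infty e^{-2s}g(s)\,ds$ (with $g$ the right-hand side, which is $O(r^{-2})$ when $v$ is bounded) and running a contraction in a space of functions of the form $v=\kappa+O(r^{-1})$ shows $v\to\kappa_Q$ for some constant $\kappa_Q$, with $\kappa_Q>0$ forced by positivity of $Q$.

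To reach the refined expansion I would iterate this integral representation once more, substituting $v=\kappa_Q+O(r^{-1})$ into $g$, which gives $v(r)=\kappa_Q + a r^{-1} + O(r^{-2})$ with $a$ computable, i.e. $v=\kappa_Q\big(1+c_Q r^{-1}+O(r^{-2})\big)$ with $c_Q=a/\kappa_Q$. Since then $v'=O(r^{-2})$, one gets $w=e^{-r}v=\kappa_Q e^{-r}\big(1+c_Q r^{-1}+O(r^{-2})\big)$ and $w'=e^{-r}(v'-v)=-\kappa_Q e^{-r}\big(1+c_Q r^{-1}+O(r^{-2})\big)$, which is exactly \eqref{eqQdecay1} for $k=0,1$. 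The two-sided bound $Q(x)\sim\la x\ra^{-\frac{d-1}{2}}e^{-|x|}$ in \eqref{eqsolitondecay} then follows from this expansion for $|x|\ge 1$ and from positivity and continuity for $|x|\le 1$; the higher radial derivatives are controlled by bootstrapping the ODE ($Q''=Q-Q^p-\frac{d-1}{r}Q'$, and so on), and the full gradient bound $|\nabla^n Q(x)|\lesssim \la x\ra^{-\frac{d-1}{2}}e^{-|x|}$ is obtained by writing $\nabla^n Q$ for $|x|\ge1$ as a combination of $Q^{(j)}(|x|)$, $j\le n$, against bounded coefficients (smoothness near the origin handles small $|x|$).

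The only genuine work lies in the asymptotic steps: promoting the formal series for $v$ to a rigorous expansion with a controlled $O(r^{-2})$ remainder, which requires choosing the correct weighted Banach space and the bounded branch of the operator $v\mapsto v''-2v'$ so that the Duhamel map is a contraction. The crude-decay step and the derivative bounds are routine; I expect the fixed-point setup for the two-term expansion to be the main technical point.
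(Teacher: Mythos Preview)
Your proposal is correct and follows essentially the same route as the paper: both perform the Liouville substitution $w=r^{\frac{d-1}{2}}Q$, treat the $r^{-2}$ potential and the nonlinearity as a forcing, invert $\pa_r^2-1$ via Duhamel against the basis $e^{\pm r}$ (your factoring $w=e^{-r}v$ and selecting the bounded branch of $v''-2v'$ is just the same inversion in slightly different coordinates), and iterate once to extract the $c_Qr^{-1}$ term. The only cosmetic difference is that the paper first gets the almost-sharp decay $Q\lesssim_\e r^{-\frac{d-1}{2}+\e}e^{-r}$ by maximum-principle comparison and then upgrades to the sharp rate inside the Duhamel step, whereas you bootstrap to the sharp rate before writing the integral equation; either order works.
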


We also need the following difference estimate for slightly mass-supercritical ground states $Q_p$. 

\begin{lemma}[Continuation of ground state]\label{lemcontgs}
    For $d \ge 1$,  $\left| p - p_0 \right| \ll 1$ with $p_0 = \frac 4d + 1$, we have 
    \be \left\| Q_p - Q_{p_0}\right\|_{H^2 \cap W^{2,\infty}} \lesssim_d \left| p - p_0  \right|. \label{eqcontinuitysoliton} \ee
\end{lemma}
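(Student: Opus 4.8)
\textbf{Proof strategy for Lemma \ref{lemcontgs}.} The plan is to treat $Q_p$ for $p$ near $p_0 = \frac4d+1$ as a continuous (indeed analytic) branch solving the radial ODE, using the nondegeneracy of $Q_{p_0}$ to invoke an implicit-function argument. First I would recall that $Q_p$ solves $\Delta Q - Q + Q^p = 0$ on $\RR^d$ and the linearized operator $L_+^{(p)} = -\Delta + 1 - pQ_p^{p-1}$ has trivial kernel on $H^1_{rad}$ (this is the classical nondegeneracy of the ground state in the radial class; $\Lambda Q_{p_0}$ and the $\pa_{x_i}Q_{p_0}$ are the only zero modes and only the latter are nonradial, while $\Lambda Q$ is not in the kernel of $L_+$ but is handled by a standard modulation/Pohozaev argument — alternatively, a direct ODE-shooting argument gives local uniqueness and continuity of the branch). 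Then I would set up the map $F(Q,p) = \Delta Q - Q + Q^p$ on a suitable pair of spaces — say $F: (H^2_{rad} \cap W^{2,\infty}_{rad}) \times (p_0 - \eta, p_0+\eta) \to L^2_{rad} \cap L^\infty_{rad}$ — check $F(Q_{p_0},p_0)=0$, that $D_Q F(Q_{p_0},p_0) = L_+^{(p_0)}$ is an isomorphism between these spaces (invertibility on $L^2$ from nondegeneracy plus elliptic regularity to upgrade to $W^{2,\infty}$, using the exponential decay from Lemma \ref{lemQasymp} to control the $L^\infty$ norms), and that $p \mapsto F(Q,p)$ is $C^1$ near $p_0$ (the only subtlety is differentiating $Q^p = e^{p\log Q}$ where $Q$ vanishes at infinity, but since $Q$ decays exponentially, $Q^p \log Q$ is bounded and the difference quotient is controlled uniformly). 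The implicit function theorem then yields a $C^1$ branch $p \mapsto Q_p$ near $p_0$ in $H^2 \cap W^{2,\infty}$, and the Lipschitz bound \eqref{eqcontinuitysoliton} follows from $\|Q_p - Q_{p_0}\|_{H^2\cap W^{2,\infty}} \le \sup_{|q-p_0|\le\eta}\|\pa_q Q_q\|_{H^2\cap W^{2,\infty}} \cdot |p-p_0|$.

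The main obstacle is the interplay between the $W^{2,\infty}$ requirement and the fact that the nonlinearity $Q^p$ is not smooth in $p$ uniformly down to the spatial infinity where $Q \to 0$; I would address this by working with the exponentially weighted spaces adapted to the decay rate in Lemma \ref{lemQasymp}, where $Q^p$ and its $p$-derivatives stay bounded, and then noting all relevant norms are dominated by these weighted norms. A cleaner alternative, avoiding functional-analytic machinery, is a pure ODE argument: write $q_p(r) = r^{(d-1)/2}Q_p(r)$, which solves a second-order ODE with parameter $p$; the ground state corresponds to the unique $(q(0), q'(0) = 0)$ initial data whose solution decays at infinity; a shooting/monotonicity argument (as in the standard existence proofs cited) shows this shooting datum depends Lipschitz-continuously on $p$, and then continuous dependence of ODE solutions on initial data and parameters, together with the uniform-in-$p$ exponential decay (from the $-Q$ term dominating for large $r$), propagates the Lipschitz bound to all of $[0,\infty)$ in the $C^2$ topology, which on $\RR^d$ translates to $H^2 \cap W^{2,\infty}$. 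Either route is essentially routine once nondegeneracy (equivalently, the transversality of the shooting curve) is granted, so I would present whichever is shorter — likely the ODE version — and relegate the nondegeneracy input to a citation.
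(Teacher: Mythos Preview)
Your proposal is correct and follows essentially the same route as the paper: apply the implicit function theorem using the invertibility of $L_+$ on radial functions, then bootstrap regularity. The paper's version is marginally cleaner in one respect: rather than setting up $F: H^2_{rad}\cap W^{2,\infty}_{rad} \to L^2_{rad}\cap L^\infty_{rad}$, it pre-applies the resolvent and works with $F(f,p) = f - (-\Delta+1)^{-1}(|f|^{p-1}f)$ as a self-map of $L^2_{rad}\cap L^\infty_{rad}$, so the linearization $\pa_f F = (-\Delta+1)^{-1}L_+$ is a compact perturbation of the identity and the space bookkeeping is simpler; the $H^2\cap W^{2,\infty}$ bound is then recovered a posteriori by iterating the difference equation $(1-\Delta)(Q_p-Q_{p_0}) = Q_p^p - Q_{p_0}^{p_0}$. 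Your worry about differentiating $Q^p$ in $p$ where $Q\to 0$ is exactly the subtlety the paper singles out, and it handles it with the elementary pointwise bound $|a^\delta - 1 - \delta\log a| \lesssim_M \delta^2((\log\delta)^2 + (\log a)^2)$ for $0<\delta\le a\le M$, which suffices for $C^1$-ness in $L^2\cap L^\infty$ without passing to weighted spaces.
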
 

For completeness, we present the proof of these two lemmas in Appendix \ref{appA1}.

\mbox{}

\underline{Spectral properties.}

Now we consider $\calH_0$ and let $Q$ be the ground state for the mass-critical case $p = p_0 = \frac 4d + 1$.  Recall the linearized operator around $Q$ as \eqref{eqdefH0}-\eqref{eqdefW1W20} and define the related scalar operators
\be L_\pm = -\Delta + 1 - W_1 \mp W_2, \quad \Lambda_0 = \frac d2 + x\cdot \nabla.  \label{eqdefLpm} \ee
We record the generalized kernel of $\calH_0$ from \cite[Section 2.1]{chang2008spectra}. 
Let 
\be Q_1 = \Lambda_0 Q, \quad \rho =  L_+^{-1} (|x|^2 Q). 
\label{eqdefQ1Q2rho} \ee
Here the invertibility of $L_+$ on $L^2_{rad}$ comes from \cite[Section 2.1]{chang2008spectra}.
We also compute using $L_+ Q_1 = - 2Q$ (see \cite[Section 2.1]{chang2008spectra}) that
\be
 \left( \rho, Q\right)_{L^2(\RR^d)} = \left(|x|^2 Q, -\frac 12\Lambda_0 Q\right)_{L^2(\RR^d)} = \frac{1}{2}\| xQ\|_{L^2(\RR^d)}^2.
 \label{eqrhobQ}
\ee

Now we summarize the knowledge of $\sigma(\calH_0)$ in the following proposition.
\begin{proposition}[Spectrum of $\calH_0$]\label{propspecH0}
  For $d \ge 1$, let $\calH_0$ from \eqref{eqdefH0} be the closed operator on $(L^2(\RR^d))^2$. 
  \begin{enumerate}
  \item Essential spectrum:  $\sigma_{\rm ess}(\calH_0) = (-\infty, -1] \cup [1, \infty)$.
  \item Generalized nullspace: Define the eigenmodes 
\be
\left| \begin{array}{l}
\xi_0 = i \left(\begin{array}{c} Q \\ -Q \end{array}\right), \quad
\xi_1 = \frac 12 \left(\begin{array}{c} Q_1 \\ Q_1 \end{array}\right),\\
\xi_2 = -\frac{i}{8} \left(\begin{array}{c} |x|^2 Q \\ -|x|^2 Q \end{array}\right), \quad
\xi_3 = \frac 18 \left(\begin{array}{c} \rho \\ \rho \end{array}\right). \\
\zeta_{0, j} = \left(\begin{array}{c} \pa_j Q \label{eqxi2xi3} \\ \pa_j Q \end{array}\right) \quad \zeta_{1, j} = -\frac i2 \left(\begin{array}{c} x_j Q \\ - x_j Q \end{array}\right),\quad 1 \le j \le d.
\end{array}\right.
\ee
They satisfy the algebraic relations \bee \calH_0 \xi_0 = 0, && \calH_0 \xi_{k} = -i\xi_k,\quad k = 1, 2, 3; \\ 
 \calH_0 \zeta_{0, j} = 0, && \calH_0 \zeta_1 = -i \zeta_0,\quad 1 \le j \le d.
\eee
and the generalized nullspace of $\calH_0$ is of dimension $2d+4$, generated by these vectors
\be N_g (\calH_0) = {\rm span} \{ \xi_0, \xi_1, \xi_2, \xi_3, \zeta_{0, j}, \zeta_{1, j}  \}_{1 \le j \le d}. \label{eqnullspaceHL2}\ee
  \end{enumerate}
\end{proposition}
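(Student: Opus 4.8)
\textbf{Proof plan for Proposition \ref{propspecH0}.}

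The essential spectrum statement (1) is standard: since $W_1, W_2$ decay exponentially by Lemma \ref{lemQasymp}, the potential part of $\calH_0$ is a relatively compact perturbation of the free matrix operator $(\pa^2 - 1)\sigma_3$, whose spectrum is $(-\infty, -1] \cup [1, \infty)$ by the Fourier transform; Weyl's theorem on the invariance of essential spectrum under relatively compact perturbations then gives the claim. I would cite this from the literature (e.g. \cite{MR1696311} or the references in \cite{chang2008spectra}) and spend at most a line on it.

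For statement (2), the plan is: first verify the algebraic identities directly, then invoke the known computation of $N_g(\calH_0)$ from the literature to get the dimension count. The identities $\calH_0 \xi_0 = 0$, $\calH_0 \zeta_{0,j} = 0$ follow from differentiating the ground-state equation \eqref{eqgroundstatemasscritical}: applying $\pa_j$ gives $L_+ (\pa_j Q) = 0$, i.e. $\pa_j Q \in \ker L_+$, which translates to $\calH_0 \zeta_{0,j} = 0$; and $L_- Q = 0$ (since $L_- = -\Delta + 1 - Q^{4/d}$ and $Q$ solves \eqref{eqgroundstatemasscritical}) gives $\calH_0 \xi_0 = 0$. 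For the Jordan blocks, I would use the scaling/pseudo-conformal relations: $L_+ Q_1 = L_+(\Lambda_0 Q) = -2Q$ (stated in the excerpt, and proved by differentiating the equation in the scaling parameter), $L_-(|x|^2 Q) = -4\Lambda_0 Q + c|x|^2 Q$-type identities, and $L_+\rho = |x|^2 Q$ by definition of $\rho$ in \eqref{eqdefQ1Q2rho}; combined with the off-diagonal structure of $\calH_0$ (which sends $(u,u)^T$-type vectors through $L_+$ and $(u,-u)^T$-type vectors through $L_-$, up to the $i$-factors) these yield $\calH_0 \xi_k = -i\xi_k$ for $k=1,2,3$ and $\calH_0 \zeta_{1,j} = -i\zeta_{0,j}$. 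Each of these is a short computation chaining the scalar identities; I would present the scalar identities as a short lemma (or cite \cite[Section 2.1]{chang2008spectra}) and then do the matrix bookkeeping.

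The dimension count $\dim N_g(\calH_0) = 2d+4$ is the part I would lean most heavily on the literature for: it amounts to knowing that $\ker L_+ = \mathrm{span}\{\pa_j Q\}$ (nondegeneracy of the ground state, due to Weinstein/Kwong type results) and $\ker L_- = \mathrm{span}\{Q\}$ on the relevant spaces, plus the classification of the associated Jordan chains, which is exactly the content of \cite[Section 2.1]{chang2008spectra} (see also \cite{MR1334139} for $d=1$ and Weinstein's work). Since the proposition is stated as a ``summary of known facts,'' I would structure the proof as: (i) cite essential spectrum; (ii) verify the displayed algebraic relations by the scalar-identity computation above; (iii) cite the dimension and spanning statement \eqref{eqnullspaceHL2} from \cite{chang2008spectra}, noting that the explicit generators given there match the $\xi_k, \zeta_{i,j}$ listed here after the change of normalization. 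The main obstacle is not any single hard estimate but rather bookkeeping consistency: making sure the normalization constants ($-\frac{i}{8}$, $\frac18$, $-\frac i2$, etc.) are chosen so that the Jordan relations come out with the clean coefficients $-i$ and so that $\xi_2, \xi_3$ (resp. $\zeta_{1,j}$) map to $\xi_0$-free combinations — equivalently, one must check $\calH_0 \xi_3$ has no leftover $\xi_0$ component, which uses \eqref{eqrhobQ}. I would verify \eqref{eqrhobQ} explicitly (it is the orthogonality that makes the length-4 chain terminate correctly) and treat everything else as routine.
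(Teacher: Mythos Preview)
Your plan matches the paper's treatment, which is essentially a citation: the paper states that (1) follows from Weyl's theorem and (2) is from \cite{MR0783974}, \cite{kwong1989uniqueness}, and \cite[Section 2.1]{chang2008spectra}, with no further argument. Your additional intention to spell out the scalar identities $L_- Q = 0$, $L_+\pa_j Q = 0$, $L_+ Q_1 = -2Q$, $L_-(|x|^2 Q) = -4Q_1$, $L_+\rho = |x|^2 Q$ and then do the matrix bookkeeping is fine and harmless.

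One small correction: the identity $\calH_0\xi_3 = -i\xi_2$ follows \emph{directly} from the definition $L_+\rho = |x|^2 Q$ and needs no orthogonality; there is no ``leftover $\xi_0$ component'' to kill. The computation \eqref{eqrhobQ} gives $(\rho, Q)_{L^2} = \tfrac12\|xQ\|_{L^2}^2 \neq 0$, and its role (if you want to use it at all) is the opposite of what you wrote: the \emph{non}vanishing is what obstructs extending the chain to a hypothetical $\xi_4$, since one cannot solve $L_-\tilde\rho = \rho$ in $L^2_{\mathrm{rad}}$ when $\rho$ is not orthogonal to $\ker L_- = \mathrm{span}\{Q\}$. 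That termination is part of the dimension count you are citing from \cite{chang2008spectra}, so you do not need to reproduce it.
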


The proof of (1) is standard application of Weyl's essential spectral theorem (see for example \cite[Theorem XIII.14]{reed1972methods4}), and (2) is from \cite{MR0783974} and \cite{kwong1989uniqueness} (also see \cite[Section 2.1]{chang2008spectra}).

\subsection{Linearization around self-similar profile}\label{sec22} 

\mbox{}

\underline{Existence and asymptotics of $Q_b$.}

\begin{proposition}[Existence and asymptotics of $Q_b$, \cite{MR4250747}]\label{propQbasymp}
For any $d \ge 1$, there exists $s_c^{(0)}(d) \ll 1$ such that for $0 < s_c \le s_c^{(0)}(d)$, there exists $b = b(s_c, d) > 0$ with
\be s_c \sim b^{-1} e^{-\frac{\pi}{b}}, \label{eqbasymp}\ee
and $Q_b \in C^2(\RR^d) \cap \dot H^1(\RR^d)$ solving \eqref{eqselfsimilar} such that for $k = 0, 1$, 
\begin{align}
 &|\pa_r^k Q_b(r)| \sim (br)^{-k} r^{-\frac d2+s_c} b^{-\frac 12}e^{-\frac{\pi}{2b}} , \quad  r \ge b^{-2}; \label{eqQbasymp1}\\
  &|\pa_r^k Q_b(r)| \left| \begin{array}{ll}
      \sim r^{-\frac{d-1}{2}} b^{-\frac 16}e^{-\frac{\pi}{2b} + S_b(r)} \left\la b^{-\frac 23} (4-b^2r^2)\right\ra^{-\frac 14},& r \in [b^{-\frac 12},  b^{-2}], k=0;  \\
       \lesssim \la br\ra r^{-\frac{d-1}{2}} b^{-\frac 16}e^{-\frac{\pi}{2b} + S_b(r)} \left\la b^{-\frac 23} (4-b^2r^2)\right\ra^{-\frac 14},& r \in [b^{-\frac 12},  b^{-2}], k= 1;
  \end{array}\right.\label{eqQbasymp2}\\
  &|\pa_r^k (\Re P_b - Q)(r)| \lesssim \la r \ra^{-\frac{d-1}{2}} e^{-r}\left(b^\frac 13 + b^\frac 16 e^{-2(b^{-\frac 12} - r)}\right), \quad r \le b^{-\frac 12}; \label{eqQbasymp5} \\
  &|\pa_r^k \Im P_b(r)| \lesssim bs_c \la r \ra^{-\frac{d-1}{2}} e^{r},\quad  r \le b^{-\frac 12}; \label{eqQbasymp6}
\end{align}
and for $k \ge 2$, 
\bea
  |\pa_r^k Q_b (r)| &\lesssim_k& \left| \begin{array}{ll} 
  \la r \ra^{-\frac{d-1}{2}}e^{-r} &  r \le b^{-\frac12} \\
  r^{-\frac{d-1}{2}} b^{-\frac 16}e^{-\frac{\pi}{2b} + S_b(r)} \left\la b^{-\frac 23} (2-br)\right\ra^{-\frac 14} & r \in [b^{-\frac 12},  4b^{-1}]
  \end{array} \right. \label{eqQbasymp7} \\
  |\pa_r^k (\Re P_b-  Q)(r)| &\lesssim_k& b^\frac 13 \la r \ra^{-\frac{d-1}{2}} e^{-r}, \quad r \le b^{-\frac 13}; \label{eqQbasympint1}\\
  |\pa_r^k \Im P_b(r)| &\lesssim_k&  b s_c \la r \ra^{-\frac{d-1}{2}} e^{r}, \quad r \le b^{-\frac 13}. \label{eqQbasympint2}
\eea
  Here $P_b = Q_b e^{i\frac{br^2}{4}}$ and $Q$ is the mass-critical ground state \eqref{eqgroundstatemasscritical}, $S_b(r) = \int_{\min\{ r, \frac 2b \}}^{\frac 2b} \left( 1 - \frac{b^2 s^2}{4} \right)^\frac 12$. 
\end{proposition}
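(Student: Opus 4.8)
The proof of this proposition is carried out in \cite{MR4250747}; let me sketch the route one would follow, since it underlies the choices made in the rest of the paper.

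\textbf{Setup and region decomposition.} The plan is a matched-asymptotics construction, bifurcating from the mass-critical ground state $Q$. The first move is the gauge change $P_b := Q_b e^{i\frac{br^2}{4}}$, which, using $e^{-i\frac{br^2}{4}}(\Delta + ib\Lambda_0)e^{i\frac{br^2}{4}} = \Delta + \frac{b^2r^2}{4}$ and $\frac 2{p-1} = \frac d2 - s_c$, recasts \eqref{eqselfsimilar} as the stationary equation
\[ \Delta P_b - P_b + \tfrac{b^2r^2}{4} P_b - ib s_c P_b + P_b|P_b|^{p-1} = 0 . \]
This removes the first-order drift, produces the repulsive term $\frac{b^2r^2}{4}$, and — crucially — absorbs the fast quadratic oscillation $e^{\pm i\frac{br^2}{4}}$ of the self-similar tail into the gauge, so that $P_b$ carries only a \emph{slow} oscillation near infinity; this is exactly what makes $Q_b\in\dot H^1$ and what will produce the extra $(br)^{-k}$ gain on $\pa_r^k Q_b$ in \eqref{eqQbasymp1}. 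The equation has a single turning point at $r\approx \frac 2b$ where $\frac{b^2r^2}{4}-1$ changes sign, and I would decompose $\RR_+$ into a \emph{soliton zone} $r\lesssim b^{-1/2}$ (where $\frac{b^2r^2}{4}\lesssim b$ is genuinely perturbative), an \emph{elliptic zone} $r\in[b^{-1/2},\frac 2b]$, a \emph{turning layer} $|r-\frac 2b|\lesssim b^{-1/3}$, and an \emph{oscillatory zone} $r\gtrsim \frac 2b$, with the far field $r\ge b^{-2}$ where the profile settles. The free parameter $b$ and the supercriticality $s_c$ get linked by a solvability condition at the gluing.

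\textbf{Soliton zone.} Write $P_b = Q + \e$ with $\e = \e_R + i\e_I$ and linearize: to leading order the real and imaginary parts decouple into $L_+\e_R = \frac{b^2r^2}{4}Q + O(\e^2,s_c)$ and $L_-\e_I = b s_c Q + \cdots$, with $L_\pm$ as in \eqref{eqdefLpm}. Since $L_+$ is invertible on $L^2_{\rm rad}$ (Subsection \ref{sec21}) and its source decays exponentially, $\e_R$ has an exponentially decaying local piece of size $O(b^2)$ plus a homogeneous correction — of the size stated in \eqref{eqQbasymp5}, with the boundary-layer term $e^{-2(b^{-1/2}-r)}$ being the leak of the recessive exterior branch into the soliton zone — forced by matching to the exterior at $r\approx b^{-1/2}$. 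For $\e_I$: because $L_-Q = 0$ with $Q>0$, the source $b s_c Q$ is \emph{not} orthogonal to $\ker L_-$, so $\e_I$ must pick up the exponentially growing homogeneous solution of $L_-$ with coefficient $O(bs_c)$, which gives $|\Im P_b|\lesssim b s_c\la r\ra^{-\frac{d-1}2}e^r$, i.e. \eqref{eqQbasymp6}. A contraction in exponentially weighted spaces closes this zone once boundary data at $r=b^{-1/2}$ are supplied from the exterior.

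\textbf{Exterior zones and WKB/Airy analysis.} Set $\Phi = r^{\frac{d-1}2}P_b$; in the exterior the profile nonlinearity is exponentially small, so $\Phi$ solves, up to negligible terms, the scalar ODE $\Phi'' + \bigl(\frac{b^2r^2}{4} - 1 - ibs_c - \frac{(d-1)(d-3)}{4r^2}\bigr)\Phi = 0$. A change of variables of the Olver/\cite{MR109898} type — the same one used for the scheme (2-a) of Subsection \ref{sec13} — conjugates this to a perturbed Airy equation whose quantitative $\CC$-valued WKB/Airy solutions have envelope $\la b^{-\frac 23}(4-b^2r^2)\ra^{-\frac 14}$ and phase $\pm S_b(r)$ in the elliptic zone, with oscillatory continuation past the turning layer handled by the Airy connection formulas. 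Propagating to $r\ge b^{-2}$, the $-ibs_c$ term integrated in the WKB phase produces the $r^{s_c}$ growth and the $b^{-1/2}$ amplitude of \eqref{eqQbasymp1}, while the cancellation of $e^{i\frac{br^2}{4}}$ in $Q_b=P_be^{-i\frac{br^2}{4}}$ gives the $(br)^{-k}$ derivative gain; the exact powers $b^{\pm\frac 16}$, $b^{\frac 13}$ in \eqref{eqQbasymp2}--\eqref{eqQbasymp7} are bookkeeping from the turning-layer matching, and the $k\ge 2$ bounds follow by differentiating the equation and bootstrapping.

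\textbf{Gluing and quantization.} In the elliptic zone the general solution is $\alpha\Psi_+ + \beta\Psi_-$ with $\Psi_\pm \approx (1-\tfrac{b^2r^2}{4})^{-\frac 14}e^{\pm S_b(r)}$, and $S_b(b^{-1/2}) = \frac{\pi}{2b} - b^{-1/2} + O(\sqrt b)$. Matching $\Re P_b\approx Q\sim e^{-r}$ at $r=b^{-1/2}$ forces the real part to be $\approx e^{-\frac{\pi}{2b}}\Psi_+$, i.e. it carries the normalization $e^{-\frac{\pi}{2b}}$ into the oscillatory zone — the origin of that factor in all exterior asymptotics. Matching the forced growth $\Im P_b\sim b s_c e^r$ at $r=b^{-1/2}$ forces the imaginary part to be $\approx b s_c e^{\frac{\pi}{2b}}\Psi_-$ (the recessive branch). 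Passing both through the turning layer, the oscillatory tail of $Q_b=P_b$ has real part of amplitude $\sim e^{-\frac{\pi}{2b}}$ and imaginary part of amplitude $\sim b s_c e^{\frac{\pi}{2b}}$; requiring them to balance (so that $Q_b\in\dot H^1$ with the size \eqref{eqQbasymp1}) yields $bs_c e^{\frac{\pi}{2b}}\sim e^{-\frac{\pi}{2b}}$, i.e. the quantization $s_c\sim b^{-1}e^{-\frac{\pi}{b}}$ of \eqref{eqbasymp}; as $s_c\mapsto b$ is then monotone near $0$ this defines $b=b(s_c)$. Finally, an implicit-function/Banach-fixed-point argument in the weighted spaces adapted to the four zones produces $(b,Q_b)$ with all the stated estimates. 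The main obstacle throughout is the turning point together with the relatively unbounded term $\frac{b^2r^2}{4}$: the problem cannot be treated as a bounded perturbation of $\calH_0$, and the WKB/Airy approximation must be made quantitative — error $o(1)$ relative to the leading order, uniformly across the turning layer — which is precisely why one needs the $\CC$-valued Airy change of variables and careful a priori control of the (non-analytic) turning point.
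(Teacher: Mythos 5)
Your proposal re-sketches the construction of $(b,Q_b)$ itself, i.e. the proof strategy of \cite{MR4250747}; the paper's own proof (Appendix \ref{appQb}) does something different and lighter: it does not redo the construction at all. For \eqref{eqbasymp}--\eqref{eqQbasymp6} it simply cites the theorem of \cite{MR4250747} and translates the intermediate objects of that paper into the stated pointwise bounds — the interior decomposition $P_{\rm int}=(Q+\gamma A+\phi_+)+i(b\sigma B+\phi_-)$ with the $K$-norm bounds gives \eqref{eqQbasymp5}--\eqref{eqQbasymp6}, and the exterior Airy-variable representation $Q_b=e^{-i\frac{br^2}{4}}e^{i(\theta+\theta_{ext})}r^{-\frac{d-1}{2}}Y(b^{-2/3}\zeta(2-br))/\sqrt{\zeta'(2-br)}$ with $Y\approx\beta_I\mathbf{B}$, $|\beta_I|\sim\rho b^{1/3}$, $\rho\sim b^{-1/2}e^{-\pi/(2b)}$ gives \eqref{eqQbasymp1}--\eqref{eqQbasymp2} after evaluating $\zeta,\zeta'$ on the four subranges of $r$. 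Your matched-asymptotics outline (gauge change, soliton zone, turning layer, quantization $bs_ce^{\pi/(2b)}\sim e^{-\pi/(2b)}$) is consistent with that source, so as a description of where \eqref{eqbasymp}--\eqref{eqQbasymp6} come from it is acceptable, though it proves nothing the citation does not already provide.

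The genuine gap is in the part of the proposition that is new relative to \cite{MR4250747}, namely the higher-derivative estimates \eqref{eqQbasymp7}--\eqref{eqQbasympint2}, which you dispatch in one clause ("follow by differentiating the equation and bootstrapping"). This is not automatic, and it is where the paper actually has to work. Differentiating the equation for $P_b-Q$ requires controlling $\pa_r^n\bigl(|P_b|^{p-1}P_b-Q^{p_0}\bigr)$ with $p$ non-integer and $p\neq p_0$: the paper splits this into $|P_b|^{p-1}P_b-|P_b|^{p_0-1}P_b$ (estimated via an integral in the exponent, producing $\ln|P_b|$ factors that are only harmless because $p-p_0\sim s_c$ is exponentially small and $|P_b|\sim\la r\ra^{-\frac{d-1}{2}}e^{-r}$ is bounded below on $r\le b^{-1/3}$) and $|P_b|^{p_0-1}P_b-Q^{p_0}$ (estimated via Fa\`a di Bruno and the comparability $|P_b|\sim Q$), both of which use the non-vanishing lower bounds on $Q_b$ that are part of \eqref{eqQbasymp1}--\eqref{eqQbasymp5}. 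Moreover, near the origin an ODE bootstrap in $r$ does not apply; the paper uses interior elliptic regularity on shrinking balls $B_{R_n}$ together with the Banach-algebra property of $L^\infty\cap H^n$ to get \eqref{eqQbasympint1}--\eqref{eqQbasympint2}, and only away from the origin does it induct on the differentiated equation to get \eqref{eqQbasymp7}. Without these ingredients (the nonlinear difference estimates with the $\ln$ and $p-p_0$ bookkeeping, the lower bound on $|P_b|$, and the elliptic-regularity step at $r=0$), the claimed $k\ge2$ bounds are not established by your argument.
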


This proposition is a corollary from the main theorem and its proof in \cite{MR4250747}. We write its proof in Appendix \ref{appQb}. Besides, some refined asymptotics will be proven later in Proposition \ref{propQbasympref}, where we prove the higher derivative estimates in the exterior region and the lower bound of $|Q_b|$ on $[b^{-\frac 12}, b^{-2}]$ in particular. 


\mbox{}

\underline{Spectrum of $\calH_b$.} 

Let $\calH_b$ be the linearized operator \eqref{eqdefcalH}. Inspired by the eigenmodes of $\calH_0$ \eqref{eqxi2xi3}, we define  
\be \label{eqdefxi01b}
\left| \begin{array}{l}
\xi_{0,b} = i \left(\begin{array}{c} Q_b \\ -\bar{Q}_b \end{array}\right), \quad
\xi_{1,b} = \frac 12 \left(\begin{array}{c} \Lambda Q_b \\ \overline{\Lambda Q_b} \end{array}\right), \\
\xi_{2,b} = -\frac{i}{8} \left(\begin{array}{c} |x|^2 Q_b \\ -|x|^2 \bar{Q}_b \end{array}\right), \quad
\eta_b = \left(\begin{array}{c} Q_b \\ \bar{Q}_b \end{array}\right), \\
\zeta_{0, j, b} = \left(\begin{array}{c} \pa_j Q_b \\ \overline{\pa_j Q_b} \end{array}\right),\quad \zeta_{1,j,b} = - \frac i2 \left(\begin{array}{c} x_j  Q_b \\ -x_j \bar{Q}_b \end{array}\right),\quad 1 \le j \le d. 
\end{array}\right.
\ee
Then by differentiating the self-similar profile equation \eqref{eqselfsimilar}, we obtain the algebraic relations
\be
\begin{split}
    \calH_b \xi_{0, b} = 0,\quad \calH_b \xi_{1,b} = -2bi\xi_{1,b} - i\xi_{0,b},\quad \calH_b \xi_{2,b} = -i\xi_{1,b} + 2bi\xi_{2,b} - i\frac{s_c}{2}\eta_b, \\
\calH_b \zeta_{0,j,b} = -i \zeta_{0, j, b},\quad \calH_b \zeta_{1, j, b} = - i \zeta_{0, j, b} + i b \zeta_{1, j, b}.
\end{split} \label{eqQbalgrel}
\ee
thus 
\bea
\calH_b \xi_{0, b} = 0,\quad (\calH_b + 2bi) \left( \xi_{0,b} + 2b\xi_{1,b} \right) = 0,\label{eqeigencalHb1} \\
(\calH_b - 2bi)\left( \xi_{0,b} - 2b\xi_{1,b} + 8b^2 \xi_{2,b} \right) = -i 4b^2 s_c \eta_b,\label{eqeigencalHb2}\\  
(\calH_b + bi)\zeta_{0, j, b} = 0,\quad (\calH_b - bi) \left( \zeta_{0,j,b} - 2b\zeta_{1,j,b} \right) = 0.\label{eqeigencalHb3}
\eea

\section{Interior ODE analysis}

\subsection{Scalar case: soliton approximation regime}
\label{sec31}
In this subsection, we consider the scalar operators $L_\pm$ from \eqref{eqdefLpm} and $L_{\pm, b}$ (defined later in \eqref{eqdefLpmb}), which 
involve soliton potential and are
related to the radial case $l = 0$ of $\HH_{b, \nu}$. 
Their fundamental solutions and inversion will be discussed, which will be used to construct radial bifurcated eigenmodes in Section \ref{sec6}. 


We will use the Banach space $Z_{\pm, \a; x_*} \subset C^0_{rad}(B_{x_*}^{\RR_d})$ and $\tilde Z_{\pm, \a; x_*} \subset C^1_{rad}(B_{x_*}^{\RR_d})$ for $\a \in \RR$ and $x_* \in (0, \infty]$ that 
\be \| f \|_{Z_{\pm, \a; x_*}} := \left\|f  \right\|_{C^0_{e^{\pm r} \la r \ra^{\a - \frac{d-1}{2}} } (B_{x_*}) },\quad \| f \|_{\tilde Z_{\pm, \a; x_*}} := \left\|f  \right\|_{C^1_{e^{\pm r} \la r \ra^{\a - \frac{d-1}{2}} } (B_{x_*}) }   \label{eqdefZpm} \ee

The first lemma regards $L_\pm$, which is based on \cite[Lemma 4.1]{MR4250747} and whose proof will be given in Appendix \ref{appA1}. 

\begin{lemma}[Properties of $L_\pm$]   \label{lemLpm}
Let $d \ge 1$ and $p_0 = \frac 4d + 1$. $Q$ be the ground state of \eqref{eqNLS} and $\kappa_{Q}$ from \eqref{eqQdecay1}.

    \noindent (1) Fundamental solutions: There exist radial functions $A: [0,\infty) \to \RR$, $D: (0, \infty) \to \RR$ and 
    \be
      \frakE (r) = Q(r) \int_1^r Q(s)^{-2} s^{-(d-1)} ds \label{eqdefE}
    \ee
    such that 
    \[ L_- Q = L_- \frakE = 0, \quad L_+ A = L_+ D = 0, \quad r \in (0, \infty). \]
    There exists $\kappa_A \neq 0$ and $c_A, c_D, c_\frakE \in \RR$ for $ k= 0, 1$
    \bea
    \left| \begin{array}{l}
        \pa_r^k \left( r^{\frac {d-1}{2}} A(r) \right) = \kappa_A e^r \left( 1 + c_Ar^{-1} + O(r^{-2})  \right),\\
      \pa_r^k \left( r^{\frac {d-1}{2}} D(r) \right) = (-1)^{k+1} (2\kappa_A)^{-1}  e^{-r} \left( 1 + c_D r^{-1} + O(r^{-2})\right), \\
      \pa_r^k \left( r^{\frac {d-1}{2}} \frakE(r) \right) = (2\kappa_Q)^{-1} e^r \left( 1 + c_\frakE r^{-1} + O(r^{-2})  \right),
    \end{array}\right.\quad \forall\, r\ge 1, 
    \eea
    and 
    \be
    \left| \begin{array}{l}
        \pa_r^k A(r), Q(0)^{-1} \cdot \pa_r^k Q(r) = \delta_{k, 0} + O(r^{2-k}) \\
        D(r), Q(0) \cdot \frakE(r) = \left| \begin{array}{cc}
            (2-d)^{-1} r^{2-d}(1 + O(r)) & d \ge 3 \\
            \log r(1+O(r)) & d = 2\\
            r(1 + O(r)) & d = 1
        \end{array}\right. \\
        D'(r), Q(0) \cdot \frakE'(r) = r^{-d+1} (1 + O(r)), 
    \end{array}\right.\quad \forall\, r \in (0, 1].
    \ee
    and the Wronskian 
    \be 
      AD' - DA' = Q \frakE' - \frakE Q' = r^{-(d-1)},\quad \forall\, r >0.
    \ee

    \noindent (2) Inversion operators: For $x_* \in [1, \infty]$, define
     \be
     \begin{split} L_{+; x_*}^{-1} f =& -A \int_r^{x_*} f D s^{d-1} ds - D\int_0^r fA s^{d-1} ds, \\
  L_-^{-1} f =& -Q \int_0^r \left[\int_0^s f(\tau) Q(\tau) \tau^{d-1} d\tau \right] \frac{ds}{Q^2(s) s^{d-1}}. \end{split} \label{eqdefLpminv}\ee
  In particular, denote $L_{+; \infty}^{-1} =: L_+^{-1}$. Then they are inversions of $L_\pm$ respectively, and satisfy\footnote{Here $|\SS^{d-1}| = \frac{2\pi^{d/2}}{\Gamma(d/2)}$ is the area of $(d-1)$-dimensional unit sphere. In particular, $|\SS^{0}| = 2$.}
  \begin{align}
   &\| L_{+; x_*}^{-1} f \|_{\tilde Z_{-, \a+1;x_*}} \lesssim_\a \| f \|_{Z_{-, \a;x_*}}, \,\,
   \| L_- ^{-1} f \|_{\tilde Z_{+, \a+1;x_*}} \lesssim_\a \| f \|_{Z_{+, \a;x_*}},\,\, \forall \,\a \ge 0\label{eqLpmest1}\\
   &\| L_{+; x_*}^{-1} f \|_{\tilde Z_{\pm, 0; x_*} } \lesssim_\a \| f \|_{Z_{\pm, \a; x_*}}, \ \| L_- ^{-1} f \|_{\tilde Z_{+, 0; x_*}} \lesssim_\a \| f \|_{Z_{+, \a; x_*}},\,\, \forall\, \a \le  -2,\label{eqLpmest2}\\
   &\left\| L_-^{-1} f + |\SS^{d-1}|^{-1}(f, Q)_{L^2(B_{x_*}^{\RR^d})} \tilde{\frakE} \right\|_{\tilde Z_{-, \a+1; x_*}} \lesssim_\a \| f \|_{Z_{-, \a;x_*}},\quad \forall\, \a \in \RR \label{eqLpmest3}
  \end{align}
  where $(f, Q)_{L^2(B_{x_*}^{\RR^d})} = |\SS^{d-1}| \int_0^{x_*} fQ\tau^{d-1}d\tau$, $\tilde{\frakE} = \frakE (1 - \chi)$ with cut-off $\chi$ from \eqref{eqdefchiR}, and the constants are independent of $x_* \ge 1$.
\end{lemma}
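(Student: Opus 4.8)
The plan is to take the fundamental solutions and their pointwise asymptotics essentially from \cite[Lemma 4.1]{MR4250747}, and to add the inversion identities and the weighted bounds by a direct variation-of-parameters computation. For part (1): differentiating the ground state equation \eqref{eqgroundstatemasscritical} (and using $p_0-1=\tfrac4d$, so the potentials $W_1\mp W_2$ decay exponentially) gives $L_-Q=0$, so I take $Q$ as one solution of $L_-u=0$ and $\frakE$ from \eqref{eqdefE} as the second, which is well-defined since $Q>0$; likewise I take $A$ to be the solution of $L_+u=0$ regular at the origin with $A(0)=1$, and $D$ the one decaying like $e^{-r}$ at infinity (which for $d\ge2$ is singular at the origin, and for $d=1$ is simply a multiple of $Q'$). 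Linear independence of $\{A,D\}$ is equivalent to nondegeneracy of $L_+$ on $L^2_{rad}$, cited from \cite{chang2008spectra}; $\{Q,\frakE\}$ are automatically independent since $Q$ decays. The precise asymptotics then follow from the standard one-dimensional ODE analysis — power-series/Frobenius expansion near $0$ (indicial roots $0$ and $2-d$) and exponential-dichotomy/variation-of-parameters near $\infty$, with the $r^{-1}$-coefficients $c_\bullet$ determined by the $\tfrac{d-1}{r}\pa_r$ term. Abel's identity forces $\calW[A,D]$ and $\calW[Q,\frakE]$ to be constant multiples of $r^{-(d-1)}$; I normalize $D$, $\frakE$ so that these Wronskians equal $r^{-(d-1)}$ exactly, which pins their leading coefficients to $\pm(2\kappa_A)^{-1}$, $(2\kappa_Q)^{-1}$ as stated.

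For part (2), that the operators in \eqref{eqdefLpminv} invert $L_\pm$ is checked by differentiating twice: in $L_{+;x_*}^{-1}f=-A\int_r^{x_*}fDs^{d-1}\,ds-D\int_0^r fAs^{d-1}\,ds$ the cross terms in $\pa_r$ cancel, and the surviving second-derivative term produces $f$ precisely because $\calW[A,D]\,r^{d-1}=1$; the computation for $L_-^{-1}$ with the pair $(Q,\frakE)$ is identical. The chosen limits of integration make $L_{+;x_*}^{-1}f$ decay at infinity through the $D$-factor and regular at $0$ because the inner integral $\int_0^r fAs^{d-1}\,ds=O(r^d)$ cancels the $r^{2-d}$ (or $\log r$, $d=2$) singularity of $D$, and $L_-^{-1}f=O(r^2)$ at the origin. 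The bounds \eqref{eqLpmest1}--\eqref{eqLpmest2} then amount to inserting the part (1) asymptotics into these Duhamel integrals: near infinity, pairing $A(r)\sim e^r$ against $\int_r^{x_*}fDs^{d-1}\,ds$ with integrand $O(e^{-2s}\la s\ra^\a)$ gives $e^{-r}$ decay, while pairing $D(r)\sim e^{-r}$ against $\int_0^r fAs^{d-1}\,ds=O(\la r\ra^{\a+1})$ costs exactly one power of $\la r\ra$ — this is the $\a\mapsto\a+1$ shift in the weight — and when $\a\le-2$ the latter integral converges, so one lands in $\tilde Z_{\pm,0}$ instead. Differentiating costs nothing because $\pa_r A,\pa_r D,\pa_r Q,\pa_r\frakE$ have the same leading asymptotics as the functions themselves, and all constants come out independent of $x_*\ge1$.

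The step I expect to be the genuine obstacle is \eqref{eqLpmest3}. Because $Q$ is simultaneously the regular-at-$0$ and the $e^{-r}$-decaying solution of $L_-u=0$ (it is the ground state), the naive Duhamel solution $L_-^{-1}f$ carries a resonant $e^r$-growing tail, and its amplitude is exactly $h_\infty:=\int_0^{x_*}fQ\tau^{d-1}\,d\tau=|\SS^{d-1}|^{-1}(f,Q)_{L^2(B_{x_*}^{\RR^d})}$. The remedy is to split $h(s):=\int_0^s fQ\tau^{d-1}\,d\tau=h_\infty+(h(s)-h_\infty)$ inside $L_-^{-1}f=-Q\int_0^r h(s)\,Q^{-2}s^{-(d-1)}\,ds$: the $h_\infty$ piece reconstructs $-h_\infty\frakE$ exactly, up to the harmless cut-off difference $h_\infty(\frakE-\tilde\frakE)$ supported in $r\le\tfrac32$ — which is precisely why the correction $+|\SS^{d-1}|^{-1}(f,Q)_{L^2}\tilde\frakE$ appears in the statement — whereas $|h(s)-h_\infty|\lesssim e^{-2s}\la s\ra^\a\|f\|_{Z_{-,\a;x_*}}$, so integrating this remainder against $Q^{-2}s^{-(d-1)}\sim e^{2s}$ and multiplying by $Q(r)\sim e^{-r}\la r\ra^{-(d-1)/2}$ yields the $\tilde Z_{-,\a+1;x_*}$ bound for every $\a\in\RR$, with the $C^1$ version and the origin behaviour handled as above. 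Carrying out this resonance bookkeeping cleanly, together with the $d\ge3$ singular-point analysis at the origin, is where essentially all of the care in the proof goes; everything else is a routine one-dimensional ODE estimate.
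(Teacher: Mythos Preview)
Your proposal is correct and follows essentially the same approach as the paper: part (1) is cited from \cite[Lemma 4.1]{MR4250747} plus the direct construction of $\frakE$, and part (2) is the Duhamel formula with weighted integral estimates. Your treatment of \eqref{eqLpmest3} via the splitting $h(s)=h_\infty+(h(s)-h_\infty)$ is exactly the paper's argument --- the paper writes this (for $r\ge 2$, where $\tilde\frakE=\frakE$) as $L_-^{-1}f+h_\infty\tilde\frakE=-Q\int_0^1 h(s)\,Q^{-2}s^{-(d-1)}\,ds+Q\int_1^r[\int_s^{x_*}fQ\tau^{d-1}\,d\tau]\,Q^{-2}s^{-(d-1)}\,ds$ and bounds the second integrand by $s^\a e^{-2s}\|f\|_{Z_{-,\a;x_*}}$, which is your remainder estimate.
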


\mbox{}

Next, we define the scalar operators with $b > 0$ as 
\be 
  L_{\pm, b} = -\Delta - \frac{b^2 r^2}{4} + 1 - W_{1, b} \mp \Re \left( e^{i\frac{br^2}{2}} W_{2, b}\right) \label{eqdefLpmb}
\ee
and the difference potential as
\be
  V_{\pm, b} = L_{\pm, b} - L_{\pm} = -\frac{b^2 r^2}{4} - \left( W_{1,b} - W_1 \right) \mp \left( \Re \left( e^{i\frac{br^2}{2}} W_{2,b}\right) - W_2 \right). \label{eqdefVpmb}
\ee

\begin{lemma} \label{lemLpmb} For $d \ge 1$, there exists $s_{c;{\rm int}}^{(1)} > 0$ such that for $0 < s_c <  s_{c;{\rm int}}^{(1)}$ and $1 \le x_* \le b^{-\frac 13}$ with $b = b(s_c, d) \ll 1$, we can define 
\be L_{+, b; x_*}^{-1} = \left(1 + L_{+;x_*}^{-1} V_{+,b}\right)^{-1} L_{+;x_*}^{-1},\quad L_{-, b}^{-1} = \left(1 + L_{-}^{-1} V_{-,b}\right)^{-1} L_{-}^{-1},   \label{eqdefLpmb} \ee
and the following bounds hold
 \begin{align}
  & \left| V_{\pm, b} (r) \right| \lesssim b^\frac 43 + b^\frac 13 \la r \ra^{-\frac{(p-1)(d-1)}{2}} e^{-(p-1)r},\quad \forall \, r \le b^{-\frac 13}, \label{eqestVpmb} \\
   &\| L_{+,b; x_*}^{-1} f \|_{\tilde Z_{-, \a+1;x_*}} \lesssim \| f \|_{Z_{-, \a;x_*}}, \,\,
   \| L_{-,b} ^{-1} f \|_{\tilde Z_{+, \a+1;x_*}} \lesssim \| f \|_{Z_{+, \a;x_*}},\,\, \forall \,\a \in [0, 10] \label{eqLpmbest1}\\
   &\| L_{+,b; x_*}^{-1} f \|_{\tilde Z_{\pm, 0; x_*} } \lesssim \| f \|_{Z_{\pm, \a; x_*}}, \ \| L_{-,b} ^{-1} f \|_{\tilde Z_{+, 0; x_*}} \lesssim \| f \|_{Z_{+, \a; x_*}},\,\, \forall\, \a \in [-10, -2],\label{eqLpmbest2}\\
   &\left\| L_{-,b}^{-1} f + |\SS^{d-1}|^{-1} (f - V_{-,b} L_{-,b}^{-1} f, Q)_{L^2(B_{x_*}^{\RR^d})} \tilde{\frakE} \right\|_{\tilde Z_{-, \a+1; x_*}} \lesssim \| f \|_{Z_{-, \a;x_*}},\quad \forall\, |\a| \le 10, \label{eqLpmbest3}
  \end{align}
  where the constant is independent of $x_* \in [1, b^{-\frac 13}]$, and $L_{+;x_*}^{-1}, \,L_-^{-1},\, \tilde{\frakE}$, $(\cdot,\cdot)_{L^2(B_{x_*}^{\RR^d})}$ are from Lemma \ref{lemLpm}. Besides,
there exists a function $A_b: [0, b^{-\frac 13}] \to \CC$ such that $L_{+,b} A_b = 0$ and  
\be A_b = A + \epsilon, \quad \| \epsilon \|_{\tilde Z_{+,0;b^{-\frac 16}}} \lesssim b^\frac 13 \label{eqestAb} \ee
with $A$ from Lemma \ref{lemLpm}.
\end{lemma}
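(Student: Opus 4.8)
\textbf{Proof proposal for Lemma \ref{lemLpmb}.}

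The plan is to treat $L_{\pm,b}$ as a perturbation of $L_\pm$ by the potential $V_{\pm,b}$ and to invert $1 + L_{\pm;x_*}^{-1}V_{\pm,b}$ by a Neumann series on the weighted spaces $Z_{\pm,\a;x_*}$ and $\tilde Z_{\pm,\a;x_*}$. First I would establish the pointwise bound \eqref{eqestVpmb} on $V_{\pm,b}$. By definition $V_{\pm,b} = -\tfrac{b^2 r^2}{4} - (W_{1,b} - W_1) \mp (\Re(e^{ibr^2/2}W_{2,b}) - W_2)$; on the range $r \le b^{-1/3}$ we have $b^2 r^2/4 \lesssim b^{4/3}$, the oscillation factor satisfies $|e^{ibr^2/2} - 1| \lesssim b r^2 \lesssim b^{1/3}$, and the profile differences $W_{1,b} - W_1$, $W_{2,b} - W_2$ are controlled via \eqref{eqQbasymp5}, \eqref{eqQbasymp6}, \eqref{eqQbasympint1}, \eqref{eqQbasympint2} together with $\Re P_b - Q = O(b^{1/3})\la r\ra^{-(d-1)/2}e^{-r}$ and $\Im P_b = O(bs_c)\la r\ra^{-(d-1)/2}e^r$ (so $e^{ibr^2/2}W_{2,b}$ is close to $W_2$ up to $b^{1/3}$-size exponentially localized terms plus a globally $O(b^{4/3})$ piece from the far region). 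Since $W_1, W_2 \sim Q^{p_0-1} \sim \la r\ra^{-(d-1)(p_0-1)/2}e^{-(p_0-1)r}$ and $p_0 - 1 = 4/d$, the claimed decay profile $\la r\ra^{-(p-1)(d-1)/2}e^{-(p-1)r}$ follows for $|p - p_0| \ll 1$ (using $p - 1$ slightly below $p_0 - 1$ to absorb error terms). This is essentially bookkeeping from Proposition \ref{propQbasymp} and Lemma \ref{lemQasymp}.

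The key quantitative step is to show that the operator norm of $L_{\pm;x_*}^{-1}V_{\pm,b}$ on each relevant weighted space is $o(1)$ as $b \to 0$, uniformly in $x_* \in [1, b^{-1/3}]$. From Lemma \ref{lemLpm}(2), $L_{+;x_*}^{-1}: Z_{-,\a;x_*} \to \tilde Z_{-,\a+1;x_*}$ and $Z_{\pm,\a;x_*} \to \tilde Z_{\pm,0;x_*}$, and similarly for $L_-^{-1}$, with constants independent of $x_*$. Multiplication by $V_{\pm,b}$ maps $\tilde Z_{-,\a+1;x_*} \to Z_{-,\b;x_*}$ with a gain: the bound \eqref{eqestVpmb} contributes a factor $b^{4/3}$ (from the constant piece, which costs one power of weight in going from $\la r\ra^{\a+1}$ up to $b^{-1/3}$, i.e. a factor $b^{-(\a+1)/3}$ at worst, still leaving a net $b^{4/3 - (\a+1)/3} = o(1)$ for the bounded range $|\a| \le 10$) plus a factor $b^{1/3}$ times an exponentially decaying localized bump (which gains the full $e^{-(p-1)r}$ and hence is $o(1)$ with room to spare, independent of $x_*$). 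So for $b$ small (i.e. $s_c < s_{c;{\rm int}}^{(1)}$), $\| L_{\pm;x_*}^{-1}V_{\pm,b}\| \le 1/2$ on all the spaces appearing in \eqref{eqLpmbest1}--\eqref{eqLpmbest3}, the Neumann series converges, $1 + L_{\pm;x_*}^{-1}V_{\pm,b}$ is invertible with inverse bounded by $2$, and $L_{\pm,b;x_*}^{-1} := (1 + L_{\pm;x_*}^{-1}V_{\pm,b})^{-1}L_{\pm;x_*}^{-1}$ is a genuine right (and left, by the same argument on adjoint-side spaces) inverse of $L_{\pm,b}$. The bounds \eqref{eqLpmbest1}--\eqref{eqLpmbest2} then follow by composing the $O(1)$ bound on $(1 + L_{\pm;x_*}^{-1}V_{\pm,b})^{-1}$ with the corresponding bounds from Lemma \ref{lemLpm}. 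For \eqref{eqLpmbest3}, I would write $L_{-,b}^{-1}f = (1 + L_-^{-1}V_{-,b})^{-1}L_-^{-1}f$ and apply \eqref{eqLpmest3} to the function $g := (1 + L_-^{-1}V_{-,b})^{-1}f = f - V_{-,b}L_{-,b}^{-1}f$ (the last identity being the defining relation $g + L_-^{-1}V_{-,b}g = f$ rearranged, since $L_-^{-1}V_{-,b}g = L_-^{-1}V_{-,b}L_{-,b}^{-1}f$... more precisely $L_{-,b}^{-1}f = L_-^{-1}g$ so $g = f - L_-^{-1}V_{-,b}$ applied... I would verify $g = f - V_{-,b}L_{-,b}^{-1}f$ directly from $L_{-,b} = L_- + V_{-,b}$ and $L_{-,b}L_{-,b}^{-1}f = f$), which is exactly the function appearing in the $L^2$-pairing in \eqref{eqLpmbest3}; the remaining error term $\|g\|_{Z_{-,\a;x_*}} \lesssim \|f\|_{Z_{-,\a;x_*}}$ by the Neumann bound again.

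Finally, for the bounded solution $A_b$ of $L_{+,b}A_b = 0$: on $[0, b^{-1/6}]$ I would set $A_b = A + \epsilon$ and require $L_{+,b}\epsilon = -V_{+,b}A$, i.e. $\epsilon = -L_{+,b;b^{-1/6}}^{-1}(V_{+,b}A)$. Since $A \in \tilde Z_{+,0;b^{-1/6}}$ and $V_{+,b}A \in Z_{+,\a;b^{-1/6}}$ for suitable $\a$ with norm $\lesssim b^{1/3}$ (the constant piece of $V_{+,b}$ gives $b^{4/3} \cdot e^r\la r\ra^{-(d-1)/2}$ which over $r \le b^{-1/6}$ has $Z_{+,0}$-norm $\lesssim b^{4/3}e^{b^{-1/6}}$... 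I need to be careful here — actually the growth $e^{b^{-1/6}}$ is $b$-superpolynomially large, so I would instead pair with the decaying solution $D$ or use the weight $e^{+r}$ so that $V_{+,b}A \sim b^{4/3}$ in the $Z_{+,0}$ norm, which is fine since the $Z_{+,0}$ weight is $e^r\la r\ra^{-(d-1)/2}$ and $A \sim \kappa_A e^r r^{-(d-1)/2}$, so $V_{+,b}A$ has $Z_{+,0;b^{-1/6}}$-norm $\lesssim b^{4/3} + b^{1/3}\sup_r \la r\ra^{-\cdots}e^{-(p-1)r} \lesssim b^{1/3}$), hence $\|\epsilon\|_{\tilde Z_{+,0;b^{-1/6}}} \lesssim b^{1/3}$ by \eqref{eqLpmbest1}, giving \eqref{eqestAb}.

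The main obstacle I anticipate is not conceptual but bookkeeping-intensive: tracking the $x_*$-uniformity of all constants (which hinges on the $x_*$-independence built into Lemma \ref{lemLpm}(2)) together with the interplay between the two contributions of $V_{\pm,b}$ — the globally small but slowly-decaying constant term $b^{4/3}r^2$ and the exponentially localized term of size $b^{1/3}$ — on weighted spaces with a range of weights $|\a| \le 10$, where one must confirm the Neumann smallness survives the worst weight loss. The special-case analysis at $d = 1, 2$ (logarithmic or linear behavior of $D, \frakE$ near the origin) and the role of the extra $\tilde\frakE$ correction term in \eqref{eqLpmbest3} require care but are inherited directly from Lemma \ref{lemLpm}.
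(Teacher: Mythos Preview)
Your overall strategy --- Neumann series for $(1 + L_{\pm;x_*}^{-1}V_{\pm,b})^{-1}$, then read off \eqref{eqLpmbest1}--\eqref{eqLpmbest3} from Lemma~\ref{lemLpm}, and build $A_b$ by solving $L_{+,b}\epsilon = -V_{+,b}A$ --- is exactly the paper's. The derivation of \eqref{eqLpmbest3} via $L_{-}u = f - V_{-,b}u$ with $u = L_{-,b}^{-1}f$ is also the paper's move.

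However, your smallness bookkeeping for the Neumann step has a genuine error. You write that the constant piece $b^{4/3}$ of $V_{\pm,b}$ ``costs one power of weight \dots\ i.e.\ a factor $b^{-(\a+1)/3}$ at worst, still leaving a net $b^{4/3 - (\a+1)/3} = o(1)$''. For $\a = 10$ this is $b^{-7/3}$, which blows up; the argument as written fails for $\a > 3$. The loss you describe is not one power of weight but $\a+1$ powers, and that is not what actually happens. The clean fix (which the paper uses in one line) is to observe that on $r \le b^{-1/3}$ one has $\langle r\rangle^2 \lesssim b^{-2/3} \le b^{-1}$, hence $b^{4/3} \lesssim b^{1/3}\langle r\rangle^{-2}$, and therefore
\[
|V_{\pm,b}(r)| \;\lesssim\; b^{1/3}\langle r\rangle^{-2} \qquad \text{uniformly on } [0,b^{-1/3}].
\]
This uniform $\langle r\rangle^{-2}$ decay is exactly the input that lets \eqref{eqLpmest1}--\eqref{eqLpmest2} close the loop: multiplication by $V_{\pm,b}$ sends $\tilde Z_{\pm,\beta;x_*} \to Z_{\pm,\beta-2;x_*}$ with norm $\lesssim b^{1/3}$, and then $L_{\pm;x_*}^{-1}$ returns to $\tilde Z_{\pm,\beta-1;x_*} \hookrightarrow \tilde Z_{\pm,\beta;x_*}$, giving $\|L_{\pm;x_*}^{-1}V_{\pm,b}\|_{\calL(\tilde Z_{\pm,\beta;x_*})} \lesssim b^{1/3}$ uniformly in $\beta$ and $x_*$. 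The same bound disposes of your hesitation about $A_b$: since $\|A\|_{\tilde Z_{+,0;\infty}} \lesssim 1$, one gets $\|V_{+,b}A\|_{Z_{+,-2}} \lesssim b^{1/3}$ (no $e^{b^{-1/6}}$ appears --- the $Z_{+,0}$ weight already divides out $e^r$), and then \eqref{eqLpmest2} plus the Neumann bound give \eqref{eqestAb} directly.
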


\begin{proof} We first require $s_{c;{\rm int}}^{(1)} < s_c^{(0)}$ from Proposition \ref{propQbasymp}, and might further shrink to ensure smallness of $b$. 
Then the estimate \eqref{eqestVpmb} follows Proposition \ref{propQbasymp} 
\begin{align}
\left| V_{\pm, b} (r) \right| &\le \frac{b^2 r^2}{4} +  \left| \frac{p+1}{2} |P_b|^{p-1} - \frac{p_0+1}{2} Q^{p_0 -1}\right| +  \left| \frac{p-1}{2} |P_b|^{p-3}\Re (P_b^2) - \frac{p_0-1}{2} Q^{p_0 -1}\right| \nonumber \\
&\lesssim b^\frac 43 + b^\frac 13 \la r \ra^{-\frac{(p-1)(d-1)}{2}} e^{-(p-1)r} \lesssim \la r \ra^{-2} b^\frac 13,\qquad \forall \, r \le b^{-\frac 13}. \nonumber
\end{align}
where we used $p - p_* \sim s_c \ll b^{10}$. Therefore with \eqref{eqLpmest1}-\eqref{eqLpmest2}, we have 
\be
\| L_{+;x_*}^{-1} V_{+, b} \|_{\calL \left( \tilde Z_{-, \a; x_*} \right) \cap \calL \left( \tilde Z_{+, 0; x_*} \right)} + \| L_{-}^{-1} V_{-, b} \|_{\calL \left( \tilde Z_{+, \a; x_*} \right)} \lesssim b^\frac 13,\quad \forall\,\a \in [0,10]. \label{eqnonoq}
\ee
This implies the well-definedness and estimates \eqref{eqLpmbest1}-\eqref{eqLpmbest2} for $L_{+,b;x_*}^{-1}$ and $L_{-,b}^{-1}$ when $b \ll 1$; 
and \eqref{eqLpmbest3} follows from observing that 
\bee
 u = L_{-,b}^{-1} f \quad \Rightarrow \quad L_- u = f - V_{-,b} u \quad \Rightarrow \quad u = L_-^{-1} \left( f - V_{-,b} u \right)
\eee
and applying \eqref{eqLpmest3}. Finally, since $A$ satisfies $L_+ A = 0$, we choose
$\epsilon = L_{+, b; b^{-\frac 13}}^{-1} \left( - V_{+, b} A\right)$, and the boundedness follows \eqref{eqnonoq} and $\| A \|_{\tilde Z_{+, 0; \infty}} \lesssim 1$ from Lemma \ref{lemLpm}. 
\end{proof}

\subsection{Scalar and system case: free approximation regime}\label{sec32}

Here we consider the scalar operator
\be 
  L_{\nu, E} = \pa_r^2 - E - \frac{\nu^2 - \frac 14}{r^2},  \label{eqdefLnuE}
\ee
by dropping $b$-dependence and soliton potentials from $\HH_{b, \nu}$. Their fundamental solutions can be constructed from modified Bessel functions $I_\nu$, $K_\nu$ with $\nu \ge 0$ (see for example \cite[Chapter 10]{MR2723248})
\bea 
 I_\nu(z) &=& \frac{(z/2)^\nu }{\pi^\frac 12 \Gamma (\nu + \frac 12)} \int_{-1}^1 (1-t^2
)^{\nu - \frac 12} e^{-zt} dt, \label{eqInu}\\
 K_\nu(z) &=& \frac{\pi^\frac 12 (z/2)^\nu }{ \Gamma (\nu + \frac 12)} \int_{1}^\infty (t^2-1
)^{\nu - \frac 12} e^{-zt} dt.\label{eqKnu}\eea
Both functions solve 
\[ z^2 \frac{d^2 w}{dz^2} + z \frac{dw}{dz} - (z^2 + \nu^2) w =0. \]
Thereafter, if we define 
\be \tilde{I}_{\nu, E}(z) = (\sqrt E z)^\frac 12 I_\nu( \sqrt E z),\quad  \tilde K_{\nu, E}(z) = (\sqrt E z)^\frac 12 K_\nu(\sqrt E z), \label{eqdeftildeIK} \ee
then they will satisfy
\be L_{\nu, E} \tilde{\mathscr{Z}}_{\nu, E} = 0 \label{eqLnuEeq} \ee
where $\tilde{\mathscr{Z}}$ denotes $\tilde{I}$ or $\tilde K$. 

We first record some properties of $I_\nu$ and $K_\nu$ with particular emphasis on qualitative estimates when $\nu$ large. Then we will construct fundamental solutions of \eqref{eqnu} near the origin and evaluate the large $\nu$ asymptotics up to $r = b^{-\frac 12}$.

\begin{lemma}[Properties of modified Bessel functions] \label{lemmodBessel}
For $\nu \ge 0$, the modified Bessel functions $I_\nu(z)$, $K_\nu(z)$ are analytic functions in $\{ z \in \CC - \{0\}: |{\rm arg} z| \le \frac \pi {4} \}$ with the following properties:
    \begin{enumerate}
        \item Asymptotics: for $|{\rm arg} z| \le \frac \pi 4 $, 
        \bee
        I_\nu(z) &=& \left\{\begin{array}{ll} \frac{z^\nu}{2^\nu \Gamma(\nu + 1)} + O(z^{\nu + 2}), & |z|\le 1; \\
        (2\pi z)^{-\frac 12} e^z \left( 1+ O(z^{-1}\right),& |z| \ge 1;
        \end{array}\right. \\
        K_\nu(z) &=& \left\{\begin{array}{ll} 2^{\nu - 1} \Gamma(\nu) z^{-\nu} (1 + o(1)), & |z|\le 1,\,\,\nu > 0; \\
        2^{\nu - 1} \Gamma(\nu) z^{-\nu}( 1 - \frac{z^2}{4(\nu-1)}  + o(z^2)), & |z|\le 1,\,\,\nu > 2; \\
        -\ln z + \ln 2 - \gamma + O(z), & |z| \le 1, \,\,\nu = 0;\\
        \pi^\frac 12 (2z)^{-\frac 12} e^{-z} \left( 1+ O(z^{-1}\right),& |z| \ge 1;
        \end{array}\right. 
        \eee
        where $\gamma$ is the Euler's constant, and the constants of residuals $O(...), o(...)$ may depend on $\nu$. Moreover, when $\nu = \frac 12$, 
        \be I_{\frac 12}(z) = (2\pi z)^{-\frac 12}(e^z - e^{-z}),\quad K_{\frac 12}(z) = \pi^\frac 12 (2z)^{-\frac 12} e^{-z}. \label{eqexplicitIK12}
        \ee
        \item Wronskian: 
        \be \mathcal{W}(I_\nu, K_\nu) = -z^{-1}. \label{eqWronskiIK}
        \ee
        \item Derivatives: let $\mathscr{Z}_\nu$ denote $I_\nu$ or $e^{\nu \pi i} K_\nu$,
        \be \mathscr{Z}_\nu'(z) =  \mathscr{Z}_{\nu - 1}(z) - \frac{\nu}{z}  \mathscr{Z}_\nu (z) =  \mathscr{Z}_{\nu + 1}(z) + \frac{\nu}{z}  \mathscr{Z}_\nu (z). \label{eqBesselderiv} \ee
        \item Tur\'an type estimate: for $|{\rm arg} z| \le \frac \pi {16} $,
        \bea \frac{I_\nu'(z)}{I_\nu(z)} = \sqrt{1 + \frac{\nu^2}{z^2} - \varphi_\nu(z)},\quad  |\varphi_\nu(z)| \le \frac{1}{\nu + 1},&\quad& \forall \nu \ge 0; \label{eqTuranI}\\
        \frac{K_\nu'(z)}{K_\nu(z)} = -\sqrt{1 + \frac{\nu^2}{z^2} - \phi_\nu(z)},\quad|\phi_\nu(z)| \le \frac{1}{\nu - 1},&\quad& \forall \nu \ge 3.  \label{eqTuranK}
        \eea
        Moreover, $|\varphi_\nu(x e^{i\theta})|$ and $|\phi_\nu(x e^{i\theta})|$ are strictly decreasing w.r.t. $x > 0$ when $|\theta| \le \frac \pi {16}$.
        \item Monotonicity and product estimates: For $|\theta| \le \frac{\pi}{16}$ and $\nu \ge 3$, the map $r \mapsto |I_\nu(e^{i\theta} r)|$ is increasing, and $r \mapsto |K_\nu(e^{i\theta} r)|$ is decreasing on $r \in (0, \infty)$. Moreover, we have
        \be
          |I_\nu(z) K_\nu(z)| \lesssim |z^2 + \nu^2|^{-\frac 12},\quad |\arg z | \le \frac{\pi}{16} 
          \label{eqIKproduct2} 
        \ee
        where the constant is independent of $\nu \ge 3$, and 
        \be \frac{I_\nu(z)}{I_\nu(z')} = \left(\frac{z}{z'}\right)^\nu e^{\eta_\nu(z, z')\cdot(z-z')},\quad {\rm with}\,\,|\eta_\nu(z, z')| \le 2, \label{eqIratio2}
        \ee
        for $|\arg z|, |\arg z'| \le \frac{\pi}{16}$ and $\nu \ge 3$. 
    \end{enumerate}
\end{lemma}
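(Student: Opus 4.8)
\textbf{Proof proposal for Lemma \ref{lemmodBessel}.}

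The plan is to treat each item by reduction to standard references on Bessel functions (e.g. \cite{MR2723248}) together with a handful of explicit integral-representation arguments, taking care that the constants in items (4) and (5) are genuinely independent of $\nu$. The analyticity claim in the sector $\{|\arg z| \le \frac\pi4\}$ is immediate from the integral representations \eqref{eqInu}--\eqref{eqKnu}: the integrands are entire in $z$ with absolutely convergent integrals uniformly on compact subsets of that sector, so $I_\nu, K_\nu$ extend analytically; $I_\nu$ is actually entire while $K_\nu$ has its branch cut away from this sector. For item (1), the small-$z$ expansions follow from expanding $e^{-zt}$ in the integrals (for $K_\nu$ one separates the $\nu>0$, $\nu>2$ and $\nu=0$ cases, the last requiring the standard logarithmic singularity from the $t$-integral near $t=1$), and the large-$z$ asymptotics are the classical Hankel asymptotic expansions, obtainable by Watson's lemma applied to \eqref{eqKnu} after the substitution $t = 1 + s/z$ and, for $I_\nu$, by combining $K_\nu$ with the connection formula or by a steepest-descent argument on \eqref{eqInu}. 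The explicit formulas at $\nu=\frac12$ in \eqref{eqexplicitIK12} are a direct evaluation of the integrals. Item (2) is the standard Wronskian identity: both $I_\nu,K_\nu$ solve the modified Bessel ODE whose Wronskian is $c/z$, and the constant $c=-1$ is pinned down by inserting the leading small-$z$ (or large-$z$) asymptotics from item (1). Item (3) is the pair of classical contiguous-relation/recurrence identities for $I_\nu$ and $K_\nu$ (note the sign convention absorbed into $e^{\nu\pi i}K_\nu$), provable by differentiating \eqref{eqInu}--\eqref{eqKnu} under the integral sign and integrating by parts in $t$.

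For item (4), the Tur\'an-type bounds, I would follow \cite{MR2685149}. The function $u(z) := I_\nu'(z)/I_\nu(z)$ satisfies the Riccati equation $u' + u^2 = 1 + \nu^2/z^2$ obtained from the modified Bessel ODE; writing $u = \sqrt{1 + \nu^2/z^2 - \varphi_\nu(z)}$ \emph{defines} $\varphi_\nu$, and the content is the uniform bound $|\varphi_\nu| \le 1/(\nu+1)$. On the positive real axis this is exactly the Tur\'an-type inequality for $I_\nu$ (equivalently a bound on $I_{\nu-1}I_{\nu+1}/I_\nu^2$), and the extension to $|\arg z| \le \frac\pi{16}$ together with monotonicity of $x\mapsto|\varphi_\nu(xe^{i\theta})|$ comes from the product representation / continued-fraction expansion of $I_\nu'/I_\nu$, whose terms are monotone in $x$; the same scheme with \eqref{eqKnu} and the recurrence for $K_\nu$ gives the $K_\nu$ statement for $\nu\ge3$ (the restriction $\nu\ge3$ ensures $1/(\nu-1)$ is a harmless small constant and that $K_\nu$ has no complications). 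Item (5) then follows from (4): $|I_\nu(e^{i\theta}r)|$ increasing and $|K_\nu(e^{i\theta}r)|$ decreasing in $r$ follow by differentiating $\log|I_\nu|, \log|K_\nu|$ along the ray and using $\Re(u) > 0$, resp.\ $\Re(K_\nu'/K_\nu) < 0$, which \eqref{eqTuranI}--\eqref{eqTuranK} deliver once $|\varphi_\nu|,|\phi_\nu| < 1$. The product estimate \eqref{eqIKproduct2} comes from the Wronskian identity $\mathcal W(I_\nu,K_\nu) = -z^{-1}$ rewritten as $I_\nu K_\nu\big(\tfrac{K_\nu'}{K_\nu} - \tfrac{I_\nu'}{I_\nu}\big) = -z^{-1}$, so $I_\nu K_\nu = -z^{-1}\big(\tfrac{K_\nu'}{K_\nu} - \tfrac{I_\nu'}{I_\nu}\big)^{-1}$, and item (4) gives $\tfrac{K_\nu'}{K_\nu} - \tfrac{I_\nu'}{I_\nu} = -2\sqrt{1+\nu^2/z^2}\,(1 + O(1/\nu))$ with the $O$ uniform, whence $|I_\nu K_\nu| \sim |z|^{-1}|1+\nu^2/z^2|^{-1/2} \sim |z^2+\nu^2|^{-1/2}$. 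Finally \eqref{eqIratio2} is just $\log I_\nu(z) - \log I_\nu(z') = \int_{z'}^{z} \tfrac{I_\nu'}{I_\nu}$ along a straight segment; splitting $\tfrac{I_\nu'}{I_\nu} = \tfrac\nu z + \big(\tfrac{I_\nu'}{I_\nu} - \tfrac\nu z\big)$, the first part integrates to $\nu\log(z/z')$ and the remainder is bounded by $2|z-z'|$ using the uniform bound $\big|\tfrac{I_\nu'}{I_\nu} - \tfrac\nu z\big| = \big|\sqrt{1+\nu^2/z^2-\varphi_\nu} - \tfrac\nu z\big| \le 2$ on the sector (here one checks the elementary inequality $|\sqrt{1+w^2-\varphi}-w|\le 2$ for $|w|\ge 0$, $|\varphi|\le 1$, $\arg$ in the relevant range), which defines $\eta_\nu(z,z')$ as the average of that remainder over the segment.

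The main obstacle I expect is item (4): obtaining the Tur\'an-type inequalities with the \emph{$\nu$-uniform} constants $1/(\nu+1)$ and $1/(\nu-1)$, and in particular propagating them off the real axis into the sector $|\arg z|\le\frac\pi{16}$ while preserving the monotonicity in $|z|$. On the real line these are known sharp inequalities, but the complex extension requires either a careful continued-fraction/series argument controlling each term's argument, or a Riccati-comparison argument showing that the solution $\varphi_\nu$ of the induced equation stays in the disc of radius $1/(\nu\pm1)$ throughout the sector — one must check the vector field points inward on the boundary of that disc. Everything else (items (1)--(3), (5)) is bookkeeping on top of classical formulas, with the only mild care being to track which error constants are allowed to depend on $\nu$ (items (1)--(2)) versus which must not (items (4)--(5)).
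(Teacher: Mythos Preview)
Your proposal is largely correct and follows the same architecture as the paper: items (1)--(3) are cited from standard references, and item (5) is derived from item (4) exactly as you describe (monotonicity via $\Re(I_\nu'/I_\nu)>0$, product bound via the Wronskian, ratio formula via integrating $I_\nu'/I_\nu - \nu/z$ along a segment).

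The one place where the paper is more concrete than your sketch is item (4). You correctly point to \cite{MR2685149} and flag the complex extension as the main obstacle, suggesting a Riccati-comparison or continued-fraction argument. The paper instead extracts \emph{explicit closed formulas} for $\varphi_\nu$ and $\phi_\nu$: for $I_\nu$ the Mittag--Leffler expansion over the zeros $j_{\nu,n}$ of $J_\nu$ gives $\varphi_\nu(z)=\sum_{n\ge1}\frac{4j_{\nu,n}^2}{(z^2+j_{\nu,n}^2)^2}$, and the Rayleigh identity $\sum_n j_{\nu,n}^{-2}=\frac{1}{4(\nu+1)}$ yields $\varphi_\nu(0)=\frac{1}{\nu+1}$ exactly; for $K_\nu$ one has the Nicholson-type integral $\phi_\nu(z)=-\frac{4}{\pi^2}\int_0^\infty\frac{(z^2+t^2+1)\gamma(t)}{(z^2+t^2)^2}\,dt$ with $\gamma(t)=t^{-1}(J_\nu^2+Y_\nu^2)^{-1}>0$. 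With these formulas in hand, the monotonicity of $x\mapsto|\varphi_\nu(xe^{i\theta})|$ and the uniform bound are obtained by a direct computation of $\partial_x|\varphi_\nu(xe^{i\theta})|^2$, checking that each term in the resulting sum/integral has argument in $(-\frac\pi2,\frac\pi2)$ when $|\theta|\le\frac{\pi}{16}$ (this is where the $\frac{\pi}{16}$ threshold enters, since one accumulates factors like $e^{\pm 2i\theta}$ and $(x^2e^{\pm2i\theta}+j_{\nu,n}^2)^{-k}$). This is cleaner than a Riccati barrier argument: the explicit series makes the value at $z=0$, the limit at $\infty$, and the sign of the radial derivative all transparent, with no need to propagate an invariant region for an ODE.
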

\begin{proof} (1), (2), (3) are standard from \cite[Chapter 10]{MR2723248}. The refined asymptotics for $K_\nu$ when $\nu > 2$ follows from \eqref{eqBesselderiv}, and the explicit formula \eqref{eqexplicitIK12} can be verified by the equation \eqref{eqLnuEeq} and their limit asymptotics.

(4) The proof is a slight generalization of \cite[Theorem 2.1, Theorem 3.1]{MR2685149}. 

For \eqref{eqTuranI}, using argument as \cite[Theorem 2.1]{MR2685149} with the complex-valued Mittag-Leffler expansion for $|\arg z| \le \frac \pi 4$ \cite[10.21.15, 10.27.6]{MR2723248}, we first obtain
 \cite[(2.2)]{MR2685149}
 \[ \left(1 + \frac{\nu^2}{z^2} \right) I_\nu(z)^2 - (I_\nu'(z))^2 = I_\nu(z)^2 \sum_{n \ge 1} \frac{4j_{\nu,n}^2}{(z^2 + j_{\nu, n}^2)^2},\quad |\arg z| \le \frac\pi {16}, \]
  where $j_{\nu, n} > 0$ are positive zeros of Bessel function $J_\nu(z)$. The representation formula in \eqref{eqTuranI} follows by setting $\varphi_\nu(z) = \sum_{n \ge 1} \frac{4j_{\nu,n}^2}{(z^2 + j_{\nu, n}^2)^2}$, where the sign of square root is taken according to the asymptotics of $I_\nu(z)$ near $z= 0$. The Rayleigh formula $\sum_{n \ge 1} 4j_{\nu,n}^{-2} = (\nu + 1)^{-1}$ \cite[15.51]{MR0010746} implies $\varphi_{\nu}(0) = \frac{1}{\nu+1}$, and obviously $\lim_{|z|\to \infty}\varphi_\nu(z) = 0$ with $|\arg z| \le \frac{\pi}{16}$. Finally, the uniform boundedness of $|\varphi_{\nu}(z)|$ and monotonicity follows the computation 
  \bee
  \pa_x |\varphi_\nu(xe^{i\theta})|^2 = -8x \Re \left[ e^{2i\theta} \left( \sum_{n \ge 1} \frac{4j_{\nu,n}^2}{(x^2 e^{2i\theta} + j_{\nu, n}^2)^3} \right)\left(\sum_{n \ge 1} \frac{4j_{\nu,n}^2}{(x^2 e^{-2i\theta} + j_{\nu, n}^2)^2} \right) \right] \le 0,
  \eee
  using $|\theta| \le \frac{\pi}{16}$ and $j_{\nu, n} > 0$. 

 For \eqref{eqTuranK}, using argument as \cite[Theorem 3.1]{MR2685149} with the  integral identity \cite[(3.5)]{MR2685149} extended to complex-valued case \cite[(1.4)]{MR448480}, we have
 \bee
   \phi_\nu(z) := 1 + \frac{\nu^2}{z^2} -\left( \frac{K_\nu'(z)}{K_\nu(z)}\right)^2 = -\frac{4}{\pi^2} \int_0^\infty \frac{(z^2 + t^2 +1 )\gamma(t)}{(z^2 + t^2)^2} dt,\quad |\arg z| \le \frac{\pi}{20}, \,\, \nu \ge 3,
 \eee
 where $\gamma(t) = \frac{t^{-1}}{J_\nu^2(t) + Y_\nu^2(t)} > 0$. From the asymptotics of $K_\nu(z)$ and $K'_\nu(z)$ using (1) and (3), we see $\lim_{|z|\to \infty} \phi_\nu(z) = 0$ and  $\lim_{|z|\to 0} \phi_\nu(z) = -(\nu - 1)^{-1}$ with $|\arg z| \le \frac{\pi}{16}$. Now the bound and monotonicity follows 
 \bee
  &&\pa_x|\phi_{\nu}(xe^{i\theta})|^2 \\
  &=& -\frac{64}{\pi^4} \Re \left[ \int_0^\infty \frac{(x^2 e^{-2i\theta} + t^2 + 1)\gamma(t) dt }{(x^2 e^{-2i\theta} + t^2 )^2} \cdot \int_0^\infty \frac{xe^{2i\theta} (x^2 e^{2i\theta} + t^2 + 2)\gamma(t) dt }{(x^2 e^{2i\theta} + t^2 )^3}   \right] \le 0
 \eee
 When $\theta \in[0, \frac{\pi}{16}]$, the non-positivity comes from $\arg \left( \frac{x^2 e^{-2i\theta} + t^2 + 1}{(x^2 e^{-2i\theta} + t^2 )^2} \right) \in [0,4\theta]$ and $\arg \left( \frac{xe^{2i\theta} (x^2 e^{2i\theta} + t^2 + 2)}{(x^2 e^{2i\theta} + t^2 )^3} \right) \in [-4\theta, 2\theta]$ and $|8\theta| \le \frac \pi 2$, and similarly when $\theta \in [-\frac{\pi}{16}, 0]$. 

 (5) This is a corollary of the Tur\'an inequalities. For the monotoniocity of $|I_\nu(e^{i\theta} r)|$, we apply \eqref{eqTuranI} to compute 
 \bea
  \frac 12\pa_r |I_\nu(e^{i\theta} r)|^2 &=& \Re \left[ e^{i\theta} I_\nu'(e^{i\theta} r) \overline{I_\nu(e^{i\theta} r)} \right] \nonumber \\
  &=& |I_\nu(e^{i\theta} r)|^2 \cdot \Re\left[  \sqrt{e^{2i\theta} + \nu^2  r^{-2} + e^{2i\theta}\varphi_\nu (e^{i\theta}r) }\right] > 0 \label{eqmonoInu}
 \eea
 with $|\varphi_\nu(e^{i\theta} r)| \le \frac 12$ and $\Re e^{2i\theta} > \frac{\sqrt 2}{2}$. Similarly we have the monotonic decreasing of $r \mapsto |K_\nu(e^{i\theta}r)|$ using \eqref{eqTuranK}. For \eqref{eqIKproduct2}, we further exploit the Wronskian \eqref{eqWronskiIK} to see
 \bee
    I_\nu(z) K_\nu(z) \left[\sqrt{1 + \frac{\nu^2}{z^2} - \varphi_\nu(z)} + \sqrt{1 + \frac{\nu^2}{z^2} - \phi_\nu(z)}  \right] = - \calW(I_\nu, K_\nu) = z^{-1},
 \eee
 which implies \eqref{eqIKproduct2} with the elementary estimate $|\sqrt{1+\nu^2 z^{-2}} - \sqrt{1 + \nu^2 z^{-2} + \e}| \le |\e| \cdot |1 + \nu^2 z^{-2}|^{-\frac 12}$ for $|\e| \le \frac 12$. 
 
 For \eqref{eqIratio2}, we integrate \eqref{eqTuranI} subtracted by $\frac{\nu}{z}$ along the segment from $z$ to $z'$, which yields the identity \eqref{eqIratio2} with 
 \[ \eta_{\nu}(z, z')\cdot (z-z') = \int_{z'}^z \left( \sqrt{1 + \frac{\nu^2}{\omega^2} - \varphi_\nu(\omega)} - \frac{\nu}{\omega} \right) d\omega \]
 The bound of $\eta_\nu$ follows 
 \[ \left|\sqrt{1 + \frac{\nu^2}{\omega^2} - \varphi_\nu(\omega)} - \frac{\nu}{\omega} \right| = \frac{|1-\varphi_\nu(\omega)|}{|\sqrt{1 + \frac{\nu^2}{\omega^2} - \varphi_\nu(\omega)} + \frac{\nu}{\omega}|} \le |1-\varphi_\nu(\omega)|^\frac 12 \le 2\]
 thanks to $|\arg (1-\varphi_\nu(\omega))|, |\arg(\frac{\nu}{\omega})| \le \frac \pi 4$ under the assumption. 

 
\end{proof}

\begin{proposition}[Construction of interior fundamental solution] \label{propintfund} For $d \ge 1$, there exist $s_{c;{\rm int}}^{(2)}(d) > 0$, $b_{\rm int} > 0$ and $\delta_{\rm int} > 0$ such that for any $0 < s_c \le s_{c;{\rm int}}^{(2)}$ and $b = b(s_c, d), \l, \nu$ satisfying
\[  0 \le b \le b_{\rm int}, \quad |\l| \le \delta_{\rm int}, \quad \nu \ge 0, \] 
there exist four fundamental solutions $\Psi_{j;b, \l,\nu}$ for $j = 1, 2, 3, 4$ with $r \in (0, \infty)$ solving \eqref{eqnu}
 satisfying
 \begin{enumerate}
     \item Asymptotics at $0$: when $r \to 0$,
     \begin{align} \pa_r^k \Psi_{j;b,\l,\nu}(r) &= \left(\frac{\nu + \frac 12}{r} \right)^k r^{\nu + \frac 12} \vec e_j
     + O(r^{\nu + \frac 52- k}), \quad j = 1, 2,\,\, k = 0, 1,\,\, \nu \ge 0, \label{eqPsiasympest1}\\
     \pa_r^k \Psi_{j;b,\l,\nu}(r)& = (2\nu)^{-1} \left(\frac{\frac 12 -\nu}{r} \right)^k r^{ \frac 12 -\nu} \vec e_{j-2}
     + o(r^{-\nu + \frac 12 - k}), \quad j = 3, 4,\,\,k= 0, 1, \,\,\nu > 0, \label{eqPsiasympest2}\\
     \pa_r^k \Psi_{j;b,\l,0}(r) &= \left| \begin{array}{ll}
     \left(- \ln (\sqrt{1- (-1)^j \l} r) + \ln 2 - \gamma \right)  r^\frac 12 \vec  e_{j-2} + O(r^{\frac 32}) & k = 0,\\
     \frac 12 \left(- 2 - \ln (\sqrt{1- (-1)^j \l} r) + \ln 2 - \gamma \right) r^{-\frac 12}\vec e_{j-2} + O(r^{\frac 12}) & k = 1,
     \end{array}\right. \,\, j = 3, 4. \label{eqPsiasympest3}
     \end{align}
     where $\vec e_1 = (1, 0)^\top$, $\vec e_2 = (0, 1)^\top$ are the basis of $\CC^2$ and $\gamma$ is the Euler's constant. In particular, $\{ \Psi_{j;b,\l,\nu} \}_{1 \le j \le 4}$ are linear independent functions. Moreover, when $\nu = \frac 12$, we can refine \eqref{eqPsiasympest2} to be
     \be
      \pa_r^k \Psi_{j;b,\l,\frac 12}(r) = \pa_r^k (1-r)\vec e_{j-2} + O(r^{2-k}),\quad j = 3,4,\,\, k = 0, 1.  \label{eqPsiasympest4}
     \ee
     \item Analyticity w.r.t. $\l$: for any $r > 0$ and $1 \le j \le 4$, $\Psi_{j;b,\l,\nu}(r)$ and $\pa_r \Psi_{j;b,\l,\nu}(r)$ are analytic w.r.t. $\l$, and $\{ \pa_\l^n \Psi_{j;b,\l,\nu} \}_{\substack{1 \le j \le 4, n \ge 0}}$ are linear independent functions. For $\nu \ge 0$ and $j = 1, 2$, we have the estimate
     \be 
     \pa_r^k \pa_\l^n \Psi_{j;b,\l,\nu}(r) = O(r^{\nu + \frac 52- k}), \quad {\rm as} \,\, r \to 0\,\, {\rm for}\,\, k = 0, 1,\,\, n \ge 1. \label{eqPsipalest}
     \ee
     \item Continuity w.r.t. $b$ at $b = 0$: 
     For any $n \ge 0$, $r_* > 0$ and $1 \le j \le 4$, supposing additionally $b \le \min \{ b_{\rm int}, r_*^{-2}\}$, we have that
     \be
        \sum_{k = 0}^1 \sup_{|\l| \le \delta_0} \big|\pa_r^k \pa_\l^n \Psi_{j;b,\l,\nu}(r_*) - \pa_r^k \pa_\l^n \Psi_{j;0,\l,\nu}(r_*)\big| \lesssim_{ n, \nu} b^{\frac 16}.\label{eqPsicontb}
     \ee 
     \item Almost free asymptotics at high angular momentum: for any $d \ge 2$, there exists $\nu_{\rm int}(d)$ such that when $\nu \ge \nu_{\rm int}(d)$, we have two fundamental solutions $\check \Psi_{j;b,\l,\nu}$ for $j = 1, 2$ such that 
     \be {\rm span}\{ \check \Psi_{j;b,\l,\nu}\}_{j = 1}^2 = {\rm span}\{\Psi_{j;b,\l,\nu}\}_{j = 1}^2, \label{eqlindepcheckPsi} \ee
     and are evaluated at $x_* = b^{-\frac 12}$ as 
     \bea 
     &&\left| \begin{array}{l}
      \check \Psi_{1;b,\l,\nu}(x_*) = \left( \vec e_1 + O_\RR^2 (b^\frac 12 + \nu^{-1}) \right) \ti_{\nu, 1+\l}(x_*)\\
      \pa_r \check \Psi_{1;b,\l,\nu}(x_*) = \left( \sqrt{1 + \l + b\nu^2} \vec e_1 + O_\RR^2 (b^\frac 12 + \nu^{-1}) \right) \ti_{\nu, 1+\l}(x_*)
     \end{array} \right.
      \label{eqinthighnu1}
\\
&&\left| \begin{array}{l}
      \check \Psi_{2;b,\l,\nu}(x_*) = \left( \vec e_2 + O_\RR^2 (b^\frac 12 + \nu^{-1}) \right) \ti_{\nu, 1-\l}(x_*)\\
      \pa_r \check \Psi_{2;b,\l,\nu}(x_*) = \left( \sqrt{1 - \l + b \nu^2} \vec e_2 + O_\RR^2 (b^\frac 12 + \nu^{-1}) \right) \ti_{\nu, 1-\l}(x_*)
     \end{array} \right.
      \label{eqinthighnu2}
\eea
where
the bounds of $O(b^\frac 12 + \nu^{-1})$ are uniform for $\l, \nu, b$ in the above range. 
\end{enumerate}
\end{proposition}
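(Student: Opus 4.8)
\emph{Proof proposal.} Write $\Phi=(\Phi^1,\Phi^2)^\top$. Then \eqref{eqnu} is equivalent to the coupled pair of scalar equations
\[
 L_{\nu,E_+}\Phi^1=-\Big(\tfrac{b^2r^2}{4}+W_{1,b}\Big)\Phi^1-e^{i\frac{br^2}{2}}W_{2,b}\,\Phi^2,\qquad
 L_{\nu,E_-}\Phi^2=-\Big(\tfrac{b^2r^2}{4}+W_{1,b}\Big)\Phi^2-e^{-i\frac{br^2}{2}}\overline{W_{2,b}}\,\Phi^1,
\]
with $E_\pm$ as in \eqref{eqdefEpm} and $L_{\nu,E}$ as in \eqref{eqdefLnuE}, whose fundamental pair is $\ti_{\nu,E},\tk_{\nu,E}$ from \eqref{eqdeftildeIK}. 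The plan is to treat the right-hand sides perturbatively: solve by a Volterra/contraction argument on a fixed interval $[0,r_0]$, $r_0$ independent of $(b,\l,\nu)$, using the scalar Green's function of $L_{\nu,E_\pm}$, and then extend each solution to $(0,\infty)$ by the standard existence theorem for linear ODEs with coefficients smooth on $(0,\infty)$; the labels ``interior'' and ``exterior'' only indicate where the good asymptotics are proved.

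For (1): I would obtain $\Psi_1,\Psi_2$ (resp.\ $\Psi_3,\Psi_4$) as the fixed point of the map sending $\Phi$ to a renormalization of $\ti_{\nu,E_+}\vec e_1$ or $\ti_{\nu,E_-}\vec e_2$ (resp.\ $\tk_{\nu,E_+}\vec e_1$ or $\tk_{\nu,E_-}\vec e_2$) plus the Duhamel term for $L_{\nu,E_\pm}$ applied to the above right-hand sides, in the weighted space $C^0_{r^{\nu+1/2}}$ (resp.\ $C^0_{r^{1/2-\nu}}$, with a logarithmic weight when $\nu=0$). On $[0,r_0]$ this Duhamel operator has small norm: $W_{1,b},W_{2,b},b^2r^2/4$ are bounded near $0$, while the Bessel Green kernel, via the $r^{\nu+1/2}$/$r^{1/2-\nu}$ asymptotics in Lemma \ref{lemmodBessel}(1), gains a factor $r^2\lesssim r_0^2$ per iteration — no smallness in $b$ is needed, only $r_0$ small. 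For $\nu\ge1$ the $\ti_{\nu,E}$-branch of the Duhamel term for $\Psi_3,\Psi_4$ must be integrated from $r_0$ instead of from $0$ to keep integrability, which is harmless since the singular solutions are defined only modulo the regular ones. This yields \eqref{eqPsiasympest1}--\eqref{eqPsiasympest3}; the refinement \eqref{eqPsiasympest4} for $\nu=\tfrac12$ and the logarithm at $\nu=0$ come from \eqref{eqexplicitIK12} and Lemma \ref{lemmodBessel}(1). Global extension plus the distinct leading exponents (and the logarithm separating $\Psi_3$ from $\Psi_4$ when $\nu=0$) give linear independence, and the relevant Wronskian is a nonzero constant by \eqref{eqWronskiIK}.

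For (2): since $E_\pm$ are affine in $\l$ and $I_\nu,K_\nu$ are analytic on the relevant sector, the Duhamel operator and the free data depend analytically on $\l$ with uniform contraction on $[0,r_0]$, so $\Psi_{j;b,\l,\nu}(r)$ and $\pa_r\Psi_{j;b,\l,\nu}(r)$ are analytic in $\l$; differentiating the fixed-point equation, each $\pa_\l$ improves the vanishing order at $0$ by $r^2$ (as $\pa_\l L_{\nu,E_\pm}=\mp1$), which is \eqref{eqPsipalest}. Linear independence of $\{\pa_\l^n\Psi_j\}_{j,n}$ follows algebraically: if $\sum_{j,n}a_{j,n}\pa_\l^n\Psi_j\equiv0$, apply $(\HH_{b,\nu}-\l)^N$ with $N=\max\{n:a_{j,n}\neq0\}$; using $(\HH_{b,\nu}-\l)\pa_\l^k\Psi_j=k\,\pa_\l^{k-1}\Psi_j$ and $(\HH_{b,\nu}-\l)\Psi_j=0$ this leaves only $N!\sum_j a_{j,N}\Psi_j$, forcing $a_{j,N}=0$, and one induces downward. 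For (3): I would solve on $[0,r_0]$ by the above contraction — which is continuous in $b$ at $b=0$ there because the scalar scheme is itself $b$-independent — and then propagate from $r_0$ to $r_*$ by a Gronwall/variation-of-constants estimate along the ODE coefficients of \eqref{eqnu}, which are continuous in $b$ at $b=0$ on $[r_0,r_*]$ with the rates supplied by Proposition \ref{propQbasymp}: the shift $E_\pm^{(b)}-E_\pm^{(0)}=\pm ibs_c$ is super-exponentially small by \eqref{eqbasymp}, the term $b^2r^2/4$ is $\lesssim b$, and $\|W_{1,b}-W_1\|,\|e^{ibr^2/2}W_{2,b}-W_2\|$ are $O(b^{1/3})$ for $r_*\lesssim b^{-1/3}$ by \eqref{eqQbasympint1}--\eqref{eqQbasympint2}, degrading to $O(b^{1/6})$ near $r_*\sim b^{-1/2}$ because of the $b^{1/6}e^{-2(b^{-1/2}-r)}$ term in \eqref{eqQbasymp5}; the same argument applied to the $\pa_\l^n$-differentiated ODE gives \eqref{eqPsicontb}.

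For (4), which is the delicate part: for $d\ge2$ and $\nu\ge\nu_{\rm int}(d)$ with $\nu_{\rm int}(d)$ large, I would run the same Volterra scheme but on all of $[0,x_*]$, $x_*=b^{-1/2}$, with free data the renormalizations of $\ti_{\nu,E_+}\vec e_1$ and $\ti_{\nu,E_-}\vec e_2$, measured in the norms $\sup_{(0,x_*]}|\Phi^1|/\ti_{\nu,E_+}$ and $\sup_{(0,x_*]}|\Phi^2|/\ti_{\nu,E_-}$. Monotonicity of $|\ti_{\nu,E}|,|\tk_{\nu,E}|$ and the ratio bound \eqref{eqIratio2} make the Duhamel operator bounded in these norms by $\lesssim\int_0^{x_*}|\ti_{\nu,E}\tk_{\nu,E}|(s)\,(|W_{1,b}|+|W_{2,b}|+\tfrac{b^2s^2}{4})(s)\,ds$, and the product estimate $|\ti_{\nu,E}(s)\tk_{\nu,E}(s)|\lesssim s(\nu^2+s^2)^{-1/2}\lesssim\min(1,s/\nu)$ from \eqref{eqIKproduct2} makes the potential part $\lesssim\nu^{-1}\int_0^\infty s(|W_{1,b}|+|W_{2,b}|)\,ds\lesssim_d\nu^{-1}$ (finite by Proposition \ref{propQbasymp}) and the dropped term $\lesssim\min(\nu^{-1}b^2x_*^4,\,b^2x_*^3)\lesssim\nu^{-1}+b^{1/2}$; hence for $\nu\ge\nu_{\rm int}(d)$ this is a global contraction and the correction, including the cross component which carries an extra $\nu^{-1}$ from the same estimate on $W_{2,b}$, is $O(\nu^{-1}+b^{1/2})$ relative to $\ti_{\nu,E_\pm}$. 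Taking $\check\Psi_1,\check\Psi_2$ to be the (renormalized) combinations of $\Psi_1,\Psi_2$ carrying these data gives \eqref{eqlindepcheckPsi}; evaluating at $x_*=b^{-1/2}$ and using $E_+=1+\l+ibs_c$ with $bs_c\ll1$ gives $\check\Psi_1(x_*)=(\vec e_1+O(\nu^{-1}+b^{1/2}))\ti_{\nu,1+\l}(x_*)$, and for the derivative, $\pa_r\ti_{\nu,E}(r)=\sqrt E\,(\tfrac{1}{2\sqrt E r}+\tfrac{I_\nu'}{I_\nu}(\sqrt E r))\,\ti_{\nu,E}(r)$ together with the Tur\'an identity \eqref{eqTuranI} at $r=x_*$, where $\nu^2/(\sqrt E x_*)^2\approx b\nu^2/E$, gives $\pa_r\check\Psi_1(x_*)=(\sqrt{1+\l+b\nu^2}\,\vec e_1+O(\nu^{-1}+b^{1/2}))\ti_{\nu,1+\l}(x_*)$ — that is \eqref{eqinthighnu1}, and symmetrically \eqref{eqinthighnu2} for $\check\Psi_2$ with $E_-$. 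The hard part throughout (4) is making these bounds uniform in $\nu$: this rests entirely on the Tur\'an-type product bound $|\ti_\nu\tk_\nu|\lesssim|z^2+\nu^2|^{-1/2}$ to overcome the $O_d(1)$ mass of the profile potential, and on correctly reading off the effective momentum $\sqrt{1+\l+b\nu^2}$ at the matching point $r=b^{-1/2}$, where the angular-momentum term and the dropped $b^2r^2/4$ term are precisely of the same order.
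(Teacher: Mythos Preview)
Your argument for (1)--(3) is correct and essentially the same as the paper's: contraction via the Bessel--Green kernel on a short interval $[0,r_0]$ (the paper takes $r_0=r_0(\nu)$, not independent of $\nu$, but this is irrelevant since $\nu$ is fixed in (1)--(3)), analyticity by differentiating the fixed-point equation, the Jordan-chain argument for linear independence, and $b$-continuity by Gronwall propagation using the $O(b^{1/6})$ potential differences from Proposition~\ref{propQbasymp}.

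For (4) you correctly identify the tools --- the Tur\'an product bound \eqref{eqIKproduct2}, the ratio bound \eqref{eqIratio2}, the contraction on $[0,b^{-1/2}]$, and the computation of the effective momentum $\sqrt{1+\l+b\nu^2}$ via \eqref{eqTuranI}. But the symmetric norms $\sup|\Phi^1|/\ti_{\nu,E_+}$, $\sup|\Phi^2|/\ti_{\nu,E_-}$ do \emph{not} make the off-diagonal coupling small uniformly in $\nu$. The ratio bound \eqref{eqIratio2} gives $\ti_{\nu,E_+}(s)/\ti_{\nu,E_-}(s)\sim\big(\tfrac{1+\l}{1-\l}\big)^{\nu/2}e^{\tilde\eta_\nu(s)s}$ with $|\tilde\eta_\nu|\le4|\l|$; the exponential factor is absorbed by $|W_{2,b}|\lesssim e^{-(p-1)s}$, but the prefactor $\big|\tfrac{1+\l}{1-\l}\big|^{\nu/2}\sim e^{\nu\Re\l}$ is unbounded in $\nu$ at fixed $|\l|\le\delta_{\rm int}$. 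Hence the off-diagonal map $C^0_{\ti_{\nu,E_+}}\to C^0_{\ti_{\nu,E_-}}$ has norm $\sim e^{\nu\Re\l}\nu^{-1}$, and the single integral bound you write for the full $2\times2$ Duhamel operator is false. Worse, even granting a fixed point, your cross-component bound $|\Phi^2|\lesssim\nu^{-1}e^{\nu\Re\l}\,\ti_{\nu,E_-}$ does not convert to the required $|\Phi^2(x_*)|\lesssim\nu^{-1}\,\ti_{\nu,E_+}(x_*)$ in \eqref{eqinthighnu1}: the remaining factor is $e^{|\tilde\eta_\nu|x_*}$, which can be as large as $e^{4\delta_{\rm int}b^{-1/2}}$.

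The paper's fix is to use the \emph{asymmetric} product space \eqref{eqdefXXintj}: for $\check\Psi_1$ the second component is measured in $\big(\tfrac{1+\l}{1-\l}\big)^{\nu/2}C^0_{\ti_{\nu,1-\l}\,e^{-5\delta_{\rm int}r}}$. The $\big(\tfrac{1+\l}{1-\l}\big)^{\nu/2}$ weight absorbs the power-of-$\nu$ prefactor, and the extra $e^{-5\delta_{\rm int}r}$ decay --- which the exponential localization of $W_{2,b}$ actually provides --- is exactly what is needed to beat $e^{|\tilde\eta_\nu|r}$ when converting back to $\ti_{\nu,1+\l}(x_*)$ at the end. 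Alternatively one could observe that the two off-diagonal factors cancel in the second iterate and run a two-step contraction, but you would still need the extra exponential decay to read off \eqref{eqinthighnu1}.
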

\begin{remark}[Interior solutions for 1D even case]\label{rmkchoice1Dint}
    For the case $d = 1$, $l = 0$, we have $\nu = -\frac 12$ and hence the equation \eqref{eqnu} is the same for $d = 1, l=1$ case where $\nu = \frac 12$ with $\{\Psi_{j;b,\l,\frac 12}\}_{1 \le j \le 4}$ as the fundamental solution family. Given the asymptotics \eqref{eqPsiasympest1}, we choose  
\be
\left| \begin{array}{ll}
    \Psi_{j;b,\l,-\frac 12} = \Psi_{j;b,\l,\frac 12} + \Psi_{j+2;b,\l,\frac 12} & j = 1, 2; \\
    \Psi_{j;b,\l,-\frac 12} = \Psi_{j;b,\l,\frac 12} & j = 3, 4;
\end{array}\right.
\ee
so that
\be  \Psi_{j;b,\l,-\frac 12}(0) = \vec e_j,\quad \pa_r \Psi_{j;b,\l,-\frac 12}(0) = \vec 0,\quad {\rm for}\,\, j = 1, 2. \label{eq1Devenbdry}\ee
\end{remark}
\begin{proof} We choose $\delta_{\rm int} = \frac{\min\{ 4/d, 1\}}{100} < \frac{\min\{ p-1, 1\}}{100}$, and assume $b_{\rm int} \le \frac{1}{100}$ to be determined in the proof of (4). Finally, we let $s_{c;{\rm int}}^{(2)} \le s_c^{(0)}$ from Proposition \ref{propQbasymp} small enough such that $b(s_c, d) \le b_{\rm int}$ for all $s_c \in (0, s_{c;{\rm int}}^{(2)})$. 

\underline{1. Inversion operators of $L_{\nu, E}$.}

Consider  $\ti_{\nu, E}$, $\tk_{\nu, E}$ from \eqref{eqdeftildeIK}, with $|E - 1| \le 50^{-1} \ll 1$. The Wronskian of $I_\nu, K_\nu$ \eqref{eqWronskiIK} implies
\[ \calW(\ti_{\nu, E}, \tk_{\nu, E}) = E.\]
So we define the inversion operators of $L_{\nu, E}$ through Duhamel formula as 
\bea T^{int, (0)}_{\nu, E} f &=& \ti_{\nu, E} \int_0^r \tk_{\nu, E} f E^{-1} ds - \tk_{\nu, E} \int_0^r \ti_{\nu, E} f E^{-1} ds,\\
\tilde T^{int, (0)}_{r_0;\nu, E} f &=& -\ti_{\nu, E} \int_r^{r_0} \tk_{\nu, E} f E^{-1} ds - \tk_{\nu, E} \int_0^r \ti_{\nu, E} f E^{-1} ds,
\eea
with $r_0 > 0$, and the derivatives w.r.t. $E$
\bee
   T^{int, (k)}_{\nu, E} f = \int_0^r \pa_E^k \left(  \ti_{\nu, E}(r) \tk_{\nu, E}(s) E^{-1}\right) f(s) ds - \int_0^r \pa_E^k \left(  \tk_{\nu, E}(r) \ti_{\nu, E}(s) E^{-1}\right) f(s) ds, \\
   \tilde T^{int, (k)}_{r_0; \nu, E} f = -\int_r^{r_0} \pa_E^k \left(  \ti_{\nu, E}(r) \tk_{\nu, E}(s) E^{-1}\right) f(s) ds - \int_0^r \pa_E^k \left(  \tk_{\nu, E}(r) \ti_{\nu, E}(s) E^{-1}\right) f(s) ds.
\eee
When taking $\pa_r$, the integrand is invariant
\be \begin{split}
 \pa_r T^{int, (k)}_{\nu, E} f &= \int_0^r \pa_r \pa_E^k \left(  \ti_{\nu, E}(r) \tk_{\nu, E}(s) E^{-1}\right) f ds - \int_0^r \pa_r \pa_E^k \left(  \tk_{\nu, E}(r) \ti_{\nu, E}(s) E^{-1}\right) f ds, \\
 \pa_r \tilde T^{int, (k)}_{r_0; \nu, E} f &=  -\int_r^{r_0}\pa_r \pa_E^k \left(  \ti_{\nu, E}(r) \tk_{\nu, E}(s) E^{-1}\right) f ds - \int_0^r \pa_r \pa_E^k \left(  \tk_{\nu, E}(r) \ti_{\nu, E}(s) E^{-1}\right) f ds. \end{split} \label{eqintegrandinv}
\ee
With $|E-1| < \frac 1{50}$, we can bound $\pa_E^n \tilde{\mathscr{Z}}_{\nu, E}$ from \eqref{eqdeftildeIK} and Lemma \ref{lemmodBessel} (1), (3)
\bee 
  |\pa_E^n \tilde{\mathscr{Z}}_{\nu, E}(r)| &\lesssim_n& \sum_{k=0}^n r^k \left| \left(\pa_z^k \tilde{\mathscr{Z}}_{\nu, E}\right) (\sqrt E r)\right| \lesssim_{n, \nu} \sum_{k=0}^n r^k \left|\tilde{\mathscr{Z}}_{\nu + k, E}(r)\right| \\
  &\lesssim_{n, \nu}& \left| \begin{array}{ll}
      r^{\nu + \frac 12} & \mathscr{Z} = I,\,\, \nu \ge 0, \\
      r^{-\nu+ \frac 12 } & \mathscr{Z} = K,\,\, \nu > 0, \\
      r^\frac 12|\ln r| & \mathscr{Z} = K,\,\,\nu = 0,
  \end{array}\right.\quad {\rm for}\,\, r \le 1,\,\, n \ge 0.
\eee
This implies the boundedness on $[0, r_0]$ with $r_0 \le 1$, 
\begin{align}
&\left| \begin{array}{ll}
\| T^{int,(k)}_{\nu, E}  \|_{\calL \left( Y^+_{\nu, r_0} \to \tilde Y^+_{\nu, r_0} \right)} + \|\tilde T^{int, (k)}_{r_0; \nu, E}  \|_{\calL \left( Y^-_{\nu, r_0} \to \tilde Y^-_{\nu, r_0}\right) } \lesssim_{\nu, k} r_0, & \nu > 0, \\
 \| T^{int,(k)}_{0, E}  \|_{\calL \left( Y^+_{0, r_0} \to \tilde Y^+_{0, r_0} \right)} + \|\tilde T^{int,(k)}_{r_0; 0, E} \|_{\calL \left( Y^-_{0, r_0} \to \tilde Y^-_{0, r_0}\right) } \lesssim_{k} r_0; &
\end{array}\right.
 \label{eqTintbdd1} \\
& \left| \begin{array}{ll}
\| T^{int,(k)}_{\nu, E}  \|_{\calL \left( Y^+_{\nu, r_0} \to Y^+_{\nu+2, r_0} \right)} \lesssim_{\nu, k} 1, & \nu \ge 0 \\
  \|\tilde T^{int, (k)}_{r_0; \nu, E}  \|_{\calL \left( Y^-_{\nu, r_0} \to Y^-_{\nu - 2, r_0}\right) } + \|\tilde T^{int, (k)}_{r_0; 1, E}  \|_{\calL \left( Y^-_{1, r_0} \to Y^-_{ - \frac 12, r_0}\right) } \lesssim_{\nu, k} 1,& \nu > 1, \\
  \| T^{int,(k)}_{\nu, E} \|_{\calL \left( Y^-_{\nu, r_0} \to \tilde Y^-_{\nu - 2, r_0}\right) } + \| T^{int,(k)}_{0, E} \|_{\calL \left( Y^-_{0, r_0} \to \tilde Y^-_{- \frac 32, r_0}\right) } \lesssim_{\nu, k} 1, & \nu \in (0, 1).
 \end{array}\right.\label{eqTintbdd2}
\end{align}
where the Banach spaces for $\nu \in \RR$ are defined as 
\bee
  \|f \|_{Y^\pm_{\nu, r_0}} &:=& \left| \begin{array}{ll}
     \| f \|_{ C^0_{r^{\frac 12}|\ln r| }([0, r_0]) }  & (\pm, \nu) = (-, 0)  \\
       \| f \|_{ C^0_{r^{\pm \nu + \frac 12} }([0, r_0]) } & {\rm otherwise}
  \end{array}\right.\\
  \|f \|_{\tilde Y^\pm_{\nu, r_0}} &:=& \left| \begin{array}{ll}
     \| f \|_{ C^0_{r^{\frac 12}|\ln r| }([0, r_0]) } + \| f' \|_{C^0_{r^{-\frac 12}|\ln r|}([0, r_0]) }  & (\pm, \nu) = (-, 0)  \\
       \| f \|_{ C^0_{r^{\pm \nu + \frac 12} }} + \| f' \|_{ C^0_{r^{\pm \nu - \frac 12} }([0, r_0]) } & {\rm otherwise}
  \end{array}\right.
\eee
Besides, it is apparent from the Duhamel definition that $L_{\nu, E} T^{int, (0)}_{\nu, E}f = f$, $L_{\nu, E}\tilde  T^{int,(0)}_{r_0;\nu, E} g = g$ for regular $f, g$ in the above space respectively.

\mbox{}

\underline{2. Proof of (1), (2) and (3).}

(1) Now we rewrite \eqref{eqnu} as 
\bee
 \left| \begin{array}{l}
   L_{\nu, 1+\l} \Phi^1 = -\left( \frac{b^2 r^2}{4} + W_{1, b} - ibs_c \right)\Phi^1 -   e^{i\frac{br^2}{2}} W_{2,b} \Phi^2  \\
    L_{\nu, 1-\l} \Phi^2 = -\left( \frac{b^2 r^2}{4} + W_{1, b} + ibs_c \right)\Phi^2 -   e^{-i\frac{br^2}{2}} \overline{W_{2,b}} \Phi^1  
 \end{array}\right.
\eee
and invert $L_{\nu, E}$ to define first the normalized solutions $\tilde \Psi_j$ as 
\be
  \left| \begin{array}{l}
    \tilde \Psi^1_j = S^{int}_j - T^{int,(0)}_{\nu, 1+\l} \left[ \left( \frac {b^2 r^2}{4} + W_{1, b} -  ibs_c\right)\tilde  \Psi^1_j +  e^{i\frac{br^2}{2}} W_{2,b} \tilde  \Psi^2_j \right] \\
    \tilde  \Psi^2_j = R^{int}_j - T^{int,(0)}_{\nu, 1-\l} \left[ \left( \frac {b^2 r^2}{4} + W_{1, b} + ibs_c \right)\tilde  \Psi^2_j +  e^{-i\frac{br^2}{2}}\overline{W_{2,b}} \tilde \Psi^1_j \right]
  \end{array}\right.\label{eqIint1}
  \ee
  for $ j =1, 2$ with $\nu \ge 0$, or $j = 3, 4$ with $\nu \in [0, 1)$; and 
  \be
  \left| \begin{array}{l}
    \tilde \Psi^1_j = S^{int}_j - \tilde T^{int}_{r_0; \nu, 1+\l} \left[ \left( \frac {b^2 r^2}{4} + W_{1, b} -  ibs_c\right)\tilde \Psi^1_j +  e^{i\frac{br^2}{2}} W_{2,b} \tilde \Psi^2_j \right] \\
    \tilde \Psi^2_j = R^{int}_j - \tilde T^{int}_{r_0; \nu, 1-\l} \left[ \left( \frac {b^2 r^2}{4} + W_{1, b} +  ibs_c \right)\tilde \Psi^2_j +  e^{-i\frac{br^2}{2}}\overline{W_{2,b}} \tilde \Psi^1_j \right]
  \end{array}\right.\label{eqIint2}
\ee
for $j = 3, 4$ with $\nu \ge 1$. Here $r_0 \le 1$ is small enough to be determined, and the source terms are 
\be
  S^{int}_j = \left| \begin{array}{ll}
      \ti_{\nu, 1+\l} & j = 1, \\
      \tk_{\nu, 1+\l} & j = 3, \\
      0 & j = 2, 4,
  \end{array}\right. \quad 
  R^{int}_j = \left| \begin{array}{ll}
      \ti_{\nu, 1-\l} & j = 2, \\
      \tk_{\nu, 1-\l} & j = 4, \\
      0 & j = 1, 3.
  \end{array}\right. \label{eqdefSRint}
\ee
We also define the target fundamental solution families as 
\be \Psi_j = c_{j;\l,\nu}^{-1} \tilde \Psi_j,\quad {\rm for }\,\, j = 1, 2, 3, 4,\label{eqnormalPsi}
\ee
with normalizing coefficients
\be c_{j;\l,\nu} = \left| \begin{array}{ll}
   \frac{(1 - (-1)^j \l)^{\frac \nu 2 + \frac 14}}{2^\nu \Gamma(\nu + 1)}  &  j = 1, 2; \\
    c_{j-2;\l,\nu}^{-1} & j = 3, 4.
\end{array}\right. \label{eqc12blnu} \ee


Recall from Proposition \ref{propQbasymp} that 
\be |W_{k,b}| \lesssim \la r \ra^{-\frac{(d-1)(p-1)}{2}} e^{-(p-1)r},\quad {\rm for\,\,}r \le b^{-\frac 12}, \label{eqpotbdd1} \ee
with the constant independent of $0 \le b \le b_{\rm int}$. 
The boundedness of $T^{int, (0)}_{\nu, E}$ and $\tilde T^{int}_{\nu, E}$ \eqref{eqTintbdd1} now implies that for $r_0(\nu) \ll 1$, the linear part in RHS of \eqref{eqIint1} and \eqref{eqIint2} are contraction mappings in the corresponding topology $Y^\pm_{\nu, r_0(\nu)}$. With the stronger boundedness in $\tilde Y^{\pm}_{\nu, r_0(\nu)}$, we obtain the existence of $\tilde \Psi_j$ satisfying
\bee
  \| \tilde \Psi_{j;b,\l,\nu}\|_{(\tilde Y^+_{\nu, r_0(\nu)})^2}
   + \| \tilde \Psi_{j+2;b,\l,\nu}\|_{(\tilde Y^-_{\nu, r_0(\nu)})^2} \lesssim_\nu 1,\quad j = 1, 2.
\eee
Now the asymptotics of $\ti_{\nu, 1+\l}$, $\tk_{\nu, 1-\l}$ given by Lemma \ref{lemmodBessel} (1) and \eqref{eqdeftildeIK}, the normalizing coefficients \eqref{eqc12blnu}, the smoothing estimate of Duhamel terms \eqref{eqTintbdd2} plus the above boundedness implies the asymptotics of $\Phi_j$ and $\pa_r \Phi_j$ \eqref{eqPsiasympest1}-\eqref{eqPsiasympest3}. The refinement \eqref{eqPsiasympest4} follows from the refined asymptotics of $K_{\frac 12}$ from the explicit formula \eqref{eqexplicitIK12}.

\mbox{}

(2) We first consider $j = 1, 2$. Differentiate \eqref{eqIint1} w.r.t. $\l$, 
\be
 \left| \begin{array}{l}
    \pa_\l^n \tilde \Psi^1_j =  \pa_\l^n  S^{int}_j - \sum_{j=0}^n \binom{n}{j} T^{int,(j)}_{\nu, 1+\l} \left[ \left( \frac {b^2 r^2}{4} + W_{1, b} -  ibs_c \right) \pa_\l^{n-j}\tilde \Psi^1_j +  e^{i\frac{br^2}{2}} W_{2,b}\pa_\l^{n-j} \tilde \Psi^2_j \right] \\
    \pa_\l^n\tilde \Psi^2_j =  \pa_\l^n  R^{int}_j - \sum_{j=0}^n (-1)^j \binom{n}{j} T^{int,(j)}_{\nu, 1-\l} \left[ \left( \frac {b^2 r^2}{4} + W_{1, b}+ ibs_c \right) \pa_\l^{n-j} \tilde \Psi^2_j +  e^{-i\frac{br^2}{2}}\overline{W_{2,b}} \pa_\l^{n-j}\tilde \Psi^1_j \right]
  \end{array}\right.
  \label{eqIint3}
\ee
Notice that the linear operator acting on $(\pa_\l^n \tilde \Psi^1_j, \pa_\l^n \tilde \Psi^2_j)$ are the same as $n = 0$ case, so by inverting the contraction part and applying the boundedness of $T^{int,(j)}_{\nu, E}$ \eqref{eqTintbdd1}, we can inductively show  
\be \| \pa_\l^n \tilde \Psi_{j;b,\l,\nu}\|_{\left(\tilde Y^+_{\nu, r_0(\nu)}\right)^2} \lesssim_{\nu, n} 1, \quad j = 1, 2,\,\,\nu \ge 0. 
\label{eqPsipalEest}
\ee
The vanishing of $T^{int,(k)}_{\nu, E}$ at $r = 0$ indicates  
\bee
 \pa_r^k \pa_\l^n \tilde \Psi_{j;b,\l,\nu} = \pa_r^k \pa_\l^n \left( \begin{array}{c}
      S^{int}_{j;b,\l,\nu} \\
      R^{int}_{j;b,\l,\nu} 
 \end{array} \right) + O(r^{\nu + \frac 52 - k}) 
 = \pa_r^k \pa_\l^n \left( c_{j;\l,\nu} r^{\nu + \frac 12} \right)\vec e_j + O(r^{\nu + \frac 52 - k})
\eee
where the second equivalence used $\pa_z^n I_\nu(z) = \frac{\nu (\nu - 1)\cdots (\nu - n + 1)}{z^n} I_\nu(z) ( 1+ O(z^2))$ as $z \to 0$ from \eqref{eqBesselderiv}, namely the leading order behaves in the same way as $\frac{z^{\nu}}{2^\nu \Gamma(\nu+ 1)}$ when taking derivatives. This implies \eqref{eqPsipalest} by Leibniz rule.

Moreover, since the potentials are bounded in any finite interval away from $r = 0$, all four solutions $\Psi_j$ can be uniquely extended to $r \in (0,\infty)$,  and the solution still satisfies \eqref{eqIint1}, \eqref{eqIint2} and hence \eqref{eqIint3}. In particular, on $[0, b_{\rm int}^{-\frac 12}]$, the potentials are uniformly bounded w.r.t. $b, \l$ for fixed $\nu$, together with the uniformly bounded initial data at $r = r_0(\nu)$ \eqref{eqPsipalEest}, we have uniform boundedness of $\Psi_{j;b,\l,\nu}$ on $[r_0(\nu), b_{\rm int}^{-\frac 12}]$
\be \| \pa_\l^n \Psi_{j;b,\l,\nu}\|_{\left(C^0([r_0(\nu), b_{\rm int}^{-\frac 12}])\right)^2} \lesssim_{\nu, n, b_{\rm int}} 1, \quad {\rm for\,\,} j = 1, 2,\,\,\nu \ge 0,\,\,n \ge 0. 
\label{eqPsipalEest2}
\ee

Next, for $j = 3, 4$, the existence and boundedness of $\pa_\l^n \tilde \Psi_{j;b,\l,\nu}$ in $\tilde Y^-_{\nu, r_0(\nu)}$ for any $n \ge 0$ and extension to $r \in (0,\infty)$ follow the same argument by applying boundedness of $\tilde T^{int, (k)}_{r_0(\nu);\nu,E}$ \eqref{eqTintbdd1}. In particular, $\Psi_{j;b,\l,\nu}$ are analytic w.r.t. $\l$ for $j = 1, 2, 3, 4$. 

Finally, differentiate \eqref{eqnu} implies $(\HH_{b,\nu} - \l)\pa_\l^n \Psi_{j;b,\l,\nu} = n\pa_\l^{n-1}\Psi_{j;b,\l,\nu}$ for $n \ge 1$ and $1 \le j \le 4$. Hence the linear independence of $\{ \pa_\l^n \Psi_{j;b,\l,\nu} \}_{n\ge 0, 1 \le j \le 4}$ follows that of $\{ \Psi_{j;b,\l,\nu} \}_{1 \le j \le 4}$.

\mbox{}

(3) By the analyticity of $c_{j;\l,\nu}$ \eqref{eqc12blnu}, it suffices to prove \eqref{eqPsicontb} for the normalized solution $\tilde \Psi_{j;b,\l,\nu}$, $1 \le j \le 4$ as \eqref{eqnormalPsi}. We consider for instance $j = 1$. Take difference of \eqref{eqIint3} and the $b=0$ case,
\be
\left| \begin{array}{l}
    \pa_\l^n \left(\tilde \Psi^1_{1;b} - \tilde \Psi^1_{1;0}\right) =  - \sum_{j=0}^n \binom{n}{j} T^{int,(j)}_{\nu, 1+\l} \bigg[ W_{1} \pa_\l^{n-j}\left( \tilde \Psi^1_{1;b} - \tilde \Psi^1_{1;0} \right)  + W_2 \pa_\l^{n-j} \left( \tilde \Psi^2_{1;b} - \tilde \Psi^2_{1;0} \right) \\
    \qquad \qquad + \left( \frac{b^2r^2}{4}  + W_{1, b} - W_1 - ibs_c \right)\tilde \Psi^1_{1;b}
    +  \left(e^{i\frac{br^2}{2}} W_{2,b} - W_2\right) \tilde \Psi^2_{1;b} \bigg] \\
    \pa_\l^n \left(\tilde \Psi^2_{1;b} - \tilde \Psi^2_{1;0}\right) = - \sum_{j=0}^n (-1)^j \binom{n}{j} T^{int,(j)}_{\nu, 1-\l} \bigg[ W_{1} \pa_\l^{n-j}\left( \tilde \Psi^2_{1;b} - \tilde \Psi^2_{1;0} \right)  + W_2 \pa_\l^{n-j} \left( \tilde \Psi^1_{1;b} - \tilde \Psi^1_{1;0} \right) \\
    \qquad \qquad + \left( \frac{b^2r^2}{4}  + W_{1, b} - W_1 + ibs_c \right)\tilde \Psi^2_{1;b}
    +  \left(e^{-i\frac{br^2}{2}} \overline{W_{2,b}} - W_2\right) \tilde \Psi^1_{1;b} \bigg]
  \end{array}\right.
  \label{eqIint4}
\ee
Recall smallness from \eqref{eqQbasymp5}-\eqref{eqQbasymp6} that for $0 < b \le b_{\rm int}$,
\be
  \sup_{r \in[0,b^{-\frac 12}]} \left[ \left| \frac{b^2r^2}{4}  + W_{1, b} - W_1 \pm ibs_c \right|  + \left|e^{i\frac{br^2}{2}} W_{2,b} - W_2\right| \right]  \le b^{\frac 16} C_{b_{\rm int}}.  \label{eqintpotest}
\ee
Then on $[0, r_0(\nu)]$, similar to the proof of \eqref{eqPsipalEest}, we invert the contraction linear operator on $\pa_\l^n \left(\tilde \Psi^k_{1;b} - \tilde \Psi^k_{1;0}\right)$, apply \eqref{eqintpotest} and \eqref{eqPsipalEest}, and induct on $n\ge 0$ to see
\be
 \left \| \pa_\l^n  \left( \tilde \Psi_{j;b,\l,\nu} - \tilde \Psi_{j;0,\l,\nu} \right) \right\|_{\left(\tilde Y^+_{\nu, r_0(\nu)}\right)^2} \lesssim_{\nu, n, b_{\rm int}} b^\frac 16, \quad j = 1, 2,\,\,\nu \ge 0. 
\label{eqPsipalEest3}
\ee
On $[r_0(\nu), r_*]$ with $b \le \min \{ b_{\rm int}, r_*^{-2}\}$, we take the absolute value in \eqref{eqIint4}, and brutally estimate using the $C^0$-boundedness of $\pa_E^n \ti_{\nu, 1+\l}$, $\pa_E^n \tk_{\nu, 1+\l}$ on this region and \eqref{eqintpotest}
\bee
 \left| \pa_\l^n \left(\tilde \Psi_{1;b} - \tilde \Psi_{1;0}\right) \right|
 \le C_{\nu, n, b_{\rm int}} \int_{r_0(\nu)}^r \sum_{m=0}^n \left[ \left| \pa_\l^m \left(\tilde \Psi_{1;b} - \tilde \Psi_{1;0}\right) \right| + b^\frac 16 C_{b_{\rm int}}   \left| \pa_\l^m \tilde \Psi_{1;b} \right| \right] ds
\eee
Now summing up the estimate for $0 \le n \le N$ and invoking \eqref{eqPsipalEest}, \eqref{eqPsipalEest2} and \eqref{eqPsipalEest3}, we have 
\bee
  \sum_{n=0}^N \left| \pa_\l^n \left(\tilde \Psi_{1;b} - \tilde \Psi_{1;0}\right) \right|
 \lesssim_{\nu, N, b_{\rm int}} b^\frac 16 + \int_{r_0(\nu)}^r  \sum_{n=0}^N \left| \pa_\l^n \left(\tilde \Psi_{1;b} - \tilde \Psi_{1;0}\right) \right| ds,
\eee
which inductively yields 
\be
  \| \pa_\l^N \left(\tilde \Psi_{1;b} - \tilde \Psi_{1;0}\right) \|_{\left( C^0([r_0(\nu), r_*]) \right)^2} \lesssim_{\nu, N, b_{\rm int}} b^{\frac 16}, \quad j = 1, 2, \,\,\nu \ge 0,\,\,N \ge 0 \label{eqPsipalEest4}
\ee
via Gronwall's inequality. The $C^1$-control follows taking $\pa_r$ in \eqref{eqIint4}, and use the above $C^0$-control with the structure \eqref{eqintegrandinv}. This $C^1$-control and \eqref{eqPsipalEest4} imply \eqref{eqPsicontb}. 

\mbox{}

\underline{3. Proof of (4).}

Let $x_* = b^{-\frac 12}$. Notice that $|\l| \le \delta_{\rm int} \le 100^{-1}$ indicates $|\arg (1\pm\l)| \le \frac{\pi}{16}$. We also assume $\nu \ge 3$ in this part. 

Define $\check \Psi_{j;b,\l,\nu}$ for $j = 1, 2$
as 
\be
  \left| \begin{array}{l}
    \check \Psi^1_j = S^{int}_j - \tilde T^{int}_{x_*;\nu, 1+\l} \left[ \left( \frac {b^2 r^2}{4} + W_{1, b} -  ibs_c\right)\check  \Psi^1_j +  e^{i\frac{br^2}{2}} W_{2,b} \check  \Psi^2_j \right] \\
    \check  \Psi^2_j = R^{int}_j - \tilde T^{int}_{x_*;\nu, 1-\l} \left[ \left( \frac {b^2 r^2}{4} + W_{1, b} + ibs_c \right)\check  \Psi^2_j +  e^{-i\frac{br^2}{2}}\overline{W_{2,b}} \check \Psi^1_j \right],
  \end{array}\right.\label{eqIint5} 
  \ee
  where the source terms are from \eqref{eqdefSRint}.
We will show these linear maps for $j= 1, 2$ are
linear contractions in respectively 
\be 
\XX^{\rm int}_j := \left| \begin{array}{ll}
    C^0_{\ti_{\nu, 1+\l}}([0, x_*]) \times (\frac{1+\l}{1-\l})^{\frac \nu 2} C^0_{\ti_{\nu, 1-\l} \cdot e^{-5\delta_{\rm int} r}}([0, x_*])  & j = 1; \\
    (\frac{1-\l}{1+\l})^{\frac \nu 2} C^0_{\ti_{\nu, 1+\l} \cdot e^{-5\delta_{\rm int} r}}([0, x_*]) \times C^0_{\ti_{\nu, 1-\l}}([0, x_*])  & j = 2;
\end{array}\right. \label{eqdefXXintj}
\ee 
 with $\nu \ge \nu_{\rm int}$ large enough and $b \le b_{\rm int}$ small enough. For simplicity, we only prove the $j = 1$ case, namely \eqref{eqinthighnu1}.

Firstly, with monotonic increasing of $I_{\nu, 1+\l}$ (Lemma \ref{lemmodBessel} (5)), \eqref{eqIKproduct2} and \eqref{eqpotbdd1}, we estimate
\bee
   && \left| \tilde T^{int}_{x_*;\nu, 1+\l} \left[ \left( \frac {b^2 r^2}{4} + W_{1, b} -  ibs_c\right)\check \Psi^1_1 \right]\right|/  \| \check \Psi^1_1 \|_{C^0_{\ti_{\nu, 1+\l}}([0, b^{-\frac 12}]) } \\
   &\lesssim& |\ti_{\nu, 1+\l}(r)|  \int_r^{x_*} \left| \tk_{\nu, 1+\l}(s) \ti_{\nu, 1+\l}(s) \cdot \left( b^2 \la s\ra^2 + e^{-(p-1)s} \right)  \right| ds  \\
   &+& |\tk_{\nu, 1+\l}(r)| \int_0^r \left| \ti_{\nu, 1+\l}(s)^2 \left( b^2 \la s\ra^2 + e^{-(p-1)s} \right)\right| ds \\
   &\lesssim&  |\ti_{\nu, 1+\l}(r)|  \left(  \int_r^{x_*} (b^2 \la s\ra^2 + \frac {se^{-(p-1)s}}{\nu} ) ds + \frac 1r \int_0^r b^2 \la s\ra^2 s ds + \frac{1}{\nu} \int_0^r s e^{-(p-1)s} ds \right)  \\
   &\lesssim& (b^\frac 12 + \nu^{-1})|\ti_{\nu, 1+\l}(r)|.
   \eee
   Notice that $|I_{\nu, E_1} I_{\nu, E_2}| e^{-c\delta_{\rm int} r}$ with $0 \le c \le 50$ is monotonically increasing since $\delta_{\rm int} \le 100^{-1}$ and $\pa_r |I_{\nu, E}(r)|\ge \frac 12  |I_{\nu, E}(r)|$ from a similar computation as \eqref{eqmonoInu} using \eqref{eqTuranI}. We can similarly obtain
   \[  \left\| \tilde T^{int}_{x_*;\nu, 1-\l} \circ \left( \frac {b^2 r^2}{4} + W_{1, b} +  ibs_c\right)\right\|_{\calL\left(C^0_{\ti_{\nu, 1-\l} e^{-5\delta_{\rm int} r}}([0, b^{-\frac 12}])\right) }  \lesssim b^\frac 12 + \nu^{-1}.\]

For the crossing term, we recall \eqref{eqIratio2} to define 
\be \tilde \eta_{\nu}(r) := \eta_\nu(\sqrt{1+\l} r, \sqrt{1-\l}r) \cdot (\sqrt{1+\l} - \sqrt{1-\l}).
\ee
The bound $|\eta_{\nu}| \le 2$ yields
\be 
|\tilde \eta_{\nu}(r)| \le 4|\l| \le 4\delta_{\rm int} \le \frac{p-1}{4}. \label{eqesttildeetanu}
\ee
Therefore,
\bee
   &&\left| T^{int}_{x_*;\nu, 1-\l} \left( e^{-i\frac{br^2}{2}}\overline{W_{2, b}} \check \Psi^1_1 \right) \right| / \| \check \Psi^1_1 \|_{C^0_{\ti_{\nu, 1+\l}}([0, b^{-\frac 12}])}\\
   &\lesssim& |\ti_{\nu, 1-\l}| \left[ \int_r^{x_*} \left| \tk_{\nu, 1-\l} \ti_{\nu, 1+\l} \cdot e^{-(p-1)s}\right| ds  +  |K_{\nu, 1-\l} I_{\nu, 1+\l} e^{-10\delta_{\rm int}r}| \int_0^r s e^{-\frac{p-1}{2}s}ds\right] \\
   &\lesssim& |\ti_{\nu, 1-\l}| \bigg[ \int_r^{x_*}  \left( \frac{\sqrt{ 1+\l}s }{\sqrt{ 1-\l} s}\right)^{\nu + \frac 12}\frac{s}{\nu}  e^{-(p-1 - \tilde \eta_{\nu}(s)) s} ds  \\
   &&\qquad + \left( \frac{\sqrt{ 1+\l}r }{\sqrt{ 1-\l} r}\right)^{\nu + \frac 12} e^{-(10\delta_{\rm int}- \tilde \eta_{\nu}(r))r}\frac 1\nu \int_0^r s e^{-\frac{p-1}{2}s}ds \bigg] \\
   &\lesssim& \nu^{-1} \left( \frac{ 1+\l}{ 1-\l}\right)^{\frac \nu 2} \cdot e^{-5\delta_{\rm int} r} |\ti_{\nu, 1-\l}(r)|.
\eee
We stress that the constants above are dependent on $d$ (for boundedness of $W_{j,b}$) but independent of $\l, \nu, b$. Lastly, the crossing term in the first equation of \eqref{eqIint5} is easier to bound which we omit. 

Consequently, \eqref{eqIint5} defines a contraction mapping in $\XX^{\rm int}_1$ as \eqref{eqdefXXintj}. Since \eqref{eqIratio2} and \eqref{eqesttildeetanu} indicates that $\left( \frac{1+\l}{1-\l}\right)^{\frac \nu 2} \ti_{\nu, 1-\l}(r) e^{-5\delta_{\rm int}r} \lesssim \ti_{\nu, 1+\l}(r)$, the asymptotics of $\Phi_{1;b,\l,\nu}$ in \eqref{eqinthighnu1} follows when $b_{\rm int}$ small enough and $\nu_{\rm int}$ large enough. The asymptotics of derivative follows the Duhamel structure of \eqref{eqIint5} and Tur\'an inequality \eqref{eqTuranI} so that for $\tilde{\mathscr{Z}} = \tilde I$ or $\tilde K$, 
\[  
\pa_r \ti_{\nu, E}(r) = \left[ \pm (2r)^{-1} + \sqrt{E + \frac{\nu^2}{r^2} - O(\nu^{-1})  } \right] \ti_{\nu, E}(r) = \sqrt{E + \frac{\nu^2}{r^2}} \ti_{\nu, E}(r) (1 + O(\nu^{-1})),
\]
and similarly for $\tk_{\nu, E}$. That conclude the proof of $j = 1$ case \eqref{eqinthighnu1}, and the $j = 2$ case \eqref{eqinthighnu2} can be obtained in the same way. 

Finally, $\check \Psi_{j;b,\l,\nu} \subset  {\rm span}\{\Psi_{j;b,\l,\nu}\}_{j = 1}^2$ thanks to the singular, linear independent asymptotics of $\Psi_{3;b,\l,\nu}, \Psi_{4;b,\l,\nu}$ as $r \to 0$ \eqref{eqPsiasympest2}. The boundary values \eqref{eqinthighnu1}-\eqref{eqinthighnu2} indicate their linear independence, leading to \eqref{eqlindepcheckPsi}.  
\end{proof}


\section{Exterior ODE analysis: Scalar case}\label{sec4}

\subsection{Low spherical class}\label{sec41}
In this subsection, we consider the scalar equation 
\be \left( \pa_r^2 - E + \frac{b^2 r^2}{4}\right) \psi = 0, \label{eqscalar} \ee
with $|E - 1| \ll 1$. It serves as approximation of \eqref{eqnu} when $r \gg 1$ so that we can drop the potentials from $Q_b$ and angular momentum when $\nu$ is not too large. We will construct approximate fundamental solutions for \eqref{eqscalar} using Airy function with $\CC$-variable, which presents a refined version of the WKB analysis in Perelman's work \cite[Appendix 4]{MR1852922}. Then we can construct the inversion operator in suitable function spaces, and prove refined asymptotics of $Q_b$ in Proposition \ref{propQbasympref} as an application. 

\subsubsection{WKB approximate solution}

Recall the Airy function $\Ai: \CC \to \CC$ is an entire function defined by
\be \Ai(z) = \frac{\sqrt{3}}{2\pi} \int_0^\infty \exp\left(-\frac{t^3}{3} - \frac{z^3}{3t^3} \right)dt. \label{eqdefAi} \ee
We also define the other Airy function $\mathbf{Bi}$ as \cite[9.2.10]{MR2723248}
\be \mathbf{Bi}(z) = e^{-\frac{\pi i}{6}} \Ai \left(z e^{-\frac{2 \pi i}{3}} \right) + e^{\frac{\pi i}{6}} \Ai \left(z e^{\frac{2 \pi i}{3}} \right).  \label{eqdefBi} \ee
\begin{lemma}[Properties of Airy function] \label{lemAiry1} Let $\zeta = \frac 23 z^\frac 32$.
\begin{enumerate}
    \item Asymptotics: for $|z| \ge 1$, 
    \bee
      \Ai(z) &=& \frac{e^{-\zeta}}{2\sqrt{\pi}z^{\frac 14}}\left(1 + O(\zeta^{-1})\right),\quad |\arg z| \le \frac 56\pi; \\
      \Ai(-z) &=& \frac{1}{\sqrt{\pi} z^{\frac 14}} \left( \cos(\zeta - \frac 14 \pi) + \frac{5}{72 \zeta}\sin(\zeta - \frac 14\pi) \right)\left(1 + O(\zeta^{-1})\right),\quad |\arg z| \le \frac 13\pi; \\
    \Ai'(z) &=& - \frac{z^{\frac 14}e^{-\zeta}}{2\sqrt{\pi}}\left(1 + O(\zeta^{-1})\right),\quad |\arg z| \le \frac 56\pi; \\
      \Ai'(-z) &=& \frac{z^{\frac 14}}{\sqrt{\pi}} \left( \sin(\zeta - \frac 14 \pi) - \frac{5}{72 \zeta}\cos(\zeta - \frac 14\pi) \right)\left(1 + O(\zeta^{-1})\right),\quad |\arg z| \le \frac 13\pi.
    \eee
    \item Solutions of Airy equation, connection formula and Wronskian: $\Ai(z)$, $\Ai(e^{\mp \frac 23\pi i}z)$ forms the $2$-dim solution space of Airy equation $\frac{d^2}{dz^2}w = zw$. They have the following relations:
    \bee
      \Ai(z) + e^{-\frac{2\pi i}{3}}\Ai(e^{- \frac 23\pi i}z) + e^{\frac{2\pi i}{3}} \Ai(e^{\frac 23\pi i}z) &=& 0 \\
      \mathcal{W}(\Ai, \Ai(\cdot e^{\mp \frac{2\pi i}{3}})) = \frac{e^{\pm i\pi/6}}{2\pi},\quad \mathcal{W}(\Ai(\cdot e^{-\frac{2\pi i}{3}}) ,  \Ai(\cdot e^{ \frac{2\pi i}{3}})) &=& \frac{1}{2\pi i}.
    \eee
    \item Identity of higher derivatives: there exists polynomial $P_n(z)$, $Q_n(z)$ of degree
    \[ \deg P_n = \left| \begin{array}{ll}
        k & n = 2k, \\
        k-1 & n = 2k+1,
    \end{array}\right.\quad 
    \deg Q_n = \left| \begin{array}{ll}
        k-2 & n = 2k, \\
        k & n = 2k+1,
    \end{array}\right.\quad \forall\, n \ge 0,\]
    such that 
    \be \pa_z^n \Ai(z) = P_n(z) \Ai(z) + Q_n(z) \Ai'(z),\quad \forall\,n\ge 0.  \label{eqAiryderivident} \ee
     \item Refined estimate for higher derivatives: for $|z| \ge 1$, $|\arg z| \le \frac 56\pi$ and for all $n \ge 0$, we have
    \bea \left| \left( \pa_z + z^\frac 12\right)^n \pa_z^k \Ai(z) \right| \lesssim_n |z|^{-n} |\pa_z^k \Ai(z)|,\quad k = 0, 1;    \label{eqAiryderiv} \\
    \left| \pa_z^n \left( 2\sqrt{\pi} z^{\frac 14} e^{\frac 23 z^\frac 32} \Ai(z) \right) - \delta_{n, 0} \right| \lesssim_n | z |^{-\frac 32 - n} . \label{eqAiryderiv2} \eea
\end{enumerate}
\end{lemma}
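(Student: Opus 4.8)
Parts (1) and (2) I would simply extract from the standard literature. The Poincar\'e expansions
$\Ai(z)\sim \tfrac{e^{-\zeta}}{2\sqrt\pi z^{1/4}}\sum_{k\ge0}(-1)^k u_k\zeta^{-k}$ and
$\Ai'(z)\sim -\tfrac{z^{1/4}e^{-\zeta}}{2\sqrt\pi}\sum_{k\ge0}(-1)^k v_k\zeta^{-k}$ with $u_0=v_0=1$, valid uniformly in $|\arg z|\le\pi-\delta$, together with the connection relation \eqref{eqdefBi} and the two Wronskian identities, are recorded in \cite[\S 9.2, \S 9.7]{MR2723248}; the functions $\Ai(z),\Ai(e^{\mp 2\pi i/3}z)$ solve $w''=zw$ (since $e^{\pm2\pi i}=1$) and have nonzero Wronskian, hence form a basis, and the negative-argument asymptotics follow by inserting $-z=ze^{\pm i\pi}$ into the connection formula and using the expansion in the shifted sector. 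The one point worth isolating for later use is that, because $2\sqrt\pi\,z^{1/4}e^{\zeta}\Ai(z)$ and $-2\sqrt\pi\,z^{-1/4}e^{\zeta}\Ai'(z)$ are analytic on $|\arg z|<\pi$ and equal $1+O(|z|^{-3/2})$ there, Cauchy's integral formula on disks of radius $c|z|$ (which stay inside $|\arg w|\le\pi-\tfrac{\pi}{12}$ once $c$ is small and $|\arg z|\le\tfrac56\pi$) upgrades this to
\begin{equation}\label{eqAirysketchcd}
\bigl| \pa_z^n\bigl( 2\sqrt\pi\, z^{1/4} e^{\zeta}\Ai(z) - 1\bigr)\bigr| \lesssim_n |z|^{-3/2-n}, \qquad
\bigl| \pa_z^n\bigl( 2\sqrt\pi\, z^{-1/4} e^{\zeta}\Ai'(z) + 1\bigr)\bigr| \lesssim_n |z|^{-3/2-n},
\end{equation}
for $|z|\ge 1$, $|\arg z|\le\tfrac56\pi$; the first of these is exactly \eqref{eqAiryderiv2}.

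For (3) I would induct on $n$ using $\Ai''=z\Ai$. Starting from $P_0=1$, $Q_0=0$, differentiating $\pa_z^n\Ai=P_n\Ai+Q_n\Ai'$ and substituting $\Ai''=z\Ai$ yields the recursion $P_{n+1}=P_n'+zQ_n$, $Q_{n+1}=P_n+Q_n'$, so \eqref{eqAiryderivident} holds for all $n$. The stated degrees follow by tracking $(\deg P_n,\deg Q_n)$ through two parity steps: if $\deg P_{2k}\le k$ and $\deg Q_{2k}\le k-2$, then $\deg P_{2k+1}\le\max\{k-1,(k-2)+1\}=k-1$ and $\deg Q_{2k+1}\le\max\{k,k-3\}=k$; and if $\deg P_{2k+1}\le k-1$ and $\deg Q_{2k+1}\le k$, then $\deg P_{2k+2}\le\max\{k-2,k+1\}=k+1$ and $\deg Q_{2k+2}\le\max\{k-1,k-1\}=(k+1)-2$, closing the induction (with the convention $\deg 0 = -\infty$).

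For (4) the mechanism is the conjugation identity $(\pa_z+z^{1/2})\bigl(e^{-\zeta}w\bigr)=e^{-\zeta}w'$, valid because $\zeta'=z^{1/2}$; iterating, $(\pa_z+z^{1/2})^n\bigl(e^{-\zeta}w\bigr)=e^{-\zeta}w^{(n)}$. I would write $\pa_z^k\Ai(z)=e^{-\zeta}h_k(z)$ for $k=0,1$, where by (1) and \eqref{eqAirysketchcd} one has $h_0(z)=\tfrac{1}{2\sqrt\pi}z^{-1/4}(1+g_0(z))$ and $h_1(z)=-\tfrac{1}{2\sqrt\pi}z^{1/4}(1+g_1(z))$ with $|\pa_z^n g_k(z)|\lesssim_n|z|^{-3/2-n}$. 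Then $(\pa_z+z^{1/2})^n\pa_z^k\Ai=e^{-\zeta}h_k^{(n)}$, and Leibniz' rule with the bounds on $g_k$ gives $|h_k^{(n)}(z)|\lesssim_n |z|^{(-1)^{k+1}/4-n}\lesssim_n|z|^{-n}|h_k(z)|$ for large $|z|$ (the compact range $1\le|z|\le R_0$ being trivial, since $\Ai,\Ai'$ have no zeros in $|\arg z|\le\tfrac56\pi$); multiplying by $|e^{-\zeta}|$ yields \eqref{eqAiryderiv}, and \eqref{eqAiryderiv2} was already obtained as \eqref{eqAirysketchcd}.

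I expect the only genuine obstacle to be technical: ensuring the constants in \eqref{eqAirysketchcd} are uniform in $n$ and that the estimate reaches the very edge $|\arg z|=\tfrac56\pi$ of the sector. This is precisely why I would work inside the slightly larger sector $|\arg z|\le\pi-\delta$ on which the DLMF expansions hold, so that Cauchy disks of radius comparable to $|z|$ stay inside it; once this is set up, the rest of (4) is Leibniz bookkeeping and (3) is pure induction from the Airy equation.
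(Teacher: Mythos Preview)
Your proof is correct. Parts (1)--(3) match the paper's approach exactly: (1) and (2) are cited from the DLMF, and (3) is the same induction with the recursion $P_{n+1}=P_n'+zQ_n$, $Q_{n+1}=P_n+Q_n'$ (your degree-tracking is more explicit than the paper's one-line ``follows from induction''; note that the lemma states exact degrees, which your argument gives only as upper bounds, but equality is immediate once one observes that the leading coefficients never cancel, and in any case only the upper bounds are used downstream).

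For part (4) your route genuinely differs from the paper's. Both start from the conjugation identity $(\pa_z+z^{1/2})=e^{-\zeta}\pa_z e^{\zeta}$, reducing \eqref{eqAiryderiv} to controlling $\pa_z^n\bigl(e^{\zeta}\pa_z^k\Ai\bigr)$. You obtain these derivative bounds by applying Cauchy's estimate on disks of radius $c|z|$ to the analytic remainder $g_k(z)=\pm 2\sqrt\pi\,z^{\mp 1/4}e^{\zeta}\pa_z^k\Ai(z)-1$, exploiting that the DLMF expansion holds on the slightly wider sector $|\arg z|\le\pi-\delta$; this is elementary and self-contained. The paper instead invokes the Kummer-function representation $\Ai(z)=3^{-1/6}\pi^{-1/2}\zeta^{2/3}e^{-\zeta}U(\tfrac56,\tfrac53,2\zeta)$ (and the analogue for $\Ai'$), uses the exact derivative formula $\pa_w^n U(a,b,w)=(-1)^n(a)_n\,U(a+n,b+n,w)$ together with $U(a,b,w)=w^{-a}(1+O(w^{-1}))$ to bound $\pa_\zeta^n(e^{\zeta}\Ai)$, and then converts to $z$-derivatives via Fa\`a di Bruno. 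Your approach avoids special-function machinery and gives the result in fewer steps; the paper's approach trades this for a more explicit symbolic calculus that plugs directly into the DLMF identities, which may be preferable if one wants to track constants or reuse the Kummer representation.
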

\begin{proof}
    (1) and (2) are standard in \cite[Chapter 9]{MR2723248}. For (3), the equation $\Ai''(z) = z\Ai(z)$, and initially $P_0= 1$, $Q_0 = 0$ imply \eqref{eqAiryderivident} with the recurrence formula 
    \[ P_{n+1} = P_n' + z Q_n,\quad Q_{n+1} = Q_n' + P_n,\quad \forall\, n \ge 0. \]
    The degree of $P_n$, $Q_n$ follows from induction.
    
    For (4), we first recall the representation of $\Ai$ by confluent hypergeometric function (Kummer's function) \cite[9.6.21]{MR2723248}
    \be\Ai(z) = 3^{-\frac 16} \pi^{-\frac 12} \zeta^{\frac 23} e^{-\zeta} U\left(\frac 56, \frac 53, 2\zeta\right) \label{eqAiKummer1}\ee
    and the properties of $U$ \cite[13.2.6, 13.7.3]{MR2723248}
    \be
    \begin{split} 
    &|U(a, b,z) - z^{-a}| \lesssim_{a, b} z^{-a-1}, \qquad |z| \ge 1,\,\, |{\rm arg} z| \le \frac 54\pi;\\
    &\pa_z^n U(a, b,z) = (-1)^n \left[\prod_{k=0}^{n-1} (a+k)\right] U(a+n, b+n, z),\qquad n \ge 1.  \label{eqAiKummer3}
    \end{split}
    \ee

    Noticing that $\pa_z + z^{\frac 12} = e^{-\zeta} \pa_z e^\zeta$, 
    we get 
    \[ \left| \left(e^{-\zeta} \pa_\zeta e^{\zeta} \right)^n \Ai(z) \right| \lesssim_n |\zeta|^{-n} |\Ai(z)|, \]
    namely
    \[ \left| \pa_\zeta^n \left( e^\zeta \Ai (z) \right)\right| \lesssim_n |z|^{-\frac 32 n} |e^{\zeta} \Ai(z)|.  \]
    Recall the Fa\'a di Bruno's formula (chain-rule for higher derivatives):
    \be \pa_z^n F(\zeta(z)) = \sum_{\sum_{k=1}^n km_k = n } \frac{n!}{m_1! m_2! \cdots m_n!} \cdot \pa_\zeta^{m_1+\cdots +m_n} F (\zeta(z)) \cdot \prod_{j=1}^n \left(\pa_z^j \zeta(z) \right)^{m_j}. \label{eqFaadiBruno}\ee
    The estimate \eqref{eqAiryderiv} for $k = 0$ follows by taking $F(\zeta) = e^\zeta \Ai(z(\zeta)) = 3^{-\frac 16} \pi^{-\frac 12} \zeta^{\frac 23} U \left( \frac 56, \frac 53, 2\zeta \right)$ and using $|\pa_z^m \zeta| \lesssim |z|^{\frac 32 - m}$. The case $ k = 1$ comes similarly using \cite[9.6.22]{MR2723248} 
    \be \Ai'(z) = -3^{-\frac 16} \pi^{-\frac 12} \zeta^{\frac 43} e^{-\zeta} U\left(\frac 76, \frac 73, 2\zeta\right). \label{eqAiKummer2}\ee
    
    For \eqref{eqAiryderiv2}, using Kummer's function $U$, we rewrite 
    $2\sqrt{\pi} z^{\frac 14} e^{\frac 23 z^\frac 32} \Ai(z) = (2\zeta)^\frac 56 U\left(\frac 76, \frac 73, 2\zeta\right)$. From the derivative formula, we know $\left|\pa_z^n U(a, b, z) - \pa_z^n z^{-a}\right| \lesssim_{a, b, n} |z|^{-a-n-1}$. So with this cancellation at leading order, a similar computation with \eqref{eqFaadiBruno} implies \eqref{eqAiryderiv2}.
\end{proof}
Now we are in place to define the WKB approximate solutions to \eqref{eqscalar}.

\begin{definition}[WKB approximate solutions]\label{defWKBappsolu}
  Let $b > 0$ and $|E-1| \le \frac 12$. Define the complex arguments $s$ and $\zeta$ by
  \bea  s &=& \frac{br}{2\sqrt{E}}, \label{eqsdef} \\
   \zeta \left(\frac{d\zeta}{ds}\right)^2 &=& s^2 - 1,\quad \zeta(1) = 0, \label{eqzetadef}
  \eea
  and the parameter 
  \be \mu = e^{i\frac{\pi}{4}}\left(\frac{2E}{b} \right)^{\frac 12}. \label{eqmudef}\ee
Then we define the WKB approximate solutions to be 
\be \begin{split} 
\psi_1^{b, E} = (\pa_s \zeta)^{-\frac 12}  \Ai(e^{-\frac{2\pi i}{3}}\mu^\frac 43\zeta),&\quad 
\psi_2^{b, E} = (\pa_s \zeta)^{-\frac 12} \Ai(e^{ \frac{2\pi i}{3}}\mu^\frac 43 \zeta), \\
\psi_3^{b, E} = (\pa_s \zeta)^{-\frac 12} \Ai(\mu^\frac 43 \zeta),&\quad 
\psi_4^{b, E} = \frac 12 (\pa_s \zeta)^{-\frac 12} \mathbf{Bi}(e^{ \frac{2\pi i}{3}}\mu^\frac 43 \zeta)
\end{split} \label{eqpsidef} \ee
and the correction function\footnote{The second part of the formula is a brute force computation using \eqref{eqzetadef}, similar to \eqref{eqPsiF} below.}
\be h_{b, E}(r) = -\frac{b^2}{4E} \zeta_s^{\frac 32} \pa_\zeta^2 (\zeta_s^\frac 12) = -\frac{b^2}{4E} \left[ \frac{5(s^2 - 1)}{16\zeta^3} - \frac{3s^2 + 2} {4(s^2 - 1)^2}\right ]. \label{eqcorrection}\ee
\end{definition}

\begin{remark}\mbox{}\label{rmkWKB}
\begin{enumerate}
\item (Well-definedness) We will prove below in Lemma \ref{lemzeta32} that \eqref{eqzetadef} determines $\zeta = \zeta(s)$ as an analytic function on $\CC - (\infty, -1]$ and $\pa_s \zeta(s) \neq 0$. Therefore $\psi_j^{b, E}$ are well-defined $C^\infty_{loc}(\RR_+ \to \CC)$ functions. Here thanks to \eqref{eqargzetas}, the branch of $\zeta_s^{-\frac 12}$ is chosen to be the analytic extension of 
\be s \mapsto \zeta_s^{-\frac 12}: (0, \infty) \mapsto (0, \infty). \label{eqchoicebranchzetas} \ee
\item (Derivation) The idea comes from \cite{MR109898}. Under the variable $s$,  \eqref{eqscalar} becomes
\[\left[  \pa_s^2 + \frac{4E^2}{b^2} (s^2 - 1) \right] \psi = 0.  \]
With a further change of argument to $\zeta$ and variable to $\Upsilon$ as
\[ \Upsilon = \zeta_s^{\frac 12} \psi,  \]
the equation \eqref{eqscalar} turns into 
\be \pa_\zeta^2 \Upsilon - \mu^4 \zeta \Upsilon = F(\zeta)\Upsilon \label{eqPsi}\ee
where a direct computation gives the correction potential
\be F(\zeta) = \zeta_s^{-\frac 12} \pa_\zeta^2 (\zeta_s^\frac 12) = \frac{5}{16\zeta^2} - \frac{3s^2 + 2 }{4(s^2 - 1)^3} \zeta. \label{eqPsiF}\ee
So $\zeta_s^\frac 12 \psi_j^{b, E}$ for $j = 1, 2, 3, 4$ and their linear superposition will solve the homogeneous equation of \eqref{eqPsi}.
\end{enumerate}
\end{remark}

We first show the analyticity and asymptotics of $\zeta$.
\begin{lemma}\label{lemzeta32}
Let $\zeta$ defined as in \eqref{eqzetadef} and $s \in \CC$. Then $\zeta = \zeta(s)$ is analytic in $\CC - (-\infty, -1]$ and satisfies 
        \be \frac 23 \zeta^\frac 32 = \int_1^s (w^2 - 1)^\frac 12 dw 
\label{eqetaident} 
\ee
where the integrand takes the branch
\be w \mapsto (w^2 - 1)^\frac 12: \left| \begin{array}{ll} \CC - (-\infty, 1] \to  \CC - (-\infty, 0], & {\rm where\,\,}\arg (w\pm 1) \in (-\pi, \pi),  \\
(-1, 1] \to i\RR_{\ge 0},& {\rm where\,\,} \arg (w-1) = \pi, \,\,\arg(w+1) = 0
\end{array} \right.
\label{eqbranchsqrt}\ee
with $\arg (s-1) \in (-\pi, \pi]$, 
and the fractional power of LHS takes $\arg \zeta \in (-\pi, \pi]$. 
Moreover, the following properties hold
\begin{enumerate}
    \item  $\zeta$ has only one zero at $s = 1$ and $\zeta_s$ is non-vanishing in $\CC - (-\infty, -1]$.
    Moreover, we have 
    \bea
    \sgn(\Im \zeta(s)) = \sgn (\Im s),&& {\rm when}\,\, \Re s > 0; \label{eqImzetasgn} \\
    \sgn(\zeta(s)) = \sgn(s-1),\quad \zeta_s (s) > 0,&& {\rm when} \,\, s \in (-1, \infty).
      \label{eqargzetas} 
    \eea
    \item For $s \in \CC - (-\infty, 1]$, the integration \eqref{eqetaident} can be computed as
    \be \label{eqetaident2}
    \frac 23 \zeta^\frac 32 = \frac 12 s(s^2 - 1)^\frac 12 - \frac 12 \ln \left(s + (s^2 - 1)^\frac 12\right).
    \ee
    Here the square root of $s^2 - 1$ takes the branch as \eqref{eqbranchsqrt}, and the logarithm function takes the principal branch $\ln: \{|z|>0, \arg z \in (-\pi, \pi)\} \to \RR + \left(-\pi,  \pi \right)$.
    \item  Asymptotics in the cone near $\RR$: for $|\arg s| \le \frac \pi 4$, we have
        \be 
        \zeta(s)= \begin{cases}-\left(\frac{3 \pi}{8}\right)^{\frac{2}{3}}\left(1-\frac{8}{3 \pi} s+O\left(s^{2}\right)\right) & |s| \leq \frac{1}{2},\\ 2^{\frac{1}{3}}(s-1)(1+O(s-1)) & \frac{1}{2}<|s| \leq \frac{3}{2}, \\ \left(\frac{3}{4}\right)^{\frac{2}{3}} s^{\frac{4}{3}}\left(1+O\left(s^{-2} \ln s\right)\right) & |s|>\frac{3}{2},\end{cases} \label{eqzetaasymp}
        \ee
\end{enumerate} 
    \end{lemma}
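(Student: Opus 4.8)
The plan is to derive everything from the explicit antiderivative $\phi(s):=\int_1^s(w^2-1)^{1/2}\,dw$ (with the branch of \eqref{eqbranchsqrt}) together with the germ of $\zeta=(\tfrac32\phi)^{2/3}$ at $s=1$. With that branch, $(w^2-1)^{1/2}$ is analytic on $\CC\setminus(-\infty,1]$ and continuous up to $(-1,1]$ with boundary values $i\sqrt{1-w^2}\in i\RR_{\ge0}$, so $\phi$ is analytic on $\CC\setminus(-\infty,1]$, extends continuously to $(-1,1]$ with $\phi\in -i\RR_{\ge0}$ there, and is strictly positive on $(1,\infty)$. Differentiating $\tfrac12 s(s^2-1)^{1/2}-\tfrac12\ln\!\big(s+(s^2-1)^{1/2}\big)$ (principal logarithm) returns $(s^2-1)^{1/2}$, and since $t:=s+(s^2-1)^{1/2}$ avoids $(-\infty,0]$ precisely when $s\in\CC\setminus(-\infty,-1]$ (Joukowski relation $s=\tfrac12(t+t^{-1})$), this expression is analytic on $\CC\setminus(-\infty,1]$; hence it equals $\phi$ up to a constant, which vanishes at $s=1$. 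This gives the closed form \eqref{eqetaident2}, once $\tfrac23\zeta^{3/2}=\phi$ is established.

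Next I would construct $\zeta$. Near $s=1$ one has $\phi(s)=\tfrac{2\sqrt2}{3}(s-1)^{3/2}\psi(s)$ with $\psi$ analytic, $\psi(1)=1$; set $\zeta(s):=2^{1/3}(s-1)\psi(s)^{2/3}$ with the principal $2/3$-power of $\psi\approx1$. Then $\zeta$ is analytic near $s=1$, $\zeta(1)=0$, $\zeta_s(1)=2^{1/3}$, $\tfrac23\zeta^{3/2}=\phi$ for a consistent branch of $\zeta^{3/2}$, and differentiating this relation yields $\zeta\zeta_s^2=s^2-1$, i.e. \eqref{eqzetadef}. To continue $\zeta$: the monodromy of $(\tfrac32\phi)^{2/3}$ about $s=1$ is trivial because $\big((s-1)^{3/2}\big)^{2/3}=s-1$, and $\phi$ has no other zero on $\CC\setminus(-\infty,-1]$ — via $t=s+(s^2-1)^{1/2}$ the equation $\phi=0$ reads $t^2-t^{-2}=4\ln t$, and $\tfrac{d}{dt}\big(t^2-t^{-2}-4\ln t\big)=\tfrac{2(t^2-1)^2}{t^3}$ forces $t=1$ (hence $s=1$) as the only root with $|t|\ge1$. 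Thus $(\tfrac32\phi)^{2/3}$ continues to a single-valued analytic function on $\CC\setminus(-\infty,1]$, analytic also at $s=1$. Finally, across $(-1,1)$, where $\phi$ jumps to $-\phi$, the two boundary values of $\zeta$ coincide (both equal $-|\tfrac32\phi|^{2/3}\in\RR_{<0}$, using $\arg\zeta\to\pm\pi$ from the two sides near $s=1$), so $\zeta$ extends analytically across $(-1,1)$ by Schwarz reflection. This yields analyticity of $\zeta$ on all of $\CC\setminus(-\infty,-1]$, and \eqref{eqzetadef}, \eqref{eqetaident}, \eqref{eqetaident2} persist by analytic continuation.

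Property (1) then follows: $\zeta=0\iff\phi=0\iff s=1$; $\zeta_s^2=(s^2-1)/\zeta$ can vanish only at $s=\pm1$, with $-1$ off the domain and $\zeta_s(1)\ne0$, so $\zeta_s$ is nowhere zero. On $(-1,\infty)$, $\zeta$ is real (positive on $(1,\infty)$ since $\phi>0$, negative on $(-1,1)$ since $\phi\in -i\RR_{>0}$), so $\zeta_s$ is real, continuous and non-vanishing there, hence of constant sign $=\zeta_s(1)>0$; this gives $\sgn\zeta=\sgn(s-1)$ and $\zeta_s>0$ on $(-1,\infty)$. For \eqref{eqImzetasgn}: $\zeta$ real on $(-1,\infty)$ gives $\zeta(\overline s)=\overline{\zeta(s)}$, so it suffices to prove $\Im\zeta>0$ on the open first quadrant; writing $\arg\zeta=\tfrac23\arg\phi$ for the branch of $\arg\phi$ continued from $\arg\phi=0$ on $(1,\infty)$, one has $\Im\zeta=0$ on $(0,\infty)$ and, from the Step-1 formula, $\phi(iy)=-\tfrac{i\pi}4-\int_0^y\sqrt{\tau^2+1}\,d\tau$ lies in the open third quadrant, which by continuity of the branch forces $\arg\zeta(iy)\in(\tfrac{2\pi}3,\pi)$, i.e. $\Im\zeta>0$ on $i(0,\infty)$; tracking $\arg\zeta$ along the boundary of a large quarter-disk $\{\Re s\ge0,\ \Im s\ge0,\ |s|\le R\}$ and applying the argument principle then gives $\Im\zeta>0$ inside. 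Finally (3) is Taylor expansion: $\phi(s)=-\tfrac{i\pi}4+i\big(s-\tfrac{s^3}6+\cdots\big)$ for $|s|\le\tfrac12$ gives $\zeta(s)=-(3\pi/8)^{2/3}\big(1-\tfrac{8}{3\pi}s+O(s^2)\big)$; $\zeta(s)=2^{1/3}(s-1)\psi(s)^{2/3}$ with $\psi^{2/3}=1+O(s-1)$ for $\tfrac12<|s|\le\tfrac32$ in the cone; and for $|s|>\tfrac32$ in the cone, inserting $(s^2-1)^{1/2}=s-\tfrac1{2s}+O(s^{-3})$ into \eqref{eqetaident2} gives $\tfrac23\zeta^{3/2}=\tfrac{s^2}2-\tfrac14-\tfrac12\ln(2s)+O(s^{-2})=\tfrac{s^2}2\big(1+O(s^{-2}\ln s)\big)$, whence $\zeta(s)=(3/4)^{2/3}s^{4/3}\big(1+O(s^{-2}\ln s)\big)$.

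The main obstacle is the construction in the second paragraph: $\phi$ and $(s^2-1)^{1/2}$ are only analytic on $\CC\setminus(-\infty,1]$, whereas $\zeta$ must be shown analytic on the strictly larger slit plane $\CC\setminus(-\infty,-1]$. This forces precisely the three inputs used there — non-vanishing of $\phi$ off $s=1$, triviality of the monodromy at $s=1$ (so the $2/3$-power continues single-valuedly), and the coincidence of $\zeta$'s two boundary values across the cut $(-1,1)$ of $\phi$ — after which the sign statements and the asymptotics are routine calculus; the only further care needed is in fixing the continuous branch of $\arg\zeta$ along the imaginary axis for \eqref{eqImzetasgn}.
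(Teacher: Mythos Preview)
Your route largely parallels the paper's, and the closed form \eqref{eqetaident2}, the Schwarz-reflection extension across $(-1,1)$, the sign properties on $(-1,\infty)$, and the three asymptotic regimes are all correctly argued. The genuine gap is in the non-vanishing of $\phi$ on $\CC\setminus(-\infty,1]$: you write that $g(t)=t^2-t^{-2}-4\ln t$ has $g'(t)=2(t^2-1)^2/t^3$ and that this ``forces $t=1$ as the only root with $|t|\ge1$.'' This inference is valid for real $t$ (monotonicity) but fails for complex-analytic functions: the location of critical points of $g$ tells you nothing about its zero set. Substituting $z=2\ln t$ turns $g=0$ into $\sinh z=z$, which has infinitely many non-zero complex roots; what you actually need is that none lie in the strip $\{\Re z>0,\ |\Im z|<2\pi\}$ corresponding to your $t$-domain. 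This is true but requires a separate argument---for instance, any such root would satisfy $\sinh x\cos y=x$ and $\cosh x\sin y=y$ with $x>0$ and, after reductions, $0<y<\pi/2$; dividing gives $\tfrac{\tanh x}{x}=\tfrac{\tan y}{y}$, which is impossible since the left side is strictly below $1$ and the right side strictly above $1$ for $x,y>0$. The paper's own justification at this step is equally thin (it simply asserts ``after integration, $\zeta^{3/2}\ne0$''), so this is a shared soft spot rather than a flaw peculiar to your write-up.

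One terminological correction: your boundary argument for $\Im\zeta>0$ on the open first quadrant is the minimum principle for the harmonic function $\Im\zeta$ (non-negative on the quarter-disk boundary, strictly positive on $i(0,\infty)$, hence positive inside), not the argument principle. With that relabeled and the non-vanishing of $\phi$ properly justified, your proof is complete; in fact your treatment of \eqref{eqImzetasgn} is more explicit than the paper's one-line appeal to the sign of $\Im(w^2-1)$.
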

\begin{proof} The formula \eqref{eqetaident} follows \eqref{eqzetadef}, and indicates that $\zeta^\frac 32(s)$ has only two branching point $s = \pm 1$ in $\CC$. Note that $w \mapsto (w^2 - 1)^\frac 12 = (w-1)^\frac 12 \cdot (w+1)^\frac 12$ maps $\CC - (-\infty, 1]$ analytically to $\CC - (-\infty, 0]$ with branch taken as \eqref{eqbranchsqrt}. After integraion, we have $\zeta^\frac 32 \neq 0$ and $|\arg \zeta^{\frac 32}| < 2 \pi$ for $s\in \CC - (-\infty, 1]$.

Thus we can define $\zeta: \CC - (-\infty, 1] \to \CC - \{ 0 \}$ analyticially by choosing the branch of $z^\frac 23$ correspondingly. To prove the analyticity of $\zeta$ in $\CC - (-\infty, -1]$, it suffices to check that $\zeta$ is analytic near $s=1$ for analytic extension. Indeed, we have $\zeta'(1) = 2^\frac 13$ since 
\be
  \zeta = \left( \frac 32 \int_1^s (w^2 - 1)^\frac 12 dw\right)^\frac 23 = \left( \frac 32 \int_0^{s-1} h^\frac 12 (2+h)^\frac 12 dh \right)^\frac 23 = 2^\frac 13 (s-1) + O(s-1)^2. \label{eqzetaderiv}
\ee
We then choose to extend \eqref{eqetaident} analytically to $s \in (-1, 1]$ from below, leading to the choice of branches for $(w^2 - 1)^\frac 12$ and $\zeta^\frac 32$. 

Next, we prove the properties of $\zeta$.

(1) The uniqueness of zero for $\zeta$ and non-vanishing of $\zeta_s$ are obvious from \eqref{eqzetadef}. For \eqref{eqImzetasgn}, we notice that 
${\rm sgn}(\Im (w^2 - 1)) = {\rm sgn}\Im s$ when $\Re w > 0$, so the argument of $\zeta$ is evaluated from \eqref{eqetaident} and the choice of $\arg (s-1)$. This also implies the sign of $\zeta$ in \eqref{eqargzetas} when $s > 0$, and the sign of $\zeta_s$ follows \eqref{eqzetadef} for $s \neq 1$ and that $\zeta'(1) = 2^\frac 13$ above. 

(2) The formula \eqref{eqetaident2} for $s \in (1,\infty)$ is a direct computation. Noticing that $s \in \CC - (-\infty, 1]$ implies $|\arg(s^2 - 1)^\frac 12| < \pi$ and $\left|\arg\left(s + (s^2 - 1)^\frac 12\right) \right| < \pi$, the formula in this complex region is a direct analytic extension.

(3) The asymptotics near $s = 1$ and near infinity easily come from \eqref{eqzetaderiv} and \eqref{eqetaident2} respectively. Lastly, we compute 
\bee
  \int_1^s (w^2 - 1)^\frac 12 dw = -\int_0^1 i(1 -w^2)^\frac 12 dw + \int_0^s i (1 -w^2)^\frac 12 dw = -i\frac \pi 4 + i s + O(s^3)
\eee
with leading order term $\arg(-i\frac \pi 4) = \frac 32\pi$ according to \eqref{eqbranchsqrt}. Then inverting the $\frac 32$ power yields the asymptotics of $\zeta$ near $s=0$. 
 \end{proof}

Before proving the properties of the WKB solutions, we first define two auxiliary functions for describing the asymptotics. 

\begin{definition}[Auxiliary functions for WKB approximate solutions]\label{defWKBaux}
    Let $b > 0$ and $|E-1| \le \frac 12$ and $s, \mu, \zeta$ be as Definition \ref{defWKBappsolu}. We define the $\CC$-valued function $\eta = \eta_{b, E}: (0, \infty) \to \CC$ as 
\be \eta_{b, E}(r)  = \frac 23 \left(\mu^\frac 43 \zeta(s(r)) e^{-\frac{2\pi i}{3}}\right)^{\frac 32},\quad {\rm for\,\,} \arg \left(\mu^\frac 43 \zeta e^{-\frac{2\pi i}{3}} \right) \in (-\pi, \pi],\label{eqetadef}\ee
and the weight functions $\omega_{b,E}^\pm: (0, \infty) \to (0,\infty)$ as 
  \be
 \omega^\pm_{b, E}(r) =  \left\la b^{-\frac 23} (b^2r^2 - 4E) \right\ra^{-\frac 14} e^{\pm \Re \eta_{b, E}(r)}. \label{eqomegapm}
\ee
We also define the center interval as\footnote{Note that $I_c^{b, E}$ is a closed interval, a point or empty. The last two cases happen when $|\Im E| \gg b^\frac 23$.}
\be I_c^{b, E} = \left\{ r \in (0, \infty):  |br-2\sqrt{E}| \le b^{\frac 23} \right\}. \label{eqIcdef}
 \ee
\end{definition}

They will be used to describe the asymptotics of the WKB solution $\psi_j^{b, E}$ \eqref{eqpsidef}. We stress that $\eta_{b, E}$ may have at most one discontinuity due to the jump of branches (where $\Re \eta_{b, E}$, $|\eta_{b, E}|$ are still continuous), as characterized in the lemma below.

\begin{lemma}\label{lemWKBeta}
There exists $0 < \delta_0  \ll 1$ such that for $0 < b \le \frac 14$ and $|E-1|\le \delta_0$, we have the following properties for $\eta_{b, E}$. 
\begin{enumerate}
   \item Turning point and monotonicity of $\Re \eta$: there exists a unique $r^* = r^*_{b, E} \in (0, \infty)$ such that
   \bea\Im \left(\mu^\frac 43 \zeta(r) e^{-\frac{2\pi i}{3}} \right) \left\{ \begin{array}{ll} 
        < 0 & r > r^* \\
        = 0 & r = r^* \\
        > 0 & r < r^* 
        \end{array}\right.\quad {\rm if}\,\, \Im E \ge 0;
        \label{eqsignImzeta} \\
        \Im \left(-\mu^\frac 43 \zeta(r) \right) \left\{ \begin{array}{ll} 
        < 0 & r > r^* \\
        = 0 & r = r^* \\
        > 0 & r < r^* 
        \end{array}\right. \quad {\rm if}\,\, \Im E \le 0;
        \label{eqsignImzeta2}
        \eea
    Then $\Re \eta (r^* - 0) = \Re \eta(r^* + 0) = 0$ and $E \mapsto r^*_{b, E}$ is continuous and satisfies the following monotonicity w.r.t. $\Re E$ 
     \be r^*_{b, E_1} < r^*_{b, E_2},\quad {\rm if\,\,} \Re E_1 < \Re E_2 \,\,{\rm and}\,\, \Im E_1 = \Im E_2. 
    \label{eqr*mono} 
    \ee
    Moreover, it satisfies the following bounds
    \be
     \left| r^*_{b, E} - \frac{2\sqrt{\Re E}}{b} \right| \le  \frac{|\Im E|}{b},\quad   \forall \,\,|E-1| \le \delta_0. \label{eqr*bE1}
    \ee
    On different side of $r^*$, we have formula of $\eta$
    \be
      \eta = \left| \begin{array}{ll}
        -i \frac{2E}{b} \int_1^{s(r)} (w^2 - 1)^\frac 12 dw    & r > r^*,\,\, {\rm or}\,\,r < r^* \,\,{\rm and}\,\,\Im E \le 0, \\
         i \frac{2E}{b} \int_1^{s(r)} (w^2 - 1)^\frac 12 dw    & r < r^*\,\,{\rm and}\,\,\Im E > 0,
      \end{array} \right. \label{eqformulaeta}
    \ee
    where $\omega \mapsto (\omega^2 - 1)^\frac 12$ takes the branch as in \eqref{eqbranchsqrt}.
    We also have the formula of $\pa_r \Re \eta$ 
    \be 
     \pa_r \left(\Re \eta(r) \right) = \Im \left( \frac{b^2 r^2}{4} - E \right)^\frac 12,\quad {\rm where}\,\, \left\{ \begin{array}{ll} 
       \arg \left( \frac{b^2 r^2}{4} - E \right) \in (0, 2\pi) & r < r^*, \\
       \arg \left( \frac{b^2 r^2}{4} - E \right) \in (-\pi, \pi) & r > r^*,
     \end{array}\right. \label{eqReetaderiv} 
    \ee
    and monotonicity of $\Re \eta$
    \be {\rm sgn}(\pa_r \Re \eta) = \left\{ \begin{array}{ll} 
       1 & r < r^*, \\
       - {\rm sgn}(\Im E) & r > r^*.
     \end{array}\right.
\label{eqReetamono}
    \ee
    \item Asymptotics of $\eta$ and $\Re \eta$: we have
    \bea
           &&|\eta|\left\{\begin{array}{ll}
           \sim b r^2 & r \ge \frac 4b,\\
           \sim b^{-1} \left| br - 2 \sqrt E \right|^{\frac 32} & r \in \left[0,\frac 4b \right] - I_c^{b, E}  \\
               \lesssim 1 & r\in I_c^{b, E}
           \end{array} \right. \label{eqetaabs} \\
            &&\Re \eta = -\frac{\Im E\cdot \left(\ln (br) + O(1)\right)}{b} + \frac{\Im (E \ln E)}{2b},\quad r \ge \frac 4b; \label{eqReetaext} \\
          &&\eta= -\frac{\pi E}{2b} + \left(\sqrt E + O_\CC(b)\right)r,\quad 0 \le r \le b^{-\frac 12}.  \label{eqetaconv} 
           \eea
Moreover, if in addition $|\Im E| \le bI_0$ for some $0 < I_0 < (10b^{\frac 13})^{-1}$, we have refined asymptotics of $\Re \eta$
    \bea
           \Re \eta = \left\{\begin{array}{ll}
               -\frac{\Im E}{b} \ln (br) + O(1) & r \in  [r^*_{b, E},\infty) - I_c^{b, E} \\
               -S_{b, \Re E}(r) + O(b |br-2\sqrt {\Re E}|^{-\frac 12}) & r\in [0, r^*_{b, E}] - I_c^{b, E}
           \end{array} \right. \label{eqetaRe}  
           \eea
           where $S_b(r) = \int_{\min\{ r, \frac 2b \}}^{\frac 2b} \left( 1 - \frac{b^2 s^2}{4} \right)^\frac 12$,  $S_{b, \a}(r) = S_{b}(\frac{r}{\sqrt\a}) \a$ for $\a \in \RR$, and the bound of $O(...)$ terms in \eqref{eqetaRe} depend on $I_0$. 
\end{enumerate}
\end{lemma}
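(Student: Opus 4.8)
The plan is to reduce the entire statement to the single identity
$\eta_{b,E}(r)=\mp i\,\tfrac{2E}{b}\,\tfrac23\zeta(s(r))^{3/2}=\mp i\,\tfrac{2E}{b}\int_1^{s(r)}(w^2-1)^{1/2}\,dw$
and then read off each property from Lemma \ref{lemzeta32}. First I would compute $\mu^{4/3}=e^{i\pi/3}(2E/b)^{2/3}$: since $2E/b$ is close to a large positive real, $(2E/b)^{1/2}$ is unambiguous and $\arg\mu=\tfrac\pi4+\tfrac12\arg E$, so $\tfrac43\arg\mu=\tfrac\pi3+\arg((2E/b)^{2/3})$; hence $\mu^{4/3}e^{-2\pi i/3}=e^{-i\pi/3}(2E/b)^{2/3}$. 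Writing $\eta_{b,E}=\tfrac23\big(e^{-i\pi/3}(2E/b)^{2/3}\zeta(s(r))\big)^{3/2}$ with $\arg(\cdot)\in(-\pi,\pi]$ and using $((2E/b)^{2/3})^{3/2}=2E/b$, one gets $\eta_{b,E}=-i\,\tfrac{2E}{b}\,\tfrac23\zeta^{3/2}$ when $\arg\zeta>-\tfrac{2\pi}3-\tfrac23\arg E$ and $\eta_{b,E}=+i\,\tfrac{2E}{b}\,\tfrac23\zeta^{3/2}$ when $\arg\zeta\le-\tfrac{2\pi}3-\tfrac23\arg E$ (in the second case one must add $2\pi$ to the argument before taking the $\tfrac32$-power, multiplying the result by $e^{3\pi i}=-1$). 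Throughout, $s(r)=\tfrac{br}{2\sqrt E}$ runs along the ray $\{\arg=-\tfrac12\arg E\}$, which for $|E-1|\le\delta_0$ lies in a thin cone about the positive real axis, so all asymptotics, branch conventions and the identity $\tfrac23\zeta^{3/2}=\int_1^s(w^2-1)^{1/2}dw$ from Lemma \ref{lemzeta32} are available.

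For part (1) I would track $\theta(r):=\arg\zeta(s(r))$ along this ray. By \eqref{eqImzetasgn} (applied with $\Re s(r)>0$) and the fact that $\zeta(s)$ is a negative real as $s\downarrow0$ (from \eqref{eqzetaasymp}), $\theta(r)\to\pi$ if $\Im E<0$ and $\theta(r)\to-\pi$ if $\Im E>0$ as $r\downarrow0$, while $\theta(r)\to\tfrac43\cdot(-\tfrac12\arg E)$ (hence $\to0$) as $r\to\infty$; moreover in the window $\Re s(r)\approx1$ we have $\zeta\approx2^{1/3}(s-1)$ by \eqref{eqzetaasymp}, so $\theta$ is strictly monotone there because $\arg(s-1)$ is. Consequently $\theta$ attains each intermediate value exactly once, which gives a unique $r^*=r^*_{b,E}$ with the sign behaviour \eqref{eqsignImzeta}--\eqref{eqsignImzeta2}; at $r^*$ the relevant quantity lies on $(-\infty,0]$, so $(\cdot)^{3/2}$ equals $\mp i|\cdot|^{3/2}$, purely imaginary, whence $\Re\eta(r^*\pm0)=0$, and for $\Im E>0$ the $\pi\to-\pi$ jump of $\arg$ produces the single allowed discontinuity of $\eta$ (only $\Im\eta$ flips), while for $\Im E\le0$ no branch change occurs and \eqref{eqformulaeta} holds with the single sign everywhere. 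The derivative identity \eqref{eqReetaderiv} follows by differentiating: with $\zeta^{1/2}\zeta_s=(s^2-1)^{1/2}$ and $s_r=\tfrac{b}{2\sqrt E}$ one finds $\pa_r\eta=\mp i\sqrt E(s^2-1)^{1/2}=\mp i(\tfrac{b^2r^2}{4}-E)^{1/2}$, and identifying which branch of the square root is picked up on each side of $r^*$ gives the stated conventions; \eqref{eqReetamono} then follows by inspecting $\sgn\Im(\tfrac{b^2r^2}{4}-E)^{1/2}$. Finally, the monotonicity \eqref{eqr*mono} and the bound \eqref{eqr*bE1} are obtained by locating the crossing through $\zeta\approx2^{1/3}(s-1)$: solving $\arg(s-1)=-\tfrac{2\pi}3-\tfrac23\arg E$ (resp.\ $=\tfrac{2\pi}3-\tfrac23\arg E$ for $\Im E<0$) near $s=1$ forces $|s(r^*)|=1+O(\arg E)$, so $r^*_{b,E}=\tfrac{2|E|^{1/2}}{b}(1+O(\arg E))$, and comparing with $\tfrac{2\sqrt{\Re E}}{b}$ yields $r^*_{b,E}-\tfrac{2\sqrt{\Re E}}{b}=-\tfrac{\Im E}{\sqrt3\,b}+o(\tfrac{|\Im E|}{b})$; monotonicity in $\Re E$ holds because the crossing occurs near $\Re s(r)=1$ and $r\mapsto\Re s(r)$ is increasing.

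For part (2) I would insert the explicit formulas for $\tfrac23\zeta^{3/2}$ from \eqref{eqetaident}--\eqref{eqetaident2}. When $r\ge4/b$, i.e.\ $|s|\ge2$, expand $\tfrac12 s(s^2-1)^{1/2}-\tfrac12\ln(s+(s^2-1)^{1/2})=\tfrac12 s^2-\tfrac12\ln s-\tfrac12\ln2-\tfrac14+O(s^{-2})$; since $|\mu|^2=\tfrac{2|E|}{b}$ this gives $|\eta|\sim br^2$, and for $\Re\eta=\Im(\tfrac{2E}{b}\tfrac23\zeta^{3/2})$ the leading term drops out because $\tfrac{E}{b}s^2=\tfrac{br^2}{4}$ is real, so only $-\tfrac{E}{b}\ln s$ and bounded pieces contribute, and after the cancellation of the $\tfrac{\Im E}{b}\ln2$ contributions one gets \eqref{eqReetaext}. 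When $r\le b^{-1/2}$, i.e.\ $|s|\lesssim b^{1/2}$, use \eqref{eqetaident}: since $w\mapsto(w^2-1)^{1/2}$ is even, $\int_1^s(w^2-1)^{1/2}dw=\mp i\tfrac\pi4\pm is+O(s^3)$ (sign according to the side of the real axis), and in both the case $\Im E\le0$ and the case $\Im E>0,\ r<r^*$ this collapses to the single formula $\eta=-\tfrac{\pi E}{2b}+\sqrt E\,r+O(\tfrac{E s^3}{b})=-\tfrac{\pi E}{2b}+(\sqrt E+O_\CC(b))r$. The intermediate values of $r$ and the bound $|\eta|\lesssim1$ on $I_c^{b,E}$ follow from \eqref{eqzetaasymp}. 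For the refined $\Re\eta$ asymptotics under $|\Im E|\le bI_0$: on $[r^*,\infty)-I_c^{b,E}$ the computation above already gives $\Re\eta=-\tfrac{\Im E}{b}\ln(br)+\tfrac{\Im(E\ln E)}{2b}+\tfrac{\Im E}{b}\,O(1)$; on $[0,r^*]-I_c^{b,E}$ I would integrate \eqref{eqReetaderiv}, using $\pa_r\Re\eta=(\Re E-\tfrac{b^2r^2}{4})^{1/2}+O\big((\Im E)^2|\Re E-\tfrac{b^2r^2}{4}|^{-3/2}\big)=-\pa_r S_{b,\Re E}(r)+O(\cdots)$, and integrate from $r$ to the edge of $I_c^{b,E}$, where $\Re\eta$ is already $O(b|br-2\sqrt{\Re E}|^{-1/2})$, to obtain the claimed bound; note $r^*\in I_c^{b,E}$ by \eqref{eqr*bE1} once $I_0<(10b^{1/3})^{-1}$, which is precisely why the turning point is excluded in \eqref{eqetaRe}.

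The step I expect to be the main obstacle is the uniform branch bookkeeping: keeping $\arg\zeta$, the $2\pi$-jump of the $\tfrac32$-power at $r^*$, and the two different branch conventions for $(\tfrac{b^2r^2}{4}-E)^{1/2}$ mutually consistent across $r^*$ and as $E$ moves, and then — in the refined $\Re\eta$ asymptotics — pinning down exactly which terms carry the purely real part $\tfrac{br^2}{4}$ so that every surviving remainder is genuinely $O(1)$ or $\tfrac{\Im E}{b}O(1)$, including in the delicate region just outside $I_c^{b,E}$.
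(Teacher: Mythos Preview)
Your approach is essentially the paper's: reduce to $\eta=\mp i\,\tfrac{2E}{b}\cdot\tfrac23\zeta^{3/2}$ via $\mu^{4/3}e^{-2\pi i/3}=e^{-i\pi/3}(2E/b)^{2/3}$, track $\arg\zeta(s(r))$ along the ray $\arg s=-\tfrac12\arg E$ to locate $r^*$, and read off each asymptotic from Lemma~\ref{lemzeta32} and \eqref{eqetaident}--\eqref{eqetaident2}. The paper proceeds in exactly this way, with the same $\Im E\gtrless0$ case split.

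Two steps need patching. First, your argument for the monotonicity \eqref{eqr*mono} (``the crossing occurs near $\Re s(r)=1$ and $r\mapsto\Re s(r)$ is increasing'') is not sufficient: as $\Re E$ varies with $\Im E$ fixed, both the target angle $\theta^*=\pm\tfrac{2\pi}{3}-\tfrac23\arg E$ and the path $r\mapsto s(r)$ shift, and their combined effect on $r^*$ is not transparent. The paper instead computes $\pa_r$ and $\pa_{\Re E}$ of $\Im(\mu^{4/3}\zeta e^{-2\pi i/3})$ near $r^*$ and applies the implicit function theorem, which also delivers the continuity of $E\mapsto r^*_{b,E}$ that you did not address. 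Second, for the first line of \eqref{eqetaRe} you assert that the $r\ge\tfrac4b$ expansion ``already gives'' the bound on all of $[r^*,\infty)-I_c^{b,E}$; it does not. In that expansion the term $\tfrac{r}{4}\Im(b^2r^2-4E)^{1/2}\sim -\tfrac{r\,\Im E}{2(b^2r^2-4\Re E)^{1/2}}$ is of order $\tfrac{|\Im E|}{b^{4/3}}\le I_0 b^{-1/3}$ at the edge of $I_c^{b,E}$, which for $I_0$ up to $(10b^{1/3})^{-1}$ is not $O_{I_0}(1)$. The paper avoids any direct computation on $[r^*,\tfrac4b]$: it combines $\Re\eta(r^*)=0$, the value $\Re\eta(\tfrac4b)=O(1)$ from \eqref{eqReetaext}, and the monotonicity \eqref{eqReetamono} you already derived to conclude $\Re\eta=O(1)$ on the whole interval.
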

\begin{remark}
    From Definition \ref{defWKBappsolu} and Definition \ref{defWKBaux}, we have the scaling law for $b > 0$:
    \be \eta_{b, E}(r) = b^{-1} \eta_{1, E}(br),\quad r^*_{b, E} = b^{-1}r^*_{1, E}. 
    \ee
    However, we choose not to apply it in our proof to coordinate with the case of high spherical classes (Proposition \ref{propWKBh}) which does not have such a simple scaling law. 
\end{remark}

\begin{proof}[Proof of Lemma \ref{lemWKBeta}]\mbox{}
    
\underline{\textbf{1. Proof of (1).}} 

\underline{1.1. Existence and uniqueness of $r^*_{b, E}$.} We consider the following three cases.

\mbox{}

(a) For $E \in \RR$, \eqref{eqargzetas} implies $r^*_{b, E} = \frac{2\sqrt E}{b}$. The sign of $\Im \left( \mu^\frac 43 \zeta(s(r)) e^{-\frac{2\pi i}{3}} \right)$ and $\Im \left(-\mu^\frac 43 \zeta(s(r))\right)$ follows $\arg \left(\mu^\frac 43 e^{-\frac {2 \pi i}{3}} \right)  = -\frac \pi 3$ and the monotonicity of $\zeta$. 


\mbox{}

(b) For $\Im E < 0$, \eqref{eqImzetasgn} and $s = \frac{br}{2\sqrt {E}}$ implies $\arg\zeta(s(r)) \in (0,\pi)$. Recall that $\arg \left(-\mu^\frac 43  \right) = -\frac{2\pi}3 + \frac 23 \arg E$, we see $\Im \left(-\mu^\frac 43 \zeta(r)\right) = 0$ is equivalent to $\arg \zeta(r) = \frac {2\pi}{3} - \frac 23 \arg E$. 

Note that $\arg (s(r)-1)$ decays monotonically from $\pi$ to $-\frac 12 \arg E$ when $r$ increases from $0$ to $\infty$, yielding the unique existence of $r_L$ and $r_R$ (depending on $b, E$) such that 
\be  \arg (s(r_L) - 1) = \frac {5\pi}6, \quad \arg (s(r_R) - 1) = \frac \pi 6;\quad \arg(s(r)-1) \in \left[ \frac \pi 6, \frac{5\pi}{6} \right]\,\, \forall r \in [r_L, r_R]. \label{eqrLrR}\ee
Since $|E-1| \le \delta_0 \ll 1$ implies $\arg E = O(\delta_0)$, so we also have 
\[ \arg\left(r_L - \frac{ \sqrt{E}}{2b}\right) = \frac{5\pi}{6} + O(\delta_0),\quad  \arg\left(r_R - \frac{\sqrt{E}}{2b}\right) = \frac{\pi}{6} + O(\delta_0),  \] 
and hence 
\be  r_{\sigma_\pm} - \frac{2\Re \sqrt E}{b}
= \pm\left(\frac{1}{\sqrt 3} + O(\delta_0)\right) \frac{2\Im \sqrt{ E}}{b} = \pm\left(\frac{1}{\sqrt 3} + O(\delta_0)\right)\frac{\Im E}{b},\label{eqrLrRasymp}
\ee
where $\sigma_+ = L$ and $\sigma_- = R$, using $\Im \sqrt{E} = (\frac 12 + O(\delta_0)) \Im E$.

Now we claim the following two ingredients: a rough estimate of $\arg \zeta$
\be
   \arg \zeta(s) = \arg (s-1) + O(\delta_0), \quad \forall\,  r \ge 0,
  \label{eqzetaarg}
\ee
 and the monotonicity of $\arg \zeta$ on $[r_L, r_R]$
 \be \pa_r \arg \zeta(r) < 0,\quad r \in [r_L, r_R].\label{eqzetaargmono} \ee 
 Together with the definition of $r_L, r_R$, \eqref{eqzetaarg} implies $r^* \in [r_L, r_R]$, and \eqref{eqzetaargmono} confirms the existence and uniqueness when $\delta_0 \ll 1$. 

 \textit{Proof of \eqref{eqzetaarg}.} Notice that $\arg s = -\frac 12\arg E = O(\delta_0)$, so $\{ ts + (1-t): t \in [0, 1] \} \subset \{ z: |\arg z| \lesssim \delta_0 \}$ and thus $|\arg (t(s-1) + 2)| \lesssim \delta_0$ for $t \in [0, 1]$. Then 
 \be
\frac 23 \zeta^\frac 32 = \int_0^{s-1} h^\frac 12 (h+2)^\frac 12 dh = e^{i\frac 32 \arg(s-1)} \int_0^{|s-1|} t^\frac 12 \left((s-1)\cdot \frac{t}{|s-1|} + 2\right)^\frac 12 dt \label{eqetaident3}
\ee
 implies \eqref{eqzetaarg}.


\textit{Proof of \eqref{eqzetaargmono}.} 
This is a direct computation with \eqref{eqzetaderiv}
\bee
 \pa_r \arg \zeta(r)  &=& \pa_r \Im \ln \zeta(r) = \Im \left[ \frac{\zeta_s(r)}{\zeta(r)} \pa_r s(r) \right] \\
 &=&  \Im \left[ (s-1)^{-1} \left( 1 + O_\CC(s-1)\right) \frac{b}{2 \sqrt E} \right] < 0,\quad r\in [r_L, r_R],
\eee
where in the last step we used $\Im (s-1) \sim  |s-1|$ from \eqref{eqrLrR}. 

\mbox{}

(c) For $\Im E > 0$, \eqref{eqImzetasgn} implies $\arg \zeta(s(r)) \in (-\pi, 0)$, and thus $\Im (\mu^\frac 43 \zeta(r) e^{-\frac{2 \pi i}{3}}) = 0 \Leftrightarrow \arg \zeta(r) = -\frac {2\pi} 3 - \frac 23 \arg E$. The proof is similar with $\arg(s(r_L) - 1) = -\frac{5\pi}{6}$, $\arg (s(r_R) - 1) = -\frac \pi 6$, and hence is omitted. In particular, \eqref{eqzetaarg} still holds. 

\mbox{}

\underline{1.2. Vanishing of $\Re \eta$ and estimates of $r^*_{b, E}$.}
The vanishing of $\Re \eta$ at $r^* \pm 0$.  follows that $|\arg(\eta)| \to  \frac 32\pi$ or $\frac 12 \pi$ when $r \to r_* \pm 0$. As for the estimate of $r^*_{b, E}$ \eqref{eqr*bE1}, it is self-evident by $r^*_{b, E} = \frac{2\sqrt{E}}{b}$ when $E \in \RR$; when $E \notin \RR$, it easily follow $r^*_{b, E} \in [r_L, r_R]$ and \eqref{eqrLrRasymp} from Step 1.1.

\mbox{}

\underline{1.3. The monotonicity \eqref{eqr*mono} and continuity of $r^*_{b, E}$.} They are conclusions  of implicit function theorem. Specifically, Let $F_b^+(r, \Re E, \Im E) := \Im \left( \mu^\frac 43 \zeta e^{-\frac{2\pi i}{3}} \right)$ which is apparently $C^\infty$ w.r.t. $r$, $\Re E$ and $\Im E$. Then $r^*_{b, E}$ is uniquely determined by $F_b^+(r^*_{b, E}, \Re E, \Im E) = 0$ when $\Im E \ge 0$. Because of \eqref{eqr*bE1}, we compute for $r \in [\frac{2-2\delta_0}{b}, \frac{2+2\delta_0}{b}]$ that
\bee
  \pa_r F_b^+(r, \Re E, \Im E) &=& \Im \left( \mu^\frac 43 \zeta_s \frac{b}{2\sqrt{E}} e^{-\frac{2\pi i}3}\right) = \Im \left( e^{-\frac{\pi i}{3}} E^\frac 16 b^\frac 13 (1 + O(\delta_0)) \right) < 0 \\
  \pa_{\Re E} F_b^+(r, \Re E, \Im E) &=& \Im \left( \frac 23 E^{-1} \mu^\frac 43 \zeta e^{-\frac{2\pi i}3} +   \mu^\frac 43 \zeta_s \left(-\frac{br}{4E^{\frac 32}} \right) e^{-\frac{2\pi i}3}  \right)\\
  &=& \Im \left( -e^{-\frac{\pi i}{3}} \frac{b^\frac 13 r}{2E^\frac 56} \left( 1 + O(\delta_0) \right) \right) > 0.
\eee
where we used $|s-1| \lesssim \delta_0$, $\zeta_s = 2^\frac 13 + O(\delta_0)$ from \eqref{eqzetaasymp}. By implicit function theorem, the non-degeneracy of $\pa_r F_b^+$ and differentiability of $F_b^+$ imply that $(\Re E, \Im E) \to r^*_{b, E}$ is differentiable for $\Im E > 0$ and continuous for $\Im E \ge 0$; and the sign above indicates that $\pa_{\Re E} r^*_{b, E} > 0$, which is the monotonicity \eqref{eqr*mono} for $\Im E \ge 0$. The result in the region $\Im E \le 0$ follows by considering $F_b^-(r, \Re E, \Im E) := \Im \left( -\mu^\frac 43 \zeta \right)$.

\mbox{}

\underline{1.4. Formula and monotonicity of $\Re \eta$ \eqref{eqformulaeta}-\eqref{eqReetamono}.}

(a) For $r > r^*$, \eqref{eqsignImzeta}-\eqref{eqsignImzeta2} imply the choice of branch $\arg (\mu^\frac 43 \zeta e^{-\frac{2\pi i}{3}}) \in (-\pi, \frac \pi 3)$ and $\zeta \in (-\frac {2\pi} 3 - \frac 23 \arg E, \frac {2\pi}{3} - \frac 23 \arg E) \subset (-\pi, \pi)$. 
So direct application of \eqref{eqetaident} gives 
\be \eta = \frac 23 e^{-\pi i} \mu^2 \zeta^\frac 32 = 
-i \frac{2E}{b} \int_1^{s(r)} (w^2 - 1)^\frac 12 dw  \label{eqeta00}
\ee
where the branch of square root is taken as \eqref{eqbranchsqrt}, 
and thus $|\arg(s-1)| \le \frac {5\pi}{6}$ from \eqref{eqrLrR} and $|\arg(s+1)| \lesssim \delta_0$ from $|E-1| \le \delta_0$. Hence $|\arg(s^2 - 1)^\frac 12| \le \frac{5\pi}{12} + O(\delta_0)$, and thus  
\bee
 \pa_r \Re \eta = \Re \left[  \frac{ds}{dr}\cdot (-i)\frac{2E}{b} (s^2 - 1)^\frac 12 \right] =  \Im \left( \frac{b^2 r^2}{4} - E \right)^\frac 12 \,\,\, \Rightarrow \,\,\,  {\rm sgn}(\pa_r \Re \eta) = -{\rm sgn} (\Im E).
\eee
where  $ \arg (\frac{b^2 r^2}{4} - E) = \arg(s^2 - 1) + O(\delta_0)$ so satisfies \eqref{eqReetaderiv}.

(b) For $r < r^*$, we claim that with the branch of $\zeta$ such that $\arg (\mu^\frac 43 \zeta e^{-\frac{2\pi i}3}) \in (-\pi, \pi)$, we have 
\bee
 \arg \zeta  \left| \begin{array}{ll}
     \in \left( \frac {2\pi} 3 - \frac 23 \arg E, \pi \right) &  \Im E < 0, \\
     = \pi & \Im E  = 0, \\
     \in (\pi, \frac {4\pi}{3} - \frac 23 \arg E) & \Im E > 0,
 \end{array}\right.\quad {\rm for\,\,} r < r^*.
\eee
This follows from \eqref{eqImzetasgn} and \eqref{eqsignImzeta}, \eqref{eqargzetas}, \eqref{eqImzetasgn} and \eqref{eqsignImzeta2} for the cases $\Im E > 0$, $\Im E = 0$, $\Im E < 0$ respectively. 



Now for $\Im E \le 0$, we can apply \eqref{eqetaident} to obtain \eqref{eqeta00} in the same way. Again with \eqref{eqrLrR}, $\Im E \le 0$ and \eqref{eqbranchsqrt}, in \eqref{eqeta00} we take branches $\arg (s-1) \in [\frac \pi 6, \pi]$, $0 \le \arg(s+1) \lesssim \delta_0$ and thus $ \arg (\frac{b^2 r^2}{4} - E) = \arg(s^2 - 1) + O(\delta_0) \in [ \frac{\pi}{6}, \pi ] + O(\delta_0)$, which implies
$
\pa_r \Re \eta = \Im \left( \frac{b^2 r^2}{4} - E \right)^\frac 12 > 0.$ 

For $\Im E > 0$, we apply \eqref{eqetaident} with $\zeta e^{-2\pi i}$ to compute
\be
  \eta = \frac 23 \left( e^{-\frac{2\pi i}{3}} \mu^\frac 43 \zeta \right)^\frac 32 = \mu^2 e^{-\pi i} \cdot \frac 23 \left(\zeta e^{-2 \pi i}\right)^\frac 32 e^{3\pi i} = i \frac{2E}{b} \int_1^{s(r)} (w^2 - 1)^\frac 12 dw  \label{eqeta01}
\ee
where the branch of square root is again as \eqref{eqbranchsqrt}. Then $\pa_r \Re \eta = - \Im \left( \frac{b^2 r^2}{4} - E \right)^\frac 12 > 0$ where  $\arg \left( \frac{b^2 r^2}{4} - E \right) \in [-\pi, -\frac \pi{6} ] + O(\delta_0)$. This is equivalent to take $\pa_r \Re \eta = \Im \left( \frac{b^2 r^2}{4} - E \right)^\frac 12$ with the branch $\arg \left( \frac{b^2 r^2}{4} - E \right) \in [\pi, \frac {11\pi}{6} ] + O(\delta_0)$, which implies the formulation in \eqref{eqReetaderiv}.

\mbox{}

\underline{\textbf{2. Proof of (2).}} The asymptotics of $|\eta|$ \eqref{eqetaabs} is a simple consequence of asymptotics of $\zeta$ \eqref{eqzetaasymp} and non-vanishing of $\zeta$ on $\{ |\arg s| \le \frac \pi 4, s \neq 1 \}$ from Lemma \ref{lemzeta32} (1). 

For \eqref{eqReetaext}, when $r > r^*$, we compute using \eqref{eqformulaeta} and the expansion \eqref{eqetaident2}
\bee
  \Re \eta &=& \Re \left[  -i \frac{2E}b \left( \frac 12 s(s^2 - 1)^\frac 12 - \frac 12 \ln \left(s + (s^2 - 1)^\frac 12\right) \right)  \right] \\
  &=&  \Im \left[ \frac 14 r \left(b^2 r^2 - 4E \right)^\frac 12 - \frac{E}{b} \left( \ln \left( br + (b^2 r^2 - 4E)^\frac 12 \right) -  \ln (2\sqrt{E}) \right) \right] 
\eee
Since $\Im \left(b^2 r^2 - 4E \right)^\frac 12 = O_\RR \left( \frac{\Im E}{(b^2 r^2 - 4 \Re E)^\frac 12} \right)$ and 
\bee 
\ln \left( br + (b^2 r^2 - 4E)^\frac 12 \right) &=&  \ln \left| br + (b^2 r^2 - 4E)^\frac 12\right| + i   \arctan \frac{\Im (b^2 r^2 - 4E)^\frac 12 }{br + \Re  (b^2 r^2 - 4E)^\frac 12} \\
&=&
\ln (br) + O_\CC(1)
\eee
supposing $br \ge 2 \sqrt{\Re E}$, 
we have
\bee
 \Re \eta = -\frac{\Im E(\ln (br) + O_\CC (1)) }{b} +O_\RR \left( \frac{r \Im E}{(b^2 r^2 - 4 \Re E)^\frac 12}  \right) + \frac{\Im \left( E \ln (2\sqrt E) \right) }{b}.
\eee
Thus when $r \ge \frac 4b > 2 \sqrt{\Re E}$, we have  $\frac{r}{(b^2 r^2 - 4\Re E)^\frac 12} \lesssim b^{-1}$ and hence \eqref{eqReetaext}. 

For \eqref{eqetaconv}, with $r \le b^{-\frac 12} \ll r^*$, we use \eqref{eqformulaeta} to compute similarly as \eqref{eqzetaasymp}
\bea
 \eta &=& \left| \begin{array}{ll}
     -i\frac{2E}b\left( -\int_0^1 i (1-w^2)^\frac 12dw + \int_0^{s(r)} i (1-w^2)^\frac 12 dw \right)  & \Im E \le 0 \\
     i\frac{2E}b\left( -\int_0^1 (-i)(1-w^2)^\frac 12dw + \int_0^{s(r)} (-i)(1-w^2)^\frac 12 dw \right)  & \Im E > 0
 \end{array}\right.\nonumber  \\
 &=& -\frac{\pi E}{2b} + \frac{2E}{b} (s + O_\CC (s^3)) \label{eqetacomputerleb-1}
\eea
which implies \eqref{eqetaconv} with $|s| \lesssim b^\frac 12$.

Lastly, we prove \eqref{eqetaRe}, the refined asymptotics of $\Re \eta$, when $|\Im E| \le bI_0$. When $r \ge \frac 4b$, it follows \eqref{eqReetaext}, in particular we have $|\Re \eta(\frac 4b)| \lesssim 1$. Hence the boundary value $\Re \eta (r^*+0) = 0$ and the monotonicity \eqref{eqReetamono} implies the estimate on $r \in [r^*, \frac 4b]$. For $r \in [0, r^*] - I_c^{b, E}$, we again use \eqref{eqformulaeta} for $\Im E \le 0$ to derive 
\bee
  \eta &=&  -i \frac{2E}{b} \left( -\int_s^1 i(1-w^2)^\frac 12 dw \right) \\
  &=& -\frac{2E}{b} \left[ \int_{\Re s}^1 (1-t^2)^\frac 12 dt - i\int_{0}^{\Im s} \left( 1 - (\Re s + i\tau)^2 \right)^\frac 12 d\tau  \right] \\
  &=& -\left( \frac{2\Re E}{b} + i O(1) \right) \bigg\{ \int_{\frac{br}{2\sqrt{\Re E}}}^1 (1-t^2)^\frac 12 dt + O(b^2) \\
  && \quad - i \int_0^{O(b)}\left[ \left(1 - \frac{b^2 r^2}{4\Re E} + O(b^2)  \right)^\frac 12 + O_\CC\left( \frac{br \tau}{(4\Re E - b^2 r^2 + O(b^2))^\frac 12} \right) \right] d\tau \bigg\}  \\
  &=& -\frac{2\Re E}{b} \int_{\frac{br}{2\sqrt{\Re E}}}^1 (1-t^2)^\frac 12 dt + O\left( b(2\sqrt{\Re E} - br)^{-\frac 12} \right) + iO\left((2\sqrt{\Re E} - br)^{\frac 12}  \right)
\eee
where we used $\Re \sqrt E = \sqrt{\Re E} + O((\Im E)^2)$, $\Im \sqrt E = \frac 12 \Im E (1 + O(\delta_0))= O(b)$ and that $\frac 12 b^{\frac 23} \le 4\Re E - b^2 r^2 \sim |4E - b^2 r^2| \sim |2\sqrt{E} - br| \lesssim 1$ according to \eqref{eqIcdef} and that $I_0 \le (10b^\frac 13)^{-1}$. That concludes \eqref{eqetaRe} for $\Im E \le 0$. The case of $\Im E > 0$ follows similarly.
\end{proof}


%

Next, we prove properties for these WKB solutions.

\begin{proposition}[Linear WKB solution with correction]\label{propWKB} Let $0 < \delta_0 \ll 1$ from Lemma \ref{lemWKBeta}. For any $I_0 > 0$ there exists $b_0 = b_0(I_0) \ll 1$ such that for any $0 < b \le b_0$ and $E \in \{ z \in \CC: |z-1| \le \delta_0, \Im z \le bI_0 \}$, the following statements hold. Here $s, \zeta, \mu, \psi_j^{b, E}, h_{b, E}$ and $\eta_{b, E}, \omega_{b, E}^\pm, I_c^{b, E}$ are from Definition \ref{defWKBappsolu} and Definition \ref{defWKBaux} respectively.



    \begin{enumerate}
        \item Corrected equation: $\psi_j^{b, E}$ for $j = 1, 2, 3, 4$ solves 
        \[ \left( \pa_r^2 - E + \frac{b^2 r^2}{4}\right) \psi_j^{b, E} = h_{b, E} \psi_j^{b, E} \]
        with correction $h_{b, E}$ satisfying the estimates for all $n \ge 1$, 
        \be |h_{b, E}(r)| \lesssim \min\{ b^2, r^{-2} \},\quad r \ge 0; \quad |\pa_r^n h_{b, E}(r)| \lesssim_{n} r^{-2-n},\quad r \ge \frac 4b. \label{eqbddh}\ee
        \item Connection formula:
\be \psi_1^{b, E} + e^{-\frac{2\pi i}{3}}\psi_2^{b, E} +  e^{\frac{2\pi i}{3}}\psi_3^{b, E} = 0,\quad 2\psi_4^{b, E}(r) =  e^{\frac{\pi i}{6}} \psi_1^{b, E} +  e^{-\frac{\pi i}{6}} \psi_3^{b, E}. \label{eqconnect}
\ee
\item Wronskian:
        \bea
        \mathcal{W}(\psi_4^{b, E}, \psi_2^{b, E}) = \frac{b^\frac 13 E^\frac 16 }{2^\frac 43 \pi},\quad  \mathcal{W}(\psi_1^{b, E}, \psi_3^{b, E}) = \frac{-i b^\frac 13 E^\frac 16}{2^\frac 43 \pi}. \label{eqWronski2}
        \eea
\item $\RR$-valued components for $E = 1$: $\psi_2^{b, 1}(r), \psi_4^{b, 1}(r), h_{b, 1}(r)\in \RR$ for $r \le \frac 2b$. Moreover, $\psi_1^{b, 1}$ is non-vanishing on $\RR$. 
    \item Asymptotics and bounds of $\psi_j^{b, E}$ and $(\psi_j^{b, E})'$: for $|E-1| \le \delta_0$ and $\Im E \le bI_0$, we have 
        \bea 
         \psi_1^{b, E}(r)&=& \begin{cases}
            \frac{e^{\frac{\pi i}{6}}e^{-\eta}}{2\sqrt{\pi}\mu^\frac 13 (s^2 - 1 )^\frac 14 } (1 + O(\eta^{-1})) 
            &r \in  [r^*_{b, E},\infty) - I_c^{b, E}, \\
             \frac{e^{-\frac{\pi i}{12} }e^{-\eta} }{2\sqrt{\pi}\mu^\frac 13 (1-s^2)^\frac 14 } (1 + O(\eta^{-1})) 
             &  r \in  [0, r^*_{b, E}] - I_c^{b, E}, \\
            O (1), & r \in I_c^{b, E};
        \end{cases}\label{eqWKBasymp2} \\
        \psi_2^{b, E}(r) &=& \begin{cases}
         \frac{e^{\frac{\pi i}{12} }e^{\eta} }{2\sqrt{\pi}\mu^\frac 13 (1-s^2)^\frac 14 } (1 + O(\eta^{-1})) 
             & r\in [0, r^*_{b, E}] - I_c^{b, E}, \\
            O(1), & r \in I_c^{b, E};
        \end{cases}\label{eqWKBasymp3} \\ 
       \psi_3^{b, E}(r)&=& \begin{cases}
            \frac{e^{\eta}}{2\sqrt{\pi}\mu^\frac 13 (s^2 - 1 )^\frac 14 } (1 + O(\eta^{-1})) 
            & r \in [4b^{-1}, \infty),\\
            O(\mu^{-\frac 13}|s^2 - 1|^{-\frac 14}e^\eta) 
            & r \in [r^*_{b, E}, 4b^{-1}] - I_c^{b, E},\\
            O(1), & r \in I_c^{b, E};
        \end{cases}\label{eqWKBasymp1} 
        \eea
        and
        \bea
         (\psi_1^{b, E})'(r)&=& \begin{cases}
            \frac{i\sqrt{E} (s^2 - 1)^\frac 14 e^{\frac{\pi i}{6}}e^{-\eta}}{2\sqrt{\pi}\mu^\frac 13} (1 + O(\eta^{-1})) 
            & r \in [r^*_{b, E},\infty) - I_c^{b, E},\\
           \frac{- \sqrt{E}(1-s^2)^\frac 14 e^{-\frac{\pi i}{12}}e^{-\eta}}{2\sqrt{\pi}\mu^\frac 13 } (1 + O(\eta^{-1})) 
           &r\in [0, r^*_{b, E}] - I_c^{b, E}, \\
            O(b^\frac 13), & r \in I_c^{b, E};
        \end{cases}\label{eqWKBasymp2d}\\
        (\psi_2^{b, E})'(r)&=& \begin{cases}
           \frac{\sqrt{E} (1-s^2)^\frac 14 e^{\frac{\pi i}{12}}e^{\eta}}{2\sqrt{\pi}\mu^\frac 13 } (1 + O(\eta^{-1}))
           & r\in [0, r^*_{b, E}] - I_c^{b, E},\\
            O(b^\frac 13), & r \in I_c^{b, E};
        \end{cases}\label{eqWKBasymp3d}\\
          (\psi_3^{b, E})'(r)&=& \begin{cases}
            \frac{-i\sqrt{E}(s^2 - 1)^\frac 14 e^{\eta}}{2\sqrt{\pi}\mu^\frac 13} (1 + O(\eta^{-1})) 
            & r \in [4b^{-1}, \infty),\\
            O(\mu^{-\frac 13}|s^2 - 1|^{\frac 14} e^{\eta} )
            & r \in [r^*_{b, E}, 4b^{-1}] - I_c^{b, E},\\
            O(b^\frac 13), & r \in I_c^{b, E};
        \end{cases}\label{eqWKBasymp1d} 
        \eea
        Here we choose $\arg (s^2 - 1) \in (-\pi, \pi]$ when $r > r^*_{b, E}$ and $\arg (1 - s^2) \in (-\pi, \pi]$ when $r < r^*_{b, E}$. If additionally $\Im E \ge - bI_0$, then we have 
        \be \pa_r^k \psi_4^{b, E} =  e^{\frac{\pi i}{6}} \pa_r^k \psi_1^{b, E}(1 + O(\eta^{-1})) \quad {\rm for}\,\, r \in [0, r^*_{b, E}] - I_c^{b, E}, \,\, k \in \{ 0, 1\}. \label{eqWKBasymp4L} \ee
\item Derivatives estimates in the exterior region for $\psi_1^{b, E}$, $\psi_3^{b, E}$: Let $j_+ =3$ and $j_- = 1$. For any $n \ge 1$, 
        \bea
        |D_{\pm; b, E}^n \psi_{j_\pm}^{b, E}(r)| &\lesssim_n& \left| \begin{array}{ll}
             r^{-n} |\psi_{j_\pm}^{b, E}(r)|  &r \ge \frac 4b, \\
             b^n |s-1|^{-n} |\psi_{j_\pm}^{b, E}(r)| & r\in \left[r^*, \frac 4b \right] - I_c^{b, E},
        \end{array}\right. 
         \label{eqpsibderiv1} \\
        \left|\left(\pa_r \pm \frac{ibr}{2}\right)^n\psi_{j_\pm}^{b, E}(r)\right| &\lesssim_n& (br)^{-n} |\psi_{j_\pm}^{b, E}(r)| 
        \quad r \ge \frac 4b.\label{eqpsibderiv2}
        \eea
        where $D_{\pm; b, E} = \pa_r \pm i \left( \frac{b^2 r^2}{4} - E\right)^\frac 12$.
        \item  Derivatives with respect to $E$: $\psi_j^{b, E}(r)$ for $j = 1, 2, 3, 4$, $r > 0$ are analytic with respect to $E$. For any $N \ge 0$, we have the following estimates.
        
        \noindent(a) Boundedness: for $k = 0, 1$, 
        \bea
           \left|\pa_r^k \pa_E^N \psi_1^{b, E}(r)\right| &\lesssim_N& \left\{ \begin{array}{ll}
               (br^2)^N  (br)^k \omega^-_{b, E} & r \ge r^*_{b, E}  \\
               b^{-N} \omega^-_{b, E} & r \le r^*_{b, E}
           \end{array} \right. \label{eqpsibEderiv1} \\
           \left|\pa_r^k \pa_E^N \psi_2^{b, E}(r)\right| &\lesssim_N& b^{-N} \omega^+_{b, E}, \quad r \le r^*_{b, E} \label{eqpsibEderiv2}\\
           \left|\pa_r^k \pa_E^N \psi_3^{b, E}(r)\right| &\lesssim_N& (br^2)^N  (br)^k \omega^+_{b, E},\quad r \ge r^*_{b, E} \label{eqpsibEderiv3} \\
            \left|\pa_r^k \pa_E^N \psi_4^{b, E}(r)\right| &\lesssim_N& b^{-N} \omega^-_{b, E}, \quad r \le r^*_{b, E} \label{eqpsibEderiv4}
        \eea
        
        \noindent (b) Improved bounds and asymptotics in the interior region: Define the normalization coefficients
\be \kappa_{b, E}^\pm = \left( \frac{e^{\frac{\pi i}{12}} e^{\mp \frac{\pi E}{2b}} }{2\sqrt{\pi} \mu_{b, E}^\frac 13 } \right)^{-1}  = \left( \frac{b^\frac 16 e^{\mp \frac{\pi E}{2b}} }{2^\frac 76 \pi^\frac 12 E^\frac 16} \right)^{-1},  \label{eqnormalcoeff}
    \ee
for $l_- = 4$, $l_+ = 2$, and $k = 0, 1$, we have 
\begin{align}
  \left| \pa_r^k \pa_E^N \left( \kappa_{b, E}^\pm \psi_{l_\pm}^{b, E} \right) \right| &\lesssim_N  r^N |\kappa_{b, E}^\pm| \omega_{b, E}^\pm, \quad r \le r^*_{b, E}; \label{eqpsibEderiv4} \\
   \pa_r^k \pa_E^N \left( \kappa_{b, E}^\pm \psi_{l_\pm}^{b, E} \right) &= \pa_r^k \pa_E^N \left( e^{\pm \sqrt{E} r}\right)+ O_\CC (b^\frac 12 \la r\ra^N e^{\pm \sqrt E r}),\quad r \le b^{-\frac 12}.\label{eqpsibEderiv5}
\end{align}

        
        \noindent (c) Derivative estimates in the exterior region: Let $j_+ =3$ and $j_- = 1$. For any $n \ge 1$,
        \bea
        |D_{\pm; b, E}^n \pa_E^N \psi_{j_\pm}^{b, E}(r)| &\lesssim_{n, N}& (br^2)^N r^{-n} \omega_{b, E}^\pm,\quad r \ge \frac 4b. \label{eqpsibEderiv6}
        \eea

        \noindent (d) Boundedness of correction:
         \be \begin{split}
         |\pa_E^N h_{b, E}(r)| &\lesssim_N \min\{ b^2, r^{-2} \},\quad r \ge 0; \\ |\pa_r^n \pa_E^N h_{b, E}(r)| &\lesssim_{n, N} r^{-2-n},\quad r \ge \frac 4b, \,\,n \ge 1. \label{eqbddh2}\end{split}\ee

\end{enumerate}
\end{proposition}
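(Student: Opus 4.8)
The plan is to push everything back to standard facts about the Airy function through the Liouville-type substitution recorded in Remark \ref{rmkWKB}, and then to carry out the branch bookkeeping dictated by Lemma \ref{lemzeta32} and Lemma \ref{lemWKBeta}. First one fixes $b_0 = b_0(I_0) \ll 1$ so that $\Im E \le bI_0$ keeps $E$ inside the regime of Lemma \ref{lemWKBeta} (in particular $|\Im E| \ll \delta_0$). Under $s = \frac{br}{2\sqrt E}$, $\Upsilon = \zeta_s^{1/2}\psi$ and $w = \mu^{4/3}\zeta$, the operator $\pa_r^2 - E + \frac{b^2 r^2}{4}$ is conjugated, up to the nonvanishing factor $\frac{b^2}{4E}\zeta_s^{3/2}$, to $\pa_\zeta^2 - \mu^4\zeta - F(\zeta)$ with $F$ as in \eqref{eqPsiF}, and $\pa_\zeta^2 - \mu^4\zeta$ is in turn the Airy operator $\pa_w^2 - w$; since $\Ai(c_j w)$ and $\mathbf{Bi}(c_j w)$ solve the Airy equation exactly, $\psi_j^{b,E} = \zeta_s^{-1/2}\Ai(c_j\mu^{4/3}\zeta)$ satisfies $(\pa_r^2 - E + \frac{b^2 r^2}{4})\psi_j^{b,E} = h_{b,E}\psi_j^{b,E}$ with $h_{b,E} = -\frac{b^2}{4E}\zeta_s^2 F$, which is exactly \eqref{eqcorrection}; this is the identity in part (1). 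The bounds $|h_{b,E}| \lesssim \min\{b^2, r^{-2}\}$ and $|\pa_r^n h_{b,E}| \lesssim r^{-2-n}$ for $r \ge 4/b$ follow from this closed form and the asymptotics of $\zeta$ in Lemma \ref{lemzeta32}(3): near $s=1$ the two singular terms in \eqref{eqcorrection} cancel to leading order, as one reads off from $\zeta = 2^{1/3}(s-1) + O((s-1)^2)$ in \eqref{eqzetaderiv}, giving the $O(b^2)$ bound; for $|s| \gtrsim 1$ one uses $\zeta \sim (3/4)^{2/3} s^{4/3}$ and $s \sim br$, so $|h_{b,E}| \lesssim b^2 s^{-2} \sim r^{-2}$, and each $\pa_r = \frac{b}{2\sqrt E}\pa_s$ contributes one more factor $r^{-1}$. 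Parts (2)--(4) are then bookkeeping: (2) is the Airy connection identities of Lemma \ref{lemAiry1}(2) together with \eqref{eqdefBi}, read through $w = \mu^{4/3}\zeta$; (3) uses that the operator in (1) has no first-order term, so $\calW(\psi_j^{b,E},\psi_k^{b,E})$ is $r$-independent, and the chain rule with $\frac{dw}{dr} = \mu^{4/3}\zeta_s\frac{b}{2\sqrt E}$ reduces it to $\mu^{4/3}\frac{b}{2\sqrt E}\calW_w[F_j,F_k]$, into which one plugs the Airy Wronskians of Lemma \ref{lemAiry1}(2); (4) follows since for $E=1$, $r<2/b$ one has $s\in(-1,1)$, $\zeta\in(-\infty,0)$ and $\zeta_s>0$ real by \eqref{eqargzetas}, whence $e^{2\pi i/3}\mu^{4/3}\zeta = -(2/b)^{2/3}\zeta\in(0,\infty)$, so $\psi_2^{b,1},\psi_4^{b,1},h_{b,1}\in\RR$ ($\Ai,\mathbf{Bi}$ being real on the positive axis and \eqref{eqcorrection} being real), while the argument of $\Ai$ in $\psi_1^{b,1}$ avoids the ray $\arg = \pi$ for all $r>0$, so $\psi_1^{b,1}$ never vanishes (all zeros of $\Ai$ being negative real).

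For part (5) one inserts the large-argument asymptotics of $\Ai$ and $\Ai'$ from Lemma \ref{lemAiry1}(1) into $\psi_j^{b,E} = \zeta_s^{-1/2}\Ai(c_j\mu^{4/3}\zeta)$ and $(\psi_j^{b,E})' = \frac{b}{2\sqrt E}(c_j\mu^{4/3}\zeta_s^{1/2}\Ai'(c_j\mu^{4/3}\zeta) - \tfrac12\zeta_s^{-3/2}\zeta_{ss}\Ai(c_j\mu^{4/3}\zeta))$, using $\zeta_s^{-1/2} = (\zeta/(s^2-1))^{1/4}$ and the identity $\eta_{b,E} = \frac23(e^{-2\pi i/3}\mu^{4/3}\zeta)^{3/2}$ to identify the exponential factors. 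The real content is the branch bookkeeping: Lemma \ref{lemWKBeta}(1) fixes, on each side of the turning point $r^*_{b,E}$, inside $I_c^{b,E}$, and for both signs of $\Im E$, the value of $\arg(e^{-2\pi i/3}\mu^{4/3}\zeta)$ (hence which of the $|\arg z|\le\frac56\pi$ expansion or the oscillatory $|\arg z|\le\frac13\pi$ expansion of $\Ai$ applies) as well as the choice between $\arg(s^2-1)$ and $\arg(1-s^2)$; combined with $\zeta_s^{-1/2}$ this reproduces exactly the unimodular prefactors $e^{i\pi/6}$, $e^{-i\pi/12}$, $e^{i\pi/12}$ in \eqref{eqWKBasymp2}--\eqref{eqWKBasymp1d}, with $O(\eta^{-1})$ errors coming from the Airy remainders and the lower bound on $|\eta_{b,E}|$ outside $I_c^{b,E}$ from \eqref{eqetaabs}. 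On $I_c^{b,E}$ the argument of $\Ai$ is $O(1)$ and $\zeta_s^{-1/2} = O(1)$ (since $\zeta_s(1) = 2^{1/3}$), giving $\psi_j^{b,E} = O(1)$ and $(\psi_j^{b,E})' = O(b^{1/3})$. Finally \eqref{eqWKBasymp4L} follows from the connection formula \eqref{eqconnect} once one notes, from \eqref{eqetaconv} and the monotonicity \eqref{eqReetamono}, that $|\psi_3^{b,E}/\psi_1^{b,E}| \lesssim e^{2\Re \eta_{b,E}}$ on $[0,r^*_{b,E}]\setminus I_c^{b,E}$, which is far below $|\eta_{b,E}|^{-1}$.

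For part (6) one uses that $\eta_{b,E}'(r) = \pm i(\tfrac{b^2 r^2}{4} - E)^{1/2}$ with the branches of \eqref{eqReetaderiv}, so that after the change of variables $D_{\pm;b,E}$ becomes, up to the prefactor $\zeta_s^{-1/2}$ (whose $\pa_r$-derivatives are strictly lower order), a multiple of the operator $\pa_z + z^{1/2}$ appearing in \eqref{eqAiryderiv}; applying \eqref{eqAiryderiv} to $\psi_{j_\pm}^{b,E}$ gives \eqref{eqpsibderiv1}, and \eqref{eqpsibderiv2} follows because $\pm\tfrac{ibr}{2} \mp i(\tfrac{b^2 r^2}{4} - E)^{1/2} = O(r^{-1})$ for $r \ge 4/b$, so $(\pa_r \pm \tfrac{ibr}{2})^n$ expands into $D_{\pm;b,E}$-powers against bounded coefficients. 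For part (7), analyticity in $E$ is inherited from analyticity of $E \mapsto (\mu, s)$ and $s \mapsto \zeta$; the $\pa_E$-estimates (a), (c), (d) follow by differentiating the closed forms above and the asymptotics of parts (1) and (5), with each $\pa_E$ hitting the phase or a prefactor costing a factor $\lesssim br^2$ for $r \ge r^*_{b,E}$ (via $\pa_E\eta_{b,E}$, controlled through \eqref{eqReetaderiv}) and $\lesssim b^{-1}$ for $r \le r^*_{b,E}$ (via \eqref{eqetaconv}). The improved interior bounds (b) need one more step: multiplying by $\kappa_{b,E}^\pm$ removes the factor $e^{\mp\pi E/(2b)}$ from the leading asymptotics \eqref{eqetaconv}, after which $\kappa_{b,E}^\pm\psi_{l_\pm}^{b,E} - e^{\pm\sqrt E r}$ solves $(\pa_r^2 - E)u = (\tfrac{b^2 r^2}{4} - h_{b,E})\kappa_{b,E}^\pm\psi_{l_\pm}^{b,E}$; a Duhamel estimate on $[0, b^{-1/2}]$, using $\int_0^{b^{-1/2}}(\tfrac{b^2 s^2}{4} + \min\{b^2, s^{-2}\})\,ds \lesssim b^{1/2}$ and the already established $C^1$-bound $\lesssim e^{\pm\sqrt E r}$, yields \eqref{eqpsibEderiv5}, and differentiating the Duhamel identity in $r$ and $E$ propagates it.

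The main obstacle is the branch bookkeeping of part (5): one must keep the discontinuity locus of $\eta_{b,E}$ (where $\arg(e^{-2\pi i/3}\mu^{4/3}\zeta)$ jumps between $\frac32\pi$ and $\frac12\pi$) aligned with the restricted sectors $|\arg z|\le\frac56\pi$ and $|\arg z|\le\frac13\pi$ of validity of the Airy asymptotics in Lemma \ref{lemAiry1}(1), simultaneously in the four zones $r < r^*_{b,E}$, $r \in I_c^{b,E}$, $r^*_{b,E} < r < 4/b$, $r \ge 4/b$ and for both signs of $\Im E$, so that the exact prefactors in \eqref{eqWKBasymp2}--\eqref{eqWKBasymp1d} emerge and the $O(\eta^{-1})$ remainders stay uniform in $b$ and $E$; this is where the full force of Lemma \ref{lemWKBeta} is used. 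A secondary technical point is the $O(b^{1/2})$ Duhamel refinement in (7)(b), the one place where reading off Airy asymptotics alone does not give the stated order.
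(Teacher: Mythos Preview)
Your proposal is correct and follows essentially the paper's approach for parts (1)--(6) and (7)(a),(c),(d): reduce to the Liouville substitution of Remark~\ref{rmkWKB}(2), feed in the Airy asymptotics of Lemma~\ref{lemAiry1}, and use Lemma~\ref{lemWKBeta} for the branch bookkeeping (the paper records the relevant argument ranges as \eqref{eqzetaarg1}--\eqref{eqzetaarg2} and reduces the $(\psi_j)'$ formulas to the single inequality \eqref{eqzetasszetas}). Two places where your route differs are worth flagging. For the non-vanishing of $\psi_1^{b,1}$ in (4), the paper instead uses \eqref{eqconnect} to write $e^{i\pi/6}\psi_1^{b,1} = \psi_4^{b,1} + \tfrac{i}{2}\psi_2^{b,1} = \tfrac12\zeta_s^{-1/2}(\Bi+i\Ai)(e^{2\pi i/3}\mu^{4/3}\zeta)$ and invokes $\Ai^2+\Bi^2>0$ on $\RR$; your argument that the trajectory of the Airy argument stays on the rays $\arg = 2\pi/3$ and $\arg=-\pi/3$, hence avoids the negative real axis where all zeros of $\Ai$ lie, is a clean alternative.

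For (7)(b) the paper does \emph{not} use Duhamel. It writes the explicit factorization
\[
e^{\frac{\pi i}{6}}\kappa_{b,E}^-\psi_1^{b,E} \;=\; \bigl(2\sqrt\pi\,\xi^{1/4}e^{\frac23\xi^{3/2}}\Ai(\xi)\bigr)\,(1-s^2)^{-1/4}\,e^{-\frac{2E}{b}\int_0^s(1-w^2)^{1/2}dw},
\]
with $\xi=e^{-2\pi i/3}\mu^{4/3}\zeta$, and differentiates each factor separately: the first is $1+O(\xi^{-3/2})$ with $\pa_\xi^n$-derivatives controlled by \eqref{eqAiryderiv2}, and the other two are elementary (recorded as \eqref{eqbddpalzetaxi3}--\eqref{eqbddpalzetaxi4}). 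This gives \eqref{eqpsibEderiv4}--\eqref{eqpsibEderiv5} directly. Your Duhamel route is viable but needs two caveats you did not spell out: (i) there is a sign typo, the right-hand side is $-(\tfrac{b^2r^2}{4}-h_{b,E})\kappa\psi$; (ii) to avoid the exponentially growing homogeneous piece you must integrate from $r_*=b^{-1/2}$ inward for $l_-=4$ but from $r=0$ outward for $l_+=2$, with the boundary data in each case read off from the asymptotics of part (5) combined with \eqref{eqetaconv}. Once this is set up, propagating $\pa_E^N$ through Duhamel works but is more bookkeeping than the paper's factorization.
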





\begin{proof} 
To begin with, we require 
\be b_0 \le (10 (I_0 + 1))^{-3} \label{eqrequestb0} 
\ee
to ensure \eqref{eqetaRe}. We will further shrink $b_0$ to ensure $\arg \zeta$ in the correct range for $r \notin I_c^{b, E}$ during the proof of (5). 

\mbox{}

\underline{\textbf{1. Proof of (1)-(4).}} For (1), the equation follows from the derivation of the equation in Remark \ref{rmkWKB} (2). For \eqref{eqbddh}, we first compute from \eqref{eqcorrection} that
\be
 h_{b, E} = \frac{b^2}{4E} \left(-\frac 34 \zeta_s^{-2} \zeta_{ss}^2 + \frac 12 \zeta_s^{-1} \zeta_{sss} \right). \label{eqcorrection2}
\ee
Notice that from the definition of $\zeta$ \eqref{eqzetadef}, and analyticity w.r.t. $s$ and non-vanishing of $\zeta_s$ from Lemma \ref{lemzeta32}, we have for any $k \ge 1$, 
\[ |\zeta_s| \sim 1,\quad |\pa_s^k \zeta| \lesssim_k 1,\quad s \in \{ x+iy \in \CC: x  \in [0, 4], |y| \le 1 \}, \]
which implies the estimate in $[0, \frac 4b]$ of \eqref{eqbddh}. 
For $r \ge \frac 4b$, we have $|s| \ge \frac 32$, $|\arg s| \ll 1$ with $|E - 1| \ll 1$. Thus we can compute $|\zeta^3| \sim |s|^4$ and $|\pa_s^k (\zeta^{-3})| \lesssim_k |s|^{-4-k}$ for $k \ge 1$ from \eqref{eqetaident}. Applying this to the second formulation in \eqref{eqcorrection} yields \eqref{eqbddh} for $r \ge \frac 4b$. 

(2) and (3) directly follow the connection formula and Wronskian for Airy functions in Lemma \ref{lemAiry1} (2) and \eqref{eqdefBi}. 

For (4), when $E = 1$ and $r \in [0, 2b^{-1}]$, we have $s(r) \le 1$, $\zeta(r) \le 0$ and $e^{\frac{2\pi i}{3}} \mu^\frac 43 \zeta \ge 0$, $e^{-\frac{2\pi i}{3}} \mu^\frac 43 \zeta = \overline{ \mu^\frac 43 \zeta}$. So $h_{b, 1}(r) \in \RR$ from its definition \eqref{eqcorrection}. Noticing that $\overline{\Ai(z)} = \Ai(\bar z)$ from the integral definition \eqref{eqdefAi}, combined with \eqref{eqdefBi}, we have
$\psi_2^{b, 1}(r) \in \RR$ and $\psi_4^{b, 1}(r) = \Re(e^\frac{\pi i}{6}\psi_1^{b, 1}(r)) \in \RR$ for $r \le \frac 2b$. Besides, (2) indicates 
\[ e^{\frac{\pi i}{6}}\psi_1^{b, 1} = \psi_4^{b, 1} + \frac i2 \psi_2^{b, 1} = \frac 12 \zeta_s^{-\frac 12} \left( \mathbf{Bi} + i \Ai\right)\left(e^{\frac{2\pi i}{3}} \mu^\frac 43 \zeta\right), \]
Therefore, the non-vanishing of $\psi_1^{b, 1}$ follows that of $|\zeta_s^{-1}|$ from Lemma \ref{lemzeta32} and of $\Ai^2 + \Bi^2$ on $\RR$ from \cite[9.8(iii), 9.9(i)]{MR2723248}.

\mbox{}

\underline{\textbf{2. Proof of (5).}} Now we prove asymptotics of $\psi_j^{b, E}$, $(\psi_j^{b, E})'$ for $j = 1, 2, 3$ via $\eta$. Notice that $\psi_4^{b, E} = e^{\frac{\pi i }{6}} \psi_1^{b, E} - \frac i2 \psi_2^{b, E}$ by \eqref{eqconnect}. The asymptotics of $\pa_r^k \psi_4^{b, E}$ \eqref{eqWKBasymp4L} when $|\Im E| \le bI_0$ follows from those of $\pa_r^k\psi_1^{b, E}$ and $\pa_r^k\psi_2^{b, E}$ thanks to $\Re \eta < 0$ for $r < r^*_{b, E}$ from \eqref{eqReetamono} and the smallness $|e^{\eta}| \lesssim |(\Re \eta)^{-1}| \sim |\eta^{-1}|$  for $r \in [0, r^*_{b, E}] - I_c^{b, E}$ using \eqref{eqetaabs} and \eqref{eqetaRe}.

Before specific computation of asymptotics, we record the basic bounds of $\arg (\mu^\frac 43 e^{-\frac{2\pi i}{3}} \zeta)$ using  \eqref{eqImzetasgn}, \eqref{eqsignImzeta} and \eqref{eqsignImzeta2}: 
\bea
  \arg \left(\mu^\frac 43 \zeta e^{-\frac{2\pi i}3} \right) \left| \begin{array}{ll}
     \in \left( -\frac \pi 3 + \frac 23 \arg E, \frac \pi 3\right)  &  \Im E < 0 \\
     = -\frac \pi 3  &  \Im E = 0 \\
     \in \left(-\pi, -\frac \pi 3 + \frac 23 \arg E\right) & \Im E > 0
  \end{array} \right.\quad r > r^*; \label{eqrangeargself1}\\
  \arg \left(\mu^\frac 43 \zeta e^{-\frac{2\pi i}3} \right) \left| \begin{array}{ll}
     \in \left(\frac \pi 3, \frac{2\pi}3 + \frac 23 \arg E \right)  &  \Im E < 0 \\
     = \frac {2\pi} 3  &  \Im E = 0 \\
     \in \left( \frac{2\pi}{3} + \frac 23\arg E, \pi \right) & \Im E > 0
  \end{array} \right.\quad r < r^*.\label{eqrangeargself2}
\eea
Each interval is the intersection of two intervals given by \eqref{eqImzetasgn}, and one of \eqref{eqsignImzeta}, \eqref{eqsignImzeta2}. For example, for $r > r^*$ and $\Im E < 0$, it is $\left[(0, \pi) -\frac{\pi}{3} + \frac 23 \arg E \right] \cap \left[ (-\pi, 0) + \frac{\pi}{3}  \right] = \left( -\frac \pi 3 + \frac 23 \arg E, \frac \pi 3\right)$. 


\mbox{}

\underline{2.1. Asymptotics of $\psi_j^{b, E}$.} 

\textit{(a) On $I_c^{b, E}$.} To begin with, the boundedness of $\psi_j^{b, E}$ with $r \in I_c^{b, E}$ follows easily from boundedness of $|\mu^\frac 43\zeta|$ from \eqref{eqetaabs} and boundedness of $\zeta_s^{-\frac 12}$ from Lemma \ref{lemzeta32} (1). 

\textit{(b) On $r \in [r^*, \infty) - I_c^{b, E}$.} We claim that
\be
  \arg \left(\mu^\frac 43 \zeta e^{-\frac{2\pi i}3} \right) \left| \begin{array}{ll}
     \in \left( -\frac \pi 3 + \frac 23 \arg E, \frac \pi 3\right)  &  \Im E < 0 \\
     = -\frac \pi 3  &  \Im E = 0 \\
     \in \left( -\frac{2\pi}{3}, -\frac \pi 3 + \frac 23 \arg E\right) & 0 < \Im E \le bI_0,
  \end{array} \right.\quad r \in [r^*, \infty) - I_c^{b, E}. \label{eqzetaarg1}
\ee
Compared with \eqref{eqrangeargself1}, it suffices to check the lower bound for $\Im E \in (0, bI_0]$. Indeed, we notice that $r^* \in I_c^{b, E}$ by \eqref{eqr*bE1} when $b \ll 1$, and then $|\arg (s-1)| = \frac{|\Im (E^{-\frac 12})|}{br/2 - \Re (E^{-\frac 12})}  \lesssim_{I_0} b^\frac 23$ for $r \in [r^*,\infty) - I_c^{b, E}$.  Therefore with $b \le b_0(I_0) \ll 1$, \eqref{eqzetaarg} implies the lower bound above. In particular, we are taking $\arg \zeta \in (-\pi, \pi)$.

Now \eqref{eqzetaarg1} and Lemma \ref{lemAiry1} implies the asymptotics of $\psi_1^{b, E}$
\bee
\psi_1^{b, E} = \left( \frac{s^2 - 1}{\zeta}\right)^{-\frac 14} \frac{e^{-\eta}}{2 \sqrt{\pi}( \mu^\frac 43 \zeta e^{-\frac{2\pi i}3})^\frac 14} (1 + O(\eta^{-1})) = \frac{e^{\frac {\pi i}{6}}e^{-\eta}}{2\sqrt{\pi}\mu^\frac 13 (s^2 - 1)^{\frac 14}}(1 + O(\eta^{-1})). 
\eee
Here we are taking $\arg (s^2 - 1) \in (-\pi, \pi)$ from the choice of branch of $\zeta_s = \left(\frac{s^2 - 1}{\zeta}\right)^\frac 12$ in \eqref{eqchoicebranchzetas}. 


Next, we prove the estimate for $\psi_3^{b, E}$ \eqref{eqWKBasymp1}. When $r \ge \frac 4b$, since $|\arg (s-1)| \lesssim |\Im E| \lesssim \delta_0$,  \eqref{eqzetaarg} implies that $\arg(\mu^\frac 43 \zeta) \in (\frac \pi 6, \frac \pi 2)$ and $\arg(\mu^\frac 43 \zeta e^{-\frac{2\pi i}{3}}) \in (-\frac \pi 2, -\frac \pi 6)$ with $\delta_0 \ll 1$. Therefore Lemma \ref{lemAiry1} (1) yields the asymptotics of $\psi_3^{b, E}$ noticing that $\frac 23 (\mu^\frac 43 \zeta)^\frac 32 = -\eta$. 
When $r \in [r^*_{b, E}, \frac 4b] - I_c^{b, E}$, we consider two scenarios. (a) If $|\Im E| \le bI_0$, then $|\arg (s-1)| \lesssim_{I_0} b^\frac 23 \ll 1$, so the asymptotics (and thereafter the bound in \eqref{eqWKBasymp1}) follows as in the discussion above. (b) If $\Im E < -bI_0$, then $\arg(\mu^\frac 43 \zeta) \in \left( \frac \pi 3 + \frac 23 \arg E, \pi\right)$ from \eqref{eqrangeargself1}. Noticing that $|\Im \left[\frac 23(-\mu^\frac 43 \zeta)^\frac 32\right]| = |\Re \eta| = \Re \eta$ when $\Im E < 0$ by \eqref{eqReetamono}, the estimate in this scenario again follows Lemma \ref{lemAiry1} (1) plus the elementary bound $|\cos z|, |\sin z| \le e^{|\Im z|}$.


\textit{(c) On $r \in [0, r^*]- I_c^{b, E}$.} We claim that
\be
  \arg \left(\mu^\frac 43 \zeta e^{-\frac{2\pi i}3} \right) \left| \begin{array}{ll}
     \in \left(\frac \pi 3, \frac{2\pi}3 + \frac 23 \arg E \right)  &  \Im E < 0 \\
     = \frac {2\pi} 3  &  \Im E = 0 \\
     \in \left( \frac{2\pi}{3} + \frac 23\arg E, \frac {5\pi}{6} \right) & 0 < \Im E \le bI_0,
  \end{array} \right.\quad r \in [0, r^*] - I_c^{b, E}. \label{eqzetaarg2}
\ee
In particular, $\arg \zeta \in (\frac \pi 2, \pi]$ when $\Im E \le 0$ and $\arg \zeta \in (\pi, \frac 76\pi)$ when $\Im E > 0$. 
 This again follows \eqref{eqrangeargself2} plus the sharper upper bound when $\Im E \in (0, bI_0]$ using \eqref{eqzetaarg}, $|\arg (-(s-1))|\lesssim_{I_0} b^\frac 23$ and $b \le b_0(I_0) \ll 1$. 

Next, we claim that Lemma \ref{lemAiry1} implies the asymptotics of $\psi_1^{b, E}$ as in the computation for $r \in [r^*, \infty) - I_c^{b, E}$. The only difference is that we now choose $\arg \zeta \in (0, 2\pi]$ and hence also $\arg (s^2 - 1) \in (0, 2\pi]$ as required by \eqref{eqchoicebranchzetas}. Then 
\[ (s^2 - 1)^{-\frac 14} = \left( (1-s^2) e^{\pi i} \right)^{-\frac 14} = (1-s^2)^{-\frac 14} e^{-\frac {\pi i} 4}, \quad \arg(1-s^2) \in (-\pi, \pi]\]
implies the correct branch of $1-s^2$ and the phase in the numerator. 

For $\psi_2^{b, E}$, by analyticity of $\Ai$, we write $\psi_2^{b, E} = \left(\frac{s^2 -1}{\zeta} \right)^{-\frac 14} \Ai (\mu^\frac 43\zeta e^{-\frac{4\pi i}3})$. So with the same branch of $\zeta$ as \eqref{eqzetaarg2}, we have $\arg \zeta \in (\frac \pi 2, \frac 76\pi) \subset (0, 2\pi]$, $\arg \left(\mu^\frac 43 \zeta e^{-\frac{4\pi i}3} \right) \in (-\frac {\pi}{6}, \frac \pi 2) \subset (-\pi, \pi]$. Thus $\frac 23 \left(\mu^\frac 43\zeta e^{-\frac{4\pi i}3} \right) = -\eta$ which yields the asymptotic formula. 

\mbox{}

\underline{2.2. Asymptotics of $(\psi_j^{b, E})'$.}

Considering $\psi_1^{b, E}$, we compute
\[(\psi_1^{b, E})' = \pa_r \left[ \left( \frac{s^2 - 1}{\zeta}\right)^{-\frac 14} \right] \Ai (\mu^\frac 43 \zeta e^{-\frac{2\pi i}3}) + \left( \frac{s^2 - 1}{\zeta}\right)^{-\frac 14} \mu^\frac 43 \pa_r \zeta e^{-\frac{2\pi i}3} \Ai' (\mu^\frac 43 \zeta e^{-\frac{2\pi i}3}).
\]
We will show the second term provides the leading order in all three ranges of $r$. For $r \in I_c^{b, E}$, the first term is of size $O(b)$ by $\pa_r s(r) = O(b)$, and the second term is bounded by $O(\mu^\frac 43 \pa_r s(r)) = O(b^\frac 13)$. For $r \notin I_c^{b, E}$, from the discussion about phase \eqref{eqzetaarg1} and \eqref{eqzetaarg2}, we apply Lemma \ref{lemAiry1} to see
\[ \Ai' \left(\mu^\frac 43 \zeta e^{-\frac{2\pi i}3}\right) = -\left(\mu^\frac 43 \zeta e^{-\frac{2\pi i}3}\right)^\frac 12 \Ai \left(\mu^\frac 43 \zeta e^{-\frac{2\pi i}3}\right) (1 + O(\eta^{-1})) \]
with $\arg \left(\mu^\frac 43 \zeta e^{-\frac{2\pi i}3}\right) \in (-\frac 56\pi, \frac 56\pi)$. So 
\bee
  (\psi_1^{b, E})' &=& \psi_1^{b, E} \left[ -\mu^\frac 43 \pa_r \zeta e^{-\frac{2\pi i}{3}}  \left( \mu^\frac 43 \zeta e^{-\frac{2\pi i}{3}} \right)^\frac 12 (1 + O(\eta^{-1})) - \frac 12 \zeta_s^{-1} \zeta_{ss} \pa_r s (1 + O(\eta^{-1}))   \right] \\
  &=& \psi_1^{b, E} \mu^2 (s^2 - 1)^\frac 12 \pa_r s \left[ 1 + O_\CC (\eta^{-1}) + O_\CC \left( \frac{\zeta_s^{-1} \zeta_{ss}}{\mu^2 (s^2 - 1)^{\frac 12}} \right) \right]
\eee
Hence to verity the asymptotic formula of $(\psi_1^{b, E})'$, it suffices to prove
\be \left| \frac{\zeta_s^{-1} \zeta_{ss}}{\mu^2 (s^2 - 1)^{\frac 12}} \right| \lesssim |\eta^{-1}|,\quad  r \notin I_c^{b, E}. \label{eqzetasszetas} \ee
Indeed, when $r \le \frac 4b$, we use $\zeta_{ss} \zeta_s^{-1} = O(1)$ from Lemma \ref{lemzeta32}. For $r \ge \frac 4b$, we compute using \eqref{eqzetadef} that
\bee
 \frac{\zeta_{ss}}{\zeta_s} = \zeta_s^{-1} \pa_s\left[\left( \frac{s^2 - 1}{\zeta}\right)^\frac 12 \right] = \frac{2s \zeta^\frac 32 - (s^2 - 1)^\frac 32}{2(s^2 - 1) \zeta^\frac 32} = O(s^{-1})
\eee
where in the last step we used \eqref{eqzetaasymp}. These estimates and \eqref{eqetaabs} yield the bound \eqref{eqzetasszetas} and conclude the proof for  $(\psi_1^{b, E})'$ case. 

The cases of $(\psi_2^{b, E})'$ and $(\psi_3^{b, E})'$ are similar and thus omitted. 

\mbox{}

\underline{\textbf{3. Proof of (6).}} Note that \eqref{eqpsibderiv2} can be derived from \eqref{eqpsibderiv1} 
using
\be 
\left| \pa_r^k \left[ \left( \frac{b^2 r^2}{4} - E \right)^\frac 12 - \frac{br}{2} \right] \right| \lesssim_k (br)^{-k-1},\quad r \ge \frac 4b,\quad k \ge 0. \label{eqphasematcheibx2} \ee
Now we focus on \eqref{eqpsibderiv1}. 

We first claim the estimate of $\pa_s^k \zeta$ for $r \ge 0$ that
\be |\pa_s \zeta| \sim \la br \ra^{\frac 13};\quad  |\pa_s^k \zeta| \lesssim_k \la br \ra^{\frac 43 - k},\quad \forall \,\,k\ge 0.\label{eqzetaderivest} \ee
Indeed, when $r \le \frac 4b$, we have $|s| \le 8$, so the upper bound of $\pa^k_s \zeta$ for all $k \ge 0$ follows from the analyticity of $\zeta$, and the non-vanishing of $\pa_s\zeta$ implies its lower bound. For $r \ge \frac 4b$, the asymptotics of $\pa_s \zeta = \left(\frac{s^2 - 1}{\zeta} \right)^\frac 12$ follows \eqref{eqzetaasymp}, and we estimate $\pa_s^k \zeta$ inductively using
\[ \pa_s^k \zeta = \pa_s^{k-1} \zeta_s = \pa_s^{k-1} \left[ (s^2 - 1)^\frac 12 \zeta^{-\frac 12} \right].\]
 
Next, let 
 $\xi_{j_-} = e^{-\frac{2\pi i}{3}}\mu^\frac 43 \zeta$, $\xi_{j_+} = \mu^\frac 43 \zeta$. 
 We compute using \eqref{eqzetadef} and Fa\'a di Bruno's formula \eqref{eqFaadiBruno}
 \bee\pa_r &=& \pa_r (\xi_{j_\pm}) \pa_{\xi_{j_\pm}} = \pm i \sqrt{E} (s^2 - 1)^\frac 12 \xi_{j_{\pm}}^{-\frac 12} \pa_{\xi_{j_\pm}},\\
 \pa_r^n &=& \sum_{\sum_{k=1}^n km_k = n} \frac{n!}{m_1! m_2! \cdots m_n!} \left(\prod_{i=1}^n \left(\pa_r^i \xi_{j_\pm}\right)^{m_i}\right) \pa_{\xi_{j_\pm}}^{m_1 + \cdots + m_n} \eee
 and therefore 
 \bee
  D_{\pm; b, E} &=& \pm i \sqrt{E} (s^2 - 1)^\frac 12 \xi_{j_{\pm}}^{-\frac 12}  \left(\pa_{\xi_{j_\pm}} + \xi_{j_\pm}^{\frac 12}\right)\\
 D_{\pm; b, E}^n &=& e^{-\frac 23 \xi_{j_\pm}^\frac 32} \circ \left(\pm i \sqrt E (s^2 - 1)^\frac 12 \xi_{j_{\pm}}^{-\frac 12} \pa_{\xi_{j_\pm}}\right)^n \circ e^{\frac 23 \xi_{j_\pm}^\frac 32} = e^{-\frac 23 \xi_{j_\pm}^\frac 32} \circ \pa_r^n  \circ e^{\frac 23 \xi_{j_\pm}^\frac 32} \\
 &=& e^{-\frac 23 \xi_{j_\pm}^\frac 32} \circ \left[\sum_{\sum_{k=1}^n km_k = n} \frac{n!}{m_1! m_2! \cdots m_n!} \left(\prod_{i=1}^n \left(\pa_r^i \xi_{j_\pm}\right)^{m_i}\right) \pa_{\xi_{j_\pm}}^{m_1 + \cdots + m_n} \right] \circ e^{\frac 23 \xi_{j_\pm}^\frac 32} \\
 &=& \sum_{\sum_{k=1}^n km_k = n} \frac{n!}{m_1! m_2! \cdots m_n!} \left(\prod_{i=1}^n \left(\pa_r^i \xi_{j_\pm}\right)^{m_i}\right) \left( \pa_{\xi_{j_\pm}} + \xi_{j_\pm}^\frac 12\right)^{\sum_{k=1}^n m_k}.
 \eee
 From \eqref{eqzetaderivest}, we have $|\pa_r^i \xi_{j_\pm}| \lesssim_i \mu^\frac 43 |\pa_s^i \zeta| b^i \lesssim_i b^{\frac 23} r^{\frac 43 - i}$ for any $i \ge 1$, $r \ge b^{-1}$. Hence with \eqref{eqAiryderiv} and \eqref{eqzetaasymp},
 \bee
 \left| D_{\pm; b, E}^n \Ai (\xi_{j_\pm}) \right| &\lesssim_n&  \sum_{\sum_{k=1}^n km_k = n} \left(b^\frac 32 r^\frac 43 \right)^{\sum_k m_k} r^{-\sum_k km_k} |\xi_{j_\pm}|^{-\sum_k m_k} |\Ai(\xi_{j_\pm})| \\
 &\lesssim_n& r^{-n} |\Ai(\xi_{j_\pm})| \sum_{n' = 1}^n \left((br)^{\frac 43} \zeta^{-1}\right)^{n'} \\
 &\lesssim_n& |\Ai(\xi_{j_\pm})| \cdot \left|\begin{array}{ll} 
 b^n |s-1|^{-n} & r \in \left[r^*, \frac 4b \right] - I_c^{b, E},\\
 r^{-n} & r \ge \frac 4b.
 \end{array}\right. 
 \eee
Finally, by Leibniz rule
\bee
D_{\pm; b, E}^k \psi_{j_\pm}^{b, E} = \sum_{l=0}^k \binom{k}{l} D_{\pm; b, E}^l \left(\Ai (\xi_{j_\pm})\right) \pa_r^{k-l} \left( \zeta_s^{-\frac 12}\right) 
\eee
and $|\pa_r^n (\zeta_s^{-\frac 12})| \lesssim_n |\zeta_s^{-\frac 12}| \cdot r^{-n}$ by \eqref{eqzetaderivest}, we obtain the desired estimate \eqref{eqpsibderiv1}.

\mbox{}

\underline{\textbf{4. Proof of (7).}} For simplicity, for estimates in (a)-(c), we only prove the case of $\psi_1^{b, E}$  (for (b), replacing $\psi_4^{b, E}$ by $ e^\frac{\pi i}{6}\psi_1^{b, E}$ thanks to \eqref{eqconnect}). The proof for $\psi_2^{b, E}$ and $\psi_3^{b, E}$ follows similarly, and that for $\psi_4^{b, E}$ comes from the connection formula \eqref{eqconnect} and smallness of $|e^{\Re \eta}|\le 1$ on $r \in [0,  r^*_{b, E}]$ from \eqref{eqReetamono}. Also, we will apply the Fa\'a di Bruno's formula \eqref{eqFaadiBruno} without citing.

Denote $\xi(r) = e^{-\frac{2\pi i}{3}} \mu^\frac 43 \zeta(r)$ as $\xi_1$ in the above proof of (6). Then for $|E - 1| \le \delta_0 \ll 1$ and $\Im E \le b I_0$, from the argument estimates \eqref{eqzetaarg1}-\eqref{eqzetaarg2}, the definition of $I_c^{b, E}$ \eqref{eqIcdef} and \eqref{eqzetaasymp}, we have 
\be \xi(r) \in B_{C_0} \cup \left\{ z \in \CC: |\arg z| \le \frac 56\pi \right\},\quad \forall \, r > 0  \label{eqargxi}\ee
for a constant $C_0 > 0$ independent of $b, r, E$.


\mbox{}

\underline{4.1. Derivative estimates for regular functions.} 

We claim for $n \ge 0$, $\a > 0$, $k = 0, 1$ that
\bea\left|\pa_r^k \pa_E^n (\zeta_s)^{-\a}\right| \lesssim_{\a,n} b^k \la s \ra^{-\frac \a 3-k}, \quad \left|\pa_r^k \pa_E^n \xi \right| \lesssim_n b^{-\frac 23+k} \la s \ra^{\frac 43-k},\quad  \forall \, r > 0;\label{eqbddpalzetaxi} \\
\left| \pa_E^n \zeta_{ss} \right| + \left| \pa_E^n \zeta_{sss} \right| \lesssim_n 1,\quad \forall\, r \le \frac 4b;\label{eqbddpalzetaxi5}\\
\left| \pa_E^n \left( e^{-\frac{2E}{b} \int_0^s  (1-w^2)^\frac 12 dw } \right) \right| \lesssim_{n} r^n \left| e^{-\frac{2E}{b} \int_0^s  (1-w^2)^\frac 12 dw } \right|,\quad \forall \, r \le r^*_{b, E}, \label{eqbddpalzetaxi4}
\eea
and for $n \ge 0$, $k = 0, 1$, $\a \notin \NN_{\ge 0}$,  
\be
\left| \begin{array}{l}
\left|\pa_r^k \pa_E^n (1 - s^2)^{\a} - \delta_{n, 0}\delta_{k, 0} \right| \lesssim_{\a, n, k} |s|^2 b^k,\\
\left| \pa_r^k \pa_E^n\left(e^{\frac{2E}{b} \int_0^{s} [1 - (1-w^2)^\frac 12] dw } \right) - \delta_{n, 0}\delta_{k, 0}\right| \lesssim_{n,k} b^{-1+k}|s|^3,
\end{array}\right.\quad \forall\, r\le b^{-1};
 \label{eqbddpalzetaxi3}
\ee
and for $n \ge 0$, $k \ge 0$, 
\be
\left|\pa_r^k \pa_E^n (\zeta_s)^{-\frac 12}\right| \lesssim_{n,k}  |s|^{-\frac 16}r^{-k}, \,\,
\left|\pa_r^k \pa_E^n (\zeta^{-3}) \right| \lesssim_{n,k} |s|^{-4}r^{-k},
\,\,
\left|\pa_r^k \pa_E^n \xi \right| \lesssim_{n,k} b^{-\frac 23} | s |^{\frac 43}r^{-k},\quad \forall \, r \ge \frac 4b.\label{eqbddpalzetaxi2}\ee

Indeed, we recall \eqref{eqzetaderivest} and use $| \pa_E^m s| \lesssim_m |s|$ $\forall m \ge 0$ to control for $n, m_0 \ge 0$,
\bee
 \pa_E^n (\pa_s^{m_0} \zeta) = \sum_{\sum_{k=1}^n km_k = n} C_{n, \vec m}
\pa_s^{\sum_{k=0}^n m_k}\zeta \prod_{j = 1}^k (\pa_E^j s)^{m_j} = O\left(\la s \ra^{\frac 43 - m_0}\right), \quad \forall\, r \ge 0.
\eee
The estimate of $\pa_r^k \pa_E^n (\zeta_s)^{-\a}$ with $k = 0, 1$ follows similarly using $|\pa_s^m (\zeta_s)^{-\a}| \lesssim_{\a, m} \la s \ra^{-\frac \a 3 - m}$ from \eqref{eqzetaderivest}. Noticing that $\xi = e^{-\frac{2\pi i}{3}} \mu^\frac 43 \zeta$ and $\pa_r \xi = e^{-\frac{2\pi i}{3}} \mu^\frac 43 \zeta_s \frac{b}{2\sqrt{E}}$, they imply \eqref{eqbddpalzetaxi} and \eqref{eqbddpalzetaxi5}. And \eqref{eqbddpalzetaxi2} follows by further using $|\pa_r^k \pa_E^n s| \lesssim b r^{1-k}$ for all $k, n \ge 0$. 


Next, for the first estimate in \eqref{eqbddpalzetaxi3}, we compute for $n \ge 1$, and $\a \notin \NN_{\ge 0}$, 
\bee
  \pa_E^n (1-s^2)^{\a} =  \sum_{\sum_{k=1}^n km_k = n} C_{n, \vec m} \left[ \pa_z^{\sum_{k=1}^n m_k} ((1-z)^{\a}) \right] \bigg|_{z = s^2} \prod_{k =1}^{n} ( \pa_E^k (s^2) )^{m_k} = O(s^2),\quad r \le b^{-1},
\eee
and similarly for $\pa_r \pa_E^n (1-s^2)^\a = -\frac{\a b^2 r}{2} \pa_E^n \left( (1-s^2)^{\a-1} E^{-1} \right)$. 

For \eqref{eqbddpalzetaxi4}, we exploit $\pa_E^n e^{f(E)} = e^{f(E)}  \sum_{\sum_{k=1}^n km_k = n} C_{n, \vec m} \prod_{k =1}^{n} (\pa_E^k f)^{m_k}$, and compute for $r \le r^*$ 
\bee
 \pa_E^k \int_0^s (1-w^2)^\frac 12 dw = \left| \begin{array}{ll}
      O(br)  & k = 0, \\
      -\frac{br}{4} \pa_E^{k-1} \left( E^{-\frac 32} (1-s^2)^\frac 12 \right) = O(br) & k \ge 1,
 \end{array}\right.
\eee
where we used $ \pa_E^n (1-s^2)^{\frac 12} = \delta_{n,0} + O(s^2)$.
For the second estimate of \eqref{eqbddpalzetaxi3}, we compute for $r \le b^{-1}$,  
\bee
 \pa_E^k \int_0^s (1 - (1-w^2)^\frac 12) dw = \left| \begin{array}{ll}
      \int_0^s O(w^2) dw = O(s^3) & k = 0, \\
      -\frac{br}{4} \pa_E^{k-1} \left( E^{-\frac 32} (1 - (1-s^2)^\frac 12) \right) = O(s^3) & k \ge 1.
 \end{array}\right.
\eee

\mbox{}

\underline{4.2. Proof of (a) and (c).} The $N=0$ case is included in the asymptotics in (7), so we only consider $N \ge 1$. 
 Apply \eqref{eqAiryderivident} to compute
 \bea
\pa_E^N \psi_1^{b, E} = \sum_{n = 0}^N \sum_{\sum_{k=1}^n k m_k = n} \binom{N}{n} n! \pa_E^{N-n}   \left(\zeta_s \right)^{-\frac 12} \pa_\xi^{\sum_k {m_k}} \Ai(\xi) \prod_{k=1}^n (\pa_E^k \xi)^{m_k} (m_k!)^{-1}\nonumber \\
=  \sum_{n = 0}^N \sum_{\sum_{k=1}^n k m_k = n} C_{N,n,\vec m} \pa_E^{N-n}  \left(\zeta_s \right)^{-\frac 12}  \left( P_{\sum m_k} \Ai + Q_{\sum m_k} \Ai'  \right)(\xi) \prod_{k=1}^n (\pa_E^k \xi)^{m_k} \label{eqpaENpsi1bE}
 \eea
 where $C_{N, n, \vec m} > 0$ are universal constants.  
Recall from Lemma \ref{lemAiry1}(1) and the range of $\xi$ \eqref{eqargxi}, that $|\pa_\xi^k \Ai(\xi)| \lesssim \left|e^{-\frac 23 \xi^{\frac 32}}\right| \la \xi \ra^{-\frac 14 + \frac k2}$ with $k = 0, 1$ for $r > 0$. Thus combined with \eqref{eqbddpalzetaxi} and degree of $P_n$, $Q_n$ from Lemma \ref{lemAiry1}(3), we have 
\bee
  \left| \pa_E^N \psi_1^{b, E}\right| \lesssim_N \la s \ra^{-\frac 16} \left(b^{-\frac 32} \la s \ra^{\frac 43} \right)^{\frac 32 N - \frac 14}   \left|e^{-\frac 23 \xi^{\frac 32}}\right| \sim b^{-N} \la br \ra^{2N} \omega_{b, E}^-,
\eee
where for the second inequality, we used $\left|e^{-\frac 23 \xi^{\frac 32}}\right| = e^{-\Re \eta_{b, E}}$ when $|\arg \xi| \le \frac 56\pi$, and $\la \xi \ra^{-\frac 14} \la s \ra^{-\frac 16} \sim \la b^{-\frac 32} (s^2 - 1)\ra^{-\frac 14}$ from \eqref{eqzetaasymp}. This verifies the $k = 0$ case of \eqref{eqpsibEderiv1}. The $k = 1$ case follows the observation that applying $\pa_r$ to $\pa_E^N \psi_1^{b, E}$ creates at most $\la \xi \ra^{\frac 12} \pa_r \xi = O(\la s \ra)$ growth when hitting the $\pa_\xi^{\sum_k m_k} \Ai$ term.


For (c), we apply $D_{-;b,E}^n$ to the above formula for $\pa_E^N \psi_1^{b, E}$ \eqref{eqpaENpsi1bE} and attribute the derivatives through Leibniz rule similarly as in (6). We gain $r^{-1}$ when the derivative hits $\Ai(\xi)$ or $\Ai'(\xi)$ as $D_{-;b,E}$ (noticing that \eqref{eqAiryderiv} holds for both $\Ai$ and $\Ai'$), and gain $r^{-1}$ as well when the derivative hits $\pa_E^k \xi$, $P_{\sum m_k}(\xi)$, $Q_{\sum m_k}(\xi)$ or $ \pa_E^{N-n}  \left(\zeta_s \right)^{-\frac 12}$ due to \eqref{eqbddpalzetaxi2}.

\mbox{}

\underline{4.3. Proof of (b).} The first estimate \eqref{eqpsibEderiv4} for $r \in [b^{-1}, r^*_{b, E}]$ follows from \eqref{eqpsibEderiv1} and $|\pa_E^n \kappa_{b, E}^\pm| \lesssim b^{-n} |\kappa_{b, E}^\pm|$. Now we consider \eqref{eqpsibEderiv4} for $r \le b^{-1}$ and \eqref{eqpsibEderiv5}. Recall that \eqref{eqzetaarg} implies $\arg \xi(r) = \frac{2\pi}{3} + O(\delta_0^\frac 12)$ and thus $\arg \zeta, \arg (s^2 - 1) = \pi + O(\delta_0^\frac 12)$. We compute $\psi_1^{b, E} =  \Ai(\xi) \left( \frac{e^{\pi i} ( 1-  s^2)}{e^{\frac{2\pi i}{3}} \mu^{-\frac 43} \xi}  \right)^{-\frac 14} = \Ai(\xi) \xi^{\frac 14} (1- s^2)^{-\frac 14} e^{-\frac{\pi i}{12}} \mu^\frac 13$ and $\frac 23 \xi^\frac 32 = \eta = -\frac{\pi E}{2b} + \frac{2E}{b} \int_0^s (1-w^2)^\frac 12 dw$ similar to \eqref{eqetacomputerleb-1}, leading to that
\bea
e^{\frac{\pi i}{6}}\kappa_{b, E}^- \psi_1^{b, E} &=& \left( 2 \sqrt{\pi} \xi^\frac 14 e^{\frac 23 \xi^\frac 32} \Ai(\xi) \right) (1-s^2)^{-\frac 14} e^{-\frac{2E}{b} \int_0^s  (1-w^2)^\frac 12 dw } \label{eqnormpsi1rep}\\
&=& e^{-\sqrt{E}r} \left( 2 \sqrt{\pi} \xi^\frac 14 e^{\frac 23 \xi^\frac 32} \Ai(\xi) \right) (1-s^2)^{-\frac 14} e^{\frac{2E}{b} \int_0^s [1 - (1-w^2)^\frac 12 ] dw }. \label{eqnormpsi1rep2}
\eea

From \eqref{eqbddpalzetaxi} and \eqref{eqAiryderiv2}, we have for $n \ge 0$ and $k= 0, 1$, 
\bee
  &&\pa_r^k \pa_E^n \left( 2 \sqrt{\pi} \xi^\frac 14 e^{\frac 23 \xi^\frac 32} \Ai(\xi) \right)\\
  &=& \sum_{\sum_{j=1}^n jm_j = n} C_{n, \vec m} \pa_r^k \left[ \pa_\xi^{\sum_{m_j}} \left( 2 \sqrt{\pi} \xi^\frac 14 e^{\frac 23 \xi^\frac 32} \Ai \right) \prod_{j=1}^n (\pa_E^j \xi)^{m_j}\right]\\
  &=& \delta_{n,0} \delta_{k, 0} + O(\xi^{-\frac 32} b^{k})  = \delta_{n,0}\delta_{k, 0} + O(b^{1+k}),\qquad \forall\, r \le b^{-1}. 
\eee
Then taking $\pa_E^n$ on \eqref{eqnormpsi1rep} and applying \eqref{eqbddpalzetaxi4} yields \eqref{eqpsibEderiv4} with $\psi_4^{b, E}$ replaced by $e^{\frac{\pi i}{6}} \psi_1^{b, E}$, since each derivative $\pa_E$ creates at most $r^1$ growth no matter it hits which term in \eqref{eqnormpsi1rep}. 

Next, we take $\pa_E^N$ on \eqref{eqnormpsi1rep2} and  apply \eqref{eqbddpalzetaxi3} to obtain 
\bee
  \pa_E^N \left(e^{\frac {\pi i}{6}} \kappa_{b, E}^- \psi_1^{b, E} \right) &=& \pa_E^N  e^{-\sqrt{E}r}  \cdot (1 + O(b)) \cdot (1 + O(s^2)) \cdot (1 + O(b^{-1}s^3)) \\
  &+& \sum_{n = 1}^{N} \pa_E^{N-n} e^{-\sqrt{E}r} \cdot \left(   O(b) + O(s^2)+ O(b^{-1}s^3) \right) \\
  &=&  \pa_E^N  e^{-\sqrt{E}r} \cdot (1 + O(b^\frac 12)),\quad r \le b^{-\frac 12}. 
\eee
which is \eqref{eqpsibEderiv5} with $k=0$. For the $k=1$ case, we take $\pa_r \pa_E^N$ to \eqref{eqnormpsi1rep2}. The control of $\pa_r^k \pa_E^n \left( 2 \sqrt{\pi} \xi^\frac 14 e^{\frac 23 \xi^\frac 32} \Ai(\xi) \right)$ and \eqref{eqbddpalzetaxi4} show that when $\pa_r$ does not hit $\pa_r^{N-n} e^{-\sqrt{E} r}$ it creates $b^1$ smallness. So the above estimate yields \eqref{eqpsibEderiv5} with $k=1$. 

\mbox{}

\underline{4.4. Proof of (d).} For $r \le \frac 4b$, the $b^2$ smallness follows taking $\pa_E^n$ on the formula for $h_{b, E}$ \eqref{eqcorrection2} and applying estimates \eqref{eqbddpalzetaxi5}, \eqref{eqbddpalzetaxi2}. For $r \ge \frac 4b$, from the formula \eqref{eqcorrection}, it suffices to show 
$$ \left| \pa_r^n \pa_E^N \left( \frac{s^2 - 1}{\zeta^3}\right)\right| + \left| \pa_r^n \pa_E^N \left( \frac{3 s^2 + 2}{(s^2 - 1)^2}\right)\right| \lesssim_{n, N} b^{-2}r^{-2-n} ,\quad \forall\, r \ge \frac 4b,  $$ 
which follows \eqref{eqbddpalzetaxi2}, $|\pa_r^n \pa_E^N (s^2)| \lesssim_{n, N} b^2 r^{2-n}$ and $\left|\pa_z^m \left(\frac{3z+2}{(z-1)^2} \right)\right| \lesssim_m |z|^{-1-m}$ for $|z| \ge 4|E|^{-1} \ge 2$. 

\end{proof}

\subsubsection{Inversion of corrected scalar operator}

For $b > 0$, $|E- 1| \le \frac 12$, we define the scalar differential operator with correction in \eqref{eqcorrection} as
\be 
 \tilde H_{b, E} =  \pa_r^2 - E + \frac{b^2 r^2}{4} -  h_{b, E}. \label{eqdeftildeHbE}
\ee 
And we will construct its inversion on the exterior and middle region.

\mbox{}

We start from exterior region $[r^*_{b, E}, \infty)$. We introduce the following definitions to characterize functions with polynomial growth or decay and quadratic phase oscillation near infinity. 

\begin{definition}[Differential operators and Banach spaces for exterior region] \label{defdiffopspace}Let $b > 0$ and $|E-1| \le \frac 12$. We define the phase function
\be \phi_{b, E} := \frac{E}{2b} \int_2^{\frac{br}{\sqrt E}} \sqrt{\tau^2 - 4} d\tau,\quad  \Rightarrow \quad \pa_r \phi_{b, E} = \left(\frac{b^2 r^2}{4} - E \right)^\frac 12, \label{eqdefphi} \ee
    the differential operators (same as in Proposition \ref{propWKB} (6))
\be
\begin{split}
D_{\pm k; b, E} = \pa_r \pm k i \pa_r \phi_{b, E}  = e^{\mp k i \phi_{b, E}} \pa_r e^{\pm k i\phi_{b, E}},\quad k \ge 1.
 \end{split}
\label{eqDpm}
\ee
In particular, we denote
\[D_{\pm;b, E} = D_{\pm 1;b ,E},\quad D_{\pm\pm;b, E} = D_{\pm 2;b ,E}.  \]
We also define 
 the Banach spaces $X_{r_1; b, E}^{\a, N, \pm} \subset  C^N([r_1, \infty))$ and $X_{r_0, r_1; b, E}^{\a, N, \pm} \subset C^0([r_0, \infty)) \cap C^N ([r_1, \infty))$ with norms
\be
\begin{split}
\| f \|_{X_{r_1; b, E}^{\a, N, \pm}} &= \sum_{k = 0}^N \sup_{r \ge r_1} \left( (r^\a \omega_{b, E}^\pm)^{-1} |r - 2\sqrt E b^{-1}|^{k} |D_{\pm; b, E}^k f|\right);    \\
\| f \|_{X_{r_0, r_1; b, E}^{\a, N, \pm}} &= \left\| (r^\a \omega_{b, E}^\pm)^{-1} f \right \|_{C^0_{[r_0, r_1]}} + \| f \|_{X_{r_1; b, E}^{\a, N, \pm}}.
\end{split}\label{eqdefXBanach}
\ee
where $r_1 > \frac{2\Re \sqrt E}{b}$, $r_0 \in (0, r_1)$, $N \in \NN$, $\a \in \RR$, and $\omega_{b, E}^\pm$ is from \eqref{eqomegapm}.
\end{definition}

\begin{remark}

From Proposition \ref{propWKB} (5)-(6), we see for any $I_0 > 0$, $0 < b \le b_0(I_0)$, $|E-1|\le \delta_0$, $\Im E \le bI_0$, $N>1$, with $r_0 = r^*_{b, E} - b^{-\frac 13}$, $r_1 = r^*_{b, E} + 2 b^{-\frac 13} \in [r_0, \frac 4b] - I_c^{b, E}$,   
\be
 \left\| \psi_1^{b, E} \right\|_{X^{0, N, -}_{r_0, r_1;b, E}} \sim_N 1, \quad \left\| \psi_3^{b, E} \right\|_{X^{0, N, +}_{r_0, r_1;b, E}} \sim_N 1. \label{eqestpsi13Xspace}
\ee
\end{remark}

Now we construct the inversion of $\tilde H_{b, E}$ in $[r^*_{b, E} - b^{-\frac 13}, \infty)$.

\begin{lemma}[Inversion of $\tilde H_{b, E}$ on $[r^*_{b, E} - b^{-\frac 13}, \infty)$]\label{leminvtildeHext}

For any $I_0 > 0$, let $0 < b \le b_0$, $|E-1|\le \delta_0$ and $\Im E \le b I_0$ with $\delta_0 \ll 1$ and $b_0 = b_0(I_0) \ll 1$ from Proposition \ref{propWKB}. 
For $\a \in \RR, N \in \ZZ_{\ge 0}$ satisfying
\be \a \ge -2, \quad
\max \left\{\frac{2\Im E}{b}, 0\right\} + \max \{ \a, 0\} + \frac 12 < 2 N, \label{eqcondinv} \ee
and any $r_0 \in \left[ r^*_{b, E} - b^{-\frac 13}, \frac 4b\right]$, $r_1 \in \left[\max \{r_0, r^*_{b, E} + 2b^{-\frac 13}\}, \frac 4b\right]$,
we define the inversion operator of $\tilde H_{b, E}$ for $ f \in X^{\a, N, -}_{r_0, r_1 ;b, E}$ as 
\be \label{eqdeftildeHbnu}
  \tilde T_{r_0; b, E}^{ext} f =: \left| \begin{array}{ll}
      \psi_3^{b, E} \calI^-_{N; b, E} [f](r) + \psi_1^{b, E} \int_{r_0}^r \psi_3^{b, E} f \frac{ds}{W_{31}}  & r \ge r_1 \\
      \psi_3^{b, E} \left(\calI^-_{N; b, E} [f]\left(r_1\right) + \int^{r_1}_r \psi_1^{b, E} f \frac{ds}{W_{31}} \right) + \psi_1^{b, E} \int_{r_0}^r \psi_3^{b, E} f \frac{ds}{W_{31}}  & r \in \left[r_0, r_1 \right]
  \end{array}\right.
\ee
where $W_{31} = \calW (\psi_3^{b, E}, \psi_1^{b, E}) = i\frac{b^\frac 13 E^\frac 16}{2^\frac 43 \pi}$ from \eqref{eqWronski2} and the integral operator being
\be
\begin{split}
  \calI^-_{N; b, E} [f](r) &= \int_r^\infty \left(D_{--} \circ (-2i\phi')^{-1}\right)^N (\psi_1^{b, E} f) \frac{ds}{W_{31}} \\
  &+ \sum_{l = 0}^{N-1} \left(D_{--} \circ (-2i\phi')^{-1}\right)^l (\psi_1^{b, E} f) \cdot (-2i\phi' W_{31})^{-1}.
  \end{split} \label{eqdefcalIn}
\ee
Then the following properties hold. We stress that the bounds below in (3), (4) are dependent of $I_0, \a, N$, but independent of $b, E, r_0, r_1$. 
\begin{enumerate}
    \item {\rm Well-definedness of $\calI_N^-$}: for $f \in  X^{\a, N, -}_{r_0, r_1;b, E}$, the integral in $\calI^-_N$ converges absolutely; and the definition of $\calI^-_N$ is independent of $N$ in the following sense: if $N',N \in \ZZ_{\ge 0}$, $f \in X^{\a, \max\{N, N'\}, -}_{r_0, r_1;b, E}$, then
    \be
      \max \left\{\frac{2\Im E}{b}, 0\right\} + \max \{ \a, 0\}+ \frac 12 <2 \min \{ N, N'\} \,\, \Rightarrow \,\,  \calI^-_{N; b, E}[f] = \calI^-_{N'; b, E}[f]. \label{eqindepN}
    \ee
    \item {\rm Well-definedness and inversion property of $\tilde T^{ext}_{r_0;b, E}$}: For any fixed $r_1^* \ge \max \{r_0, r^*_{b, E}+ 2b^{-\frac 13}\}$, $\tilde T_{r_0; b, E}^{ext} f$ can also be uniformly written for $r \ge r_0$ as
    \be \label{eqdeftildeHbnu2}
     \tilde T_{r_0; b, E}^{ext} f =  \psi_3^{b, E} \left(\calI^-_{N; b, E} [f]\left(r_1^*\right) + \int^{r_1^*}_r \psi_1^{b, E} f \frac{ds}{W_{31}} \right) + \psi_1^{b, E} \int_{r_0}^r \psi_3^{b, E} f \frac{ds}{W_{31}}.
     \ee
    Hence the definition of $\tilde T_{r_1; b, E}^{ext}$ is independent of $r_1$, and it is an inverse of $\tilde H_{b, E}$ in the sense that
    $$\tilde H_{b, E} \tilde T_{r_0; b, E}^{ext} f = f. $$
    \item {\rm Boundedness}:
    \be
 \| \tilde T_{r_0; b, E}^{ext} f \|_{X^{\a+2, N, -}_{r_0, r_1;b, E}} \lesssim b \| f \|_{X^{\a, N, -}_{r_0, r_1;b, E}},\quad 
  \| \tilde T_{r_0; b, E}^{ext} f \|_{X^{\a+2, N+1, -}_{r_0, r_1;b, E}} \lesssim  \| f \|_{X^{\a, N, -}_{r_0, r_1;b, E}}
      \label{eqbddHbnuinvext}
    \ee
    For $r_0' \in [r_0, \frac 4b]$, we also have 
    \be 
    \| \tilde T_{r_0; b, E}^{ext} (\mathbbm{1}_{[r_0, r_0']} f) \|_{C^0_{\omega_{b, E}^\pm}([r_0, r_0'])} \lesssim b^{-1} \| f \|_{C^0_{\omega_{b, E}^\pm}([r_0, r_0'])},
\label{eqbddHbnuinvext2}
    \ee
    \item {\rm Evaluation at $r_0 \in [r_0, r_1]$}: for $k = 0, 1$, 
 \be
      \pa_r^k \tilde T_{b, E}^{ext} f \left(r \right) = \beta (r) \pa_r^k \psi_1^{b, E} (r) + \gamma(r)\pa_r^k \psi_3^{b, E} (r),\label{eqasymptildeHbnu}
    \ee
    where
    \bee
    \gamma(r) &=& \calI^-_N [f]\left(r_1\right) +  \int_{r}^{r_1} \psi_1^{b, E} f \frac{ds}{W_{31}} \\
    &=& e^{-2\Re \eta_{b, E}(r_1)}\|f \|_{X^{\a, N, -}_{r_0, r_1;b, E}}O( b^{-\a - \frac 12} |r_1 - 2\sqrt E b^{-1}|^{\frac 12}) \\
    \beta(r) &=& \int_{r_0}^{r} \psi_3^{b, E} f \frac{ds}{W_{31}} = \|f \|_{X^{\a, N, -}_{r_0, r_1;b, E}}O( b^{-\a - \frac 12} |r_1 - 2\sqrt E b^{-1}|^{\frac 12}) 
    \eee
    
  
\end{enumerate}
\end{lemma}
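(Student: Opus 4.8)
\emph{Strategy.} The operator $\tilde H_{b, E}$ has two homogeneous solutions $\psi_1^{b, E}, \psi_3^{b, E}$ with $\tilde H_{b, E}\psi_j^{b,E} = 0$ and $\calW(\psi_3^{b, E}, \psi_1^{b, E}) = W_{31}$ by Proposition \ref{propWKB}(1),(3), so the natural candidate for an inverse is the variation-of-parameters formula $\psi_3^{b, E}\int_r^\infty \psi_1^{b, E}f\,\frac{ds}{W_{31}} + \psi_1^{b, E}\int_{r_0}^r\psi_3^{b, E}f\,\frac{ds}{W_{31}}$. The only real issue is that $\psi_1^{b, E}f$ carries a quadratic phase (it oscillates like $e^{2i\phi_{b, E}}$, since both $\psi_1^{b, E}$ and any $f \in X^{\a, N, -}_{r_0, r_1; b, E}$ oscillate like $e^{i\phi_{b, E}}$) with at most polynomially growing amplitude, so the outgoing integral converges only conditionally; $\calI^-_{N; b, E}[f]$ in \eqref{eqdefcalIn} is exactly the result of regularizing it by $N$ iterated integrations by parts against $e^{2i\phi_{b, E}}$, the operator $D_{--}\circ(-2i\phi_{b, E}')^{-1}$ encoding one such step. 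Accordingly I would: (i) prove a gain estimate for this operator, yielding part (1); (ii) identify $\calI^-_{N; b, E}[f](r) = \int_r^\infty\psi_1^{b, E}f\,\frac{ds}{W_{31}}$ in a consistent sense, which gives $N$-independence, $r_1$-independence, the uniform formula \eqref{eqdeftildeHbnu2}, and $\pa_r\calI^-_{N; b, E}[f] = -\psi_1^{b, E}f/W_{31}$; (iii) verify $\tilde H_{b, E}\tilde T^{ext}_{r_0; b, E}f = f$ by the textbook computation; (iv) read off the boundedness and the boundary evaluation from the same bounds.

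\emph{The gain estimate (technical core) and part (1).} The key identity is the twisted Leibniz rule $D_{--}(gh) = (D_{-; b, E}g)h + g(D_{-; b, E}h)$, valid because $D_{--} = D_{-2; b, E} = \pa_r - 2i\phi_{b, E}'$ and $D_{-; b, E} = \pa_r - i\phi_{b, E}'$. Combining it with the facts that $(-2i\phi_{b, E}')^{-1}$ merely multiplies by a factor $\lesssim (br)^{-1}$ and that $D_{--}$ acting on an $e^{2i\phi_{b, E}}$-oscillating function returns $e^{2i\phi_{b, E}}$ times the derivative of its slowly varying amplitude, one shows that each application of $D_{--}\circ(-2i\phi_{b, E}')^{-1}$ to $\psi_1^{b, E}f$ gains a factor $\lesssim (br)^{-1}\big(r^{-1} + |r - 2\sqrt E b^{-1}|^{-1}\big)$, the two terms coming from $D_{-; b, E}$ hitting $\psi_1^{b, E}$ (bound $|D^n_{-; b, E}\psi_1^{b, E}| \lesssim_n r^{-n}|\psi_1^{b, E}|$ from Proposition \ref{propWKB}(6)) or $f$ (bound from the definition of $X^{\a, N, -}_{r_0, r_1; b, E}$), the derivatives of the $\phi_{b, E}'$-factors being subsumed via $|\pa_r^m(\phi_{b, E}')^{-1}| \lesssim_m (br)^{-1}r^{-m}$ outside $I_c^{b, E}$. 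Iterating $l \le N$ times and using $|\psi_1^{b, E}|\sim\omega^-_{b, E}$, $|f| \lesssim \|f\|_{X^{\a, N, -}}r^\a\omega^-_{b, E}$, the $l$-th iterate is bounded, for $r \ge 4b^{-1}$, by $\lesssim_l \|f\|_{X^{\a, N, -}}\, r^{\a - 1 - 2l}b^{-l - 2/3}(br)^{2\Im E/b}$ (using \eqref{eqReetaext} for $\omega^\pm_{b, E}$). Hence \eqref{eqcondinv} makes the integral in \eqref{eqdefcalIn} absolutely convergent and forces the boundary terms $(D_{--}\circ(-2i\phi_{b, E}')^{-1})^l(\psi_1^{b, E}f)\,(\phi_{b, E}')^{-1}$ to vanish as $r \to \infty$ for $l \ge N$; the $N$-independence \eqref{eqindepN} then follows since one further integration by parts converts $\int_r^\infty (D_{--}\circ(-2i\phi_{b, E}')^{-1})^N(\psi_1^{b, E}f)\frac{ds}{W_{31}}$ into the $l = N$ boundary term plus the $N+1$ integral, which is legitimate as soon as $\min\{N, N'\}$ satisfies \eqref{eqcondinv}.

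\emph{Inversion, formula and boundedness (parts (2), (3)).} Picking any $N'$ large enough that $\int_r^\infty\psi_1^{b, E}f\frac{ds}{W_{31}}$ converges absolutely, the iterated-IBP identity shows $\calI^-_{N; b, E}[f](r) = \int_r^\infty\psi_1^{b, E}f\frac{ds}{W_{31}}$ for every valid $N$; consequently $\calI^-_{N; b, E}[f](r_1) + \int_r^{r_1}\psi_1^{b, E}f\frac{ds}{W_{31}} = \calI^-_{N; b, E}[f](r)$, which is precisely the statement that \eqref{eqdeftildeHbnu} and \eqref{eqdeftildeHbnu2} agree and are independent of $r_1$, and also gives $\pa_r\calI^-_{N; b, E}[f] = -\psi_1^{b, E}f/W_{31}$. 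With this, $\tilde H_{b, E}\tilde T^{ext}_{r_0; b, E}f = f$ is the standard variation-of-parameters computation using $\tilde H_{b, E}\psi_1^{b, E} = \tilde H_{b, E}\psi_3^{b, E} = 0$ and $\calW(\psi_3^{b, E}, \psi_1^{b, E}) = W_{31}$; the cross-terms cancel, which simultaneously proves \eqref{eqasymptildeHbnu} with $\beta(r) = \int_{r_0}^r\psi_3^{b, E}f\frac{ds}{W_{31}}$ and $\gamma(r) = \calI^-_{N; b, E}[f](r)$. For part (3), I would estimate termwise using the cancellation $\omega^+_{b, E}\omega^-_{b, E} = \la b^{-2/3}(b^2r^2 - 4E)\ra^{-1/2}$, so that $|\psi_1^{b, E}f|, |\psi_3^{b, E}f| \lesssim \|f\|_{X^{\a, N, -}}r^\a\la b^{-2/3}(b^2r^2 - 4E)\ra^{-1/2}$, together with the turning-point substitution $v = b^{1/3}(s - 2\sqrt E b^{-1})$ giving $\int_{r_0}^{r_1}\la b^{-2/3}(b^2s^2 - 4E)\ra^{-1/2}\,ds \lesssim b^{-1/6}|r_1 - 2\sqrt E b^{-1}|^{1/2}$; all remaining powers of $b$ are absorbed because $r \gtrsim b^{-1}$ throughout $[r_0, \infty)$, which yields both estimates in \eqref{eqbddHbnuinvext} (the extra $b$ in the first coming from $|W_{31}|^{-1}\sim b^{-1/3}$ and $\int(\omega^\pm_{b,E})^2 \lesssim b^{-2/3}$ over the relevant range). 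The middle-region bound \eqref{eqbddHbnuinvext2} uses in addition that $\omega^+_{b, E}\sim\omega^-_{b, E}$ on $[r_0, 4b^{-1}]$ (since $|\Re\eta_{b, E}| \lesssim_{I_0}1$ there by \eqref{eqetaRe}), so integrating a weight-bounded $f$ over an interval of length $\lesssim b^{-1}$ with the factors $|W_{31}|^{-1}\sim b^{-1/3}$ and $\int(\omega^\pm_{b,E})^2\lesssim b^{-2/3}$ produces exactly the $b^{-1}$ loss.

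\emph{Part (4) and the main obstacle.} For $r \in [r_0, r_1]$ the cancellation of cross-terms gives \eqref{eqasymptildeHbnu}, and the bounds on $\beta, \gamma$ follow by specializing the integral estimates above: on $[r_0, r_1] \subset [r^*_{b, E} - b^{-1/3}, 4b^{-1}]$ one has $s \sim b^{-1}$, hence $s^\a \sim b^{-\a}$, and together with the turning-point substitution this gives the claimed $b^{-\a - 1/2}|r_1 - 2\sqrt E b^{-1}|^{1/2}$, the $e^{-2\Re\eta_{b, E}(r_1)}$ factor in $\gamma$ absorbing the $\omega^-_{b, E}$-weight incurred by the tail $\int_{r_1}^\infty$ inside $\calI^-_{N; b, E}[f]$. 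I expect the genuine obstacle to be the gain estimate of the second paragraph: one must propagate the quadratic-oscillation structure through $l$ nested applications of $D_{--}\circ(-2i\phi_{b, E}')^{-1}$ while having only the $N$ controlled $D_{-; b, E}$-derivatives of $f$ at one's disposal, and at the same time keep the $\la b^{-2/3}(b^2r^2 - 4E)\ra$-weight under control so that the turning-point zone $r \in [r_0, r^*_{b, E} + 2b^{-1/3}]$ — where $|\phi_{b, E}'|$ degenerates to $\sim b^{2/3}$ and $\omega^+_{b, E}, \omega^-_{b, E}$ become comparable — and the oscillatory tail $r \ge 4b^{-1}$ are both covered; the rest is careful but routine accounting of powers of $b$ and $r$.
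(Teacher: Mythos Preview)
Your overall strategy matches the paper's, but there are real gaps in the execution. The most serious is in part (3): the $X^{\alpha+2,N,-}$ norm requires bounding $D_{-;b,E}^k(\tilde T^{ext}_{r_0;b,E}f)$ for $0\le k\le N$ on $r\ge r_1$, hence $D_{--;b,E}^k\calI^-_N[f]$, and your ``estimate termwise'' does not address this. Acting by $D_{--}^k$ on the definition \eqref{eqdefcalIn} directly produces uncontrolled commutators; the paper's key device is to set $I_n[f]=\calI^-_N[f]-(-2i\phi')^{-1}\sum_{l<n}(D_{--}\circ(-2i\phi')^{-1})^l(\psi_1^{b,E}f)$ and verify the recursion $D_{--}I_n=-2i\phi'\,I_{n+1}$ for $0\le n\le N-1$, which via the appendix lemma on the coefficients $f_{n,k}$ yields a closed formula $D_{--}^k\calI^-_N=\sum_{l=1}^k f_{k,l}I_l$ that is then estimated term by term. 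This algebraic structure is the technical heart of the proof and you have not identified it.

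Two smaller issues. In part (2), ``picking $N'$ large enough that $\int_r^\infty\psi_1^{b,E}f\,ds/W_{31}$ converges absolutely'' is not meaningful: that integral does not depend on $N'$ and generically diverges for $f$ in the hypothesis class; the paper instead computes $\pa_r\calI^-_N[f]=-\psi_1^{b,E}f/W_{31}$ by a direct telescoping identity, from which \eqref{eqdeftildeHbnu2} and the inversion follow. For \eqref{eqbddHbnuinvext2}, your claim $\omega^+_{b,E}\sim\omega^-_{b,E}$ on $[r_0,4b^{-1}]$ invokes \eqref{eqetaRe}, which requires $|\Im E|\le bI_0$, whereas the lemma only assumes $\Im E\le bI_0$; when $\Im E<0$ is of order $\delta_0$ the ratio $e^{2\Re\eta}$ is exponentially large in $b^{-1}$ near $r=4b^{-1}$. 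The correct argument uses the monotonicity \eqref{eqReetamono} of $\Re\eta_{b,E}$ to pull $e^{\pm 2\Re\eta}$ out of each integral at the favorable endpoint, after which the remaining weights recombine into $\omega^\pm_{b,E}$.
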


\begin{proof} 

(1) We apply \eqref{eqpsibderiv1}, \eqref{eqpsibEderiv1} to compute for $0 \le n \le N$ and $r \ge r_1$,  
\bee
  \left|D_{--}^n (\psi_1^{b, E} f)\right| \le \sum_{k = 0}^n \binom{n}{k} |D_-^k \psi_1^{b, E}| \cdot |D_-^{n-k} f | \lesssim_n r^\a |r - 2\sqrt E b^{-1}|^{- n}(\omega_{b, E}^-)^2 \| f \|_{ X^{\a, N, -}_{r_1;b, E}}. 
\eee
Further with Lemma \ref{lemgnk}, we compute
\bea
&& \left| \left(D_{--} \circ (-2i\phi')^{-1}\right)^N (\psi_1^{b, E} f) \right| \le \sum_{k = 0}^N |g_{N, k}| \cdot |D_{--}^k (\psi_1^{b, E} f)| \nonumber \\
 &\lesssim_N& b^{-N} r^{\a-\frac N2} |r-2\sqrt{E}b^{-1}|^{-\frac 32N} (w_{b, E}^-)^2  \| f \|_{ X^{\a, N, -}_{r_1;b, E}}.  \label{eqestD--N}
\eea
Since $\omega_{b, E}^- \lesssim b^{-\frac 13} r^{-\frac 12} e^{-\Re \eta_{b, E}(r)} \lesssim_{b, E} r^{-\frac 12 + \frac{\Im E}{b}}$ for $r \ge \frac 4b$ by from \eqref{eqetaRe}, we see $\eqref{eqestD--N} \in L^1\left(\left[ r_1, \infty \right) \right)$ by the numerology \eqref{eqcondinv}. 
Thus $\calI^-_N[f]$ is well-defined.

To show \eqref{eqindepN}, it suffices to prove for $N$ satisfying \eqref{eqcondinv}, we have $\calI^-_{N+1}[f] = \calI^-_{N}[f]$. Indeed, this follows 
\bee
    &&  \int_r^\infty \left(D_{--} \circ (2i\phi')^{-1}\right)^N \left(\psi_1^{b, E} f\right)ds\\
   &=& - \int_r^\infty \left(D_{--} \circ (2i\phi')^{-1}\right)^{N+1} \left(\psi_1^{b, E} f\right)ds + \int_r^\infty \pa_r \left[ (2i\phi')^{-1}\left(D_{--} \circ (2i\phi')^{-1}\right)^{N} \left(\psi_1^{b, E} f\right)\right]ds \\
   &=&- \int_r^\infty \left(D_{--} \circ (2i\phi')^{-1}\right)^{N+1} \left(\psi_1^{b, E} f\right)ds - \left(D_{--} \circ (2i\phi')^{-1}\right)^{N} \left(\psi_1^{b, E} f\right)\cdot (2i\phi')^{-1}.
   \eee

\mbox{}

(2) For \eqref{eqdeftildeHbnu2}, it suffices to notice that 
\bee
   W_{31} \pa_r  \calI^-_N[f](r) 
   &=& \left(D_{--} \circ (-2i\phi')^{-1}\right)^N \left( \psi_1^{b, E} f \right) \nonumber \\
   &+& \sum_{l = 0}^{N-1} \left[ \left(D_{--} \circ (-2i\phi')^{-1}\right) - 1 \right] \left(D_{--} \circ (-2i\phi')^{-1}\right)^l \left( \varphi_1^{b, E} f \right) \nonumber \\
   &=&-\varphi_1^{b, E} f 
 \eee

 \mbox{}

 (3) We first prove \eqref{eqbddHbnuinvext}. It involves three steps.   
 
 \textit{Step 1. Computation and estimates of $D_{--}^k \calI^-_N[f](r)$ for $r \ge r_1$.}

 First notice that when $r \in [r_1, \infty) \subset [r^*_{b, E}, \infty) - I_c^{b, E}$, we have 
 \bea \omega_{b, E}^\pm \sim b^{-\frac 13} r^{-\frac 14} |r - 2\sqrt{E}b^{-1}|^{-\frac 14} e^{\pm \Re \eta_{b, E}(r)}, \label{eqIcoutomega} \\
|r-2\sqrt E b^{-1}|^{3} \gtrsim b^{-1}. \label{eqIcout2}
 \eea

For notational simplicity, let $G = \psi_1^{b, E} f W_{31}^{-1}$. Define 
\be I_n[f] = \left| \begin{array}{ll}
     \int_r^\infty (D_{--} \circ (-2i\phi')^{-1})^{N} G ds +   (-2i\phi')^{-1} \sum_{l = n}^{N-1} (D_{--} \circ  (-2i\phi')^{-1})^{l} G & 0 \le n \le N-1, \\
    \int_r^\infty (D_{--} \circ  (-2i\phi')^{-1})^{N} G ds  & n = N,
\end{array} \right. \label{eqDNIn} \ee
or equivalently,
\bee
  I_n[f] &=& \left| \begin{array}{ll}
     \calI^-_N[f] -  (-2i\phi')^{-1} \sum_{l = 0}^{n-1}(D_{--} \circ  (-2i\phi')^{-1})^{l}G  & 1 \le n \le N, \\
      \calI^-_N[f] & n = 0.
  \end{array}\right.
\eee

For $0 \le n \le N-1$, using the integration by parts formula \eqref{eqD++commutator1} and \eqref{eqindepN}, we compute
\bea
 D_{--} I_n[f] 
 &=& (-2i\phi') \int_r^\infty (D_{--} \circ (-2i\phi')^{-1})^{N+1} G ds + \sum_{l = n}^{N-1} (D_{--} \circ (-2i\phi')^{-1})^{l+1} G  \nonumber \\
  &=&  (-2i\phi') \left( \calI^-_{N+1} [f] -  (-2i\phi')^{-1} \sum_{l = 0}^{n}(D_{--} \circ (-2i\phi')^{-1})^{l} G \right) \nonumber  \\
  &=& -2i\phi' I_{n+1}[f],\label{eqD--Initer}
\eea
where in the last equivalence we used $\calI_{N+1}^-[f] = \calI_N^-[f]$ which holds for $f \in X^{\a, N+1, -}_{r_0, r_1;b, E}$, and the case $f \in X^{\a, N, -}_{r_0, r_1;b, E}$ follows by a standard limiting argument. 
This shows that $\{ I_n[f]\}_{0 \le n \le N}$ satisfies the algebraic recurrence relation \eqref{eqalgrec}. Hence for $1 \le n \le N$, with $I_n[f]$ from \eqref{eqDNIn}, Lemma \ref{lemfnk} indicates
\be
  D_{--}^n \calI^-_N[f] = \sum_{l = 1}^{n} f_{n, l} I_l[f], \quad 1 \le n \le N.  \label{eqD++DN}
\ee
Now we can estimate for $1 \le n\le N-1$ and $r \ge r_1$ that
\bee 
  &&|D_{--}^n \calI^-_N[f](r)|\\
 &\lesssim& \sum_{l=1}^n b^l r^{\frac l2} |r-2\sqrt{E}b^{-1}|^{\frac{3l}{2}-n} \bigg[ \int_r^\infty b^{-N-1} \tau^{\a-\frac N2-\frac 12} |\tau-2\sqrt{E}b^{-1}|^{-\frac 32N-\frac 12}  e^{-2 \Re \eta_{b, E}(\tau)} d\tau \\
 && \qquad +\sum_{k=l}^{N-1} b^{-k-2} r^{\a-\frac k2 - 1}  |r-2\sqrt{E}b^{-1}|^{-\frac 32k-1} e^{-2 \Re \eta_{b, E}(r)}\bigg]\| f \|_{ X^{\a, N, -}_{r_1;b, E}} \\
 &\lesssim&  e^{-2\Re \eta_{b, E}(r)}\cdot  \sum_{l= 1}^n \Big[ b^{-(N-l-1)-2} r^{\a-\frac{(N-l-1)}2-1} |r-2\sqrt{E}b^{-1}|^{- \frac 32(N-l-1)-n-1} \\
 &&\qquad + \sum_{k=l}^{N-1}  b^{-(k-l)-2}r^{\a-\frac{k-l}{2}-1}|r-2\sqrt{E}b^{-1}|^{-\frac 32(k-l)-n-1} \Big]\| f \|_{ X^{\a, N, -}_{r_1;b, E}} \\
 &\lesssim& b^{-2} r^{\a-1}|r-2\sqrt{E}b^{-1}|^{- n - 1} e^{-2\Re \eta_{b, E}(r)} \| f \|_{ X^{\a, N, -}_{r_1;b, E}} 
\eee
where the first inequality follows \eqref{eqD++DN}, \eqref{eqestD--N}, \eqref{eqestfnk} and \eqref{eqIcoutomega}, and the last follows \eqref{eqIcout2}. For the second inequality, 
the control of $e^{-2\Re \eta_{b, E}(s)}$ comes from its monotonicity \eqref{eqReetamono} for $\Im E < 0$ or $|e^{-2\eta(s)}| \sim (br)^{2\frac{\Im E}{b}}$ from \eqref{eqetaRe} for $0 \le \Im E \le bI_0$ with $b \le b_0$ small enough \eqref{eqrequestb0}, and then we integrate with the admissible numerology \eqref{eqcondinv}. Similarly, for $n=0$ we directly have 
\[ |\calI_N^-[f](r)| \lesssim b^{-2} r^{\a-1} |r-2\sqrt{E}b^{-1}|^{- 1} e^{-2\Re \eta_{b, E}(r)} \| f \|_{ X^{\a, N, -}_{r_1;b, E}}. \]
And for $n = N$, the dominant term is given by the integration, so 
\bee
|D_{--}^N \calI_N^-[f](r)| \lesssim b^{-1} r^{\a-\frac 12} |r-2\sqrt{E}b^{-1}|^{ - N + \frac 12} e^{-2\Re \eta_{b, E}(r)} \| f \|_{ X^{\a, N, -}_{r_1;b, E}}.
\eee
To sum up, 
\be
|D_{--}^n \calI^-_N[f](r)| \lesssim 
    b^{-1} r^{\a -\frac 12} |r - 2\sqrt E b^{-1}|^{-n+\frac 12} e^{-2\Re\eta_{b,E}(r)}| \cdot \| f \|_{ X^{\a, N, -}_{r_1;b, E}},\quad  0 \le n \le N,\,\, r \ge r_1.
\label{eqestD--IN}
\ee

 For $n = N+1$, we apply \eqref{eqD++DN} and \eqref{eqD--Initer} to compute
\bee
  D_{--}^{N+1} \calI^-_N[f] &=& D_{--} \sum_{l = 1}^{N} f_{N, l} I_l[f] = \sum_{l = 1}^N \left(\pa_r f_{N, l} I_l[f] + f_{N, l} D_{--} I_l[f] \right) \\
  &=& \sum_{l = 1}^N \pa_r f_{N, l} I_l[f] + (-2i\phi') \sum_{l = 1}^{N-1} f_{N, l} I_{l+1}[f] \\
  &+& f_{N, N} \left( - (D_{--} \circ  (-2i\phi')^{-1})^{N} G - 2i\phi' \int_r^\infty (D_{--} \circ  (-2i\phi')^{-1})^{N} G ds \right)
\eee
Then similar estimates lead to 
\be
|D_{--}^{N+1} \calI^-_N[f](r)| \lesssim 
 r^\a |r-2\sqrt E b^{-1}|^{-(N+1)+2} e^{-2\Re\eta_{b, E}(r)} \cdot \| f \|_{ X^{\a, N, -}_{r_1;b, E}},\quad r \ge r_1. 
\label{eqestD--IN+1}
\ee
where the worst decay is given by the last term $f_{N, N}\cdot(-2i\phi')I_N[f]$.

\mbox{}

\textit{Step 2. Estimates for $r \ge r_1$.} By Leibniz rule,
\bee
 D_{-}^k (\tilde T_{b, E}^{ext} f) = \sum_{j =0}^k \binom{k}{j} \left( D_+^{k-j} \psi_3^{b, E} \cdot D_{--}^{j} \calI^-_N[f] + D_-^{k-j} \psi_1^{b, E} \cdot \pa_r^{j} \int_{\frac 4b}^r \psi_3^{b, E} f \frac{ds}{W_{31}} \right) 
\eee
The last integral is bounded by 
\bee
 \left|\int_{r_1}^r \psi_3^{b, E} f \frac{ds}{W_{31}}\right| &\lesssim& b^{-1} \int_{r_1}^r s^{\a-\frac 12} |s-2\sqrt E b^{-1}|^{-\frac 12} ds  \| f \|_{ X^{\a, N, -}_{r_1;b, E}} \\
 &\lesssim& b r^{\a + \frac 32} |r - 2 \sqrt E b^{-1}|^{\frac 12 } \| f \|_{ X^{\a, N, -}_{r_1;b, E}} \\
  \left|\pa_r^{j}\int_{r_1}^r \psi_3^{b, E} f \frac{ds}{W_{31}}\right| &\lesssim& 
  \sum_{m =0}^{j-1} |D_+^{m}\psi_3^{b, E}| |D_-^{j-1-m} f| b^{-\frac 13} \\
 & \lesssim & b^{-1} r^{\a-\frac 12} |r - 2\sqrt E b^{-1}|^{-j+ \frac 12} \| f \|_{ X^{\a, N, -}_{r_1;b, E}},\quad 1 \le j \le N+1.
\eee
where we used $\a \ge -2$ and the cancellation $e^{\Re\eta_{b, E}} \cdot e^{-\Re\eta_{b, E}} = 1$. Now \eqref{eqpsibderiv1}, \eqref{eqestD--IN}, \eqref{eqestD--IN+1} and the above estimate yields 
\be
\left| D_{-}^k (\tilde T_{r_1; b, E}^{ext} f) \right| \lesssim \left| \begin{array}{ll}
b r^{\a + \frac 32} |r - 2 \sqrt E b^{-1}|^{-k + \frac 12} \omega_{b, E}^-(r) \| f \|_{ X^{\a, N, -}_{r_1;b, E}} & 0 \le k \le N, \\
     r^\a |r - 2 \sqrt E b^{-1}|^{-k + 2} \omega_{b, E}^-(r) \| f \|_{ X^{\a, N, -}_{r_1;b, E}} & k = {N+1},
\end{array}\right.\,\,  r\ge r_1.\label{eqtildeTbEbdd1}
\ee

\mbox{}

\textit{Step 3. Estimates for $r \in [r_0, r_1] \subset [r^*_{b, E} - b^{-\frac 13}, \frac 4b]$.} 

We first estimate the integral coefficients in \eqref{eqdeftildeHbnu}
\bee
  && \left|\int_r^{r_1} \psi_1^{b, E}f \frac{ds}{W_{31}}\right| \lesssim  \int_r^{r_1} b^{-1} s^{\a - \frac 12} |s - 2\sqrt{E}b^{-1}|^{-\frac 12}  e^{-2\Re\eta_{b, E}(s)} ds \cdot \| f \|_{ X^{\a, N, -}_{r_0, r_1; b, E}}\\
  &\lesssim&  b^{-\a-\frac 12} |r_1 - 2\sqrt{E}b^{-1}|^{\frac 12}  e^{-2\Re\eta_{b, E}(r)} \| f \|_{ X^{\a, N, -}_{r_0, r_1; b, E}} 
\eee
where for $e^{-2\Re\eta_{b, E}(s)}$ term, we again exploited the monotonicity \eqref{eqReetamono} for $\Im E < 0$ and $|e^{-2\eta(s)}| \sim (br)^{2\frac{\Im E}{b}} \sim 1$ on $[r^*_{b, E} - b^{-\frac 13}, \frac 4b]$ from \eqref{eqetaRe} for $0 \le \Im E \le bI_0$. Similarly, for $r \in [r_0, r_1]$, 
\bee
   \left| \int_{r_0}^{r} \psi_3^{b, E}f \frac{ds}{W_{31}}\right| \lesssim  \int_{r^*_{b, E}}^{r_1} 
   \left| \psi_3^{b, E}f W_{31}^{-1}\right| ds  \lesssim b^{-\a-\frac 12} |r_1 - 2\sqrt{E}b^{-1}|^{\frac 12} \| f \|_{ X^{\a, N, -}_{r_0, r_1; b, E}}.
\eee
Recall \eqref{eqestD--IN} yields $\left| \calI^-_N[f]\left( \frac 4b \right) \right| \lesssim b^{-\a-\frac 12} |r_1 - 2\sqrt{E}b^{-1}|^{\frac 12}  e^{-2\Re\eta_{b, E}(r_1)} \cdot \| f \|_{ X^{\a, N, -}_{r_0, r_1; b, E}}$. Plugging these estimates into \eqref{eqdeftildeHbnu}, we obtain
\be
  \left| \tilde T_{r_1; b, E}^{ext} f \right| \lesssim b^{-\a-\frac 12} |r_1 - 2\sqrt{E}b^{-1}|^{\frac 12}  \omega_{b, E}^{-}(r) \| f \|_{ X^{\a, N, -}_{r_0, r_1; b, E}}, \quad r \in [r_0, r_1]. \label{eqtildeTbEbdd2}
\ee
The boundedness \eqref{eqbddHbnuinvext} follows \eqref{eqtildeTbEbdd1} and \eqref{eqtildeTbEbdd2}. 

\mbox{}

For \eqref{eqbddHbnuinvext2}, notice that by picking $r_1 = \frac 4b \ge r_0'$, the inversion operator becomes the common Duhamel formula 
\[ \tilde T_{r_0; b, E}^{ext} (\mathbbm{1}_{[r_0, r_0']} f) = \psi_3^{b, E} \int_r^{r_0'} \psi_1^{b, E} f \frac{ds}{W_{31}} + \psi_1^{b, E} \int_{r_0}^r \psi_3^{b, E} f \frac{ds}{W_{31}}. \]
Then as in Step 3 above, we can exploit the monotonicity or smallness of $\Re \eta_{b, E}$ to obtain the estimates 
\bee
  \left|\int_r^{r_0'} \psi_1^{b, E}f \frac{ds}{W_{31}}\right| &\lesssim& b^{-1}  e^{(-1 \pm 1) \Re \eta_{b, E}(r)}\|f \|_{C^0_{\omega^\pm_{b, E}} ([r_0, r_0'])} \\ 
  \left|\int_{r_0}^{r} \psi_3^{b, E}f \frac{ds}{W_{31}}\right| &\lesssim& b^{-1} e^{(1 \pm 1) \Re \eta_{b, E}(r)}\|f \|_{C^0_{\omega^\pm_{b, E}} ([r_0, r_0'])},
\eee
which leads to \eqref{eqbddHbnuinvext2}.

\mbox{}

(4) The formulation \eqref{eqdeftildeHbnu2} yields \eqref{eqasymptildeHbnu}, and the estimates of $\beta$, $\gamma$  follows the estimates in (3). 
\end{proof}

\mbox{}

Next, we consider the intermediate region $[x_*, r^*_{b, E} + b^{-\frac 13}]$ for $1 \ll x_* \ll b^{-1}$. 


\begin{lemma}[Inversion of $\tilde H_{b, E}$ for $r \le r^*_{b, E} + b^{-\frac 13}$] \label{leminvtildeHmid}
For any $I_0 > 0$, let $0 < b \le b_0(I_0) \ll 1$ and $|E-1| \le \delta_0$, $|\Im E| \le b I_0$ with $\delta_0 \ll 1$ and $b_0 = b_0(I_0) \ll 1$ from Proposition \ref{propWKB}. For $[x_*, x^*] \subset [10, r^*_{b, E} + b^{-\frac 13}]$, define the inversions of $\tilde H_{b, E}$ on $[x_*, x^*]$ as\footnote{Here $G$ and $D$ indicate growing and decaying respectively.} 
\be\begin{split} 
\tilde T^{mid, G}_{x_*, x^*; b, E} g = -\psi_4^{b, E} \int^r_{x_*} \psi_2^{b, E} g \frac{ds}{W_{42}} - \psi_2^{b, E} \int^{x^*}_r \psi_4^{b, E} g \frac{ds}{W_{42}}   \\
 \tilde T^{mid, D}_{x_*, x^*; b, E} g = \psi_4^{b, E} \int_r^{x^*} \psi_2^{b, E} g \frac{ds}{W_{42}} - \psi_2^{b, E} \int^{x^*}_r \psi_4^{b, E} g \frac{ds}{W_{42}} 
 \end{split} \label{eqinvHmid}
\ee
where $W_{42} = \calW(\psi_4^{b, E}, \psi_2^{b, E}) =  \frac{b^\frac 13 E^\frac 16 }{2^\frac 43 \pi}$ from \eqref{eqWronski2}. Then for $g \in C^0([x_*, x^*])$, we have
$ \tilde H_{b, E} \tilde T^{mid, G}_{x_*, x^*; b, E} g = \tilde H_{b, E} \tilde T^{mid, D}_{x_*, x^*; b, E} g = g$,
and the following properties hold. We stress that the bounds in (2)-(4) are independent of $b, E, x_*, x^*$. 
\begin{enumerate}
    \item $\RR$-valued property: When $E = 1$ and $x^* \le r^*_{b, 1} = \frac 2b$, both operators map $\RR$-valued function to $\RR$-valued function. 
    \item Boundedness: Recall the weight function $w_{b, E}^\pm$ from \eqref{eqomegapm}.
    We have for $ \a  \in [0, 1]$,
    $\beta > 0$ and $0 \le k \le \frac{x_*}{10}$ that
    \bea
      \left\| \tilde T^{mid, G}_{x_*, x^*; b, E} f \right\|_{C^1_{\omega^+_{b, E} e^{-\a \Re \eta_{b, E}} } ([x_*, x^*])} &\lesssim& x_*^{-1}  \| f \|_{C^0_{\omega^+_{b, E} e^{-\a \Re\eta_{b, E}} r^{-2}} ([x_*, x^*])}\label{eqtildeTmidGest1} \\
      \left\| \tilde T^{mid, G}_{x_*, x^*; b, E} f \right\|_{C^1_{\omega^+_{b, E} } ([x_*, x^*])} &\lesssim_{k}&  b^{-k-1} \| f \|_{C^0_{\omega^+_{b, E} r^k} ([x_*, x^*])}\label{eqtildeTmidGest4}  \\ 
      \left\| \tilde T^{mid, G}_{x_*, x^*; b, E} f \right\|_{C^1_{\omega^-_{b, E}  r^k } ([x_*, x^*])} &\lesssim_k& x_*^{-1} \| f \|_{C^0_{\omega^-_{b, E} r^{k-2}} ([x_*, x^*])} \label{eqtildeTmidGest2} \\
      \left\| \tilde T^{mid, G}_{x_*, x^*; b, E} f \right\|_{C^1_{\omega^-_{b, E}  r^{k+2}} ([x_*, x^*])} &\lesssim_k& \| f \|_{C^0_{\omega^-_{b, E} r^{k+1}} ([x_*, x^*])} \label{eqtildeTmidGest3} \\
      \left\| \tilde T^{mid, D}_{x_*, x^*; b, E} f \right\|_{C^1_{\omega^-_{b, E} e^{-\beta r }r^k } ([x_*, x^*])} &\lesssim_{\beta, k}& x_*^{-1} \| f \|_{C^0_{\omega^-_{b, E} e^{-\beta r} r^{k-2}} ([x_*, x^*])}  \label{eqtildeTmidDest} 
    \eea
    \item Boundary values: There exists $\gamma_2$, $\gamma_4$ such that for $m = 0, 1$, 
    \bee
     \pa_r^m \tilde  T^{mid, G}_{x_*, x^*; b, E} f  (x^*) = \gamma_2 \pa_r^m \psi_4^{b, E} (x^*),\quad
     \pa_r^m \tilde T^{mid, G}_{x_*, x^*; b, E} f \left(x_* \right) = \gamma_4 \pa_r^m \psi_2^{b, E} \left(x_* \right),\\
     \pa_r^m \tilde  T^{mid, D}_{x_*, x^*; b, E} f  (x^*) = 0, \quad \pa_r^m \tilde  T^{mid, D}_{x_*, x^*; b, E} f  (x_*) = \gamma_4  \pa_r^m \psi_2^{b, E} (x_*) - \gamma_2 \pa_r^m \psi_4^{b, E} (x_*)
    \eee
    where 
    \bee
     \gamma_{j} = -\int_{x_*}^{x^*} \psi_j^{b, E} f \frac{ds}{W_{42}},\quad j = 4, 2.
    \eee
    satisfies estimates for $\a \in [0, 1], k \ge 0$,
    \bea
       |\gamma_2[f]| &\lesssim& \left| \begin{array}{l}
           x_*^{-1} \| f \|_{C^0_{\omega^-_{b, E} r^{-2} }([x_*, x^*])}    \\
           b^{-k-1} \| f \|_{C^0_{\omega_{b, E}^- r^k}([x_*, x^*])}   \\
           (x^*)^{-1} e^{(2-\a) \Re \eta_{b, E}(x^*) } \| f \|_{ C^0_{\omega^+_{b, E} e^{-\a \Re \eta_{b, E}} r^{-2}}([x_*, x^*])}  \\
           b^{-k-1} e^{2\Re \eta_{b, E}(x^*)} \| f \|_{C^0_{\omega_{b, E}^+ r^k}([x_*, x^*])}  
       \end{array}\right. \label{eqeta2est1}\\
        \left| \gamma_{4}[f]\right| &\lesssim& \left| \begin{array}{l}
          x_*^{k-1} e^{-2\Re \eta_{b, E}(x_*)}\| f \|_{C^0_{\omega^-_{b, E} r^{k-2} }([x_*, x^*])}, \\
          x_*^{-1}\| f \|_{ C^0_{\omega^+_{b, E} r^{-2}}([x_*, x^*])}  \\
          b^{-1-k}   \| f \|_{C^0_{\omega_{b, E}^+ r^k}([x_*, x^*])}  
      \end{array} \right.  \label{eqeta2est2}
    \eea

\item Improved boundary estimates: If $x_* \le b^{-1}$, we have 
\be  |\gamma_4[f]| \lesssim x_*^{k-2} e^{-2\Re \eta_{b, E}(x_*)}\| f \|_{C^0_{\omega^-_{b, E} r^{k-2} }([x_*, x^*])} \quad {\rm for}\,\,|k| \le \frac{x_*}{10};\label{eqeta2est3}
\ee
if $E = 1 + iIb$ with $I \in \RR$, we have
\be 
  |\gamma_2[f]| \lesssim_{|I|} b^{-k-\frac 23} e^{2\Re \eta_{b, E}(x^*)} \| f \|_{C^0_{\omega_{b, E}^+ r^k}([x_*, x^*])} \quad {\rm for}\,\,|k| \le \frac{x_*}{10}. \label{eqeta2est4}
\ee
\end{enumerate}

\end{lemma}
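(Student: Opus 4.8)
The plan is to treat Lemma~\ref{leminvtildeHmid} as a bounded-interval variation-of-parameters inversion built from the two WKB solutions $\psi_2^{b,E}$ and $\psi_4^{b,E}$, in complete analogy with Lemma~\ref{leminvtildeHext} but simpler, since no iterated integration by parts near infinity is needed. First I would verify the inversion identity: by Proposition~\ref{propWKB}(1) both $\psi_2^{b,E}$ and $\psi_4^{b,E}$ solve $\tilde H_{b,E}\psi=0$ (i.e.\ $\pa_r^2\psi_j=(E-b^2r^2/4+h_{b,E})\psi_j$), and Proposition~\ref{propWKB}(3) gives the nonzero constant $\calW(\psi_4^{b,E},\psi_2^{b,E})=W_{42}$; differentiating the two Duhamel formulas once, the endpoint terms $\pm\psi_4^{b,E}\psi_2^{b,E}g/W_{42}$ coming from the moving limits cancel, and a second differentiation using the equation for $\psi_j$ yields $\tilde H_{b,E}\tilde T^{mid,G}g=\tilde H_{b,E}\tilde T^{mid,D}g=g$. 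For part~(1), when $E=1$ and $x^*\le r^*_{b,1}=2/b$, Proposition~\ref{propWKB}(4) gives $\psi_2^{b,1},\psi_4^{b,1},h_{b,1}\in\RR$ on $[0,2/b]$ and $W_{42}=2^{-4/3}\pi^{-1}b^{1/3}\in\RR$, so both integral kernels are real-valued and the operators preserve $\RR$-valued functions.

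For the boundedness estimates in part~(2) I would feed in the bounds of $\psi_2^{b,E}$, $\psi_4^{b,E}$ and their first derivatives from Proposition~\ref{propWKB}(5): away from $I_c^{b,E}$ one has $|\psi_2^{b,E}|\lesssim\omega_{b,E}^+$, $|\psi_4^{b,E}|\lesssim\omega_{b,E}^-$ (using $\psi_4^{b,E}=e^{i\pi/6}\psi_1^{b,E}-\tfrac{i}{2}\psi_2^{b,E}$ and $\Re\eta_{b,E}\le 0$ on $[0,r^*_{b,E}]$, hence $\omega^+_{b,E}\le\omega^-_{b,E}$ there), with matching $\langle br\rangle$-weighted bounds on the derivatives, while on $I_c^{b,E}$ the solutions are $O(1)$ and their derivatives $O(b^{1/3})$. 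The key structural input is the monotonicity $\pa_r\Re\eta_{b,E}>0$ on $[x_*,r^*_{b,E}]$ from Lemma~\ref{lemWKBeta} (\eqref{eqReetamono}): in the two Duhamel integrals of $\tilde T^{mid,G}$ it makes each factor $e^{\pm(\Re\eta_{b,E}(s)-\Re\eta_{b,E}(r))}$ bounded by $1$ (first integral $s\le r$ with $\psi_4\psi_2$, second $s\ge r$ with $\psi_2\psi_4$), so everything reduces to integrating the polynomial weight $\langle b^{-2/3}(b^2s^2-4E)\rangle^{-1/2}$ against the stated source weights. From here: the gain $x_*^{-1}$ in \eqref{eqtildeTmidGest1} and \eqref{eqtildeTmidGest2} is the convergence of $\int_{x_*}^{\cdot}s^{-2}\,ds$; the loss $b^{-k-1}$ in \eqref{eqtildeTmidGest4} is integration of a growing weight over an interval of length $\lesssim b^{-1}$; for $\tilde T^{mid,D}$ the factors $e^{\pm\Re\eta_{b,E}(s)}$ cancel inside each integrand and the extra $e^{-\beta s}$ weight on the source supplies the matching decay, giving \eqref{eqtildeTmidDest}; and on $I_c^{b,E}$, of length $\lesssim b^{-1/3}$, the $O(1)/O(b^{1/3})$ bounds are used directly. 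The $C^1$ estimates follow because, differentiating the Duhamel formula, the moving-endpoint terms cancel as above, leaving $(\psi_j^{b,E})'$ against the same integrals, with $(\psi_j^{b,E})'$ obeying the companion bounds of Proposition~\ref{propWKB}(5).

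For parts~(3)--(4), substituting $r=x^*$ or $r=x_*$ into the Duhamel formulas kills one of the two integrals and yields $\gamma_2\,\pa_r^m\psi_4^{b,E}$ at $x^*$ and $\gamma_4\,\pa_r^m\psi_2^{b,E}$ at $x_*$ for $\tilde T^{mid,G}$ (and $0$ at $x^*$, $\gamma_4\psi_2^{b,E}-\gamma_2\psi_4^{b,E}$ at $x_*$ for $\tilde T^{mid,D}$), the moving-endpoint contributions again cancelling when $m=1$; the bounds \eqref{eqeta2est1}--\eqref{eqeta2est2} on $\gamma_2=-\int\psi_2^{b,E}f\,ds/W_{42}$ and $\gamma_4=-\int\psi_4^{b,E}f\,ds/W_{42}$ are just the integral estimates of part~(2) localized to the endpoint where $e^{\mp 2\Re\eta_{b,E}}$ is largest. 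For the improved estimates \eqref{eqeta2est3}--\eqref{eqeta2est4} I would integrate by parts in $\gamma_4$ (resp.\ $\gamma_2$): writing $\psi_4^{b,E}(s)=e^{-\eta_{b,E}(s)}\cdot(\text{slowly varying})$ and using \eqref{eqReetaderiv}--\eqref{eqReetamono} to bound $\pa_s\Re\eta_{b,E}(s)=\Im(b^2s^2/4-E)^{1/2}$ below by a fixed constant on $[x_*,b^{-1}]\setminus I_c^{b,E}$, one trades one power of $s$ for that constant and concentrates the estimate at $s=x_*$; the case of $\gamma_2$ along $E=1+iIb$ is the mirror argument, now with $\pa_r\Re\eta_{b,E}$ bounded below on a range that also includes the $b^{-1/3}$-strip past $r^*_{b,E}$.

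The main obstacle I expect is the uniformity bookkeeping across the turning point: the clean $\Re\eta_{b,E}$-monotonicity argument only holds on $[x_*,r^*_{b,E}]$, whereas $[x_*,x^*]$ may reach into $I_c^{b,E}$ and into $(r^*_{b,E},r^*_{b,E}+b^{-1/3}]$, where the WKB asymptotics degrade to $O(1)$ bounds and $\sgn(\pa_r\Re\eta_{b,E})$ flips, so one must re-derive exponential domination there from the crude bounds together with $|I_c^{b,E}|\lesssim b^{-1/3}$ and glue it to the bulk region. The same degeneracy of $\pa_r\Re\eta_{b,E}$ near $r^*_{b,E}$ is what obstructs the integration by parts in part~(4), which is why \eqref{eqeta2est3}--\eqref{eqeta2est4} are claimed only for $x_*\le b^{-1}$ (resp.\ on the line $E=1+iIb$). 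Threading every constant independently of $b,E,x_*,x^*$ through these patchings is the bulk of the verification.
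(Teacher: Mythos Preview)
Your overall strategy matches the paper's: reduce to integral estimates via the pointwise bounds $|\psi_4^{b,E}|\lesssim\omega_{b,E}^-$, $|\psi_2^{b,E}|\lesssim\omega_{b,E}^+$ from Proposition~\ref{propWKB}(5), exploit the monotonicity of $\Re\eta_{b,E}$ from \eqref{eqReetamono}, and handle the short strip past $r^*_{b,E}$ by the crude $O(1)$ bounds and integrability of $|b^2s^2-4E|^{-1/2}$. Parts~(1) and~(3) are exactly as you describe.

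There is, however, a genuine gap in your treatment of part~(2). Your claim that ``each factor $e^{\pm(\Re\eta_{b,E}(s)-\Re\eta_{b,E}(r))}$ is bounded by $1$, so everything reduces to integrating the polynomial weight'' is not sufficient for \eqref{eqtildeTmidGest2}. In the second Duhamel term of $\tilde T^{mid,G}$ with source weight $\omega^-_{b,E}r^{k-2}$, the integrand carries the full factor $(\omega^-_{b,E})^2 s^{k-2}\sim e^{-2\Re\eta_{b,E}(s)}\langle\cdot\rangle^{-1/2}s^{k-2}$, and for $k\ge 1$ naively bounding $e^{-2\Re\eta_{b,E}(s)}\le e^{-2\Re\eta_{b,E}(r)}$ leaves $\int_r^{x^*}s^{k-2}\,ds\sim (x^*)^{k-1}\sim b^{-(k-1)}$, which is far larger than the claimed $r^{k-1}$. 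The paper closes this by exactly the mechanism you reserve for part~(4): it computes
\[
\pa_r\bigl(e^{-2\Re\eta_{b,E}(r)}r^{k-2}\bigr)\sim -e^{-2\Re\eta_{b,E}(r)}r^{k-2}\quad\text{on }[x_*,\tfrac{3}{2b}],
\]
using \eqref{eqReetaderiv} and the restriction $k\le x_*/10$ (so that $(k-2)r^{-1}$ does not dominate $2\pa_r\Re\eta_{b,E}\sim 2$), which concentrates the integral at the lower endpoint and yields $r^{k-1}e^{-2\Re\eta_{b,E}(r)}$. This is the reason the constraint $k\le x_*/10$ already appears in part~(2), not only in~(4). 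The analogous computation $\pa_r(e^{(2-\alpha)\Re\eta_{b,E}}r^{k-2})\sim e^{(2-\alpha)\Re\eta_{b,E}}r^{k-2}$ is what gives the sharp form of the first Duhamel integral in \eqref{eqtildeTmidGest1} (the paper's \eqref{eqintest11}), though there the weaker bound you describe also suffices. So your integration-by-parts idea is right; you just need to deploy it earlier, already inside part~(2), and recognize that the role of $k\le x_*/10$ is precisely to make that derivative comparison valid.
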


\begin{proof} (1) The $\RR$-valued property follows that of $\psi_2^{b, 1}$ and $\psi_4^{b, 1}$ from Proposition \ref{propWKB} (4). 

For (2)-(3), 
the Duhamel definition of $\tilde T^{mid, G}_{x_*, x^*; b, E}$, $\tilde T^{mid, D}_{x_*, x^*; b, E}$ \eqref{eqinvHmid} clearly implies the inversion property and the formula for boundary values. Thus we only need to verify the estimates \eqref{eqtildeTmidGest1}-\eqref{eqtildeTmidDest} and \eqref{eqeta2est1}-\eqref{eqeta2est2}.
From Proposition \ref{propWKB}, we have 
\bee
 | \psi_4^{b, E}| + |\pa_r \psi_4^{b, E}|  \lesssim \omega^-_{b, E},\quad  |\psi_2^{b, E}| +  |\pa_r \psi_2^{b, E}| \lesssim \omega^+_{b, E}, \quad r\in [0, r^*_{b, E} + b^{-\frac 13}];   
\eee 
and $|W_{42}| \sim b^{\frac 13}$. So those estimates are reduced to the following integral bounds with $r \in [x_*, x^*] \subset [1,  r^*_{b, E} + b^{-\frac 13}]$, $k \ge 0$, $\beta > 0$ and $\a \in [0, 1]$
\bea
 \int_{x_*}^{r} (\omega_{b, E}^+)^2  e^{-\a \Re \eta_{b, E}} s^{k-2}
  b^{-\frac 13} ds &\lesssim_k& 
  e^{(2-\a) \Re \eta_{b, E}(r)} r^{k-1} 
\label{eqintest11}\\
 \int_{r}^{x^*} \omega_{b, E}^- \omega_{b, E}^+  e^{-\a \Re \eta_{b, E}} s^{-2}
  b^{-\frac 13} ds &\lesssim_k& r^{-1} e^{-\a \Re \eta_{b, E}} 
  \label{eqintest12}\\
   \int_{r}^{x^*} \omega_{b, E}^- \omega_{b, E}^+ s^k
  b^{-\frac 13} ds &\lesssim_k& b^{-k-1} 
  \label{eqintest112}\\
  \int_{x_*}^{r} \omega_{b, E}^- \omega_{b, E}^+ s^{k-2} b^{-\frac 13} ds &\lesssim_k& \left| \begin{array}{ll}
     x_*^{-1} r^k  & k \ge 0 \\
     r^{k-1}  & k \ge 3
  \end{array} \right.  \label{eqintest114}\\
  \int_r^{x^*}  (\omega_{b, E}^-)^2  s^{k-2}
  b^{-\frac 13} ds &\lesssim_k&  r^{k-1}e^{-2\Re \eta_{b, E}(r)}
  \label{eqintest14} \\
  \int_{r}^{x^*} \omega_{b, E}^+ \omega_{b, E}^-  e^{-\beta s}  s^{k-2}
  b^{-\frac 13} ds &\lesssim_{\beta, k}& e^{-\beta r}  r^{k-2} \label{eqintest15} \\
  \int_{r}^{x^*} (\omega_{b, E}^-)^2 e^{-\beta s}  s^{k-2}
  b^{-\frac 13} ds &\lesssim_{\beta, k}& r^{k-1} e^{-2\Re \eta_{b, E}(r)} e^{-\beta r}.  \label{eqintest16} 
\eea
The estimates \eqref{eqintest12}-\eqref{eqintest114}, \eqref{eqintest15}-\eqref{eqintest16} are simple consequences of the three facts: (a) $e^{-\Re\eta_{b, E}}$ is monotonically decreasing on $[0, r^*_{b, E}]$ by \eqref{eqReetamono} and is $O(1)$ on $[r^*_{b, E}, r^*_{b, E} + b^{-\frac 13}]$, (b) $|b^2 r^2 - 4E|^{-\frac 12}$ is integrable near $r^*_{b, E}$, and (c) (for \eqref{eqintest15}-\eqref{eqintest16}) $\int_r^\infty e^{-\beta s}s^{k-2} \lesssim_{\beta, k} e^{-\beta r}r^{k-2}$. For \eqref{eqintest11}, we also exploit
\bea
\pa_r \left( e^{(2-\a)\Re \eta_{b, E}} r^{k-2} \right) &=& \left((2-\a)\Im (b^2 r^2/4 - E)^\frac 12 + (k-2)r^{-1}\right)  e^{(2-\a) \Re \eta_{b, E}} r^{k-2}\nonumber \\
&\sim_k& e^{(2-\a) \Re \eta_{b, E}} r^{k-2},\quad r \in [x_*, \frac{3}{2b}]
\label{eqderivetarkest}
\eea
from \eqref{eqReetaderiv}, which implies \eqref{eqintest11} for $r \le \frac{3}{2b}$ and thereafter for $r \ge \frac{3}{2b}$ combined with the facts (a), (b) above. And \eqref{eqintest14} similarly $\pa_r \left(e^{-2\Re \eta_{b, E}} r^{k-2}\right) \sim -e^{-2\Re \eta_{b, E}} r^{k-2}$ for $0 \le k \le \frac{x_*}{10}$, $r \in [x_*, \frac{3b}{2}^{-1}]$, where we require $k \le \frac{x_*}{10}$ so that $(k-2)r^{-1} \le |\Im (b^2 r^2/4 - E)^\frac 12|$.

(4) For \eqref{eqeta2est3}, 
 the integral on $[x_*, \frac{3}{2b}]$ is controlled as in the proof of \eqref{eqintest14} above, and we control $r \in [\frac{3}{2b}, r^*_{b, E} + b^{-\frac 13}]$ using \eqref{eqintest14} and $e^{-2\left(\Re \eta_{b, E}(b^{-1}) - \Re \eta_{b, E}(\frac{3}{2b})\right)} \gg b^{-1}$.

  For \eqref{eqeta2est4}, first notice that \eqref{eqomegapm} indicates that $ \omega_{b, 1+iIb}^\pm \sim_{|I|} \omega_{b, 1}^\pm$ and that $\Re \eta_{b, 1}(r) = - S_b(r)$. Then compute
  \bea
 &&\left(  (2-br)^{-1} e^{-2S_b(r)} r^k \right)' \nonumber \\
 &=& (2-br)^{-1} e^{-2S_b(r)} r^k \left[ b(2-br)^{-1} + k r^{-1} + (4 - b^2 r^2)^\frac 12 \right]\nonumber \\
 &=&   \left( (2+br)^\frac 12 + b (2-br)^{-\frac 32} + kr^{-1}(2-br)^{-\frac 12}\right) (2-br)^{-\frac 12} e^{-2S_b(r)}r^k \nonumber\\
 &\sim& (2-br)^{-\frac 12} e^{-2S_b(r)}r^k
 \sim  \left( \int^r_{x_*} (\omega_{b, 1}^+)^2 s^k b^{-\frac 13}ds \right)',\quad r \in \left[x_*, \frac 2b - b^{-\frac 13}\right], \nonumber
\eea
where for the last line we used 
$$ \sup_{r \in \left[x_*, 2b^{-1} - b^{-\frac 13} \right]}\left( k r^{-1} (2-br)^{-\frac 12} \right) \lesssim |k|\max\left\{ x_*^{-1}, b^{\frac 23} \right\} \le 10^{-1}$$ for $|k| \le k_0 = \frac{x_*}{10}$. Thus with $E = 1+iIb$, 
\[ \int_{x_*}^r (\omega_{b, E}^+)^2 s^k b^{-\frac 13}ds \lesssim (2-br)^{-1} e^{-2S_b(r)} r^k \lesssim r^{k+1} b^\frac 13 e^{2\Re \eta_{b, E}(r)},\quad r \in \left[x_*, 2b^{-1} - b^{-\frac 13}\right].  \]
The boundedness on $[2b^{-1} - b^{-\frac 13}, r^*_{b, E} + b^{-\frac 13}]$ is trivial, and that concludes \eqref{eqeta2est4}.

\end{proof}

\subsubsection{Refined asymptotics of $Q_b$}

As a first application of these inversions, we redo the exterior construction of $Q_b$ in \cite{MR4250747} using our approximate fundamental solutions $\psi_j^{b, E}$ to obtain refined asymptotics as below. The proof 
will be presented in Appendix \ref{appQb}. 

\begin{proposition}[Refined asymptotics of $Q_b$]\label{propQbasympref}



For $d \ge 1$, there exists $s_c^{(0)'}(d) \in (0, s_c^{(0)}(d))$ with $s_c^{(0)}(d) \ll 1$ from Proposition \ref{propQbasymp}, such that for $s_c \in (0, s_c^{(0)'}(d))$, the profile $Q_b$ further satisfies

(1) Derivative estimates in the exterior and intermediate region: for $n \ge 0$,
 \begin{align}
    |\pa_r^n Q_b(r)| & \lesssim_n  (br)^{-n} \omega_{b, E}^- r^{-\frac{d-1}{2}} b^{-\frac 16}e^{-\frac{\pi}{2b}},\quad r \ge 4b^{-1}. \label{eqQbasymp8}\\
    \left| D_{-;b, 1}^n (e^{i\frac{br^2}{4}} Q_b(r))\right| &\lesssim_n  |r - 2b^{-1}|^{-n} \omega_{b, E}^- r^{-\frac{d-1}{2}} b^{-\frac 16}e^{-\frac{\pi}{2b}},\quad r \ge 2b^{-1} + b^{-\frac 12}, \label{eqQbasymp9}  \\
    \left| \pa_r^n |Q_b(r)|^2 \right| & \lesssim_n  |r - 2b^{-1}|^{-n}  ( \omega_{b, E}^- r^{-\frac{d-1}{2}} b^{-\frac 16}e^{-\frac{\pi}{2b}})^2,\quad r \ge 2b^{-1} + b^{-\frac 12}. \label{eqUabs2}
    \end{align}
where $E = 1 + ibs_c$, $\omega_{b, E}^-(r)$ is from \eqref{eqomegapm}. 

(2) Asymptotics for $r \ge b^{-\frac 12}$: there exist\footnote{Here the $\varrho_b$ is different from the $\rho$ in \cite[Theorem 1]{MR4250747}  by a factor $2^\frac 76 \pi^\frac 12 (1 + o_{s_c \to 0}(1))$.}
\be  
  \varrho_b = 2^\frac 76 \pi^\frac 12 \kappa_Q b^{-\frac 12} e^{-\frac{\pi}{2b}}(1 + o_{s_c\to 0}(1)), \label{eqvarrhob}
\ee
and $|\theta_b| \lesssim b$ such that
\be
P_b(r) =
 \left| \begin{array}{ll}
 \varrho_b b^\frac 13 e^{\frac{\pi i}{6} + i\theta_b} \left[  \psi^{b, E}_1 (1 + O_\CC (b^{-1} r^{-2})) +  \psi^{b, E}_3 O_\CC (b^{-1+2s_c}r^{-2+2s_c}) \right] & r \ge \frac 2b \\
 \varrho_b b^{\frac 13} \left[   \psi_4^{b, 1} (1 + O_\RR (r^{-1})) + i   \psi_2^{b, 1} (\frac 12 + O_\RR(r^{-1})) \right] & r \in \left[ 2b^{-\frac 12}, \frac 2b \right]   \\
  \varrho_b b^{\frac 13} \left[   \psi_4^{b, 1} (1 + O_\RR (r^{-1})) + i   \psi_2^{b, 1} (\frac 12 + O_\RR(b^\frac 14)) \right] & r \in \left[b^{-\frac 12},  2b^{-\frac 12} \right]
  \end{array} \right. \label{eqQbsharp}
\ee

\end{proposition}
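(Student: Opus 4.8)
The plan is to \emph{refine} the exterior analysis of $Q_b$ from \cite{MR4250747}, replacing its rough approximate solutions by the accurate WKB solutions $\psi_j^{b, E}$ of Proposition \ref{propWKB} and the inversion operators of Lemmas \ref{leminvtildeHext}--\ref{leminvtildeHmid}. \textbf{Step 1 (reduction).} Write $E = 1 + ibs_c$ and $\Phi := r^{\frac{d-1}{2}} P_b = r^{\frac{d-1}{2}} e^{i\frac{br^2}{4}} Q_b$; conjugating the Gaussian out of \eqref{eqselfsimilar} (using $e^{i\frac{br^2}{4}}\Delta_b u = (\Delta + \frac{b^2r^2}{4})(e^{i\frac{br^2}{4}}u)$) and stripping the $r^{\frac{d-1}{2}}$-weight with $\nu = \frac{d-2}{2}$ turns the profile equation into the scalar ODE
\[ \tilde H_{b, E}\Phi = \mathcal{S}_b\Phi, \qquad \mathcal{S}_b := \frac{\nu^2 - \frac 14}{r^2} - |P_b|^{p-1} - h_{b, E}, \]
with $\tilde H_{b, E}$ as in \eqref{eqdeftildeHbE}. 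Proposition \ref{propQbasymp} makes $|P_b|$ exponentially small (in $b^{-1}$) on $[b^{-\frac 12}, \infty)$, so there $\mathcal S_b$ is a \emph{negligible potential}: $|\pa_r^n \mathcal S_b| \lesssim_n r^{-2-n}$ for $r \ge 4b^{-1}$ (by \eqref{eqbddh}, \eqref{eqbddh2}) and $|\mathcal S_b| \lesssim b$ on $[b^{-\frac 12}, b^{-1}]$. I also extract from Proposition \ref{propQbasymp} the Cauchy data at $r = b^{-\frac 12}$ --- namely $(\Phi, \pa_r\Phi) = (r^{\frac{d-1}{2}}Q, \pa_r(r^{\frac{d-1}{2}}Q))$ up to $O(b^{\frac 13})$ relative error and an $O(bs_c)$ imaginary contribution, by \eqref{eqQbasymp5}--\eqref{eqQbasymp6} --- together with the a priori bound $|\Phi| \lesssim b^{-\frac 16}e^{-\frac{\pi}{2b}}\omega_{b, E}^-$.

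\textbf{Step 2 (intermediate region $[b^{-\frac 12}, r^*_{b, E} + b^{-\frac 13}]$).} Since $\psi_j^{b, E} = \psi_j^{b, 1}(1 + (\text{exp. small}))$ in the elliptic zone, I solve $\tilde H_{b, 1}\Phi = \tilde{\mathcal S}_b\Phi$ (with $\tilde{\mathcal S}_b = \mathcal S_b + O(\text{exp. small})$) by contraction using $\tilde T^{mid, G}$ of Lemma \ref{leminvtildeHmid} on this interval, writing $\Phi = \gamma_4\psi_4^{b, 1} + \gamma_2\psi_2^{b, 1} + \tilde T^{mid, G}[\tilde{\mathcal S}_b\Phi]$. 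The inversion gains $x_*^{-1} = b^{\frac 12}$ against the $r^{-2}$-bounded potential (e.g.\ \eqref{eqtildeTmidGest2}), so the map is contractive with $\Phi$ as fixed point; imposing the Cauchy data at $b^{-\frac 12}$ via the normalizations $\kappa_{b, 1}^{\pm}\psi_{l_{\pm}}^{b, 1} = e^{\pm r}(1 + O(b^{\frac 12}))$ \eqref{eqpsibEderiv5} fixes $\gamma_4$ (dominant, $\propto \varrho_b$) and the subdominant $\gamma_2$. This gives the second and third lines of \eqref{eqQbsharp}, the $O_\RR(r^{-1})$ and $O_\RR(b^{\frac 14})$ errors being produced by the boundedness and boundary bounds of Lemma \ref{leminvtildeHmid}(2)--(4).

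\textbf{Step 3 (turning point, exterior region, derivative estimates).} On $I_c^{b, E}$ the $\psi_j^{b, E}$ are $O(1)$ and linked by the connection formula \eqref{eqconnect}; as $|\mathcal S_b| \lesssim b$ there and the zone has width $\sim b^{-\frac 13}$, a short-interval continuation (equivalently, the near-identity transfer matrix of $\tilde H_{b, E}$) transports the $\{\psi_4^{b, 1}, \psi_2^{b, 1}\}$-description across $r^*_{b, E}$ to a $\{\psi_1^{b, E}, \psi_3^{b, E}\}$-description on $[r^*_{b, E} + b^{-\frac 13}, \infty)$. There I close the ansatz $\Phi = \varrho_b b^{\frac 13}e^{\frac{\pi i}{6} + i\theta_b}\psi_1^{b, E} + \tilde T^{ext}_{r_0; b, E}[\mathcal S_b\Phi]$ by contraction in $X^{0, N, -}_{r_0, r_1; b, E}$: one has $\mathcal S_b\Phi \in X^{-2, N, -}$ with norm $\lesssim \|\Phi\|_{X^{0, N, -}}$ --- via the product rule $D_{-}^n(\mathcal S_b\Phi) = \sum_k\binom nk(\pa_r^k\mathcal S_b)(D_-^{n-k}\Phi)$ and $|\pa_r^k\mathcal S_b| \lesssim r^{-2-k}$ --- and $\tilde T^{ext}_{r_0; b, E}$ maps $X^{-2, N, -} \to X^{0, N, -}$ with a gain of $b$ \eqref{eqbddHbnuinvext}. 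The decaying oscillatory structure is carried exactly by $\psi_1^{b, E}$, and the admissibility of $Q_b$ (its $r^{-\frac d2 + s_c}$ decay, Proposition \ref{propQbasymp}) selects precisely this $\psi_1$-dominant branch; decomposing $\tilde T^{ext}[\mathcal S_b\Phi]$ in the basis $\{\psi_1^{b, E}, \psi_3^{b, E}\}$ via Lemma \ref{leminvtildeHext}(4) produces the $O_\CC(b^{-1}r^{-2})$ and $O_\CC(b^{-1+2s_c}r^{-2+2s_c})$ remainders of the first line of \eqref{eqQbsharp}. Membership in $X^{0, N, -}$ gives $|D_{-; b, E}^n\Phi| \lesssim |r - 2\sqrt E b^{-1}|^{-n}\omega_{b, E}^-\, b^{-\frac 16}e^{-\frac{\pi}{2b}}$; converting $D_{-; b, E}$ to $D_{-; b, 1}$ and then to $\pa_r$ (using $\pa_r(e^{-i\frac{br^2}{4}}P_b) = e^{-i\frac{br^2}{4}}D_{-; b, 1}P_b$ and \eqref{eqphasematcheibx2}) and restoring the $r^{\pm\frac{d-1}{2}}$-weights yields \eqref{eqQbasymp8}--\eqref{eqQbasymp9}, while \eqref{eqUabs2} follows from \eqref{eqQbasymp9} by writing $\pa_r = \frac 12(D_{-; b, 1} + \overline{D_{-; b, 1}})$ on $r > 2b^{-1}$ and noting the Gaussian phases cancel in $|Q_b|^2$. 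Tracking the constants through Steps 2--3 identifies $\varrho_b = 2^{\frac 76}\pi^{\frac 12}\kappa_Q b^{-\frac 12}e^{-\frac{\pi}{2b}}(1 + o_{s_c\to 0}(1))$ --- the factor $2^{\frac 76}\pi^{\frac 12}$ being the WKB normalization $\kappa_{b, 1}^- b^{\frac 16}e^{\frac{\pi}{2b}}$ and $\kappa_Q$ the asymptotic constant of $Q$ --- and gives $\theta_b = O(b)$ from $E - 1 = ibs_c$ and the connection coefficients, matching \eqref{eqvarrhob}.

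\textbf{Main obstacle.} The crux is the two-scale matching across $r^*_{b, E}$, bridging the exponential elliptic regime of the intermediate region with the polynomially-growing, quadratically-oscillating exterior regime: one must verify that the perturbative WKB scheme genuinely closes with the stated gains ($b^{\frac 12}$ from $\tilde T^{mid, G}$, $b$ from $\tilde T^{ext}$) against the $r^{-2}$-potential $\mathcal S_b$ uniformly across all zones, and that the subdominant branch ($\psi_2^{b, 1}$, respectively $\psi_3^{b, E}$) is suppressed to exactly the order appearing in \eqref{eqQbsharp} --- which is precisely what the algebraic structure of the quadratic-oscillation class built into Lemma \ref{leminvtildeHext} and the sharp boundary estimates of Lemma \ref{leminvtildeHmid} are designed to deliver.
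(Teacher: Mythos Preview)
Your proposal uses the right machinery --- the WKB solutions $\psi_j^{b,E}$ and the inversion lemmas --- but differs from the paper in the direction of propagation, and this creates a genuine gap.

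The paper \emph{reconstructs} the exterior solution by shooting from infinity: it builds a one-parameter family $U_\varrho$ on $[2b^{-1},\infty)$ with leading term $\varrho b^{\frac13}e^{\frac{\pi i}{6}+i\theta_1}\psi_1^{b,E}$, then extends it inward to $[b^{-\frac12},2b^{-1}]$ by splitting $U=\Sigma+i\Theta$ and matching at $r_0=2b^{-1}$ via the connection formula \eqref{eqconnect}, and finally matches with the interior solution of \cite{MR4250747} at $b^{-\frac12}$ to pin down $\varrho=\varrho_b$. The precise coefficient $\tfrac12$ multiplying $i\psi_2^{b,1}$ in \eqref{eqQbsharp} is not arbitrary: it comes from the connection matrix $\begin{pmatrix} e^{\pi i/3}/2 & e^{-\pi i/3}/2 \\ e^{-\pi i/6} & e^{\pi i/6}\end{pmatrix}$ applied to $(\tilde c_3,\beta)$ at the turning point (cf.\ the computation producing $\bar d_2^\Re+i\bar d_2^\Im = \tfrac12\varrho b^{\frac13}(i+O_\CC(b))$), i.e.\ it is forced by the fact that the admissible branch at infinity is exactly $\psi_1^{b,E}$.

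Your Step 2 runs the opposite way: you impose Cauchy data at $b^{-\frac12}$ from Proposition \ref{propQbasymp} and propagate outward, fixing both $\gamma_4$ and $\gamma_2$ from that data. But \eqref{eqQbasymp6} gives only an \emph{upper bound} $|\Im P_b|\lesssim bs_c\langle r\rangle^{-\frac{d-1}{2}}e^r$, not a sharp asymptotic, so the subdominant coefficient $\gamma_2$ cannot be read off to the precision $\tfrac12+O_\RR(b^{\frac14})$ demanded by \eqref{eqQbsharp}. You do invoke admissibility at infinity in Step 3 to select the $\psi_1$-branch, but by then your intermediate-region coefficients are already fixed. Reversing the order --- exterior first, then connect inward --- is precisely the paper's route, and the real/imaginary split (with different Green's functions $\tilde T^{mid,G}$ versus $\tilde T^{mid,D}$ for $\Sigma$ and $\Theta$) is essential to keep the sizes of the two parts separated through the intermediate zone.

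A second, more technical gap: you treat $|P_b|^{p-1}$ as part of a \emph{known} potential $\mathcal S_b$ with $|\partial_r^k\mathcal S_b|\lesssim r^{-2-k}$ on $[4b^{-1},\infty)$, but for $k\ge1$ this is essentially \eqref{eqUabs2}, which is what you are proving. The paper keeps the nonlinear term and bootstraps: it proves $\|\partial_r^N(|U|^2)\|_{C^0_{(\omega_{b,E}^-)^2}}\lesssim_N\|U\|_{X^{0,N,-}}^2$ by writing $|U|^2=\overline{(e^{-i\phi_{b,1}}U)}\cdot(e^{-i\phi_{b,1}}U)$ so that plain $\partial_r$-derivatives of $|U|^2$ are controlled by $D_{-;b,E}$-derivatives of $U$ (with a harmless phase shift $\phi_{b,E}-\phi_{b,1}$), and this closes the induction on $N$ in the $X^{0,N,-}$ scale via \eqref{eqbddHbnuinvext}.
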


\begin{remark}
    This proposition verifies two features of $Q_b$: the exterior oscillation (even $o(b^{-1})$ close to the turning point) and the sharp asymptotics for real and imaginary parts in the intermediate range (clear from the connection formula \eqref{eqconnect}). They are essential in computing the bifurcated eigenvalue in Section \ref{sec6}. 
\end{remark}

\subsection{High spherical class}\label{sec42}

When it comes to high spherical classes $\nu = l + \frac{d-2}{2} \gg 1$, the approximate solution should take $\nu$ into account. 
We will look for approximate solution of 
\be \left( \pa_r^2 - E + \frac{b^2 r^2}{4} - \frac{\nu^2 - \frac 14}{r^2}  \right) \psi  = 0 \label{eqscalarh} \ee
with $\nu \ge 1, |E-1| \ll 1, 0 < b \ll 1$. The construction will be similar to the case when $\nu = 0$ through the Airy function with $\CC$-variable.

In this subsection, we will use the notation $\calC_{\theta}$ in \eqref{eqdefcalCtheta} for sector regions in $\CC$.

\mbox{}

\subsubsection{WKB approximate solutions}

\begin{definition}[WKB approximate solutions for high spherical classes]\label{defWKBappsoluh}
  Let $b > 0$, $|E-1| \le \frac 18$, $\nu \ge 1$. Define the complex arguments $s$ and $\zeta$ by
  \bea  s &=& \frac{br}{2\sqrt{E}}, \label{eqsdefh} \\
  \zeta \left(\frac{d\zeta}{ds}\right)^2 &=& s^2 - 1 - \a s^{-2},\quad \zeta(s_0) = 0, \label{eqzetadefh}
  \eea
  and where $\a, s_0(\a) \in \CC$ and extra auxiliary parameters $\mu, t_\pm$ are defined by 
  \bea \a = -\frac{\nu^2 - \frac 14}{\mu^4} = \frac{b^2 (4 \nu^2 - 1)}{16E^2},&&  \mu = e^{i\frac{\pi}{4}}\left(\frac{2E}{b} \right)^{\frac 12}, \label{eqmudefh}\\
   t_\pm = \frac{1 \pm \sqrt{1 + 4\a}}{2}, && s_0 = \sqrt{t_+}. \label{eqs0def1}
  \eea
Here $|\arg \a| \le \frac \pi 4$ and $\Re \a > 0$ from the range of $E, \nu, b$, so the branch of square roots in \eqref{eqs0def1} are chosen as $|\arg(1+4\a) |, |\arg t_+| \le \frac \pi 2$. 
Then we define the WKB approximate solutions to be 
\be \begin{split} 
\psi_1^{b, E, \nu} = (\pa_s \zeta)^{-\frac 12}  \Ai(e^{-\frac{2\pi i}{3}}\mu^\frac 43\zeta),&\quad 
\psi_2^{b, E, \nu} = (\pa_s \zeta)^{-\frac 12} \Ai(e^{ \frac{2\pi i}{3}}\mu^\frac 43 \zeta), \\
\psi_3^{b, E, \nu} = (\pa_s \zeta)^{-\frac 12} \Ai(\mu^\frac 43 \zeta),&\quad 
\psi_4^{b, E, \nu} = \frac 12 (\pa_s \zeta)^{-\frac 12} \mathbf{Bi}(e^{ \frac{2\pi i}{3}}\mu^\frac 43 \zeta)
\end{split} \label{eqpsidefh} \ee
and the correction function\footnote{Again, the second part of the formula follows \eqref{eqzetadef} and is similar to \eqref{eqPsiF} below.}
\be h_{b, E, \nu}(r) = -\frac{b^2}{4E} \zeta_s^{\frac 32} \pa_\zeta^2 (\zeta_s^\frac 12) = -\frac{b^2}{4E} \left[ \frac{5(s^2 - 1 - \a s^{-2})}{16\zeta^3} - \frac{3s^2 + 2 + 18\a s^{-2} - 6 \a s^{-4} - \a^2 s^{-6}}{4(s^2 - 1 - \a s^{-2})^2} \right]. \label{eqcorrectionh}\ee
\end{definition}

\begin{remark}\mbox{}\label{rmkWKBh}
\begin{enumerate}
\item (Well-definedness) Notice that with $\a$ defined as \eqref{eqmudefh}, $\arg \a = - 2 \arg E = O(\delta)$ for $\nu \ge 1$, $b > 0$ and $|E-1| \le \delta\ll 1$.  We will prove below in Lemma \ref{lemzeta32h} that when $|E-1| \ll 1$, \eqref{eqzetadef} determines $\zeta = \zeta(s)$ as an analytic function on the sector region $\calC_\frac \pi 4$ and $\pa_s \zeta(s) \neq 0$. Therefore $\psi_j^{b, E}$ are well-defined $C^\infty_{loc}(\RR_+ \to \CC)$ functions, with the branch of $\zeta_s^{-\frac 12}$ determined by $|\arg \zeta'(s_0)| \le \frac \pi 4$ from \eqref{eqzetas0h}. 
\item (Derivation) We modify the construction of Definition \ref{defWKBappsolu}. Under the variable $s$,  \eqref{eqscalarh} becomes
\[\left[  \pa_s^2 + \frac{4E^2}{b^2} (s^2 - 1 - \a s^{-2}) \right] \psi = 0.  \]
This inspires the construction of $\zeta$ as \eqref{eqzetadefh}. 
With a further change of argument to $\zeta$ and variable to $\Upsilon = \zeta_s^{\frac 12} \psi$, the equation \eqref{eqscalar} turns into 
\be \pa_\zeta^2 \Upsilon - \mu^4 \zeta \Upsilon = F(\zeta)\Upsilon \label{eqPsih}\ee
where $F(\zeta) = \zeta_s^{-\frac 12} \pa_\zeta^2 (\zeta_s^\frac 12)$ like \eqref{eqPsiF}. So $\zeta_s^\frac 12 \psi_j^{b, E, \nu}$ for $j = 1, 2, 3, 4$ and their linear superposition will solve the homogeneous equation of \eqref{eqPsih}.
\end{enumerate}
\end{remark}



We first show $\zeta$ with parameter $\a$ is a well-defined analytic function for $\a \in \calC_{\theta_0}$.

\begin{lemma}\label{lemzeta32h}
There exists a constant $0 < \theta_0 \ll 1$ such that for any $\a \in \calC_{\theta_0}$, let $\zeta$, $\a$, $t_\pm$ and $s_0$ defined as in \eqref{eqzetadefh}-\eqref{eqs0def1}. Then $\zeta = \zeta(s)$ is analytic in  $\calC_{\frac \pi 4}$,
and it satisfies\footnote{This integral can also be computed explicitly, but we do not need that.}
        \be \zeta = \left( \frac 32\int_{s_0}^s (w^2 - 1 - \a w^{-2})^\frac 12 dw \right)^\frac 23
\label{eqetaidenth} 
\ee
where the branch of RHS is uniquely determined by 
\be \zeta'(s_0) = (2(t_+ - t_-)s_0^{-1})^{\frac 13} \in \calC_{\frac \pi 4}.\label{eqzetas0h} \ee
Moreover, on $\calC_\frac{\pi}{4}$, $\zeta$ has only one zero at $s = s_0$ and $\zeta_s$ is non-vanishing; on $\calC_{\theta_0}$, the following estimates hold
\be
  |\zeta(s)^\frac 32|\sim_{\theta_0} \left| \begin{array}{ll} 
  \ln\left|\frac{s_0}{s} \right| |\a|^\frac 12 + \la \a \ra^{\frac 12} & |s| \le \frac 12 |s_0|,\\
  |s_0|^{\frac 12}|s - s_0|^\frac 32 & \frac 12 |s_0| \le |s| \le \frac 32|s_0|, \\
  |s|^2 & |s| \ge \frac 32 |s_0|. 
  \end{array}\right.  \label{eqzetaasymph}
\ee
where the constant is independent of $\a$. 
\end{lemma}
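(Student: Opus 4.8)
The plan is to mirror the proof of Lemma \ref{lemzeta32}, reducing everything to the integral representation \eqref{eqetaidenth} together with a careful branch analysis; the two new features relative to the $\alpha=0$ case are the extra turning point of $s^2-1-\alpha s^{-2}$ and the simple pole of the integrand at $s=0$.

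\emph{Step 1 (locating the singularities).} Factor $s^2-1-\alpha s^{-2}=s^{-2}(s^2-t_+)(s^2-t_-)=s^{-2}(s-s_0)(s+s_0)(s^2-t_-)$, where $t_\pm$ are the roots of $t^2-t-\alpha=0$ and $s_0=\sqrt{t_+}$. Thus $P(w):=(w^2-1-\alpha w^{-2})^{1/2}$ has branch points exactly at $\pm s_0$, $\pm\sqrt{t_-}$ and a simple pole at $w=0$. Using $t_+t_-=-\alpha$, $t_++t_-=1$ and $|\arg\alpha|\le\theta_0$, one checks that $1+4\alpha$ avoids a neighbourhood of $(-\infty,0]$, so $\Re\sqrt{1+4\alpha}>0$, hence $\Re t_+>\tfrac12$ and $\arg s_0=O(\theta_0)$; meanwhile $t_-=1-t_+$ stays in a neighbourhood of $(-\infty,0]$, so $\sqrt{t_-}$, $-\sqrt{t_-}$ and $-s_0$ all lie outside $\calC_{\pi/4}$ once $\theta_0\ll1$. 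Therefore inside $\calC_{\pi/4}$ the only branch point of $P$ is $s_0$, and $0$ is only at the (excluded) vertex.

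\emph{Step 2 (defining $\zeta$ and its basic properties).} On $\calC_{\pi/4}$ cut along $(0,s_0]$, define the analytic branch $P(w)=w^{-1}(w-s_0)^{1/2}(w+s_0)^{1/2}(w^2-t_-)^{1/2}$ normalised by $P(w)>0$ for real $w>|s_0|$ (this reduces to \eqref{eqbranchsqrt} when $\alpha=0$; along $(0,s_0)$ one takes the limit from $\Im w>0$, giving $P\in i\RR_{>0}$ there). Set $\zeta^{3/2}(s)=\tfrac32\int_{s_0}^s P(w)\,dw$. Pulling out the singular factor $(w-s_0)^{1/2}$, whose argument is constant along a straight path from $s_0$, one shows that along a suitably chosen path the argument of $P$ stays in an interval of width $<\pi$, so $|\arg\zeta^{3/2}|<2\pi$ and $\zeta:=(\zeta^{3/2})^{2/3}$ is a well-defined analytic function on $\calC_{\pi/4}\setminus(0,s_0]$. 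Near $s_0$, $P(w)=(w-s_0)^{1/2}g(w)$ with $g(w)=w^{-1}(w+s_0)^{1/2}(w^2-t_-)^{1/2}$ analytic and non-vanishing (since $s_0\ne0$ and $t_+\ne t_-$), whence $\zeta^{3/2}=g(s_0)(s-s_0)^{3/2}(1+O(s-s_0))$ and $\zeta=g(s_0)^{2/3}(s-s_0)(1+O(s-s_0))$ extends analytically across $s_0$; computing $g(s_0)=\sqrt2\,s_0^{-1/2}(t_+-t_-)^{1/2}$ gives $\zeta'(s_0)=g(s_0)^{2/3}=(2(t_+-t_-)s_0^{-1})^{1/3}$, which lies in $\calC_{\pi/4}$ because $\arg(t_+-t_-)=\arg\sqrt{1+4\alpha}\in(-\pi/2,\pi/2)$. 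A similar limiting argument across $(0,s_0)$ gives analyticity on all of $\calC_{\pi/4}$. Finiteness of the integral (the only singularity of $P$, the pole at $0$, being excluded) shows $\zeta$ has no poles; $\zeta^{3/2}$ vanishes only at $s_0$ (again by the width-$<\pi$ bound the integral cannot cancel), so $s_0$ is the unique zero of $\zeta$; and $\zeta(\zeta_s)^2=s^2-1-\alpha s^{-2}$ forces any zero of $\zeta_s$ to be a zero of the right-hand side, i.e. $s_0$, where $\zeta_s(s_0)=\zeta'(s_0)\ne0$ — so $\zeta_s$ is non-vanishing.

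\emph{Step 3 (asymptotics \eqref{eqzetaasymph}).} Split the contour at the scales $\tfrac12|s_0|$, $|s_0|$, $\tfrac32|s_0|$. For $|s|\ge\tfrac32|s_0|$, $P(w)=w\,(1+O(|s_0/w|^2))$ gives $\tfrac32\int_{s_0}^s P=\tfrac34 s^2(1+o(1))$, so $|\zeta^{3/2}|\sim|s|^2$. For $\tfrac12|s_0|\le|s|\le\tfrac32|s_0|$, using $P(w)\approx g(s_0)(w-s_0)^{1/2}$ gives $|\zeta^{3/2}|\sim|g(s_0)|\,|s-s_0|^{3/2}\sim|s_0|^{1/2}|s-s_0|^{3/2}$, since $|t_+-t_-|=|\sqrt{1+4\alpha}|\sim\langle\alpha\rangle^{1/2}$ and $|s_0|^2=|t_+|\sim\langle\alpha\rangle^{1/2}$. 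For $|s|\le\tfrac12|s_0|$, near $w=0$ one has $P(w)\approx\sqrt{-\alpha}\,w^{-1}$, so $\tfrac32\int_{s_0}^s P=\tfrac32\sqrt{-\alpha}\,\ln(s/s_0)+O(\langle\alpha\rangle^{1/2})$, yielding $|\zeta^{3/2}|\sim|\alpha|^{1/2}\ln|s_0/s|+\langle\alpha\rangle^{1/2}$. Uniformity over $\alpha\in\calC_{\theta_0}$, in particular as $|\alpha|\to\infty$, is obtained by rescaling $w=s_0u$, writing $\zeta^{3/2}=\tfrac32\,t_+\int_1^{s/s_0}(u^2-t_+^{-1}+(t_-/t_+)u^{-2})^{1/2}\,du$ and tracking $t_\pm$ as in Step 1.

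\emph{Main obstacle.} The heart of the matter is the branch bookkeeping in Step 2: certifying that the a priori multivalued-looking object $(\tfrac32\int_{s_0}^s P)^{2/3}$ is genuinely single-valued and analytic on the entire sector. This rests on the quantitative separation of $s_0$ from the other branch points and from the pole, uniformly in $\alpha\in\calC_{\theta_0}$ (Step 1), and on the argument bound controlling the $2/3$-power, where the choice of integration path may have to depend on the location of $s$ (straight paths for $s$ ``outward'', a more careful path when $s$ approaches the vertex, exactly as the separate treatment near $s=0$ in the proof of Lemma \ref{lemzeta32}); the logarithmic blow-up of $\zeta$ near the vertex, absent in the $\alpha=0$ case, is the only genuinely new phenomenon and must be folded into both the continuation argument and the asymptotics. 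The remaining estimates are routine contour-splitting.
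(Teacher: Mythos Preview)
Your proposal is correct and follows essentially the same path as the paper: factor the integrand to locate singularities, verify that $s_0$ is the only branch point in $\calC_{\pi/4}$, Taylor-expand at $s_0$ to get local analyticity and the value of $\zeta'(s_0)$, and split the integral into three ranges for the asymptotics.

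The one place where you make life harder than necessary is the ``main obstacle'' you flag. The sector $\calC_{\pi/4}$ is simply connected, so there is no monodromy issue to track: once $\zeta$ is analytic in a neighbourhood of $s_0$ (from the Taylor expansion) and the first-order ODE $\zeta_s = (F(s)/\zeta)^{1/2}$ has analytic coefficients away from $s_0$, analytic continuation gives single-valuedness on the whole sector for free --- no path-dependent branch bookkeeping needed. Likewise, your width-$<\pi$ argument for non-vanishing of $\zeta$ is not wrong but is roundabout; the paper simply reads it off the ODE $\zeta(\zeta_s)^2 = s^2-1-\alpha s^{-2}$: since $\zeta$ is analytic, a zero of $\zeta$ forces a zero of the right-hand side, whose only root in $\calC_{\pi/4}$ is $s_0$ (this is the same argument you already use for $\zeta_s$). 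The logarithmic growth near the vertex is real and does show up in the asymptotics, but since $0 \notin \calC_{\pi/4}$ it plays no role in the continuation argument. For the asymptotics themselves, the paper tightens your contour-splitting slightly by restricting to $s \in \calC_{\theta_0}$ (so $|\arg(s-s_0)|$ is small outside a thin annulus), which lets you replace $|\int_{s_0}^s P\,dw|$ by $\int |P|\,|dw|$ without cancellation; otherwise the computations are the same.
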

\begin{proof}  
Integrating \eqref{eqzetadefh} from $s_0$ to $s \in \calC_{\frac \pi 4}$, we obtain
\[ \frac 23 \zeta^\frac 32 = \int_{s_0}^s (w^2 - 1 - \a w^{-2})^\frac 12 dw. \]
Notice that with $|\arg \a| \le \theta_0 \ll 1$, we have 
\[ |\arg t_+| \lesssim |\arg \a| \le \theta_0,\quad |\arg(-t_-)| = |\arg(\a/t_+)| \lesssim \theta_0.\]
Notice that 
\bee s_0^2 - 1 - \a s_0^{-2} = 0,\quad s^2 - 1 - \a s^{-2} = s^{-2}( s^2 - t_+)(s^2 - t_-), \eee 
and among the four roots $ \pm \sqrt{t_+}, \pm \sqrt{t_-}$, only $s_0 = \sqrt {t_+} \in \calC_{\frac \pi 4}$. Thus $\zeta^\frac 32(s)$ has only one branching point $s = s_0$ in $\calC_{\frac\pi 4}$, and similar to Lemma \ref{lemzeta32}, the analyticity of $\zeta$ in $\calC_{\frac \pi 4}$ reduces to its analyticity near $s_0$. This is a direct computation
\bea
  \zeta &=& \left( \frac 32 \int_{s_0}^s (w^2 - 1-\a w^{-2})^\frac 12 dw\right)^\frac 23 \nonumber \\
  &=& \left( \frac 32 \int_0^{s-s_0} h^\frac 12 (2s_0 + h)^\frac 12 (s_0 + h)^{-1} (t_+ - t_- + 2hs_0 + h^2)^{\frac 12} dh \right)^\frac 23 \label{eqzetaexpandh} \\
  &=& \left( \frac{2(t_+ - t_-)}{s_0} \right)^\frac 13 (s-s_0) + O(s-s_0)^2.\nonumber
\eea
Since obviously $|\arg((t_+ - t_-)s_0^{-1})|\lesssim \delta_2 \ll 1$, we can choose the natural branch such that $\zeta_s(s_0) \in \calC_{\frac \pi 4}$.
The non-vanishing of $\zeta$ and $\zeta_s$ in $\calC_\frac \pi 4$ is obvious from \eqref{eqzetadefh}. 

For the estimate \eqref{eqzetaasymph}, we first notice that 
    \bee
     |t_+| \sim |t_+ - t_-| \sim \la \a \ra^{\frac 12}, \quad |t_-| \sim \a \la \a \ra^{-\frac 12}, \quad |s_0| \sim \la \a \ra^\frac 14.
    \eee
We take $\theta_0 \ll 1$ such that $100 \theta_0 \ll \frac 12$, then 
 on $|s - s_0| \lesssim 100 \theta_0 |s_0| \ll \frac 12|s_0|$, the estimate follows the expansion of $\zeta$ at $s_0$ above. When $|s| \le (1-50\theta_0) |s_0|$ (or $|s| \ge (1+50\theta_0)|s_0|$) with $s \in \calC_{\theta_0}$, noticing that $|\arg (s_0 - s)| \le 10^{-1}$ (or $|\arg(s-s_0)| \le 10^{-1}$ respectively), we can write 
\bee
  |\zeta(s)^\frac 32| \sim \left| \int_{s_0}^s \left| w^2 - 1 - \a w^{-2} \right|^{\frac 12}  dw \right|.
\eee
So \eqref{eqzetaasymph} on these regions will follow from integrating the estimate for any $\theta \le \theta_0$
\bea
  &&|s^2 - 1 - \a s^{-2}|= |s|^{-2}|s^2 - t_+||s^2 - t_-| \nonumber \\
  &\sim_{\theta}& \left| \begin{array}{ll}
  |s|^{-2}|s_0|^2 \left( |s|^2 + |\a| \la \a\ra^{-\frac 12} \right) \sim |s_0|^2 + |\a s^{-2}| & |s| \le (1-50\theta) |s_0| \\
  |s|^2 &  |s| \ge (1+50\theta) |s_0|.
  \end{array}\right. \label{eqsintasymph}
\eea

 \end{proof}

The following definition and lemma are the counterparts of Definition \ref{defWKBaux} and Lemma \ref{lemWKBeta} in high spherical classes.

\begin{definition}[Auxiliary functions for WKB approximate solutions for high spherical classes]\label{defWKBauxh}
    Let $b > 0$, $\nu \ge 1$ and $|E-1| \le \frac 1{8}$ and $s, s_0, \mu, \zeta$ be as Definition \ref{defWKBappsoluh}. We define the $\CC$-valued function $\eta = \eta_{b, E, \nu}: (0, \infty) \to \CC$ as 
\be \eta_{b, E, \nu}(r)  = \frac 23 \left(\mu^\frac 43 \zeta(s(r)) e^{-\frac{2\pi i}{3}}\right)^{\frac 32},\quad {\rm for\,\,} \arg \left(\mu^\frac 43 \zeta e^{-\frac{2\pi i}{3}} \right) \in (-\pi, \pi],\label{eqetadefh}\ee
and the weight functions $\omega_{b, E, \nu}^\pm: (0, \infty) \to (0,\infty)$ as 
  \be
 \omega^\pm_{b, E, \nu}(r) =  \left\la b^{-\frac 23} (b^2 r^2 - 4E - (4\nu^2 - 1)Er^{-2}) \right\ra^{-\frac 14} e^{\pm \Re \eta_{b, E, \nu}(r)}. \label{eqomegapmh}
\ee
We also define the center interval as
\be I_c^{b, E, \nu} = \left\{ r \in (0, \infty):  |br - 2\sqrt E s_0| \le 4|s_0|^{-\frac 13} b^{\frac 23} \right\}. \label{eqIcdefh}
 \ee
\end{definition}

\begin{lemma}\label{lemWKBetah}
There exists $0 < \delta_2' \ll 1$ such that for $0 < b \le \frac 14$, $\nu \ge 1$, and $|E-1|\le \delta_2'$, we have the following properties for $\eta_{b, E}$. 
\begin{enumerate}
    \item Turning point and monotonicity of $\Re \eta$: there exists a unique $r^* = r^*_{b, E, \nu} > 0$ such that 
    \bea\Im \left(\mu^\frac 43 \zeta(r) e^{-\frac{2\pi i}{3}} \right) \left\{ \begin{array}{ll} 
        < 0 & r > r^* \\
        = 0 & r = r^* \\
        > 0 & r < r^* 
        \end{array}\right.\quad {\rm if}\,\, \Im E \ge 0;
        \label{eqsignImzetah} \\
        \Im \left(-\mu^\frac 43 \zeta(r) \right) \left\{ \begin{array}{ll} 
        < 0 & r > r^* \\
        = 0 & r = r^* \\
        > 0 & r < r^* 
        \end{array}\right. \quad {\rm if}\,\, \Im E \le 0;
        \label{eqsignImzeta2h}
        \eea
    Then $\Re \eta (r^* - 0) = \Re \eta(r^* + 0) = 0$. Moreover, it satisfies the following bounds
    \bea
     \left| r^*_{b, E, \nu} - \frac{2\Re (\sqrt{ E}s_0) }b \right|\le \frac{|\Im E|}{b|s_0|}, \quad \forall \,\, |E-1| \le \delta_2';  \label{eqr*bE1h} 
    \eea
    and monotonicity
    \be r^*_{b, E_1, \nu} < r^*_{b, E_2, \nu},\quad {\rm if\,\,} \Re E_1 < \Re E_2 \,\,{\rm and}\,\, \Im E_1 = \Im E_2. 
    \label{eqr*monoh} 
    \ee
    We also have the formula of $\pa_r \Re \eta$ 
    \be 
     \pa_r \left(\Re \eta(r) \right) = \Im \left(  \frac{b^2 r^2}{4} - E - \frac{\nu^2 - \frac 14}{ r^2}  \right)^\frac 12,\label{eqReetaderivh} 
    \ee
    where $\arg \left(  \frac{b^2 r^2}{4} - E - \frac{\nu^2 - \frac 14}{ r^2}  \right) \in \left\{ \begin{array}{ll} 
        (0, 2\pi) & r < r^*, \\
        (-\pi, \pi) & r > r^*,
     \end{array}\right.$ 
    and monotonicity of $\Re \eta$
    \be {\rm sgn}(\pa_r \Re \eta) = \left\{ \begin{array}{ll} 
       1 & r < r^*, \\
       - {\rm sgn}(\Im E) & r > r^*.
     \end{array}\right.
\label{eqReetamonoh}
    \ee
    \item Asymptotics and estimates of $\eta$ and $\Re \eta$: for any $|E-1| \le \delta_2'$, 
    \bea
           &&|\eta|\left\{\begin{array}{ll}
           \sim b r^2 & r \ge \frac{4|s_0|}{b},\\
           \sim b^{-1} |s_0|^\frac 12 \left| br - 2 \sqrt E s_0 \right|^{\frac 32} & r \in \left[\frac{|s_0|}b,\frac{4|s_0|}b \right] - I_c^{b, E, \nu}  \\
           \sim b^{-1} \left( \ln\left|\frac{s_0}{br}\right| |\a|^\frac 12 + \la \a \ra^{\frac 12} \right) & r \in \left[0, \frac{|s_0|}b\right] \\
               \lesssim 1 & r\in I_c^{b, E, \nu}
           \end{array} \right. \label{eqetaabsh}  \\
&& 
\begin{split}
\Re \eta = -\frac{\Im E}{b} \bigg[&\ln (br) - \frac 14 \ln\left( (\Re E)^2 + b^2(\nu^2 - 1/4)/4 \right) + O\left(|s_0|^2(br)^{-2}\right)\\
& + O\left(|s_0|^{-1}|\Im E|^\frac 12\right)   \bigg],\qquad {\rm for\,\,} r \ge \frac{2\Re (\sqrt E s_0)}{b} + 6 \frac{|\Im E|}{b|s_0|}  \end{split} \label{eqetaReh}  \\
&& \pa_r \Re \eta \ge \frac 12|s_0|,\qquad {\rm for}\,\, r \le \frac{\sqrt 2 |\sqrt E s_0|}{b}. \label{eqetaRehlowerbdd}
           \eea
\end{enumerate}
\end{lemma}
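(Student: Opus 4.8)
\textbf{Proof plan for Lemma \ref{lemWKBetah}.}

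The proof parallels that of Lemma \ref{lemWKBeta}, replacing the scalar turning point structure of $s^2-1$ by the two-root structure of $s^2-1-\a s^{-2}$, with the key new feature that $s_0=\sqrt{t_+}$ plays the role of the turning point and all estimates must be uniform in $\a\in\calC_{\theta_0}$ (equivalently in $\nu\ge 1$). First I would record the elementary facts about $t_\pm,s_0$ already used in Lemma \ref{lemzeta32h}, namely $|t_+|\sim|t_+-t_-|\sim\la\a\ra^{1/2}$, $|t_-|\sim\a\la\a\ra^{-1/2}$, $|s_0|\sim\la\a\ra^{1/4}$, and $\arg s_0=-\tfrac12\arg E+O(\theta_0)$, plus the expansion $\zeta=\bigl(2(t_+-t_-)s_0^{-1}\bigr)^{1/3}(s-s_0)+O((s-s_0)^2)$ from \eqref{eqzetaexpandh} and the asymptotics \eqref{eqzetaasymph}. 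These give $|\zeta_s|\sim 1$ near $s_0$ and control $\zeta_s=\bigl((s^2-1-\a s^{-2})/\zeta\bigr)^{1/2}$ everywhere on $\calC_{\theta_0}$.

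For part (1), the argument is the analogue of Step 1 of the proof of Lemma \ref{lemWKBeta}. For $E\in\RR$ one has $\zeta(r)\in\RR$ with $\zeta$ increasing through its unique zero at $r^*_{b,E,\nu}=2\sqrt E\,s_0/b$, so $r^*$ exists and is unique, and \eqref{eqsignImzetah}--\eqref{eqsignImzeta2h} follow from $\arg(\mu^{4/3}e^{-2\pi i/3})=-\pi/3$ together with the sign/monotonicity of $\zeta$. For $E\notin\RR$, I would run the same "$r_L,r_R$" trapping argument: define $r_L,r_R$ (depending on $b,E,\nu$) so that $\arg(s(r)-s_0)$ takes the critical values $\pm 5\pi/6$ or $\pm\pi/6$ there, using that $\arg(s(r)-s_0)$ is monotone in $r$; then establish the rough estimate $\arg\zeta(s)=\arg(s-s_0)+O(\theta_0)$ (from \eqref{eqzetaexpandh} written as an integral $\int_0^{s-s_0}h^{1/2}(\cdots)dh$ with all factors of small argument) and the monotonicity $\pa_r\arg\zeta<0$ on $[r_L,r_R]$ (a direct computation with $\zeta_s/\zeta=(s-s_0)^{-1}(1+O(s-s_0))\cdot b/(2\sqrt E)$ and $\Im(s-s_0)\sim|s-s_0|$ there), which pins $r^*\in[r_L,r_R]$ and gives uniqueness. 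The vanishing $\Re\eta(r^*\pm 0)=0$ follows from $|\arg\eta|\to 3\pi/2$ or $\pi/2$ as $r\to r^*\pm 0$. The bound \eqref{eqr*bE1h} comes from $r^*=2\sqrt E s_0/b$ when $E\in\RR$ and from $r^*\in[r_L,r_R]$ with $r_{L,R}-2\Re(\sqrt E s_0)/b=O(\Im E/(b|s_0|))$ otherwise (here the extra factor $|s_0|^{-1}$ compared to Lemma \ref{lemWKBeta} comes from $\pa_r(s-s_0)=b/(2\sqrt E)$ and the scaling $|s-s_0|\sim|br-2\sqrt E s_0|/(b|s_0|^{?})$ — to be tracked carefully). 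Monotonicity \eqref{eqr*monoh} and continuity follow from the implicit function theorem applied to $F_b^{\pm}(r,\Re E,\Im E):=\Im(\pm\mu^{4/3}\zeta e^{-2\pi i/3})$ exactly as in Step 1.3 of Lemma \ref{lemWKBeta}, computing $\pa_r F_b^+$ and $\pa_{\Re E}F_b^+$ near $r^*$ using $\zeta_s(s_0)=(2(t_+-t_-)/s_0)^{1/3}$ and the smallness of $|s-s_0|$. The formula \eqref{eqReetaderivh} and sign \eqref{eqReetamonoh} follow by differentiating $\eta=\mp i\,(2E/b)\int_{s_0}^{s}(w^2-1-\a w^{-2})^{1/2}dw$ (the branch chosen according to the side of $r^*$, as in \eqref{eqformulaeta}), giving $\pa_r\Re\eta=\Im\bigl(\tfrac{b^2r^2}{4}-E-\tfrac{\nu^2-1/4}{r^2}\bigr)^{1/2}$ with the stated branch.

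For part (2), the asymptotics of $|\eta|$ in \eqref{eqetaabsh} is immediate from \eqref{eqzetaasymph}, splitting according to whether $|s|$ is $\le\tfrac12|s_0|$, comparable to $|s_0|$, or $\ge\tfrac32|s_0|$, which translates into the four ranges of $r$ via $r\sim 2|s||\sqrt E|/b$ and the definitions of $I_c^{b,E,\nu}$. For \eqref{eqetaReh} I would use the integral formula for $\eta$ for $r>r^*$ and an explicit antiderivative of $(w^2-1-\a w^{-2})^{1/2}$ (or, more economically, expand the integrand for large $|w|$ as $w\sqrt{1-w^{-2}-\a w^{-4}}=w-\tfrac12 w^{-1}+O(w^{-3})$, integrate, and take imaginary parts), tracking the $-\ln$ term and the normalization at $s_0$, which produces the $\ln(br)$ and $\tfrac14\ln((\Re E)^2+b^2(\nu^2-1/4)/4)$ contributions together with the error terms $O(|s_0|^2(br)^{-2})$ and $O(|s_0|^{-1}|\Im E|^{1/2})$ from $\Im\sqrt E$ and $\Im(b^2r^2/4-E-\cdots)^{1/2}$. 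Finally \eqref{eqetaRehlowerbdd} follows from \eqref{eqReetaderivh}: for $r\le \sqrt2\,|\sqrt E s_0|/b$ one has $\tfrac{b^2r^2}{4}-E-\tfrac{\nu^2-1/4}{r^2}$ lying in a region where $\Im(\cdot)^{1/2}\ge\tfrac12|s_0|$, using $|E-\tfrac{\nu^2-1/4}{r^2}+\cdots|\gtrsim|s_0|^2$ on that range (this is the uniform-in-$\nu$ ellipticity, coming from \eqref{eqsintasymph}) together with $|\arg(\cdot)|$ bounded away from $0,\pi$ by $O(\delta_2')$ once $b,\delta_2'$ are small.

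The main obstacle is the uniformity in $\nu$ (equivalently $\a$): unlike Lemma \ref{lemWKBeta}, where the scaling $\eta_{b,E}(r)=b^{-1}\eta_{1,E}(br)$ reduces everything to a single profile, here the turning point $s_0$ and the whole geometry depend on $\a$, so every estimate — the trapping of $r^*$, the argument bound $\arg\zeta=\arg(s-s_0)+O(\theta_0)$, the implicit-function computations near $r^*$, and especially the elliptic lower bound \eqref{eqetaRehlowerbdd} — must be proved with constants independent of $\a\in\calC_{\theta_0}$. This requires consistently normalizing lengths by $|s_0|$ and using the two-sided bounds $|s^2-1-\a s^{-2}|\sim|s_0|^2+|\a s^{-2}|$ for $|s|\lesssim|s_0|$ and $\sim|s|^2$ for $|s|\gtrsim|s_0|$ from \eqref{eqsintasymph}; I expect the bookkeeping near the transitional scale $|s|\approx|s_0|$, where both $s^2-t_+$ and $s^2-t_-$ must be tracked, to be the delicate point.
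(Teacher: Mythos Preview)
Your proposal is essentially correct and follows the same strategy as the paper's proof, parallel to Lemma \ref{lemWKBeta} with the turning point moved to $s_0$ and all scales normalized by $|s_0|$. Two points deserve comment where the paper is more explicit than your sketch.

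First, for \eqref{eqetaReh} the paper does not rely on a large-$|w|$ expansion of the integrand. Instead it changes to the variable $w=br/2$ (so the quartic is $w^4-Ew^2-b^2(4\nu^2-1)/16$), introduces the \emph{real} roots $\tilde t_\pm=\tfrac12\bigl(\Re E\pm\sqrt{(\Re E)^2+b^2(\nu^2-1/4)/4}\bigr)$ and $\tilde s_0=\sqrt{\tilde t_+}$, estimates $|\sqrt{E}s_0-\tilde s_0|\le |\Im E|/|s_0|$, and splits the integral at $\tilde s_0+2|\Im E|/|s_0|$. The short piece near the turning point is bounded crudely by $O(|\Im E|^{3/2}/|s_0|)$; on the long piece one writes $\Re(w^4-Ew^2-\cdots)=(w^2-\tilde t_+)(w^2-\tilde t_-)$ and integrates the leading term explicitly via $\int((u-\tilde t_+)(u-\tilde t_-))^{-1/2}du$, which produces exactly the $\ln(br)-\tfrac14\ln((\Re E)^2+b^2(\nu^2-1/4)/4)$ structure with the stated errors. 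Your large-$|w|$ expansion would recover the $\ln(br)$ term but the constant $-\tfrac14\ln(\cdots)$ and the error $O(|s_0|^{-1}|\Im E|^{1/2})$ come specifically from the lower endpoint, so the splitting is not optional.

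Second, in your argument for \eqref{eqetaRehlowerbdd} the phrase ``$|\arg(\cdot)|$ bounded away from $0,\pi$'' is a slip: for $r<r^*$ the quantity $\tfrac{b^2r^2}{4}-E-\tfrac{\nu^2-1/4}{r^2}$ has argument \emph{close to} $\pi$ (it is approximately negative real), and with the branch $\arg\in(0,2\pi)$ the square root then has argument near $\pi/2$, so $\Im(\cdot)^{1/2}$ is essentially the full modulus. The paper sidesteps this phase tracking by factoring $E+\tfrac{\nu^2-1/4}{r^2}-\tfrac{b^2r^2}{4}=(Et_+-\tfrac{b^2r^2}{4})(1-\tfrac{4Et_-}{b^2r^2})$ and using $Et_+=|Et_+|e^{iO(|\Im E|)}$ to compute the real part of the square root directly, which gives the clean lower bound $|s_0|(1-a^2/2)^{1/2}(1+O(\delta_2'))$ for $r=a\cdot\sqrt2|\sqrt E s_0|/b$, $a\in[0,1]$.
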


\begin{proof} To begin with, we choose $\delta_2' \ll \theta_0$ from Lemma \ref{lemzeta32h}, so that $|\arg E| \ll \theta_0$ and thus $\a \in \calC_{\theta_0}$ from \eqref{eqmudefh}, and then the conclusion of Lemma \ref{lemzeta32h} is valid. 

    \underline{\textbf{1. Proof of (1).}} When $E \in \RR$, we have $s_0 > 0$ and monotonicity of $r \mapsto s(r)$, $r \mapsto \zeta(s(r))$. Thus $r^*_{b, E, \nu} = \frac{2\sqrt E s_0}{b} = b^{-1} \left( E + \sqrt{E^2 + b^2(\nu - \frac 14)} \right)^\frac 12$ and all the properties follow directly. For simplicity, we only consider the case $\Im E < 0$ below, and the case $\Im E > 0$ can be derived similarly.

    \underline{1.1. Existence, uniqueness, monotonicity, and estimate of $r^*_{b, E, \nu}$.} 

    
    Since $0 < \arg t_+ <  -\arg E \lesssim \delta_0$ by its definition \eqref{eqs0def1}, we see $r \mapsto \arg(s(r) - s_0)$ is decreasing from $\pi - \frac 12\arg t_+$ to $-\frac 12 \arg E$ as $r$ goes from $0$ to $\infty$. Combined with that $|\arg s^{-2}|, |\arg (s^2 - t_-)|, |\arg (s + s_0)| \lesssim \delta_2$, the integration representation \eqref{eqzetas0h}, the root decomposition of $s^4 - \a s^2 - 1$ and the choice of branch \eqref{eqzetas0h} indicates 
    \be \arg \zeta \in (O(\delta_2), \pi + O(\delta_2)),\quad  {\rm for}\,\, r > 0,\label{eqzetaargrangeh} \ee
    which also falls in the admissible branch for \eqref{eqetadefh}. Thus $\Im \left(-\mu^\frac 43 \zeta(r) \right) = 0$ is equivalent to $\arg \zeta(r) = \frac {2\pi}{3} - \frac 23 \arg E$.
    
    The monotonicity of $\arg(s(r) - s_0)$ also implies that there exist $r_L$ and $r_R$ (depending on $b, E, \nu$) such that $r_L < r_R$ and 
\be  \arg (s(r_L) - s_0) = \frac {5\pi}6, \quad \arg (s(r_R) - s_0) = \frac \pi 6;\quad \arg(s(r) - s_0) \in \left[ \frac \pi 6, \frac{5\pi}{6} \right]\,\, \forall r \in [r_L, r_R]. \label{eqrLrRh}\ee
As in the proof of (1) for Lemma \ref{lemWKBeta}, the existence and uniqueness of $r^*_{b, E, \nu}$ plus \eqref{eqsignImzetah} and \eqref{eqr*bE1h} are reduced to 
\bea
&& \max \left\{ \frac{2\Re (\sqrt E s_0)}{b} - r_L, r_R - \frac{2\Re (\sqrt E s_0)}{b} \right\} < 
 \left(\frac{1}{\sqrt 3} + O(\delta_0)\right)\frac{|\Im E|}{b|s_0|},\label{eqrLrRasymph} \\
   &&\arg \zeta(s) =\arg (s-s_0) + O(\delta_0),\quad \forall \, r > 0, \label{eqzetaargh} \\
   && \pa_r \arg \zeta(r) < 0,\quad r \in [r_L, r_R].\label{eqzetaargmonoh}
\eea
The proof is similar to Lemma \ref{lemWKBeta} (1). We only mention that 
for \eqref{eqrLrRasymph}, we exploit 
\bea \Im (Et_+) &=& \Im \frac{E + \sqrt{E^2 + b^2 (\nu^2 - \frac 14) } }{2} \in (\Im E, 0), \label{eqImEt+1} \\
\Im \sqrt{Et_+} &=& \frac{\Im (Et_+)}{2|t_+|^\frac 12}(1 + O(\Im(Et_+)^2 |t_+|^{-1})) \in (|s_0|^{-1}\Im E, 0).\nonumber
\eea


The monotonicity \eqref{eqr*monoh} can be proven by implicit function theorem with similar computation of derivative.

\underline{1.2. Vanishing, formula, and monotonicity of $\Re \eta$ \eqref{eqReetaderivh}-\eqref{eqReetamonoh}.} 

When $\Im E < 0$, the vanishing of $\Re \eta$ follows from $\arg (\mu^\frac 43 \zeta e^{-\frac{2\pi i}{3}})(r^*_{b, E}) = \frac \pi 3$. 

From \eqref{eqsignImzeta2h} and \eqref{eqzetaargh}, we have 
\[ \arg \left( \mu^\frac 43 \zeta (r)  e^{-\frac{2\pi i}{3}} \right) \in \left|\begin{array}{ll}  \left(-\frac{\pi }{3} + O(\delta_0), \frac \pi 3 \right) & r > r^* \\
\left(\frac \pi 3,\frac{2\pi }{3} + O(\delta_0)\right) & r < r^*.
\end{array} \right.   \]
So applying \eqref{eqetaidenth}, 
\be \eta = e^{-\frac{\pi i}{2}}\frac{2E}{b} \int_{s_0}^{s(r)} \left(w^2 - 1 - \a w^{-2} \right)^\frac 12 dw,\quad {\rm where}\,\, \arg \eta \in \left|\begin{array}{ll} \left(-\frac {\pi}2 + O(\delta_0), 0\right) & r > r^* \\
\left(0, \pi + O(\delta_0) \right) & r < r^*.
\end{array} \right.  \label{eqetainth} \ee
Therefore, when $r > r^*$, we take the branch $|\arg(w^2 - 1 - \a w^{-2})| \le \frac{5\pi}{6} $; and when $r \to 0$, we take $\arg(w^2 - 1 - \a w^{-2}) \in (\frac \pi 6, \frac {11\pi}{6})$. Then with 
\[ \pa_r \Re \eta = \Re \left[ e^{-\frac{\pi i}{2}} \sqrt E \left( \frac{b^2 r^2}{4E} - 1 - \frac{\nu^2 - \frac 14}{E r^2} \right)^\frac 12 \right] \]
the choice of branch implies \eqref{eqReetaderivh} and \eqref{eqReetamonoh}.

\mbox{}

\underline{\textbf{2. Proof of (2).}} The asymptotics of $|\eta|$ \eqref{eqetaabsh} follows directly from \eqref{eqzetaasymph}.

\emph{Proof of \eqref{eqetaReh}.} Firstly, we carefully evaluate the integral in \eqref{eqetainth} to be
\be
 \eta = -i \frac 2b \int_{\sqrt E s_0}^\frac{br}{2} \left( \left(w^4 - Ew^2 - \frac{b^2(4\nu^2 -1)}{16} \right) w^{-2} \right)^\frac 12 dw. \label{eqwquadtilde}
\ee
Denote $\tilde t_\pm = \frac{\Re E \pm \sqrt{(\Re E)^2 + b^2(\nu^2 - 1/4)/4}}{2}$ and $\tilde s_0 = \sqrt{\tilde t_+}$. Noticing that when $|E-1| \le \delta_2' \ll 1$, 
\be
  |\sqrt E s_0 - \tilde s_0| = \left|\frac{E t_+ - \tilde t_+}{\sqrt E s_0 + \tilde s_0}\right| \le \frac{|\Im E|}{2|s_0|(1 + O(\delta_2'))} \le \frac{|\Im E|}{|s_0|},\label{eqdifEs0ts0} \ee
  and $w^4 - Ew^2 - \frac{b^2(4\nu^2 -1)}{16}  = (w - \sqrt E s_0)(w + \sqrt E s_0) (w^2 - E t_-)$, we first have
\begin{align*}
&\left| \int_{\sqrt E s_0}^{\tilde s_0 +2\frac{|\Im E|}{|s_0|}} \left( (w^4 - Ew^2 - \frac{b^2(4\nu^2 -1)}{16} ) w^{-2} \right)^\frac 12 \right| \nonumber\\
\lesssim& \left| \int_{\sqrt E s_0}^{\tilde s_0 +2\frac{|\Im E|}{|s_0|}} |w - \sqrt E s_0|^\frac 12 dw \right| \cdot |s_0|^\frac 12 
\lesssim \frac{|\Im E|^\frac 32}{|s_0|}.
\end{align*}
Next, 
we notice that $r^*_{b, E, \nu} \le \frac{2\Re (\sqrt E s_0)}{b} + \frac{|\Im E|}{|s_0|} \le \frac2b \left(\tilde s_0 + 2\frac{|\Im E|}{|s_0|}\right)$ from \eqref{eqr*bE1h} and \eqref{eqdifEs0ts0}. When $r \ge \frac2b \left(\Re(\sqrt E s_0) + 3\frac{|\Im E|}{|s_0|}\right) \ge \frac2b \left(\tilde s_0 + 2\frac{|\Im E|}{|s_0|}\right)$, with $\Re \left( w^4 - E w^2 - \frac{b^2(4\nu^2 -1)}{16} \right)= (w^2 - \tilde t_+)(w^2 - \tilde t_-)$,  $\Im (w^4 - Ew^2 - \frac{b^2(4\nu^2 -1)}{16}) = -\Im E w^2$ for $w \ge r^*_{b, E, \nu}$, we compute
\bee
  &&\Im \int_{\tilde s_0 + 2\frac{|\Im E|}{|s_0|}}^{\frac{br}{2}}  \left( \left(w^4 - Ew^2 - \frac{b^2(4\nu^2 -1)}{16} \right) w^{-2} \right)^\frac 12 dw \\
  &=& \int_{\tilde s_0 + 2\frac{|\Im E|}{|s_0|}}^{\frac{br}{2}} \left[ \frac{-\frac 12 \Im E \cdot w}{(w^2 - \tilde t_+)^\frac 12(w^2 - \tilde t_-)^\frac 12} + O\left( \frac{|\Im E \cdot w^2|^3 w^{-1}}{(w^2 - \tilde t_+)^\frac 52(w^2 - \tilde t_-)^\frac 52} \right) dw \right] =: I + II
\eee
For $I$, we have explicit integration 
\bee
  I &=& - \frac 14 \Im E \int_{\tilde s_0 + 2\frac{|\Im E|}{|s_0|}}^{\frac{b^2 r^2}{4}} \left((u - \tilde t_+)(u - \tilde t_-)\right)^{-\frac 12} du \\
  &=& - \frac{\Im E}{2} \ln \left[ \frac{\sqrt{u - \tilde t_+} + \sqrt{ u -\tilde t_-}}{\sqrt{\tilde t_+ - \tilde t_-}} \right] \Bigg|_{u = (\tilde s_0 + 2\frac{|\Im E|}{|s_0|})^2}^{\frac{b^2 r^2}{4}} \\
  &=& - \frac{\Im E}{2}  \ln \left( \frac{\sqrt{\frac{b^2 r^2}{4} - \tilde t_+} + \sqrt{\frac{b^2 r^2}{4} - \tilde t_-}}{ \sqrt{2\frac{|\Im E|}{|s_0|} \left( 2\tilde s_0 + 2\frac{|\Im E|}{|s_0|} \right) } + \sqrt{\tilde t_+ - \tilde t_- + 2\frac{|\Im E|}{|s_0|} \left( 2\tilde s_0 + 2\frac{|\Im E|}{|s_0|} \right)}} \right)\\
  &=& - \frac{\Im E}{2} \bigg[\ln (br) - \frac 14 \ln\left( (\Re E)^2 + b^2(\nu^2 - 1/4)/4 \right) + O\left(|s_0|^2(br)^{-2}\right)
 + O\left(|s_0|^{-1}|\Im E|^\frac 12\right)   \bigg]
\eee
where we used $\tilde t_+ = \tilde s_0^2$ and $\tilde t_+ - \tilde t_- = 
  \left( (\Re E)^2 + b^2(\nu^2 - 1/4)/4 \right)^\frac 12 \sim |s_0|^2$. 
For $II$, we estimate
\bee
 |II| \lesssim |\Im E|^3 \int_{\tilde s_0 +  2\frac{|\Im E|}{|s_0|} }^{\frac{br}{2}} (w^2 - \tilde t_+)^{-\frac 52} dw \lesssim |\Im E|^3\cdot |s_0|^{-\frac 52} \left( \frac{|\Im E|}{|s_0|}\right)^{-\frac 32} = \frac{|\Im E|^\frac 32}{|s_0|}.
\eee
These estimates together yield \eqref{eqetaReh}. 

\textit{Proof of \eqref{eqetaRehlowerbdd}.}  Let $r_0 = \sqrt 2 |\sqrt E s_0| b^{-1}$ and $r = ar_0$ for $a \in [0, 1]$. Observe that $E t_+ = |Et_+| e^{iO(|\Im E|)}$ from \eqref{eqImEt+1}, and that $E t_- = -\frac{b^2(\nu^2 - \frac 14)}{16}(E t_+)^{-1}$. Since $r \le r_0 \le r^*_{b, E, \nu}$ from \eqref{eqr*bE1h}, we compute $\pa_r \Re \eta_{b, E, \nu}$ using \eqref{eqReetaderivh} 
\begin{align*}
 \pa_r \Re \eta_{b, E, \nu}(r) =& \Re \left(   E + \frac{\nu^2 - \frac 14}{ r^2} - \frac{b^2 r^2}{4} \right)^\frac 12 = \Re \left[ \left( Et_+ - \frac{b^2 r^2}{4} \right)^\frac 12 \left( 1 - \frac{4Et_-}{b^2 r^2}\right)^\frac 12 \right] \\
 =& \Re \left[ |Et_+|^{\frac 12} \left(e^{iO(\Im E)} - \frac{a^2}2\right)^\frac 12 \left( 1 + \frac{b^2(\nu^2 - \frac 14)}{16 |Et_+|^2} e^{iO(\Im E)}    \right)^\frac 12 \right] \\
 \ge& |s_0| \left(1 - \frac{a^2}{2}\right)^\frac 12 (1 + O(\delta_2'))
\end{align*}
Hence \eqref{eqetaRehlowerbdd} follows $a \le 1$ and taking $\delta_2'$ small enough. 
\end{proof}

\begin{proposition}[Linear WKB solution with correction for high spherical classes]\label{propWKBh} 
There exists $0 < \delta_2 \ll 1$ and $b_2(I_0) \ll 1$ depending on any fixed $I_0 > 0$, such that for any $0 < b < b_2(I_0)$, $E \in \{ z \in \CC: |z-1| \le \delta_2, \Im z \le bI_0 \}$, and $\nu \ge 1$, the following statements hold. Here $s, \zeta, \mu, \a, s_0, \psi_j^{b, E, \nu}, h_{b, E, \nu}$ and $\eta_{b, E, \nu}, \omega_{b, E, \nu}^\pm, I_c^{b, E, \nu}$ are from Definition \ref{defWKBappsoluh} and Definition \ref{defWKBauxh} respectively.


    \begin{enumerate}
        \item Corrected equation: $\psi_j^{b, E}$ for $j = 1, 2, 3, 4$ solves 
        \[ \left( \pa_r^2 - E + \frac{b^2 r^2}{4} - \frac{\nu^2 - \frac 14}{r^2}\right) \psi_j^{b, E, \nu} = h_{b, E, \nu} \psi_j^{b, E, \nu} \]
        with correction $h_{b, E, \nu}$   
        satisfying the estimate
        \be |h_{b, E, \nu}(r)| \lesssim r^{-2},\quad r > 0 \label{eqbddhh}\ee
        where the constant is independent of $b, E, \nu$. 
        \item Connection formula:
\be \psi_1^{b, E, \nu} + e^{-\frac{2\pi i}{3}}\psi_2^{b, E, \nu} +  e^{\frac{2\pi i}{3}}\psi_3^{b, E, \nu} = 0,\quad 2\psi_4^{b, E, \nu}(r) =  e^{\frac{\pi i}{6}} \psi_1^{b, E, \nu} +  e^{-\frac{\pi i}{6}} \psi_3^{b, E, \nu}. \label{eqconnecth}
\ee
\item Wronskian:
        \bea
        \mathcal{W}(\psi_4^{b, E, \nu}, \psi_2^{b, E, \nu}) = \frac{b^\frac 13 E^\frac 16 }{2^\frac 43 \pi},\quad  \mathcal{W}(\psi_1^{b, E, \nu}, \psi_3^{b, E, \nu}) = \frac{-i b^\frac 13 E^\frac 16}{2^\frac 43 \pi}. \label{eqWronski2h}
        \eea
    \item Asymptotics and bounds of $\psi_j^{b, E, \nu}$ and $(\psi_j^{b, E, \nu})'$: for $|E-1| \le \delta_0$ and $\Im E \le bI_0$, we have 
        \bee
         \psi_1^{b, E, \nu}(r)&=& \begin{cases}
            \frac{e^{\frac{\pi i}{6}}e^{-\eta}}{2\sqrt{\pi}\mu^\frac 13 (s^2 - 1 - \a s^{-2} )^\frac 14 } (1 + O(\eta^{-1})) 
            &r \in  [r^*_{b, E, \nu},\infty) - I_c^{b, E, \nu}, \\
             \frac{e^{-\frac{\pi i}{12} }e^{-\eta} }{2\sqrt{\pi}\mu^\frac 13 (1-s^2+\a s^{-2})^\frac 14 } (1 + O(\eta^{-1})) 
             &  r \in  [0, r^*_{b, E, \nu}] - I_c^{b, E, \nu}, \\
            O (1), & r \in I_c^{b, E, \nu};
        \end{cases}\\
        \psi_2^{b, E, \nu}(r) &=& \begin{cases}
             \frac{e^{\frac{\pi i}{12} }e^{\eta} }{2\sqrt{\pi}\mu^\frac 13 (1-s^2+\a s^{-2})^\frac 14 } (1 + O(\eta^{-1})) 
             & r\in [0, r^*_{b, E, \nu}] - I_c^{b, E, \nu}, \\
            O(1), & r \in I_c^{b, E, \nu};
        \end{cases}\\
       \psi_3^{b, E, \nu}(r)&=& \begin{cases}
            \frac{e^{\eta}}{2\sqrt{\pi}\mu^\frac 13 (s^2 - 1 - \a s^{-2} )^\frac 14 } (1 + O(\eta^{-1})) 
            & r \in [4|s_0|b^{-1}, \infty),\\
            O\left( \mu^{-\frac 13} e^\eta |s^2 - 1 - \a s^{-2}|^{-\frac 14} \right)
            & r \in [r^*_{b, E, \nu}, 4|s_0|b^{-1}] - I_c^{b, E, \nu},\\
            O(1), & r \in I_c^{b, E, \nu};
        \end{cases}
        \eee
        and
        \bee
         (\psi_1^{b, E, \nu})'(r)&=& \begin{cases}
            \frac{i\sqrt{E} (s^2 - 1 - \a s^{-2})^\frac 14 e^{\frac{\pi i}{6}}e^{-\eta}}{2\sqrt{\pi}\mu^\frac 13} (1 + O(\eta^{-1})) 
            & r \in [r^*_{b, E, \nu},\infty) - I_c^{b, E, \nu},\\
           \frac{- \sqrt{E}(1-s^2+\a s^{-2})^\frac 14 e^{-\frac{\pi i}{12}}e^{-\eta}}{2\sqrt{\pi}\mu^\frac 13 } (1 + O(\eta^{-1})+ O(\nu^{-1})) 
           &r\in [0, r^*_{b, E, \nu}] - I_c^{b, E, \nu}, \\
            O(b^\frac 13), & r \in I_c^{b, E, \nu};
        \end{cases}\\
        (\psi_2^{b, E, \nu})'(r)&=& \begin{cases}
           \frac{\sqrt{E} (1-s^2+\a s^{-2})^\frac 14 e^{\frac{\pi i}{12}}e^{\eta}}{2\sqrt{\pi}\mu^\frac 13 } (1 + O(\eta^{-1})+ O(\nu^{-1}))
           & r\in [0, r^*_{b, E, \nu}] - I_c^{b, E, \nu},\\
            O(b^\frac 13), & r \in I_c^{b, E, \nu};
        \end{cases}\\
          (\psi_3^{b, E, \nu})'(r)&=& \begin{cases}
            \frac{-i\sqrt{E}(s^2 - 1 - \a s^{-2})^\frac 14 e^{\eta}}{2\sqrt{\pi}\mu^\frac 13} (1 + O(\eta^{-1})) 
            & r \in [4|s_0|b^{-1}, \infty) - I_c^{b, E, \nu},\\
             O\left( \mu^{-\frac 13}e^\eta |s^2 - 1- \a s^{-2}|^{\frac 14} \right) 
            & r \in [r^*_{b, E, \nu}, 4|s_0|b^{-1}],\\
            O(b^\frac 13), & r \in I_c^{b, E, \nu}.
        \end{cases}
        \eee
        Here we choose $\arg (s^2 - 1 - \a s^{-2}) \in (-\pi, \pi]$ when $r > r^*_{b, E, \nu}$ and $\arg (1 - s^2 + \a s^{-2}) \in (-\pi, \pi]$ when $r < r^*_{b, E, \nu}$. If additionally $\Im E \ge - bI_0$, then we have 
        \bee \pa_r^k \psi_4^{b, E, \nu} =  e^{\frac{\pi i}{6}} \pa_r^k \psi_1^{b, E}(1 + O(\eta^{-1}) + O(\nu^{-1})) \quad {\rm for}\,\, r \in [0, r^*_{b, E}] - I_c^{b, E}, \,\, k \in \{ 0, 1\}. 
        \eee
\item Derivatives estimates in the exterior region: Let $j_+ =3$ and $j_- = 1$, then
\be 
  \sup_{r \ge 4|s_0|b^{-1}}  \left|r^{\mp \frac{\Im E}{b} + 1} \left(\pa_r \pm \frac{ibr}{2}\right) \psi_{j_\pm}^{b, E, \nu} \right| < \infty.\label{eqpsibderivh}
\ee
        \end{enumerate}
        \end{proposition}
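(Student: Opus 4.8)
The plan is to treat this as a light corollary of part (4) of the proposition together with the refined formula for $\Re\eta_{b,E,\nu}$ in \eqref{eqetaReh}, reducing everything to the behaviour as $r\to\infty$. Since $\psi_{j_\pm}^{b,E,\nu}\in C^\infty((0,\infty))$, for each fixed $b,E,\nu$ the map $r\mapsto r^{\mp\Im E/b+1}(\pa_r\pm\frac{ibr}{2})\psi_{j_\pm}^{b,E,\nu}$ is continuous on $[4|s_0|b^{-1},\infty)$ and hence bounded on every compact subinterval; so it suffices to show it stays bounded (in fact decays) as $r\to\infty$. First I would record that on this range $|s|=\frac{br}{2|\sqrt E|}\ge 2|s_0|$, so $r\notin I_c^{b,E,\nu}$, and that $r^*_{b,E,\nu}<4|s_0|b^{-1}$ once $b\le b_2(I_0)$ is small (using \eqref{eqr*bE1h}, $|s_0|\gtrsim 1$ and $\delta_2\ll1$). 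Consequently $[4|s_0|b^{-1},\infty)\subset[r^*_{b,E,\nu},\infty)-I_c^{b,E,\nu}$, and the asymptotic formulas of part (4) for $\psi_{j_\pm}^{b,E,\nu}$ and $(\psi_{j_\pm}^{b,E,\nu})'$ are all available there.

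Next, writing $p(r):=\left(\frac{b^2r^2}{4}-E-\frac{\nu^2-1/4}{r^2}\right)^{1/2}=\sqrt E\,(s^2-1-\a s^{-2})^{1/2}$, dividing the two formulas of part (4) gives $(\psi_{j_\pm}^{b,E,\nu})'=(\mp i\,p(r))\,\psi_{j_\pm}^{b,E,\nu}(1+O(\eta^{-1}))$, with the sign chosen so that $\pm\frac{ibr}{2}$ cancels the leading $\mp i\,p(r)$; hence
\be
\Big(\pa_r\pm\tfrac{ibr}{2}\Big)\psi_{j_\pm}^{b,E,\nu}=\Big[\mp i\big(p(r)-\tfrac{br}{2}\big)\mp i\,p(r)\,O(\eta^{-1})\Big]\psi_{j_\pm}^{b,E,\nu}.
\ee
An elementary Taylor expansion bounds the first bracket term by $|p(r)-\frac{br}{2}|\le\frac{|E|}{br}+\frac{\nu^2-1/4}{br^3}\lesssim_{b,E}r^{-1}\big(1+\frac{\nu^2}{br^2}\big)$, and $\frac{\nu^2}{br^2}$ is non-increasing in $r$, hence $\le\frac{\nu^2 b}{16|s_0|^2}=:C_{b,E,\nu}$ throughout $[4|s_0|b^{-1},\infty)$; the second term is handled by $|p(r)|\lesssim br$ and $|\eta|\sim br^2$ from \eqref{eqetaabsh}, so $|p(r)\eta^{-1}|\lesssim r^{-1}$ with a universal constant. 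Therefore $|(\pa_r\pm\frac{ibr}{2})\psi_{j_\pm}^{b,E,\nu}|\lesssim_{b,E,\nu}r^{-1}|\psi_{j_\pm}^{b,E,\nu}(r)|$ for $r\ge 4|s_0|b^{-1}$.

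Finally, from the amplitude in part (4) and $|s^2-1-\a s^{-2}|\sim|s|^2\sim_{b,E}r^2$ we get $|\psi_{j_\pm}^{b,E,\nu}(r)|\lesssim_{b,E}r^{-1/2}e^{\mp\Re\eta_{b,E,\nu}(r)}$, and by \eqref{eqetaReh}, for $r$ beyond a $(b,E,\nu)$-dependent threshold one has $\Re\eta_{b,E,\nu}(r)=-\frac{\Im E}{b}\ln(br)+O_{b,E,\nu}(1)$, so $e^{\mp\Re\eta_{b,E,\nu}(r)}\lesssim_{b,E,\nu}r^{\mp\Im E/b}$ with precisely the sign appearing in the normalization. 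Combining the last two displays, $|r^{\mp\Im E/b+1}(\pa_r\pm\frac{ibr}{2})\psi_{j_\pm}^{b,E,\nu}|\lesssim_{b,E,\nu}r^{-1/2}\to0$ as $r\to\infty$; together with the continuity/compactness remark on the remaining bounded interval, this yields finiteness of the supremum. There is no genuine obstacle here — the only points needing care are (i) that the normalization exponent $\mp\Im E/b$ matches the sign of $\mp\Re\eta_{b,E,\nu}$ in $|\psi_{j_\pm}^{b,E,\nu}|$, which is exactly what \eqref{eqetaReh} provides, and (ii) that the extra angular-momentum contribution $\frac{\nu^2-1/4}{br^3}$ to $|p(r)-\frac{br}{2}|$ (absent in the low-class analogue \eqref{eqpsibderiv2}) is absorbed into $C_{b,E,\nu}\,r^{-1}$ on the exterior region, which follows from $|s|\ge2|s_0|\gtrsim\la\a\ra^{1/4}$ and $\a\approx b^2\nu^2/(4E^2)$. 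If one preferred a proof paralleling that of Proposition \ref{propWKB}(6), one could instead first establish $|D_{\pm;b,E,\nu}^{n}\psi_{j_\pm}^{b,E,\nu}|\lesssim_n(br)^{-n}|\psi_{j_\pm}^{b,E,\nu}|$ on $[4|s_0|b^{-1},\infty)$ by a Fa\`a di Bruno computation in the variable $\xi_{j_\pm}$ and then compare $D_{\pm;b,E,\nu}$ with $\pa_r\pm\frac{ibr}{2}$; the route above is shorter because part (4) already supplies the needed first-derivative asymptotics.
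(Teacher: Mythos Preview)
Your argument is correct and essentially identical to the paper's proof, which simply states that (5) ``is a direct consequence of the asymptotics in (4) and \eqref{eqetaReh}.'' You have merely spelled out the one-line justification in full detail: combining the leading-order asymptotics of $\psi_{j_\pm}^{b,E,\nu}$ and $(\psi_{j_\pm}^{b,E,\nu})'$ from part (4) to extract the cancellation $(\pa_r\pm\tfrac{ibr}{2})\psi_{j_\pm}^{b,E,\nu}=O(r^{-1})\psi_{j_\pm}^{b,E,\nu}$, and then using \eqref{eqetaReh} to convert $e^{\mp\Re\eta_{b,E,\nu}}$ into the matching power $r^{\pm\Im E/b}$.
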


\begin{proof} Recall $\theta_0$, $\delta_2'$ from Lemma \ref{lemzeta32h} and Lemma \ref{lemWKBetah} respectively. To begin with, we assume $\delta_2 \le \delta_2' \ll \theta_0$ and $b_2 \le \frac 14$. 


\mbox{}

    \textbf{Proof of (1)-(3).} The derivation of equation and form of correction $h_{b, E, \nu}$ are from Remark \ref{rmkWKBh} (2), and (2)-(3) comes from properties of Airy functions in Lemma \ref{lemAiry1} (2). It suffices to check the bound \eqref{eqbddhh}. To begin with, notice that 
    \bee
     |t_+| \sim |t_+ - t_-| \sim \la \a \ra^{\frac 12}, \quad |t_-| \sim \a \la \a \ra^{-\frac 12}, \quad |s_0| \sim \la \a \ra^\frac 14.
    \eee
    
    Firstly, we claim when $\delta_2$ small enough, for $|s - s_0| \le 100 \delta_2 |s_0|$, we have
    \be|\pa_s \zeta(s)| \sim |s_0|^{\frac 13},\quad |\pa_s^k \zeta(s)| \lesssim_k |s_0|^{\frac 43 - k}\,\,\, {\rm for}\,\, k  \in \{2, 3\}.\label{eqzetaderivesth} \ee
    This will immediately imply $|h_{b, E, \nu}(s)| \lesssim b^2 |s_0|^{-2} \sim r^{-2}$ from the same formula as \eqref{eqcorrection2}.
    Indeed, for $|s-s_0| \le \frac 12 |s_0|$, the differential equation of $\zeta$ \eqref{eqzetadefh} and asymptotics \eqref{eqzetaasymph} yield uniform estimates $|\pa_s^k \zeta| \lesssim |s_0|^{\frac 43 - k}$ for $k \in \{ 1, 2, 3\}$, and first part of \eqref{eqzetaderivesth} follows from the non-vanishing of $\zeta_s(s_0)$ \eqref{eqzetas0h} and taking $\delta_2$ small enough. 
    
    Next, when $|s| \le (1 -50 \delta_2) |s_0|$, we apply \eqref{eqsintasymph} (since $\delta_2 \ll \theta_0$) and \eqref{eqzetaasymph} to \eqref{eqcorrectionh} to see
    \bee
     b^{-2}|h_{b, E, \nu}(r)| &\lesssim& \frac{|s_0|^2 + |\a s^{-2}|}{\la \a \ra} + \frac{\la s\ra^2 + |\a s^{-2}| + \la \a s^{-2} \ra \cdot | \a s^{-4} | }{(|s_0|^2 + |\a s^{-2}|)^2} \\
     &\lesssim& |s|^{-2} \left( \frac{|s_0|^2 |s|^{2} + |\a|}{\la \a \ra} + \frac{|s|^2 \la s \ra^2 + |\a| + \la \a s^{-2}\ra |\a s^{-2}| }{\la \a \ra + |\a s^{-2}|^2} \right) \lesssim |s|^{-2}
    \eee
    Similarly, for $|s| \ge (1 + 50 \delta_2) |s_0|$, using $|\a s^{-2}| \le |\a s_0^{-2}| \lesssim |s_0|^2 \le |s|^2$,
    \bee
    b^{-2}|h_{b, E, \nu}(r)| &\lesssim& \frac{|s|^2}{|s|^4} = |s|^{-2}.
    \eee
    With $|s| \sim br$, these bounds conclude \eqref{eqbddhh}. 
  
    \mbox{}

\textbf{Proof of (4).} The asymptotics of $\psi^{b, E, \nu}_j$ are proven in the same way as Proposition \ref{propWKB} (5), from the asymptotics of Airy function, definition of $r^*_{b, E, \nu}$ and \eqref{eqzetaargh}. For those of $(\psi^{b, E, \nu}_j)'$, similarly, it suffices to check the derivative hitting on $\zeta_s^{-\frac 12}$ only gives lower order terms, namely
\be \left| \frac{\zeta_s^{-1} \zeta_{ss}}{\mu^2 (s^2 - 1 - \a s^{-2})^{\frac 12}} \right| \lesssim \left| \begin{array}{ll} |\eta^{-1}| & r \in [r^*_{b, E, \nu},\infty) - I_c^{b, E, \nu},  \\
|\eta^{-1}| + \nu^{-1} & r \in [0, r^*_{b, E, \nu}] - I_c^{b, E, \nu}, 
\end{array}\right. \label{eqzetasszetash} \ee
which is the counterpart of \eqref{eqzetasszetas}. When $|s - s_0| \le 100 \delta_2 |s_0|$, we use $\zeta_s^{-1} \zeta_{ss} = O_\CC(|s_0|^{-1})$ from  \eqref{eqzetaderivesth} and $|(s^2 - 1 - \a s^{-2})^\frac 12| \sim |s_0|^\frac 12 |s - s_0|^\frac 12$. When $|s| \le (1-50\delta_2)|s_0|$ or $|s| \ge (1+50\delta_2)|s_0|$, we compute using \eqref{eqzetadefh},
\bee
  \frac{\zeta_{ss}}{\zeta_s} = \zeta_s^{-1} \pa_s\left[\left( \frac{s^2 - 1 - \a s^{-2}}{\zeta}\right)^\frac 12 \right] = \frac{s (1 + \a s^{-4})}{s^2 - 1 - \a s^{-2}} -  \frac{(s^2 - 1 - \a s^{-2})^\frac 12}{2 \zeta^\frac 32},
\eee
so
\bee
 \left| \frac{\zeta_s^{-1} \zeta_{ss}}{\mu^2 (s^2 - 1 - \a s^{-2})^{\frac 12}} \right| \lesssim  b \left| \frac{s (1 + \a s^{-4})}{(s^2 - 1 - \a s^{-2})^\frac 32} \right| + |\eta^{-1}|.
 \eee
The first term can be evaluated through \eqref{eqsintasymph}, 
 \bee
 \left| \frac{s (1 + \a s^{-4})}{(s^2 - 1 - \a s^{-2})^\frac 32} \right| \lesssim
 \left| \begin{array}{ll}
       \frac{|s| + |\a s^{-3}|}{(|s_0|^2 + |\a s^{-2}|)^\frac 32} \lesssim |\a|^{-\frac 12} \sim (b\nu)^{-1} &  |s| \le (1-50\delta_2)|s_0| \\ 
       \frac{|s|}{|s|^3} = |s|^{-2} & |s| \ge (1+50\delta_2)|s_0|
  \end{array}\right. 
\eee
The above estimates combined together imply \eqref{eqzetasszetash}. 

\mbox{}

\textbf{Proof of (5).} This is a direct consequence of the asymptotics in (4) and \eqref{eqetaReh}. 
    
\end{proof}

\subsubsection{Inversion of corrected scalar operator}

For $b > 0$, $|E-1| \le \frac 18$ and $\nu \ge 1$, we define the scalar differential operator in high spherical classes with correction from \eqref{eqcorrectionh} as 
\be 
 \tilde H_{b, E, \nu} =  \pa_r^2 - E + \frac{b^2 r^2}{4} - \frac{\nu^2 - \frac 14}{r^2} -  h_{b, E, \nu}. \label{eqdeftildeHbEh}
\ee 
Now we construct the scalar inversion operator as in Lemma \ref{leminvtildeHext} and Lemma \ref{leminvtildeHmid}. We only consider $\Im E \le \frac 12b$, so we can work with the simple Duhamel formula and in weighted $C^0$ space. 

\begin{lemma}[Inversion of $\tilde H_{b, E, \nu}$] \label{leminvtildeHh}
    Fix $I_0 = \frac 12$. Let $0 < b \le b_2(I_0)$, $E \in \{ z \in \CC: |E-1| \le \delta_2, \Im z \le bI_0\}$ and $\nu \ge 1$, with $b_2(I_0), \delta_2$ from Proposition \ref{propWKBh}. We define the inversion operators of $\tilde \calH_{b, E, \nu}$ \eqref{eqdeftildeHbEh} in the exterior region $[r^*_{b, E, \nu}, \infty)$ with $r_0 \in [r^*_{b, E, \nu}, \infty]$ as 
    \be
    \tilde T_{r_0; b, E, \nu}^{ext} f = \psi_3^{b, E, \nu} \int_r^\infty \psi_1^{b, E, \nu} f \frac{ds}{W_{31}}  + \psi_1^{b, E, \nu} \int_{r_0}^r \psi_3^{b, E, \nu} f \frac{ds}{W_{31}} \label{eqinvHexth}
    \ee
    and in the middle region $[x_*, x^*] \subset [10, r^*_{b, E, \nu}]$ as
\be\begin{split} 
\tilde T^{mid, G}_{x_*, x^*; b, E, \nu} g = -\psi_4^{b, E, \nu} \int^r_{x_*} \psi_2^{b, E, \nu} g \frac{ds}{W_{42}} - \psi_2^{b, E, \nu} \int^{x^*}_r \psi_4^{b, E, \nu} g \frac{ds}{W_{42}}   \\
 \tilde T^{mid, D}_{x_*, x^*; b, E, \nu} g = \psi_4^{b, E, \nu} \int_r^{x^*} \psi_2^{b, E, \nu} g \frac{ds}{W_{42}} - \psi_2^{b, E, \nu} \int^{x^*}_r \psi_4^{b, E, \nu} g \frac{ds}{W_{42}} 
 \end{split} \label{eqinvHmidh}
\ee
where the Wronskians are $W_{31} = \calW(\psi_3^{b, E, \nu}, \psi_1^{b, E, \nu}) = \frac{ib\frac 13 E^\frac 16}{2^\frac 43 \pi}$, $W_{42} = \calW(\psi_4^{b, E, \nu}, \psi_2^{b, E, \nu}) = \frac{b\frac 13 E^\frac 16}{2^\frac 43 \pi}$ from \eqref{eqWronski2h}. They satisfy the following properties: 
\begin{enumerate}
    \item Boundedness: for $r_0' \in [r_0, 4|s_0|b^{-1}]$, $\a \in [0, \frac 12]$ and $\beta \ge 0$, 
    \begin{align}
      &\left\| \tilde T_{\infty; b, E, \nu}^{ext} f \right\|_{C^0_{\omega_{b, E, \nu}^- r^{-2}}([r_0, \infty)) \cap \dot C^1_{\omega_{b, E, \nu}^- (br)^{-1}r^{-2}} ([\frac{4|s_0|}{b},\infty))} 
      \lesssim b^{-1} \| f \|_{C^0_{\omega_{b, E, \nu}^- r^{-2}}([r_0, \infty))} \label{eqtildeHextesth1}\\
      &\left\| \tilde T_{r_0; b, E, \nu}^{ext} (\mathbbm{1}_{[r_0, r_0']} f) \right\|_{C^0_{\omega_{b, E, \nu}^\pm}([r_0, r_0'])} \lesssim b|s_0|^{-2} \| f \|_{C^0_{\omega_{b, E, \nu}^\pm r^{-2}}([r_0, r_0'])} \label{eqtildeHextesth2}\\
       &\left\| \tilde T^{mid, G}_{x_*, x^*; b, E, \nu} f \right\|_{C^1_{\omega^\pm_{b, E, \nu}} ([x_*, x^*])} \lesssim   (x_* |s_0|)^{-1}  \| f \|_{C^0_{\omega^\pm_{b, E, \nu} r^{-2}} ([x_*, x^*])} \label{eqtildeTmidGesth} \\
       & \left\| \tilde T^{mid, G}_{x_*, x^*; b, E, \nu} f \right\|_{C^1_{\omega^+_{b, E, \nu} e^{-\frac{\a r}{2|s_0|}}} ([x_*, x^*])} \lesssim   (x_* |s_0|)^{-1}  \| f \|_{C^0_{\omega^+_{b, E, \nu} e^{-\frac{\a r}{|s_0|}} r^{-2}} ([x_*, x^*])} \label{eqtildeTmidGesth2} \\
      &\left\| \tilde T^{mid, D}_{x_*, x^*; b, E} f \right\|_{C^1_{\omega^-_{b, E, \nu} e^{-\frac{\beta r}{|s_0|}} } ([x_*, x^*])} \lesssim (x_* |s_0|)^{-1}  \| f \|_{C^0_{\omega^-_{b, E, \nu} e^{-\frac{\beta r}{|s_0|}} r^{-2}} ([x_*, x^*])}  \label{eqtildeTmidDesth} 
    \end{align}
    Here $\| f \|_{\dot C^1_\omega(I)} := \| \nabla f \|_{C^0_\omega(I)}$.
    \item Boundary values: for $r_0, r_0'$ satisfying $r^*_{b, E, \nu} \le r_0 \le r_0' \le 4|s_0|b^{-1}]$ and $k \in \{ 0, 1\}$, there exist functionals $\beta_1, \beta_3, \gamma_2, \gamma_4$ such that
    \begin{align*}
      &\pa_r^k \tilde T^{ext}_{\infty; b, E, \nu} f(r_0) = \beta_3 \pa_r^k \psi_3^{b, E, \nu}(r_0) - \beta_1 \pa_r^k \psi_1^{b, E, \nu}(r_0),\\
      &\pa_r^k \tilde T^{ext}_{r_0; b, E, \nu} f(r_0) = \beta_3 \pa_r^k \psi_3^{b, E, \nu}(r_0),\quad \pa_r^k \tilde T^{ext}_{r_0; b, E, \nu} (\mathbbm{1}_{[r_0, r_0']} f)(r_0') = \beta_1 \pa_r^k \psi_1^{b, E, \nu}(r_0'),\\
      &\pa_r^k \tilde  T^{mid, G}_{x_*, x^*; b, E, \nu} f  (x^*) = \gamma_2 \pa_r^k \psi_4^{b, E, \nu} (x^*),\quad
     \pa_r^k \tilde T^{mid, G}_{x_*, x^*; b, E, \nu} f \left(x_* \right) = \gamma_4 \pa_r^k \psi_2^{b, E, \nu} \left(x_* \right),\\
     &\pa_r^k \tilde  T^{mid, D}_{x_*, x^*; b, E, \nu} f  (x^*) = 0, \quad \pa_r^k \tilde  T^{mid, D}_{x_*, x^*; b, E, \nu} f  (x_*) = \gamma_4  \pa_r^k \psi_2^{b, E, \nu} (x_*) - \gamma_2 \pa_r^k \psi_4^{b, E, \nu} (x_*)
    \end{align*}
    and they satisfy the following estimates with $\a \in [0, 1]$, $\beta > 0$
    \bee
      |\beta_3[f]| &\lesssim& b |s_0|^{-2} e^{(-1\pm 1)\Re \eta_{b, E, \nu}(r_0)} \| f \|_{C^0_{\omega_{b, E, \nu}^\pm  r^{-2}} ([r_0, \infty))} \\
      |\beta_1[f]| &\lesssim& b |s_0|^{-2} \| f \|_{C^0_{\omega_{b, E, \nu}^-  r^{-2}} ([r_0, \infty))} \\
      |\beta_1[\mathbbm{1}_{[r_0, r_0']} f]| &\lesssim& b |s_0|^{-2} e^{(1 \pm 1)\Re \eta_{b, E, \nu}(r_0')} \| f \|_{C^0_{\omega_{b, E, \nu}^\pm  r^{-2}} ([r_0, r_0'])} \\
      |\gamma_2[f]| &\lesssim& \left| \begin{array}{l}
           (x_*|s_0|)^{-1} e^{-\frac{\beta x_*}{|s_0|}} \| f \|_{C^0_{\omega_{b, E, \nu}^- e^{-\frac{\beta r}{|s_0|}} r^{-2}} ([x_*, x^*])} \\
           (x^*|s_0|)^{-1} e^{2\Re \eta_{b, E, \nu}(x^*)} e^{-\frac{\a x^*}{2|s_0|}}  \| f \|_{C^0_{\omega_{b, E, \nu}^+ e^{-\frac{\a r}{|s_0|}}  r^{-2}} ([x_*, x^*])}
      \end{array}\right.\\
    |\gamma_4[f]| &\lesssim& \left| \begin{array}{l}
           (x_*|s_0|)^{-1} e^{-2\Re \eta_{b, E, \nu}(x_*)} e^{-\frac{\beta x_*}{|s_0|}} \| f \|_{C^0_{\omega_{b, E, \nu}^- e^{-\frac{\beta r}{|s_0|}} r^{-2}} ([x_*, x^*])} \\
           (x_*|s_0|)^{-1} e^{-\frac{\a x^*}{2|s_0|}}  \| f \|_{C^0_{\omega_{b, E, \nu}^+ e^{-\frac{\a r}{|s_0|}}  r^{-2}} ([x_*, x^*])} 
      \end{array}\right.
    \eee
\end{enumerate}
\end{lemma}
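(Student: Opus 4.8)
The argument mirrors the proofs of Lemma~\ref{leminvtildeHmid} and Lemma~\ref{leminvtildeHext}, but is lighter: the standing restriction $\Im E\le\tfrac12 b$ lets us work with the plain variation-of-parameters (Duhamel) formula and weighted $C^0$/$C^1$ norms, so the polynomial-integration operator $\calI^-_N$ is not needed. First I would record, from Proposition~\ref{propWKBh}(4)–(5), the pointwise bounds $|\psi_j^{b,E,\nu}|+|\pa_r\psi_j^{b,E,\nu}|\lesssim\omega^-_{b,E,\nu}$ for $j\in\{1,4\}$ and $|\psi_j^{b,E,\nu}|+|\pa_r\psi_j^{b,E,\nu}|\lesssim\omega^+_{b,E,\nu}$ for $j\in\{2,3\}$ on the regions where each is used, with the extra decay $|(\pa_r+\tfrac{ibr}{2})\psi_1^{b,E,\nu}|$, $|(\pa_r-\tfrac{ibr}{2})\psi_3^{b,E,\nu}|\lesssim(br)^{-1}\omega^\mp_{b,E,\nu}$ for $r\ge 4|s_0|b^{-1}$ coming from \eqref{eqpsibderivh}; together with $|W_{31}|=|W_{42}|\sim b^{1/3}$ by \eqref{eqWronski2h} and the identity $\omega^+_{b,E,\nu}\omega^-_{b,E,\nu}=\la b^{-2/3}(b^2r^2-4E-(4\nu^2-1)Er^{-2})\ra^{-1/2}$ from \eqref{eqomegapmh}, which cancels the exponential $e^{\pm\Re\eta_{b,E,\nu}}$ wherever the two weights are multiplied. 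Given these, the inversion identity $\tilde H_{b,E,\nu}\tilde T_\bullet f=f$ is immediate from \eqref{eqinvHexth}, \eqref{eqinvHmidh} and the Wronskians, the $C^1$-bounds follow since differentiating the Duhamel integrals either hits the WKB prefactor (same weight bounds) or produces boundary terms that cancel by the Wronskian, and the boundary-value formulas are read off by evaluating at $r=r_0,r_0',x_*,x^*$ where one of the two integrals vanishes. Thus everything reduces to a handful of scalar integral estimates, the high-$\nu$ analogues of \eqref{eqintest11}–\eqref{eqintest16}.

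\textbf{Key integral estimates.} In the exterior region $[r^*_{b,E,\nu},\infty)$ the crucial point is absolute convergence of $\int_r^\infty|\psi_1^{b,E,\nu}f|\,ds$: since $|\psi_1^{b,E,\nu}(s)f(s)|\lesssim(\omega^-_{b,E,\nu}(s))^2 s^{-2}\|f\|$ and, for $s\ge 4|s_0|b^{-1}$, $(\omega^-_{b,E,\nu})^2\lesssim(bs)^{-1}e^{2\Re\eta_{b,E,\nu}}\sim(bs)^{-1-2\Im E/b}$ by \eqref{eqetaReh}, the integrand decays like $s^{-3-2\Im E/b}$ with exponent $<-1$ because $\Im E\le\tfrac12 b$; multiplying the resulting bound by $|\psi_3^{b,E,\nu}(r)|\lesssim\omega^+_{b,E,\nu}(r)$ and $|W_{31}|^{-1}\sim b^{-1/3}$, and using $\omega^+\omega^-\sim(bs)^{-1}$, gives the $b^{-1}\omega^-_{b,E,\nu}r^{-2}$ control of \eqref{eqtildeHextesth1}; the localized bound \eqref{eqtildeHextesth2} and the estimates on $\beta_1,\beta_3$ come from the same computation on a finite interval, combined with the monotonicity \eqref{eqReetamonoh} of $\Re\eta_{b,E,\nu}$ when $\Im E<0$ and the near-constancy $e^{2\Re\eta}\sim1$ on $[r^*_{b,E,\nu},4|s_0|b^{-1}]$ when $0\le\Im E\le\tfrac b2$. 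For the middle region one splits $[x_*,x^*]$ at $r\sim|s_0|b^{-1}$: on $[x_*,\sqrt2|\sqrt E s_0|b^{-1}]$ the lower bound $\pa_r\Re\eta_{b,E,\nu}\ge\tfrac12|s_0|$ from \eqref{eqetaRehlowerbdd} makes $e^{-\Re\eta}$ decay at rate $|s_0|$, so that $\int_{x_*}^r(\omega^+_{b,E,\nu})^2 s^{-2}b^{-1/3}ds\lesssim|s_0|^{-1}e^{2\Re\eta(r)}r^{-2}$ and $\int_r^{x^*}\omega^+_{b,E,\nu}\omega^-_{b,E,\nu}e^{-\beta s/|s_0|}s^{-2}b^{-1/3}ds\lesssim(r|s_0|)^{-1}e^{-\beta r/|s_0|}$, with an additional $e^{-\a s/(2|s_0|)}$ weight handled exactly like $e^{-\Re\eta}$; near $r^*_{b,E,\nu}$ one uses the $L^1$-integrability of $|b^2r^2-4E-(4\nu^2-1)Er^{-2}|^{-1/2}$ and the boundedness of the weights on $I_c^{b,E,\nu}$ from \eqref{eqetaabsh}. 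These yield \eqref{eqtildeTmidGesth}–\eqref{eqtildeTmidDesth} and the bounds on $\gamma_2,\gamma_4$.

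\textbf{Main obstacle.} The technical heart is carrying out these integral estimates \emph{uniformly in} $\nu\ge1$ (as well as in $b$ and $E$), which forces one to track the correct powers of $|s_0|\sim\la\a\ra^{1/4}\sim\la b\nu\ra^{1/2}$ everywhere: one must match the three zones $[0,|s_0|b^{-1}]$ (angular-momentum dominated, $|\eta|\sim b^{-1}(\ln|s_0/(br)|\,|\a|^{1/2}+\la\a\ra^{1/2})$), $[|s_0|b^{-1},4|s_0|b^{-1}]$ (turning-point zone), and $[4|s_0|b^{-1},\infty)$ (quadratic-growth zone), and check that at $r^*_{b,E,\nu}$ the weight-smoothing factor $\la b^{-2/3}(\cdots)\ra^{-1/4}$ exactly compensates the non-integrable pieces of $|b^2r^2-4E-(4\nu^2-1)Er^{-2}|^{-1/2}$. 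The exponential decay rate $|s_0|$ delivered by \eqref{eqetaRehlowerbdd} is precisely what produces the gains $(x_*|s_0|)^{-1}$ and $b|s_0|^{-2}$ in (1)–(2); obtaining these sharp constants, rather than a crude $\nu$-dependent bound, is the only delicate point, and it is exactly what makes the lemma usable for the mismatch argument for high spherical classes in Step~1.2.
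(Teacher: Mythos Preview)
Your proposal is correct and follows essentially the same route as the paper: reduce to Duhamel integral estimates using the WKB bounds of Proposition~\ref{propWKBh}, exploit the monotonicity \eqref{eqReetamonoh} and the rate \eqref{eqetaRehlowerbdd} on the middle interval, and use that $\Im E\le\tfrac12 b$ makes $(\omega_{b,E,\nu}^-)^2 s^{-2}$ integrable at infinity so the $\calI^-_N$ machinery is unnecessary. One sign slip to fix: since $\omega^-_{b,E,\nu}$ carries $e^{-\Re\eta}$ and $\Re\eta\sim-\tfrac{\Im E}{b}\ln(br)$ by \eqref{eqetaReh}, you get $(\omega^-_{b,E,\nu})^2\sim(bs)^{-1+2\Im E/b}$, so the integrand is $\sim s^{-3+2\Im E/b}$ with exponent $\le-2<-1$ from $\Im E\le\tfrac12 b$ --- your stated sign gives the wrong inequality direction.
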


\begin{proof}
Notice that the polynomial part of $\omega_{b, E, \nu}^\pm$ is bounded for $r \ge 0$ by 
    \be \left\la b^{-\frac 23} (b^2 r^2 - 4E - (4\nu^2 - 1)Er^{-2}) \right\ra^{-\frac 14} \lesssim b^{-\frac 13}  \left|r + \frac{2\sqrt E s_0}{b}\right|^{-\frac 14} \left|r-\frac{2\sqrt E s_0}{b}\right|^{-\frac 14}. \label{eqpolyweightcontrolh} \ee
    
    Then the proof of \eqref{eqtildeHextesth1}-\eqref{eqtildeHextesth2} is similar to that of \eqref{eqbddHbnuinvext2}. We remark that (1) as $\Im E \in (0, \frac b2]$, we have  $e^{- 2\Re \eta_{b, E, \nu}} \sim \left| \frac{br}{s_0} \right|^{\frac{2\Im E}{b}} \lesssim \left| \frac{br}{s_0} \right|$, so that $\int_r^\infty(\omega_{b, E, \nu}^-)^2 r'^{-2} b^{-\frac 13} dr' \lesssim  b^{-1} r^{-2} e^{-2\Re \eta_{b, E, \nu}(r)}$ holds without further integration by parts thanks to the slow growth; (2) in \eqref{eqtildeHextesth1} the integrability of the second integral of $\tilde \calT^{ext}_{\infty; b, E, \nu}$ follow from the extra $r^{-2}$ decay. 
    
    The proof of \eqref{eqtildeTmidGesth} and \eqref{eqtildeTmidDesth} mimic that of \eqref{eqtildeTmidGest1} with $\a = 0$, \eqref{eqtildeTmidGest2} with $k = 0$ and \eqref{eqtildeTmidDest} with $k = 0$, which mainly reduces to the monotonicity of $e^{\pm \Re \eta_{b, E, \nu}}$ from \eqref{eqReetamonoh} and $e^{-\beta r |s_0|^{-1}}$ to take them out of the integral. 
    
    For \eqref{eqtildeTmidGesth2}, we instead use that $r \mapsto e^{2\Re \eta_{b, E, \nu}}e^{-\frac{\a}{|s_0|} \min \{ r, r^*_{b, E, \nu}/2 \} }$ with $\a \in [0, \frac 12]$ is non-decreasing on $[x_*, x^*] \subset [0, r^*_{b, E, \nu}]$ thanks to \eqref{eqReetamonoh}, \eqref{eqetaRehlowerbdd}, and $|s_0| > 1$. This weight turns into RHS since $e^{-\frac{\a}{|s_0|} \min \{ r, r^*_{b, E, \nu}/2 \} } \le e^{-\frac{\a r}{2|s_0|}}$ on $[x_*, x^*]$. 
    
    Meanwhile, we also obtain the boundary value estimates, except the second estimate for $\gamma_2[f]$. To obtain the bound with $(x^*)^{-1}$ factor, we exploit the non-decreasing of $r \mapsto r^{-4}  e^{2\Re \eta_{b, E, \nu}}e^{-\frac{\a r}{|s_0|}}$ on $[2, \frac{r^*_{b, E, \nu}}2]$ and $r \mapsto e^{2\Re \eta_{b, E, \nu}}$ on $[\frac{r^*_{b, E, \nu}}2, r^*_{b, E, \nu}]$ from \eqref{eqReetamonoh} and \eqref{eqetaRehlowerbdd} and partition the integration into two parts, like the proof of \eqref{eqintest11}.
\end{proof}

\section{Exterior ODE analysis: System case}\label{sec5}
 In this section, we construct exterior fundamental solution to the original system \eqref{eqnu} with the scalar approximate solutions and inversion operators in Section \ref{sec4}.

\subsection{Admissible solutions for low spherical classes}

Firstly, we construct the admissible branches of fundamental solutions for finite spherical classes based on Section \ref{sec41}. 

\begin{proposition}[Construction of exterior admissible solution] 
\label{propextfund} For $d \ge 1$, $I_0, \nu_0, K_0 > 0$, there exists $s_{c;{\rm ext}}^{(1)} > 0$, $b_1 > 0$, $\delta_1 > 0$ and $x_* \gg 1$ such that for $0 < s_c \le s_{c;{\rm ext}}^{(1)}(d, I_0, \nu_0, K_0)$, $b = b(d, s_c)$ from Proposition \ref{propQbasymp} and $\l, \nu$ satisfying
    \be 0 \le b \le b_1, \quad  \l \in \Omega_{\delta_1; I_0, b}, \quad \nu \in \frac 12 \NN \cap [0, \nu_0], \label{eqrangeblnu} \ee
    there exist two smooth functions $\Phi_{j;b, \l, \nu}: (0, \infty) \to \CC^2$ with $j = 1, 2$ solving the equation \eqref{eqnu} with parameters in the above range. They are analytic w.r.t. $\l$, and the following statements hold.
    \begin{enumerate}
    \item Asymptotic behavior at infinity for $b = 0$: For $0 \le k \le K_0$, $N = 0, 1$ and $j = 1, 2$, 
    \bea
      \sup_{r \ge 1} \left[\left| r^{-k} e^{\sqrt{1+\l}r} \pa_\l^k \pa_r^N \Phi^1_{j;0,\l,\nu}  \right| + \left| r^{-k} e^{\sqrt{1-\l}r} \pa_\l^k \pa_r^N \Phi^2_{j;0,\l,\nu}  \right|\right] < \infty, \label{eqb0extbdd}
    \eea
    with non-degeneracy that for $N = 0, 1$, and any $(c_1,c_2) \in \CC^2 - \{ \vec 0\}$,
    \be
    \limsup_{r \to \infty}  \left[ \left| e^{\sqrt{1+\l}r} \pa_r^N \sum_{j=1}^2  ( c_j \Phi^1_{j;0,\l,\nu} )\right| +  \left|  e^{\sqrt{1-\l}r}  \pa_r^N \sum_{j=1}^2  ( c_j \Phi^2_{j;b,\l,\nu}) \right|  \right]  > 0. 
  \label{eqnondegb0}
\ee
Moreover, $\{\pa_\l^n \Phi_{j;0,\l,\nu}\}_{\substack{ n \ge 0 \\ 1 \le j \le 2}}$ are linearly independent.
        \item Asymptotic behavior at infinity and linear independence for $ b > 0$: For $b>0$, $\Phi_{j;b,\l,\nu}$ for $j = 1, 2$ satisfy for $0 \le k \le K_0$ and $N \ge 0$ that 
        \bea 
          \sup_{r \ge \frac 4b} \left[ \left|\pa_\l^k \pa_r^N ( e^{-\frac{ibr^2}{4}} \Phi^1_{j;b,\l,\nu}) \right| +  \left|\pa_\l^k \pa_r^N ( e^{\frac{ibr^2}{4}} \Phi^2_{j;b,\l,\nu}) \right|  \right] r^{\frac 12 -\frac{\Im \l}{b} - 2k - s_c+N} < \infty,
          \label{eqadmosc}
        \eea
        Moreover, $\{\pa_\l^n \Phi_{j;b,\l,\nu}\}_{\substack{ n \ge 0 \\ 1 \le j \le 2}}$ are linearly independent and satisfies for $N = 0, 1$ that, 
        \bea
          |\Phi^1_{1;b,\l,\nu}| \sim_{b, E} |\psi_1^{b, 1+\l+ibs_c}|, \quad  |\Phi^2_{2;b,\l,\nu}| \sim_{b, E} |\psi_1^{b, 1-\bar \l+ibs_c}|, \quad r \ge \frac 4b.
          \label{eqadmnondeg}
        \eea
        \item Boundary value at $x_*$: With $\nu$ in the above range fixed, define the admissible boundary value map
        \be
        \begin{split}
        f_{x_*;\nu} : \{ (b, \l) \in [0, b_1] \times B_{\delta_1}^\CC: \l \in \Omega_{\delta_1; I_0, b} \,\, {\rm if}\,\,b > 0 \} \to (\CC^2)^4,  \\
        (b, \l) \mapsto \left( \Phi_{1;b, \l, \nu}(x_*), \pa_r \Phi_{1;b, \l, \nu}(x_*), \Phi_{2;b, \l, \nu}(x_*), \pa_r \Phi_{2;b, \l, \nu}(x_*)  \right). 
        \end{split}
        \ee
        Then $f_{x_*;\nu}$ is analytic w.r.t. $\l$ and satisfies the following quantitative continuity w.r.t. $b$: for $0 \le k \le K_0$,  
        \bea 
        \sup_{\substack{b \in [0, b_1], \\ \l \in \Omega_{\delta_1;I_0,b} }}| \pa_\l^k f_{x_*;\nu}| \lesssim_{K_0} e^{-\sqrt{1-\delta_1} x_*} x_*^{k},  \label{eqbdrymapest}\\
        \sup_{\l \in \Omega_{\delta_1;I_0,b}}\left|\pa_\l^k f_{x_*;\nu}(b, \l) - \pa_\l^k f_{x_*;\nu}(0, \l) \right| \lesssim_{K_0} b^\frac 16 e^{-\sqrt{1-\delta_1} x_*} x_*^{k},\quad 0 < b \le b_1.
        \label{eqbdrymapasymp}
        \eea
    \end{enumerate}
\end{proposition}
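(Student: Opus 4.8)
\emph{Strategy and reformulation.} The plan is to realize $\Phi_1,\Phi_2$ as a basis of the two–dimensional space of solutions of \eqref{eqnu} on $[x_*,\infty)$ that are \emph{admissible at infinity} (i.e.\ the associated $Z$ of \eqref{eqZPhi} carries no quadratic oscillation), built by a fixed point from the scalar WKB solutions and inversions of Subsection \ref{sec41}, and then to read off the boundary data at $x_*$ with its $b\to 0$ asymptotics. I would set $E_+:=1+\l+ibs_c$ and $\tilde E_-:=1-\bar\l+ibs_c$; both have imaginary part $\Im\l+bs_c<b(I_0+1)$ and satisfy $|{\cdot}-1|\le\delta_0$ once $\delta_1\ll\delta_0$, so both lie in the validity range of Proposition \ref{propWKB} with $b_1:=b_0(I_0+1)$; then $s_{c;{\rm ext}}^{(1)}$ is chosen so that $b(s_c)\le b_1$. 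Writing $\Psi:=\overline{\Phi^2}$ (and using $W_{1,b}\in\RR$), the system \eqref{eqnu} becomes the pair of scalar equations $\tilde H_{b,E_+}\Phi^1=V^+\Phi^1+e^{ibr^2/2}W_{2,b}\overline\Psi$, $\tilde H_{b,\tilde E_-}\Psi=\tilde V^-\Psi+e^{ibr^2/2}W_{2,b}\overline{\Phi^1}$, with $\tilde H_{b,E}$ from \eqref{eqdeftildeHbE} and ``small'' potentials $V^+=\tfrac{\nu^2-1/4}{r^2}-h_{b,E_+}-W_{1,b}$ ($\tilde V^-$ analogously) obeying $|V^+(r)|\lesssim_{\nu_0}r^{-2}+e^{-(p-1)r}$ on $r\ge1$ uniformly in $b\in[0,b_1]$ by Proposition \ref{propQbasymp} and Proposition \ref{propWKB}(1); $x_*=x_*(d,\nu_0)\gg1$ is then fixed so large that the $L^1$–masses of $V^\pm$ and of $W_{2,b}$ against the Green's kernels of Lemmas \ref{leminvtildeHext}, \ref{leminvtildeHmid} over $[x_*,\infty)$ are $\le\tfrac1{100}$. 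The admissible branches near $\infty$ are $\psi_1^{b,E_+}$ for $\Phi^1$ and $\psi_1^{b,\tilde E_-}$ for $\Psi$, since by Proposition \ref{propWKB}(6) and \eqref{eqphasematcheibx2} $e^{-ibr^2/4}\psi_1^{b,E_+}$, and $\overline{e^{-ibr^2/4}\psi_1^{b,\tilde E_-}}$ after undoing $\Psi=\overline{\Phi^2}$, carry no quadratic oscillation. For $b=0$ the whole construction degenerates: $\psi_j^{b,E}$ is replaced by $e^{\pm\sqrt E r}$, there is no turning point, and one uses the plain exponentially weighted Volterra formula on $[x_*,\infty)$.

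\emph{Construction.} For $b>0$ I would first solve each scalar equation separately: glue $\tilde T_{\cdot;b,E_+}^{ext}$ (Lemma \ref{leminvtildeHext}, on $[r^*_{b,E_+}-b^{-1/3},\infty)$ in the Banach space $X^{0,N,-}$ of \eqref{eqdefXBanach}) to $\tilde T^{mid,D}_{x_*,\cdot;b,E_+}$ (Lemma \ref{leminvtildeHmid}, on $[x_*,r^*_{b,E_+}+b^{-1/3}]$ in $C^1_{\omega^-_{b,E_+}}$) — on the overlap the two Duhamel formulas differ by an explicit homogeneous term, producing a single bounded right–inverse $\calT^{(+)}$ of $\tilde H_{b,E_+}$ on $[x_*,\infty)$ whose output is $\omega^-_{b,E_+}$–weighted throughout — and solve $\phi=\kappa^-_{b,E_+}\psi_1^{b,E_+}+\calT^{(+)}(V^+\phi)$ by contraction (the operator norm of $\calT^{(+)}(V^+{\cdot})$ is $\le\tfrac1{100}$ by the choice of $x_*$, using the $b$–factor in \eqref{eqbddHbnuinvext} and the $x_*^{-1}$–factors in Lemma \ref{leminvtildeHmid}); the analogue with $\tilde E_-$ gives the $\Psi$–admissible solution. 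Then I would restore the exponentially small coupling by a second contraction on the pair $(\Phi^1,\Psi)$, treating $e^{ibr^2/2}W_{2,b}$ as an $O(e^{-(p-1)x_*})$–small off–diagonal perturbation: this yields $\Phi_1$ with first component $\approx\phi$ and $\Psi$–component $=O(e^{-(p-1)x_*})$, and $\Phi_2$ symmetrically with $E_+\leftrightarrow\tilde E_-$; one sets $\Phi_{1;b,\l,\nu}=(\Phi^1,\overline\Psi)$ and likewise for $\Phi_2$, and extends both to $(0,\infty)$ by ODE continuation. Analyticity in $\l$ follows since $\psi_j^{b,E}$, $\kappa^\pm_{b,E}$ and the inversions depend analytically on $E$, hence on $\l$ (Proposition \ref{propWKB}(7)), and a fixed point of an analytic family of contractions is analytic, so $\Phi_{j;b,\l,\nu}$ and $\pa_r\Phi_{j;b,\l,\nu}$ are analytic; differentiating \eqref{eqnu} as in Proposition \ref{propintfund}(2) reduces linear independence of $\{\pa_\l^n\Phi_j\}$ to that of $\Phi_1,\Phi_2$, which holds because their leading terms live in different components (see \eqref{eqadmnondeg}).

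\emph{Asymptotics, parts (1)–(2).} For $b>0$, \eqref{eqadmosc} and \eqref{eqadmnondeg} come from the leading term $\kappa^-_{b,E_+}\psi_1^{b,E_+}$ (resp.\ $\kappa^-_{b,\tilde E_-}\psi_1^{b,\tilde E_-}$) via Proposition \ref{propWKB}(5)–(7): using \eqref{eqformulaeta}, \eqref{eqetaabs}, \eqref{eqReetaext} one has $e^{-\eta_{b,E_+}}=e^{ibr^2/4}(br)^{-iE_+/b}(1+O(\eta^{-1}))$ for $r\ge 4/b$, so $e^{-ibr^2/4}\psi_1^{b,E_+}$ has no quadratic oscillation and, up to a $b$–dependent constant, size $r^{-1/2+\Im\l/b+s_c}$; the $\pa_r^N$ and $\pa_\l^k$ refinements follow from the $D_{-;b,E_+}^n$– and $\pa_E^N$–estimates of Proposition \ref{propWKB}(6)(7c) together with the exponential smallness of the $\Psi$–component and of the fixed-point corrections. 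For $b=0$, $\psi_1^{0,E_+}=e^{-\sqrt{1+\l}r}$ and $\overline{\psi_1^{0,\tilde E_-}}=e^{-\sqrt{1-\l}r}$ give \eqref{eqb0extbdd} (each $\pa_\l$ costs at most $r^1$ in the Volterra iteration), and \eqref{eqnondegb0} holds by the nonvanishing normalized leading coefficient.

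\emph{The boundary–value map and its $b$–continuity — the main obstacle.} Since $x_*$ is a fixed constant, for $b\le b_1$ small $x_*\le b^{-1/2}<r^*_{b,E_+}$, so $\Phi^1_1(x_*)$ equals $\kappa^-_{b,E_+}\psi_1^{b,E_+}(x_*)$ plus fixed-point corrections; by the connection formula \eqref{eqconnect} and $\Re\eta_{b,E_+}<0$ on $[0,r^*_{b,E_+}]$ (\eqref{eqReetamono}) this is $e^{-\pi i/6}\kappa^-_{b,E_+}\psi_4^{b,E_+}(x_*)$ up to an exponentially small term, and \eqref{eqpsibEderiv5} (valid for $r\le b^{-1/2}$) gives $\pa_r^k\pa_E^N(\kappa^-_{b,E_+}\psi_4^{b,E_+})(x_*)=\pa_r^k\pa_E^N(e^{-\sqrt{E_+}r})\big|_{x_*}+O(b^{1/2}\la x_*\ra^N e^{-\sqrt{E_+}x_*})$; hence the WKB part of $f_{x_*;\nu}$ and all its $\pa_\l^k$ ($k\le K_0$) are within $O(b^{1/2}x_*^k e^{-\sqrt{1-\delta_1}x_*})$ of the $b=0$ values $\pa_r^k\pa_\l^N(e^{-\pi i/6}e^{-\sqrt{1+\l}r})|_{x_*}$. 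For the corrections I would subtract the $b>0$ and $b=0$ fixed-point equations on $[x_*,b^{-1/2}]$: the driving terms are $\psi_1^{b,E_+}-\psi_1^{0,E_+}$ (controlled on $[x_*,b^{-1/2}]$ by \eqref{eqpsibEderiv5}), the potential differences $h_{b,E_+}=O(b^2r^{-2})$ (\eqref{eqbddh}), $W_{1,b}-W_1=O(b^{1/6}e^{-(p-1)r})$ (\eqref{eqintpotest}), and $e^{ibr^2/2}W_{2,b}-W_2=O(b^{1/6})$ (Proposition \ref{propQbasymp}); a Gronwall argument on $[x_*,b^{-1/2}]$ then bounds the correction difference and its $\pa_\l^k$'s by $O(b^{1/6}x_*^k e^{-\sqrt{1-\delta_1}x_*})$, giving \eqref{eqbdrymapasymp} (and \eqref{eqbdrymapest} by the same estimates without differencing). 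The essential difficulty — and the step I expect to be the hardest — is that the exterior zone $[b^{-1/2},\infty)$, where the WKB $b$–continuity \emph{breaks} across the turning point $r\sim r^*_{b,E_+}$, is never compared to $b=0$ directly: its only influence on the value at $x_*$ must be routed through the exponentially favorable weight ratios in \eqref{eqbddHbnuinvext2}, \eqref{eqeta2est1}–\eqref{eqeta2est3} applied to an exterior correction that is $O(b)$–small in $X^{0,N,-}$ (\eqref{eqbddHbnuinvext}); it is precisely there that the rate degrades from $b^{1/2}$ to $b^{1/6}$, inherited from the turning-point normalizations $\kappa^\pm_{b,E}$ and the $b^{1/6}$–errors of Proposition \ref{propQbasymp}. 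The boundary data of $\Phi_2$ is handled identically with $E_+$ replaced by $\tilde E_-$ (equivalently, $\Phi_2(\l)$ is the conjugate–swap of the $\Phi_1$–construction at $-\bar\l$, which stays in the same parameter range), completing all parts.
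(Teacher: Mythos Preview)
Your overall architecture is close to the paper's, but there is one genuine gap and one structural complication you have underestimated.

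\textbf{The analyticity of the inversions does not hold as stated.} You write that ``the inversions depend analytically on $E$, hence on $\l$''. They do not: your gluing of $\tilde T^{ext}_{\cdot;b,E_+}$ to $\tilde T^{mid,D}_{x_*,\cdot;b,E_+}$ is performed at (a neighborhood of) the turning point $r^*_{b,E_+}$, and by Lemma~\ref{lemWKBeta} this point is the zero of a real-valued function of $E$, hence only continuous in $E$, not analytic. Thus your single right-inverse $\calT^{(+)}$ is built over a $\l$-dependent interval whose endpoint is non-analytic, and the fixed point it produces is not, a priori, analytic in $\l$. The paper confronts exactly this issue: after constructing the (non-analytic) $\Phi^{mid}_{j}$ on $[x_*,r^*_{b,\bar E_-}]$, it introduces a second family $\check\Phi^{mid}_{j}$ on the \emph{$\l$-independent} interval $[x_*,b^{-1/2}]$ (Step~2.6 of the proof), proves convergence of the normalized $\check\Phi^{mid}_{j}$ to $\Phi_{j;0,\l,\nu}$ as $b\to0$, and then passes all $\pa_\l$-computations through local analytic branches frozen at a reference $\l_0$ (Step~3.1). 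The final matching coefficients $\check c^{mid}_{kj}$ are shown to coincide on overlapping analytic branches, hence are genuinely analytic. Your sketch does not contain any mechanism of this kind, and without it the analyticity claim---and hence the $\pa_\l^k$-estimates \eqref{eqbdrymapest}--\eqref{eqbdrymapasymp}---is unjustified.

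\textbf{Two distinct turning points force a third region.} You propose two zones (exterior and mid) glued at $r^*_{b,E_+}$, and separately ``the analogue with $\tilde E_-$'' for $\Psi$. But when $\Re\l\neq 0$ the two turning points $r^*_{b,E_+}$ and $r^*_{b,\bar E_-}$ differ by up to $O(\delta_1/b)$, and the weight functions $\omega^\pm_{b,E_+}$, $\omega^\pm_{b,\bar E_-}$ in the intervening strip are neither both ``exterior'' nor both ``mid''. The paper therefore constructs on \emph{three} regions $I_{ext}=[r_1,\infty)$, $I_{con}=[r_2,r_1]$, $I_{mid}=[x_*,r_2]$ (Step~2.1), establishes precise comparison estimates \eqref{eqcomparisonomega}--\eqref{eqcomparisonomega4} between the two weight families (Step~2.2, which in fact pins down $\delta_1$), and matches across both interfaces by explicit $4\times4$ transition matrices (Step~2.7). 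Your ``second contraction on the pair'' with coupling treated as $O(e^{-(p-1)x_*})$ glosses over the fact that on $I_{con}$ the coupling must be measured against mismatched weights; the paper's $e^{-\frac{p-1}{64b}}$ smallness factors come from this comparison, not simply from the size of $W_{2,b}$.

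In short: your plan is morally right but misses the auxiliary analytic family on $[x_*,b^{-1/2}]$ that repairs the non-analytic turning point, and it collapses the three-region decomposition into two, which does not survive the weight comparison when $\Re\l\neq 0$.
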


\begin{proof} 
Within the proof, to simplify notation, we use the vector form of fundamental solution $\vec \Phi$ to \eqref{eqnu} and WKB approximate solutions $\vec \psi_j^{b, E}$ ($j = 1, 2, 3$) in Proposition \ref{propWKB} as 
 \be \vec\Phi (r) = \left( \Phi^{1}, \pa_r \Phi^{1}, \Phi^{2}, \pa_r \Phi^{2} \right)^\top (r) \in \CC^4, \quad
\vec\psi_j^{b, E}(r) = \left(
    \psi_j^{b, E}, \pa_r \psi_j^{b, E}
\right)^\top (r) \in \CC^2. \label{eqvectorform}
\ee

Fix a $d \ge 1$. We first restrict our parameters to satisfy
\be s_{c;{\rm ext}}^{(1)} \le s_c^{(0)'},\quad \delta_1 \le \min \left\{\frac 12 \delta_0, (100d)^{-1}\right\},\quad b_1 \le \min \left\{b_0(I_0), \delta_1^\frac 32\right\}, \label{eqdelta1restrict} \ee
where $s_c^{(0)'}$ is from Proposition \ref{propQbasympref} and $b_0(I_0)$ and $\delta_0$ are from Proposition \ref{propWKB}. We will determine $\delta_1$ in Step 2 (1) as \eqref{eqdelta1choice}, and take $b_1$ small enough and $x_*$ large enough according to restrictions in Step 1, Step 2, and Step 3. Finally $s_{c;{\rm ext}}^{(1)}$ will be further shrunk so that the parameter of self-similar profile (see Proposition \ref{propQbasymp}) satisfies $b(d, s_c) \le b_1$ for all $s_c \le s_{c;{\rm ext}}^{(1)}$.

\mbox{}

\underline{Step 1. Fundamental solutions for $b = 0$ case.}

This part is standard, see a similar setting in \cite[Lemma 5.2, Lemma 5.5]{MR2219305} for example. We first define two inversion operators of $\pa_r^2 - E$ as
\be
\begin{split}
T_{x_*; E}^{(0), G} f &= - \int_{x_*}^r  \frac{e^{-\sqrt E (r-s)}}{2\sqrt{E}} f ds - \int_r^\infty \frac{e^{-\sqrt E (s-r)}}{2\sqrt{E}} f ds,   \\
T_{x_*; E}^{(0), D} f &=  \int_r^\infty \frac{e^{-\sqrt E (r-s)}}{2\sqrt{E}} f ds - \int_r^\infty \frac{e^{-\sqrt E (s-r)}}{2\sqrt{E}} f ds,
\end{split}  \label{eqdefTb0inv}
\ee
and their derivatives with respect to $E$ for $k \ge 1$
\bee T_{x_*; E}^{(k), G} f &=& - \int_{x_*}^r \pa^k_E \left( \frac{e^{-\sqrt E (r-s)}}{2\sqrt{E}}\right) f ds - \int_r^\infty \pa^k_E \left( \frac{e^{-\sqrt E (s-r)}}{2\sqrt{E}}\right) f ds,   \\
T_{x_*; E}^{(k), D} f &=&  \int_r^\infty \pa^k_E \left( \frac{e^{-\sqrt E (r-s)}}{2\sqrt{E}}\right) f ds - \int_r^\infty \pa^k_E \left( \frac{e^{-\sqrt E (s-r)}}{2\sqrt{E}}\right) f ds.
\eee
 Notice that 
\be 
\left| \pa_E^k e^{-\sqrt E r} \right| \lesssim_k r^k  e^{-\sqrt E r},  \label{eqexpdiff}
\ee
and that from the cancellation $\pa_E^k  \left( \frac{e^{-\sqrt E (r-s)} - e^{-\sqrt{E}(s-r)} }{2\sqrt{E}}\right)\big|_{r = s} = 0$, we have 
\bee
  \pa_r T_{x_*; E}^{(k), G} f &=&  \int_{x_*}^r \pa^k_E \left( \frac{e^{-\sqrt E (r-s)}}{2}\right) f ds - \int_r^\infty \pa^k_E \left( \frac{e^{-\sqrt E (s-r)}}{2}\right) f ds,   \\
  \pa_r T_{x_*; E}^{(k), D} f &=&  -\int_r^\infty \pa^k_E \left( \frac{e^{-\sqrt E (r-s)}}{2}\right) f ds - \int_r^\infty \pa^k_E \left( \frac{e^{-\sqrt E (s-r)}}{2}\right) f ds.
\eee
Fix $\a > 0$, these operators enjoy the boundedness: for $k, m \ge 0$, 
\be\begin{split}
  \left \| T_{x_*; E}^{(k), G}  f\right\|_{C^1_{r^{m+k} e^{-\sqrt E r}}([x_*, \infty))} &\lesssim_{k, m} x_*^{-1} \| f \|_{C^0_{r^{m-2} e^{-\sqrt E r}}([x_*, \infty)) }, \\
  \left \| T_{x_*; E}^{(k), D}  f\right\|_{C^1_{r^{m+k} e^{-(\sqrt E + \a) r}}([x_*, \infty))} &\lesssim_{k, m, \a} x_*^{-1} \| f \|_{C^0_{r^{m-2} e^{-(\sqrt E + \a) r}}([x_*, \infty)) },
  \end{split}\label{eqbddTk}
\ee
where the constant is independent of $x_* \ge 1$. 

\mbox{}

Now we invert $\HH_{0,\nu}$ in \eqref{eqnu} and define $\Phi_{j ;0, \l, \nu} = \left( \begin{array}{c}
     \Phi^1_j  \\
     \Phi^2_j 
\end{array} \right)$ as the solution of the integral equation 
\bea
\left| \begin{array}{l}
    \Phi^1_1 = S_1 + T_{x_*; 1 + \l}^{(0), G} F^+_1
    \\
    \Phi^2_1 = R_1 + T_{x_*; 1-\l}^{(0), D} \overline{F^-_1}
\end{array}\right. && 
\left| \begin{array}{l}
    \Phi^1_2 = S_2 + T_{x_*; 1 + \l}^{(0), D} F^+_2
    \\
    \Phi^2_2 = R_2 + T_{x_*; 1-\l}^{(0), G} \overline{F^-_2}
\end{array}\right. \label{eqb0fund}\\
\eea
with the source terms
\bea
  \left( \begin{array}{c} 
  S_1 \\ R_1
  \end{array} \right) = 
  \left( \begin{array}{c} 
  e^{-\sqrt{1+\l} r}  \\ 0
  \end{array} \right),
  &&
  \left( \begin{array}{c} 
  S_2 \\ R_2
  \end{array} \right) = 
  \left( \begin{array}{c} 
  0 \\ e^{-\sqrt{1-\l} r}   
  \end{array} \right),
  \label{eqHH0solu}
\eea
and 
\be F^+_j = \left(\frac{\nu^2 - \frac 14}{r^2} - W_1 \right) \Phi^1_j - W_2 \Phi^2_j,\quad F^-_j =\left (\frac{\nu^2 - \frac 14}{r^2} - W_1\right ) \overline{\Phi^2_j} - W_2 \overline{\Phi^1_j}. \label{eqb0Fpm} \ee
Thanks to the exponential decay $|W_1| + |W_2| \lesssim \la r\ra^{-\frac{d-1}{2}(p-1)} e^{-(p-1)r}$ and $r^{-2}$ decay, the boundedness of inversion operators above with $\a = \frac{p-1}{8} > 0$ then implies the existence and uniqueness of $\Phi_{j;0, \l, \nu}$ satisfying
\be
\begin{split}
 \| \Phi_{1;0, \l, \nu}^1 - e^{-\sqrt{1+\l}r} \|_{C^1_{e^{-\sqrt{1+\l}{r}}}([x_*, \infty)) } + 
 \| \Phi_{1;0, \l, \nu}^2 \|_{C^1_{e^{-(\sqrt{1-\l} + \frac{p-1}{8}){r}}}([x_*, \infty)) } &\lesssim x_*^{-1},\\
\| \Phi_{2;0, \l, \nu}^1 \|_{C^1_{e^{-(\sqrt{1+\l}+\frac{p-1}{8} ){r}}}([x_*, \infty)) } + \| \Phi_{2;0, \l, \nu}^2 - e^{-\sqrt{1-\l}r} \|_{C^1_{e^{-\sqrt{1-\l}{r}}}([x_*, \infty)) } &\lesssim x_*^{-1},
\end{split} \label{eqestPhib0} 
\ee
via contraction mapping principle in the corresponding space. Here we used $\frac{p-1}{8} \ge (2d)^{-1} \gg 2\delta_1 \ge |\sqrt{1 + \l} - \sqrt{1-\l} |$ from $s_c= \frac d2 - \frac{2}{p-1} > 0$ and \eqref{eqdelta1restrict} to absorb the off-diagonal term, and we need $x_* \gg 1$ depends on $\nu_0$ to ensure smallness of the potential. The faster decay of $\Phi^2_{1;0,\l,\nu}$ and $\Phi^1_{2;0,\l,\nu}$ also implies the non-degeneracy \eqref{eqnondegb0}.

For analyticity, we differentiate \eqref{eqb0fund} and \eqref{eqHH0solu} w.r.t. $\l$ for $1 \le N \le K_0$ times to obtain
\be \begin{split}
\left| \begin{array}{l}
    \pa_\l^N \Phi^1_1 = \pa_\l^N S_1 + \sum_{k=0}^N \binom{N}{k} T_{x_*; 1 + \l}^{(k), G} \pa_\l^{N-k} F^+_1
    \\
    \pa_\l^N \Phi^2_1 = \pa_\l^N R_1 + \sum_{k=0}^N (-1)^k \binom{N}{k} T_{x_*; 1 - \l}^{(k), D} \overline{\pa_{\bar\l}^{N-k} F^-_1}
\end{array}\right. \\
\left| \begin{array}{l}
    \pa_\l^N \Phi^1_2 = \pa_\l^N S_2 + \sum_{k=0}^N \binom{N}{k} T_{x_*; 1 + \l}^{(k), D} \pa_\l^{N-k} F^+_2
    \\
    \pa_\l^N \Phi^2_2 = \pa_\l^N R_2 + \sum_{k=0}^N (-1)^k \binom{N}{k} T_{x_*; 1 - \l}^{(k), G} \overline{\pa_{\bar\l}^{N-k} F^-_2}
\end{array}\right.\end{split} \label{eqb0fundN}
\ee
Again from \eqref{eqexpdiff} and boundedness of resolvents, we can show inductively that for $x_* \gg 1$ depending on $K_0$ and $\nu_0$ 
\be
\begin{split}
 \| \pa_\l^N \Phi_{1;0, \l, \nu}^1 \|_{C^1_{r^N e^{-\sqrt{1+\l}{r}}}([x_*, \infty)) } + \| \pa_\l^N \Phi_{1;0, \l, \nu}^2 \|_{C^1_{r^N e^{-(\sqrt{1-\l} + \frac{p-1}{8}){r}}}([x_*, \infty)) } \lesssim& 1,\\
\| \pa_\l^N \Phi_{2;0, \l, \nu}^1 \|_{C^1_{r^N e^{-(\sqrt{1+\l}+\frac{p-1}{8} ){r}}}([x_*, \infty)) } + \| \pa_\l^N \Phi_{2;0, \l, \nu}^2 \|_{C^1_{r^N e^{-\sqrt{1-\l}{r}}}([x_*, \infty)) } \lesssim& 1.
\end{split} \label{eqestPhib0N} 
\ee

\mbox{}\\

\underline{Step 2. Fundamental solution for $0 < b \le b_1$.}

This step aims to construct admissible fundamental solutions on $[x_*, \infty)$ for $0 < b \le b_1$ case with $b_1 \ll 1$ small enough and any $x_* \gg 1$ large enough depending on $\nu_0$. We might further shrink $b_1$ and increase $x_*$ in Step 3. From the local well-posedness of ODE \eqref{eqnu}, the solution can then be extended to $r \in (0, \infty)$. 

We will apply notations from Definition \ref{defWKBappsolu}, Definition \ref{defWKBaux} and Definition \ref{defdiffopspace} without citing them repeatedly. 

To begin with, we take complex conjugation on the second scalar equation of \eqref{eqnu} and rewrite the system as 
\be
\left\{ \begin{array}{l}
 \tilde H_{b, E_+} \phi = \left( h_{b, E_+} + \frac{\nu^2 - \frac 14}{r^2} - W_{1, b}\right) \phi - e^{i\frac{br^2}2} W_{2, b} \bar\varphi \\
 \tilde H_{b, \bar E_-} \varphi = \left( h_{b, \bar E_-} + \frac{\nu^2 - \frac 14}{r^2} - W_{1, b}\right) \varphi - e^{i\frac{br^2}2} W_{2, b} \bar\phi
 \end{array}\right. \label{eqsystemphivarphi}
\ee
where $E_\pm = 1 \pm (\l + ibs_c)$, $\tilde H_{b, E}$ is from \eqref{eqdeftildeHbE} and 
\be \Phi = \left( \begin{array}{c}
     \Phi^1 \\
     \Phi^2 
\end{array} \right) =: \left( \begin{array}{c}
     \phi \\
     \bar \varphi 
\end{array} \right). \label{eqPhitophi} \ee 
Notice that this system satisfies conjugate linearity inherited from linearity of \eqref{eqnu}: if $(\phi_j, \varphi_j)$ for $j = 1, 2$ solve \eqref{eqsystemphivarphi}, then so is $(z_1 \phi_1 + z_2 \phi_2, \overline{z_1} \varphi_1 + \overline{z_2} \varphi_2)$ for any $z_1, z_2 \in \CC$.
Let 
\[ r_1 = r^*_{b, E_+}, \quad r_2 = r^*_{b, \bar E_-}. \]
Without loss of generality, we assume 
\be  \Re \l \ge 0, \ee
which implies by \eqref{eqr*mono} that
\[  r_1 \ge r_2.\]
The construction for $\Re \l \le 0$ is exactly symmetric and hence omitted. 

\mbox{}

\textit{Step 2.1. Set up of construction.}

We will construct $2 + 4 + 4$ solutions to \eqref{eqsystemphivarphi} on three regions
\[ I_{ext} = \left[r_1, \infty \right),\quad I_{con} = [r_2, r_1],\quad I_{mid} = [x_*, r_2] \]
respectively, where $x_* \gg 1$ is specified later. In each case, we will consider the following system
\be
   \left( \begin{array}{c} 
  \phi^{\Box}_j \\ \varphi^{\Box}_j
  \end{array} \right) = 
  \left( \begin{array}{c} 
  S^{\Box}_j \\ R^{\Box}_j
  \end{array} \right) 
  + 
\calT^\Box_j
   \left( \begin{array}{c} 
  \phi^{\Box}_j \\ \varphi^{\Box}_j
  \end{array} \right), \quad \calT^\Box_j := \left( \begin{array}{cc} 
  \calA^\Box_j & \\ & \calB^\Box_j
  \end{array} \right)
   \circ \calV, \label{eqI0} 
\ee
where the parameters range from 
\[ \Box \in \{ext, con, mid\},\quad j \in \left|\begin{array}{ll}\{ 1, 2\} & \Box = ext, \\ \{ 1, 2, 3, 4\} & \Box \in \{con, mid\}, \end{array}\right.\]
$\calA_j^\Box$, $\calB_j^\Box$ are the scalar inversion operators from  Lemma \ref{leminvtildeHext} and \ref{leminvtildeHmid}, $S_j^\Box$, $R_j^\Box$ are kernel of $\tilde H_{b, E_+}$, $\tilde H_{b, \bar E_-}$ respectively, 
, and the potential operator is
\be \calV   \left( \begin{array}{c} 
  \phi \\ \varphi
  \end{array} \right) := \left( \begin{array}{c} 
  \left( h_{b, E_+} + \frac{\nu^2 - \frac 14}{r^2}  - W_{1, b}\right) \phi - e^{i\frac{br^2}2} W_{2, b} \bar\varphi \\ 
  \left( h_{b, \bar E_-} + \frac{\nu^2 - \frac 14}{r^2} - W_{1, b}\right) \varphi - e^{i\frac{br^2}2} W_{2, b} \bar\phi
  \end{array} \right). \ee
  For notational simplicity, we denote 
  \be 
    \left( \begin{array}{c} 
  F^{\Box, +}_j \\ F^{\Box,-}_j
  \end{array} \right) := \calV \circ  \left( \begin{array}{c} 
  \phi^{\Box}_j \\ \varphi^{\Box}_j
  \end{array} \right). \label{eqdefFjBoxpm}
  \ee
Thereafter, solutions of \eqref{eqI0} will solve \eqref{eqsystemphivarphi}. We will construct solutions by finding suitable $\calA_j^\Box$, $\calB_j^\Box$, $S_j^\Box$, $R_j^\Box$ in each case and showing that \eqref{eqI0} is a linear contraction in some Banach space. Then match them to obtain $\Phi_{1;b,\l,\nu}$ and $\Phi_{2;b,\l,\nu}$. 

Finally, we mention the following estimates in preparation: 
\begin{enumerate}
    \item For parameters: \[ |E_\pm - 1| \le 2 \delta_1,\qquad r_1, r_2 \in \left[\frac{2-6\delta_1}{b}, \frac{2+6\delta_1}{b}\right],
\]
This follows from \eqref{eqdelta1restrict} and \eqref{eqr*bE1}.
\item For potentials: with $E \in \{E_+, \bar E_-\}$, we have
\bea 
 \left|h_{b, E} + \frac{\nu^2 - \frac 14}{r^2} - W_{1, b}\right| \lesssim_\nu r^{-2}, \quad |W_{2, b}| \lesssim e^{-\frac{p-1}{2}\min\{ r, 2b^{-1} \} } \la r \ra^{-2},\quad {\rm for}\,\, r > 0;\label{eqpotentialest} \\
  \left| \pa_r^n \left(h_{b, E} + \frac{\nu^2 - \frac 14}{r^2} - W_{1, b} \right) \right| \lesssim_{n, \nu} r^{-2-n},\quad {\rm for}\,\, r \ge 4b^{-1}, \,\,n \ge 1;\label{eqpotentialest2}\\
  \left| D_{--;b,1}^n (e^{i\frac{br^2}{2}} W_{2, b}) \right| \lesssim_n e^{-\frac{p-1}{b}} r^{-2-n},\quad {\rm for}\,\, r \ge 4b^{-1}, \,\,n \ge 1.\label{eqpotentialest3}
\eea
They follow from \eqref{eqbddh}, Proposition \ref{propQbasymp} and Proposition \ref{propQbasympref} (1). In particular, for \eqref{eqpotentialest2} and \eqref{eqpotentialest3}, we exploited the Fa\'a di Bruno's formula \eqref{eqFaadiBruno} for estimating $\pa_r^n(|Q_b|^{\a})$ and the Leibniz rule to compute
\[  D_{--;b, 1}^n \left( e^{i\frac{br^2}{2}}W_{2,b}\right) =  \sum_{j, k, l}
\binom{n}{j \, k\, l} \pa_r^j |Q_b|^{p-3}\cdot D_{-; b, 1}^{k} (e^{i\frac{br^2}{4}} Q_b)\cdot D_{-; b, 1}^{l} (e^{i\frac{br^2}{4}} Q_b).  \] 
We emphasize that the constants in \eqref{eqpotentialest}-\eqref{eqpotentialest3} are independent of $s_c, b, \l$. 
\end{enumerate}


\mbox{}

\textit{Step 2.2 Comparison estimates of weight functions and choice of $\delta_1$.}

We claim for any $|\l| \le \delta_1$, $0 < b \le b_1$ with $\delta_1, b_1$ satisfying \eqref{eqrequestb0}, there exists a universal constant $C_1 > 0$ such that
\bea
 \frac 12  e^{-C_1\delta_1 \min\{r, b^{-1}\} } \le \frac{\omega_{b, \bar E_-}^\pm}{\omega_{b, E_+}^\pm}e^{\mp \frac{\pi(E_+ - \bar E_-)}{2b}} 
  \le 2e^{C_1\delta_1 \min\{r, b^{-1}\} },&& \forall\, r\ge 0, \label{eqcomparisonomega}\\
   \sup_{\substack{|r-\frac 2b| \le 6\delta_1b^{-1} \\ |E-1| \le 2 \delta_1} } \{ \omega_{b, E}^\pm(r), (\omega_{b, E}^\pm(r))^{-1}, e^{\pm\Re \eta_{b, E}(r)} \} \le e^{\frac{C_1\delta_1}{b}}. &&
   \label{eqcomparisonomega2}
\eea

Indeed, 
for \eqref{eqcomparisonomega}, we first notice that 
\bee
  \sup_{\left| r - \frac 2b \right| \ge \frac{6\delta_1}{b}} \left| \frac{\la b^{-\frac 23} (b^2r^2 - 4\bar E_-) \ra^{\pm \frac 14}}{\la b^{-\frac 23} (b^2r^2 - 4E_+) \ra^{\pm \frac 14}} \right| \le   1 + C\delta_1, \quad \sup_{\substack{\left| r - \frac 2b \right| \le \frac{6\delta_1}{b} \\ |E-1| \le 2\delta_1  } } \la b^{-\frac 23} (b^2r^2 - 4E) \ra^{\frac 14} \le e^{\frac{C \delta_1}{b^\frac 23}},
\eee
with some $C > 0$. Here we have used $|r - \frac 2b| \ge 6 \delta_1 b^{-1} \Rightarrow |r - \frac {2\sqrt E}{b}| \ge 2 \delta_1 b^{-1} \ge 2 b^{-\frac 13}$ for $E \in \{ E_+, \bar E_-\}$ since $\delta_1 \le b^{-\frac 23}$ from \eqref{eqrequestb0}. So we can control this weight in $\frac{\omega_{b, \bar E_-}^\pm}{\omega_{b, E_+}^\pm}$ and reduce \eqref{eqcomparisonomega} to 
\be \left|\Re \eta_{b, E_+} - \Re \eta_{b, \bar E_-} + \frac{\pi(E_+ - \bar E_-)}{2b}\right| \lesssim \delta_1 \min\{ r, b^{-1}\}, \quad r \ge 0. \label{eqomegadiff} \ee
For $r \ge \frac 4b$, we use \eqref{eqReetaext} and that $\Im E_+ = \Im \bar E_- = \Im \l + bs_c$. For $r \in [\frac {2+6\delta_1}b, \frac 4b]$, we notice that $|\Re \eta_{b, E}(r)| \lesssim \delta_1 b^{-1}$ at $r = \frac 4b$ from \eqref{eqReetaext}; this estimate holds for $r \in [r^*_{b, E}, \frac 4b] \supset [\frac {2+6\delta_1}b, \frac 4b]$ thanks to the monotonicity \eqref{eqReetamono} and estimate of $r^*_{b, E}$ \eqref{eqr*bE1}. For $|r - \frac 2b| \le 6\delta_1 b^{-1}$, we use \eqref{eqetaabs} to obtain $|\eta_{b, E}(r)| \lesssim \delta_1 b^{-1}$ for $|E-1| \le 2\delta_1$. 
Finally, for $r \le \frac{2 - 6\delta_1}{b}$, we have $r \le \min \{ r^*_{b, E_+}, r^*_{b, \bar E_-} \}$ due to \eqref{eqr*bE1} and  $\Im E_+ = \Im \bar E_-$, so we will compute the difference via \eqref{eqformulaeta}: if $\Im E \le 0$,
\bee
  &&\left|\eta_{b, E_+}(r) - \eta_{b, \bar E_-}(r) + \frac{\pi(E_+ - \bar E_-)}{2b}\right| \\
  &=& \left| -i \frac{2E_+}{b} \int_1^{\frac{br}{2\sqrt{E_+}}} (w^2 - 1)^\frac 12 dw + i\frac{2\bar E_-}{b} \int_1^{\frac{br}{2\sqrt{\bar E_-}}} (w^2 - 1)^\frac 12 dw + \frac{\pi(E_+ - \bar E_-)}{2b} \right| \qquad \\
  &\le& \left|-i \frac{2E_+}{b} \int_0^{\frac{br}{2\sqrt{E_+}}} (w^2 - 1)^\frac 12 dw \right| + \left|i \frac{2 \bar E_-}{b} \int_0^{\frac{br}{2\sqrt{\bar E_-}}} (w^2 - 1)^\frac 12 dw \right| \lesssim \delta_1 r
\eee
where all the integrand takes the branch $ \arg(w^2 - 1) \in (0, 2\pi)$, and we have used $\int_0^1 (1- w^2)^\frac 12 dw = \frac{\pi}{4}$. The case $\Im E < 0$ comes similarly. That concludes the proof of \eqref{eqomegadiff} in all regions, and hence the proof of \eqref{eqcomparisonomega}.

The estimate \eqref{eqcomparisonomega2} was already proven above during the discussion on $|r-\frac 2b| \le 6\delta_1 b^{-1}$. Thus we have completed the proof of \eqref{eqcomparisonomega}-\eqref{eqcomparisonomega2}. 


\mbox{}

Thereafter, we can choose $\delta_1$ as
\be
\delta_1 = \min \left\{ \frac 12 \delta_0, (100d)^{-1}, (64C_1 d)^{-1} \right\} \label{eqdelta1choice}
\ee
so that $p-1 \ge \frac 4d \ge 256 C_1 \delta_1$ and we can reformulate \eqref{eqcomparisonomega}-\eqref{eqcomparisonomega2} as 
\begin{align}
 &\frac 12  e^{- \frac{p-1}{256} \min\{r, b^{-1}\} } \le \frac{\omega_{b, \bar E_-}^\pm}{\omega_{b, E_+}^\pm}e^{\mp \frac{\pi(E_+ - \bar E_-)}{2b}} 
  \le 2e^{\frac{p-1}{256} \min\{r, b^{-1}\} },\quad \forall\, r > 0, \label{eqcomparisonomega3}\\
   &\sup_{\substack{|r-\frac 2b| \le 6\delta_1b^{-1} } } \{ \omega_{b, E}^\pm(r), (\omega_{b, E}^\pm(r))^{-1}, e^{\pm\Re \eta_{b, E}(r)} \} \le e^{\frac{p-1}{256 b}},\quad \forall\, |E-1| \le 2\delta_1. \label{eqcomparisonomega4}
\end{align}

\mbox{}

\textit{Step 2.3. On the exterior region.} 

Note that $\Im E_+ = \Im \bar E_- = \Im \l + bs_c$, we define for $\a \ge 0$ that
\be N_\a = \min \left\{ N \in \NN_{\ge 0}, N \ge \frac{\Im \l}{b} + s_c + \frac \a 2 \right\} \le I_0 + 1 + \frac \a 2,\quad \a \ge 0. \label{eqchoiceNI1}\ee
Consider the system \eqref{eqI0} with source terms
\be
  \left( \begin{array}{c} 
  S^{ext}_1 \\ R^{ext}_1
  \end{array} \right) = 
  \left( \begin{array}{c} 
  \psi_1^{b, E_+} \\ 0
  \end{array} \right),
  \quad
  \left( \begin{array}{c} 
  S^{ext}_2 \\ R^{ext}_2
  \end{array} \right) = 
  \left( \begin{array}{c} 
  0 \\ \psi_1^{b, \bar E_-} 
  \end{array} \right), \label{eqIextsource}
\ee
and the scalar operators 
\be \calA_j^{ext} = \tilde T_{\frac 4b; b, E_+}^{ext},\quad \calB_j^{ext} = \tilde T_{\frac 4b; b, \bar E_-}^{ext},\quad {\rm for}\,\, j = 1, 2. \label{eqdefcalText}\ee 
Since the linear operator $\calT_j^{ext}$ as in \eqref{eqI0} is independent of $j$, we denote $\calT^{ext} := \calT^{ext}_1 = \calT^{ext}_2$. 

We claim that for $\a \ge 0$,
\be
  \| \mathbf{\calT}^{ext} \|_{\calL\left( X^{\a, N_\a, -}_{r_1, \frac 4b; b, E_+} \times e^{\pm \frac{p-1}{64b}} X^{\a, N_\a, -}_{r_1, \frac 4b; b, \bar E_-} \right)} \lesssim_{\a, \nu} b.\label{eqestcalText}
\ee
and for any $N \ge N_\a$,
\be
   \| \mathbf{\calT}^{ext} \|_{\left( X^{\a, N, -}_{r_1, \frac 4b; b, E_+} \times e^{\pm \frac{p-1}{64b}} X^{\a, N, -}_{r_1, \frac 4b; b, \bar E_-} \right) \to \left( X^{\a, N+1, -}_{r_1, \frac 4b; b, E_+} \times e^{\pm \frac{p-1}{64b}} X^{\a, N+1, -}_{r_1, \frac 4b; b, \bar E_-} \right)} \lesssim_{\a, \nu, N}  1.\label{eqestcalText1}
\ee
 Indeed, with $N_\a$ as \eqref{eqchoiceNI1} and $E = E_+$ or $\bar E_-$ satisfying \eqref{eqcondinv}, 
the estimates 
\eqref{eqestcalText}-\eqref{eqestcalText1} follow from the boundedness of $\tilde T^{ext}_{r_1; b, E}$ on $X^{\a-2, N, -}_{r_1, \frac 4b; b, E}$ \eqref{eqbddHbnuinvext} and the following estimates of potential: for $(E_1, E_2) \in \{ (E_+, \bar E_-), (\bar E_-, E_+)\}$, 
\bea
 \left\| \left( h_{b, E} + \frac{\nu^2 - \frac 14}{r^2} - W_{1, b}\right) f \right\|_{X^{\a-2, N, -}_{r_1, \frac 4b; b, E_1}} \lesssim_{\nu, N} \|  f  \|_{X^{\a, N, -}_{r_1, \frac 4b; b, E_1}} \label{eqfundI1est1} \\
 \left\|e^{i\frac{br^2}{2}}W_{2,b} \bar f \right\|_{X^{\a-2, N, -}_{r_1, \frac 4b; b, E_1}} \lesssim_N e^{-\frac{p-1}{8b}} \| f \|_{X^{\a, N, -}_{r_1, \frac 4b; b, E_2}}
\label{eqfundI1est2}
\eea
The first estimate \eqref{eqfundI1est1} easily follows from \eqref{eqpotentialest} and \eqref{eqpotentialest2}, and we now prove \eqref{eqfundI1est2}. For simplicity, we only consider the case $(E_1, E_2) = (E_+, \bar E_-)$. 
Define the auxiliary function 
$$\rho(r) = 2\left( \frac{b^2 r^2}{4} - 1 \right)^\frac 12 - \left( \frac{b^2 r^2}{4} - E_+ \right)^\frac 12 - \left( \frac{b^2 r^2}{4} - E_- \right)^\frac 12,$$
since $E_+ + E_- = 2$, $|1 - E_\pm| \le 2\delta_1$, we have the elementary estimate
\[ \left| \pa_r^n \rho \right| \lesssim_n b^{-1} r^{-1-n},\quad \forall\,\, n \ge 0, \quad r \ge \frac 4b.  \]
Recalling the differential operator from Definition \ref{defdiffopspace},  we apply the Leibniz rule and \eqref{eqpotentialest3} to compute
\begin{align*}
   &\left| (D_{-;b, E_+} -i \rho )^n \left( e^{i\frac{br^2}{2}}W_{2,b} \bar f \right)\right| = \left| \sum_{m=0}^n \binom{n}{m} \pa_r^{n-m} D_{--;b,1}(e^{i\frac{br^2}{2}} W_{2,b}) \cdot\overline{D_{-; b, \bar E_-}^m f}  \right| \\
  \lesssim_n&\,\, e^{-\frac{p-1}{4b}} r^{\a-n-2} \omega_{b, \bar E_-}^-(r)  \| f \|_{X^{\a, N, -}_{r_1, \frac 4b; b, \bar E_-}},\qquad \forall \,\, n \ge 0,\quad r \ge \frac 4b.
\end{align*}
Notice that 
\[ D_{-;b, E_+}^n =\left[ e^{-i\int^r_{\frac 4b} \rho} (D_{-; b, E_+} - i\rho) e^{i\int^r_{\frac 4b} \rho} \right]^n =  e^{-i\int^r_{\frac 4b} \rho} (D_{-; b, E_+} - i\rho)^n e^{i\int^r_{\frac 4b} \rho}.  \]
Hence
\bea
  && \left| (D_{-;b, E_+}^n \left( e^{i\frac{br^2}{2}}W_{2,b} \bar f \right)\right| \nonumber \\
  &\le& \sum_{k = 0}^n \binom{k}{n} \left| e^{-i\int^r_{\frac 4b} \rho} \pa_r^k \left( e^{i\int^r_{\frac 4b} \rho} \right) \right|\cdot \left| (D_{-;b, E_+} -i \rho )^{n-k} \left( e^{i\frac{br^2}{2}}W_{2,b} \bar f \right)\right|\nonumber
  \\
  &\lesssim_n&  b^{-n}e^{-\frac{p-1}{b}} r^{\a-n-2} \omega_{b, \bar E_-}^-(r)  \| f \|_{X^{\a, N, -}_{r_1, \frac 4b; b, \bar E_-}}\nonumber \\
  &\le & e^{-\frac{p-1}{2b}} r^{\a-n-2} \omega_{b,  E_+}^-(r)  \| f \|_{X^{\a, N, -}_{r_1, \frac 4b; b, \bar E_-}}, \quad \forall \,\, n \ge 0,\quad r \ge \frac 4b. \nonumber
\eea
where in the last inequality we used \eqref{eqcomparisonomega3}. Also a simple bound for $[r_1, \frac 4b]$ using again \eqref{eqcomparisonomega3} plus \eqref{eqpotentialest} is 
\bee
  \left|e^{i\frac{br^2}{2}}W_{2,b} \bar f \right| &\lesssim& e^{-\frac{p-1}{b}} \omega_{b, \bar E_-}^-(r) r^{\a-2} \| f \|_{X^{\a, N, -}_{r_1, \frac 4b; b, \bar E_-}} \\
  &\lesssim& e^{-\frac{p-1}{2b}} r^{\a-2} \omega_{b,  E_+}^-(r)  \| f \|_{X^{\a, N, -}_{r_1, \frac 4b; b, \bar E_-}}, \quad r_1 \le r \le \frac 4b. 
\eee
These two estimates combined yield \eqref{eqfundI1est2}. 



Hence using \eqref{eqestcalText} and boundedness of $\psi_1^{b, E}$ \eqref{eqestpsi13Xspace}, with $b_1$ small enough depending on $\nu_0$, contraction mapping principle implies the existence of unique solutions $(\phi^{ext}_j, \varphi^{ext}_j)$ for $j = 1, 2$, $\Box = ext$ to \eqref{eqI0} with
\be 
\begin{split}
  \| \phi^{ext}_1 - \psi_1^{b, E_+}\|_{X^{0, N_0, -}_{r_1, \frac 4b; b, E_+}} + \| \varphi^{ext}_2 - \psi_2^{b, \bar E_-}\|_{X^{0, N_0, -}_{r_1, \frac 4b; b, \bar E_-}} &\lesssim b, \\
  \| \phi^{ext}_2 \|_{X^{0, N_0, -}_{r_1, \frac 4b; b, E_+}} + \| \varphi^{ext}_2 \|_{X^{0, N_0, -}_{r_1, \frac 4b; b, \bar E_-}} &\lesssim b e^{-\frac{p-1}{64b}}
  \end{split}
  \label{eqPhiextnondeg}
\ee
Moreover, by iterating \eqref{eqestcalText1} and rewriting in $\Phi^{ext}_j$ variables, we have improved regularity
\be
  \| \Phi^{ext, 1}_j\|_{X^{0, N, -}_{r_1, \frac 4b; b, E_+}} + \| \overline{ \Phi^{ext, 2}_j}\|_{X^{0, N, -}_{r_1, \frac 4b; b, \bar E_-}} \lesssim_N 1,\quad \forall N \ge 0,\,\, j = 1, 2.  
  \label{eqPhiextbdd}
\ee
From Lemma \ref{leminvtildeHext} (4) and going back to $\Phi$ using \eqref{eqPhitophi}, we can evaluate the boundary value of  $\Phi^{ext}_j$ at $r_1$:
\be
  \left( \begin{array}{c}
       \vec\Phi^{ext}_1 (r_1) \\  \vec\Phi^{ext}_2 (r_1)
  \end{array}\right)
  =
  \left( \begin{array}{cccc}
       1 + \gamma^{ext}_{11} & \gamma^{ext}_{12} & \gamma^{ext}_{13} & 
       \gamma^{ext}_{14}  \\
       \gamma^{ext}_{21} & \gamma^{ext}_{22} &  1 + \gamma^{ext}_{23} & \gamma^{ext}_{24}
  \end{array}\right)
  \left( \begin{array}{c}
       \vec\psi_1^{b, E_+}(r_1) \otimes \vec 0 \\
       \vec\psi_3^{b, E_+}(r_1) \otimes \vec 0 \\
       \vec 0 \otimes \overline{\vec\psi_1^{b, \bar E_-}}(r_1)  \\
       \vec 0 \otimes \overline{\vec\psi_3^{b, \bar E_-}}(r_1)
  \end{array}\right)
  \label{eqbdryext}
\ee
where the vector form follows \eqref{eqvectorform} and for $j = 1, 2$, 
\be
\begin{split}
  \gamma^{ext}_{j1} = \int_{\frac 4b}^{r_1} \frac{\psi_3^{b, E_+} }{W_{31;E_+}} F^{ext, +}_{j} ds,&\quad  
  \gamma^{ext}_{j2} = \int_{r_1}^{\frac 4b}\frac{\psi_1^{b, E_+} }{W_{31;E_+}} F^{ext, +}_{j} ds + \calI^-_{N;b, E_+} [F^{ext, +}_{j}]\left( \frac 4b\right) \\
  \overline{\gamma^{ext}_{j3}} = \int_{\frac 4b}^{r_1} \frac{\psi_3^{b, \bar E_-} }{W_{31; \bar E_-}} F^{ext, -}_{j} ds,&\quad 
  \overline{\gamma^{ext}_{j4}} = \int_{r_1}^{\frac 4b}\frac{\psi_1^{b, \bar E_-} }{W_{31;\bar E_-}} F^{ext, -}_{j} ds + \calI^-_{N;b, \bar E_-} [F^{ext, -}_{j}]\left( \frac 4b\right)
\end{split} \label{eqdefgammaext}
\ee
with $F^{ext, \pm}_j$ defined as in \eqref{eqdefFjBoxpm}. 
Lemma \ref{leminvtildeHext} (4) also implies the estimate of these coefficients:
    \be 
 (\gamma^{ext}_{jk})_{\substack{1 \le j \le 2\\ 1 \le k \le 4}} = b \left( \begin{array}{cccc}
        O(1) & O(1) & O(e^{-\frac{p-1}{64b}}) & O(e^{-\frac{p-1}{64b}} e^{-2\eta_{b, \bar E_-, \nu}(r_1)} ) \\
        O(e^{-\frac{p-1}{64b}}) & O(e^{-\frac{p-1}{64b}}) &   O(1) & O(e^{-2\eta_{b, \bar E_-, \nu}(r_1)})
  \end{array}\right) \label{eqestgammaext} 
    \ee
where we have used $e^{-2\eta_{b, E_+}(r_1)} = 1$. 
We stress that $|e^{-2\eta_{b, \bar E_-}(r_1)}| \lesssim 1$  with $\Im \l \le bI_0$, and that $\Phi^{ext}_j$, $\gamma^{ext}_{jk}$, $F^{ext, \pm}_j$ all depend on $b, \l, \nu$, and so are the corresponding notations on $I_{con}$ and $I_{mid}$ constructed below.



\mbox{}

\textit{Step 2.4. On the connection region $I_{con}$.} 

On $I_{con}$, we consider the system \eqref{eqI0} with $j \in\{ 1, 2, 3, 4\}$, scalar operators (independent of $j$)
\[ \calA_j^{con} = \tilde T^{mid, G}_{r_2, r_1; b, E_+},\quad \calB_j^{con} = \tilde T^{ext}_{r_2; b, \bar E_-} \circ \mathbbm{1}_{[r_2, r_1]},  \]
and source terms 
\be
\begin{split}
  \left(\begin{array}{c}
       S^{con}_1  \\
       R^{con}_1
  \end{array} \right) = 
   \left(\begin{array}{c}
       \psi_4^{b, E_+}  \\
       0
  \end{array} \right),&\quad
  \left(\begin{array}{c}
       S^{con}_2  \\
       R^{con}_2
  \end{array} \right) = 
   \left(\begin{array}{c}
       \psi_2^{b, E_+}  \\
       0
  \end{array} \right),\\
  \left(\begin{array}{c}
       S^{con}_3  \\
       R^{con}_3
  \end{array} \right) = 
   \left(\begin{array}{c}
          0  \\
       \psi_1^{b, \bar E_-}
  \end{array} \right),&\quad
  \left(\begin{array}{c}
       S^{con}_4  \\
       R^{con}_4
  \end{array} \right) = 
   \left(\begin{array}{c}
       0 \\
       \psi_3^{b,\bar E_-}
  \end{array} \right).
  \end{split} \label{eqIconsource}
\ee

Notice that $r \sim b^{-1}$ on $I_{con}$. Hence from the boundedness \eqref{eqtildeTmidGest4}, \eqref{eqtildeTmidGest2} and \eqref{eqbddHbnuinvext2} plus the $L^\infty$ bound of potentials \eqref{eqpotentialest} and comparison estimate of weights \eqref{eqcomparisonomega4},
we see the linear operators $\calT^{con}_j$ for $j \in \{ 1, 2, 3, 4\}$ are bounded respectively in $e^{(1-\iota_j) \frac{p-1}{64b}} C^0_{\omega_{b, E_+}^{\tilde{\iota}_j}}(I_{con}) \times e^{\iota_j \frac{p-1}{64b}} C^0_{\omega_{b, \bar E_-}^{\tilde{\iota}_j}}(I_{con})$ with $\tilde{\iota}_j = \left|\begin{array}{ll}
    - & j = 1, 3, \\
    + & j = 2, 4,
\end{array}\right.$ and $\iota_j = \left|\begin{array}{ll}
    1 & j = 1, 2, \\
    0 & j = 3, 4,
\end{array}\right.$ and their operator norms are $O(b)$. Hence with $b_1$ small enough depending on $\nu_0$, the contraction mapping principle yields existence and uniqueness of solutions $(\phi^{con}_j, \varphi^{con}_j)$ for $j = 1 , 2 ,3, 4$ satisfying
\bee
   e^{(1-\iota_j) \frac{p-1}{64b}} \| \phi^{con}_j\|_{C^0_{\omega_{b, E_+}^{\tilde{\iota}_j}}(I_{con})} +  e^{\iota_j \frac{p-1}{64b}} \| \varphi^{con}_j\|_{C^0_{\omega_{b, \bar E_-}^{\tilde{\iota}_j}}(I_{con})} \lesssim 1,
\eee
where $\tilde{\iota}_j, \iota_j$ are defined above. 

From Lemma \ref{leminvtildeHext} and Lemma \ref{leminvtildeHmid}, we also have the boundary values (written in vector form \eqref{eqvectorform})
\be
\begin{split}
  \small \left( \begin{array}{c}
       \vec \Phi^{con}_1 (r_1)\\
       \vec \Phi^{con}_2 (r_1)\\
       \vec \Phi^{con}_3 (r_1)\\
       \vec \Phi^{con}_4 (r_1)
  \end{array}\right) = 
  (I + \Gamma^{con, R})
  \left( \begin{array}{c}
       \vec\psi_4^{b, E_+}(r_1) \otimes \vec 0 \\
       \vec\psi_2^{b, E_+}(r_1) \otimes \vec 0 \\
       \vec 0 \otimes \overline{\vec\psi_1^{b, \bar E_-}}(r_1)  \\
       \vec 0 \otimes \overline{\vec\psi_3^{b, \bar E_-}}(r_1)
  \end{array}\right) = (I + \Gamma^{con, R})A^R
  \left( \begin{array}{c}
       \vec\psi_1^{b, E_+}(r_1) \otimes \vec 0 \\
       \vec\psi_3^{b, E_+}(r_1) \otimes \vec 0 \\
       \vec 0 \otimes \overline{\vec\psi_1^{b, \bar E_-}}(r_1)  \\
       \vec 0 \otimes \overline{\vec\psi_3^{b, \bar E_-}}(r_1)
  \end{array}\right)  \\
 \small  \left( \begin{array}{c}
       \vec \Phi^{con}_1 (r_2)\\
       \vec \Phi^{con}_2 (r_2)\\
       \vec \Phi^{con}_3 (r_2)\\
       \vec \Phi^{con}_4 (r_2)
  \end{array}\right) =
  (I + \Gamma^{con, L})
  \left( \begin{array}{c}
       \vec\psi_4^{b, E_+}(r_2) \otimes \vec 0 \\
       \vec\psi_2^{b, E_+}(r_2) \otimes \vec 0 \\
       \vec 0 \otimes \overline{\vec\psi_1^{b, \bar E_-}}(r_2)  \\
       \vec 0 \otimes \overline{\vec\psi_3^{b, \bar E_-}}(r_2)
  \end{array}\right) = (I + \Gamma^{con, L})A^L
  \left( \begin{array}{c}
       \vec\psi_4^{b, E_+}(r_2) \otimes \vec 0 \\
       \vec\psi_2^{b, E_+}(r_2) \otimes \vec 0 \\
       \vec 0 \otimes \overline{\vec\psi_4^{b, \bar E_-}}(r_2)  \\
       \vec 0 \otimes \overline{\vec\psi_2^{b, \bar E_-}}(r_2)
  \end{array}\right) 
  \end{split}\label{eqbdrycon}
\ee
where $A^R$, $A^L$ are given by the connection formula \eqref{eqconnect}
\be
 A^R = \left( \begin{array}{cccc}
       \frac{e^{\frac{\pi i}{6}}}{2}  & \frac{e^{-\frac{\pi i}{6}}}{2} & 0 & 0 \\
       e^{-\frac{\pi i}{3}} & e^{\frac{\pi i}{3}} & 0 & 0 \\
       0 & 0 & 1  & 0  \\
       0 & 0 & 0 & 1
  \end{array}\right),\quad
 A^L = \left( \begin{array}{cccc}
       1  & 0 & 0 & 0 \\
       0 & 1 & 0 & 0 \\
       0 & 0 & e^{\frac{\pi i}{6}} & \frac{e^{-\frac{\pi i}{3}}}{2}  \\
       0 & 0 & e^{-\frac{\pi i}{6}} & \frac{e^{\frac{\pi i}{3}}}{2}
  \end{array}\right) \label{eqARAL}
\ee
and $\Gamma^{con, R}$, $\Gamma^{con, L}$ are given by
\bea
  \Gamma^{con, R} = \left( \begin{array}{cccc}
       \gamma^{con}_{11} & 0 & \gamma^{con}_{13} & 0 \\
       \gamma^{con}_{21} & 0 & \gamma^{con}_{23} & 0 \\
       \gamma^{con}_{31} & 0 & \gamma^{con}_{33} & 0  \\
       \gamma^{con}_{41} & 0 & \gamma^{con}_{43} & 0
  \end{array}\right), \quad  \Gamma^{con, L} = \left( \begin{array}{cccc}
       0 & \gamma^{con}_{12} & 0 & \gamma^{con}_{14}  \\
       0 & \gamma^{con}_{22} & 0 & \gamma^{con}_{24}  \\
       0 & \gamma^{con}_{32} & 0 & \gamma^{con}_{34}   \\
       0 & \gamma^{con}_{42} & 0 & \gamma^{con}_{44} 
  \end{array}\right) \label{eqGammamatrixcon} \\
  \begin{split}
  \gamma^{con}_{j1} = - \int_{r_2}^{r_1} \frac{\psi_2^{b, E_+}}{W_{42;E_+}} F^{con, +}_{j} ds,&\quad \gamma^{con}_{j2} = -\int_{r_2}^{r_1} \frac{\psi_4^{b, E_+}}{W_{42;E_+}} F^{con, +}_{j} ds, \\
  \overline{\gamma^{con}_{j3}} =  \int_{r_2}^{r_1} \frac{\psi_3^{b, \bar E_-}}{W_{31;\bar E_-}} F^{con, -}_{j} ds,&\quad 
  \overline{\gamma^{con}_{j4}} = \int_{r_2}^{r_1} \frac{\psi_1^{b, \bar E_-}}{W_{31;\bar E_-}} F^{con, -}_{j} ds,
  \end{split} \label{eqdefgammacon}
\eea
for $j = 1, 2, 3, 4$ with $F^{con, \pm}_{j}$ from \eqref{eqdefFjBoxpm}.
The asymptotics of $\psi_j^{b, E}$ from Proposition \ref{propWKB} implies the estimates of coefficients 
\be 
 \small \Gamma^{con, R} =  \left( \begin{array}{cccc}
       O(b) & 0 & O(be^{-\frac{p-1}{64b}}) & 0 \\
       O(b) & 0 & O(be^{-\frac{p-1}{64b}}) & 0 \\
       O(be^{-\frac{p-1}{64b}}) & 0 & O(b) & 0  \\
       O(be^{-\frac{p-1}{64b}}) & 0 & O(be^{2\eta_{b, \bar E_-}(r_1)}) & 0
  \end{array}\right), \,\,  \Gamma^{con, L} = \left( \begin{array}{cccc}
       0 & O(be^{-2\eta_{b, E_+}(r_2)}) & 0 & O(be^{-\frac{p-1}{64b}})  \\
       0 & O(b) & 0 & O(be^{-\frac{p-1}{64b}})  \\
       0 & O(be^{-\frac{p-1}{64b}}) & 0 & O(b)   \\
       0 & O(be^{-\frac{p-1}{64b}}) & 0 & O(b) 
  \end{array}\right)\label{eqestgammacon}
\ee
We stress that $|e^{2\eta_{b, \bar E_-}(r_1)}| \gtrsim 1$ and $|e^{-2\eta_{b, E_+}(r_2)}| \gtrsim 1$ with $\Im \l \le bI_0$. 

\mbox{}

\textit{Step 2.5. On middle region $I_{mid}$: Construction.} 

On $I_{mid}$, we consider the system \eqref{eqI0} with $j \in\{ 1, 2, 3, 4\}$. The scalar operators are 
\be  \calA_j^{con} = \tilde T_{x_*, r_2; b, E_+}^{mid, \sigma_+(j)},\quad \calB_j^{con} =  \tilde T_{x_*, r_2; b, \bar E_-}^{mid, \sigma_-(j)}, \label{eqdefcalTmid} \ee
where the symbols are 
\be  \sigma_+(j) = \left| \begin{array}{ll}
    G & j =1, 2, 4,  \\
    D & j = 3,
\end{array}\right. \quad \sigma_-(j) = \left| \begin{array}{ll}
    G & j =  2, 3, 4,  \\
    D & j = 1,
\end{array}\right. 
\label{eqcalTmidsigma}
\ee
and the source terms are
\be
\begin{split}
  \left(\begin{array}{c}
       S^{mid}_1  \\
       R^{mid}_1
  \end{array} \right) = 
   \left(\begin{array}{c}
       \psi_4^{b, E_+}  \\
       0
  \end{array} \right),&\quad
  \left(\begin{array}{c}
       S^{mid}_2  \\
       R^{mid}_2
  \end{array} \right) = 
   \left(\begin{array}{c}
       \psi_2^{b, E_+}  \\
       0
  \end{array} \right),\\
  \left(\begin{array}{c}
       S^{mid}_3  \\
       R^{mid}_3
  \end{array} \right) = 
   \left(\begin{array}{c}
          0  \\
       \psi_4^{b, \bar E_-}
  \end{array} \right),&\quad
  \left(\begin{array}{c}
       S^{mid}_4  \\
       R^{mid}_4
  \end{array} \right) = 
   \left(\begin{array}{c}
       0 \\
       \psi_2^{b,\bar E_-}
  \end{array} \right).
  \end{split}\label{eqImidsource}
\ee

Recall the pointwise bound of potential from \eqref{eqpotentialest}, and that $e^{-\frac{p-1}{2}\min\{r, 2b^{-1} \}} \gtrsim e^{-\frac{p-1}{4}r}$ on $I_{mid}$ since $r_2 \le 4b^{-1}$. 
For the off-diagonal part, we again absorb the quotient of $\frac{\omega_{b, E_1}^\pm}{\omega_{b, E_2}^\pm}$ in \eqref{eqcomparisonomega3} by $W_{2, b}$ for $(E_1, E_2) = (E_+, \bar E_-) \,\,{\rm or}\,\, (\bar E_-, E_+)$ and $k \ge 0$,
\be
  \left\| e^{i\frac{br^2}{2}} W_{2, b} f \right\|_{C^0_{\omega^\pm_{b, E_1} e^{-\frac{p-1}{8} r } r^{k-2}} (I_{mid})} \lesssim_k e^{\pm\frac{\pi}{2b}(E_1 - E_2)}\|  f \|_{C^0_{\omega^\pm_{b, E_2} r^k} (I_{mid})}. 
  \label{eqpotentialmidest1}
\ee
Also recall from \eqref{eqReetaderiv} that 
\[ \pa_r^2 (\Re \eta_{b, E}) = \frac{b^2r}{4} \Im \left( \frac{b^2 r^2}{4} - E\right)^{-\frac 12} < 0,\quad 0 \le r < r^*_{b, E}, \]
so 
\[ \Re \eta_{b, E}(r) \le \Re \eta_{b, E}(0) + r\pa_r \Re \eta_{b, E}(0) =-\frac{\pi \Re E}{2b} + \Re (\sqrt{E})r,\quad 0 \le r \le r^*_{b, E}.  \]
This implies $e^{-\frac{p-1}{8}r} \le e^{-\frac{\pi(p-1)\Re E}{16\Re \sqrt E b}}e^{-\frac{p-1}{8\Re \sqrt E}\Re \eta_{b, E}}$
and thus
\be
  \left\| e^{i\frac{br^2}{2}} W_{2, b} f \right\|_{C^0_{\omega^\pm_{b, E_1} e^{-\frac{p-1}{8\Re \sqrt{E_1}}\Re \eta_{b, E_1}} r^{-2}} (I_{mid})} \lesssim e^{-\frac{\pi(p-1)\Re E_1}{16\Re \sqrt{E_1} b}} e^{\pm\frac{\pi}{2b}(E_1 - E_2)}\|  f \|_{C^0_{\omega^\pm_{b, E_2}} (I_{mid})}.
  \label{eqpotentialmidest2}
\ee

Thus with the boundedness of $\tilde T^{mid,\sigma}_{x_*, r_2;b, E}$ from Lemma \ref{leminvtildeHmid}, we have
\be
\begin{split}
  &\| \calT^{mid}_1 \|_{\calL \left( C^0_{\omega^-_{b, E_+} r^k}(I_{mid}) \times  e^{-\frac{\pi \Re \l}{b}}C^0_{\omega^-_{b, \bar E_-}e^{-\frac{p-1}{8}r} r^k}(I_{mid}) \right)}  \lesssim_k  x_*^{-1},\quad k \ge 0  \\
  &\| \calT^{mid}_2 \|_{\calL \left( C^0_{\omega^+_{b, E_+} } (I_{mid}) \times  e^{\frac{\pi(p-1)\Re E_1}{16\Re \sqrt{E_1} b} + \frac{\pi \Re \l}{b}} C^0_{\omega^+_{b, \bar E_-}e^{-\frac{p-1}{8\Re \sqrt {\bar E_-}}\Re \eta_{b, \bar E_-}} }(I_{mid}) \right)}  \lesssim x_*^{-1}. 
  \end{split}\label{eqcalTmidest}
\ee
and similarly for $j = 3, 4$. Here we used $E_+ - \bar E_- = 2\Re \l$. Thereafter,  
the existence and uniqueness of solutions $(\phi^{mid}_j, \varphi^{mid}_j)$ for $j = 1 , 2 ,3, 4$ follows by contraction mapping principle as long as $x_* \gg 1$ depending on $\nu_0$. The solutions satisfy 
\bea
    \| \phi^{mid}_1\|_{C^0_{\omega_{b, E_+}^-}(I_{mid})} + e^{-\frac{\pi \Re \l}{b} } \| \varphi^{mid}_1\|_{C^0_{\omega_{b, \bar E_-}^- e^{-\frac{p-1}{8}r} }(I_{mid})} \lesssim 1, \label{eqestPhimidb1} \\
    \| \phi^{mid}_2\|_{C^0_{\omega_{b, E_+}^+}(I_{mid})} + 
    e^{\frac{\pi(p-1)\Re {\bar E_-}}{16\Re \sqrt{\bar E_-} b} + \frac{\pi \Re \l}{b}}\| \varphi^{mid}_2\|_{C^0_{\omega_{b, \bar E_-}^+ e^{-\frac{p-1}{8\Re \sqrt {\bar E_-}}\Re \eta_{b, \bar E_-}} }(I_{mid})} \lesssim 1,\label{eqestPhimidb2}\\
    e^{\frac{\pi \Re \l}{b} } \| \phi^{mid}_3\|_{C^0_{\omega_{b, E_+}^- e^{-\frac{p-1}{8}r} }(I_{mid})} +  \| \varphi^{mid}_3\|_{C^0_{\omega_{b, \bar E_-}^-}(I_{mid})} \lesssim 1, \label{eqestPhimidb3}\\
    e^{\frac{\pi(p-1)\Re E_+}{16\Re \sqrt{E_+} b} -\frac{\pi \Re \l}{b} } \| \phi^{mid}_4\|_{C^0_{\omega_{b, E_+}^+ e^{-\frac{p-1}{8\Re \sqrt {E_+}}\Re \eta_{b, E_+}} }(I_{mid})} + \| \varphi^{mid}_4\|_{C^0_{\omega_{b, \bar E_-}^+}(I_{mid})} \lesssim 1. \label{eqestPhimidb4}
\eea

The boundary values at $x_*$ and $r_2$ can be evaluated as
\be\small
\begin{split}
 \left( \begin{array}{c}
       \vec \Phi^{mid}_1 (r_2)\\
       \vec \Phi^{mid}_2 (r_2)\\
       \vec \Phi^{mid}_3 (r_2)\\
       \vec \Phi^{mid}_4 (r_2)
  \end{array}\right) = (I + \Gamma^{mid, R})
  \left( \begin{array}{c}
       \vec\psi_4^{b, E_+}(r_2) \otimes \vec 0 \\
       \vec\psi_2^{b, E_+}(r_2) \otimes \vec 0 \\
       \vec 0 \otimes \overline{\vec\psi_4^{b, \bar E_-}}(r_2)  \\
       \vec 0 \otimes \overline{\vec\psi_2^{b, \bar E_-}}(r_2)
  \end{array}\right)
  \end{split} \label{eqbdrymid}
  \ee

where 
\be
  \small \Gamma^{mid, R} = \left( \begin{array}{cccc}
       \gamma^{mid}_{11} & 0 & 0 & 0 \\
       \gamma^{mid}_{21} & 0 & \gamma^{mid}_{23} & 0 \\
       0 & 0 & \gamma^{mid}_{33} & 0  \\
       \gamma^{mid}_{41} & 0 & \gamma^{mid}_{43} & 0
  \end{array}\right),
  \label{eqGammamatrixmid} \ee
  with 
  \be
  \begin{split}
      \gamma^{mid}_{j1} = - \int_{x_*}^{r_2} \frac{\psi_2^{b, E_+}}{W_{42;E_+}} F^{mid, +}_{j} ds,\quad  \gamma^{mid}_{j2} = - \int_{x_*}^{r_2} \frac{\psi_4^{b, E_+}}{W_{42;E_+}} F^{mid, +}_{j} ds, 
  \end{split}
  \label{eqdefgammamid}
\ee
and $F^{mid, \pm}_j$ as \eqref{eqdefFjBoxpm}.
Lemma \ref{leminvtildeHmid} also implies the estimates of the coefficients 
\bea
   \small \Gamma^{mid, R} = \left( \begin{array}{cccc}
       O(x_*^{-1}) & 0 & 0 & 0 \\
       O(be^{2\eta_{b, E_+}(r_2)}) & 0 & O(be^{2\eta_{b, E_+}(r_2)} e^{-\frac{p-1}{64 b}}) & 0 \\
       0 & 0 & O(x_*^{-1}) & 0  \\
       O(b e^{-\frac{p-1}{64 b}}) & 0 & O(b) & 0
  \end{array}\right). \label{eqestgammamid}
  \eea
We remark that here $\gamma^{mid}_{23}$ and $\gamma^{mid}_{41}$ exploited the smallness of $e^{-\frac{\pi(p-1)\Re E}{16\Re \sqrt E b}\pm \frac{\pi \Re \l}{b} } \ll e^{-\frac{p-1}{16 b}} \ll e^{-\frac{p-1}{64 b}} |e^{2\eta_{b, E_+}(r_2)}|$ from \eqref{eqestPhimidb2} and \eqref{eqestPhimidb4} with $E \in \{ E_+, E_-\}$, and that $|e^{2\eta_{b, E_+}(r_2)}| \le 1$.

Besides, \eqref{eqestPhimidb1}-\eqref{eqestPhimidb4} and potential estimates \eqref{eqpotentialmidest1}, \eqref{eqpotentialmidest2} imply the bounds of $F_j^{mid,\pm}$ 

\be
\begin{split}
 \small |F^{mid, +}_j| &\lesssim \left| \begin{array}{ll}
     \omega_{b, E_+}^- r^{-2} & j = 1  \\
     \omega_{b, E_+}^+ r^{-2} & j = 2 \\
      \omega_{b, E_+}^- e^{-\frac{p-1}{8} r} r^{-2}  e^{-\frac{\pi \Re \l}{b}} & j = 3 \\
       \omega_{b, E_+}^+ e^{-\frac{p-1}{8\Re \sqrt{E_+}} \Re\left( \eta_{b, E_+} + \frac{\pi  E_+}{2b}\right) + \frac{\pi \Re \l}{b}} r^{-2} & j = 4
 \end{array}\right. \quad r \in [x_*, r_2],\\
 |F^{mid, -}_j| &\lesssim \left| \begin{array}{ll}
      \omega_{b,  \bar E_-}^- e^{-\frac{p-1}{8} r} r^{-2}  e^{\frac{\pi \Re \l}{b}} & j = 1 \\
       \omega_{b,  \bar E_-}^+ e^{-\frac{p-1}{8\Re \sqrt{\bar E_-}} \Re\left( \eta_{b,  \bar E_-} + \frac{\pi  \bar E_-}{2b}\right) - \frac{\pi \Re \l}{b}} r^{-2} & j = 2 \\
    \omega_{b, \bar E_-}^- r^{-2} & j = 3  \\
     \omega_{b,  \bar E_-}^+ r^{-2} & j = 4 \\
 \end{array}\right. \quad r\in [x_*, r_2].
 \end{split} \label{eqbddFmidpm}
\ee

\mbox{}

\textit{Step 2.6. On middle region $I_{mid}$: analytic solution and convergence as $b \to 0$.}

By choosing $b_1 \ll 1$, we can assume $x_* \le b_1^{-\frac 12} \le b^{-\frac 12}$. 

Notice that the fundamental solutions $\Phi^{mid}_j$ constructed are not analytic w.r.t. $\l$ due to its dependence on $r^*_{b, E}$. We construct another family of solutions $\check \Phi^{mid}_j$ by 
\be
  \left( \begin{array}{c} 
  \check \phi^{mid}_j \\ \check \varphi^{mid}_j
  \end{array} \right) = 
  \left( \begin{array}{c} 
  S^{mid}_j \\ R^{mid}_j
  \end{array} \right) 
  + 
  \check{\mathbf{\calT}}^{mid}_j \left( \begin{array}{c} 
  \check \phi^{mid}_j \\ \check \varphi^{mid}_j
  \end{array} \right) \label{eqImidcheck}
\ee
with linear operators
\be
   \check{\mathbf{\calT}}^{mid}_j \left( \begin{array}{c} 
  \check \phi^{mid}_j \\ \check \varphi^{mid}_j
  \end{array} \right) = 
  \left( \begin{array}{c} 
  \tilde T_{x_*, b^{-\frac 12}; b, E_+}^{mid, \sigma_+(j)} \left[ \left( h_{b, E_+} + \frac{\nu^2 - \frac 14}{r^2} - W_{1, b}\right) \check\phi^{mid}_j - e^{i\frac{br^2}2} W_{2, b}
  \overline{\check{\varphi}^{mid}_j}
  \right] 
  \\ 
  \tilde T_{x_*, b^{-\frac 12}; b, \bar E_-}^{mid, \sigma_-(j)} \left[ \left( h_{b, \bar E_-} + \frac{\nu^2 - \frac 14}{r^2} - W_{1, b}\right) \check\varphi^{mid}_j - e^{i\frac{br^2}2} W_{2, b} \overline{\check \phi^{mid}_j} \right]
  \end{array} \right) \label{eqdefcalTmidcheck}
\ee
with symbols $\sigma_\pm(j)$ as \eqref{eqcalTmidsigma} and source terms as \eqref{eqImidsource}. Substituting $I_{mid}$ by $[x_*, b^{-\frac 12}]$ in the argument above, we obtain same bounds \eqref{eqestPhimidb1}-\eqref{eqestPhimidb4} for $(\check \phi^{mid}_j, \check \varphi^{mid}_j)$ on $[x_*, b^{-\frac 12}]$. Note that from \eqref{eqomegapm}, \eqref{eqnormalcoeff} and \eqref{eqetaconv}, for $r \le b^{-\frac 12}$,
\[ \omega^\pm_{b, E} \sim b^\frac 16 e^{\pm \Re \eta_{b, E}} \sim  |\kappa_{b, E}^\pm|^{-1} e^{\pm \sqrt{E} r}, \qquad \left| \kappa_{b, 1+\l}^\pm(\kappa_{b, 1-\l}^\pm)^{-1} \right| \sim e^{\pm \frac{\pi \Re \l}{b}}, \]
and $|\kappa_{b, 1+\l}^\pm| \sim |\kappa_{b, E_+}^\pm|$,  $|\kappa_{b, 1-\l}^\pm| = | \overline{\kappa_{b, 1-\bar \l}^\pm} | \sim |\kappa_{b, \bar E_-}^\pm|$, 
so the bounds for $\check \varphi^{mid}_j$ can be rewritten as 
\bea
  \| \kappa_{b, 1+\l}^- \check \Phi^{mid, 1}_{1;b,\l,\nu} \|_{C^0_{e^{-\sqrt{1+\l}r } } ([x_*, b^{-\frac 12}])} + \| \kappa_{b, 1+\l}^- \check \Phi^{mid, 2}_{1;b,\l,\nu} \|_{C^0_{e^{-(\sqrt{1-\l} + \frac{p-1}{8})r} } ([x_*, b^{-\frac 12}])} \lesssim 1, \\
  \| \kappa_{b, 1+\l}^+ \check \Phi^{mid, 1}_{2;b,\l,\nu} \|_{C^0_{e^{\sqrt{1+\l}r } } ([x_*, b^{-\frac 12}])} + \| \kappa_{b, 1+\l}^+ \check \Phi^{mid, 2}_{2;b,\l,\nu} \|_{C^0_{e^{(\sqrt{1-\l} - \frac{p-1}{8})r} } ([x_*, b^{-\frac 12}])} \lesssim 1, \\
  \| \kappa_{b, 1-\l}^- \check \Phi^{mid, 1}_{3;b,\l,\nu} \|_{C^0_{e^{-(\sqrt{1+\l} + \frac{p-1}{8})r } } ([x_*, b^{-\frac 12}])} + \| \kappa_{b, 1-\l}^- \check \Phi^{mid, 2}_{3;b,\l,\nu} \|_{C^0_{e^{-\sqrt{1-\l} r} } ([x_*, b^{-\frac 12}])} \lesssim 1, \\
  \| \kappa_{b, 1-\l}^+ \check \Phi^{mid, 1}_{4;b,\l,\nu} \|_{C^0_{e^{(\sqrt{1+\l} - \frac{p-1}{8})r } } ([x_*, b^{-\frac 12}])} + \| \kappa_{b, 1-\l}^+ \check \Phi^{mid, 2}_{4;b,\l,\nu} \|_{C^0_{e^{\sqrt{1-\l}r} } ([x_*, b^{-\frac 12}])} \lesssim 1,
\eea
We also define $\check F^{mid, \pm}_j$ as in \eqref{eqdefFjBoxpm}.

\mbox{}

Now we claim the following properties of $\check \Phi^{mid}_{j;b,\l,\nu}$:

\noindent{\textit{(a) Connection with $\Phi^{mid}_{j;b,\l,\nu}$}}: for $r \in [x_*, b^{-\frac 12}]$, we have
\be
   \left( \begin{array}{c}
        \Phi^{mid}_1 \\
        \Phi^{mid}_2 \\
        \Phi^{mid}_3 \\
        \Phi^{mid}_4 
  \end{array}\right)  = (I + \Omega) \left( \begin{array}{c}
        \check \Phi^{mid}_1 \\
        \check \Phi^{mid}_2 \\
        \check \Phi^{mid}_3 \\
        \check \Phi^{mid}_4 
  \end{array}\right),\quad \Omega = \left(\begin{array}{cccc}
      0 & \omega_{12} & \omega_{13} & \omega_{14}  \\
      0 & \omega_{22} & 0 & \omega_{24} \\
      \omega_{31} & \omega_{32} & 0 & \omega_{34} \\
      0 & \omega_{42} & 0 & \omega_{44}
  \end{array} \right) 
  \label{eqconnectcheck}
\ee
with 
\be
\begin{split}
  \omega_{31} = \int_{b^{-\frac 12}}^{r_2} \frac{\psi_2^{b, E_+}}{W_{42;E_+}}  F^{mid,+}_3 ds, &\quad
 \overline{\omega_{13}} = \int_{b^{-\frac 12}}^{r_2} \frac{\psi_2^{b,\bar E_-}}{W_{42;\bar E_-}}  F^{mid,-}_1 ds,  \\
 \omega_{j2} = -\int_{b^{-\frac 12}}^{r_2} \frac{\psi_4^{b, E_+}}{W_{42;E_+}}  F^{mid,+}_j ds,&\quad
 \overline{\omega_{j4}} = -\int_{b^{-\frac 12}}^{r_2} \frac{\psi_4^{b,\bar E_-}}{W_{42;\bar E_-}}  F^{mid,-}_j ds, \quad j = 1, 2, 3, 4
\end{split} \label{eqdefomegajk}
\ee
satisfying
\be
  \small \Omega = 
  b^\frac 12 \left( \begin{array}{cccc}
       0 & O(e^{-2\eta_{b, E_+}(b^{-\frac 12})}) & O(M \tilde \epsilon) & O(M e^{-2\eta_{b, \bar E_-}(b^{-\frac 12})} \tilde \epsilon
       ) \\
       0 & O(1) & 0 & O(M^{-1} \tilde \epsilon)  \\
       O(M^{-1}\tilde \epsilon) & O(M^{-1} e^{-2\eta_{b, E_+}(b^{-\frac 12}) }\tilde \epsilon) & 0 & O(e^{-2\eta_{b, \bar E_-}(b^{-\frac 12})})  \\
       0 & O(M\tilde \epsilon) & 0 & O(1)
  \end{array}\right)
  \label{eqestomegajk}
\ee
where we denoted 
\be M = e^{\frac{\pi \Re \l}{b}},\quad \tilde \epsilon = e^{-\frac{p-1}{16}b^{-\frac 12}}. \label{eqdefMtildeep}
\ee



\noindent{\textit{(b) Asymptotics as $b \to 0$}}: Recall the normalizing coefficient $\kappa^-_{b, E}$ from \eqref{eqnormalcoeff}, and fundamental solutions $\Phi_{j;0,\l,\nu}$ from \eqref{eqb0fund},
    then
    \begin{align}
     \|\kappa_{b, 1+ \l}^- \check \Phi^{mid, }_{1; b, \l, \nu} - \Phi_{1; 0, \l, \nu} \|_{\left(C^1_{e^{-\sqrt{1+\l} r}}([x_*, b^{-\frac 12}]) \times C^1_{e^{-(\sqrt{1-\l}+\frac{p-1}{8}) r}}([x_*, b^{-\frac 12}]) \right) } &\lesssim b^{\frac 16} \label{eqestconv1} \\
     \|\kappa_{b, 1- \l}^-\check \Phi^{mid, }_{3; b, \l, \nu} - \Phi_{2; 0, \l, \nu} \|_{\left(C^1_{e^{-(\sqrt{1+\l} + \frac{p-1}{8}) r}}([x_*, b^{-\frac 12}]) \times C^1_{e^{-\sqrt{1-\l} r}}([x_*, b^{-\frac 12}]) \right) } &\lesssim b^{\frac 16}.
     \label{eqestconv2}
    \end{align}

 

\mbox{}

During the proof, we omit the subscript $b, \l, \nu$ for simplicity unless necessary. 

For (a), let $\omega_{jk}$ be defined as in \eqref{eqdefomegajk}. We subtract \eqref{eqImidcheck} from \eqref{eqI0} (for $\Box= mid$) with $r \in [x_*, b^{-\frac 12}]$ to obtain
\bee
  \left(I -\check \calT^{mid}_j \right)\left( \begin{array}{c} 
  \triangle \phi^{mid}_j \\ \triangle \varphi^{mid}_j
  \end{array} \right) = \left( \begin{array}{c} 
   \sum_{k = 1}^2 \omega_{jk} \psi_{k}^{b, E_+} \\  \sum_{k' = 3}^4 \overline{\omega_{jk'}}  \psi_{k'}^{b, \bar E_-} 
  \end{array} \right)
\eee
where $\left( \begin{array}{c} 
  \triangle \phi^{mid}_j \\ \triangle \varphi^{mid}_j
  \end{array} \right) = \left( \begin{array}{c} 
   \phi^{mid}_j - \check  \phi^{mid}_j \\ \varphi^{mid}_j - \check \varphi^{mid}_j
  \end{array} \right)$. 
The conjugate linearity of $I- \check T^{mid}_j$ and definition of $(\check \phi^{mid}_j, \check \varphi^{mid}_j)$ from \eqref{eqImidcheck} indicate that $\left( \begin{array}{c} 
  \triangle \phi^{mid}_j \\ \triangle \varphi^{mid}_j
  \end{array} \right)  = \left( \begin{array}{c} 
  \sum_{k=1}^4 \omega_{jk} \check \phi^{mid}_k \\ \sum_{k=1}^4 \overline{\omega_{jk}} \check \varphi^{mid}_j
  \end{array} \right) $ solves the equation. It is also the unique solution from the invertibility of $I- \check T^{mid}_j$. The corresponding estimates come from the bound of $F^{mid, \pm}_{j;b,\l,\nu}$ \eqref{eqbddFmidpm} and integration estimates similar to \eqref{eqintest11}-\eqref{eqintest16}. In particular, we used $e^{-\frac{p-1}{8\Re \sqrt{E}} \Re\left( \eta_{b, E}(b^{-\frac 12}) + \frac{\pi  E}{2b}\right)} \sim e^{-\frac{(p-1)\Re E}{8\Re \sqrt E} b^{-\frac 12}} \le \tilde \epsilon$ from \eqref{eqetaconv}. 
  
  Now we focus on (b) and only consider the case of $j=1$ since $j=3$ will follow similarly. 
  Denote
\[ \triangle \Phi^k = \kappa_{b, 1+\l}^- \check \Phi^{mid, k}_{1;b, \l, \nu} - \Phi^k_{1;0, \l, \nu},  \quad k = 1, 2. \]
Consider the difference equation between \eqref{eqImidcheck} multiplied by $\kappa_{b, 1+\l}^-$ and \eqref{eqb0fund} with the second equation taking complex conjugation
\be
 \left|\begin{array}{l} 
 \triangle \Phi^1 = \kappa_{b, 1+\l}^- \psi_4^{b, E_+}(r) - e^{-\sqrt{1+\l} r} + \kappa_{b, 1+\l}^{-}  \tilde T_{x_*, b^{-\frac 12}; b, E_+}^{mid, G}  \check F^{mid, +}_1 -  T_{x_*; 1 + \l}^{(0), G} F^+_1 \\
  \overline{\triangle \Phi^2} =  \overline{\kappa_{b, 1+\l}^{-}} \tilde T_{x_*, b^{-\frac 12}; b, \bar E_-}^{mid, G}  \check F^{mid, -}_1 -  T_{x_*; 1 - \bar \l}^{(0), G} F^-_1 
  \end{array}\right.\label{eqconveq}
\ee
where $\check F^{mid, \pm}_j$ and $F^\pm_j$ are from \eqref{eqdefFjBoxpm} (with $\Phi^{mid,k}_j$ replaced by $\check \Phi^{mid,k}_j$) and \eqref{eqb0Fpm} respectively.
Now we perform estimates within $r \in [x_*, b^{-\frac 12}]$ for the first equation. Recalling \eqref{eqpsibEderiv5} and $E_+ = 1 + \l + ibs_c$, we have
\be
  | \kappa_{b, 1+\l}^- \psi_4^{b, E_+}(r) - e^{-\sqrt{1+\l} r} | \lesssim b^\frac 12 |e^{-\sqrt{1+\l} r}|,\quad r\in [x_*, b^{-\frac 12}].  \label{eqconvest1}
\ee
For the Duhamel part in the first equation, we decompose as 
\bee
   &&\kappa_{b, 1+\l}^{-}  \tilde T_{x_*, b^{-\frac 12}; b, E_+}^{mid, G}  \check F^{mid, +}_1 -  T_{x_*; 1 + \l}^{(0), G} F^+_1  \\
   &=&  \left[ \tilde T_{x_*, b^{-\frac 12}; b, E_+}^{mid, G}  \left( \kappa_{b, 1+\l}^- \check F^{mid, +}_1 -  F^+_1 
   \right) \right] +  \left[ \left(\tilde T_{x_*, b^{-\frac 12}; b, E_+}^{mid, G} - T_{x_*, 1+\l}^{(0), G} \right)   F^+_1 \right] \\
   &=:& I + II
\eee
For $I$, we recall from 
Proposition \ref{propQbasymp} that $|P_b(r) - Q(r)| \lesssim b^\frac 16 Q(r)$ for $r \le b^{-\frac 12}$ and hence 
\be | W_{1, b}(r) - W_1(r)| +  | e^{i\frac{br^2}{2}} W_{2, b}(r) - W_2(r)|  \lesssim b^\frac 16 |Q(r)|^{p-1}, \quad r \in [0, b^{-\frac 12}],  \label{eqpotentialasymp}\ee
together with  $|h_{b, E}| \lesssim b^2$ for $r \le b^{-\frac 12}$ from \eqref{eqbddh}. Thus the boundedness of linear operator \eqref{eqtildeTmidGest2} implies
\be
\begin{split}
\| I \|_{ C^1_{e^{-\sqrt{1+\l}r }} ([x_*, b^{-\frac 12}]) } \lesssim& x_*^{-1} \| F^+_1 - \kappa_{b, 1+\l}^- \check F^{mid, +}_1 \|_{C^0_{e^{-\sqrt{1+\l}r } r^{-2} } ([x_*, b^{-\frac 12}]) } \\
\lesssim& x_*^{-1} \left(\| \triangle \Phi^1 \|_{C^0_{e^{-\sqrt{1+\l}r}}([x_*, b^{-\frac 12}])} +  \| \triangle \Phi^2 \|_{C^0_{e^{-\sqrt{1-\l}r}}([x_*, b^{-\frac 12}])} \right) \\
+& x_*^{-1} b^{\frac 16} \left(\| \Phi^1_1 \|_{C^0_{e^{-\sqrt{1+\l}r}}([x_*, b^{-\frac 12}])} +  \|  \Phi^2_1 \|_{C^0_{e^{-\sqrt{1-\l}r}}([x_*, b^{-\frac 12}])} \right)
  \end{split} \label{eqconvest1}
\ee
For $II$, the asymptotics of $\pa_r^k \psi_j^{b, E}$ for $ k = 0, 1$, $j = 2, 4$ in \eqref{eqpsibEderiv5} and $\kappa_{b, E}^- \kappa_{b, E}^+ W_{42}=  4E^{\frac 12}$ imply for $k = 0, 1$ that
\[ \frac{  \pa_r^k \psi_4^{b, E_+}(r) \psi_2^{b, E_+}(s)}{W_{42; E_+}} =  \frac{ \pa_r^k e^{-\sqrt{1+\l}(r-s)} }{4\sqrt{1+\l}} \cdot (1 + O(b^\frac 12)), \quad r, s \in [x_*, b^{-\frac 12}] \] 
Therefore
\be
\begin{split}
  \| II \|_{ C^1_{e^{-\sqrt{1+\l}r }} ([x_*, b^{-\frac 12}]) } &\lesssim x_*^{-1} b \|  F^{+}_1 \|_{ C^0_{e^{-\sqrt{1+\l}r } r^{-2} } ([x_*, b^{-\frac 12}]) } + b^{\frac 12} \|   F^{+}_1 \|_{ C^0_{e^{-\sqrt{1+\l}r } r^{-2} } ([b^{-\frac 12},\infty) ) } \\
  &\lesssim b^\frac 12 \left(\| \Phi^1_1 \|_{C^0_{e^{-\sqrt{1+\l}r}}([x_*, \infty))} +  \|  \Phi^2_1 \|_{C^0_{e^{-\sqrt{1-\l}r}}([x_*, \infty) } \right)
  \end{split} \label{eqconvest2}
\ee

Symmetrically, we can derive the counterparts of \eqref{eqconvest1}, \eqref{eqconvest2} for the second equation in \eqref{eqconveq}. Plugging them and controls of source terms \eqref{eqconvest1} and \eqref{eqestPhib0} into \eqref{eqconveq} yields \eqref{eqestconv1}-\eqref{eqestconv2} when $b_1 \ll 1$ and $x_* \gg 1$ by linear contraction principle.

\mbox{}

\textit{Step 2.7. Matching and construction of admissible fundamental solutions.} 

We are in place to match to construct the fundamental solution $\Phi_{k;b, \l, \nu}$ on $(0, \infty)$ with asymptotics identified on $[x_*, \infty)$ and parameters $k = 1, 2$, $0 < b \le b_1$, $|\l| \le \delta_1$, $\Im \l \le bI_0$, and $\nu \le \nu_0$. For notational simplicity, we omit the subscript $\l, \nu$ below. Let
\be
 \tilde \Phi_{k;b} = \left| \begin{array}{ll}
     \Phi^{ext}_{k;b} & r \in I_{ext} \\
     \sum_{j=1}^4 c^{con}_{kj;b} \Phi^{con}_{j;b} & r \in I_{con} \\
     \sum_{j=1}^4 c^{mid}_{kj;b} \Phi^{mid}_{j;b}& r \in I_{mid} 
 \end{array}\right.\quad k = 1, 2, \label{eqtildePhik}
\ee
with the asymptotics matching at $r_1, r_2$:
\bee
  \vec \Phi_{k;b}(r_1+0) = \vec \Phi_{k;b}(r_1-0),\quad  \vec \Phi_{k;b}(r_2+0) = \vec \Phi_{k;b}(r_2-0).
\eee
Besides, by \eqref{eqconnectcheck}, we can write $\tilde \Phi_{k;b}$ under the basis $\check \Phi^{mid}_{j;b}$ on $[x_*, b^{-\frac 12}]$ as 
\be
  \tilde \Phi_{k;b} = \sum_{j=1}^4 \check c^{mid}_{kj;b} \check \Phi^{mid}_{j;b} \quad {\rm for}\,\, r \in [x_*, b^{-\frac 12}];\qquad 
  \check c^{mid}_{kj;b} = c^{mid}_{kj;b} +  \sum_{l=1}^4 c^{mid}_{kl;b} \omega_{lj}.
  \label{eqtildePhik1} 
\ee

\mbox{}

Now we evaluate these coefficients $c^{con}_{jk;b}$, $c^{mid}_{jk;b}$ and $\check c^{mid}_{kj;b}$. First from \eqref{eqestgammacon}, \eqref{eqestgammamid}, we can compute to show the following inversions when $b_1 \ll 1$ and $x_* \gg 1$
\bea \small 
(I+\Gamma^{con, R}_b)^{-1} &=&
\left( \begin{array}{cccc}
       1 + O(b) & 0 & O(be^{-\frac{p-1}{64b}}) & 0 \\
       O(b) & 1 & O(be^{-\frac{p-1}{64b}}) & 0 \\
       O(be^{-\frac{p-1}{64b}}) & 0 & 1 + O(b) & 0  \\
       O(be^{2\eta_{b, \bar E_-}(r_1)}e^{-\frac{p-1}{64b}}) & 0 & O(be^{2\eta_{b, \bar E_-}(r_1)}) & 1
  \end{array}\right),\label{eqestgammaconinv}\\
  \small (I+\Gamma^{mid, R}_b)^{-1} &=& 
  \left( \begin{array}{cccc}
       1 + O(x_*^{-1}) & 0 & 0 & 0 \\
       O(be^{2\eta_{b, E_+}(r_2)}) & 1 &  O(be^{2\eta_{b, E_+}(r_2)}e^{-\frac{p-1}{64b}}) & 0 \\
       0 & 0 & 1+ O(x_*^{-1}) & 0  \\
        O(be^{-\frac{p-1}{64b}}) & 0 & O(b) & 1
        \end{array}\right)\label{eqestgammamidinv}
\eea
A direct computation from the matching condition at $r_1$, and definition of $A^R$ \eqref{eqARAL}, estimates of $\gamma^{ext}_{jk;b}$ \eqref{eqestgammaext} leads to
\bee \small
   \left(\begin{array}{cc}
    c^{con}_{11;b} & c^{con}_{21;b} \\
    c^{con}_{12;b} & c^{con}_{22;b}  \\
    c^{con}_{13;b} & c^{con}_{23;b} \\
    c^{con}_{14;b} & c^{con}_{24;b}
  \end{array}\right) 
  = \left[(I + \Gamma^{con, R}_b)A^R \right]^{-\top}   \left( \begin{array}{cc}
       1 + \gamma^{ext}_{11} & \gamma^{ext}_{21;b} \\
       \gamma^{ext}_{12;b} & \gamma^{ext}_{22;b} \\
       \gamma^{ext}_{13;b} & 1 + \gamma^{ext}_{23;b} \\
       \gamma^{ext}_{14;b} & \gamma^{ext}_{24;b} 
  \end{array}\right) 
 = \left( \begin{array}{cc}
       e^{-\frac{\pi i}{6}} + O(b) & O(be^{-\frac{p-1}{64b}}) \\
       O(b) & O(be^{-\frac{p-1}{64b}}) \\
       O(be^{-\frac{p-1}{64b}}) & 1 + O(b) \\
       O(be^{-\frac{p-1}{64b}}) & O(b) 
  \end{array}\right) 
\eee
where we used $|e^{-2\eta_{b, \bar E_-}(r_1)}| \lesssim 1$ and the cancellation between $e^{2\eta_{b, \bar E_-}(r_1)}$ and $e^{-2\eta_{b, \bar E_-}(r_1)}$. Similarly,  from the matching condition at $r_2$, and definition of $A^L$ \eqref{eqARAL}, estimates of $\Gamma^{con, L}_b$ \eqref{eqestgammacon}, we have
\bee
 \small \left(\begin{array}{cc}
    c^{mid}_{11;b} & c^{mid}_{21;b} \\
    c^{mid}_{12;b} & c^{mid}_{22;b}  \\
    c^{mid}_{13;b} & c^{mid}_{23;b} \\
    c^{mid}_{14;b} & c^{mid}_{24;b}
  \end{array}\right) 
  &=&   (I + \Gamma^{mid, R}_b)^{-\top} \left[(I + \Gamma^{con, L}_b) A^L\right]^{\top}  \left(\begin{array}{cc}
    c^{con}_{11;b} & c^{con}_{21;b} \\
    c^{con}_{12;b} & c^{con}_{22;b}  \\
    c^{con}_{13;b} & c^{con}_{23;b} \\
    c^{con}_{14;b} & c^{con}_{24;b}
  \end{array}\right) \\
  \small &=& \left( \begin{array}{cc}
       e^{-\frac{\pi i}{6}} + O(x_*^{-1}) & O(be^{-\frac{p-1}{64b}}) \\
       O(b e^{-2\eta_{b, E_+}(r_2)}) & O(b e^{-2\eta_{b, E_+}(r_2)}e^{-\frac{p-1}{64b}}) \\
       O(be^{-\frac{p-1}{64b}}) & e^{\frac{\pi i}{6}} + O(x_*^{-1}) \\
       O(be^{-\frac{p-1}{64b}}) & O(b)
  \end{array}\right)
\eee
where we used $|e^{2\eta_{b, E_+}(r_2)}| \lesssim 1$ and the cancellation between $e^{2\eta_{b, E_+}(r_2)}$ in $(I + \Gamma^{mid, R}_b)^{-1}$ and $e^{-2\eta_{b, E_+}(r_2)}$ in $I + \Gamma^{con, L}_b$.

Finally, we compute through \eqref{eqtildePhik1} using the estimates for $\Omega$ \eqref{eqestomegajk} and $c^{mid}_{jk;b}$ as above that
\bea
  \small &&\left(\begin{array}{cc}
   \check c^{mid}_{11;b} & \check c^{mid}_{21;b} \\
   \check c^{mid}_{12;b} & \check c^{mid}_{22;b}  \\
   \check c^{mid}_{13;b} & \check c^{mid}_{23;b} \\
   \check c^{mid}_{14;b} & \check c^{mid}_{24;b}
  \end{array}\right) = (I + \Omega)^\top  \left(\begin{array}{cc}
    c^{mid}_{11;b} & c^{mid}_{21;b} \\
    c^{mid}_{12;b} & c^{mid}_{22;b}  \\
    c^{mid}_{13;b} & c^{mid}_{23;b} \\
    c^{mid}_{14;b} & c^{mid}_{24;b}
  \end{array}\right)   \nonumber
  \\
  \small &=&  
  \left( \begin{array}{cc}
       e^{-\frac{\pi i}{6}} + O(x_*^{-1}) & O(b^\frac 12 M^{-1} \tilde \epsilon) \\
       O(b^\frac 12 e^{-2\eta_{b, E_+}(b^{-\frac 12})}) & O(b^\frac 12 M^{-1} \tilde \epsilon e^{-2\eta_{b, E_+}(b^{-\frac 12})} ) \\
       O(b^\frac 12 M \tilde \epsilon) & e^{\frac{\pi i}{6}}  + O(x_*^{-1}) \\
       O(b^\frac 12M \tilde \epsilon  e^{-2\eta_{b, \bar E_-}(b^{-\frac 12})}) & O(b^\frac 12 e^{-2\eta_{b, \bar E_-}(b^{-\frac 12})})
  \end{array}\right) \label{eqestpalcheckc0}
\eea
Here $M, \tilde \epsilon$ are defined as in \eqref{eqdefMtildeep}, and to obtain the leading errors, we used $\min\{ |e^{-2\eta_{b, E_+}(b^{-\frac 12})}|, |e^{-2\eta_{b, \bar E_-}(b^{-\frac 12})}|\} \gg |e^{-2\eta_{b, E_+}(r_2)}|$ and $e^{-\frac{p-1}{64b}} \ll M^{-1} \tilde \epsilon$ due to \eqref{eqcomparisonomega3}.

We now take 
\be
 \Phi_{1;b, \l, \nu} = \kappa_{b, 1+\l}^-  (\check c^{mid}_{11;b, \l, \nu})^{-1} \tilde \Phi_{1;b, \l, \nu},\quad 
  \Phi_{2;b, \l, \nu} = \kappa_{b, 1-\l}^-  (\check c^{mid}_{23;b, \l, \nu})^{-1} \tilde \Phi_{2;b, \l, \nu} 
\label{eqPhi12adm}
\ee
as the fundamental solutions families in the statement of Proposition \ref{propextfund}. In particular, for $r \in [x_*, b^{-\frac 12}]$,
\be
\begin{split}
   \Phi_{1;b, \l, \nu} =& \kappa_{b, 1+\l}^- \left( \check \Phi^{mid}_{1;b,\l,\nu} + (\check c^{mid}_{11;b, \l, \nu})^{-1} \sum_{k \in \{2, 3, 4 \}} \check c_{1k;b,\l,\nu}^{mid} \check  \Phi^{mid}_{k;b, \l, \nu} \right) \\
   \Phi_{2;b, \l, \nu} =& \kappa_{b, 1-\l}^- \left( \check  \Phi^{mid}_{3;b,\l,\nu} + (\check c^{mid}_{23;b, \l, \nu})^{-1}  \sum_{k \in \{1, 2, 4 \}} \check  c_{2k;b,\l,\nu}^{mid} \check \Phi^{mid}_{k;b, \l, \nu} \right)
   \end{split} \label{eqPhi12asympx*}
\ee

\mbox{}\\

\underline{Step 3. Fundamental solutions for $0<b<b_1$: derivatives with respect to $\l$.}

In this step, we will compute the $\pa_\l$-derivatives of fundamental solutions on different regions constructed above and related coefficients. We still restrict ourselves to $\Re \l > 0$ in this step, and the extension to the whole region $\l \in \Omega_{\delta_1;I_0, b}$ will be discussed in the end.

\mbox{}

\textit{Step 3.1. Local analytic branches of fundamental solutions.}

While $\Phi^{ext}_{j;b,\l,\nu}$ and $\hat \Phi^{mid}_{j;b,\l,\nu}$ are analytic w.r.t. $\l$ and so are their matching coefficients $\hat c^{mid}_{jk;b,\l,\nu}$ \eqref{eqtildePhik}-\eqref{eqtildePhik1} as we will prove in later subsection, the fundamental solution families $\Phi^{con}_{j;b,\l,\nu}$ or $\Phi^{mid}_{j;b,\l,\nu}$ are not analytic since $r^*_{b, E}$ is $\RR$-valued and hence not analytic w.r.t. $E$. 
Nevertheless, we show in this subsection that we can safely neglect the dependence on $E$ from $r^*_{b, E}$ when computing $\pa_\l$-derivatives of fundamental solutions and coefficients in order to obtain derivatives of the final matching cofficients $\hat c^{mid}_{jk;b,\l,\nu}$, by constructing local analytic branches around a fixed $\l_0$. 

\mbox{}

For any fixed $\l_0$, we denote $E_{\pm,\l_0}$, $r_{1,\l_0}$, $r_{2,\l_0}$, $I_{ext,\l_0}$, $I_{con,\l_0}$, $I_{mid,\l_0}$ correspondingly. Then consider $|\l - \l_0| \ll 1$ such that 
$|r^*_{b, E_{+, \l}} - r_{1,\l_0}| + | r^*_{b, \overline{E_{-,\l_0}}} - r_{2,\l_0}| \ll 1$
by continuity of $E \mapsto r^*_{b, E}$ from Lemma \ref{lemWKBeta} (1). We now define the fundamental solutions $\Phi^{con,\l_0}_{j;b,\l,\nu}$, $\Phi^{mid,\l_0}_{j;b,\l,\nu}$ for \eqref{eqnu} with parameters $b, \l, \nu$ on $I_{con, \l_0}$, $I_{mid, \l_0}$ respectively as solution of 
\bea
  \left\{ \begin{array}{l}
     \phi^{con,\l_0}_{j;b,\l,\nu} = S^{con}_{j;b,\l,\nu} + \tilde T_{r_{20}, r_{10}; b, E_+}^{con, +} \left[ \left( h_{b, E_+} + \frac{\nu^2 - \frac 14}{r^2} - W_{1, b}\right) \phi^{con,\l_0}_{j;b,\l,\nu} - e^{i\frac{br^2}2} W_{2, b} \bar\varphi^{con,\l_0}_{j;b,\l,\nu} \right]  \\
 \varphi^{con,\l_0}_{j;b,\l,\nu} = R^{con}_{j;b,\l,\nu} + \tilde T_{r_{20}, r_{10}; b, \bar E_-}^{con, -} \left[ \left( h_{b, \bar E_-} + \frac{\nu^2 - \frac 14}{r^2} - W_{1, b}\right) \varphi^{con,\l_0}_{j;b,\l,\nu} - e^{i\frac{br^2}2} W_{2, b} \bar\phi^{con,\l_0}_{j;b,\l,\nu} \right]
 \end{array}\right. \label{eqIcon'} \\
 \left\{ \begin{array}{l}
     \phi^{mid,\l_0}_{j;b,\l,\nu} = S^{mid}_{j;b,\l,\nu} + \tilde T_{x_*, r_{20}; b, E_+}^{mid, +} \left[ \left( h_{b, E_+} + \frac{\nu^2 - \frac 14}{r^2} - W_{1, b}\right) \phi^{mid,\l_0}_{j;b,\l,\nu} - e^{i\frac{br^2}2} W_{2, b} \bar\varphi^{mid,\l_0}_{j;b,\l,\nu} \right]  \\
 \varphi^{mid,\l_0}_{j;b,\l,\nu} = R^{mid}_{j;b,\l,\nu} + \tilde T_{x_*, r_{20}; b, \bar E_-}^{mid, -} \left[ \left( h_{b, \bar E_-} + \frac{\nu^2 - \frac 14}{r^2} - W_{1, b}\right) \varphi^{mid,\l_0}_{j;b,\l,\nu} - e^{i\frac{br^2}2} W_{2, b} \bar\phi^{mid,\l_0}_{j;b,\l,\nu} \right]
 \end{array}\right. \label{eqImid'}
\eea
These systems are the same as \eqref{eqI0} with $\Box \in \{ con, mid\}$ except substituting the $r_1, r_2$ in the subscript of resolvent by $r_{10}, r_{20}$. Thanks to the continuous dependence in $r, E$ of $\psi_j^{b, E}(r)$ with $j = 1, 2, 3, 4$ and potentials $h_{b, E} + \frac{\nu^2-\frac 14}{r^2} - W_{1,b}$ and $e^{\frac {ibr^2}{2}}W_{1,b}$, repeating the proof in Step 2 yields the existence, uniqueness, and the same bounds of $\Phi^{con,\l_0}_{j;b,\l,\nu}$ and $\Phi^{mid,\l_0}_{j;b,\l,\nu}$. Also, we can extend or restrict $\Phi^{ext}_{j; b, \l, \nu}$ from $I_{ext}$ to $I_{ext,\l_0}$, which enjoys the same estimate as before for the same reasoning. 

Thereafter, the boundary data $\gamma^{ext, \l_0}_{jk;b,\l,\nu}$, $\gamma^{con, \l_0}_{jk;b,\l,\nu}$, $\gamma^{mid,\l_0}_{jk;b,\l,\nu}$ and matching coefficients  $c^{con,\l_0}_{jk;b,\l,\nu}$,  $c^{mid,\l_0}_{jk;b,\l,\nu}$ can be computed as before. Regarding the analytic solution $\check \Phi^{mid}_{j;b,\l,\nu}$, we also obtain $\omega^{\l_0}_{jk;b,\l,\nu}$ and $\check c^{mid,\l_0}_{jk;b,\l,\nu}$. We stress that at $\l = \l_0$, all the fundamental solution and coefficients are the same as constructed in Step 2. 

By the uniqueness of solution to ODE $\eqref{eqnu}$, we can rewrite \eqref{eqtildePhik}-\eqref{eqtildePhik1} with $|\l - \l_0| \ll 1$ as 
\be
 \tilde \Phi_{k;b,\l,\nu} = \left| \begin{array}{ll}
     \Phi^{ext}_{k;b,\l,\nu} & r \in I_{ext, \l_0} \\
     \sum_{j=1}^4 c^{con,\l_0}_{kj;b,\l,\nu} \Phi^{con,\l_0}_{j;b,\l,\nu} & r \in I_{con, \l_0} \\
     \sum_{j=1}^4 c^{mid,\l_0}_{kj;b,\l,\nu} \Phi^{mid,\l_0}_{j;b,\l,\nu}& r \in I_{mid, \l_0} \\
     \sum_{j=1}^4 \check  c^{mid,\l_0}_{kj;b,\l,\nu} \check \Phi^{mid}_{j;b,\l,\nu} & r \in [x_*, b^{-\frac 12}]
 \end{array}\right.\quad k = 1, 2, \label{eqtildePhik2}
\ee
In particular, compared with \eqref{eqtildePhik1}, we have 
\[ \check c^{mid}_{kj;b,\l,\nu}  = \check  c^{mid,\l_0}_{kj;b,\l,\nu},
\quad  |\l-\l_0| \ll 1.   \]
So recall from the matching computations in Step 2.6, for $\check c^{mid,\l_0}_{kj;b,\l,\nu}$, we derive
\be
\begin{split}
&\pa_\l^n \left( \overrightarrow{\check c^{mid}_{1\cdot;b,\l,\nu}}, \overrightarrow{\check c^{mid}_{2\cdot;b,\l,\nu}} \right)\bigg|_{\l = \l_0} = \pa_\l^n \left( \overrightarrow{\check c^{mid,\l_0}_{1\cdot;b,\l,\nu}}, \overrightarrow{\check c^{mid,\l_0}_{2\cdot;b,\l,\nu}} \right)\bigg|_{\l = \l_0}\\
  =& \pa_\l^n \bigg\{ (I + \Omega^{\l_0}_{b,\l,\nu})^\top (I + \Gamma^{mid, R, \l_0}_{b,\l,\nu})^{-\top} \left[(I + \Gamma^{con, L,\l_0}_{b,\l,\nu}) A^L\right]^{\top} \\
  & \qquad \cdot \left[(I + \Gamma^{con, R,\l_0}_{b,\l,\nu})A^R \right]^{-\top} 
  \left( \vec e_1 + \overrightarrow{\gamma^{ext,\l_0}_{1\cdot;b,\l,\nu}}, \vec e_3 + \overrightarrow{\gamma^{ext,\l_0}_{2\cdot;b,\l,\nu}} \right)\bigg\}\bigg|_{\l = \l_0}
  \end{split} \label{eqcheckcmid2}
\ee
where the coefficient matrices or vectors are from \eqref{eqbdryext}, \eqref{eqbdrycon}, \eqref{eqbdrymid} and \eqref{eqconnectcheck}.
In the following steps, we will use $\rpa_\l$ to denote the differential on its local analytic branch, for example
\be
  \left(\rpa_\l \Phi^{con}_{j;b,\l,\nu}\right)\big|_{\l = \l_0} =  \left(\pa_\l \Phi^{con,\l_0}_{j;b,\l,\nu}\right)\big|_{\l = \l_0} \label{eqdefrpa}
\ee
and similarly for $\Phi^{mid}_{j;b,\l,\nu}$,  $\gamma^{ext}_{jk;b,\l,\nu}$, $\gamma^{con}_{jk;b,\l,\nu}$, $\gamma^{mid}_{jk;b,\l,\nu}$, $c^{con}_{jk;b,\l,\nu}$,  $c^{mid}_{jk;b,\l,\nu}$ and $\omega_{jk;b,\l,\nu}$. 
Practically, this is equivalent to forget the dependence on $E$ from $r_1$, $r_2$ when differentiating w.r.t. $\l$. 

\mbox{}

\textit{Step 3.2. $\pa_\l$-derivatives for fundamental solutions in $I_{ext}$.}

We claim that the fundamental solution $\Phi^{ext}_j$ with $j = 1 ,2 $ constructed on $I_{ext}$ further satisfy for $0 \le n \le K_0$ and $j = 1, 2$ that
\bea
  \| \pa_\l^n \Phi^{ext, 1}_{j;b,\l,\nu} \|_{X^{2n, L, -}_{r_1, \frac 4b; b, E_+}} + \| \overline{\pa_\l^n \Phi^{ext, 2}_{j;b,\l,\nu}} \|_{X^{2n, L, -}_{r_1, \frac 4b; b, \bar E_-}} \lesssim_{n, L} b^n,\quad \forall L \ge N_n, \label{eqIextpalest}
\eea
where $N_n$ comes from \eqref{eqchoiceNI1}. 

\mbox{}

Indeed, since $\pa_\l \Phi^{ext, 2}_j = \overline{ \pa_{\bar \l} \varphi^{ext}_j}$, we take $\pa_\l^N$, $\pa_{\bar\l}^N$ to the first and second scalar equation of \eqref{eqI0} (with $\Box = ext$) respectively to obtain
\be\left| \begin{array}{l}
 \pa_\l^N \phi^{ext}_{j;b,\l,\nu} = \pa_\l^N S^{ext}_{j;b, \l, \nu} + \sum_{n=0}^N \binom{N}{n} \tilde T^{ext, (n)}_{\frac 4b; b, E_+} \pa_\l^{N-n} F^{ext, +}_{j;b,\l,\nu} \\
  \pa_{\bar \l}^N \varphi^{ext}_{j;b,\l,\nu} = \pa_{\bar \l}^N R^{ext}_{j;b, \l, \nu} + \sum_{n=0}^N (-1)^n \binom{N}{n} \tilde T^{ext, (n)}_{\frac 4b; b, \bar E_-} \pa_{\bar \l}^{N-n} F^{ext, -}_{j;b,\l,\nu}
\end{array}\right. \label{eqpalKIext}
\ee
where $S^{ext}_{j;b, \l, \nu}, R^{ext}_{j;b, \l, \nu}$ are from \eqref{eqIextsource}, $F^{ext,\pm}_{j;b,\l,\nu}$ are as \eqref{eqdefFjBoxpm} and 
\begin{align*}
  &\tilde T^{ext, (n)}_{\frac 4b; b, E} f = \pa_E^n \left(  \tilde T^{ext}_{\frac 4b; b, E} f\right) \\
  =& \left| \begin{array}{ll}
      \pa_E^n\left( \psi_3^{b, E}  \calI^-_{L; b, E} [f](r)\right) +  \int_{r_1}^r \pa_E^n \left(\frac{\psi_1^{b, E}(r) \psi_3^{b, E}(s)}{W_{31;E}} \right) fds  & r \ge \frac 4b \\
        \pa_E^n \left[ \psi_3^{b, E}(r) \left(\calI^-_{L; b, E} [f]\left(\frac 4b\right) + \int^{\frac 4b}_r \psi_1^{b, E} f \frac{ds}{W_{31}} \right)\right] + \int_{r_1}^r \pa_E^n \left(\frac{\psi_1^{b, E}(r) \psi_3^{b, E}(s)}{W_{31;E}} \right)  fds   & r \in \left[ r_1, \frac 4b \right].
  \end{array}\right.
\end{align*}
Here $\calI^-_{L;b,E}$ is defined in Lemma \ref{leminvtildeHext} with $L$ taken large enough depending on decay of $f$. Extracting the highest order terms, we can reformulate \eqref{eqpalKIext} as 
\be (I - \calT^{ext})  \left( \begin{array}{c} 
  \pa_\l^N \phi^{ext}_{j;b,\l,\nu} \\ \pa_\l^N \varphi^{ext}_{j;b,\l,\nu}
  \end{array} \right) = 
  \left( \begin{array}{c} 
  S^{ext, (N)}_{j;b,\l,\nu} \\ R^{ext, (N)}_{j;b,\l,\nu}
  \end{array} \right)  \label{eqpalKIext2}
\ee
with $\calT^{ext}$ same as $N=0$ case from \eqref{eqI0} and \eqref{eqdefcalText}, and the source terms are
\be
\left| \begin{array}{l}
 S_{j;b,\l,\nu}^{ext, (N)} =  \pa_\l^N S^{ext}_{j;b, \l, \nu}  + \sum_{n=1}^{N} \binom{N}{n} \tilde T^{ext, (n)}_{\frac 4b; b, E_+} \pa_\l^{N-n} F^{ext, +}_{j;b,\l,\nu} + T^{ext, (0)}_{\frac 4b; b, E_+} \left([ \pa_\l^N, h_{b, E_+}] \phi^{ext}_{j;b,\l,\nu}\right) \\
 R_{j;b,\l,\nu}^{ext, (N)} =  \pa_{\bar \l}^N R^{ext}_{j;b, \l, \nu}  +  \sum_{n=1}^{N} (-1)^n \binom{N}{n} \tilde T^{ext, (n)}_{\frac 4b; b, \bar E_-} \pa_{\bar \l}^{N-n} F^{ext, -}_{j;b,\l,\nu}  + T^{ext, (0)}_{\frac 4b; b, E_+} \left([ \pa_{\bar \l}^N, h_{b, \bar E_-}] \varphi^{ext}_{j;b,\l,\nu}\right) 
 \end{array}\right.
 \label{eqsourceextN}
\ee
which only involves $\pa_\l^{n} \Phi^{ext}_j$ for $0 \le n \le N-1$. Using the boundedness \eqref{eqestcalText}, we can invert $I- \calT^{ext}$ on the left of \eqref{eqpalKIext2} when $b_1 \ll 1$ depending on $K_0$ and reduce the proof of \eqref{eqIextpalest} to the control of source term
\be
 \|  S_{j;b,\l,\nu}^{ext, (n)} \|_{X^{2n, N_n, -}_{r_1, \frac 4b; b, E_+}} + \| R_{j;b,\l,\nu}^{ext, (n)}  \|_{X^{2n, N_n, -}_{r_1, \frac 4b; b, \bar E_-}} \lesssim_{n} b^n,\quad  j = 1, 2. \label{eqestsourceextN}
\ee
Once we proved \eqref{eqestsourceextN} for some $n$, the higher order derivative control $L \ge N_n$ follows immediately from iterating \eqref{eqestcalText1} as in the proof of $n = 0$ case in Step 2.3.

To show \eqref{eqestsourceextN}, we first consider the boundedness of $\tilde T^{ext, (n)}_{\frac 4b; b, E}$. 
Note that by \eqref{eqDpm},
\bee
 \pa_E^n  \calI^-_{L; b, E} [f](r) = \int_r^\infty \pa_E^n \left[ e^{2i\phi_{b, E}} \left(\pa_r \circ (-2i\phi_{b, E}')^{-1}\right)^L \circ \left(e^{-2i\phi_{b, E}} \psi_1^{b, E} f\right) W_{31;E}^{-1}\right] ds \\
 + \sum_{l=0}^{L-1} \pa_E^n \left[ e^{2i\phi_{b, E}} \left(\pa_r \circ (-2i\phi_{b, E}')^{-1}\right)^l \circ \left(e^{-2i\phi_{b, E}} \psi_1^{b, E} f \right) \cdot (-2i\phi_{b, E}' W_{31;E})^{-1} \right]
\eee
where the phase function $\phi_{b, E} = \frac{E}{2b} \int_2^{\frac{br}{\sqrt E}} \sqrt{\tau^2 - 4} d\tau$ from \eqref{eqdefphi}. Using the apparent bounds
\bee 
  \left|\pa_E^n \pa_r^k \phi_{b, E} \right|\lesssim_{n,k} br^{2-k}, && n \ge 0,\,\, k \ge 0, \,\, r \ge \frac 4b, \\
  |\pa_E^n W_{31;E}| \sim_n b^{\frac 13} && n \ge 0,
\eee
and the estimates of $\pa_E^n \psi_1^{b, E}$ for $r \ge r^*_{b, E}$ \eqref{eqpsibEderiv1}, \eqref{eqpsibEderiv6}, a brute force computation shows 
\bee
  &&\left| D_{--;b, E}^m  \pa_E^n \left[ \frac{e^{2i\phi_{b, E}}}{ -2i\phi_{b, E}' W_{31;E} } \left(\pa_r \circ (-2i\phi_{b, E}')^{-1}\right)^l \circ \left(e^{-2i\phi_{b, E}} \psi_1^{b, E} f \right) \right]  \right| \\
   &\lesssim_{l,m,n}& (br^2)^{n-l-2} r^{-m+\a + 2} |e^{-2\eta_{b, E}}| \| f \|_{X^{\a, N, -}_{r^*_{b, E}, \frac 4b; b, E}},\quad r \ge \frac 4b,
  \eee
namely each $D_{--}$ derivative creates $r^{-1}$ decay, while each $\pa_E$ derivative brings $br^2$ growth. We need
\bee
 N \ge m+l
\eee
for differentiability of $f$. 
Similarly,
\bee
  \left| \int_r^{\infty} \pa_E^n \left[ e^{2i\phi_{b, E}} W_{31;E}^{-1} \left(\pa_r \circ (-2i\phi_{b, E}')^{-1}\right)^L \circ \left(e^{-2i\phi_{b, E}} \psi_1^{b, E} f\right)\right] ds \right| \\
  \lesssim_{L,n} \int_r^\infty (bs^2)^{n-L-1} s^{\a+1} |e^{-2\eta_{b,E}(s)}| ds \cdot \| f \|_{X^{\a, N, -}_{r^*_{b, E}, \frac 4b; b, E}} \\
  \lesssim (br^2)^{n-L-1} r^{\a + 2} |e^{-2\eta_{b,E}(r)}| \cdot \| f \|_{X^{\a, N, -}_{r^*_{b, E}, \frac 4b; b, E}},\quad r \ge \frac 4b. 
\eee
where we require for the integrability over $r$ and differentiability of $f$ that
\bee L \ge n + \frac \a 2 + \max \left\{ \frac{\Im E}{b}, 0 \right\} + 1,\quad N \ge L,  \eee
and the exponential $|e^{-2\eta}|$ is treated similarly to the proof of Lemma \ref{leminvtildeHext}, using monotonicity \eqref{eqReetamono} for $\Im E < 0$ or $|e^{-2\eta(s)}| \sim (br)^{2\frac{\Im E}{b}}$ from \eqref{eqetaRe} for $0 \le \Im E \le bI_0$. Further combined with \eqref{eqD++commutator}, \eqref{eqestfnk} and \eqref{eqgnk}, we have
\begin{align*}
  &\quad\left| D_{--;b, E}^m \int_r^{\infty} \pa_E^n \left[ e^{2i\phi_{b, E}} W_{31;E}^{-1} \left(\pa_r \circ (-2i\phi_{b, E}')^{-1}\right)^L \circ \left(e^{-2i\phi_{b, E}} \psi_1^{b, E} f\right)\right] ds \right| \\
  &\le \sum_{k=1}^m  \sum_{j=0}^k \bigg| f_{m,k;b, E}(r) \int_r^\infty g_{k,j;b,E}(s) \\
  &\quad\qquad \cdot D^j_{--;b, E} \pa_E^n \left[ e^{2i\phi_{b, E}} W_{31;E}^{-1} \left(\pa_r \circ (-2i\phi_{b, E}')^{-1}\right)^L \circ \left(e^{-2i\phi_{b, E}} \psi_1^{b, E} f\right)\right] ds \bigg| \\
  &\lesssim_{L,m,n} \| f \|_{X^{\a, N, -}_{r^*_{b, E}, \frac 4b; b, E}} \sum_{k=1}^m  \sum_{j=0}^k b^k r^{2k-m} \int_r^\infty b^{-k} s^{j-2k} \cdot (bs^2)^{n-L-1} s^{-j+\a+1} |e^{-2\eta_{b, E}(s)}| ds\\
  &\lesssim (br^2)^{n-L-1} r^{-m+\a+2}  |e^{-2\eta_{b, E}(r)}| \cdot \| f \|_{X^{\a, N, -}_{r^*_{b, E}, \frac 4b; b, E}},\quad r \ge \frac 4b.
\end{align*}
Here we required 
\bee  L \ge n + \frac \a2 + \max \left\{ \frac{\Im E}{b}, 0 \right\} + 1 - \frac m2,\quad N \ge L + m.
\eee

The three estimates above yield
\be
  \left| D_{--;b, E}^m \pa_E^n  \calI^-_{L; b, E} [f] (r)\right| \lesssim_{m,n,L} (br)^{-2} (br^2)^n r^{-m+\a} \| f \|_{X^{\a, m+L, -}_{r_1, \frac 4b;b, E}}. \label{eqpaEcalIest}
\ee
for $L \ge n + \frac \a2 + \max \left\{ \frac{\Im E}{b}, 0 \right\} + 1$, namely each $\pa_E$ creates $O(br^2)$ growth. Thereafter, arguing similarly as the proof of Lemma \ref{leminvtildeHext} (3) Step 2-3, we have 
\be
 \left\| \tilde T^{ext, (n)}_{\frac 4b;b,E} \right\|_{X^{\a, N+L_{n,\a,E}, -}_{r_1, \frac 4b;b,E} \to X^{\a+2n, N, -}_{r_1, \frac 4b;b,E} } \lesssim_{n, N} b^{-1+n},
\ee
with $\a \ge -2$, $n \ge 1$, $N \ge 0$ and 
\be  L_{n, \a, E} := \min \left\{ L \in \NN_{\ge 0}: L \ge n + \frac \a 2 + \left\{\frac{\Im E}{b}, 0 \right\} + 1  \right\} \le K_0 + I_0 + 1 =: L^*. \ee
Also notice that the boundedness of $\pa_r^k \pa_E^n h_{b, E}$ \eqref{eqbddh2} and of the potentials \eqref{eqfundI1est1}-\eqref{eqfundI1est2} imply for $\a \in \RR$ and $n \ge 0$ that
\bee
  &&\left\|\pa_{ \l}^{n} F^{ext, +}_{j;b,\l,\nu} \right\|_{X^{\a-2, N, -}_{r_1, \frac 4b; b, E_+}} + \left\|\pa_{\bar \l}^{n} F^{ext, -}_{j;b,\l,\nu} \right\|_{X^{\a-2, N, -}_{r_1, \frac 4b; b, \bar E_-}} \\
  &+&\left\|[\pa_\l^{n+1}, h_{b, E_+}] \phi^{ext}_{j;b,\l,\nu} \right\|_{X^{\a-2, N, -}_{r_1, \frac 4b; b, E_+}} + 
   \left\|[\pa_{\bar\l}^{n+1}, h_{b,\bar E_-}] \varphi^{ext}_{j;b,\l,\nu} \right\|_{X^{\a-2, N, -}_{r_1, \frac 4b; b, \bar E_-}} 
   \\
  &\lesssim& \sum_{k = 0}^n \left(  \| \pa_\l^k \Phi^{ext, 1}_j \|_{X^{\a, N, -}_{r_1, \frac 4b; b, E_+}} + \| \overline{\pa_\l^k \Phi^{ext, 2}_j} \|_{X^{\a, N, -}_{r_1, \frac 4b; b, \bar E_-}}    \right)
\eee
Hence 
\bee
 && {\rm LHS\,\,of\,\,\eqref{eqestsourceextN}} \lesssim b^N + \sum_{n = 0}^{N} b^{-1+n} \sum_{k=0}^{N- \min \{1,n\}} \\ 
 && \qquad \qquad \qquad  \left(  \| \pa_\l^k \Phi^{ext, 1}_j \|_{X^{2N-2n+2, N_n+L^*, -}_{r_1, \frac 4b; b, E_+}} + \| \overline{\pa_\l^k \Phi^{ext, 2}_j} \|_{X^{2N-2n+2, N_n+L^*, -}_{r_1, \frac 4b; b, \bar E_-}}    \right) \\
 &\lesssim& b^N + \sum_{n=0}^{N-1} b^{n+1} \left(  \| \pa_\l^k \Phi^{ext, 1}_j \|_{X^{2N-2n, N_n+L^*, -}_{r_1, \frac 4b; b, E_+}} + \| \overline{\pa_\l^k \Phi^{ext, 2}_j} \|_{X^{2N-2n,  N_n+L^*, -}_{r_1, \frac 4b; b, \bar E_-}}    \right).  \eee
Thus \eqref{eqestsourceextN} and \eqref{eqIextpalest} are proven by induction on $N$.

\mbox{}

\textit{Step 3.3. $\pa_\l$-derivatives for fundamental solutions in $I_{mid}$ and $I_{con}$}

Recall the derivative $\rpa$ on local analytic branch defined in \eqref{eqdefrpa}. We claim the following estimate for $0 \le n \le K_0$ derivatives on fundamental solutions on $I_{mid}$ when $x_* \gg 1$ and $b_1 \ll 1$ depending on $K_0$ and $\nu_0$:
\begin{align}
    \| \rpa_\l^n \left( \kappa_{b, 1+ \l}^- \Phi^{mid, 1}_{1; b, \l, \nu}\right) \|_{C^1_{\omega_{b, E_+}^- r^n}(I_{mid})} + e^{-\frac{\pi \Re \l}{b} } \|\overline{\rpa_\l^n \left( \kappa_{b, 1+ \l}^- \check \Phi^{mid, 2}_{1; b, \l, \nu}\right) } \|_{C^1_{\omega_{b, \bar E_-}^- e^{-\frac{p-1}{8}r} r^n}(I_{mid})} \lesssim |\kappa_{b, 1+ \l}^-| , \label{eqestPhimidb1n} \\
    \| \rpa_\l^n \Phi^{mid,1}_{2;b,\l,\nu} \|_{C^1_{\omega_{b, E_+}^+}(I_{mid})} + 
    e^{\frac{\pi(p-1)\Re {\bar E_-}}{16\Re \sqrt{\bar E_-} b} + \frac{\pi \Re \l}{b}}\|\overline{\rpa_\l^n \Phi^{mid,2}_{2;b,\l,\nu}} \|_{C^1_{\omega_{b, \bar E_-}^+ e^{-\frac{p-1}{8\Re \sqrt {\bar E_-}}\Re \eta_{b, \bar E_-}} }(I_{mid})} \lesssim b^{-n},\label{eqestPhimidb2n}\\
     e^{\frac{\pi \Re \l}{b} } \| \rpa_\l^n \left( \kappa_{b, 1- \l}^- \Phi^{mid, 1}_{3; b, \l, \nu}\right) \|_{C^1_{\omega_{b, E_+}^- e^{-\frac{p-1}{8}r} r^n }(I_{mid})} +  \| \overline{\rpa_\l^n \left( \kappa_{b, 1- \l}^- \Phi^{mid, 2}_{3; b, \l, \nu}\right) } \|_{C^1_{\omega_{b, \bar E_-}^- r^n} (I_{mid})} \lesssim |\kappa_{b, 1- \l}^-| , \label{eqestPhimidb3n}\\
    e^{\frac{\pi(p-1)\Re {E_+}}{16\Re \sqrt{E_+} b} -\frac{\pi \Re \l}{b} } \| \rpa_\l^n \Phi^{mid,1}_{4;b,\l,\nu} \|_{C^1_{\omega_{b, E_+}^+ e^{-\frac{p-1}{8\Re \sqrt {E_+}}\Re \eta_{b, E_+}} }(I_{mid})} + \| \overline{\rpa_\l^n \Phi^{mid,2}_{4;b,\l,\nu}}\|_{C^1_{\omega_{b, \bar E_-}^+}(I_{mid})} \lesssim b^{-n}. \label{eqestPhimidb4n}
\end{align}
and on $I_{con}$ 
\be
\begin{split}
    \| \rpa_\l^n \Phi^{con, 1}_{j;b,\l,\nu} \|_{C^0_{\omega_{b, E_+}^{-}}(I_{con})} + \| \overline{\rpa_\l^n \Phi^{con, 2}_{j;b,\l,\nu}} \|_{C^0_{\omega_{b, \bar E_-}^{-}}(I_{con})} \lesssim_{n} b^{-n},\quad j = 1, 2;\\
    \| \rpa_\l^n \Phi^{con, 1}_{j;b,\l,\nu} \|_{C^0_{\omega_{b, E_+}^{+}}(I_{con})} + \| \overline{\rpa_\l^n \Phi^{con, 2}_{j;b,\l,\nu}} \|_{C^0_{\omega_{b, \bar E_-}^{+}}(I_{con})} \lesssim_{n} b^{-n},\quad j = 3, 4.
\end{split}
   \label{eqIconpalest}
\ee


\mbox{}

For $I_{mid}$, for $j = 2, 4$ we take $(\pa_\l^N, \pa_{\bar \l}^N)$ the system \eqref{eqImid'} to get 
\be\left| \begin{array}{l}
 \rpa_\l^N \phi^{mid}_{j;b,\l,\nu} = \pa_\l^N S^{mid}_{j;b, \l, \nu} + \sum_{n=0}^N \binom{N}{n} \tilde T^{mid, \sigma_+(j), (n)}_{x_*, r_2; b, E_+} \rpa_\l^{N-n} F^{mid, +}_{j;b,\l,\nu} \\
  \rpa_{\bar \l}^N \varphi^{mid}_{j;b,\l,\nu} = \pa_{\bar \l}^N R^{mid}_{j;b, \l, \nu} + \sum_{n=0}^N (-1)^n \binom{N}{n} \tilde T^{mid, \sigma_-(j), (n)}_{x_*, r_2; b, \bar E_-} \rpa_{\bar \l}^{N-n} F^{mid, -}_{j;b,\l,\nu}
\end{array}\right. \label{eqpalKImid}
\ee
and for $j = 1, 3$, we multiply by $(\kappa_{1\pm_j\l}^-, \overline{\kappa_{1\pm_j\l}^-})$ for $\pm_1 = +, \pm_3 = -$ and differentiate
    \be\left| \begin{array}{l}
 \rpa_\l^N \left( \kappa_{b, 1\pm_j\l}^- \phi^{mid}_{j;b,\l,\nu} \right) = \rpa_\l^N \left( \kappa_{b, 1\pm_j\l}^- S^{mid}_{j;b,\l,\nu} \right) \\
 \qquad \qquad + \sum_{n=0}^N \binom{N}{n} \tilde T^{mid, \sigma_+(j), (n)}_{x_*, r_2; b, E_+} \rpa_\l^{N-n} \left( \kappa_{b, 1\pm_j \l}^- F^{mid, +}_{j;b,\l,\nu} \right) \\
  \rpa_{\bar \l}^N \left( \overline{\kappa_{b, 1\pm_j\l}^-}  \varphi^{mid}_{1;b,\l,\nu} \right) = \rpa_{\bar \l}^N \left( \overline{\kappa_{b, 1\pm_j\l}}^- R^{mid}_{j;b,\l,\nu} \right)\\
 \qquad \qquad + \sum_{n=0}^N (-1)^n \binom{N}{n} \tilde T^{mid, \sigma_-(j), (n)}_{x_*, r_2;b, \bar E_-} \pa_{\bar \l}^{N-n} \left( \overline{\kappa_{b, 1\pm_j\l}^-} F^{mid, -}_{j;b,\l,\nu} \right)
\end{array}\right. \label{eqpalKImid2}
\ee
where
$S^{mid}_{j;b, \l, \nu}, R^{mid}_{j;b, \l, \nu}$ are from \eqref{eqImidsource}, $F^{mid,\pm}_{j;b,\l,\nu}$ are as \eqref{eqdefFjBoxpm}, the symbols $\sigma^\pm(j)$ are as \eqref{eqcalTmidsigma}, 
and the operator families being
\begin{align*}
\left| \begin{array}{l} 
\tilde T^{mid, G, (n)}_{x_*, x^*; b, E}f =  - \int_{r}^{x^*} \pa_{E}^n \left(\frac{\psi_2^{b, E}(r) \psi_4^{b, E}(s)}{W_{42}} \right) fds - \int_{x_*}^{r} \pa_{E}^n \left( \frac{\psi_4^{b, E}(r) \psi_2^{b, E}(s)}{W_{42}} \right)f ds  \\
\tilde T^{mid, D, (n)}_{x_*, x^*; b, E} f=  - \int_{r}^{x^*} \pa_{E}^n \left(\frac{\psi_2^{b, E}(r) \psi_4^{b, E}(s)}{W_{42}} \right) fds + \int_{r}^{x^*}\pa_{E}^n \left( \frac{\psi_4^{b, E}(r) \psi_2^{b, E}(s)}{W_{42}} \right)f ds
\end{array}\right.
\end{align*}

Similar as the case for $I_{ext}$, we can extract the $\rpa_\l^N \phi^{mid}_{j;b,\l,\nu}$ and $ \rpa_{\bar \l}^N \varphi^{mid}_{j;b,\l,\nu}$ term to formulate the system like \eqref{eqpalKIext2} with the linear operator being $I - \calT^{mid}_j$ from \eqref{eqI0} and \eqref{eqdefcalTmid}, and the source of the form \eqref{eqsourceextN} composed by lower order terms. From the boundedness of $\calT^{mid}_j$ as \eqref{eqcalTmidest}, it suffices to check inductively that source term satisfies the target estimate \eqref{eqestPhimidb1n}-\eqref{eqestPhimidb4n}. This is verified by \eqref{eqpsibEderiv2}, \eqref{eqpsibEderiv4}, and the boundedness 
 \bea
      \left\| \tilde T^{mid, G, (n)}_{x_*, x^*; b, E} f \right\|_{C^1_{\omega^+_{b, E} e^{-\frac{p-1}{16} \Re \eta_{b, E}}} (I_{mid})} &\lesssim_{n}& x_*^{-1} b^{-n} \| f \|_{C^0_{\omega^+_{b, E} e^{-\frac{p-1}{16} \Re\eta_{b, E}} r^{-2}} (I_{mid})}\label{eqtildeTmidGest1N} \\
      \left\| \tilde T^{mid, G, (n)}_{x_*, x^*; b, E} f \right\|_{C^1_{\omega^-_{b, E}  r^{k+n} } (I_{mid})} &\lesssim_{k, n} & x_*^{-1} \| f \|_{C^0_{\omega^-_{b, E} r^{k-2}} (I_{mid})} \label{eqtildeTmidGest2N} \\
      \left\| \tilde T^{mid, D, (n)}_{x_*, x^*; b, E} f \right\|_{C^1_{\omega^-_{b, E} e^{-\frac{p-1}{8}} r^{k+n}}  (I_{mid})} &\lesssim_{k, n} & x_*^{-1} \| f \|_{C^0_{\omega^-_{b, E} e^{-\frac{p-1}{8} r} r^{k-2}} (I_{mid})}, \label{eqtildeTmidDestN} 
    \eea
    plus the $O(r^{-2})$ smallness of $\pa_E^n \left( h_{b, E} + \frac{\nu^2 - \frac 14}{r^2} - W_{1,b} \right)$, and boundedness of $W_{2, b}$ by \eqref{eqpotentialmidest1}, \eqref{eqpotentialmidest2} to control $\rpa_{\l}^n F^{mid, +}_{j;b,\l,\nu}$ and $\rpa_{\bar \l}^n F^{mid, -}_{j;b,\l,\nu}$. 

    To prove \eqref{eqtildeTmidGest1N}-\eqref{eqtildeTmidDestN}, we first notice that when taking $\pa_r$ derivative to the operators, the boundary term cancels and we have
\begin{align*}
\left| \begin{array}{l} 
\pa_r \tilde T^{mid, G, (n)}_{x_*, x^*; b, E}f =  - \int_{r}^{x^*} \pa_r \pa_{E}^n \left(\frac{\psi_2^{b, E}(r) \psi_4^{b, E}(s)}{W_{42}} \right) fds - \int_{x_*}^{r} \pa_r\pa_{E}^n \left( \frac{\psi_4^{b, E}(r) \psi_2^{b, E}(s)}{W_{42}} \right)f ds  \\
\pa_r \tilde T^{mid, D, (n)}_{x_*, x^*; b, E} f=  - \int_{r}^{x^*} \pa_r \pa_{E}^n \left(\frac{\psi_2^{b, E}(r) \psi_4^{b, E}(s)}{W_{42}} \right) fds + \int_{r}^{x^*} \pa_r\pa_{E}^n \left( \frac{\psi_4^{b, E}(r) \psi_2^{b, E}(s)}{W_{42}} \right)f ds.
\end{array}\right.
\end{align*}
Then with $\kappa_{b, E}^- \kappa_{b, E}^+ W_{42}^{-1}=  (2E^{\frac 12})^{-1}$ from \eqref{eqnormalcoeff} and \eqref{eqWronski2}, we bound the integral kernel via \eqref{eqpsibEderiv4}
    \bee
       && \left| \pa_r^k \pa_{E}^n \left( \frac{\psi_4^{b, E}(r) \psi_2^{b, E}(s)}{W_{42}} \right)\right| = \left|  \pa_r^k\pa_E^n \left[ 2E^{-\frac 12} \cdot ( \kappa_{b, E}^- \psi_4^{b, E}(r)) \cdot (\kappa_{b, E}^+ \psi_2^{b, E}(s)) \right]\right| \\
       &\lesssim_{n, k}& \max \{r, s \}^n \cdot \omega_{b, E}^-(s) \omega_{b, E}^+(r) b^{-\frac 13},\qquad {\rm for}\,\, r, s \in [0, r^*_{b, E}],\,\, n \ge 0,\,\,k \in \{ 0, 1\}.
    \eee
    The similar estimate holds for $ \pa_r^k \pa_{E}^n \left( \frac{\psi_4^{b, E}(s) \psi_2^{b, E}(r)}{W_{42}} \right)$. 
    They imply the boundedness \eqref{eqtildeTmidGest1N}-\eqref{eqtildeTmidDestN} in the similar fashion as the proof of Lemma \ref{leminvtildeHmid} (2), using the integral estimates \eqref{eqintest11}-\eqref{eqintest16}.

\mbox{}

For $I_{con}$, similarly, by differentiating the system \eqref{eqIcon'} w.r.t. $(\pa_\l^N, \pa_{\bar \l}^N)$, we obtain 
\be\left| \begin{array}{l}
 \pa_\l^N \phi^{con}_{j;b,\l,\nu} = \pa_\l^N S^{con}_{j;b, \l, \nu} + \sum_{n=0}^N \binom{N}{n} \tilde T^{con, +, (n)}_{r_2, r_1; b, E_+} \pa_\l^{N-n} F^{con, +}_{j;b,\l,\nu} \\
  \pa_{\bar \l}^N \varphi^{con}_{j;b,\l,\nu} = \pa_{\bar \l}^N R^{con}_{j;b, \l, \nu} + \sum_{n=0}^N (-1)^n \binom{N}{n} \tilde T^{con, -, (n)}_{r_2, r_1; b, \bar E_-} \pa_{\bar \l}^{N-n} F^{con, -}_{j;b,\l,\nu}
\end{array}\right. \label{eqpalKIcon}
\ee
where
$S^{con}_{j;b, \l, \nu}, R^{con}_{j;b, \l, \nu}$ are from \eqref{eqIconsource}, $F^{con,\pm}_{j;b,\l,\nu}$ are as \eqref{eqdefFjBoxpm} and 
\begin{align*}
\left| \begin{array}{l} 
\tilde T^{con, +, (n)}_{r_2, r_1; b, E_+} =  - \int_{r}^{r_1} \pa_{E_+}^n \left(\frac{\psi_2^{b, E_+}(r) \psi_1^{b, E_+}(s)}{W_{12;E_+}} \right) fds - \int_{r_2}^{r} \pa_{E_+}^n \left( \frac{\psi_1^{b, E_+}(r) \psi_2^{b, E_+}(s)}{W_{12;E_+}} \right)f ds  \\
\tilde T^{con, -, (n)}_{r_2, r_1; b, \bar E_-} =   \int^{r_1}_r \pa_{\bar E_-}^n \left(\frac{\psi_3^{b, \bar E_-}(r)  \psi_1^{b, \bar E_-}(s)}{W_{31;b,\bar E_-}} \right)f ds + 
\int^r_{r_2} \pa_{\bar E_-}^n \left(\frac{\psi_1^{b, \bar E_-}(r)  \psi_3^{b, \bar E_-}(s)}{W_{31;b,\bar E_-}} \right)f ds. 
\end{array}\right.
\end{align*}
From \eqref{eqpsibEderiv1}-\eqref{eqpsibEderiv3} and noticing that $r \sim b^{-1}$ on $I_{con}$, we easily have the boundedness  of  $\rpa_{\l}^n F^{mid, +}_{j;b,\l,\nu}$, $\rpa_{\bar \l}^n F^{mid, -}_{j;b,\l,\nu}$ and of the operators
\be
\begin{split}
  \left\| \tilde T^{con, +, (n)}_{r_2, r_1; b, E_+} f   \right\|_{C^0_{\omega_{b, E_+}^{\pm}}(I_{con})} \lesssim_n b^{-1-n} \| f \|_{C^0_{\omega_{b, E_+}^{\pm}}(I_{con})}, \quad n \ge 0,\\
  \left\| \tilde T^{con, -, (n)}_{r_2, r_1; b, \bar E_-} f   \right\|_{C^0_{\omega_{b, \bar E_-}^{\pm}}(I_{con})} \lesssim_n b^{-1-n} \| f \|_{C^0_{\omega_{b, \bar E_-}^{\pm}}(I_{con})},\quad n \ge 0.
  \end{split} \label{eqbddinvcon}
\ee
Likewise, that implies \eqref{eqIconpalest} by induction on $n$. 

\mbox{
}

\textit{Step 3.4. $\pa_\l$-derivatives for analytic fundamental solutions on $[x_*, b^{-\frac 12}]$}

Considering the fundamental solution family $\check \Phi^{mid}_{j;b,\l,\nu}$ determined by \eqref{eqImidcheck}. Repeating the computations above implies that $\pa_\l^n \check \Phi^{mid}_{j;b,\l,\nu}$, for $j = 2, 4$ and $\pa_\l^n \left(\kappa_{b, 1+\l}^- \check \Phi^{mid}_{1;b,\l,\nu} \right)$, $\pa_\l^n \left(\kappa_{b, 1-\l}^- \check \Phi^{mid}_{3;b,\l,\nu} \right)$ satisfy \eqref{eqestPhimidb1n}-\eqref{eqestPhimidb4n}  for $0 \le n \le K_0$ with $I_{mid}$ replaced by $[x_*, b^{-\frac 12}]$. Similar to the case of $n = 0$ in Step 2.5, we can replace $\omega_{b, E}^\pm$ by exponential functions and rewrite the bounds more explicitly as
\begin{align}
  \| \pa_\l^n \left( \kappa_{b, 1+\l}^- \check \Phi^{mid, 1}_{1;b,\l,\nu}\right) \|_{C^1_{e^{-\sqrt{1+\l}r } r^n } ([x_*, b^{-\frac 12}])} + \|  \pa_\l^n \left( \kappa_{b, 1+\l}^- \check \Phi^{mid, 2}_{1;b,\l,\nu} \right)\|_{C^1_{e^{-(\sqrt{1-\l} + \frac{p-1}{8})r} r^n } ([x_*, b^{-\frac 12}])} \lesssim 1, \label{eqestpalcheckPhi1} \\
  \| \kappa_{b, 1+\l}^+ \pa_\l^n \check \Phi^{mid, 1}_{2;b,\l,\nu} \|_{C^1_{e^{\sqrt{1+\l}r } } ([x_*, b^{-\frac 12}])} + \| \kappa_{b, 1+\l}^+ \pa_\l^n \check \Phi^{mid, 2}_{2;b,\l,\nu} \|_{C^1_{e^{(\sqrt{1-\l} - \frac{p-1}{8})r} } ([x_*, b^{-\frac 12}])} \lesssim b^{-n},\label{eqestpalcheckPhi2} \\
  \|  \pa_\l^n \left( \kappa_{b, 1-\l}^- \check \Phi^{mid, 1}_{3;b,\l,\nu} \right) \|_{C^1_{e^{-(\sqrt{1+\l} + \frac{p-1}{8})r} r^n } ([x_*, b^{-\frac 12}])} + \| \pa_\l^n \left( \kappa_{b, 1-\l}^- \check \Phi^{mid, 2}_{3;b,\l,\nu}\right) \|_{C^1_{e^{-\sqrt{1-\l} r} r^n } ([x_*, b^{-\frac 12}])} \lesssim 1, \label{eqestpalcheckPhi3} \\
  \| \kappa_{b, 1-\l}^+ \pa_\l^n \check \Phi^{mid, 1}_{4;b,\l,\nu} \|_{C^1_{e^{(\sqrt{1+\l} - \frac{p-1}{8})r } } ([x_*, b^{-\frac 12}])} + \| \kappa_{b, 1-\l}^+ \pa_\l^n \check \Phi^{mid, 2}_{4;b,\l,\nu} \|_{C^1_{e^{\sqrt{1-\l}r} } ([x_*, b^{-\frac 12}])} \lesssim b^{-n}, \label{eqestpalcheckPhi4}
\end{align}

Now we discuss the asymptotics as $b \to 0$. 
 Recall the normalizing coefficient $\kappa^-_{b, E}$ from \eqref{eqnormalcoeff}, and fundamental solutions $\Phi_{j;0,\l,\nu}$ from \eqref{eqb0fund},
    then for $0 \le n \le K_0$, we claim
    \begin{align}
    \left \|\pa_\l^n \left( \kappa_{b, 1+ \l}^- \check \Phi^{mid, }_{1; b, \l, \nu}\right) - \pa_\l^n \Phi_{1; 0, \l, \nu} \right \|_{\left(C^1_{e^{-\sqrt{1+\l} r} r^n}([x_*, b^{-\frac 12}]) \times C^1_{e^{-(\sqrt{1-\l}+\frac{p-1}{8}) r}r^n}([x_*, b^{-\frac 12}]) \right) } &\lesssim b^{\frac 16} \label{eqestconv3} \\
     \left \|\pa_\l^n \left( \kappa_{b, 1- \l}^-\check \Phi^{mid, }_{3; b, \l, \nu} \right)- \pa_\l^n \Phi_{3; 0, \l, \nu} \right \|_{\left(C^1_{e^{-(\sqrt{1+\l} + \frac{p-1}{8}) r}r^n}([x_*, b^{-\frac 12}]) \times C^1_{e^{-\sqrt{1-\l} r}r^n}([x_*, b^{-\frac 12}]) \right) } &\lesssim b^{\frac 16}.
     \label{eqestconv4}
    \end{align}
    The proof is similar to the one for \eqref{eqestconv1}-\eqref{eqestconv2}, and we only sketch the proof for \eqref{eqestconv3}.
    We take complex conjugation for the second equation of \eqref{eqb0fundN} and subtract them from \eqref{eqpalKImid2} to find 
    \be 
      \left| \begin{array}{l}
 \pa_\l^N \triangle \Phi^1 = S_N + \sum_{n=0}^N \binom{N}{n} (S_{N, n}^+ + \tilde S_{N, n}^+) \\
 \overline{\pa_\l^N \triangle \Phi^2} =  \sum_{n=0}^N (-1)^n \binom{N}{n} (S_{N, n}^- + \tilde S_{N, n}^-)
 \end{array}\right.\label{eqconveqK0}
    \ee
    where
    \[\small
    \left| \begin{array}{l}
         S_N = \pa_\l^N \left( \kappa_{b, 1+\l}^- \psi_1^{b, E_+}(r) - e^{-\sqrt{1+\l} r}\right),   \\
         S_{N, n}^+ =  \tilde T_{x_*, b^{-\frac 12}; b, E_+}^{mid, G, (n)} \left[  \pa_\l^{N-n} \left(  \kappa_{b, 1+\l}^{-} \check F^{mid, +}_1 - F^+_1 \right) \right],\quad
         \tilde S_{N, n}^+ = \left( \tilde T_{x_*, b^{-\frac 12}; b, E_+}^{mid, G, (n)}  - T_{x_*; 1 + \l}^{(n), G} \right) \pa_\l^{N-n} F^+_1,\\
         S_{N, n}^- =  \tilde T_{x_*, b^{-\frac 12}; b, \bar E_-}^{mid, D, (n)} \left[  \pa_{\bar\l}^{N-n} \left(  \overline{\kappa_{b, 1+\l}^-} \check F^{mid, -}_1 - F^-_1 \right) \right],\quad
         \tilde S_{N, n}^- = \left( \tilde T_{x_*, b^{-\frac 12}; b, \bar E_-}^{mid, D, (n)}  - T_{x_*; 1 - \bar \l}^{(n), D} \right) \pa_{\bar\l}^{N-n} F^-_1,
    \end{array}\right.
    \]
We will estimate each term on the RHS. From \eqref{eqpsibEderiv5}, we have $\| S_N \|_{C^1_{e^{-\sqrt{ (1+\l)r}}r^N}([x_*,b^{-\frac 12}])} \lesssim_N b^\frac 12$. For $S_{N, n}^+$, we compute
\bee
  &&\pa_\l^{n} \left(  \kappa_{b, 1+\l}^{-} \check F^{mid, +}_1 - F^+_1 \right) \\
  &=& 
  \left( \frac{\nu^2 - \frac 14}{r^2} + W_1 \right) \pa_\l^n \triangle \Phi^1 + W_2  \pa_\l^n \triangle \Phi^2 \\
  &+&  \pa_\l^{n}\left(  (h_{b, E} + W_{1, b} - W_1) \kappa_{b, 1+\l}^{-} \check \Phi^{mid, 1}_{1;b, \l, \nu} \right) + (e^{ibr^2}W_{2, b}  - W_2) \pa_\l^n \left(\kappa_{b, 1+\l}^{-} \check \Phi^{mid, 2}_{1;b, \l, \nu} \right) 
\eee
So with \eqref{eqtildeTmidGest2N}, \eqref{eqpotentialasymp} and \eqref{eqbddh2}, we have
\bea
  && \|S_{N, N-n}^+\|_{C^1_{e^{-\sqrt{ (1+\l)r}}r^N}([x_*,b^{-\frac 12}])}\nonumber \\
  &\lesssim_{n, N} & x_*^{-1}\| \pa_\l^{n} \left(  \kappa_{b, 1+\l}^{-} \check F^{mid, +}_1 - F^+_1 \right) \|_{C^0_{e^{-\sqrt{1+\l} r } r^{n-2}} ([x_*, b^{-\frac 12}])} \nonumber \\
  &\lesssim& x_*^{-1}\left(\| \pa_\l^n \triangle \Phi^1 \|_{C^0_{e^{-\sqrt{1+\l} r } r^{n}} ([x_*, b^{-\frac 12}])} + \| \pa_\l^n \triangle \Phi^2 \|_{C^0_{e^{-\sqrt{1-\l} r } r^{n-2}} ([x_*, b^{-\frac 12}])}  \right)\nonumber \\
  &+& x_*^{-1}b^\frac 16 \sum_{k=0}^n  \left(\|\pa_\l^k \check \Phi^{mid, 1}_{1;b, \l, \nu} \|_{C^0_{e^{-\sqrt{1+\l} r } r^{n}} ([x_*, b^{-\frac 12}])} + \|\pa_\l^k \check \Phi^{mid, 2}_{1;b, \l, \nu} \|_{C^0_{e^{-\sqrt{1-\l} r } r^{n}} ([x_*, b^{-\frac 12}])} \right). \nonumber
\eea
For $\tilde S_{N, n}^+$, we again apply \eqref{eqpsibEderiv5} to see 
\bee
  \pa_r^k \pa_\l^n \left( \frac{\psi_2^{b, E_+}(r) \psi_1^{b, E_+}(s)}{W_{12;E_+}}-\frac{e^{-\sqrt{1+\l}(r-s) }}{2\sqrt{E_+}} \right) =  O\left(b^\frac 12 \max \{r, s\}^n e^{-\sqrt{1+\l}(r-s)} \right), \quad r, s \in [x_*, b^{-\frac 12}],
\eee
for $k = 0, 1$ and $n \ge 0$. Thus combined with \eqref{eqbddTk} and its proof, we have 
\be
  \left\|\left( \tilde T_{x_*, b^{-\frac 12}; b, E_+}^{mid, \sigma, (n)}  - T_{x_*; 1 + \l}^{(n), \sigma} \right) f  \right\|_{C^1_{e^{-\sqrt{1+\l} r } r^N }([x_*,b^{-\frac 12}]) } 
  \lesssim_{n,N} b^\frac 12 \| f \|_{C^0_{e^{-\sqrt{1+\l} r } r^{N-n-2} }([x_*, \infty)) }  \label{eqpalNdifference2} 
\ee
for $\sigma = G, D$, and therefore $\| \tilde S_{N, n}^+\|_{C^1_{e^{-\sqrt{ (1+\l)r}}r^N}([x_*,b^{-\frac 12}])} \lesssim_{N, n} b^\frac 12$ using the boundedness of $\pa_\l^n \Phi_{1;0,\l,\nu}$ \eqref{eqestPhib0N}. With similar estimates for the second equation of \eqref{eqconveqK0}, we see when $b_1 \ll 1$ and $x_* \gg 1$ depending on $\nu_0, K_0$, \eqref{eqconveqK0} is a linear contraction leading to \eqref{eqestconv3} with induction on $n$. 

%
%

\mbox{}

\textit{Step 3.5. $\pa_\l$-derivatives for boundary values and matching coefficients.}

Now we are in place to estimate the derivatives of coefficients appearing in \eqref{eqcheckcmid2}.
We still use the notation $\rpa$ from \eqref{eqdefrpa} for differential on the local analytic branch.

 Recall $\gamma^{ext}_{jk;b,\l,\nu}$ from \eqref{eqdefgammaext}, $\gamma^{con}_{jk;b,\l,\nu}$ from \eqref{eqdefgammacon}, 
  $\gamma^{mid}_{jk;b,\l,\nu}$ from \eqref{eqdefgammamid} and 
  $\omega_{jk;b,\l,\nu}$ from \eqref{eqdefomegajk}. The boundedness of $\pa_E^n \psi_j^{b, E}$ \eqref{eqpsibEderiv1}-\eqref{eqpsibEderiv3}, of $\pa_E^n h_{b, E}$ \eqref{eqbddh2} and of solution families $\pa_\l^n \Phi^{ext}_{j;b, \l, \nu}$ \eqref{eqIextpalest}, $\rpa_\l^n \Phi^{con}_{j;b, \l, \nu}$ \eqref{eqIconpalest} and $\rpa_\l^n \Phi^{mid}_{j;b, \l, \nu}$ \eqref{eqestPhimidb1n}-\eqref{eqestPhimidb4n} imply that each derivative $\pa_\l$ creates at most $O(b^{-1})$ growth for each term (for $\gamma^{ext}_{jk;b,\l,\nu}$ we also used 
\eqref{eqpaEcalIest} to control $\pa_E^n \calI^-_{b, E}[f]$). So compared with $n=0$ case \eqref{eqestgammaext}, \eqref{eqestgammacon}, \eqref{eqestgammamid}, \eqref{eqestomegajk}, we have for $n \ge 1$
\begin{align}
&\small (\rpa_\l^n \gamma^{ext}_{jk;b,\l,\nu})_{\substack{1 \le j \le 2\\ 1 \le k \le 4}} = b^{1-n} \left( \begin{array}{cccc}
        O(1) & O(1) & O(e^{-\frac{p-1}{64b}}) & O(e^{-\frac{p-1}{64b}} e^{-2\eta_{b, \bar E_-, \nu}(r_1)} ) \\
        O(e^{-\frac{p-1}{64b}}) & O(e^{-\frac{p-1}{64b}}) &   O(1) & O(e^{-2\eta_{b, \bar E_-, \nu}(r_1)})
  \end{array}\right)\\
&\small\rpa_\l^n \Gamma^{con, R}_{b, \l, \nu} = b^{-n} \left( \begin{array}{cccc}
       O(b) & 0 & O(be^{-\frac{p-1}{64b}}) & 0 \\
       O(b) & 0 & O(be^{-\frac{p-1}{64b}}) & 0 \\
       O(be^{-\frac{p-1}{64b}}) & 0 & O(b) & 0  \\
       O(be^{-\frac{p-1}{64b}}) & 0 & O(be^{2\eta_{b, \bar E_-}(r_1)}) & 0
  \end{array}\right), \\
  &\small\rpa_\l^n \Gamma^{con, L}_{b, \l, \nu} = b^{-n} \left( \begin{array}{cccc}
       0 & O(be^{-2\eta_{b, E_+}(r_2)}) & 0 & O(be^{-\frac{p-1}{64b}})  \\
       0 & O(b) & 0 & O(be^{-\frac{p-1}{64b}})  \\
       0 & O(be^{-\frac{p-1}{64b}}) & 0 & O(b)   \\
       0 & O(be^{-\frac{p-1}{64b}}) & 0 & O(b) 
  \end{array}\right),\\
  & \small\rpa_\l^n \Gamma^{mid, R}_{b, \l, \nu} = b^{-n} \left( \begin{array}{cccc}
       O(1) & 0 & 0 & 0 \\
       O(be^{2\eta_{b, E_+}(r_2)}) & 0 & O(be^{2\eta_{b, E_+}(r_2)}e^{-\frac{p-1}{64b}}) & 0 \\
       0 & 0 & O(1) & 0  \\
       O(be^{-\frac{p-1}{64b}}) & 0 & O(b) & 0
  \end{array}\right),\\
  & \small\rpa_\l^n \Omega_{b, \l, \nu} = b^{\frac 12-n}
  \left( \begin{array}{cccc}
       0 & O(e^{-2\eta_{b, E_+}(b^{-\frac 12})}) & O(M \tilde \epsilon) & O(M e^{-2\eta_{b, \bar E_-}(b^{-\frac 12})} \tilde \epsilon
       ) \\
       0 & O(1) & 0 & O(M^{-1} \tilde \epsilon)  \\
       O(M^{-1}\tilde \epsilon) & O(M^{-1} e^{-2\eta_{b, E_+}(b^{-\frac 12}) }\tilde \epsilon) & 0 & O(e^{-2\eta_{b, \bar E_-}(b^{-\frac 12})})  \\
       0 & O(M\tilde \epsilon) & 0 & O(1)
  \end{array}\right)
\end{align}
where $M, \tilde \epsilon$ are as in \eqref{eqdefMtildeep}. 
To compute the derivative of inversions, we first use $\pa_t^N (A(t) A(t)^{-1}) = \delta_{N0}$ to obtain inductive formula 
\be \pa_t^N \left[(A(t))^{-1}\right] = \sum_{n=1}^N A(t)^{-1} \cdot \pa_t^n A(t) \cdot \pa_t^{N-n} \left[(A(t))^{-1}\right], N \ge 1. \label{eqinvmatrix} \ee
Then combine the derivative estimate above with the inversion estimate \eqref{eqestgammaconinv} and \eqref{eqestgammamidinv}, we can inductively show for $n \ge 1$, 
\bea
  \small \rpa_\l^n \left[ (I + \Gamma^{con, R}_{b,\l,\nu})^{-1} \right] = b^{-n} \left( \begin{array}{cccc}
       O(b) & 0 & O(be^{-\frac{p-1}{64b}}) & 0 \\
       O(b) & 0 & O(be^{-\frac{p-1}{64b}}) & 0 \\
       O(be^{-\frac{p-1}{64b}}) & 0 & O(b) & 0  \\
       O(be^{2\eta_{b, \bar E_-}(r_1)}e^{-\frac{p-1}{64b}})  & 0 & O(be^{2\eta_{b, \bar E_-}(r_1)}) & 0
  \end{array}\right);\\
   \small \rpa_\l^n \left[ (I + \Gamma^{mid, R}_{b,\l,\nu})^{-1} \right] = b^{-n} \left( \begin{array}{cccc}
       O(1) & 0 & 0 & 0 \\
       O(be^{2\eta_{b, E_+}(r_2)}) & 0 & O(be^{2\eta_{b, E_+}(r_2)}e^{-\frac{p-1}{64b}}) & 0 \\
       0 & 0 & O(1) & 0  \\
       O(be^{-\frac{p-1}{64b}}) & 0 & O(b) & 0
  \end{array}\right).
\eea

Therefore, we can estimate $\pa_\l^n \check c^{mid}_{jk;b,\l,\nu}$ for $0 \le n \le K_0$ from \eqref{eqcheckcmid2} similar to the computation in Step 2.7
\be 
 \small \pa_\l^n \left(\begin{array}{cc}
   \check c^{mid}_{11;b,\l,\nu} & \check c^{mid}_{21;b,\l,\nu} \\
   \check c^{mid}_{12;b,\l,\nu} & \check c^{mid}_{22;b,\l,\nu}  \\
   \check c^{mid}_{13;b,\l,\nu} & \check c^{mid}_{23;b,\l,\nu} \\
   \check c^{mid}_{14;b,\l,\nu} & \check c^{mid}_{24;b,\l,\nu}
  \end{array}\right) = b^{-n} 
  \left( \begin{array}{cc}
       O(1) & O(b^\frac 12 M^{-1} \tilde \epsilon) \\
       O(b^\frac 12 e^{-2\eta_{b, E_+}(b^{-\frac 12})}) & O(b^\frac 12 M^{-1} \tilde \epsilon e^{-2\eta_{b, E_+}(b^{-\frac 12})} ) \\
       O(b^\frac 12 M \tilde \epsilon) & O(1) \\
       O(b^\frac 12M \tilde \epsilon  e^{-2\eta_{b, \bar E_-}(b^{-\frac 12})}) & O(b^\frac 12 e^{-2\eta_{b, \bar E_-}(b^{-\frac 12})})
  \end{array}\right)
, \label{eqestpalcheckc}
\ee
where $M, \tilde \epsilon$ are as in \eqref{eqdefMtildeep}. We remark that for $n=0$ we have better control \eqref{eqestpalcheckc0}.

\mbox{}

\textit{Step 3.6. Extension to $\l \in \Omega_{\delta_1;I_0,b}$.}

In the previous Steps 3.1-3.5, we have shown analyticity w.r.t. $\l \in \{\Re \l > 0 \} \cap \Omega_{\delta_1;I_0,b}$  and estimates of $\Phi^{ext}_{j;b,\l,\nu}$, $\check \Phi^{mid}_{k;b,\l,\nu}$, $\check c^{mid}_{jk;b,\l,\nu}$ and $\check a_{jj';b,\l,\nu}$ for $j, j' = 1, 2$ and $k = 1, 2, 3, 4$. Apparently, similar to Step 2, the estimates can be proven verbatim for the $\l \in \{\Re \l < 0 \} \cap \Omega_{\delta_1;I_0,b}$.

Since from Step 2, all the functions and quantities above on both branches can be extended continuously to $\{\Re \l = 0 \} \cap \Omega_{\delta_1;I_0,b}$ and coincide, the analyticity on the whole region $\Omega_{\delta_1;I_0,b}$ now follows Morera's theorem. Moreover, the estimates on $\{ \Re \l = 0\}$ can be obtained by continuity from $\{ \Re \l > 0 \}$.

\mbox{}

\underline{Step 4. End of proof.} 

We have constructed admissible fundamental solution families $\Phi_{1;b,\l,\nu}$, $\Phi_{2;b,\l,\nu}$ from \eqref{eqPhi12adm}. As a solution to the linear ODE system \eqref{eqnu}, they can all be smoothly extended to $r \in (0,\infty)$.  

\mbox{}

\noindent{\textit{Step 4.1. Asymptotic behavior at infinity for $b=0$.}} 

The asymptotics \eqref{eqb0extbdd} and non-degeneracy \eqref{eqnondegb0} of $b=0$ case are derived in Step 1 from \eqref{eqestPhib0} (in particular the extra decay of $\Psi^2_{1;0,\l,\nu}$ and $\Psi^1_{2;0,\l,\nu}$) and \eqref{eqestPhib0N}. Finally, the linear independence of $\{\pa_\l^n \Phi_{j;0,\l,\nu}\}_{\substack{ n \ge 0 \\ 1 \le j \le 2}}$ follows from that of $\{\Phi_{j;0,\l,\nu}\}_{\substack{ 1 \le j \le 2}}$ and $(\HH_{b,\nu} - \l)\pa_\l^n \Phi_{j;b,\l,\nu} = n\pa_\l^{n-1}\Phi_{j;b,\l,\nu}$ by differentiating \eqref{eqnu}.


\mbox{}

\noindent{\textit{Step 4.2. Asymptotic behavior at infinity for $b > 0$.}} 

Noticing that $\pa_r^n \left[\left( \frac{b^2r^2}{4} - E \right)^\frac 12 - \frac{br}{2}\right] = O(r^{-n-1})$ for $r \ge \frac 4b$, the boundedness \eqref{eqadmosc} can be reduced to showing boundedness in $X^{2k,N,\pm}_{r_0;b,E}$ from \eqref{eqdefXBanach}.
Since $\Phi_{j;b,\l,\nu}$ is parallel to $\Phi^{ext}_{j;b,\l,\nu}$ for $j = 1, 2$ from \eqref{eqPhi12adm} and \eqref{eqtildePhik}, so the boundedness of $\Phi^{ext}_{j;b,\l,\nu}$ \eqref{eqPhiextbdd}, \eqref{eqIextpalest} implies \eqref{eqadmosc}. Moreover, the non-degeneracy and linear independence of $\Phi^{ext}_{1;b,\l,\nu}$ and $\Phi^{ext}_{2;b,\l,\nu}$ follows from the smallness of $\Phi^{ext,1}_{2;b,\l,\nu}$ and $\Phi^{ext,2}_{1;b,\l,\nu}$ from \eqref{eqPhiextnondeg} and \eqref{eqcomparisonomega3}. The linear independence of $\{\pa_\l^n \Phi_{j;b,\l,\nu}\}_{\substack{ n \ge 0 \\ 1 \le j \le 2}}$ follows similarly as $b=0$ case. 




\mbox{}

\noindent{\textit{Step 4.3. Asymptotics of boundary value at $x_*$.}}


The analyticity of  $r \mapsto \Phi_{j;b,\l,\nu}(r)$ for $j = 1, 2$  was proven in Step 3.6. For the asymptotics of boundary values \eqref{eqbdrymapasymp}, we will only prove the estimate for $\vec \Phi_{1;b,\l,\nu}$ for simplicity, and that for $\vec \Phi_{2;b,\l,\nu}$ follows in almost the same way.

From \eqref{eqPhi12asympx*}, we have
\bee
&&\left|\pa_\l^n \left( \vec\Phi_{1;b,\l,\nu} (x_*)- \vec \Phi_{1;0,\l,\nu}(x_*) \right)\right| \\
&\le& \left| \pa_\l^n \left( \kappa_{b, 1+\l}^- \overrightarrow{\check \Phi^{mid}_{1;b,\l,\nu}} - \overrightarrow{\Phi_{1;0,\l,\nu}} \right) (x_*)\right| \\
&+& \sum_{k \in \{ 2, 3, 4\}} \left| \pa_\l^n \left( \kappa_{b, 1+\l}^- (\check c^{mid}_{11;b,\l,\nu})^{-1} \check c^{mid}_{1k;b,\l,\nu} \overrightarrow{\check \Phi^{mid}_{k;b,\l,\nu}}(x_*)  \right)\right|  =: \sum_{k=1}^4 J_{k;n}
\eee
It suffices to show for $0 \le n \le K_0$ that
\be 
 J_{k;n} \lesssim_n b^\frac 16 e^{-\Re \sqrt{1+\l}x_*} x_*^n,\quad {\rm for}\,\, 1\le k \le 4.  \label{eqJknest}
\ee
The case $k=1$ is included in \eqref{eqestconv3}. For $k = 2$, we use 
the asymptotics of $\eta_{b, E}$ \eqref{eqetaconv}, definition of $\kappa^{\pm}_{b, E}$ \eqref{eqnormalcoeff}, and estimates \eqref{eqestpalcheckPhi2}, \eqref{eqestpalcheckc0} and \eqref{eqestpalcheckc}, we have 
\bee
  J_{2;n} &\lesssim_n& \sum_{m=0}^n \left| \pa_\l^m (\kappa_{b, 1+\l}^+ \overrightarrow{\check \Phi^{mid}_{k;b,\l,\nu}}(x_*)) \right| \cdot \left| \pa_\l^{n-m} \left(\frac{\kappa^-_{b, 1+\l} \check c^{mid}_{12;b,\l,\nu} }{\kappa^+_{b, 1+\l} \check c^{mid}_{11;b,\l,\nu}} \right) \right|\\
  &\lesssim_n& \sum_{m=0}^n b^{-m} \left| e^{\sqrt{1+\l}x_*} \cdot b^{-(n-m)} e^{-\frac{\pi(1+\l)}{b}} \cdot b^\frac 12 e^{\frac{\pi E_+}{b} - 2(\sqrt E_+  + O_\CC(b))b^{-\frac 12}} \right| \\
  &\lesssim_n& b^{\frac 12} e^{-\Re \sqrt {1+\l}x_*} \cdot \left(b^{-n} e^{-2\Re \sqrt{1+\l} (b^{-\frac 12} - x_*) } \right) \lesssim_n b^\frac 12 e^{-\Re \sqrt {1+\l}x_*}.
\eee
For $k = 3, 4$, we further recall $M, \tilde \epsilon$ from \eqref{eqdefMtildeep} and 
notice that $\left|\pa_\l^m \left(\frac{\kappa^\pm_{b, 1+\l}}{\kappa^\pm_{b, 1-\l}}\right)\right| \lesssim_m b^{-m} e^{\pm \frac{\pi \Re \l}{b}} = b^{-m} M^{\pm 1}$
 to compute for $0 \le m \le n$, 
\bee 
 \left| \pa_\l^{m} \left(\frac{\kappa^-_{b, 1+\l} \check c^{mid}_{13;b,\l,\nu} }{\kappa^-_{b, 1-\l} \check c^{mid}_{11;b,\l,\nu}} \right) \right|  &\lesssim_m& b^{-m} \tilde \epsilon \lesssim_{n} b^{\frac 16 + n-m} e^{-\Re \left(\sqrt{1+\l} - \sqrt{1-\l}\right) x_*} \\
 \left| \pa_\l^{m} \left(\frac{\kappa^-_{b, 1+\l} \check c^{mid}_{14;b,\l,\nu} }{\kappa^+_{b, 1-\l} \check c^{mid}_{11;b,\l,\nu}} \right) \right| 
 &\lesssim_m& b^{-m}e^{-\frac{2\pi}{b}} \cdot  M \tilde \epsilon e^{2\left(\frac{\pi\Re\bar E_-}{b} - \Re\sqrt{\bar E_-}(b^{-\frac 12}) \right)} \\
 &\lesssim_n& b^{\frac 16 + n-m} e^{-\Re\sqrt{1+\l}x_*}.
\eee
Combined with \eqref{eqestpalcheckPhi2}, a similar computation as for $k=2$ leads to $k = 3, 4$ of  \eqref{eqJknest}. That concludes the proof of \eqref{eqbdrymapasymp}.


\end{proof}

\subsection{Admissible solutions for high spherical classes}

In this subsection, we construct the admissible solutions to \eqref{eqnu} in high spherical classes and identify their asymptotics. Notations from Definition \ref{defWKBappsoluh}, \ref{defWKBauxh}, and Lemma \ref{leminvtildeHh} will be used  without further citation. 

\begin{proposition}[Construction of exterior fundamental solution for high spherical classes]  
\label{propextfundh} 
For $d \ge 2$, there exists $s_{c;{\rm ext}}^{(2)} > 0$, $0 < b_3 \ll 1$, $0 < \delta_3 \ll 1$ such that for $0 < s_c \le s_{c;{\rm ext}}^{(2)}(d)$, $b = b(d, s_c)$ from Proposition \ref{propQbasymp} and $\l, \nu$ satisfying
\be 0 < b \le b_3,\quad \l \in \Omega_{\delta_3, \frac 12; b},\quad \nu \in \frac 12 \NN \cap [1, \infty), \label{eqrangeblnuh} \ee
    there exist four smooth functions $\Phi_{j;b, \l, \nu}: (0, \infty) \to \CC^2$ with $j = 1, 2$ solving the equation \eqref{eqnu} with parameters in the above range and the following statements hold.
    \begin{enumerate}
        \item Asymptotic behavior at infinity and linear independence: For $j = 1, 2$,  $\Phi_{j;b,\l,\nu}$ satisfies that for $N = 0, 1$
        \bea 
          \sup_{r \ge \frac {4|s_0|}b} \left[ \left| \pa_r^N ( e^{-\frac{ibr^2}{4}} \Phi^1_{j;b,\l,\nu}) \right| +  \left| \pa_r^N ( e^{\frac{ibr^2}{4}} \Phi^2_{j;b,\l,\nu}) \right|  \right] r^{\frac 12 -\frac{\Im \l}{b} - s_c+N} < \infty,
          \label{eqadmosch}
        \eea
        Moreover, they are non-degenerate in the sense that for $N = 0, 1$ and $ (c_1, c_2) \in \CC^2 - \{ \vec 0\}$, 
        \be
        \limsup_{r \to \infty}  \left[ \left|  \pa_r^N \sum_{j=1}^2  ( e^{-\frac{ibr^2}{4}} c_j \Phi^1_{j;b,\l,\nu}) \right| +  \left|  \pa_r^N \sum_{j=1}^2  ( e^{\frac{ibr^2}{4}} c_j \Phi^2_{j;b,\l,\nu}) \right|  \right]  r^{\frac 12 -\frac{\Im \l}{b} - s_c+N} > 0. 
          \label{eqadmnondegh}
        \ee
        \item Boundary value at $x_* = b^{-\frac 12}$: 
  \bea  
  \left| \begin{array}{l}
   \Phi_{1;b,\l,\nu}(x_*) =\left(  \vec e_1 + O_{\RR^2}(b^\frac 12 + \nu^{-1}) \right)   \psi_1^{b, 1+\l+ibs_c, \nu}(x_*), \\
  \pa_r \Phi_{1;b,\l,\nu}(x_*) =- \left( \sqrt{1 + \l + b\nu^2} \vec e_1 + O_{\RR^2}(b^\frac 12 + \nu^{-1}) \right)  \psi_1^{b, 1+\l+ibs_c, \nu}(x_*)
  \end{array}\right.  \label{eqinthighnu1h}
\\
\left| \begin{array}{l}
   \Phi_{2;b,\l,\nu}(x_*) = \left(\vec e_2 + O_{\RR^2}(b^\frac 12 + \nu^{-1}) \right)  \overline{\psi_1^{b, 1 - \bar \l+ibs_c, \nu}(x_*)}, \\
  \pa_r \Phi_{2;b,\l,\nu}(x_*) =-\left( \sqrt{1 - \l + b\nu^2}\vec e_2 + O_{\RR^2}(b^\frac 12 + \nu^{-1}) \right)  \overline{\psi_1^{b, 1 - \bar \l+ibs_c, \nu}(x_*)}
  \end{array}\right. \label{eqinthighnu2h}
\eea
    \end{enumerate}
\end{proposition}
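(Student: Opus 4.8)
The plan is to run, in a simplified form, the matched-asymptotics scheme of Proposition \ref{propextfund}: since here we only need $\Im\l\le\frac12 b$ and no $\pa_\l$-derivative control, we can work throughout with the simple Duhamel inversions of Lemma \ref{leminvtildeHh} and weighted $C^0$-spaces, paying instead the price of tracking a second turning point. First I would pass to the conjugated system as in \eqref{eqsystemphivarphi}, writing $\Phi=(\phi,\bar\varphi)^\top$ and $E_\pm=1\pm(\l+ibs_c)$, so that $\tilde H_{b,E_+,\nu}\phi=(h_{b,E_+,\nu}-W_{1,b})\phi-e^{ibr^2/2}W_{2,b}\bar\varphi$ and $\varphi$ solves the conjugate analogue with $\overline{E_-}$; note that the angular potential $\tfrac{\nu^2-1/4}{r^2}$ is now absorbed into $\tilde H_{b,E,\nu}$, so the remaining diagonal perturbation $h_{b,E,\nu}-W_{1,b}$ is $O(r^{-2})$ by \eqref{eqbddhh} and Proposition \ref{propQbasymp}, while the off-diagonal coupling $e^{ibr^2/2}W_{2,b}$ is exponentially small on $[b^{-1/2},\infty)$ since $r\ge b^{-1/2}\gg1$ there. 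Set $r_1=r^*_{b,E_+,\nu}$, $r_2=r^*_{b,\overline{E_-},\nu}$; by the monotonicity \eqref{eqr*monoh} we may assume $\Re\l\ge0$, so $r_1\ge r_2$, and \eqref{eqr*bE1h} places both near $\tfrac{2\Re(\sqrt E s_0)}{b}\sim|s_0|b^{-1}$. Partition $[b^{-1/2},\infty)=I_{mid}\cup I_{con}\cup I_{ext}$ with $I_{ext}=[r_1,\infty)$, $I_{con}=[r_2,r_1]$, $I_{mid}=[b^{-1/2},r_2]$.

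\textbf{Exterior region.} On $I_{ext}$ I would solve the coupled integral system with sources $(\psi_1^{b,E_+,\nu},0)$ and $(0,\psi_1^{b,\overline{E_-},\nu})$ and inversions $\tilde T^{ext}_{r_1;b,E_+,\nu}$, $\tilde T^{ext}_{r_1;b,\overline{E_-},\nu}$ from \eqref{eqinvHexth}. The contraction works because multiplication by the $O(r^{-2})$ diagonal potential sends $C^0_{\omega^-_{b,E_+,\nu}}$ into $C^0_{\omega^-_{b,E_+,\nu}r^{-2}}$ with norm $O(1)$, and then $\tilde T^{ext}$ maps back with the gain of \eqref{eqtildeHextesth1}--\eqref{eqtildeHextesth2}, which together with $r\ge r_1\gtrsim|s_0|b^{-1}$ yields a total operator norm $O(b|s_0|^{-2})\ll1$; the off-diagonal contribution carries the extra factor $e^{-(p-1)|s_0|/b}$, which swallows any weight mismatch from $|E_+-\overline{E_-}|\le2\delta_3$. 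This produces $\Phi^{ext}_j$ with $\Phi^{ext,1}_1=(1+O(b))\psi_1^{b,E_+,\nu}$, $\overline{\Phi^{ext,2}_2}=(1+O(b))\psi_1^{b,\overline{E_-},\nu}$ and exponentially small cross-components, and the admissible decay and oscillation \eqref{eqadmosch} is inherited from that of $\psi_1^{b,E,\nu}$ via \eqref{eqetaReh}. Lemma \ref{leminvtildeHh}(2) records the boundary data at $r_1$ as a matrix acting on $(\psi_1^{b,E_+,\nu}(r_1),\psi_3^{b,E_+,\nu}(r_1))$ and the $\overline{E_-}$ analogues, with diagonal entries $O(b)$ and off-diagonal block exponentially small.

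\textbf{Connection and middle regions; matching.} I would repeat the construction on $I_{con}$, using for $\phi$ the interior-side pair $\psi_2^{b,E_+,\nu},\psi_4^{b,E_+,\nu}$ through $\tilde T^{mid,G/D}_{r_2,r_1;b,E_+,\nu}$ and for $\varphi$ the exterior-side pair through $\tilde T^{ext}_{r_2;b,\overline{E_-},\nu}$ restricted to $[r_2,r_1]$; and on $I_{mid}=[b^{-1/2},r_2]$ using $\psi_2,\psi_4$ for both components via the $\tilde T^{mid,G/D}$ of \eqref{eqinvHmidh}. On $I_{con}$ one has $r\sim|s_0|b^{-1}$ and on $I_{mid}$ one has $r\ge b^{-1/2}$, so the profile potential is exponentially small and the operators are bounded with gains $(x_*|s_0|)^{-1}=b^{1/2}|s_0|^{-1}$ and $O(b|s_0|^{-2})$ from \eqref{eqtildeTmidGesth}--\eqref{eqtildeTmidDesth}; again everything contracts. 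Matching at $r_1$ and $r_2$ through the connection matrices built from \eqref{eqconnecth}, with the $O(b)$ and exponentially small boundary estimates, gives a linear system for the gluing coefficients whose near-identity blocks are inverted to obtain them explicitly up to $O(b)$; normalizing by the coefficient of the $\psi_1^{b,E_+,\nu}$-component — which is the branch exponentially large near $b^{-1/2}$ — defines $\Phi_{1;b,\l,\nu}$, and symmetrically $\Phi_{2;b,\l,\nu}$. Evaluating the matched solution at $x_*=b^{-1/2}$, the $\psi_1^{b,E_+,\nu}$-term dominates, the $\psi_2^{b,E_+,\nu}$-term ($\sim e^{\eta}$, exponentially small there by \eqref{eqReetamonoh}) and the cross-component are negligible, and the logarithmic derivative is read from the interior asymptotics of Proposition \ref{propWKBh}(4),
$$(\psi_1^{b,E_+,\nu})'/\psi_1^{b,E_+,\nu}=-\sqrt{E_+(1-s^2+\a s^{-2})}\,(1+O(\eta^{-1})+O(\nu^{-1})),$$
which at $r=b^{-1/2}$ equals $-\sqrt{1+\l+b\nu^2}$ up to $O(b^{1/2}+\nu^{-1})$ since $E_+(1-s^2+\a s^{-2})=1+\l+b\nu^2+O(b)$ there; this yields \eqref{eqinthighnu1h}--\eqref{eqinthighnu2h}. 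The non-degeneracy and linear independence \eqref{eqadmnondegh} follow from the exponential smallness of the cross-components $\Phi^{ext,2}_1,\Phi^{ext,1}_2$ exactly as in Step~4.2 of the proof of Proposition \ref{propextfund}.

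\textbf{Main obstacle.} The genuinely delicate point is the bookkeeping forced by the two distinct turning points $r_1\neq r_2$: one must keep the growing/decaying WKB branches ($\psi_1,\psi_3$ on the exterior side versus $\psi_2,\psi_4$ on the interior side) consistent across each turning point and for each of the two components, and then chase the $O(b)$, $O(\nu^{-1})$ and exponentially small errors through both matchings in order to land precisely the sharp $O(b^{1/2}+\nu^{-1})$ in the boundary formula \eqref{eqinthighnu1h}--\eqref{eqinthighnu2h}. Everything else — the contractions, the weight comparisons, the asymptotic evaluations — is routine given Proposition \ref{propWKBh} and Lemma \ref{leminvtildeHh}, and is appreciably lighter than in Proposition \ref{propextfund} because no analyticity-in-$\l$ estimates are needed.
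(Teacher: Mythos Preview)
Your proposal is correct and follows essentially the same approach as the paper: the same conjugated system, the same three-region partition $I_{ext}\cup I_{con}\cup I_{mid}$ at the two turning points $r_1,r_2$, the same contractions via Lemma~\ref{leminvtildeHh}, and the same linear matching to read off the boundary data at $b^{-1/2}$. The paper does the weight-comparison step a bit more explicitly by introducing a constant $\frakc_{b,\l,\nu}$ with $|\frakc_{b,\l,\nu}|\lesssim\delta_3$ so that $\omega^\pm_{b,\overline{E_-},\nu}/\omega^\pm_{b,E_+,\nu}$ is controlled by $e^{\pm\pi\frakc/(2b)}\cdot e^{C\delta_3\min\{r/s_{0*},\,b^{-1}\}}$, and uses $\tilde T^{ext}_{\infty;b,E,\nu}$ rather than $\tilde T^{ext}_{r_1}$, but these are minor refinements of exactly the scheme you outline.
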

\begin{remark}\label{rmkextfundh}
    Here we restrict our construction for $\Im \l \le \frac 12 b$, which is suffices for Theorem \ref{thmmodestabsmallspec}. It should be possible to extend to $\Im \l \le I_0 b$ for any fixed $I_0 > 0$ by constructing the exterior inversion operator like Lemma \ref{leminvtildeHext} exploiting the quadratic phase oscillation of $\psi_j^{b, E, \nu}$. 
\end{remark}

\begin{proof} The proof is similar to Step 2 of the proof of Proposition \ref{propextfund} using the approximate WKB solutions of high spherical classes and corresponding scalar inversion operators from Section \ref{sec42} to obtain uniform construction. We will use the vector notation \eqref{eqvectorform}.

Recalling $\delta_2$, $b_2(I_0)\big|_{I_0 = \frac 12}$ from Proposition \ref{propWKBh} and $s_c^{(0)'}$ from  Proposition \ref{propQbasympref}. Fixing $d \ge 2$, we similarly restrict our other parameters to satisfy
\be s_{c;{\rm ext}}^{(2)} \le s_c^{(0)'} \ll 1,\quad \delta_3 \le \min \left\{\frac 12 \delta_2, (100d)^{-1}\right\},\quad b_3 \le \min \left\{b_2(\frac 12), \delta_3^\frac 32\right\}. \label{eqdelta1restricth} \ee
We will choose $\delta_3$, $b_3$ smaller during the proof, and finally shrink $s_{c;{\rm ext}}^{(2)}$ so that when  $s_c \le s_{c;{\rm ext}}^{(2)}$, we have $b(d, s_c) \le b_3$ from Proposition \ref{propQbasymp}.  

To begin with, we rewrite \eqref{eqnu} as
    \be
\left| \begin{array}{l}
 \tilde H_{b, E_+, \nu} \phi = \left( h_{b, E_+, \nu}  - W_{1, b}\right) \phi - e^{i\frac{br^2}2} W_{2, b} \bar\varphi \\
 \tilde H_{b, \bar E_-, \nu} \varphi = \left( h_{b, \bar E_-, \nu} - W_{1, b}\right) \varphi - e^{i\frac{br^2}2} W_{2, b} \bar\phi
 \end{array}\right. \label{eqsystemphivarphih}
\ee
with the scalar operator $\tilde \calH_{b, E, \nu}$ \eqref{eqdeftildeHbEh},  $E_\pm = 1 \pm (\l + ibs_c)$ and $\Phi = \left( \begin{array}{c}
     \Phi^1 \\
     \Phi^2 
\end{array} \right) =: \left( \begin{array}{c}
     \phi \\
     \bar \varphi 
\end{array} \right)$. Since $bs_c \le \delta_3$ from \eqref{eqdelta1restricth}, we have
$$|E_\pm - 1| \le \frac 32 \delta_3. $$ 
Assuming $\Re \l \ge 0$ without loss of generality, we define 
\bee r_1 = r^*_{b, E_+, \nu}, \quad r_2 = r^*_{b, \bar E_-, \nu},  \quad
I_{ext} = \left[r_1, \infty \right),\quad I_{con} = [r_2, r_1],\quad I_{mid} = [b^{-\frac 12}, r_2],
\eee
where we used $r_1 \ge r_2$ from the monotonicity \eqref{eqr*monoh}. Besides, we denote $s_{0*}$, $s_{0;+}$, $s_{0; -}$ for $s_0$ \eqref{eqs0def1} related to parameters $(b, 1, \nu)$, $(b, E_+, \nu)$, $(b, \bar E_-, \nu)$ respectively, and similarly for $t_{\pm*}$, $t_{\pm;+}$ and $t_{\pm;-}$. In particular,
\[ s_{0*} = \left(\frac{1 + \sqrt{1 + b(\nu^2 - \frac 14)/4}}{2} \right)^\frac 12 > 1.\]

\mbox{}

\underline{Step 1. Preparatory estimates}

When $0 < s_c \le s_{c;{\rm ext}}^{(2)}$, $0 < b \le b_3$, $|\l| \le \delta_3$, $\Im \l \le \frac 12 b$ and $\nu \ge 1$ with $s_{c;{\rm ext}}^{(2)}, b_3, \delta_3$ satisfying \eqref{eqdelta1restricth}, we claim the following estimates:
\begin{enumerate}
    \item For potentials: with $E \in \{ E_+, \bar E_-\}$, 
    \be  |h_{b, E, \nu} - W_{1, b}| \lesssim r^{-2}, \quad |W_{2, b}| \lesssim e^{-\frac{p-1}{4}\min\{ rs_{0*}^{-1}, b^{-1} \} } \la r \ra^{-2}. \label{eqpotentialesth} \ee
    \item For turning points: 
    \be  r_1 - r_2 \le \frac{12 \delta_3}{b s_{0*}},\quad r_1, r_2 \in \left[ \frac{2s_{0*}}{b} - \frac{6\delta_3}{b s_{0*}} , \frac{2s_{0*}}{b} + \frac{6\delta_3}{b s_{0*}}  \right] \label{eqpositionr1r2} \ee
    \item For weight functions: there exists a universal constant $C_3 > 0$ and a constant $\frakc_{b, \l, \nu} \in \RR$ such that $|\frakc_{b, \l, \nu}| \le C_3 \delta_3$ and
    \begin{align}
       \frac 12  e^{-C_3\delta_3 \min\{rs_{0*}^{-1}, b^{-1}\} } \le \frac{\omega_{b, \bar E_-, \nu}^\pm}{\omega_{b, E_+, \nu}^\pm}e^{\mp \frac{\pi \frakc_{b, \l, \nu}}{2b}} 
  \le 2e^{C_3\delta_3 \min\{rs_{0*}^{-1}, b^{-1}\} },& \,\,\,\forall\, r\ge 0, \label{eqcomparisonomegah}\\
   \sup_{\substack{|r-\frac {2s_{0*}}b| \le 6\frac{\delta_3}{bs_{0*}}} } \{ \omega_{b, E, \nu}^\pm(r), (\omega_{b, E, \nu}^\pm(r))^{-1}, e^{\pm\Re \eta_{b, E, \nu}(r)} \} \le e^{\frac{C_3\delta_3}{b}}, &\,\,\,\forall\, |E-1| \le 2\delta_3. \label{eqcomparisonomega2h}
    \end{align}
\end{enumerate}
As a corollary, we can choose $\delta_3$ as $\delta_3 = \min \left\{ \frac 12 \delta_2, (100d)^{-1}, (64C_3 d)^{-1} \right\}$ so that \eqref{eqcomparisonomegah}-\eqref{eqcomparisonomega2h} imply 
    \begin{align}
       \frac 12  e^{-\frac{p-1}{256} \min\{rs_{0*}^{-1}, b^{-1}\} } \le \frac{\omega_{b, \bar E_-, \nu}^\pm}{\omega_{b, E_+, \nu}^\pm}e^{\mp \frac{\pi \frakc_{b, \l, \nu}}{2b}} 
  \le 2e^{\frac{p-1}{256} \min\{rs_{0*}^{-1}, b^{-1}\} },& \,\,\,\forall\, r\ge 0, \label{eqcomparisonomega3h}\\
   \sup_{\substack{|r-\frac {2s_{0*}}b| \le 6\frac{\delta_3}{bs_{0*}}} } \{ \omega_{b, E, \nu}^\pm(r), (\omega_{b, E, \nu}^\pm(r))^{-1}, e^{\pm\Re \eta_{b, E, \nu}(r)} \} \le e^{\frac{p-1}{256b}}, &\,\,\,\forall\, |E-1| \le 2\delta_3, \label{eqcomparisonomega4h}
    \end{align}
    and 
    \be |\frakc_{b, \l, \nu}| \le \frac{p-1}{256}. \label{eqestfrakc} \ee

\mbox{}

Now we prove (1)-(3). Indeed, (1) follows directly from \eqref{eqbddhh}, Proposition \ref{propQbasymp} and $s_{0*}> 1$. For (2), we compute that $\sqrt E_+ s_{0;+} = 2^{-\frac 12}\left( E_+ + \sqrt{E_+ + b^2 (\nu^2 - \frac 14)/4} \right)^\frac 12$ and hence 
\be |\sqrt E_+ s_{0;+} - s_{0*}| \le |E_+ - 1| s_{0*}^{-1}.\label{eqestE+-1} \ee
From this and similar estimate for $\sqrt{\bar E_-} s_{0;-}$, the location estimate \eqref{eqr*bE1h} yields \eqref{eqpositionr1r2}. 

Finally, for (3), we first define $\frakc$ as 
\bee
  &&\frakc_{b, \l, \nu} :=-\frac{2b}{\pi} \Re \left( \lim_{r \to 0}(\eta_{b, E_+, \nu}(r) - \eta_{b, \bar E_-, \nu}(r)) \right) \\
  &=& -\Im \lim_{r \to 0} \int_{s_{0*} - 2\delta_3 s_{0*}^{-1}}^{\frac{br}{2}} \left[ \left(w^2 - E_+ - \frac{b^2(4\nu^2 -1)}{16w^2} \right)^\frac 12 - \left(w^2 - \bar E_- - \frac{b^2(4\nu^2 -1)}{16w^2} \right)^\frac 12 \right] dw \\
  &-& \Im \int_{s_{0*} - 2\delta_3 s_{0*}^{-1}}^{\sqrt E_+ s_{0;+}} \left(w^2 - E_+ - \frac{b^2(4\nu^2 -1)}{16w^2} \right)^\frac 12 + \Im \int_{s_{0*} - 2\delta_3 s_{0*}^{-1}}^{\sqrt{\bar E_-} s_{0;-}} \left(w^2 - \bar E_- - \frac{b^2(4\nu^2 -1)}{16w^2} \right)^\frac 12 \\
  &=:& I_1 + I_2 + I_3
\eee
where we used the formula \eqref{eqwquadtilde}. To show the well-definedness and the uniform bound, it suffices to control each integral by $O(\delta_3)$. For $I_2$, it follows from
$w^4 - Ew^2 - \frac{b^2(4\nu^2 -1)}{16}  = (w - \sqrt E s_0)(w + \sqrt E s_0) (w^2 - E t_-)$ and \eqref{eqestE+-1} that
\[ |I_2| \lesssim s_{0*}^\frac 12 \int_{s_{0*} - 2\delta_3 s_{0*}^{-1}}^{\sqrt E_+ s_{0;+}} |w - \sqrt E_+ s_{0;+}|^\frac 12 \lesssim \delta_3^{\frac 32} s_{0*}^{-1}, \]
and the third integral $I_3$ is bounded similarly. For the first integral $I_1$, notice that,
\bea
  &&\left| \left(w^2 - E_+ - \frac{b^2(4\nu^2 -1)}{16w^2} \right)^\frac 12 + \left(w^2 - \bar E_- - \frac{b^2(4\nu^2 -1)}{16w^2} \right)^\frac 12 \right| \nonumber \\
  &\ge& \left| E_+ + \frac{b^2(4\nu^2 -1)}{16w^2} - w^2 \right|^\frac 12 
  = \frac{|w^2 - E_+ t_{-;+}|^{\frac 12} }{w}  \cdot |w - \sqrt E_+ s_{0; +}|^{\frac 12} \cdot  |w + \sqrt E_+ s_{0; +}|^{\frac 12} \nonumber\\
  &\gtrsim& s_{0*}^\frac 12 \cdot |w - (s_{0*} - 2\delta_3 s_{0*}^{-1})|^{\frac 12}, \quad {\rm for} \,\, w \in [0, s_{0*} - 2\delta_3 s_{0*}^{-1}], \label{eqestnumeratorcomp}
\eea
where we exploited $\Re \l \ge 0$ for the first inequality and \eqref{eqestE+-1} for the last. So using the cancellation, we have 
\bee
 I_1 =  -\Im \int_{s_{0*} - 2\delta_3 s_{0*}^{-1}}^{0} \frac{\bar E_- - E_+}{ \left(w^2 - E_+ - \frac{b^2(4\nu^2 -1)}{16w^2} \right)^\frac 12 + \left(w^2 - \bar E_- - \frac{b^2(4\nu^2 -1)}{16w^2} \right)^\frac 12} dw = O(\delta_3).
\eee 

Next, the pointwise estimate \eqref{eqestnumeratorcomp} also leads to 
\bee
   \left|\Re \eta_{b, E_+, \nu}(r) - \Re \eta_{b, \bar E_-, \nu}(r) + \frac{\pi \frakc_{b, \l, \nu}}{2b}\right| \lesssim \delta_3 \min\{ rs_{0*}^{-1}, b^{-1}\}, \quad r \in \left[0, \frac{2s_{0*}}{b} - \frac{4\delta_3}{bs_{0*}}\right]
\eee
by estimating integration of the derivative from $0$ to $r$. Similar to Step 2.1 of proof of Proposition \ref{propextfund}, this inequality holds also for $r \ge \frac{2s_{0*}}{b} - \frac{4\delta_3}{bs_{0*}}$ using \eqref{eqetaabsh}, \eqref{eqetaReh} plus monotonicity \eqref{eqReetamonoh}, and leads to \eqref{eqcomparisonomegah}-\eqref{eqcomparisonomega2h}. 

\mbox{}

\underline{Step 2. Constructing fundamental solutions on each region.} 

This step resembles Step 2.2-2.4 of the proof of Proposition \ref{propextfund}. We construct $2 + 4 + 4$ solutions of \eqref{eqsystemphivarphih} on $I_{ext}$, $I_{con}$ and $I_{mid}$ respectively.

Specifically, we invert the scalar operator using Lemma \ref{leminvtildeHh} and plug in approximate WKB solutions $\psi_j^{b, E, \nu}$ from its kernel as source terms, leading to the following system 
\be
   \left( \begin{array}{c} 
  \phi^{\Box}_j \\ \varphi^{\Box}_j
  \end{array} \right) = 
  \left( \begin{array}{c} 
  S^{\Box}_j \\ R^{\Box}_j
  \end{array} \right) 
  + 
\calT^\Box_j
   \left( \begin{array}{c} 
  \phi^{\Box}_j \\ \varphi^{\Box}_j
  \end{array} \right), \quad \calT^\Box_j := \left( \begin{array}{cc} 
  \calA^\Box_j & \\ & \calB^\Box_j
  \end{array} \right)
   \circ \calV, \label{eqIh}
\ee
where the parameters are ranged from 
\[ \Box \in \{ext, con, mid\},\quad j \in \left|\begin{array}{ll}\{ 1, 2\} & \Box = ext, \\ \{ 1, 2, 3, 4\} & \Box \in \{con, mid\}, \end{array}\right.\]
and the potential operator is
\[ \calV   \left( \begin{array}{c} 
  \phi \\ \varphi
  \end{array} \right) :=  \left( \begin{array}{c} 
  \left( h_{b, E_+, \nu}  - W_{1, b}\right) \phi - e^{i\frac{br^2}2} W_{2, b} \bar\varphi \\ 
  \left( h_{b, \bar E_-, \nu} - W_{1, b}\right) \varphi - e^{i\frac{br^2}2} W_{2, b} \bar\phi
  \end{array} \right). \]
  The scalar inversion operators are 
  \bee 
\calA^\Box_j = \left|\begin{array}{ll}
\tilde T^{ext}_{\infty; b, E_+, \nu} & \Box = ext;\\
\tilde T^{mid, G}_{r_2, r_1; b, E_+, \nu}     & \Box = con; \\
\tilde T^{mid, G}_{b^{-\frac 12}, r_2; b, E_+, \nu}  & \Box = mid, \,\,j \neq 3;\\
\tilde T^{mid, D}_{b^{-\frac 12}, r_2; b, E_+, \nu}  & \Box = mid, \,\,j = 3;
\end{array} 
\right. \,\,
\calB^\Box_j = \left|\begin{array}{ll}
\tilde T^{ext}_{\infty; b, \bar E_-, \nu} & \Box = ext;\\
\tilde T^{ext}_{r_2; b, \bar E_-, \nu}\circ \mathbbm{1}_{I_{con}} & \Box = con; \\
\tilde T^{mid, G}_{b^{-\frac 12}, r_2; b, \bar E_-, \nu}  & \Box = mid, \,\,j \neq 1;\\
\tilde T^{mid, D}_{b^{-\frac 12}, r_2; b, \bar E_-, \nu}  & \Box = mid, \,\,j = 1;
\end{array} 
\right. 
  \eee
  and the potentials are 
  \bee
S^\Box_j = \left|\begin{array}{ll}
\psi_1^{b, E_+, \nu} & (\Box, j) = (ext, 1);\\
\psi_4^{b, E_+, \nu} & (\Box, j) \in \{(con, 1), (mid, 1) \}; \\
\psi_2^{b, E_+, \nu} & (\Box, j) \in \{ (con, 2), (mid, 2)\}; \\
   0  & otherwise;
\end{array} 
\right. \,\,
R^\Box_j = \left|\begin{array}{ll}
\psi_1^{b, \bar E_-, \nu} & (\Box, j) \in \{ (ext, 2), (con, 3) \};\\
\psi_3^{b, \bar E_-, \nu} & (\Box, j) = (con, 4); \\
\psi_4^{b, \bar E_-, \nu} & (\Box, j) =  (mid, 3);  \\
\psi_2^{b, \bar E_-, \nu} & (\Box, j) = (mid, 4); \\
   0  & otherwise.
\end{array} 
\right.
  \eee
We claim the following bounds for the linear operators in \eqref{eqIh}: 
\begin{itemize}
    \item $\Box \in \{ext, con\}$: $\| \calT^{\Box}_j \|_{\XX^{\Box}_{j}} \lesssim bs_{0*}^{-2}$, 
    where 
       \[  
    \XX^{\Box}_j = \left| \begin{array}{ll}
        C^0_{\omega_{b, E_+, \nu}^-}(I_{\Box}) \times e^{\frac{p-1}{64b}} (b^{2}|s_{0*}|^{-2}) C^0_{\omega_{b, \bar E_-, \nu}^- r^{-2}}(I_{\Box})  & (\Box, j) \in \{ (ext, 1), (con, 1) \}; \\
        C^0_{\omega_{b, E_+, \nu}^+}(I_{\Box}) \times e^{\frac{p-1}{64b}} C^0_{\omega_{b, \bar E_-, \nu}^+}(I_{\Box})  & (\Box, j) = (con, 2); \\
        e^{\frac{p-1}{64b}} (b^{2}|s_{0*}|^{-2}) C^0_{\omega_{b, E_+, \nu}^- r^{-2}}(I_{\Box}) \times  C^0_{\omega_{b, \bar E_-, \nu}^-}(I_{\Box})  & (\Box, j) \in \{ (ext, 2), (con, 3) \}; \\
        e^{\frac{p-1}{64b}} C^0_{\omega_{b, E_+, \nu}^+}(I_{\Box}) \times C^0_{\omega_{b, \bar E_-, \nu}^+}(I_{\Box})  & (\Box, j) = (con, 4); 
    \end{array}\right.
    \]
    We remark that $b^{2}|s_{0*}|^{-2}\| f \|_{C^0_{\omega_{b, \bar E_-, \nu}^- r^{-2}} (I_{con}) }  \sim \| f \|_{C^0_{\omega_{b, \bar E_-, \nu}^-} (I_{con}) } $ so that \eqref{eqtildeHextesth2} is applicable.
    \item $\Box = mid$: $\| \calT^{mid}_j \|_{\XX^{mid}_j} \lesssim b^\frac 12 s_{0*}^{-1}$, where 
    \[  
    \XX^{mid}_j = \left| \begin{array}{ll}
        C^0_{\omega_{b, E_+, \nu}^-}(I_{mid}) \times e^{-\frac{\pi \frakc_{b, \l, \nu}}{2b}} C^0_{\omega_{b, \bar E_-, \nu}^- e^{-\frac{p-1}{16s_{0*}}r} }(I_{mid})  & j = 1; \\
        C^0_{\omega_{b, E_+, \nu}^+}(I_{mid}) \times e^{\frac{\pi \frakc_{b, \l, \nu}}{2b}} C^0_{\omega_{b, \bar E_-, \nu}^+ e^{-\frac{p-1}{16s_{0*}}r} }(I_{mid})  & j = 2; \\
         e^{\frac{\pi \frakc_{b, \l, \nu}}{2b}}  C^0_{\omega_{b, E_+, \nu}^- e^{-\frac{p-1}{16s_{0*}}r} }(I_{mid}) \times C^0_{\omega_{b, \bar E_-, \nu}^-}(I_{mid})  & j = 3; \\
         e^{-\frac{\pi \frakc_{b, \l, \nu}}{2b}}  C^0_{\omega_{b, E_+, \nu}^+ e^{-\frac{p-1}{16s_{0*}}r} }(I_{mid}) \times C^0_{\omega_{b, \bar E_-, \nu}^+}(I_{mid}) & j = 4.\\
    \end{array}\right.
    \]
\end{itemize}
Then when $b_3 \ll 1$, \eqref{eqIh} is a linear contraction and generates a unique solution for each case.
Indeed, these estimates can be easily verified using \eqref{eqtildeHextesth1} for $\Box = ext$, \eqref{eqtildeHextesth2}-\eqref{eqtildeTmidGesth} for $\Box = con$ and \eqref{eqtildeTmidGesth}, \eqref{eqtildeTmidGesth2}, \eqref{eqtildeTmidDesth} for $\Box = mid$. Here the additional $r^{-2}$ decay is provided by \eqref{eqpotentialesth}. The exponential decay of the coupling potential $W_{2, b}$ provides three smallness: (1) covering the quotient between $\omega_{b, E_+, \nu}^\pm$ and $\omega_{b, \bar E_-, \nu}^\pm$ thanks to \eqref{eqcomparisonomega3h}, \eqref{eqcomparisonomega4h} and \eqref{eqestfrakc}; (2) on $I_{ext}$ and $I_{con}$, the coefficient $e^{-\frac{p-1}{64b}}$ to the component without source term; (3) on $I_{mid}$, the exponential decaying weight for the component without source term, and moreover, the additional $e^{-\frac{\a r}{2|s_0|}}$ decay when applying \eqref{eqtildeTmidGesth2}. 

In particular, due to boundedness of $\tilde T^{ext}_{r_1;b, E, \nu}$ on $\dot C^1_{\omega_{b, E, \nu}^- (br)^{-3}} ([\frac{4|s_0|}{b},\infty))$, we see that $\Phi^{ext}_1$ and $\Phi^{ext}_2$ satisfy the boundedness \eqref{eqadmosch} and non-degeneracy \eqref{eqadmnondegh} because $\varphi^{ext}_2$ and $\phi^{ext}_1$ have additional $r^{-2}$ decay. 

\mbox{}

Next, we identify the boundary values for these solutions. We denote  $\Phi^{\Box}_j = \left( \phi^{\Box}_j, \overline{\varphi^{\Box}_j} \right)^\top$ and recall the vector notation \eqref{eqvectorform}. For notational simplicity, we also denote 
\be \tb = b |s_{0*}|^{-2},\quad \epsilon = e^{-\frac{p-1}{64b}}. \ee
Then Lemma \ref{leminvtildeHh} (2) immediately implies the following: 
\begin{itemize}
    \item For $\vec \Phi^{ext}_j(r_1)$: Under the form \eqref{eqbdryext}, the coefficients are
    \be 
 (\gamma^{ext}_{jk})_{\substack{1 \le j \le 2\\ 1 \le k \le 4}} = \tb \left( \begin{array}{cccc}
        O(1) & O(1) & O(\epsilon) & O(\epsilon e^{-2\eta_{b, \bar E_-, \nu}(r_1)} ) \\
        O(\epsilon) & O(\epsilon) &   O(1) & O(e^{-2\eta_{b, \bar E_-, \nu}(r_1)})
  \end{array}\right) \label{eqestgammaexth} 
    \ee
    \item For $\vec \Phi^{con}_j(r_1)$ and $\vec \Phi^{con}_j(r_2)$: Under the form \eqref{eqbdrycon}, \eqref{eqARAL} and \eqref{eqGammamatrixcon}, the coefficients are 
     \be 
 (\gamma^{con}_{jk})_{\substack{1 \le j \le 4\\ 1 \le k \le 4}} = \tb \left( \begin{array}{cccc}
        O(1) & O(e^{-2\eta_{b, E_+, \nu}(r_2)}) & O(\epsilon) & O(\epsilon) \\
        O(1) &  O(1) & O(\epsilon) & O(\epsilon) \\
      O(\epsilon) & O(\epsilon) & O(1) &  O(1) \\
      O(\epsilon) & O(\epsilon) & O(e^{2\eta_{b, \bar E_-, \nu}(r_1)}) & O(1)
  \end{array}\right) \label{eqestgammaconh} 
    \ee
    \item For $\vec \Phi^{mid}_j(r_2)$: Under the form \eqref{eqbdrymid} and \eqref{eqGammamatrixmid}, the coefficients are 
    \be 
 \Gamma^{mid, R} = \tb \left( \begin{array}{cccc}
        O(\tb^\frac 12) & 0 & 0 & 0 \\
        O(\tb e^{2\eta_{b, E_+, \nu}(r_2)})  & 0 &  O(\epsilon \tb e^{2\eta_{b, E_+, \nu}(r_2)}) & 0 \\
      0 & 0 & O(\tb^\frac 12) & 0 \\
      O(\epsilon \tb) & 0  & O(\tb) & 0
  \end{array}\right) \label{eqestgammamidRh}  
    \ee
    Here for $(j, k) \in \{(2,3), (4, 1)\}$, we used $e^{-\frac{(p-1)r_2}{32|s_{0*}|}} \le e^{\pm \frac{\pi \frakc_{b, \l, \nu}}{2b}}e^{2 \eta_{b, E_+, \nu}(r_2)} \epsilon$ due to \eqref{eqpositionr1r2}, \eqref{eqcomparisonomega3h} and \eqref{eqcomparisonomega4h}. 
    \item For $\vec \Phi^{mid}_j(b^{-\frac 12})$: we have
    \be
 \left( \begin{array}{c}
       \vec \Phi^{mid}_1 (b^{-\frac 12})\\
       \vec \Phi^{mid}_2 (b^{-\frac 12})\\
       \vec \Phi^{mid}_3 (b^{-\frac 12})\\
       \vec \Phi^{mid}_4 (b^{-\frac 12})
  \end{array}\right) = 
(I + \Gamma^{mid, L})
  \left( \begin{array}{c}
       \vec\psi_4^{b, E_+, \nu}(b^{-\frac 12}) \otimes \vec 0 \\
       \vec\psi_2^{b, E_+, \nu}(b^{-\frac 12}) \otimes \vec 0 \\
       \vec 0 \otimes \overline{\vec\psi_4^{b, \bar E_-, \nu}}(b^{-\frac 12})  \\
       \vec 0 \otimes \overline{\vec\psi_2^{b, \bar E_-, \nu}}(b^{-\frac 12})
  \end{array}\right) \label{eqbdrymidh}
  \ee
  where 
  \be \small \Gamma^{mid, L} 
  = \tb^\frac 12
   \left( \begin{array}{cccc}
       0 & O(e^{-2\eta_{b, E_+, \nu}(b^{-\frac 12}) }) & O(M \tilde \epsilon^2) & O(M e^{-2\eta_{b, \bar E_-, \nu}(b^{-\frac 12} )} \tilde \epsilon^2
       ) \\
       0 & O(1) & 0 & O(M^{-1} \tilde \epsilon)  \\
       O(M^{-1}\tilde \epsilon^2) & O(M^{-1} e^{-2\eta_{b, E_+, \nu}(b^{-\frac 12}) }\tilde \epsilon^2 ) & 0 & O(e^{-2\eta_{b, \bar E_-, \nu}(b^{-\frac 12} )})  \\
       0 & O(M\tilde \epsilon) & 0 & O(1)
  \end{array}\right)
  \label{eqestgammamidLh} \ee
  Here we denoted
  \[ M = e^{\frac{\pi \frakc_{b,\l,\nu}}{2b}},\quad \tilde \epsilon = e^{-\frac{p-1}{32 \sqrt{\tb}}}. \]
\end{itemize}

\mbox{}

\underline{Step 3. Linear matching and concluding the proof.}

Now we do similar computation as in Step 2.6 of the proof of Proposition \ref{propextfund} to match these solutions and construct $\Phi_{j;b,\l,\nu}$ for $j = 1, 2$. 

Let
\be 
 \Phi_{k } = \left| \begin{array}{ll}
     \Phi^{ext}_{k } & r \in I_{ext} \\
     \sum_{j=1}^4 c^{con}_{kj } \Phi^{con}_{j } & r \in I_{con} \\
     \sum_{j=1}^4 c^{mid}_{kj } \Phi^{mid}_{j }& r \in I_{mid} 
 \end{array}\right.\quad k = 1, 2, \label{eqtildePhikh}
\ee
with the asymptotics matching at $r_1, r_2$:
\bee
  \vec{{\Phi}}_{k }(r_1+0) = \vec{{\Phi}}_{k }(r_1-0),\quad \vec{{\Phi}}_{k }(r_2+0) = \vec{{\Phi}}_{k }(r_2-0).
\eee
Then the boundedness \eqref{eqadmosch} and non-degeneracy \eqref{eqadmnondegh} follow from those of $\Phi^{ext}_k$ proven in Step 2. 

Now we evaluate the matching coefficients and boundary value to verify \eqref{eqinthighnu1h} and \eqref{eqinthighnu2h}. 
Similar computation as in Step 2.6 of the proof of Proposition \ref{propextfund} yields 
\bee \small
   \left(\begin{array}{cc}
    c^{con}_{11 } & c^{con}_{21 } \\
    c^{con}_{12 } & c^{con}_{22 }  \\
    c^{con}_{13 } & c^{con}_{23 } \\
    c^{con}_{14 } & c^{con}_{24 }
  \end{array}\right) 
 &=& \left( \begin{array}{cc}
       e^{-\frac{\pi i}{6}} + O(\tb) & O(\epsilon \tb) \\
       O(\tb) & O(\epsilon \tb) \\
       O(\epsilon \tb) & 1 + O(\tb) \\
       O(\epsilon \tb ) & O(\tb) 
  \end{array}\right) \\
  \small \left(\begin{array}{cc}
    c^{mid}_{11 } & c^{mid}_{21 } \\
    c^{mid}_{12 } & c^{mid}_{22 }  \\
    c^{mid}_{13 } & c^{mid}_{23 } \\
    c^{mid}_{14 } & c^{mid}_{24 }
  \end{array}\right) 
 &=& \left( \begin{array}{cc}
       e^{-\frac{\pi i}{6}} + O(\tb^\frac 12) & O(\epsilon \tb) \\
       O(\tb e^{-2\eta_{b, E_+, \nu}(r_2)}) & O(\epsilon \tb e^{-2\eta_{b, E_+, \nu}(r_2)}) \\
       O(\epsilon \tb) & e^{\frac{\pi i}{6}} + O(\tb^\frac 12)  \\
       O(\epsilon \tb) & O(\tb) 
  \end{array}\right)
\eee
Hence we can evaluate the boundary value using \eqref{eqbdrymidh} and \eqref{eqestgammamidLh} that
\be
 \small \left( \vec{{\Phi}}_1(b^{-\frac 12}), \vec{{\Phi}}_2(b^{-\frac 12}) \right) =  \calC  \left( \begin{array}{c}
       \vec\psi_4^{b, E_+, \nu}(b^{-\frac 12}) \otimes \vec 0 \\
       \vec\psi_2^{b, E_+, \nu}(b^{-\frac 12}) \otimes \vec 0 \\
       \vec 0 \otimes \overline{\vec\psi_4^{b, \bar E_-, \nu}}(b^{-\frac 12})  \\
       \vec 0 \otimes \overline{\vec\psi_2^{b, \bar E_-, \nu}}(b^{-\frac 12})
  \end{array}\right), \label{eqbdrymidLh}
  \ee
  with the matrix coefficient as 
  \begin{align*}
   \small &\calC = \left(\begin{array}{cccc}
   c^{mid}_{11 } & c^{mid}_{12 } & c^{mid}_{13 } & c^{mid}_{14 } \\
   c^{mid}_{21 } & c^{mid}_{22 } & c^{mid}_{23 } & c^{mid}_{24 } 
  \end{array}\right) (I + \Gamma^{mid, L}) \\
   \small= & \left(\begin{array}{cccc}
    e^{-\frac{\pi i}{6}} + O(\tb^\frac 12) & O(\tb^\frac 12 e^{-2\eta_{b, E_+, \nu}(b^{-\frac 12}) }) & O(\tb^\frac 12 M \tilde \epsilon^2) & O(\tb^\frac 12 M \tilde \epsilon^2 e^{-2\eta_{b, \bar E_-, \nu}(b^{-\frac 12}) }) \\
    O(\tb^\frac 12 M^{-1} \tilde \epsilon^2) & O(\tb^\frac 12 M^{-1} \tilde \epsilon^2 e^{-2\eta_{b, E_+, \nu}(b^{-\frac 12}) }) & e^{\frac{\pi i}{6}} + O(\tb^\frac 12) & O(\tb^\frac 12 e^{-2\eta_{b, \bar E_-, \nu}(b^{-\frac 12}) })
  \end{array}\right)
\end{align*}
where we used that when $b \le b_3 \ll 1$, we have $\epsilon \ll \min \{ M, M^{-1}\} \tilde \epsilon^2$ from \eqref{eqestfrakc} and $e^{-2\eta_{b, E, \nu}(b^{-\frac 12})} = e^{2\int_{b^{-\frac 12}}^{r^*_{b, E, \nu}} \pa_r \Re \eta_{b, E, \nu}(\tau)d\tau} \ge e^{-(2\tb)^{-1}} \gg \epsilon^{-1}$ for $E \in \{ E_+, \bar E_-\}$ using \eqref{eqetaRehlowerbdd}.

Finally, we notice that \eqref{eqcomparisonomega3h} implying $M^{-1} \tilde \epsilon^2 \omega_{b, E_+, \nu}^- \lesssim \omega_{b, \bar E_-, \nu}^-$ and $M \tilde \epsilon^2 \omega_{b, \bar E_-, \nu}^- \lesssim \omega_{b, E_+, \nu}^-$, and that $\sqrt E (1 - s^2 + \a s^{-2})^{\frac 12}\big|_{r = b^{-\frac 12}} = (E + b\nu^2 + O(b))^\frac 12$. Plugging the asymptotics of $\vec \psi_4^{b, E, \nu}$ and $\vec \psi_2^{b, E, \nu}$ from Proposition \ref{propWKBh} (4) into \eqref{eqbdrymidLh} yields \eqref{eqinthighnu1h} and \eqref{eqinthighnu2h}. 
\end{proof}

\subsection{Inadmissible solutions}

To show the admissibility of fundamental solutions in Proposition \ref{propextfund} and Proposition \ref{propextfundh}, we need the existence of the other two branches of fundamental solutions. Since less information of asymptotics is required, we apply the same strategy in Proposition \ref{propextfund} for all spherical classes and all $|\l| \ll 1$.

\begin{proposition}[Construction of exterior inadmissible solution] 
\label{propextfundin}
 For $d \ge 1$, $I_0 > 0$, let $\delta_0, b_0(I_0)$ from proposition \ref{propWKB}. There exists $s_{c;{\rm ext}}^{(3)}(d, I_0) > 0$ such that for $0 < s_c \le s_{c;{\rm ext}}^{(3)}(d, I_0)$, $b = b(d, s_c)$ from Proposition \ref{propQbasymp} and $\l, \nu$ satisfying
    \[ 0 \le b \le b_0,\quad  \l \in \Omega_{\delta_0, I_0; b}, \quad \nu \in \frac 12 \NN_{\ge 0}, \]
    there exist smooth functions $\Phi_{j;b, \l, \nu}: (0, \infty) \to \CC^2$ with $j = 3, 4$ solving the equation \eqref{eqnu} with parameters in the above range. They satisfy the following properties.
    \begin{enumerate}
    \item Asymptotic behavior at infinity for $b = 0$: For $k \ge 0$, $N = 0, 1$ and $j = 3, 4$, 
    \be
      \sup_{r \ge 1} \left[\left| e^{-\sqrt{1+\l}r}  \pa_r^N \Phi^1_{j;0,\l,\nu}  \right| + \left|  e^{-\sqrt{1-\l}r} \pa_r^N \Phi^2_{j;0,\l,\nu}  \right|\right] < \infty, \label{eqasympb0in}
    \ee
    with non-degeneracy for $N = 0, 1$ and $(c_3, c_4) \in \CC^2 - \{ \vec 0\}$,
    \be
 \limsup_{r \to \infty}  \left[ \left| e^{-\sqrt{1+\l}r} \pa_r^N \sum_{j=3}^4  ( c_j \Phi^1_{j;0,\l,\nu} )\right| +  \left| e^{-\sqrt{1-\l}r} \pa_r^N \sum_{j=3}^4 ( c_j \Phi^2_{j;b,\l,\nu}) \right|  \right]  > 0. 
  \label{eqnondegb0in}
\ee
        \item Asymptotic behavior at infinity and linear independence for $ b > 0$: For $b>0$ and $N \ge 0$,
        \bea
        \begin{split}
          \sup_{r \ge \frac 4b} \left[ \left| \pa_r^N ( e^{\frac{ibr^2}{4}} \Phi^1_{3;b,\l,\nu}) \right| +  \left|  \pa_r^N ( e^{\frac{3ibr^2}{4}} \Phi^2_{3;b,\l,\nu}) \right| \right] r^{\frac 12 +\frac{\Im \l}{b} + s_c+N} < \infty, \\
          \sup_{r \ge \frac 4b} \left[ \left| \pa_r^N ( e^{-\frac{3ibr^2}{4}} \Phi^1_{4;b,\l,\nu}) \right| +  \left|  \pa_r^N ( e^{-\frac{ibr^2}{4}} \Phi^2_{4;b,\l,\nu}) \right| \right] r^{\frac 12 +\frac{\Im \l}{b} + s_c+N} < \infty,
          \end{split}
          \label{eqnonadmosc}
        \eea
        with non-degeneracy for $N \ge 0$ and  $(c_3, c_4) \in \CC^2 - \{ \vec 0\}$,
         \be
        \liminf_{r \to \infty}  \left[ \left|  \pa_r^N \sum_{j=3}^4  ( e^{-\frac{ibr^2}{4}} c_j \Phi^1_{j;b,\l,\nu}) \right| +  \left|  \pa_r^N \sum_{j=3}^4  ( e^{\frac{ibr^2}{4}} c_j \Phi^2_{j;b,\l,\nu}) \right|  \right]  r^{\frac 12 +\frac{\Im \l}{b} + s_c-N} > 0. 
          \label{eqnonadmnondeg}
        \ee
        \item Linear independence: $\{\Phi_{j;b,\l,\nu}\}_{1 \le j \le 4}$ are linearly independent when $\{\Phi_{j;b,\l,\nu}\}_{j = 1, 2}$ are also well-defined from Proposition \ref{propextfund} or Proposition \ref{propextfundh}. 
    \end{enumerate}
\end{proposition}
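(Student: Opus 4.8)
The plan is to imitate Step~1 and Step~2 of the proof of Proposition~\ref{propextfund}, but in a much lighter form. Since no $\pa_\l$-derivatives, no $b\to0$ continuity and no boundary asymptotics at a fixed $x_*$ are required, crude estimates suffice and, most importantly, the whole construction can be localized to the far-exterior region, strictly past the turning point, and then extended to $(0,\infty)$ by uniqueness for the linear ODE \eqref{eqnu}; this sidesteps the delicate multi-zone matching of Proposition~\ref{propextfund}. For $b>0$ I would split $\nu\le\nu_0(d)$ (scalar scheme of Section~\ref{sec41}) from $\nu\ge\nu_0(d)$ (Section~\ref{sec42}), with $\nu_0$ the threshold of Proposition~\ref{propextfundh}; the split is genuinely needed because the angular momentum is not uniformly small in $\nu$ even on $[4b^{-1},\infty)$, whereas for $b=0$ no split is needed since the modified Bessel functions $\tilde I_{\nu,E},\tilde K_{\nu,E}$ from \eqref{eqdeftildeIK} diagonalize the angular part exactly. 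The key structural simplification throughout is that the \emph{inadmissible} branch is the fast-decaying one in $\Phi$-variables: the building block $\psi_3^{b,E}$ (resp. $\psi_3^{b,E,\nu}$) behaves near infinity like $e^{\eta}\langle\cdot\rangle^{-1/4}$ with $\eta\approx-ibr^2/4$ and $|e^\eta|\sim(br)^{-\Im E/b}$ by \eqref{eqReetaext}, so all the relevant Duhamel integrals converge absolutely and no quadratic-phase integration by parts (hence no restriction $\Im\l\le b/2$) is needed.

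For $b=0$ I would conjugate the second line of $\HH_{0,\nu}-\l$ and use $\tilde I_{\nu,1\pm\l}$ (growing, $\sim e^{\sqrt{1\pm\l}r}$) and $\tilde K_{\nu,1\pm\l}$ (decaying) as the exact scalar fundamental solutions, defining $\Phi_{3;0,\l,\nu}$ by a Duhamel integral equation with source $(\tilde I_{\nu,1+\l},0)^\top$ and slaving the second component through the coupling, and symmetrically $\Phi_{4;0,\l,\nu}$ with source $(0,\tilde I_{\nu,1-\l})^\top$; the contraction is run in a space of functions growing at most like $e^{\sqrt{1+\l}r}$ (resp. $e^{\sqrt{1-\l}r}$), the off-diagonal $W_1,W_2$ being absorbed by their exponential decay against the harmless mismatch $|\sqrt{1+\l}-\sqrt{1-\l}|\lesssim\delta_0<(p-1)/2$, exactly as in Step~1 of Proposition~\ref{propextfund}. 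For $b>0$ I would rewrite \eqref{eqnu} as in \eqref{eqsystemphivarphi}, take $\Phi_3$ with first-component source $\psi_3^{b,E_+}$ (resp. $\psi_3^{b,E_+,\nu}$) and zero second source, $\Phi_4$ with second-component source $\psi_3^{b,\bar E_-}$ (resp. $\psi_3^{b,\bar E_-,\nu}$), invert the corrected scalar operators $\tilde H_{b,E}$, $\tilde H_{b,E,\nu}$ on $[4b^{-1},\infty)$ resp. $[4|s_0|b^{-1},\infty)$ by the exterior Duhamel formula with $r_0=\infty$ (which selects the decaying branch), and slave the remaining component via the coupling $e^{\pm ibr^2/2}W_{2,b}$. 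On these far-exterior regions $h_{b,E}=O(b^2)$ by \eqref{eqbddh}, $|W_{1,b}|+|W_{2,b}|\lesssim r^{-2}e^{-c/b}$, and for $\nu\le\nu_0$ the dropped angular potential is $\le\nu_0^2b^2$, so after using the weight comparison bounds \eqref{eqcomparisonomega3}--\eqref{eqcomparisonomega4} (or \eqref{eqcomparisonomega3h}--\eqref{eqcomparisonomega4h}) to bound $\omega^\pm_{b,E_+}/\omega^\pm_{b,\bar E_-}$ the Duhamel operator is a contraction of norm $O(b)$; the solutions thus obtained are extended to $(0,\infty)$ by ODE uniqueness.

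The asymptotics \eqref{eqasympb0in} and \eqref{eqnonadmosc} are then read off: for $b=0$ from $\Phi^1_3\sim\tilde I_{\nu,1+\l}\sim e^{\sqrt{1+\l}r}$ with the slaved component exponentially smaller; for $b>0$ from Proposition~\ref{propWKB}(5), Proposition~\ref{propWKBh}(4) and the above $\eta$-estimates, giving $\Phi^1_3\sim\psi_3^{b,E_+}\sim e^{-ibr^2/4}r^{-1/2-\Im\l/b-s_c}$, while the coupling $e^{-ibr^2/2}\overline{W_{2,b}}\Phi^1_3$ forces the $\Phi^2$-equation (whose natural oscillations are $e^{\pm ibr^2/4}$) with the non-resonant faster phase $e^{-3ibr^2/4}$, so $\Phi^2_3$ inherits the $e^{-3ibr^2/4}$ profile at no worse than the same polynomial rate; symmetrically for $\Phi_4$. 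The non-degeneracies \eqref{eqnondegb0in}, \eqref{eqnonadmnondeg} follow because $\Phi_3$ and $\Phi_4$ carry distinct leading oscillations in distinct components, and the linear independence of $\{\Phi_j\}_{1\le j\le4}$ in the overlap regime follows because $\Phi_1,\Phi_2$ carry the admissible oscillation ($Z^j$ without quadratic phase) while $\Phi_3,\Phi_4$ carry the inadmissible quadratic phase $e^{\mp ibr^2/2}$, so the $4\times4$ Wronskian matrix of $(\Phi_j,\pa_r\Phi_j)$ is nonsingular by comparison of leading behaviors at infinity. The main obstacle I anticipate is the sharp identification of the slaved component near infinity --- verifying that the coupling genuinely produces a non-resonant super-oscillatory forcing and controlling the resulting Duhamel integral against the quadratic-phase WKB kernels $\psi_j^{b,\bar E_-}$ (using Proposition~\ref{propWKB}(6) or a direct non-stationary-phase estimate) to gain enough decay, uniformly for $\Im\l$ up to $I_0b$; a secondary point is to record the elementary analogue, for $\nu\le\nu_0$, of the decaying exterior inversion $\tilde T^{ext}_{\infty;b,E,\nu}$ of Lemma~\ref{leminvtildeHh}.
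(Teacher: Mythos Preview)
Your overall architecture is sound and matches the paper's in spirit, but there is a genuine gap in the $b>0$ construction. Your key claim --- that because $\psi_3^{b,E}$ carries the profile $|e^\eta|\sim(br)^{-\Im E/b}$, ``all the relevant Duhamel integrals converge absolutely and no quadratic-phase integration by parts\dots is needed'' --- is false on most of $\Omega_{\delta_0,I_0;b}$. Recall $\Im E_+=\Im\bar E_-=\Im\l+bs_c$, and $\Omega_{\delta_0,I_0;b}$ only bounds $\Im\l$ from \emph{above} by $bI_0$; hence $\Im E/b$ ranges down to roughly $-\delta_0/b\to-\infty$. In that regime $|\psi_3^{b,E}|\sim r^{-1/2-\Im E/b}$ \emph{grows} polynomially with arbitrarily large exponent, so the second Duhamel leg $\int_r^\infty\psi_3^{b,E}(s)f(s)\,ds$ with $f\sim\omega_{b,E}^+ s^{-2}$ diverges (integrand $\sim s^{-3-2\Im E/b}$). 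Replacing it by $\int_{r_0}^r$ restores the $C^0_{\omega^+}$ contraction but contaminates the solution with a nonzero $\psi_1^{b,E}$-component carrying the \emph{opposite} oscillation $e^{+ibr^2/4}$; this destroys the derivative estimates \eqref{eqnonadmosc} for large $N$ (and hence the $N$-uniform non-degeneracy \eqref{eqnonadmnondeg}). The paper handles this by building two \emph{new} exterior inversions $\tilde T^{ext,+}_{b,E}$ and $\tilde T^{ext,-}_{b,E}$ on $[r_0,\infty)$ (Step~2.1), each using an integration-by-parts functional $\calI^+_N$, $\tilde\calI^-_N$ of order $N>-\Im E/b+1$ to make sense of $\int_r^\infty\psi_3 f$ in the oscillating classes $X^{\a,N,+}_{r_0;b,E}$ and $\tilde X^{\a,N}_{r_0;b,E}$ (the latter built on $D_{-3;b,E}$, matching your $e^{-3ibr^2/4}$ observation). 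The constants are not uniform in $b,\nu$; the contraction comes from taking $r_0=r_{0;b,\nu,\delta_0}$ large, not from $b$ small.

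Two minor points. First, the split into low/high $\nu$ is unnecessary: the paper runs a single construction for all $\nu\ge0$ with the Section~\ref{sec41} scheme, absorbing the angular potential $(\nu^2-\tfrac14)r^{-2}$ into the contraction by taking $r_0$ large depending on $\nu$. Second, your $b=0$ proposal (Bessel sources $\tilde I_{\nu,1\pm\l}$ instead of the paper's plain $e^{\sqrt{1\pm\l}\,r}$) is a harmless alternative basis and works the same way.
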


\begin{remark}[Exterior solutions for 1D even case]\label{rmkchoice1Dext} For the case $d = 1, l = 0$ where $\nu = -\frac 12$, the equation \eqref{eqnu} is the same as $d = 1, l = 1$ case where $\nu = \frac 12$. So we define exterior solutions as 
\be
    \Phi_{j;b,\l,-\frac 12} := \Phi_{j;b,\l,\frac 12}, \quad {\rm for}\,\, 1 \le j \le 4;
\ee
with $j = 1, 2$ from Proposition \ref{propextfund} and $j = 3, 4$ from Proposition \ref{propextfundin}. 
\end{remark}

\begin{proof}The proof of (1) resembles Step 1 of the proof of Proposition \ref{propextfund}, and (2) resembles Step 2.2 of that plus Lemma \ref{leminvtildeHext} for scalar inversion of $\tilde H_{b, E}$ with different oscillatory behavior. Finally, (3) is a corollary from the non-degeneracy properties. We will apply the same vector notation \eqref{eqvectorform}. 

In contrast to Proposition \ref{propextfund}, the smallness of contractions comes from $r_0 \gg 1$ rather than smallness of  $b$ and $|E-1|$. Therefore, it suffices to guarantee Proposition \ref{propQbasymp}, Proposition \ref{propQbasympref} and Proposition \ref{propWKB} are applicable, which leads to the choice $s_{c;{\rm ext}}^{(3)} \le \min\{s_c^{(0)}(d),  s_c^{(0)'}(d)\}$ and small enough such that $b(d, s_c) \le b_0(I_0)$ for all $s_c < s_{c;{\rm ext}}^{(3)}$. 
The constants $s_c^{(0)}(d),  s_c^{(0)'}(d), b_0(I_0)$ are from the propositions listed above. 

\mbox{}

\underline{Step 1. Fundamental solutions for $b = 0$ case.}

Recall the inversion operator $T_{x_*, E}^{(0), G}$ for $\pa_r^2 - E$ from \eqref{eqdefTb0inv}, and similar to \eqref{eqbddTk}, it is easy to see $T_{x_*, E}^{(0), G}$ is bounded in $C^0_{r^{-2} e^{\sqrt E r}}([x_*, \infty]) \to C^1_{r^{-1} e^{\sqrt E r}}([x_*, \infty])$ and in $C^0_{r^{m} e^{(\sqrt E - \a) r}}([x_*, \infty]) \to C^1_{r^{m} e^{(\sqrt E - \a) r}}([x_*, \infty])$ for $m \ge -2, 0 < \a < 2\Re \sqrt{E}$, with the bounds independent of $x_* \ge 1$. 

Then we can invert $\calH_{0, \nu}$ in \eqref{eqnu} and define $\Phi_{j ;0, \l, \nu} = \left( \begin{array}{c}
     \Phi^1_j  \\
     \Phi^2_j 
\end{array} \right)$ for $j = 3, 4$ as the solution of the integral equation
\bee
\left| \begin{array}{l}
    \Phi^1_j = S_j + T_{x_*; 1+\l}^{(0), G} F^+_j
    \\
    \Phi^2_j = R_j + T_{x_*; 1-\l}^{(0), G} \overline{F^-_j}
\end{array}\right.
\eee
where $x_* = x_*(d, \nu)$ will be picked large enough, and
\[\left( \begin{array}{c} 
  S_3 \\ R_3
  \end{array} \right) = 
  \left( \begin{array}{c} 
  e^{\sqrt{1+\l} r}  \\ 0
  \end{array} \right),\quad 
  \left( \begin{array}{c} 
  S_4 \\ R_4
  \end{array} \right) = 
  \left( \begin{array}{c} 
  0 \\ e^{\sqrt{1-\l} r}   
  \end{array} \right) \]
  and $F^\pm_j$ are as \eqref{eqb0Fpm}. With the decay of potentials in $F^\pm_j$, the contraction mapping principle implies 
\be\begin{split}
    \| \Phi_{3;0, \l, \nu}^1 - e^{\sqrt{1+\l}r}  \|_{C^1_{e^{\sqrt{1+\l}{r}}}([x_*, \infty)) } + 
 \| \Phi_{3;0, \l, \nu}^2 \|_{C^1_{e^{(\sqrt{1-\l} - \frac{p-1}{8}){r}}}([x_*, \infty)) } &\lesssim_d (\nu^2 + 1) x_*^{-1},\\
\| \Phi_{4;0, \l, \nu}^1 \|_{C^1_{e^{(\sqrt{1+\l}-\frac{p-1}{8} ){r}}}([x_*, \infty)) } + \| \Phi_{4;0, \l, \nu}^2 - e^{\sqrt{1-\l}r} \|_{C^1_{e^{\sqrt{1-\l}{r}}}([x_*, \infty)) } &\lesssim_d (\nu^2 + 1) x_*^{-1},
\end{split} \label{eqestPhib0in}
\ee
where $x_*(d, \nu) \gg \nu^2 + 1$. That implies \eqref{eqasympb0in}-\eqref{eqnondegb0in}.

\mbox{}

\underline{Step 2. Fundamental solutions for $b > 0$.}


\mbox{}

\noindent{\textit{Step 2.1. Other inversions of $\tilde H_{b, E}$ in exterior region.}}

Let $r_0 \ge \frac 4b$. We consider the exterior region $[r_0,\infty)$. 

Firstly, we define $\tilde T^{ext,+}_{b,E}$ as another inversion of $\tilde H_{b, E}$ similarly as $\tilde T^{ext}_{\infty;b,E}$ in Lemma \ref{leminvtildeHext}. For $N$ satisfying
\be N > -\frac{\Im E}{b} + 1, \quad N \ge 0 \label{eqcondinv+} \ee
we define $\tilde T^{ext,+}_{b,E}$ for $ f \in X^{-2, N, +}_{r_0 ;b, E}$ (defined in \eqref{eqdefXBanach}) as 
\be \label{eqdeftildeHbnu+}
  \tilde T_{b, E}^{ext,+} f = -
      \psi_1^{b, E} \calI^+_{N; b, E} [f](r) + \psi_3^{b, E} \int_r^\infty \psi_1^{b, E} f \frac{ds}{W_{31}} 
\ee
where $W_{31} = \calW (\psi_3^{b, E}, \psi_1^{b, E}) = i\frac{b^\frac 13 E^\frac 16}{2^\frac 43 \pi}$ from \eqref{eqWronski2} and the integral operator being
\be
\begin{split}
  \calI^+_{N; b, E} [f](r) &= \int_r^\infty \left(D_{++} \circ (2i\phi')^{-1}\right)^N (\psi_3^{b, E} f) \frac{ds}{W_{31}} \\
  &+ \sum_{l = 0}^{N-1} \left(D_{++} \circ (2i\phi')^{-1}\right)^l (\psi_3^{b, E} f) \cdot (2i\phi' W_{31})^{-1}.
  \end{split} \label{eqdefcalIn}
\ee
Here $D_{++} = D_{++;b, E}$ is from \eqref{eqDpm}. 

We claim that $\calI^+_{N;b, E}$ is well-defined and independent of $N$ satisfying \eqref{eqcondinv+}, and $\tilde T^{ext, +}_{b,E}$ is well-defined inversion of $\tilde H_{b, E}$ with the following bound
 \be
    \| \tilde T_{b, E}^{ext,+} f \|_{X^{-2, N, +}_{r_0;b, E}} +  \| \tilde T_{b, E}^{ext,+} f \|_{X^{0, N+1, +}_{r_0;b, E}} \lesssim_{b, E, N} \| f \|_{X^{-2, N, +}_{r_0;b, E}}, \label{eqbddHbnuinvext+}
    \ee
    where the constant is independent of $r_0 \ge \frac 4b$.
Indeed, we first can prove the property of $\calI^+_{N;b, E}[f]$ and its estimate
\[  |D_{--}^{j} \tilde \calI^+_N[f](r)| \lesssim_{j} |e^{2\eta(r)}| \cdot \| f \|_{\tilde X^{-2, N}_{r_0;b, E}} \cdot \left| \begin{array}{ll}
    b^{-1} r^{-j-2} & 0 \le j \le N \\
    r^{-N-1} & j = N+1
\end{array}\right. \quad r \ge r_0,\]
    as \eqref{eqestD--IN} and \eqref{eqestD--IN+1} in the proof of Lemma \ref{leminvtildeHext}. Next, noticing  that $\left|\int_r^\infty \psi_1^{b, E} f\frac{ds}{W_{31}}\right| \lesssim b^{-1} r^{-2} \| f \|_{X^{-2, N, +}_{r_0 ;b, E}}$, we can easily show  
    \[ \left|\pa_r^j \int_r^\infty \psi_1^{b, E} f\frac{ds}{W_{31}}\right| \lesssim b^{-1} r^{-j-2} \| f \|_{X^{-2, N, +}_{r_0 ;b, E}},\quad 0 \le j \le N+1, \quad r\ge r_0. \]
The bounds \eqref{eqbddHbnuinvext+} now follow these two estimates, the estimate of $\psi_j^{b, E}$ in Proposition \ref{propWKB} and Leibniz rule.

\mbox{}

Next, we define the function space
\be
\tilde X_{r_0; b, E}^{\a, N} = \left\{ f \in C^N([r_0, \infty)) : \| f \|_{\tilde X_{r_0; b, E}^{\a, N}} = \sum_{k = 0}^N \sup_{r \ge r_0} \left(\frac{r^{-\a + k} |D_{-3; b, E}^k f|}{\omega_{b, E}^+}\right) < \infty \right\}; \label{eqtildeBanach}
\ee
where the differential operator $D_{-3;b, E}$ and weight function $\omega_{b, E}^+$ are as \eqref{eqDpm}, \eqref{eqomegapm}. Now for $N$ satisfying \eqref{eqcondinv+}, we define $\tilde T^{ext, -}_{b, E}$ for $f \in \tilde X_{r_0;b, E}^{-2, N}$ as 
\be
 \tilde T^{ext, -}_{b, E} f = -\psi_1^{b, E} \tilde \calI^{-}_{N; b, E} [f](r) + \psi_3^{b, E} \int_r^\infty \psi_1^{b, E} f \frac{ds}{W_{31}},
\ee
with $W_{31}$ from \eqref{eqWronski2} and the integral operator $\tilde \calI^-_{N;b, E}$ as 
\be
\begin{split}
  \tilde \calI^-_{N; b, E} [f](r) &= \int_r^\infty \left(D_{--} \circ (-2i\phi')^{-1}\right)^N (\psi_3^{b, E} f) \frac{ds}{W_{31}} \\
  &+ \sum_{l = 0}^{N-1} \left(D_{--} \circ (-2i\phi')^{-1}\right)^l (\psi_3^{b, E} f) \cdot (-2i\phi' W_{31})^{-1}.
  \end{split} \label{eqdefcalIntilde}
\ee
Like the previous case of $\tilde T^{ext, +}_{b, E}$, we claim that $ \tilde T^{ext, -}_{b, E}$ is a well-defined inversion of $\tilde H_{b, E}$ with the following bound 
\be
      \| \tilde T_{b, E}^{ext,-} f \|_{\tilde X^{-2, N}_{r_0;b, E}} + \| \tilde T_{b, E}^{ext,-} f \|_{\tilde X^{0, N+1}_{r_0;b, E}} \lesssim_{b, E, N} \| f \|_{\tilde X^{-2, N}_{r_0;b, E}} \label{eqbddHbnuinvexttilde}
    \ee
    where the constant is independent of $r_0 \ge \frac 4b$. 
Indeed, the estimate of $\tilde \calI^-_{N;b, E}[f]$ and thereafter the first term in $\tilde T^{ext, -}_{b, E} f$ is controlled similarly as above. 
    For the second term, since 
    \[ |D_{-4;b, E}^j (\psi_1^{b, E} f) r^j| \lesssim r^j \sum_{k = 0}^j |D_{-;b,E}^k \psi_1^{b, E}| \cdot |D_{-3;b, E}^{j-k} f | \lesssim b^{-\frac 23} r^{-3-j} \| f \|_{ \tilde X^{-2, N}_{r_0;b, E} } ,\quad r \ge r_0,   \]
    namely $\psi_1^{b, E} f$ satisfies \eqref{equpupasas}, 
    the identity \eqref{eqD++commutator} and estimates of $f_{n, k}$ \eqref{eqestfnk}, $g_{n, k}$ \eqref{eqgnk} imply that for $1 \le j \le N$, 
    \bee
     \left| D_{-4;b, E}^j \int_r^\infty \psi_1^{b, E} f \frac{ds}{W_{31}}\right| \lesssim_j b^{-1} r^{-2-j} \| f \|_{ \tilde X^{-2, N}_{r_0;b, E} },\quad r \ge r_0,\eee
     and for $j = N+1$ 
     \bee
     &&D_{-4;b, E}^{N+1} \int_r^\infty \psi_1^{b, E} f \frac{ds}{W_{31}} + 4i\phi_{b, E}'  D_{-4;b, E}^{N} \int_r^\infty \psi_1^{b, E} f \frac{ds}{W_{31}} \\
     &=& \sum_{k=1}^N \sum_{j=0}^k \left( \pa_r f_{n,k} \int_r^\infty D^j_{-4;b, E}(\psi_1^{b, E} f) g_{k,j} \frac{ds}{W_{31}} - f_{n,k} D^j_{-4;b, E}(\psi_1^{b, E} f) g_{k,j} W_{31}^{-1} \right)
    \eee
    so that 
    \[ \left| D_{-4;b, E}^{N+1} \int_r^\infty \psi_1^{b, E} f \frac{ds}{W_{31}}\right| \lesssim_N r^{-N-1}   \| f \|_{ \tilde X^{-2, N}_{r_0;b, E} },\quad r \ge r_0. \]
    These estimates imply the estimate for the second term in $\tilde T^{ext, -}_{b, E} f$, and conclude the proof of \eqref{eqbddHbnuinvexttilde}.

\mbox{}

{\noindent{\textit{Step 2.2. Construction of exotic fundamental solution.}}}

As in Step 2.2 of the proof of Proposition \ref{propextfund}, we proceed by inverting $\tilde H_{b, E}$ in \eqref{eqnu} to write the integral equation for $j = 3, 4$
\be
  \left| \begin{array}{l}
     \phi^{ext}_j = S^{ext}_j + \tilde T_{r_0; b, E_+}^{ext, \tilde \sigma(j)} \left[ \left( h_{b, E_+} + \frac{\nu^2 - \frac 14}{r^2} - W_{1, b}\right) \phi^{ext}_j - e^{i\frac{br^2}2} W_{2, b} \bar\varphi^{ext}_j \right]  \\
 \varphi^{ext}_j = R^{ext}_j + \tilde T_{r_0; b, \bar E_-}^{ext, -\tilde \sigma(j)} \left[ \left( h_{b, \bar E_-} + \frac{\nu^2 - \frac 14}{r^2} - W_{1, b}\right) \varphi^{ext}_j - e^{i\frac{br^2}2} W_{2, b} \bar\phi^{ext}_j \right]
 \end{array}\right. \label{eqIextexotic} 
\ee
with $r_0 \ge \frac 4b$, 
$\tilde \sigma(3) = +$, $ \tilde \sigma(4) = -$, and 
source terms
\bee
  \left(\begin{array}{c}
       S^{ext}_3  \\
       R^{ext}_3
  \end{array} \right) = 
   \left(\begin{array}{c}
       \psi_3^{b, E_+}  \\
       0
  \end{array} \right),\quad
  \left(\begin{array}{c}
       S^{ext}_4  \\
       R^{ext}_4
  \end{array} \right) = 
   \left(\begin{array}{c}
       0 \\
       \psi_3^{b, \bar E_-} 
  \end{array} \right),\label{eqIextexoticsource}
\eee
In particular, $S^{ext}_3 \in X^{0, N, +}_{\frac 4b;b, E_+}$ and $R^{ext}_4 \in X^{0, N, +}_{\frac 4b;b, \bar E_-}$ for any $N \ge 0$. 

With the decay of potentials \eqref{eqpotentialest}, \eqref{eqpotentialest2} and \eqref{eqpotentialest3}, 
we have the following rough bounds similar to 
 \eqref{eqfundI1est1},  \eqref{eqfundI1est2},
\bee
 \left\| \left( h_{b, E} + \frac{\nu^2 - \frac 14}{r^2} - W_{1, b}\right)  \right\|_{\calL \left( X^{k, N, +}_{r_0; b, E} \to X^{k-2, N, +}_{r_0; b, E} \right) \cap \calL \left( \tilde X^{k, N}_{r_0; b, E} \to \tilde X^{k-2, N}_{r_0; b, E} \right) } &\lesssim_{b, E, N,\nu}& 1\\
 \left\|f \mapsto e^{i\frac{br^2}{2}}W_{2,b} \bar f \right\|_{\calL\left( X^{k, N, +}_{r_0; b, E_1} \to \tilde X^{k-2, N}_{r_0; b, E_2}  \right) \cap \calL \left( \tilde X^{k, N}_{r_0; b, E_1} \to X^{k-2, N, +}_{r_0; b, E_2} \right) } &\lesssim_{b, \l, N}& 1
\eee
with $(E_1, E_2) = (E_+, \bar E_-), (\bar E_-, E_+)$, any $r_0 \ge \frac 4b$, $k \ge 0$ and $N \ge 0$. Moreover, the constants are independent of $r_0$.  

Therefore, for any $0 < b \le b_0$, $|\l| \le \delta_0$, $\nu \ge 0$ fixed, we can take a $N = N_{b, \delta_0}$ satisfying \eqref{eqcondinv+} with $\Im E = \Im E_+ = \Im \bar E_- = \Im \l + bs_c$ for any $\l$ above. Then the above potential estimates and \eqref{eqbddHbnuinvext+}, \eqref{eqbddHbnuinvexttilde} imply that \eqref{eqIextexotic} is a contraction mapping in $X^{0, N_{b, \delta_0}, +}_{r_0; b, E_+} \times r_0^{-2} \tilde X^{-2, N_{b, \delta_0}}_{r_0; b, \bar E_-}$ for $j = 3$ and  $\tilde X^{0, N_{b, \delta_0}}_{r_0; b, E_+} \times r_0^{-2} X^{-2, N_{b, \delta_0}, +}_{r_0; b, \bar E_-}$ for $j = 4$, with $r_0 = r_{0;b,\nu, \delta_0}$ sufficient large depending on $b, \nu$ and uniform for $|\l| \le \delta_0$. Hence
\be 
\begin{split}
  \| \phi^{ext}_3 - S_3^{ext}\|_{X^{0, N_{b, \delta_0}, +}_{r_0; b, E_+ }} + r_0^{-2} \| \varphi^{ext}_3\|_{ \tilde X^{-2, N_{b, \delta_0}}_{r_0; b, \bar E_-}} &\ll \| S_3^{ext} \|_{X^{0, N_{b, \delta_0}, +}_{r_0; b, E_+}}, \\
  r_0^{-2} \| \phi^{ext}_4 \|_{\tilde X^{-2, N_{b, \delta_0}}_{r_0; b, E_+}} + \| \varphi^{ext}_4 - R_4^{ext}\|_{X^{0, N_{b, \delta_0}, +}_{r_0; b, \bar E_-}} &\ll \| R_4^{ext} \|_{X^{0, N_{b, \delta_0}, +}_{r_0; b, \bar E_-}}, \end{split}
  \label{eqPhiextnondegin}
\ee
Then by iterating the smoothing estimate in \eqref{eqbddHbnuinvext+}, \eqref{eqbddHbnuinvexttilde}, we can improve $N$ from $N_{b, \delta_0}$ to any finite $N$. To sum up, we have constructed two fundamental solutions $\Phi^{ext}_{3;b, \l,\nu}$, $\Phi^{ext}_{4;b, \l,\nu}$ on $[r_{0;b,\nu,\delta_0}, \infty)$ satisfying
\be
\begin{split}
  \| \Phi^{ext, 1}_{3;b,\l,\nu} \|_{X^{0,N,+}_{r_0;b,E_+}} +  \| \overline{\Phi^{ext, 2}_{3;b,\l,\nu}} \|_{\tilde X^{-2,N}_{r_0;b,\bar E_-}} <& \infty,\quad \forall N \ge 0,\\
  \| \Phi^{ext, 1}_{4;b,\l,\nu} \|_{\tilde X^{-2,N}_{r_0;b,E_+}} +  \| \overline{\Phi^{ext, 2}_{4;b,\l,\nu}}  \|_{X^{0,N,+}_{r_0;b,\bar E_-}} <& \infty,\quad \forall N \ge 0,
  \end{split} \label{eqPhi34est}
\ee
which implies the boundedness \eqref{eqnonadmosc} using \eqref{eqetaconv} to show $\omega_{b, E}^+ \sim_{b, E} r^{-\frac 12 - \frac{\Im E}{b}}$ as $r\to \infty$, and \eqref{eqphasematcheibx2} to go from $D_{\pm l;b,E}^n$ to $(\pa_r \pm \frac{ilbr}{2})^n$.

Moreover, the smallness in \eqref{eqPhiextnondegin} and asymptotics of $\psi_3^{b, E}$ from Proposition \ref{propWKB} (5) indicate that for some $r_0' \gg r_0$, we have for $k \in \{ 0, 1\}$, 
\be 
 |\Phi^{ext, 1}_{3;b,\l,\nu}| \sim |\psi_3^{b, E_+}| \sim_{b, \l} r^{-\frac 12 - \frac{\Im \l}{b} - s_c}, \quad |\Phi^{ext, 2}_{4;b,\l,\nu}| \sim | \psi_3^{b, \bar E_-}| \sim_{b, \l} r^{-\frac 12 - \frac{\Im \l}{b} - s_c},\quad \forall \,\, r \ge r_0'.\label{eqPhi34est2}
\ee
Combined with the extra $r^{-2}$ decay for $\Phi^{ext, 2}_{3;b,\l,\nu}$ and $\Phi^{ext, 1}_{4;b,\l,\nu}$ from \eqref{eqPhiextnondegin}, we obtain the non-degeneracy  \eqref{eqnonadmnondeg} with $N = 0$. For $N \ge 1$, we compute 
\bee
&&\pa_r^N\sum_{j=3}^4  ( e^{-\frac{ibr^2}{4}} c_j \Phi^1_{j;b,\l,\nu}) \\
&=& c_3 
e^{-\frac{ibr^2}{2}} \left(\pa_r - ibr\right)^N \left( e^{i\frac{br^2}{4}} \Phi^1_{3;b,\l,\nu}\right) + c_4 e^{\frac{ibr^2}{2}} \left(\pa_r + ibr\right)^N \left( e^{-i\frac{3br^2}{4}} \Phi^1_{4;b,\l,\nu}\right) \\
&=& (br)^N e^{-\frac{ibr^2}{4}} \left( (-i)^N c_3   \Phi^1_{3;b,\l,\nu} + i^N c_4   \Phi^1_{4;b,\l,\nu}  \right) + O(r^{-\frac 12 - \frac{\Im \l}{b} - s_c + N-2})
\eee
where the residuals are controlled by \eqref{eqnonadmosc}. With a similar computation for the second term in \eqref{eqnonadmnondeg}, we can evaluate for $N \ge 1$ that
\bee
 {\rm LHS\,\,of\,\,\eqref{eqnonadmnondeg}} = \liminf_{r \to \infty} \left[ \left|  \sum_{j=3}^4  ( e^{-\frac{ibr^2}{4}} c_{j,N} \Phi^1_{j;b,\l,\nu}) \right| +  \left| \sum_{j=3}^4  ( e^{\frac{ibr^2}{4}} c_{j,N} \Phi^2_{j;b,\l,\nu}) \right|  \right]  r^{\frac 12 +\frac{\Im \l}{b} - s_c} > 0
\eee
where $(c_{3,N},c_{4,N}) = ((-ib)^N c_3, (ib)^N c_4) \neq (0, 0)$ and the positivity comes from $N=0$ case. That concludes \eqref{eqnonadmnondeg} for all $N \ge 0$. 


\mbox{}

\underline{Step 3. Linear independence.}

We are going to show the equation for $(c_j)_{j=1}^4$
\be \sum_{j=1}^4 c_j \left( \begin{array}{c}
    \Phi^1_{j;b,\l,\nu}(r)  \\
    \Phi^2_{j;b,\l,\nu}(r) 
\end{array} \right) \equiv 0,\quad \forall\,\,r > 0,\label{eqlinindepeq}
\ee
has only zero solution when $b, \l, \nu$ allows all four fundamental solutions to exist according to Proposition \ref{propextfund} and Proposition \ref{propextfundin}, or Proposition \ref{propextfundh} and Proposition \ref{propextfundin}. 

\textit{Case 1. $b = 0$.} Notice that \eqref{eqlinindepeq} implies
\[ 0 = \lim_{r \to \infty} \left| \left( \begin{array}{cc}
    e^{-\sqrt{1+\l}r} &  \\
     &  e^{-\sqrt{1-\l}r}
\end{array} \right) \sum_{j=1}^4 c_j \left( \begin{array}{c}
    \Phi^1_{j;b,\l,\nu}(r)  \\
    \Phi^2_{j;b,\l,\nu}(r) 
\end{array} \right) \right|. \]
From the fast decay of $\Phi_1$, $\Phi_2$ \eqref{eqb0extbdd} and non-degeneracy of $\Phi_3$, $\Phi_4$ \eqref{eqnondegb0in}, this implies $c_3 = c_4 = 0$. Then $c_1 = c_2 = 0$ follows considering 
\[ 0 = \lim_{r \to \infty} \left| \left( \begin{array}{cc}
    e^{\sqrt{1+\l}r} &  \\
     &  e^{\sqrt{1-\l}r}
\end{array} \right) \sum_{j=1}^2 c_j \left( \begin{array}{c}
    \Phi^1_{j;b,\l,\nu}(r)  \\
    \Phi^2_{j;b,\l,\nu}(r) 
\end{array} \right) \right|. \]
and applying the non-degeneracy \eqref{eqnondegb0}. 

\mbox{}

\textit{Case 2. $b > 0$ with $\{\Phi_{j;b,\l,\nu}\}_{j=1}^2$ from Proposition \ref{propextfund}.} Pick an $N \in (\frac{\Im \l}{b} - s_c, \infty) \cap \ZZ_{\ge 0}$ and consider 
\be 0 = \lim_{r \to \infty} \left| \left( \begin{array}{cc}
    \pa_r^N \circ e^\frac{-ibr^2}{4} &  \\
     &  \pa_r^N \circ e^\frac{ibr^2}{4}
\end{array} \right) \sum_{j=1}^4 c_j \left( \begin{array}{c}
    \Phi^1_{j;b,\l,\nu}(r)  \\
    \Phi^2_{j;b,\l,\nu}(r) 
\end{array} \right) \right| r^{\frac 12 + \frac{\Im \l}{b} - s_c- N}. \label{eqnondegtest} \ee
Then the boundedness of $\Phi_1$, $\Phi_2$ \eqref{eqadmosc} and non-degeneracy of $\Phi_3$, $\Phi_4$ \eqref{eqnonadmnondeg} yield that $c_3 = c_4 = 0$. The linear independence of $\Phi_1$ and $\Phi_2$ from Proposition \ref{propextfund} (2) concludes $c_1 = c_2 = 0$. 

\mbox{}

\textit{Case 3. $b > 0$ with $\{\Phi_{j;b,\l,\nu}\}_{j=1}^2$ from Proposition \ref{propextfundh}.} Since $\Im \l \le \frac 12 b$ as required in Proposition \ref{propextfundh}, we can consider \eqref{eqnondegtest} with $N = 1$, and that $c_3 = c_4 = 0$ and $c_1 = c_2 = 0$ follow from \eqref{eqadmosch}, \eqref{eqnonadmnondeg} and \eqref{eqadmnondegh} similarly.

\end{proof}

\part{Application} \label{part2}

\section{Existence of bifurcated eigenmodes}\label{sec6}

In this section, we construct two eigenpairs of $\calH_b$ bifurcated from the radial generalized functions $\xi_2, \xi_3$ of $\calH_0$ \eqref{eqxi2xi3}. Similar to the construction of $Q_b$ in \cite{MR4250747}, the main strategy is to find admissible interior and exterior solution families, and match asymptotics to determine the parameter. 

We will exploit the scalar inversion operators in Subsection \ref{sec31} and Subsection \ref{sec41}, with the asymptotics of $Q_b$ from Proposition \ref{propQbasymp}, Proposition \ref{propQbasympref}. 
In particular, we recall the following notations: 
\begin{itemize}
    \item For interior analysis: Banach spaces $Z_{\pm, \a; x_*}$, $\tilde Z_{\pm, \a; x_*}$ from \eqref{eqdefZpm}, potential $V_{\pm,b}$ from \eqref{eqdefVpmb}; functions $A, E, D, A_b, \tilde{\frakE}$ and linear operators $L_{+;x_*}^{-1}$, $L_{+,b;x_*}^{-1}$, $L_-^{-1}$, $L_{-,b}^{-1}$,  from Lemma \ref{lemLpm} and Lemma \ref{lemLpmb}. 
    \item For exterior analysis: Banach space $X^{\a, N, -}_{r_0, r_1; b, E}$ and weight function $\omega^\pm_{b, E}$ from Definition \ref{defdiffopspace}.
    \item Self-similar profile with quadratic phase: $P_b = Q_b e^{i\frac{br^2}{4}}$. 
\end{itemize}

\mbox{}

For imaginary eigenvalue $\l \in i\RR$, we can assume the eigenfunction has symmetry
\[ Z^1 = \bar Z^2
\quad {\rm or\,\,equivalently} \quad \Phi^1 = \bar \Phi^2 =: \phi, \]
with $\Phi$ \eqref{eqZPhi}. 
Then we can rewrite the eigen equation \eqref{eqnu} with $l = 0$ as a scalar equation 
\be
  \left( \pa_r^2 - 1 -i\upsilon + \frac{b^2 r^2}{4} - \frac{(d-1)(d-3)}{4r^2} + W_{1,b} \right)\phi + e^{i\frac{br^2}{2}} W_{2, b} \bar \phi = 0,\label{eqeigenrad1}
\ee
where we denote 
\[ \upsilon = -i \l + bs_c \in \RR. \]
Equivalently, for
\[ \Xi = r^{-\frac{d-1}{2}} \phi, \]
the equation \eqref{eqeigenrad1} becomes 
\be
   \left( \Delta - 1 -i\upsilon + \frac{b^2 r^2}{4} + W_{1,b} \right)\Xi + e^{i\frac{br^2}{2}} W_{2, b} \bar \Xi = 0.\label{eqeigenrad2}
\ee

\mbox{}

The main results of this section are the followings. We first construct the $\CC$-valued function $\rho_b$ as bifurcation of $\rho$, and then the bifurcated eigenpairs of $\calH_b$.

\begin{lemma}[Construction of $\rho_b$] \label{lemrhob} For $d \ge 1$, there exists $s_{c;{\rm eig}}^{(1)}(d) \ll 1$ such that  for $0 <s_c \le s_{c;{\rm eig}}^{(1)}(d)$ with $ b = b(s_c, d) \ll 1$ from Proposition \ref{propQbasymp}, there exists a positive real value
\be \upsilon_{\rho_b} = \frac{4\pi \kappa_Q^2}{\int_0^\infty Q^2 r^{d+1} dr} b^{-3}e^{-\frac{\pi}{b}} (1 + o_{s_c \to 0}(1)) \label{equpsilonasymp} \ee
with $\kappa_Q$ from Lemma \ref{lemQasymp}, and $\rho_b \in C^\infty_{rad}(\RR^d; \CC)$ such that
\be \left(\Delta - 1 - i \upsilon_{\rho_b} + \frac{b^2 r^2}{4} + W_{1, b} \right) \rho_b + e^{i\frac{br^2}{2}} W_{2, b} \bar \rho_b = - r^2  P_b. \label{eqrhob}
\ee
and the following estimates hold with $x_* = 10|\log b|$
\bea
 \Re \rho_b = L_{+, b; x_*}^{-1} \left( r^2 \Re P_b \right) + O_{\tilde Z_{+, 0;x_*}} (e^{-2x_*} x_*^\frac 52),\quad r \in [0, x_*], \label{eqasympXibReint} \\
 \Im \rho_b = \frac{\| x Q \|_{L^2}^2}{2 |\SS^{d-1}|} \upsilon_{\rho_b} \tilde{\frakE} + O_{\tilde Z_{-, 4;x_*}}(\upsilon_{\rho_b}) + O_{\tilde Z_{+, 0;x_*}}(\upsilon_{\rho_b} b^{\frac 13}),\quad r \in [0, x_*].\label{eqasympXibImint} \\
  \left\| r^{\frac{d-1}{2}}\rho_b \right\|_{X^{5, N, -}_{\frac 2b, \frac{2+\sqrt b}{b}; b, E}} \lesssim_N b^{2 - \frac 16} e^{-\frac{\pi}{2b}}, \quad \forall N \ge 6 \label{eqasympXibext}\\
  \left\| r^{\frac{d-1}{2}}\Re \rho_b \right\|_{C^0_{\omega_{b, 1}^- r^3}([x_*, \frac 2b])} \lesssim b^{-\frac 16} e^{-\frac{\pi}{2b}}, \label{eqasympXibRemid} \\
  \left\| r^{\frac{d-1}{2}}\Im \rho_b \right\|_{C^0_{\omega_{b, 1}^+} ([x_*, \frac 2b]) } \lesssim b^{-3-\frac 16} e^{-\frac{\pi}{2b}}. \label{eqasympXibImmid} 
\eea
\end{lemma}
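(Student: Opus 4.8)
<br>

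The plan is to construct $\rho_b$ by matched asymptotics, treating the elliptic equation \eqref{eqrhob} as a bifurcation of the defining equation $L_+ \rho = |x|^2 Q$ for $\rho$ from \eqref{eqdefQ1Q2rho}. First I would set up the interior construction on $[0, x_*]$ with $x_* = 10|\log b|$: decompose into real and imaginary parts, treat $\Re\rho_b$ as governed by $L_{+,b}$ (the $b$-deformation of $L_+$ via \eqref{eqdefLpmb}) with source $r^2\Re P_b$, and use the inversion operator $L_{+,b;x_*}^{-1}$ from Lemma \ref{lemLpmb} to solve it, with error controlled by \eqref{eqLpmbest1}--\eqref{eqLpmbest2} and the difference estimate \eqref{eqestVpmb}. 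The key subtlety is the imaginary part: the operator $L_{-,b}$ acting on $\Im\rho_b$ has $Q\in\ker L_-$, so the equation $L_{-,b}\Im\rho_b = -r^2\Im P_b + \upsilon_{\rho_b}\Re\rho_b + (\text{coupling})$ is solvable on growing weighted spaces only after imposing a solvability (Fredholm) condition against $Q$; this is exactly what pins down $\upsilon_{\rho_b}$. Using \eqref{eqLpmbest3} with the projection onto $Q$ and the key identity \eqref{eqrhobQ} that $(\rho,Q)_{L^2} = \tfrac12\|xQ\|_{L^2}^2$, one reads off that the leading term of $\Im\rho_b$ is $\tfrac{\|xQ\|_{L^2}^2}{2|\mathbb S^{d-1}|}\upsilon_{\rho_b}\tilde{\frakE}$, giving the structure \eqref{eqasympXibImint}.

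Next I would do the exterior construction. On $[x_*, \infty)$ the profile potentials $W_{1,b}, W_{2,b}$ decay and the dominant operator for $\Phi = r^{(d-1)/2}\rho_b$ is the corrected scalar operator $\tilde H_{b,E}$ of Section \ref{sec41}, with source $-r^{2+(d-1)/2}P_b$; here $E = 1 + ibs_c$ (or with the $\upsilon_{\rho_b}$ correction absorbed). I would split $[x_*,\infty)$ into the middle region $[x_*, r^*_{b,E}+b^{-1/3}]$ and the far exterior, invert using $\tilde T^{mid,G}_{x_*,x^*;b,E}$ (and its decaying variant for the rapidly growing imaginary/real pieces — recall $\Re Q_b$ decays, $\Im Q_b$ grows like $e^r$ on the soliton scale) via Lemma \ref{leminvtildeHmid}, and use the sharp asymptotics of $P_b$ from Proposition \ref{propQbasympref}, especially the real/imaginary splitting on $[b^{-1/2}, 2b^{-1}]$ via the connection formula \eqref{eqconnect}. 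The weight bookkeeping with $\omega^\pm_{b,1}$ and the boundedness estimates \eqref{eqtildeTmidGest1}--\eqref{eqtildeTmidDest}, \eqref{eqeta2est1}--\eqref{eqeta2est2} then yield \eqref{eqasympXibext}, \eqref{eqasympXibRemid}, \eqref{eqasympXibImmid}. The $X^{5,N,-}$ estimate on $[\tfrac2b, \tfrac{2+\sqrt b}{b}]$ uses the full machinery of the $D_\pm$-differential operators near the turning point.

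Finally I would match at $x_*$: the interior solution (parametrized, at leading order, by the orthogonality constant) and the exterior solution must agree in value and derivative at $x_*$, and this two-scale matching (the asymptotics are exponential in $b^{-1}$, as flagged in Step 2.1 of the outline) fixes $\upsilon_{\rho_b}$. The leading balance comes from comparing $\Im\rho_b \sim c\,\upsilon_{\rho_b}\tilde{\frakE} \sim c\,\upsilon_{\rho_b}(2\kappa_Q)^{-1}e^r r^{-(d-1)/2}$ growing out of the interior against the exterior solution, whose imaginary part inherits the $e^{-\frac{\pi}{2b}}$-scale amplitude of $Q_b$ and the oscillation structure; equating these forces $\upsilon_{\rho_b} \sim \frac{4\pi\kappa_Q^2}{\int_0^\infty Q^2 r^{d+1}dr}b^{-3}e^{-\frac\pi b}(1+o(1))$, i.e. \eqref{equpsilonasymp}. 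The formula $s_c\sim b^{-1}e^{-\pi/b}$ from \eqref{eqbasymp} is what makes $\upsilon_{\rho_b}$ genuinely small (hence the bifurcated eigenvalue tiny), and one checks $\upsilon_{\rho_b} > 0$ from the signs of $\kappa_Q$ and the integrals. I expect the main obstacle to be the matching computation: one must track the precise constants (including the normalization $\varrho_b$ of $P_b$ from \eqref{eqvarrhob}, the factor $|\mathbb S^{d-1}|^{-1}$, and the Wronskian normalizations of the $\psi_j^{b,E}$) through both the interior Fredholm condition and the exterior inversion, and reconcile the growth $e^r$ of $\tilde{\frakE}$ on the soliton scale with the $S_b(r)$-type WKB growth on the self-similar scale across the intermediate zone — a genuinely delicate two-scale bookkeeping rather than a routine estimate.
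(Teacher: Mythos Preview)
Your overall plan matches the paper's: interior/exterior construction with matching at $x_* = 10|\log b|$, real/imaginary decoupling via $L_{\pm,b}$, and identification of $\Im\rho_b \sim c\,\upsilon_{\rho_b}\tilde{\frakE}$ via \eqref{eqLpmbest3} and \eqref{eqrhobQ}. Two points need correction.

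First, your characterization of how $\upsilon_{\rho_b}$ is determined is off. You write that $L_{-,b}\Im\rho_b = \ldots$ is ``solvable on growing weighted spaces only after imposing a solvability (Fredholm) condition against $Q$; this is exactly what pins down $\upsilon_{\rho_b}$.'' That is not the mechanism: $L_{-,b}^{-1}$ as built in Lemma \ref{lemLpmb} always produces a solution with no orthogonality constraint; the inversion simply generates a growing $\tilde{\frakE}$ component with coefficient $\propto \upsilon\,(\Re\rho_b,Q)_{L^2(B_{x_*})}$ via \eqref{eqLpmbest3}. What actually fixes $\upsilon_{\rho_b}$ is the \emph{matching} at $x_*$ of this growing interior term against the exterior boundary coefficient $\Im\kappa_0^*$, which the paper computes explicitly from $\int_{x_*}^{2/b}\psi_4^{b,1}\,r^{2+(d-1)/2}\Im P_b\,dr/W_{42;1}$ using the sharp asymptotics \eqref{eqQbsharp}. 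You seem aware of this in your last paragraph, so perhaps the earlier sentence is loose language; but be sure you are not imposing a nonexistent interior constraint.

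Second, and more structurally: the matching at $x_*$ produces four real conditions (value and derivative, real and imaginary parts), so you need four real unknowns. In the paper these are $(\upsilon, \alpha, \mu_\Re, \mu_\Im)$: $\alpha$ is a free coefficient on the growing kernel element $A_b$ of $L_{+,b}$ added to the interior real part, and $\mu_\Re,\mu_\Im$ are coefficients on two homogeneous exterior solutions $\xi^\Re_\upsilon, \xi^\Im_\upsilon$ built from $\psi_1^{b,E}$. You mention none of $\alpha,\mu_\Re,\mu_\Im$; without them the matching system is overdetermined and the argument cannot close. The paper then extracts the leading orders, passes to normalized variables $(\tilde\upsilon,\tilde\alpha,\tilde\mu_\Re,\tilde\mu_\Im)$, and closes via Brouwer's fixed point theorem on a box of size $O(x_*^{1/4})$; the formula \eqref{equpsilonasymp} for $\upsilon_{\rho_b}$ falls out of the leading-order balance in that system.
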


\begin{proposition}[Construction of bifurcated eigenmodes] \label{propbifeigen}For $d \ge 1$, there exists $s_{c;{\rm eig}}^{(2)}(d) \ll 1$ such that  for $0 <s_c \le s_{c;{\rm eig}}^{(2)}(d)$ with $ b = b(s_c, d) \ll 1$ from Proposition \ref{propQbasymp}, there exist two positive real values
\be \upsilon_{2,b} = 2b + O(b^{-2} e^{-\frac \pi b}) ,\quad \upsilon_{3,b} = \upsilon_{\rho_b} + O(b^{-2} e^{-\frac \pi b}), \ee
and $Z_{2,b}$, $Z_{3, b}  \in C^\infty_{rad}(\RR^d; \CC^2)$ such that 
\[ \left(\calH_b - i\upsilon_{k,b} + ibs_c\right) Z_{k,b} = 0,\quad {\rm for}\,\, j = 2, 3.  \]
Moreover, for $k = 2, 3$, $Z_{k,b} = (\mathfrak{z}_{k, b}, \overline{\mathfrak{z}}_{k, b})^\top$ and 
\be
  \left\| \mathfrak{z}_{k, b} r^{\frac{d-1}{2}} e^{i\frac{br^2}{4}} \right\|_{X^{7, N, -}_{\frac 2b, \frac{2+\sqrt{b}}{b}; b, 1 - i \upsilon_{k,b} } } < \infty,\quad \forall N \ge 8.  \label{eqdecayZkb}
\ee
\end{proposition}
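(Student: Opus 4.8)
The plan is to construct $Z_{k,b}$ for $k=2,3$ by matched asymptotics, in the spirit of the construction of $Q_b$ in \cite{MR4250747} and of $\rho_b$ in Lemma \ref{lemrhob}, now with the eigenvalue playing the role of an extra free parameter. Since we seek a purely imaginary $\l_{k,b}=i\upsilon_{k,b}-ibs_c$ with $\upsilon_{k,b}\in\RR$, we look for $Z_{k,b}=(\mathfrak{z}_{k,b},\overline{\mathfrak{z}}_{k,b})^\top$ and set $\Xi=e^{ibr^2/4}\mathfrak{z}_{k,b}$, so that as in \eqref{eqeigenrad1}--\eqref{eqeigenrad2} the eigenequation reduces to the scalar equation $\left(\Delta-1-i\upsilon+\frac{b^2r^2}{4}+W_{1,b}\right)\Xi+e^{ibr^2/2}W_{2,b}\bar\Xi=0$ with $\upsilon=-i\l+bs_c$. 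Separating real and imaginary parts and using the anti-diagonalizing operators $L_{\pm,b}$ of \eqref{eqdefLpmb} turns this into the coupled system
\begin{equation*}
  L_{+,b}\,\Re\Xi=\big(\upsilon+\Im(e^{ibr^2/2}W_{2,b})\big)\,\Im\Xi,\qquad L_{-,b}\,\Im\Xi=\big(-\upsilon+\Im(e^{ibr^2/2}W_{2,b})\big)\,\Re\Xi,
\end{equation*}
in which both $\upsilon$ and $\Im(e^{ibr^2/2}W_{2,b})$ are small on the soliton scale. The scheme is: build two families of solutions, one on $[0,x_*]$ and one on $[x_*,\infty)$ with $x_*=10|\log b|$, each depending analytically on $\upsilon$ and each automatically admissible at its own endpoint (smooth at $r=0$; purely non-oscillatory with the correct polynomial tail at $r=+\infty$); then choose $\upsilon=\upsilon_{k,b}$ so that the two Cauchy data at $x_*$ are proportional, i.e.\ solve the scalar equation $\mathcal{F}_k(\upsilon)=0$ obtained by setting a $2\times2$ Wronskian to zero.

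For the interior piece, the leading ansatz is dictated by the degenerate generalized eigenmode of $\calH_0$ one is bifurcating from: for $k=2$ it is the scaling combination $\xi_{0,b}-2b\xi_{1,b}+8b^2\xi_{2,b}$ of \eqref{eqeigencalHb2}, whose residual $-4ib^2s_c\eta_b$ is $O(b^2s_c)$ and is corrected by a $\rho_b$-type inhomogeneous term; for $k=3$ it is a $\upsilon$-dependent variant of $\rho_b$ itself from Lemma \ref{lemrhob}. One writes $\Xi=\Xi^{\mathrm{app}}(\upsilon)+\Xi^{\mathrm{res}}$, inverts via $L_{+,b;x_*}^{-1}$ and $L_{-,b}^{-1}$ of Lemma \ref{lemLpmb} with source the residual plus the small couplings, and runs a contraction in the spaces $Z_{\pm,\a;x_*}$, $\tilde Z_{\pm,\a;x_*}$, propagating analyticity in $\upsilon$. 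By \eqref{eqrhobQ} and the analogues of \eqref{eqasympXibReint}--\eqref{eqasympXibImmid}, the resulting interior admissible family has, at $r=x_*$, real part equal to a coefficient $c_r(\upsilon)$ times the exponentially growing solution $A_b$ plus an exponentially decaying remainder, and imaginary part dominated by $\frac{\|xQ\|_{L^2}^2}{2|\SS^{d-1}|}\upsilon\,\tilde\frakE$, all coefficients analytic in $\upsilon$.

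For the exterior piece on $[x_*,\infty)$ one repeats the construction of Section \ref{sec5}: use the WKB approximate solutions $\psi_j^{b,1-i\upsilon}$ (Proposition \ref{propWKB}), the scalar inversions $\tilde T^{mid}_{x_*,x^*;b,E}$ and $\tilde T^{ext}_{r_0;b,E}$ (Lemmas \ref{leminvtildeHmid}, \ref{leminvtildeHext}), and the sharp asymptotics of $\Re Q_b,\Im Q_b$ on $[b^{-1/2},2b^{-1}]$ together with the oscillation of $Q_b$ past $2b^{-1}$ from Proposition \ref{propQbasympref}, to produce the unique (up to scaling) exterior admissible solution, namely the one whose $Z$-component $\mathfrak{z}_{k,b}$ carries no $e^{ibr^2/4}$-oscillation, i.e.\ lies in the space $X^{7,N,-}_{\frac{2}{b},\frac{2+\sqrt b}{b};b,1-i\upsilon}$ of \eqref{eqdecayZkb}, whose weight records the absence of the growing oscillatory branch together with the admissible polynomial growth inherited from the $r^2 P_b$-type source and the $r^{\upsilon/b}$-growth of the fundamental solution. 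Choosing $x_*$ logarithmic is what makes the transport through $[x_*,b^{-1/2}]$ cheap, so that both interior and exterior Cauchy data at $x_*$ are expressed in the basis $\{e^{\pm\sqrt{1-\upsilon}\,r}\}$ with $O(b^{1/6})$ errors, exactly as in Proposition \ref{propextfund}.

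The matching equation $\mathcal{F}_k(\upsilon)=0$ is then solved by a fixed-point/implicit-function argument: $\mathcal{F}_k$ is analytic in $\upsilon$ near the target value and its leading behaviour is computable. For $k=2$, the ``$+2bi$'' combination of \eqref{eqeigencalHb2} is an approximate eigenmode up to the source $-4ib^2s_c\eta_b$, and dividing this mismatch by the exponentially small Wronskian between the two admissible exterior branches yields the displacement $\upsilon_{2,b}-2b=O(b^{-2}e^{-\pi/b})$; for $k=3$, the value $\upsilon_{\rho_b}$ was already chosen in Lemma \ref{lemrhob} to render the inhomogeneous solution admissible, and the genuine eigenmode needs only an $O(b)$-relative correction, $\upsilon_{3,b}-\upsilon_{\rho_b}=O(b^{-2}e^{-\pi/b})$. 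Smoothness of $Z_{k,b}$ on $(0,\infty)$ is ODE regularity, and \eqref{eqdecayZkb} is read off from the exterior construction. I expect the main obstacle to be this matching step: because $\Im\Xi$ grows interiorly (through $\tilde\frakE\sim e^{r}$) while the true eigenvalue contribution is exponentially small, the Wronskian must be computed with exponential precision, forcing a genuinely two-scale matching across $[x_*,b^{-1/2}]$ and careful bookkeeping of the interplay between the $e^{\pm\sqrt{1-\upsilon}\,r}$ growth/decay, the $e^{\pm S_b}$ WKB factors and the $r^{\pm\upsilon/b}$ tails — the same difficulty already met in Lemma \ref{lemrhob}, now with the extra unknown $\upsilon$ to be pinned down.
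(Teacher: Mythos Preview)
Your outline is in the right spirit---matched asymptotics between an interior solution on $[0,x_*]$ and an exterior admissible solution on $[x_*,\infty)$, with $\upsilon$ as the extra matching parameter---but it misses the paper's central algebraic reduction, and as written the scheme would not close for $k=2$.

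The paper does \emph{not} treat the two eigenvalues by separate ansatzes. It takes the single $\upsilon$-dependent ansatz
\[
Z = \xi_{0,b} - \mathfrak{u}\,\xi_{1,b} + \mathfrak{u}(2b+\mathfrak{u})\,\xi_{2,b} + (2b-\mathfrak{u})(2b+\mathfrak{u})\mathfrak{u}\,\xi_{3,b} + \mathfrak{Z},\qquad \mathfrak{u}=\upsilon-bs_c,
\]
with $\xi_{3,b}$ built from $\rho_b$. The polynomial coefficients are chosen so that \emph{all} the $\xi_{j,b}$-contributions cancel under $(\calH_b-i\mathfrak{u})$, leaving for the rescaled residual $h$ the inhomogeneous scalar equation
\[
\Big(\Delta+\tfrac{b^2r^2}{4}-1-i\upsilon+W_{1,b}\Big)h+e^{ibr^2/2}W_{2,b}\bar h = i\big(P_b+\omega_2(\upsilon)\rho_b\big),\qquad \omega_2(\upsilon)=-\tfrac{1}{4s_c}(\upsilon-\upsilon_{\rho_b})(\upsilon-2b-bs_c).
\]
The interior solvability (via $L_{-,b}^{-1}$) now forces the orthogonality $\big((1-V_{-,b}L_{-,b}^{-1})(\Re P_b+\omega_2(\upsilon)\Re\rho_b),Q\big)_{L^2(B_{x_\pm})}=0$, i.e.\ $\omega_2(\upsilon)=C_\pm$ with $C_\pm\sim -2\|Q\|_{L^2}^2/\|xQ\|_{L^2}^2$. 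This is a \emph{quadratic} equation in $\upsilon$ whose two roots $\upsilon^*_+\approx 2b$ and $\upsilon^*_-\approx\upsilon_{\rho_b}$ give both target eigenvalues at once; the $O(b^{-2}e^{-\pi/b})$ corrections then come from a Brouwer fixed-point argument on four real parameters $(\gamma_\pm,\alpha_\pm,\mu_{\pm,\Re},\mu_{\pm,\Im})$, with \emph{different} matching points $x_+=\tfrac{1}{10}|\log b|$ and $x_-=10|\log b|$ for the two cases.

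Your ansatz for $k=2$, the fixed combination $\xi_{0,b}-2b\xi_{1,b}+8b^2\xi_{2,b}$ from \eqref{eqeigencalHb2}, fails at exactly this point: applying $(\calH_b-i\mathfrak{u})$ with $\mathfrak{u}\neq 2b$ produces not just $-4ib^2s_c\eta_b$ but also $i(2b-\mathfrak{u})$ times the full combination, so the residual source contains $P_b$, $\Lambda Q_b$, $|x|^2 Q_b$ and $Q_b$ with coefficients that are not in the clean form $P_b+\omega_2\rho_b$. Without the $\xi_{3,b}$ term in the ansatz there is no mechanism for $\upsilon$ to enter the solvability condition in a way that can be tuned to satisfy it. Your heuristic ``divide the $O(b^2s_c)$ residual by the exponentially small Wronskian'' does not identify this structure, and ``a $\upsilon$-dependent variant of $\rho_b$'' for $k=3$ is too vague---$\rho_b$ solves an \emph{inhomogeneous} equation and must be combined with $\xi_{0,b},\dots,\xi_{2,b}$ via exactly the polynomial coefficients above to become an approximate eigenfunction.
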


Their proofs will be given in the following two subsections. As preparation, we define the $\RR$-linear multiplication operator related to \eqref{eqeigenrad1} as 
\be
  \calV_{b, E} f:=  \left(  h_{b, E} - \frac{(d-1)(d-3)}{4r^2} + W_{1, b} \right)f + e^{i\frac{br^2}{2}} W_{2,b}\bar f. \label{eqdefcalV}
\ee

\begin{lemma} For $d \ge 1$,  $0 < s_c < \min \{ s_c^{(0)}(d), s_c^{(0)'}(d)\}$ from Proposition \ref{propQbasymp} and Proposition \ref{propQbasympref}, let $E = 1 + i \upsilon$ with $\upsilon \in \RR$. We have 
\begin{enumerate}
    \item If $|\upsilon| \le 10b$, then 
    \be
   \left\| \calV_{b, E}  f \right\|_{X^{\a-2, N, -}_{\frac 2b, \frac{2 + \sqrt b}b ;b, E}} \lesssim_{\a, N}   \left\|   f \right\|_{X^{\a, N, -}_{\frac 2b, \frac{2 + \sqrt b}b ;b, E}},\quad \forall \a \in \RR,\,\, N \ge 0; \label{eqbddcalVext}
   \ee
   and 
   \be
\begin{split}
|\Re (\calV_{b, E} g)| &\lesssim \la r\ra^{-2} |\Re g| + b^2 |\Im g|, \qquad r \le \frac 2b, \\
|\Im (\calV_{b, E} g)| &\lesssim \la r\ra^{-2} |\Im g| + b^2 |\Re g|,\qquad r \le \frac 2b.
\end{split}\label{eqbddcalVext2} 
 \ee
 \item For $\upsilon = 0$, 
\be
 \begin{split}
  \left| \Im(e^{i\frac{br^2}{2}} W_{2, b})\right| &\lesssim e^{-(p-3)r}e^{-\frac{\pi}{b}},\qquad r \le b^{-\frac 12}; \\
     | \Re ( \calV_{b, 1} f)| &\lesssim \la r\ra^{-2} \left( |\Re f| + \la r\ra^{-1} e^{-2S_b} |\Im f| \right),\quad r \le \frac 2b; \\
    | \Im ( \calV_{b, 1} f)| &\lesssim \la r\ra^{-2} \left( |\Im f| + \la r\ra^{-1} e^{-2S_b} |\Re f| \right),\quad r \le \frac 2b.
 \end{split} \label{eqtildecalVest}
\ee
where $S_b(r) = \int_{\min\{ r, \frac 2b \}}^{\frac 2b} \left( 1 - \frac{b^2 s^2}{4} \right)^\frac 12$. 
\end{enumerate}
\end{lemma}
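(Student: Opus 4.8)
The statement to be proven is the final Lemma, giving two kinds of estimates for the $\RR$-linear multiplication operator $\calV_{b,E}$ defined in \eqref{eqdefcalV}: (1) boundedness on the exterior Banach space $X^{\a,N,-}_{\frac 2b,\frac{2+\sqrt b}{b};b,E}$ and a pointwise real/imaginary splitting on $[0,\frac 2b]$ when $|\upsilon|\le 10b$; and (2) refined pointwise estimates when $\upsilon=0$. The plan is to unwind the definition
\[
 \calV_{b,E}f = \left( h_{b,E} - \frac{(d-1)(d-3)}{4r^2} + W_{1,b}\right)f + e^{i\frac{br^2}{2}}W_{2,b}\bar f,
\]
and estimate each of the three scalar coefficients $h_{b,E}$, $\frac{(d-1)(d-3)}{4r^2}$, $W_{1,b}$, $e^{i\frac{br^2}{2}}W_{2,b}$ using the inputs already assembled in the excerpt: the correction bound \eqref{eqbddh} and its $\pa_E$-version \eqref{eqbddh2} for $h_{b,E}$ (giving $|h_{b,E}|\lesssim\min\{b^2,r^{-2}\}$ and $|\pa_r^n h_{b,E}|\lesssim r^{-2-n}$ on $r\ge\frac 4b$); the profile estimates of Proposition \ref{propQbasymp} and the refined ones of Proposition \ref{propQbasympref} for $W_{1,b}=\frac{p+1}{2}|Q_b|^{p-1}$ and $W_{2,b}=\frac{p-1}{2}|Q_b|^{p-3}Q_b^2$. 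The key structural point for \eqref{eqbddcalVext2} and \eqref{eqtildecalVest} is that the real part and imaginary part of the potential terms have very different sizes: $W_{1,b}$ is real and $\Re(e^{i\frac{br^2}{2}}W_{2,b})$ is $O(\la r\ra^{-2})$, while $\Im(e^{i\frac{br^2}{2}}W_{2,b})$ is exponentially small in $b^{-1}$ because $\Im P_b$ is much smaller than $\Re P_b$ (cf.\ \eqref{eqQbasymp6}, \eqref{eqQbasympint2} and the sharp asymptotics \eqref{eqQbsharp}); similarly $h_{b,E}=h_{b,1}+O(\upsilon)$ with $h_{b,1}$ real on $[0,\frac 2b]$ by Proposition \ref{propWKB}(4), so $\Im h_{b,E}$ is $O(|\upsilon|)=O(b)$ and, since $h_{b,1}$ is a genuine potential of size $\lesssim b^2$ there, it contributes the $b^2|\Im g|$ and $b^2|\Re g|$ cross-terms.

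Concretely, I would carry out the steps as follows. \emph{Step 1: the exterior boundedness \eqref{eqbddcalVext}.} Here $r\in[\frac 2b,\frac{2+\sqrt b}{b}]$, so $r\sim b^{-1}$. Each scalar coefficient $W=h_{b,E}-\frac{(d-1)(d-3)}{4r^2}+W_{1,b}$ or $W=e^{i\frac{br^2}{2}}W_{2,b}$ satisfies $|\pa_r^n W|\lesssim_n r^{-2-n}$ on this range: for $h_{b,E}$ this is \eqref{eqbddh}–\eqref{eqbddh2}; for the angular term it is trivial; for $W_{1,b},W_{2,b}$ it follows from the derivative estimates \eqref{eqQbasymp8}–\eqref{eqUabs2} of Proposition \ref{propQbasympref} (the oscillation $e^{i\frac{br^2}{2}}$ is harmless because $e^{i\frac{br^2}{2}}W_{2,b}=|Q_b|^{p-3}(e^{i\frac{br^2}{4}}Q_b)^2=|Q_b|^{p-3}P_b^2$ and we control $D_{-;b,1}$-derivatives of $P_b$, though for \eqref{eqbddcalVext} one only needs plain $\pa_r$-derivatives plus the factor $r^{-2}$). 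Then multiplication by such a $W$ maps $X^{\a,N,-}\to X^{\a-2,N,-}$ with norm $O(1)$: indeed the $D_{-;b,E}^k$-derivatives in the norm \eqref{eqdefXBanach} distribute by Leibniz over $Wf$, each derivative hitting $W$ gains $r^{-1}\sim b$ decay which is absorbed into the $|r-2\sqrt E b^{-1}|^{-k}$ weight, and the $r^{-2}$ extra prefactor of $W$ converts the weight $r^\a\omega^-_{b,E}$ into $r^{\a-2}\omega^-_{b,E}$; for $e^{i\frac{br^2}{2}}W_{2,b}$ one also uses that $D_{-;b,1}$ essentially matches the oscillation, so one writes $e^{i\frac{br^2}{2}}W_{2,b}\,\overline f$ and commutes as in the proof of \eqref{eqfundI1est2}. \emph{Step 2: the interior splitting \eqref{eqbddcalVext2}.} On $r\le\frac 2b$ write $\calV_{b,E}g=(\text{real coeff})\,g + (\text{complex coeff})\,\bar g$ with real coeff $=h_{b,E}-\frac{(d-1)(d-3)}{4r^2}+W_{1,b}$ and split into real and imaginary parts of the output. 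The real coeff is $\la r\ra^{-2}$-bounded (profile decays exponentially on the soliton scale and is $\lesssim b^{-\frac 13}\la r\ra^{-\frac{d-1}{2}}$-type on the intermediate scale, in all cases $\lesssim\la r\ra^{-2}$ after multiplying $|Q_b|^{p-1}$), except for $\Im h_{b,E}$. Since $h_{b,1}$ is real on $[0,\frac 2b]$ (Proposition \ref{propWKB}(4)), $\Im h_{b,E}=\Im(h_{b,E}-h_{b,1})=O(|\upsilon|\sup_{|\upsilon'|\le 10b}|\pa_E h_{b,E'}|)=O(b\cdot b^2)=O(b^3)$ by \eqref{eqbddh2} — actually even smaller than the claimed $b^2$, so this is safe. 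For the $\bar f$-term: $\Re(e^{i\frac{br^2}{2}}W_{2,b})\lesssim\la r\ra^{-2}$ while $\Im(e^{i\frac{br^2}{2}}W_{2,b})\lesssim b^2$ (in fact exponentially small). Collecting: $|\Re(\calV_{b,E}g)|\le (\text{real coeff}+\Re\text{-part of }W_2\text{-coeff})|\Re g| + (|\Im h_{b,E}|+|\Im(e^{i\frac{br^2}{2}}W_{2,b})|)|\Im g|\lesssim\la r\ra^{-2}|\Re g|+b^2|\Im g|$, and symmetrically for the imaginary part. \emph{Step 3: the $\upsilon=0$ refinements \eqref{eqtildecalVest}.} Now $h_{b,1}$ is real, so the cross terms come only from $\Im(e^{i\frac{br^2}{2}}W_{2,b})$; by \eqref{eqQbsharp}–\eqref{eqconnect}, on $r\le b^{-\frac 12}$, $P_b$ is essentially $\varrho_b b^{1/3}\psi_4^{b,1}$ (real) plus $i\varrho_b b^{1/3}\tfrac12\psi_2^{b,1}$ (imaginary) with the imaginary part of relative size $O(1)$ times $e^{-\frac\pi{2b}}$... more precisely $\Im P_b=O(bs_c\la r\ra^{-\frac{d-1}{2}}e^r)$ on $[0,b^{-1/2}]$ from \eqref{eqQbasymp6}, so $\Im(e^{i\frac{br^2}{2}}W_{2,b})=\frac{p-1}{2}|Q_b|^{p-3}\cdot 2\Re P_b\,\Im P_b\lesssim |Q_b|^{p-2}|\Im P_b|\lesssim e^{-(p-2)r}e^r\cdot bs_c\lesssim e^{-(p-3)r}e^{-\frac\pi b}$ using $bs_c\sim e^{-\frac\pi b}$ from \eqref{eqbasymp}; I would double-check the exponents against the stated $e^{-(p-3)r}e^{-\frac\pi b}$. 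The $e^{-2S_b}$ weight in the cross-term estimates is exactly the ratio $|\Im P_b|/|\Re P_b|$ on the intermediate scale $[b^{-1/2},\frac 2b]$, which by \eqref{eqQbsharp} is $|\psi_2^{b,1}|/|\psi_4^{b,1}|\sim e^{2\Re\eta_{b,1}}=e^{-2S_b}$ (since $\Re\eta_{b,1}=-S_b$, Lemma \ref{lemWKBeta}), so I would invoke the asymptotics \eqref{eqWKBasymp2}–\eqref{eqWKBasymp3} and $\Re\eta_{b,1}=-S_b$ to get $|\Im(e^{i\frac{br^2}{2}}W_{2,b})|\lesssim\la r\ra^{-2}e^{-2S_b}\cdot(\text{real potential size})$, then distribute over $f$ and $\bar f$ as in Step 2.

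\textbf{Main obstacle.} The bookkeeping in Step 1 for the off-diagonal oscillatory coefficient $e^{i\frac{br^2}{2}}W_{2,b}$ is the delicate point: one must verify that its $D_{-;b,E}^k$-derivatives (not plain derivatives) decay like $r^{-2-k}$ on $[\frac 2b,\frac{2+\sqrt b}{b}]$, which requires combining the Faà di Bruno / Leibniz bookkeeping for $\pa_r^n(|Q_b|^{p-3})$ with the $D_{-;b,1}$-derivative estimate \eqref{eqQbasymp9} of $e^{i\frac{br^2}{4}}Q_b$ — exactly the computation already sketched after \eqref{eqpotentialest3}. The other genuinely non-trivial input is the sharp size of $\Im(e^{i\frac{br^2}{2}}W_{2,b})$ in Step 3, which is not a soft estimate but relies on the precise two-scale description \eqref{eqQbsharp} of $Q_b$ together with $\Re\eta_{b,1}=-S_b$; everything else is routine once the profile and correction estimates are quoted.
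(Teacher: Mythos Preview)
The proposal is correct and follows essentially the same route as the paper: for \eqref{eqbddcalVext} the paper simply refers to the computations behind \eqref{eqfundI1est1}--\eqref{eqfundI1est2} (noting the simplification $E_+=\bar E_-=E$ here since $\upsilon\in\RR$, which makes the auxiliary phase $\rho$ in that proof vanish), and for \eqref{eqbddcalVext2}--\eqref{eqtildecalVest} it writes out the same real/imaginary splitting of $\calV_{b,E}$ you describe, bounding $|W_{1,b}|+|\Re G_b|\lesssim|P_b|^{p-1}\lesssim\la r\ra^{-3}$ and $|\Im G_b|\lesssim|P_b|^{p-2}|\Im P_b|\lesssim\la r\ra^{-3}e^{-2S_b}$ directly from \eqref{eqQbasymp5}--\eqref{eqQbasymp6} and \eqref{eqQbsharp}, then combining with $|h_{b,E}|\lesssim b^2$ from \eqref{eqbddh} and $\Im h_{b,1}=0$. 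Your treatment of $\Im h_{b,E}$ via $|\upsilon|\cdot|\pa_E h|\lesssim b^3$ is slightly sharper than the paper's cruder $|\Im h_{b,E}|\le|h_{b,E}|\lesssim b^2$, but both suffice.
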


\begin{proof}
The first estimate \eqref{eqbddcalVext2} can be derived in the same way as \eqref{eqfundI1est1} and  \eqref{eqfundI1est2} in the Step 2.2 of the proof of Proposition \ref{propextfund}, noticing that here $E_+ = \bar E_- = E$, and the derivative estimates for $Q_b$ \eqref{eqUabs2}, \eqref{eqQbasymp9} can be extended to $r \ge 2b^{-1} + b^{-\frac 12}$. 

For \eqref{eqbddcalVext2} and \eqref{eqtildecalVest}, with $f = f_1 + i f_2$ and $G_b = e^{i\frac{br^2}{2}} W_{2, b}$, we compute
\bee
 \Re ( \calV_{b, E} f) &=& \left(  \Re h_{b, E} - \frac{(d-1)(d-3)}{4r^2} + W_{1, b} + \Re G_b \right) f_1 - \left(\Im h_{b, E} - \Im G_b \right) f_2 \\
 \Im (\calV_{b, E} f) &=& \left( \Re h_{b, E} - \frac{(d-1)(d-3)}{4r^2} + W_{1, b} - \Re G_b \right) f_2 + \left(\Im h_{b, E} + \Im G_b\right) f_1.
\eee
From \eqref{eqQbasymp1}, \eqref{eqQbasymp5}, \eqref{eqQbasymp6} and \eqref{eqQbsharp}, 
\bee |W_{1, b}(r)| + |\Re G_b(r)| \lesssim |P_b(r)|^{p-1} \sim \la r\ra^{-\frac{(d-1)(p-1)}{2}} e^{(p-1)(S_b(r) - \frac{\pi}{2b})}\lesssim \la r\ra^{-3}, \\
|\Im G_b(r)| \lesssim |\Re P_b(r)|^{p-2} |\Im P_b(r)| \sim \la r\ra^{-\frac{(d-1)(p-1)}{2}}e^{(p-1)(S_b(r) - \frac{\pi}{2b}) }e^{-2S_b} \lesssim \la r\ra^{-3} e^{-2S_b},
\eee
These bounds combined with \eqref{eqbddh} and $\Im h_{b, 1} = 0$ from Proposition \ref{propWKB} (4) yield \eqref{eqbddcalVext2} and \eqref{eqtildecalVest}. 
\end{proof}

\subsection{Construction of $\rho_b$}

\begin{proof}[Proof of Lemma \ref{lemrhob}]

With $x_* = 10 |\log b|$, we will construct families of interior solutions on $[0, x_*]$ and exterior solutions on $[x_*, \infty)$ with parameters $(\upsilon, \a, \mu_\Re, \mu_\Im)$.
Finally we construct $\rho_b$ and determine $\upsilon_{\rho_b}$ via matching asymptotics through Brouwer's fixed point argument. 

As for the parameter $s_{c;{\rm eig}}^{(1)}$, we first assume its smallness such that 
\be
\begin{split}
&s_{c;{\rm eig}}^{(1)} \le \min \{ s_c^{(0)}(d), s_c^{(0)'}(d), s_{c;{\rm int}}^{(2)}(d) \}, \\
& b(s_c, d) \le \min \{ b_{\rm int},  b_0(I_0 = 10) \},\quad  \forall\, \, s_c \in (0, s_{c;{\rm eig}}^{(1)}), 
\end{split}
\label{eqsceig1}\ee
to apply Proposition \ref{propQbasymp}, Lemma \ref{lemLpmb}, Proposition \ref{propQbasympref}, Proposition \ref{propWKB}, Lemma \ref{leminvtildeHext} and Lemma \ref{leminvtildeHmid}. We might futher shrink it to make $b(d,s_c)$ small enough for linear inversion and Brouwer's fixed point argument later. 

For simplicity, we might omit the parameter $b$ when no confusion occurs, and for a scalar function $f$, we denote its vector version as $\vec f := (f, \pa_r f)^\top$.

\mbox{}

\underline{1. Interior solution on $[0, x_*]$.} 

Let
\[  \rho_b = \Sigma + i\Theta.  \]

Then \eqref{eqrhob} is equivalent to
\be
\left| \begin{array}{l}
      -L_{+, b} \Sigma + G \Theta + \upsilon \Theta = - r^2 \Re P_b \\
      -L_{-, b} \Theta + G \Sigma - \upsilon \Sigma = - r^2 \Im P_b
\end{array}
\right. \label{eqrhobint}
\ee
where the potential
\[ G = \Im \left( e^{i\frac{br^2}{2}}W_{2, b} \right). \]

We claim for 
\be |\upsilon| \le b^{-3-\frac 16} e^{-\frac \pi b}, \quad |\a| \le e^{-2x_*} x_*^{\frac 52},  \label{eqrangeupsilona} \ee
there exists a solution $(\Sigma_{\upsilon, \a}, \Theta_{\upsilon, \a})$ solving \eqref{eqrhobint} on $[0, x_*]$, which satisfies \eqref{eqasympXibReint}-\eqref{eqasympXibImint} and have boundary value
\bea
   \vec \Sigma_{\upsilon, \a}(x_*) &=& \mathfrak{c}_{x_*} \vec D(x_*) + \a \vec A(x_*) + O\left( x_*^{1 - \frac{d-1}{2}} e^{-x_*}\right) \label{eqrhobbdryint1}\\
   \vec \Theta_{\upsilon, \a}(x_*) &=& \frac{\| xQ\|_{L^2}^2}{2|\SS^{d-1}|}\upsilon  \vec \frakE(x_*) + O\left( b^{-3} e^{-\frac \pi b} x_*^{-2-\frac{d-1}{2}} e^{x_*} \right) \label{eqrhobbdryint2}
\eea
with 
\be \mathfrak{c}_{x_*} = O(x_*^3)  \label{eqestfrakcx*}
\ee
and the constant bounds for $O(...)$ terms are all independent of $\a, \upsilon$ within \eqref{eqrangeupsilona}.

In the following proof of this claim, we will apply the Banach spaces $Z_{\pm, \beta;x_*}$ and $\tilde Z_{\pm, \beta ;x_*}$ for $\beta \ge 0$ from \eqref{eqdefZpm}. For simplicity, we usually omit $\upsilon, \a, x_*$ subscripts.

\mbox{}

Invert $L_{\pm, b}$ and add the fundamental solution $A_b$ from Lemma \ref{lemLpmb}, so that \eqref{eqrhobint} turns into
\bee
  \left| \begin{array}{l}
       \Sigma = L_{+, b;x_*}^{-1} \left( (G + \upsilon) \Theta\right) - L_{+, b;x_*}^{-1} \left(  - r^2 \Re P_b \right) + \a A_b\\
       \Theta = L_{-, b}^{-1} \left( (G - \upsilon) \Sigma \right) - L_{-, b}^{-1} \left( - r^2 \Im P_b\right)
\end{array}
\right.
\eee
Plug the second equation into the first, we obtain
\bee
 \Sigma &=& L_{+, b;x_*}^{-1} \left( (G + \upsilon) L_{-, b}^{-1} \left( (G - \upsilon) \Sigma \right) \right) \\
 &+& L_{+, b;x_*}^{-1} \left( (G + \upsilon)L_{-, b}^{-1}  \left(  r^2 \Im P_b\right)  \right) - L_{+, b;x_*}^{-1} \left(  - r^2 \Re P_b \right) + \a A_b
\eee
We will show under the range of parameters\eqref{eqrangeupsilona}, this is a contraction in the ball of radius $M_0 \sim 1$ in $Z_{-, 3}$. 
Applying bounds in Lemma \ref{lemLpmb} and
 \eqref{eqtildecalVest}, we notice that 
\bee
 &&\left\| L_{+, b; x_*}^{-1} \circ (G+\upsilon) \circ L_{-, b}^{-1} \right\|_{Z_{+, 2} \to \tilde Z_{-, 3}} \\
 &\le &
 \left\| L_{-, b}^{-1} \right\|_{Z_{+, 2} \to Z_{+, 3}}
 \left\|  G+\upsilon \right\|_{Z_{+, 3} \to Z_{-, 2}}
 \left\| L_{+, b; x_*}^{-1} \right\|_{Z_{-, 2} \to \tilde Z_{-, 3}} \\
 &\lesssim& bs_c e^{(5-p)_+ x_*} x_* + |\upsilon| e^{2x_*} x_* \le e^{-\frac \pi b} e^{4x_*} x_* \\
 && \| G - \upsilon \|_{Z_{-, 3} \to Z_{+, 2}} \lesssim bs_c + |\upsilon|  \le b^{-4} e^{-\frac \pi b}
\eee
and thus
\bee
 \left\|L_{+, b;x_*}^{-1}  \left(  - r^2 \Re P_b \right) \right\|_{\tilde Z_{-, 3}} \lesssim 1, &&  \| \a A_b \|_{\tilde Z_{-, 3}} \lesssim |\a| e^{2x_*} x_*^{-3}   \le x_*^{-\frac 12} \\
 \left\| L_{+, b;x_*}^{-1} \left( (G + \upsilon)L_{-, b}^{-1}  \left( r^2 \Im P_b\right)  \right) \right\|_{\tilde Z_{-, 3}} &\lesssim& e^{-\frac{2\pi}b} e^{4x_*} x_*  \\
\left\| L_{+, b;x_*}^{-1} \left( (G + \upsilon) L_{-, b}^{-1} \left( (G - \upsilon) \Sigma \right) \right)\right\|_{\tilde Z_{-, 3}} 
&\lesssim& b^{-4} e^{-\frac{2\pi} b} e^{4x_*} x_* \| \Sigma\|_{Z_{-, 3}}
\eee
Hence the contraction is verified when $b \ll 1$, leading to 
\be
  \Sigma = \Sigma_0 + \a A_b + O_{\tilde Z_{-, 3}}\left( b^{-4} e^{-\frac{2\pi}{b}} e^{4x_*} x_* \right) \label{eqasymprhobSigma}
\ee
with $\Sigma_0 := -L_{+, b;x_*}^{-1} \left(  - r^2 \Re P_b \right)$ and $\| \Sigma_0\|_{\tilde Z_{-, 3}} \lesssim 1$. Recall the definition of $L_{+, b;x_*}^{-1}$ \eqref{eqdefLpmb} and $L_{+;x_*}^{-1}$ \eqref{eqdefLpminv}, we have $L_{+, b;x_*}^{-1} f = L_{+;x_*}^{-1}(f - V_{+, b} L_{+, b;x_*}^{-1}f)$ and therefore evaluate the boundary value of $\Sigma_0$ as
\bee
  \vec \Sigma_0(x_*) = \vec D(x_*) \int_0^{x_*} (r^2 \Re P_b - V_{+,b} \Sigma_0)(s)A(s) s^{d-1} ds =: \mathfrak{c}_{x_*} \vec D(x_*). 
\eee
Evaluating the integral with estimates of $\Re P_b$, $A$, $\Sigma_0$ and $V_{+,b}$ \eqref{eqestVpmb} yields the bound \eqref{eqestfrakcx*}. This combined with the estimate of $A_b$ \eqref{eqestAb} yields \eqref{eqrhobbdryint1} when $b \ll 1$.

\mbox{}

For $\Theta$, we use \eqref{eqasymprhobSigma} and Lemma \ref{lemLpmb} to estimate
\bee
\| L_{-, b}^{-1} \Sigma_0 \|_{\tilde Z_{+, 0}} \lesssim \| \Sigma_0 \|_{Z_{+, -2}} \lesssim 1, &\Rightarrow  &  \left|\left(  V_{-, b} L_{-, b}^{-1} \Sigma_0, Q \right)_{L^2(B_{x_*})}\right| \lesssim b^\frac 13, \\
\left \| L_{-, b}^{-1} \Sigma_0 + \frac{\left( (1- V_{-, b} L_{-, b}^{-1}) \Sigma_0, Q \right)_{L^2(B_{x_*})}}{|\SS^{d-1}|} \tilde{\frakE} \right\|_{\tilde Z_{- ,4}} &\lesssim& \| \Sigma_0 \|_{Z_{-, 3}} \lesssim 1, \\
\| L_{-, b}^{-1} (G \Sigma_0) \|_{\tilde Z_{+, 0}} &\lesssim& \| G \Sigma_0 \|_{Z_{+, -2}} \lesssim e^{-\frac \pi b} \\
\| \a L_{-, b}^{-1} ((G-\upsilon) A_b) \|_{\tilde Z_{+, 2}}&\lesssim& |\a|(bs_c e^{(3-p)_+x_*} + |\upsilon|) \lesssim e^{-\frac \pi b}\\
\left\| L_{-, b}^{-1} \left( (G-\upsilon) \left(\Sigma - \Sigma_0 - \a A_b \right) \right) \right\|_{\tilde Z_{+, 3}} &\lesssim& b^{-4} e^{-\frac \pi b} \cdot b^{-4} e^{-\frac{2\pi}{b}} e^{4x_*} x_* \\
\left\| L_{-, b}^{-1} \left( - r^2 \Im P_b\right)  \right\|_{\tilde Z_{+, 3}} &\lesssim& e^{-\frac \pi b}, 
\eee
To evaluate $(\Sigma_0, Q)_{L^2(B_{x_*})}$, 
we estimate
\bea
  \Sigma_0 - \rho &=&  L_{+;x_*}^{-1} \left( r^2 (\Re P_b - Q) \right) + \left( L_{+;x_*}^{-1} - L_+^{-1} \right) (r^2 Q)  - L_{+; x_*}^{-1} V_{+,b} \Sigma_0 \nonumber\\
  &=& L_{+;x_*}^{-1} \left(O_{Z_{-,2;x_*}}(b^{\frac 13})\right) + \left( |x|^2 Q, D \right)_{L^2(B^c_{x_*})} A \nonumber\\
  &=& O_{Z_{-,3;x_*}}(b^{\frac 13}) + O_{Z_{+,0;x_*}}(x_*^2 e^{-2x_*}) \label{eqSigma0rho}
  \eea
  and then with \eqref{eqrhobQ}, 
  \bee
  ( \Sigma_0, Q)_{L^2(B_{x_*})} = ( \rho, Q)_{L^2(\RR^d)} + ( \Sigma_0 - \rho, Q)_{L^2(B_{x_*})} + ( \rho, Q)_{L^2(B_{x_*}^c)} 
  =  \frac{1} 2 \| xQ\|_{L^2(\RR^d)}^2 + O(b^\frac 13)
\eee
using $e^{-2x_*} = b^{20}$. These bounds lead to
\bea
\Theta &=&-\upsilon L_{-,b}^{-1} \Sigma_0 + L_{-,b}^{-1} (G\Sigma_0) + \a L_{-, b}^{-1} \left( (G-\upsilon) A_b \right)  \nonumber \\
&+& L_{-, b}^{-1} \left( (G-\upsilon) \left(\Sigma - \Sigma_0 - \a A_b \right) \right) - L_{-, b}^{-1} \left( - r^2 \Im P_b\right)  \nonumber\\
&=& \upsilon \left[ \frac{\| xQ\|_{L^2}^2}{2|\SS^{d-1}|} + O(b^\frac 13) \right] \tilde{\frakE} + O_{\tilde Z_{-, 4}}(\upsilon) + O_{\tilde Z_{+, 3}}(e^{-\frac \pi b})\label{eqasymprhobTheta}
\eea
which implies \eqref{eqrhobbdryint2}. 

\mbox{}

\underline{2. Exterior solution on $[x_*, \infty)$.}

Let 
$$\xi := r^{\frac{d-1}{2}}\rho_b, $$
it satisfies 
\[ \left(  \pa_r^2 - \frac{(d-1)(d-3)}{4r^2} + \frac{b^2 r^2}{4} - 1 - i\upsilon + W_{1, b}\right) \xi + e^{i\frac{br^2}{2}} W_{2, b} \bar \xi = - r^{2+\frac{d-1}{2}} P_b, \]
namely 
\be
  (\tilde H_{b, E} + \calV_{b, E}) \xi=   -r^{2+\frac{d-1}{2}} P_b.  \label{eqrhobext}
\ee
with 
\[ E = 1 + i\upsilon, \]
$\tilde H_{b, E}$ from \eqref{eqdeftildeHbE} and $\calV_{b, E}$ from \eqref{eqdefcalV}. The  homogeneous version is 
\be  (\tilde H_{b, E} + \calV_{b, E}) \xi= 0.  \label{eqrhobexthomo} \ee

For  
$$|\upsilon| \le b^{-3-\frac 16} e^{-\frac \pi b},$$ namely the range in \eqref{eqrangeupsilona},
we will construct a solution $\xi^*_\upsilon$ of \eqref{eqrhobext} and two solutions $\xi^\Re_\upsilon$, $\xi^\Im_\upsilon$ for the homogeneous equation \eqref{eqrhobexthomo} on $[x_*, \infty)$, satisfying the estimates
\be
\left| \begin{array}{l}
    \| \xi^*_\upsilon \|_{X^{5, N, -}_{\frac 2b, \frac{2+\sqrt b}{b}; b, E}} \lesssim_{N} b^{2-\frac 16} e^{-\frac \pi{2b}}, \qquad \sum_{\sigma \in \{\Re, \Im\}} \| \xi^\sigma_\upsilon \|_{X^{5, N, -}_{\frac 2b, \frac{2+\sqrt b}{b}; b, E}} \lesssim b^5,\quad \forall\, N \ge 6;\\
    \| \Re \xi^*_\upsilon\|_{C^0_{\omega^-_{b, 1}r^3 }([x_*, \frac 2b])} \lesssim b^{-\frac 16}e^{-\frac \pi{2b}}, \qquad  \| \Im \xi^*_\upsilon\|_{C^0_{\omega^+_{b, 1}}([x_*, \frac 2b])} \lesssim b^{-3-\frac 16}e^{-\frac \pi{2b}},  \\
     \| \Re \xi^\Re_\upsilon\|_{C^0_{\omega^-_{b, 1}}([x_*, \frac 2b])} +  \| \Im \xi^\Im_\upsilon\|_{C^0_{\omega^-_{b, 1}}([x_*, \frac 2b])} +
     \| \Im \xi^\Re_\upsilon\|_{C^0_{\omega^+_{b, 1}}([x_*, \frac 2b])} +  \| \Re \xi^\Im_\upsilon\|_{C^0_{\omega^+_{b, 1}}([x_*, \frac 2b])} \lesssim 1,
     \end{array}\right.
     \label{eqxiextestcompo}
\ee
with the boundary value at $x_*$ being for $k \in \{0, 1\}$, 
\be
\left| \begin{array}{l}
 \pa_r^k \xi^*_\upsilon (x_*) =\pa_r^k \psi_2^{b, 1} (x_*) \left( \Re \kappa^*_0 (1 + O_\RR(b^3)) + i \Im \kappa^*_0 (1 + O_\RR(b^\frac 14)) \right) \\
 \pa_r^k \xi^\Re_\upsilon (x_*) = \pa_r^k \psi_4^{b, 1}(x_*) \left (1 + O_\RR(x_*^{-2}) + i O_\RR(x_*^{-1} e^{-2S_b(x_*)})\right) \\
 \pa_r^k \xi^\Im_\upsilon (x_*) = \pa_r^k \psi_4^{b, 1}(x_*) \left (i + iO_\RR(x_*^{-2}) +  O_\RR(x_*^{-1} e^{-2S_b(x_*)})\right) 
\end{array}\right.
\label{eqxiextbdryx*}
\ee
with 
\be \Re \kappa^*_0 \sim x_*^2 b^{-\frac 16}e^{2S_b(x_*) - \frac \pi{2b}},\quad \Im \kappa^*_0 \sim  b^{-3-\frac 16}e^{-\frac \pi{2b}} \label{eqdefkappa*upsilon}
\ee 
independent of $\upsilon$. 

We will work on the following two regions
\be I_{ext} = \left[ r_0, \infty \right),\quad I_{mid} = \left[x_*, r_0\right],\quad {\rm where\,\,} r_0 = 2b^{-1}.  \ee

\mbox{}

\textit{2.1. Construction on $I_{ext}$.}

Denote 
\[  r_1 = 2b^{-1} + b^{-\frac 12}.\]
Recall that \eqref{eqr*bE1} implies $r_1 > r^*_{b, E}$ when $|\Im E| = |\upsilon| \le b^{-4} e^{-\frac \pi b} \ll b$. So by Lemma \ref{leminvtildeHext} and \eqref{eqbddcalVext}, we have for any $\a \ge 0$ and $N \ge \a + 1$ that 
\bee \left\| \tilde \calT^{ext}_{r_0; b, E} \calV_{b, E} \right\|_{\calL\left(X^{\a, N, -}_{r_0, r_1; b, E} \right)} \lesssim_{\a, N} b,\quad \left\| \tilde \calT^{ext}_{r_0; b, E} \calV_{b, E} \right\|_{ \calL\left(X^{\a, N, -}_{r_0, r_1; b, E} \to X^{\a, N+1, -}_{r_0, r_1; b, E} \right)} \lesssim_{\a, N} 1.
\eee
 Also with \eqref{eqQbasymp9} from Proposition \ref{propQbasympref}, for any $\a > 4+s_c - \frac{\upsilon}b$ and $N \ge \a + 1$,  
 \[  \left\|  \tilde \calT^{ext}_{r_0; b, E} \left( r^{2+\frac{d-1}{2}} P_b\right) \right\|_{ X^{\a, N, -}_{r_0, r_1; b, E}}  \lesssim b \left\| r^{2+\frac{d-1}{2}} P_b \right\|_{ X^{\a-2, N, -}_{r_0, r_1; b, E}} \lesssim_{\a, N} b^{-3-\frac 16+\a}  e^{-\frac{\pi}{2b}}.  \]
 where we used $b^{-s_c + \frac \upsilon b} \sim 1$. 

Hence we fix $\a = 5$, $N = 6$, and define  $\xi^{ext, *}_\upsilon$ solving \eqref{eqrhobext} and $\xi^{ext, \Re}_\upsilon$, $\xi^{ext, \Im}_\upsilon$ solving \eqref{eqrhobexthomo} on $I_{ext}$ as
\bee
  \xi^{ext, *}_\upsilon &=& \tilde \calT^{ext}_{r_0; b, E}  \left( -r^{2+\frac{d-1}{2}}P_b \right) - \tilde \calT^{ext}_{r_0; b, E} \calV_{b, E} \xi^{ext, *}_\upsilon \\
  \xi^{ext, \Re}_\upsilon &=& - \tilde \calT^{ext}_{r_0; b, E} \calV_{b, E} \xi^{ext, \Re}_\upsilon + \psi_1^{b, E} \\
  \xi^{ext, \Im}_\upsilon &=& - \tilde \calT^{ext}_{r_0; b, E} \calV_{b, E} \xi^{ext, \Im}_\upsilon + i\psi_1^{b, E},
\eee
using the inversion of $\tilde H_{b, E}$ from Lemma \ref{leminvtildeHext} and fundamental solution $\psi_1^{b, E}$ from Definition \ref{defWKBappsolu}. From the above two estimates and \eqref{eqestpsi13Xspace}, they are well-defined by contraction mapping principle in $X^{5, N, -}_{r_0, r_1;b, E}$, and further iterate the above estimates w.r.t. $N$ implies
\be
  \| \xi^{ext, *}_\upsilon \|_{X^{5, N, -}_{r_0, r_1; b, E}} \lesssim_{N} b^{2-\frac 16} e^{-\frac \pi{2b}}, \quad \sum_{\sigma \in \{\Re, \Im\}} \| \xi^{ext, \sigma}_\upsilon \|_{X^{5, N, -}_{r_0, r_1; b, E}} \lesssim b^5,\quad \forall\, N \ge 6. \label{eqxiextest}
\ee
The boundary values are given by Lemma \ref{leminvtildeHext} (4) as
\be
\left| \begin{array}{l}
  \vec  \xi^{ext, *}_\upsilon (r_0) = \gamma^{ext, *}_\upsilon \vec \psi_3^{b, E} (r_0), \\ 
  \vec  \xi^{ext, \Re}_\upsilon (r_0) =  \vec \psi_1^{b, E} (r_0) + \gamma^{ext, \Re}_\upsilon \vec \psi_3^{b, E} (r_0), \\
   \vec  \xi^{ext, \Im}_\upsilon (r_0) = i \vec \psi_1^{b, E} (r_0) + \gamma^{ext, \Im}_\upsilon \vec \psi_3^{b, E} (r_0),
   \end{array}\right. \label{eqxiextbdry1}
\ee
with 
\be
  \gamma^{ext, *}_\upsilon = O\left(b^{-3 + \frac 1{12}} e^{-\frac{\pi}{2b}} \right),\quad
  \gamma^{ext, \Re}_\upsilon, \gamma^{ext, \Im}_\upsilon = O(b^\frac 54).\label{eqxiextbdry2}
\ee
We emphasize that the oscillation of $\psi_1^{b, E}$ on $[r_1, \frac 4b]$ leads to the additional $b^{\frac 14}$ smallness. This is essential to guarantee that the leading order of $\upsilon_{\rho_b}$ is determined by behavior of $P_b$ on $I_{mid}$.

\mbox{}

\textit{2.2. Construction on $I_{mid}$.} 

Since 
$$\tilde H_{b, E} + \calV_{b, E} = \tilde H_{b, 1} + \calV_{b, 1} - i\upsilon,$$
we define $\xi^{mid, *}_\upsilon$ solving \eqref{eqrhobext} and $\xi^{mid, \Re, j}_\upsilon$, $\xi^{mid, \Im, j}_\upsilon$ with $j = 2, 4$ solving \eqref{eqrhobexthomo} on $I_{mid}$ through the following integral equations
\bee
  \xi^{mid, *}_\upsilon &=& \tilde \calT^{mid, G}_{x_*, r_0; b, 1}  \left( -r^{2+\frac{d-1}{2}}P_b \right) - \tilde \calT^{mid, G}_{x_*, r_0; b, 1} (\calV_{b, 1} - i\upsilon) \xi^{mid, *}_\upsilon  \\
  \xi^{mid, \Re, j}_\upsilon &=& - \tilde \calT^{mid, G}_{x_*, r_0; b, 1} (\calV_{b, 1} - i\upsilon) \xi^{mid, \Re, j}_\upsilon + \psi_j^{b, 1},\quad j = 2, 4, \\
  \xi^{mid, \Im, j}_\upsilon &=& - \tilde \calT^{mid, G}_{x_*, r_0; b, 1} (\calV_{b, 1} - i\upsilon) \xi^{mid, \Im, j}_\upsilon + i\psi_j^{b, 1},\quad j = 2, 4,
\eee
using Lemma \ref{leminvtildeHmid}. We claim that $\xi^{mid, *}_\upsilon$, $\xi^{mid, \Re, 4}_\upsilon$, $\xi^{mid, \Re, 2}_\upsilon$, $\xi^{mid, \Im, 4}_\upsilon$ and $\xi^{mid, \Im, 2}_\upsilon$ are uniquely determined by linear contraction in $\| \cdot  \|_{\calZ_3}$, $\| \cdot \|_{\calZ_0}$, $C^0_{\omega_{b, 1}^+}(I_{mid})$, $\| i\cdot \|_{\calZ_0}$, $C^0_{\omega_{b, 1}^+}(I_{mid})$ respectively with
\be
 \| f \|_{\calZ_k} = \| \Re f \|_{C^0_{\omega_{b, 1}^- r^k}(I_{mid})} + b^k \| \Im f \|_{C^0_{\omega_{b, 1}^+}(I_{mid})}, \quad k \ge 0; \label{eqdefcalZk}
\ee

For $\xi^{mid, *}_\upsilon$, notice that $\calT^{mid, G}_{x_*, r_0; b, 1}$ is $\RR$-valued integral operator (Lemma \ref{leminvtildeHmid} (1)), we apply \eqref{eqtildeTmidGest4}, \eqref{eqtildeTmidGest3} plus \eqref{eqQbasymp5}-\eqref{eqQbasymp6} and \eqref{eqQbsharp}  to obtain
\bee
  \left\| \tilde \calT^{mid, G}_{x_*, r_0; b, 1}  \left( r^{2+\frac{d-1}{2}} P_b \right)  \right\|_{\calZ_3} &\lesssim& \|r^{2+\frac{d-1}{2}} \Re P_b \|_{C^0_{\omega_{b, 1}^- r^2}(I_{mid})} + \|r^{2+\frac{d-1}{2}} \Im P_b \|_{C^0_{\omega_{b, 1}^+ r^2}(I_{mid})}\\
  &\lesssim& b^{-\frac 16} e^{-\frac{\pi}{2b}} 
\eee
From \eqref{eqbddcalVext2} and
\be e^{-2S_b(x_*)} = e^{2x_*} e^{-\frac \pi b} (1 + O(b)) \sim b^{-20} e^{-\frac \pi b}  \quad \Rightarrow \quad |\upsilon| \le b^3 r^{-3} e^{-2S_b} \quad {\rm on}\,\,I_{mid}, \label{equpsilonmid}\ee 
we derive
\bee
 \left| \Re \left( ( \calV_{b, 1} - i\upsilon)  \xi^{mid}_\upsilon\right)  \right| &\lesssim& \omega_{b, 1}^- (r^{k-2} + b^{-k}r^{-3}e^{-4S_b}) \| \xi^{mid}_\upsilon\|_{\calZ_k} \lesssim \omega_{b, 1}^- r^{k-2}  \| \xi^{mid}_\upsilon\|_{\calZ_k}\\
 \left| \Im \left( ( \calV_{b, 1} - i\upsilon)  \xi^{mid}_\upsilon\right)  \right|&\lesssim& w_{b, 1}^+ (r^{k-3} + b^{-k}r^{-2})  \| \xi^{mid}_\upsilon\|_{\calZ_k}
 \lesssim b^{-k} w_{b, 1}^+ r^{-2}  \| \xi^{mid}_\upsilon\|_{\calZ_k}.
\eee
So with \eqref{eqtildeTmidGest1} and \eqref{eqtildeTmidGest2}, we have
\[ \left\| \tilde \calT^{mid, G}_{x_*, r_0; b, 1} 
 \circ  (\calV_{b, 1} - i\upsilon) \right\|_{\calL(\calZ_k)} \lesssim x_*^{-1}. \]
That concludes the proof of contraction for $\xi^{mid, *}_\upsilon$ in $\calZ_3$, and of $\xi^{mid, \Re, 4}_\upsilon$, $\xi^{mid, \Im, 4}_\upsilon$ in $\calZ_0$, $i \calZ_0$ exploiting $\| \psi_4^{b, 1} \|_{C^0_{\omega_{b, 1}^-}(I_{mid})} \sim 1$. The contraction for $\xi^{mid, \Re, 2}_\upsilon$ and $\xi^{mid, \Im, 2}_\upsilon$ are easier using \eqref{eqtildeTmidGest1}, $|(\calV_{b, 1} - i\upsilon) \xi| \lesssim r^{-2} |\xi|$ and $\| \psi_2^{b, 1} \|_{C^0_{\omega_{b, 1}^+}(I_{mid})} \sim 1$. 
To sum up, we have 
\be \| \xi^{mid, *}_\upsilon \|_{\calZ_3} \lesssim b^{-\frac 16} e^{-\frac \pi{2b}},\quad \| \xi^{mid,\Re,4}_{\upsilon} \|_{\calZ_0} +\| i\xi^{mid,\Im,4}_{\upsilon} \|_{\calZ_0} \lesssim 1,\quad \sum_{\sigma \in \{ \Re, \Im \}} \| \xi^{mid, \sigma, 2}_\upsilon \|_{C^0_{\omega_{b, 1}^+ }(I_{mid})} \lesssim 1.
\label{eqximidest} \ee

Moreover, since for $\sigma \in \{ \Re, \Im\}$
\bee
\xi^{mid, *}_\upsilon - \xi^{mid, *}_0  &=& \left( 1 +  \tilde \calT^{mid, G}_{x_*, r_0; b, 1} \calV_{b, 1}\right)^{-1} \left(i\upsilon \xi^{mid, *}_\upsilon \right), \\
   \xi^{mid, \sigma, j}_\upsilon - \xi^{mid, \sigma, j}_0 &=& \left( 1 +  \tilde \calT^{mid, G}_{x_*, r_0; b, 1} \calV_{b, 1}\right)^{-1} \left(i\upsilon \xi^{mid, \sigma, j}_\upsilon \right),\quad j = 2, 4, 
\eee
with the additional smallness of $\upsilon$ \eqref{equpsilonmid}, we also have the following difference estimate
\be
  \| \xi^{mid, *}_\upsilon - \xi^{mid, *}_0 \|_{\calZ_3} + \sum_{\sigma \in \{ \Re, \Im\}}\left\| \xi^{mid, \sigma, 2}_\upsilon - \xi^{mid, \sigma, 2}_0 \right\|_{C^0_{\omega_{b, 1}^+ }(I_{mid})}  \lesssim b^3.
 \label{eqximiddiff}
\ee

The boundary values are given by Lemma \ref{leminvtildeHmid} as 
\bea
\left|\begin{array}{l}
  \vec  \xi^{mid, *}_\upsilon (r_0) = \gamma^{mid, *}_\upsilon \vec \psi_4^{b, 1} (r_0), \\
  \vec  \xi^{mid, \Re, j}_\upsilon (r_0) = \vec \psi_j^{b, 1} (r_0) + \gamma^{mid, \Re, j}_\upsilon \vec \psi_4^{b, 1} (r_0), \\
  \vec  \xi^{mid, \Im, j}_\upsilon (r_0) = i\vec \psi_j^{b, 1} (r_0) + \gamma^{mid, \Im, j}_\upsilon \vec \psi_4^{b, 1} (r_0), 
  \end{array}\right.  \label{eqximidbdry1}
  \\
  \left|\begin{array}{l}
  \vec  \xi^{mid, *}_\upsilon (x_*) = \kappa^{mid, *}_\upsilon \vec \psi_2^{b, 1}(x_*), \\ 
  \vec  \xi^{mid, \Re, j}_\upsilon (x_*) = \vec \psi_j^{b, 1} (x_*) + \kappa^{mid, \Re, j}_\upsilon \vec \psi_2^{b, 1} (x_*),  \\
  \vec  \xi^{mid, \Im, j}_\upsilon (x_*) = \vec \psi_j^{b, 1} (x_*) + \kappa^{mid, \Im, j}_\upsilon \vec \psi_2^{b, 1} (x_*).
  \end{array}\right. 
  \label{eqximidbdry2}
\eea
with 
\be
\left| \begin{array}{l}
 \gamma^{mid, *}_\upsilon = -\int_{x_*}^{r_0} \psi_2^{b, 1} \left( -r^{2 + \frac{d-1}{2}} P_b + r^{-2} O_{\calZ_3}(b^{-\frac 16} e^{-\frac{\pi}{2b}}) \right) \frac{dr}{W_{42;1}} \\
\quad  \qquad  = \int_{x_*}^{r_0} \psi_2^{b, 1} r^{2+\frac{d-1}{2}} \Re P_b \frac{dr}{W_{42;1}} + O_\RR\left(b^{-2-\frac 16} e^{-\frac{\pi}{2b}} \right)
   \\
  \quad \qquad  + i  \int_{x_*}^{r_0} \psi_2^{b, 1} r^{2+\frac{d-1}{2}} \Im P_b \frac{dr}{W_{42;1}} + i O_\RR\left(x_*^{-1} b^{-3+\frac 16} e^{-\frac{\pi}{2b}} \right) \\
   \gamma^{mid, \Re, 4}_\upsilon = -\int_{x_*}^{r_0} \psi_2^{b, 1} r^{-2} O_{\calZ_0}(1) \frac{dr}{W_{42;1}} = O_\RR (x_*^{-1}) + iO_\RR\left(b^{1+\frac 13}\right) \\
   \gamma^{mid, \Im, 4}_\upsilon = -\int_{x_*}^{r_0} \psi_2^{b, 1} ir^{-2} O_{\calZ_0}(1) \frac{dr}{W_{42;1}} = iO_\RR (x_*^{-1}) + O_\RR\left(b^{1+\frac 13}\right) \\
   \gamma^{mid, \Re, 2}_\upsilon, \gamma^{mid, \Im, 2}_\upsilon = -\int_{x_*}^{r_0} \psi_2^{b, 1} r^{-2} O_{C^0_{\omega_{b, 1}^+}(I_{mid}) }(1) \frac{dr}{W_{42;1}} = O_{\CC}(b^{1 + \frac 13}).
\end{array}\right.  \label{eqximidbdry3}
\ee
and 
\be
\left|
\begin{array}{l}
\kappa^{mid, *}_\upsilon = -\int_{x_*}^{r_0} \psi_4^{b, 1} \left( -r^{2 + \frac{d-1}{2}} P_b + r^{-2} O_{\calZ_3}(b^{-\frac 16} e^{-\frac{\pi}{2b}}) \right) \frac{dr}{W_{42;1}} \\
\quad  \qquad   = \int_{x_*}^{r_0} \psi_4^{b, 1} r^{2+\frac{d-1}{2}} \Re P_b \frac{dr}{W_{42;1}} + O_\RR\left(x_* b^{ -\frac 16} e^{2S_b(x_*)-\frac{\pi}{2b}} \right) \\
 \quad  \qquad  + i  \int_{x_*}^{r_0} \psi_4^{b, 1} r^{2+\frac{d-1}{2}} \Im P_b \frac{dr}{W_{42;1}} + i O_\RR\left(x_*^{-1} b^{-3-\frac 16} e^{-\frac{\pi}{2b}} \right) \\
   \kappa^{mid, \Re, 4}_\upsilon = -\int_{x_*}^{r_0} \psi_4^{b, 1} r^{-2} O_{\calZ_0}(1) \frac{dr}{W_{42;1}} = O_\RR (x_*^{-2}e^{2S_b(x_*)}) + iO_\RR\left(x_*^{-1}\right)\\
   \kappa^{mid, \Im, 4}_\upsilon = -\int_{x_*}^{r_0} \psi_4^{b, 1} ir^{-2} O_{\calZ_0}(1) \frac{dr}{W_{42;1}} = iO_\RR (x_*^{-2}e^{2S_b(x_*)}) + O_\RR\left(x_*^{-1}\right) \\
   \kappa^{mid, \Re, 2}_\upsilon, \kappa^{mid, \Im, 2}_\upsilon = -\int_{x_*}^{r_0} \psi_4^{b, 1} r^{-2} O_{C^0_{\omega_{b, 1}^+}(I_{mid}) }(1) \frac{dr}{W_{42;1}} = O_{\CC}(x_*^{-1}).
   \end{array}\right. \label{eqximidbdry4}
\ee
Moreover, we can use the asymptotics of $\Re P_b$, $\Im P_b$ on $[b^{-\frac 12}, r_0]$ from Proposition \ref{propQbasympref} \eqref{eqQbsharp}, asymptotics of $\Re P_b$ and boundedness of $\Im P_b$ on $[x_*, b^{-\frac 12}]$ from Proposition \ref{propQbasymp} \eqref{eqQbasymp5}-\eqref{eqQbasymp6} to evaluate
\be
\left| \begin{array}{l}
\int_{x_*}^{r_0} \psi_2^{b, 1} r^{2+\frac{d-1}{2}} \Re P_b \frac{dr}{W_{42;1}} =  \varrho_b b^\frac 13 \int_{b^{-\frac 12}}^{r_0} r^2 \psi_2^{b, 1} \psi_4^{b, 1} (1 + O(r^{-1})) \frac{dr}{W_{42;1}} +\int_{x_*}^{b^{-\frac 12}} r^2 O(b^{-\frac 16} e^{-\frac \pi{2b}})dr \\  
\qquad \qquad \sim b^{-\frac 12}e^{-\frac{\pi}{2b}} \int_{b^{-\frac 12}}^{r_0} \omega_{b, 1}^- \omega_{b, 1}^+ r^2 dr \sim b^{-3-\frac 16}e^{-\frac{\pi}{2b}}, \\
\int_{x_*}^{r_0} \psi_2^{b, 1} r^{2+\frac{d-1}{2}} \Im P_b \frac{dr}{W_{42;1}} \sim b^{-\frac 12}e^{-\frac{\pi}{2b}} \int_{b^{-\frac 12}}^{r_0} (\omega_{b, 1}^+)^2 r^2 dr \lesssim b^{-3+\frac 16} e^{-\frac \pi{2b}},\\
\int_{x_*}^{r_0} \psi_4^{b, 1} r^{2+\frac{d-1}{2}} \Re P_b \frac{dr}{W_{42;1}} \sim b^{-\frac 12}e^{-\frac{\pi}{2b}} \int_{x_*}^{r_0} (\omega_{b, 1}^-)^2 r^2 dr \sim x_*^2 b^{-\frac 16}e^{2S_b(x_*)-\frac{\pi}{2b}}, \\
\int_{x_*}^{r_0} \psi_4^{b, 1} r^{2+\frac{d-1}{2}} \Im P_b \frac{dr}{W_{42;1}} = \frac 12 \varrho_b b^\frac 13 \int_{b^{-\frac 12}}^{r_0} r^2 \psi_2^{b, 1} \psi_4^{b, 1} (1 + O(b^\frac 14)) \frac{dr}{W_{42;1}}+\int_{x_*}^{b^{-\frac 12}} r^2 O(b^{-\frac 16} e^{-\frac \pi{2b}})dr   \\  
\qquad \qquad \sim b^{-\frac 12}e^{-\frac{\pi}{2b}} \int_{b^{-\frac 12}}^{r_0} \omega_{b, 1}^- \omega_{b, 1}^+ r^2 dr \sim b^{-3-\frac 16}e^{-\frac{\pi}{2b}},
\end{array}\right.\label{eqximidbdry5}
\ee
and the difference estimate \eqref{eqximiddiff} and smallness of $\upsilon$ \eqref{equpsilonmid} imply
\be  
\left| \begin{array}{l}
 \kappa^{mid, \sigma, 2}_{\upsilon} - \kappa^{mid, \sigma, 2}_0  =  \int_{x_*}^{r_0} \psi_4^{b, 1} \left( \calV_{b, 1} (\xi^{mid, \sigma, 2}_\upsilon -\xi^{mid, \sigma, 2}_0 ) - i\upsilon \xi^{mid, \sigma, 2}_\upsilon \right) \frac{dr}{W_{42;1}} \\
 \qquad \qquad \qquad \qquad  = O_\CC( b^3 x_*^{-1}),\qquad \qquad \qquad 
{\rm for}\,\, \sigma \in \{ \Re, \Im \}.\\
 \kappa^{mid, *}_{\upsilon} - \kappa^{mid, *}_0 = \int_{x_*}^{r_0}  \psi_4^{b, 1} \left( \calV_{b, 1} (\xi^{mid, *}_\upsilon -\xi^{mid, *}_0) - i\upsilon \xi^{mid, *}_\upsilon \right) \frac{dr}{W_{42;1}} \\
\qquad \qquad \qquad \,\,\,  = O_\RR\left(x_* b^{3 -\frac 16} e^{2S_b(x_*)-\frac{\pi}{2b}} \right) + iO_\RR\left( x_*^{-1} b^{-\frac 16} e^{-\frac \pi{2b}} \right)
\end{array}\right.
\label{eqximidbdry6}
\ee

\mbox{}

\textit{2.3. Linear matching at $r_0$: inhomogeneous solution.}

 Now we construct $\xi^*_\upsilon$ solving \eqref{eqrhobext} on $[x_*, \infty)$.  Let 
\bee
\xi^*_\upsilon := \left| \begin{array}{ll}
    \xi^{ext, *}_\upsilon + d^{*;\Re}_\upsilon\xi^{ext, \Re}_\upsilon+ d^{*;\Im}_\upsilon \xi^{ext, \Im}_\upsilon  & r \in I_{ext} \\
    \xi^{mid, *}_\upsilon + c^{*;\Re}_\upsilon \xi^{mid, \Re, 2}_\upsilon + c^{*;\Im}_\upsilon \xi^{mid, \Im, 2}_\upsilon & r \in I_{mid}
\end{array}\right. 
\eee
and the real matching coefficients $c^{*;\Re}_\upsilon, c^{*;\Im}_\upsilon, d^{*;\Re}_\upsilon, d^{*;\Im}_\upsilon$ are determined by the matching conditions
\be
  \vec{\xi}^*_\upsilon(r_0 + 0) = \vec{\xi}^*_\upsilon(r_0 - 0).  \label{eqxiextmatch}
\ee
Then $\xi^*_\upsilon$ will solve \eqref{eqrhobext} on $[x_*, \infty)$ by uniqueness of the ODE solution. 
Recall \eqref{eqpsibEderiv1}-\eqref{eqpsibEderiv2} indicate for $1 \le j \le 4$, 
\bee \vec \psi_j^{b, 1+i\upsilon}(r_0) &=& \vec \psi_j^{b, 1}(r_0) + O_{\CC^2}(b^{-1}\upsilon) \\
&=&\vec \psi_j^{b, 1}(r_0) +  O_\CC(b^{-\frac 43}\upsilon) \vec \psi_4^{b, 1}(r_0) +O_\CC(b^{-\frac 43}\upsilon)  \vec \psi_2^{b, 1}(r_0), \eee
where the second equivalence follows $\vec\psi_4^{b, 1}(r_0),\vec\psi_2^{b, 1}(r_0)$ are of size $O(1)$ and $\vec \psi_4^{b, 1} \wedge \vec \psi_2^{b, 1} = \calW(\psi_4^{b, 1}, \psi_2^{b, 1}) \sim b^{\frac 13}$ from \eqref{eqWronski2}. Together with the boundary values \eqref{eqxiextbdry1}, \eqref{eqxiextbdry2}, \eqref{eqximidbdry1}, \eqref{eqximidbdry3},
and the connection formula \eqref{eqconnect}, we can parametrize the matching condition \eqref{eqxiextmatch} under the basis $\{ \vec\psi_4^{b, 1}(r_0),i \vec\psi_4^{b, 1}(r_0),\vec\psi_2^{b, 1}(r_0), i\vec\psi_2^{b, 1}(r_0)\}$ to obtain $\RR^4$-valued algebraic equations
\bee
\left[ \left( \begin{array}{cccc}
     \frac {\sqrt 3}2 & \frac 12 &  &   \\
    -\frac 12 & \frac {\sqrt 3}2 & &  \\
     \frac 14 & - \frac{\sqrt 3}{4} & -1 & \\
    \frac{\sqrt 3}{4} & \frac 14  & &  -1
\end{array} \right) + O(b^\frac 54) \right]
\left( \begin{array}{c}
    d^{*;\Re}_\upsilon \\
    d^{*;\Im}_\upsilon \\
    c^{*;\Re}_\upsilon \\
    c^{*;\Im}_\upsilon 
\end{array} \right) = 
\left( \begin{array}{c}
    \Re \gamma^{mid, *}_\upsilon   \\
    \Im \gamma^{mid, *}_\upsilon  \\
    0\\
    0
\end{array} \right) + O\left(b^{-3+\frac 1{12}}e^{-\frac{\pi}{2b}}\right)
\eee
Inverting the matrix yields 
\bee
 \left( \begin{array}{c}
    d^{*;\Re}_\upsilon \\
    d^{*;\Im}_\upsilon \\
    c^{*;\Re}_\upsilon \\
    c^{*;\Im}_\upsilon 
\end{array} \right) = 
 \left( \begin{array}{cccc}
      \frac {\sqrt 3}2 & -\frac 12 &  &   \\
    \frac 12 & \frac {\sqrt 3}2 & &  \\
      & -\frac 12 & -1 & \\
     \frac 12 &  & & -1
\end{array} \right) \left( \begin{array}{c}
    \Re \gamma^{mid, *}_\upsilon   \\
    0  \\
    0\\
    0
\end{array} \right) + O\left(b^{-3+\frac 1{12}}e^{-\frac{\pi}{2b}}\right).
\eee
Here we also used \eqref{eqximidbdry3}, \eqref{eqximidbdry5} to include $O(b^\frac 54) \cdot \Re \gamma^{mid, *}_\upsilon = O(b^{-2+ \frac 1{12}}e^{-\frac{\pi}{2b}})$ and $O(1) \cdot \Im \gamma^{mid, *}_\upsilon = O(b^{-3 + \frac 16}e^{-\frac \pi{2b}})$ in the residual. Since $\Re \gamma^{mid,*}_\upsilon \sim b^{-3-\frac 16}e^{-\frac\pi{2b}}$, we obtain the $O(b^{-3-\frac 16}e^{-\frac\pi{2b}})$ boundedness of coefficients $d^{*;\Re}_\upsilon, d^{*;\Im}_\upsilon, c^{*;\Re}_\upsilon, c^{*;\Im}_\upsilon$. This plus \eqref{eqxiextest}, \eqref{eqximidest} implies the estimate of $\xi^*_\upsilon$ in \eqref{eqxiextestcompo}.

Now consider the boundary value at $x_*$, we have
\bee
 \vec \xi^*_\upsilon (x_*) = \kappa^*_\upsilon \vec \psi_2^{b, 1} (x_*) \eee
 where from \eqref{eqximidbdry2},
 \bee
 \begin{split}
 \kappa^*_\upsilon &=  \kappa^{mid, *}_\upsilon + c^{*;\Re}_\upsilon (1 + \kappa^{mid, \Re, 2}_\upsilon) + c^{*;\Im}_\upsilon  \left( i+ \kappa^{mid, \Im, 2}_\upsilon\right) \\
 &= \kappa^{mid, *}_\upsilon + \frac 12 \Re \gamma^{mid, *}_\upsilon \left( i+ \kappa^{mid, \Im, 2}_\upsilon\right) +  O_\CC (b^{-3 + \frac 1{12}}e^{-\frac \pi{2b}})
 \end{split}
\eee
Moreover, applying \eqref{eqximidbdry3}, \eqref{eqximidbdry4}, \eqref{eqximidbdry5} and \eqref{eqximidbdry6}, we evaluate
\be
 \left|\begin{array}{l}
    \Re \kappa^*_0 = \int_{x_*}^{r_0} \psi_4^{b, 1} r^{2+\frac{d-1}{2}} \Re P_b \frac{dr}{W_{42;1}} +O_\RR(x_* b^{-\frac 16}e^{2S_b(x_*)-\frac{\pi}{2b}} ) \sim x_*^2 b^{-\frac 16}e^{2S_b(x_*)-\frac{\pi}{2b}}   \\
    \Im \kappa^*_0 = \int_{x_*}^{r_0} \left[ \psi_4^{b, 1}\Im P_b + \frac 12 \psi_2^{b, 1} \Re P_b \right] r^{2+\frac{d-1}{2}}  \frac{dr}{W_{42;1}}
    + O_\RR\left(x_*^{-1} b^{-3-\frac 16} e^{-\frac{\pi}{2b}} \right) \\
    \qquad = \varrho_b b^\frac 13 \int_{b^{-\frac 12}}^{r_0} r^2 \psi_2^{b, 1} \psi_4^{b, 1} \frac{dr}{W_{42;1}} + O_\RR\left(x_*^{-1} b^{-3-\frac 16} e^{-\frac{\pi}{2b}} \right) \\
    \qquad = \pi \varrho_b b^{-3+\frac 13} (1 + O(x_*^{-1})) \sim b^{-3 - \frac 16} e^{-\frac \pi{2b}}.
 \end{array}\right.  \label{eqkappa*upsilonasymp}
\ee
Here the integral in $\Im \kappa_0^*$ is estimated using \eqref{eqQbasymp5}-\eqref{eqQbasymp6}, \eqref{eqQbsharp} and the asymptotics from \eqref{eqWKBasymp3}, \eqref{eqWKBasymp4L} and \eqref{eqetaabs}:
\bee
  &&\int_{b^{-\frac 12}}^{r_0} r^2 \psi_2^{b, 1} \psi_4^{b, 1} \frac{dr}{W_{42;1}}\\
  &=& \int_{b^{-\frac 12}}^{\frac 2b - b^{-\frac 13}} r^2 \frac{e^{\frac{\pi i}{6}}(1 + O_\RR(b|2-br|^{-\frac 32}))}{4\pi (e^\frac{\pi i}{4} (\sqrt{2b^{-1}})^\frac 23 (1-b^2 r^2/4)^\frac 12}\cdot \frac{2^\frac 43 \pi dr}{b^\frac 13} 
 + \int_{\frac 2b - b^{-\frac 13}}^{\frac 2b} r^2 O_\RR(1) \cdot b^{-\frac 13}dr \\
  &=& \int_0^{\frac 2b} \frac{r^2 dr}{2(1-b^2 r^2/4)^\frac 12} + O_\RR(b^{-2-\frac 23}) = \pi b^{-3} (1 + O_\RR(b^{\frac 13}))
\eee

Besides, combining \eqref{eqximidbdry6} with $\Re \gamma^{mid,*}_\upsilon = \Re \gamma^{mid,*}_0 ( 1+ O_\RR(b))$ from \eqref{eqximidbdry2} and \eqref{eqximidbdry5}, we can evaluate $\kappa^*_\upsilon$ with $|\upsilon| \le b^{-3-\frac 16}e^{-\frac \pi b}$ by
\be \Re \kappa^*_\upsilon = \Re \kappa^*_0 \left(1 + O_\RR(b^3) \right),\quad \Im \kappa^*_\upsilon = \Im \kappa^*_0 \left(1 + O_\RR(b^\frac 14) \right).  
\label{equpsilonindep}
\ee
That implies the boundary value evaluation \eqref{eqxiextbdryx*} for $\xi^*_\upsilon$.

\mbox{}

\textit{2.4. Linear matching at $r_0$: homogeneous solution.}

For $\sigma = \Re, \Im$, Let 
\bee
\xi^\sigma_\upsilon =  
\left| \begin{array}{ll}
    d^{\sigma;\Re}_\upsilon \xi^{ext, \Re}_\upsilon + d^{\sigma;\Im}_\upsilon \xi^{ext, \Im}_\upsilon &   r \in I_{ext}\\
    \xi^{mid, \sigma, 4}_\upsilon + c^{\sigma,\Re}_\upsilon \xi^{mid, \Re, 2}_\upsilon + c^{\sigma,\Im}_\upsilon \xi^{mid, \Im, 2}_\upsilon   &  r \in I_{mid}
\end{array}\right.
\eee
with the real-valued matching coefficients $\{d^{\sigma;\iota}_\upsilon, c^{\sigma;\iota}_\upsilon \}_{\sigma, \iota \in \{ \Re, \Im\}}$ determined by matching $\vec \xi^\sigma_\upsilon$ at $r_0$. For $\xi^\Re_\upsilon$, similar to the matching for $\xi^*_\upsilon$, the matching condition at $r_0$ can be parametrized under the basis $\{ \vec\psi_4^{b, 1}(r_0),i \vec\psi_4^{b, 1}(r_0),\vec\psi_2^{b, 1}(r_0), i\vec\psi_2^{b, 1}(r_0)\}$ to obtain
\bee
\left[ \left( \begin{array}{cccc}
     \frac {\sqrt 3}2 & \frac 12 &  &   \\
    -\frac 12 & \frac {\sqrt 3}2 & &  \\
     \frac 14 & - \frac{\sqrt 3}{4} & -1 & \\
    \frac{\sqrt 3}{4} & \frac 14  & &  -1
\end{array} \right) + O(b^\frac 54) \right]
\left( \begin{array}{c}
    d^{\Re;\Re}_\upsilon \\
    d^{\Re;\Im}_\upsilon \\
    c^{\Re;\Re}_\upsilon \\
    c^{\Re;\Im}_\upsilon 
\end{array} \right) = 
\left( \begin{array}{c}
    1 + \Re \gamma^{mid, \Re, 4}_\upsilon   \\
    \Im \gamma^{mid, \Re, 4}_\upsilon  \\
    0\\
    0
\end{array} \right)
\eee
Invert the coefficient matrix and apply $\Im \gamma^{mid, \Re, 4}_\upsilon = O(b^\frac 43)$ from \eqref{eqximidbdry3},
\bee
 \left( \begin{array}{c}
    d^{\Re;\Re}_\upsilon \\
    d^{\Re;\Im}_\upsilon \\
    c^{\Re;\Re}_\upsilon \\
    c^{\Re;\Im}_\upsilon 
\end{array} \right) = 
 \left( \begin{array}{cccc}
      \frac {\sqrt 3}2 & -\frac 12 &  &   \\
    \frac 12 & \frac {\sqrt 3}2 & &  \\
      & -\frac 12 & -1 & \\
     \frac 12 &  & & -1
\end{array} \right) \left( \begin{array}{c}
    1 + \Re \gamma^{mid, \Re, 4}_\upsilon  \\
    0  \\
    0\\
    0
\end{array} \right) + O\left(b^{\frac 54}\right) = O(1).
\eee
As in the discussion for $\xi^*_\upsilon$, this implies the estimate for $\xi^\Re_\upsilon$ in \eqref{eqxiextestcompo}.
The boundary value at $x_*$ is 
\bee
  \vec \xi^\Re_\upsilon(x_*) = \vec \psi_4^{b, 1} (x_*) + \kappa^\Re_\upsilon \vec \psi_2^{b, 1}(x_*)
\eee
where 
\bee
  \kappa^\Re_\upsilon = \kappa^{mid, \Re, 4}_\upsilon + c_\Re^\Re  \kappa^{mid, \Re, 2}_\upsilon + c^{\Re;\Im}_\upsilon \kappa^{mid, \Im, 2}_\upsilon 
  = O_\RR(x_*^{-2} e^{2S_b(x_*)}) + iO_\RR(x_*^{-1}),
\eee
which yields  \eqref{eqxiextbdryx*} using $\vec \psi_4^{b, 1}(x_*) = \vec \psi_2^{b, 1} e^{2S_b(x_*)}(1 + O_\RR(b))$ from Proposition \ref{propWKB} (7). 

Similarly, for $\xi^\Im_\upsilon$ we also have 
\bee
   \left( \begin{array}{c}
    d^{\Im;\Re}_\upsilon \\
    d^{\Im;\Im}_\upsilon \\
    c^{\Im;\Re}_\upsilon \\
    c^{\Im;\Im}_\upsilon 
\end{array} \right) = 
 \left( \begin{array}{cccc}
      \frac {\sqrt 3}2 & -\frac 12 &  &   \\
    \frac 12 & \frac {\sqrt 3}2 & &  \\
      & -\frac 12 & -1 & \\
     \frac 12 &  & & -1
\end{array} \right) \left( \begin{array}{c}
    0  \\
    1 + \Im \gamma^{mid, \Im, 4}_\upsilon  \\
    0\\
    0
\end{array} \right) + O\left(b^{\frac 54}\right).
\eee
and 
\bee
  \vec \xi^\Im_\upsilon(x_*) = i\vec \psi_4^{b, 1} (x_*) + \kappa^\Im_\upsilon \vec \psi_2^{b, 1}(x_*)
\eee
with 
\bee
  \kappa^\Im_\upsilon 
  = \kappa^{mid, \Im, 4}_\upsilon + c^{\Im;\Re}_\upsilon  \kappa^{mid, \Re, 2}_\upsilon + c^{\Im;\Im}_\upsilon \kappa^{mid, \Im, 2}_\upsilon 
  = i O_\RR(x_*^{-2} e^{2S_b(x_*)}) + O_\RR(x_*^{-1}).
\eee
That concludes the proof of \eqref{eqxiextestcompo} and \eqref{eqxiextbdryx*}.


\mbox{}

\underline{3. Nonlinear matching at $x_*$.}

From Step 1 and 2, we have constructed two solution families on \eqref{eqrhobext}: $(\cdot)^{\frac{d-1}{2}} (\Sigma_{\upsilon,\a} + i \Theta_{\upsilon, \a})$ on $[0, x_*]$ for $\a, \upsilon$ in the range \eqref{eqrangeupsilona}, and $\xi^*_\upsilon + \mu_\Re \xi^\Re_\upsilon + \mu_\Im \xi^\Im_\upsilon$ on $[x_*,\infty)$ for $\upsilon$ in \eqref{eqrangeupsilona} and $\mu_\Re, \mu_\Im \in \RR$. In this final step, we look for $(\upsilon, \a, \mu_\Re, \mu_\Im) \in \RR^4$ satisfying
\be
  |\upsilon| \le x_*^{\frac 12} b^{-3}e^{-\frac \pi{b}}, \quad | \a | \le e^{-2x_*} x_*^{\frac 52}, \quad |\mu_\Re| \le b^{-\frac 16} x_*^{\frac 72} e^{-\frac \pi{2b}},\quad |\mu_\Im| \le b^{-3-\frac 16}e^{-\frac{3\pi}{2b}+2x_*}x_*^{-\frac 12},
  \label{equamuri}
\ee
such that the two solutions above match at $x_*$, namely
\be \left| \begin{array}{l} 
x_*^{\frac{d-1}{2}}  \left( \Sigma_{\upsilon, \a} + i\Theta_{\upsilon, \a} \right) (x_*) = \left( \xi^*_\upsilon + \mu_\Re \xi^\Re_\upsilon + \mu_\Im \xi^\Im_\upsilon \right) (x_*) \\ 
\pa_r \left[ (\cdot)^{\frac{d-1}{2}}  \left( \Sigma_{\upsilon, \a} + i\Theta_{\upsilon, \a} \right)\right] (x_*) = \pa_r \left( \xi^*_\upsilon + \mu_\Re \xi^\Re_\upsilon + \mu_\Im \xi^\Im_\upsilon \right) (x_*).
\end{array} \right. \label{eqmatchrhob}
\ee
We define $r^{\frac{d-1}{2}} \rho_b$ to be this global solution. 

Recall  the asymptotics of $\vec A, \vec \frakE, \vec D$ from Lemma \ref{lemLpm} (5), and those of $\vec \psi_2^{b, 1},  \vec \psi_4^{b, 1}$ from Proposition \ref{propWKB} (5) and \eqref{eqetaconv} that with $\kappa_\psi = 2^{-\frac 76} \pi^{-\frac 12}$, for $k = 0, 1$, 
\be
 \pa_r^k \psi_4^{b, 1}(x_*) = (-1)^{k} \kappa_\psi b^{\frac 16}e^{\frac \pi {2b}-x_*}\left(1 + O_\RR(b) \right), \quad \pa_r^k \psi_2^{b, 1}(x_*) = \kappa_\psi b^{\frac 16}e^{-\frac \pi {2b}+x_*}\left(1 + O_\RR(b) \right).
 \label{eqpsix+1}
\ee
Using \eqref{eqrhobbdryint1}, \eqref{eqrhobbdryint2} and \eqref{eqxiextbdryx*}, we can write the matching condition \eqref{eqmatchrhob} with \eqref{equamuri} as
\bee
  \left| \begin{array}{l}
        \mathfrak{c}_{x_*} (2\kappa_A)^{-1} e^{-x_*}\left(1 + c_D x_*^{-1}\right) + \a \kappa_A e^{x_*} 
        =  \Re \kappa_0^* \kappa_\psi b^\frac 16 e^{-\frac \pi{2b}+x_*}  + \mu_\Re \kappa_\psi b^\frac 16 e^{\frac \pi{2b}-x_*}  + O\left(e^{-x_*} x_*^{\frac 32} \right) \\
         -\mathfrak{c}_{x_*} (2\kappa_A)^{-1} e^{-x_*}\left(1 + c_D x_*^{-1}\right) + \a \kappa_A e^{x_*} 
        =  \Re \kappa_0^* \kappa_\psi b^\frac 16 e^{-\frac \pi{2b}+x_*} -\mu_\Re \kappa_\psi b^\frac 16 e^{\frac \pi{2b}-x_*} + O\left(e^{-x_*} x_*^{\frac 32} \right) \\
        \upsilon\frac{ \| xQ\|_{L^2}^2}{4 \kappa_Q|\SS^{d-1}|} e^{x_*} \left(1 + c_\frakE x_*^{-1} \right) = 
        \Im \kappa_0^* \kappa_\psi b^\frac 16 e^{-\frac \pi{2b}+x_*} + \mu_\Im \kappa_\psi b^\frac 16 e^{\frac \pi{2b}-x_*} +  O\left( b^{-3} e^{-\frac\pi b} x_*^{-\frac 32}  e^{x_*} \right)  \\
        \upsilon\frac{ \| xQ\|_{L^2}^2}{4 \kappa_Q|\SS^{d-1}|} e^{x_*} \left(1 + c_\frakE x_*^{-1} \right)  = 
        \Im \kappa_0^* \kappa_\psi b^\frac 16 e^{-\frac \pi{2b}+x_*} - \mu_\Im \kappa_\psi b^\frac 16 e^{\frac \pi{2b}-x_*} +  O\left( b^{-3} e^{-\frac\pi b} x_*^{-\frac 32}  e^{x_*} \right)
  \end{array}\right.
\eee
Firstly we match the leading order and normalize the coefficients. Let 
\[ 
 \left| \begin{array}{l}
      \upsilon = \frac{4 \kappa_Q|\SS^{d-1}|} { \| xQ\|_{L^2}^2}\cdot \left( \Im \kappa_0^* \kappa_\psi b^\frac 16 e^{-\frac \pi{2b}} +  b^{-3} e^{-\frac \pi b} x_*^{-1} \tilde \upsilon \right) \\
      \a = \kappa_A^{-1} x_*^2 e^{-2x_*} \tilde \a \\
      \mu_\Re = \mathfrak{c}_{x_*} (2\kappa_A)^{-1} \kappa_\psi^{-1} b^{-\frac 16} e^{-\frac \pi {2b}} + \kappa_\psi^{-1} b^{-\frac 16} e^{-\frac \pi{2b}}x_*^2 \tilde \mu_\Re \\
      \mu_\Im = \kappa_\psi^{-1} b^{-3-\frac 16}e^{-\frac {3\pi}{2b}+2x_*} x_*^{-1} \tilde \mu_\Im
 \end{array}\right.
\]
and we further require 
\[ |\tilde \upsilon|, |\tilde \a|, |\tilde \mu_\Re|, |\tilde \mu_\Im| \le x_*^{\frac 14} \]
so that \eqref{equamuri} is still satisfied. Plugging these new variables and dividing the first two and last two equations by $e^{-x_*}x_*^2$, $b^{-3}e^{-\frac\pi b}x_*^{-1} e^{x_*}$ respectively, the matching condition now turns into
\bee
  \left( \begin{array}{cccc}
     1 & -1 & &   \\
     1 & 1 & &    \\
     & & 1 & -1   \\
     & & 1 & 1  
  \end{array} \right) 
   \left( \begin{array}{c}
       \tilde \a \\
       \tilde \mu_\Re \\
     \tilde \upsilon\\
       \tilde \mu_\Im
  \end{array} \right) 
  = \left( \begin{array}{c}
     -\mathfrak{c}_{x_*} (2\kappa_A)^{-1} c_D x_*^{-3} + \Re \kappa^*_0 \kappa_\psi b^\frac 16 e^{-\frac \pi{2b}  + 2x_*} x_*^{-2}  \\
     \mathfrak{c}_{x_*} (2\kappa_A)^{-1} c_D x_*^{-3} + \Re \kappa^*_0 \kappa_\psi b^\frac 16 e^{-\frac \pi{2b}  + 2x_*} x_*^{-2}  \\
     - \Im \kappa^*_0 \kappa_\psi b^{3+\frac 16}e^{\frac \pi{2b}} c_\frakE  \\
     - \Im \kappa^*_0 \kappa_\psi b^{3+\frac 16}e^{\frac \pi{2b}} c_\frakE
  \end{array} \right) + O(x_*^{-\frac 12}),
\eee
where the constant bound of $O(x_*^{-\frac 12})$ is independent of $\tilde \upsilon, \tilde \a, \tilde \mu_\Re, \tilde \mu_\Im$, and the vector on the RHS is of size $O(1)$ from \eqref{eqestfrakcx*} and \eqref{eqdefkappa*upsilon}. Therefore, inverting the matrix coefficient on LHS, the Brouwer's fixed point theorem implies the existence of solution $|(\tilde \upsilon, \tilde \a, \tilde \mu_\Re, \tilde \mu_\Im)| \le C$ with $C$ determined by the leading order vector on the RHS. 

In particular, we have 
\be 
\begin{split}
&\upsilon = \frac{4 \kappa_Q \kappa_\psi|\SS^{d-1}|} { \| xQ\|_{L^2}^2} \Im \kappa_0^* b^\frac 16 e^{-\frac \pi{2b}} (1 + O(x_*^{-1})) =\frac{2^\frac 56 \pi^\frac 12 \kappa_Q}{\int_0^\infty Q^2 r^{d+1} dr} \varrho_b b^{-3+\frac 12}e^{-\frac{\pi}{2b}} (1 + O(x_*^{-1})), \\
&| \a | \lesssim e^{-2x_*} x_*^2, \quad |\mu_\Re| \lesssim b^{-\frac 16} x_*^3 e^{-\frac \pi{2b}},\quad |\mu_\Im| \lesssim b^{-3-\frac 16}e^{-\frac{3\pi}{2b}+2x_*}x_*^{-1}.\end{split} \label{eqrhobpara} 
\ee
This verifies \eqref{equpsilonasymp} with the asymptotic of $\varrho_b$ from \eqref{eqvarrhob}. The interior bounds \eqref{eqasympXibReint} and \eqref{eqasympXibImint} come from \eqref{eqasymprhobSigma} and \eqref{eqasymprhobTheta} with the range of $\a$ above, and the exterior bounds \eqref{eqasympXibext}, \eqref{eqasympXibRemid} and \eqref{eqasympXibImmid} will follow \eqref{eqxiextestcompo} and the above bound of $\mu_\Re$, $\mu_\Im$. 
\end{proof}

\mbox{}

\subsection{Construction of bifurcated eigenmodes. }

\begin{proof}[Proof of Proposition \ref{propbifeigen}]

We look for $\upsilon \in \RR$ and $Z$ solving $(\calH_b - i \upsilon + ibs_c) Z = 0$ with $Z$ having the symmetry $Z_1 = \bar Z_2$. In particular, we assume 
\be \upsilon \notin \{ bs_c,  -2b + bs_c \} \label{equpsilonnondeg} \ee
so that this gives a different eigen pair from those for $\xi_{0, b}$ or $\xi_{1, b}$. We choose $s_{c;{\rm eig}}^{(2)} \le s_{c;{\rm eig}}^{(1)}$ from Lemma \ref{lemrhob}, in particular satisfying \eqref{eqsceig1}, and may further shrink it for smallness of $b(d, s_c)$. 

\mbox{}

\underline{1. Ansatz and formulation of equation.}

Recall $\rho_b$ from Lemma \ref{lemrhob} and $\xi_{j,b}$ for $j = 0, 1, 2$ from Section \ref{sec22}. Denote 
\[ \xi_{3,b} = \frac 18 \left(\begin{array}{c} e^{-i\frac{br^2}{4}}\rho_b \\e^{i\frac{br^2}{4}} \bar{\rho}_b \end{array}\right) \]
Then \eqref{eqrhob} indicates
\[\left( \calH_b - i \upsilon_{\rho_b} + i bs_c\right) \xi_{3,b} = -i \xi_{2,b}.  \]

Let $\mathfrak{u} = \upsilon - bs_c$. 
Take the ansatz 
\be 
Z = \xi_{0,b} - \mathfrak{u} \xi_{1,b} + \mathfrak{u}(2b+\mathfrak{u}) \xi_{2,b} + (2b-\mathfrak{u})(2b+\mathfrak{u}) \mathfrak{u} \cdot \xi_{3,b} + \mathfrak{Z} 
\label{eqeigenansatz}
\ee
then with the algebraic relations \eqref{eqQbalgrel} and above, the eigen equation  reads as
\bee
  (\calH_b - i\mathfrak{u})\mathfrak{Z} = i(2b+\mathfrak{u}) \mathfrak{u} \left[ \frac{s_c}{2} \eta_b + (\mathfrak{u} - \upsilon_{\rho_b} + bs_c)(2b- \mathfrak{u}) \xi_{3,b} \right]
\eee
Let $\mathfrak{Z} =:  \left( \begin{array}{c} \mathfrak{h} \\ \overline{\mathfrak{h}} \end{array} \right) $, we obtain the $
\CC$-valued scalar equation
\be
 \left(\Delta_b - 1 - i (\mathfrak{u} + bs_c) + W_{1, b} \right) \mathfrak{h} + W_{2, b} \overline{\mathfrak{h}} = i(2b+\mathfrak{u})\mathfrak{u} \left[ \frac{s_c}{2} Q_b -\frac 18 (\mathfrak{u} - \upsilon_{\rho_b} + bs_c)(\mathfrak{u} - 2b)e^{-i\frac{br^2}{4}} \rho_b \right]. \label{eqhh}
\ee
Let 
\[ \left| \begin{array}{l}
     \omega_1(\upsilon) = \frac{s_c}{2}(2b+\mathfrak{u})\mathfrak{u} =\frac{s_c}{2} (2b-bs_c + \upsilon) (\upsilon - bs_c), \\
     \omega_2(\upsilon) = -\frac{1}{4s_c} (\mathfrak{u} - \upsilon_{\rho_b} + bs_c)(\mathfrak{u} - 2b) = -\frac{1}{4s_c} (\upsilon - \upsilon_{\rho_b})(\upsilon - 2b - bs_c),
\end{array}\right. \]
and $\omega_1(\upsilon) \neq 0$ by \eqref{equpsilonnondeg}, so let
 \[  h = e^{i\frac{br^2}{4}}  \omega_1(\upsilon)^{-1} \mathfrak{h}, \]
then the equation becomes
\be
 \left(\Delta + \frac{b^2 r^2}{4} - 1 - i \upsilon + W_{1, b} \right)  h + e^{i\frac{br^2}{2}}W_{2, b} \bar h = i\left( P_b + \omega_2(\upsilon)\rho_b \right). \label{eqhhh}
\ee
We now solve \eqref{eqhhh} for $h$ and $\upsilon$ using a similar strategy as the proof of Lemma \ref{lemrhob}.

\mbox{}

\underline{2. Interior solution.}

Denote 
 \[ h = \Sigma + i\Theta. \]
 Then \eqref{eqhhh} is equivalent to
 \be
\left| \begin{array}{l}
     -L_{+, b} \Sigma + (G +\upsilon) \Theta = -\left(  \Im P_b + \omega_2 \Im \rho_b \right) \\
     -L_{-, b} \Theta + (G - \upsilon) \Sigma =   \Re P_b +  \omega_2 \Re \rho_b
\end{array}\right. \label{eqSigmaTheta2}
 \ee
where the potential
\[ G = \Im \left( e^{i\frac{br^2}{2}}W_{2, b} \right). \]

Let 
\be
  x_+ = \frac 1{10}|\log b|,\quad x_- = 10|\log b|. 
\ee
Recall $Q_1$ and $\rho$ from \eqref{eqdefQ1Q2rho}. From \eqref{eqsolitondecay} and \eqref{eqLpmest1}, we have their exponential decay 
\be
\la r \ra^{-1}|Q_1|  + \la r \ra^{-3} |\rho| \lesssim \la r \ra^{-\frac{d-1}{2}}e^{-r}.\label{eqestQ1Q2rho}
\ee
Using the algebraic identity \eqref{eqrhobQ} and controlling the residuals by \eqref{eqestQ1Q2rho}, \eqref{eqSigma0rho}, \eqref{eqasympXibReint}, 
and Proposition \ref{propQbasymp}, we find
\bee
 \left( \Re \rho_b, Q\right)_{L^2(B_{x_\pm})} = \frac 12 \| xQ\|_{L^2(\RR^d)} + O(b^\frac 13+ x_\pm^4 e^{-2x_\pm} ) = \frac 12 \| xQ\|_{L^2(\RR^d)} + O(b^\frac 1{10}),\\
 \left( \Re P_b, Q\right)_{L^2(B_{x_\pm})} = \| Q \|_{L^2(\RR^d)}^2 + O(b^\frac 13+ x_\pm^4 e^{-2x_\pm} )  = \| Q \|_{L^2(\RR^d)}^2 + O(b^\frac 1{10}).
\eee
and from Lemma \ref{lemLpmb}, 
\[ \left( V_{-, b} L_{-, b}^{-1} \Re \rho_b,  Q \right)_{L^2(B_{x_\pm})},  \left( V_{-, b} L_{-, b}^{-1} \Re P_b,  Q \right)_{L^2(B_{x_\pm})} = O(b^\frac 13)  \]
Hence we define 
\bee
C_\pm := -\frac{ \left(\Re P_b - V_{-, b} L_{-, b}^{-1} \Re P_b, Q \right)_{L^2(B_{x_\pm})} }{\left(  \Re \rho_b - V_{-, b} L_{-, b}^{-1} \Re \rho_b, Q \right)_{L^2(B_{x_\pm})}} = -\frac{2\| Q \|_{L^2(\RR^d)}^2}{ \| xQ \|_{L^2(\RR^d)}^2} + O(b^\frac 1{10}) \sim -1
\eee
and further denote $\upsilon^*_\pm$ as
\be \begin{split} \upsilon^*_\pm &= \frac{(\upsilon_{\rho_b} + 2b + bs_c) \pm \sqrt{(2b+bs_c-\upsilon_{\rho_b})^2 - 16 s_c C_\pm} }{2} \\ &= \left| \begin{array}{ll}
    2b+O(b^{-2}e^{-\frac \pi b}) & {\rm for} \,\, \upsilon_+^*  \\
    \upsilon_{\rho_b} + O(b^{-2}e^{-\frac \pi b}) &  {\rm for} \,\, \upsilon_-^*
\end{array}\right. 
\end{split} \label{eqmupm*}
 \ee
such that $\omega_2(\upsilon^*_\pm) = C_\pm$, or equivalently
\be \left( (1 - V_{-,b} L_{-, b}^{-1})  (\Re P_b +  \omega_2(\upsilon^*_\pm) \Re \rho_b), Q \right)_{L^2(B_{x_\pm})} = 0.  \label{eqomega2*choice}\ee
For simplicity, we denote
\[  \omega^*_{2, \pm} = \omega_2(\upsilon^*_{\pm}). \]

\mbox{}

Let 
\[ \tilde \upsilon_\pm = \upsilon - \upsilon^*_\pm. \]
For $\upsilon \approx \upsilon^*_\pm$, we introduce two variables $\gamma_\pm \in \RR$ to characterize the invertibility of $L_{-, b}$
\be
       \gamma_\pm =  |\SS^{d-1}|^{-1} \left( (1 -V_{-,b} L_{-, b}^{-1} )   (\Re P_b   + \omega_2(\upsilon^*_\pm  + \tilde \upsilon_\pm)\Re \rho_b), Q \right)_{L^2(B_{x_\pm})}, \label{eqdefgammapm}
\ee
so that 
\be  \omega_2(\upsilon) - \omega^*_{2,\pm} = \frac{ |\SS^{d-1}| \gamma_\pm}{(\Re P_b - V_{-,b} L_{-, b}^{-1} \Re \rho_b, Q)_{L^2(B_{x_\pm})}}. \label{eqomega2diff}
\ee
Besides, for any fixed $|\gamma_\pm|\le 1$, the variable
$\tilde \upsilon_\pm$ can be solved from the quadratic equation \eqref{eqdefgammapm} as
\be \tilde \upsilon_\pm (\gamma_\pm) = \frac{\mp \Delta_\pm \pm \sqrt{\Delta_\pm^2 -\frac{16s_c  |\SS^{d-1}| \gamma_\pm}{((1- V_{-,b} L_{-, b}^{-1}) \Re \rho_b, Q)_{L^2(B_{x_\pm})}} }
 }{2} \sim \mp \gamma_\pm b^{-2} e^{-\frac \pi b}
 \label{eqtildeupsilonpm} \ee
 where $\Delta_\pm = \sqrt{(2b+bs_c-\upsilon_{\rho_b})^2 - 16 s_c C_\pm} = 2b + O(b^{-3}e^{-\frac \pi b})$.

We will prove in the rest of this step that for 
\be
 \left| \begin{array}{l}
 |\gamma_+| \le x_+^\frac 72 e^{-2x_+}, \\
 |\a_+| \le b x_+^{\frac 92} e^{-2x_+},
 \end{array}\right., \quad {\rm or} \quad 
 \left| \begin{array}{l}
 |\gamma_-| \le x_-^\frac 72 e^{-2x_-}, \\
 |\a_-| \le x_-^{\frac 12} b^{-4} e^{-\frac\pi b},
 \end{array}\right.,
\label{eqgammaarange+}
\ee
we can find solution $(\Sigma_{\pm; \gamma_\pm, \a_\pm}, \Theta_{\pm; \gamma_\pm, \a_\pm}) \in (C^\infty_{rad} (B_{x_{\pm}}))^2$ solving \eqref{eqSigmaTheta2} on $B_{x_\pm}$ with boundary value at $x_\pm$ as
\be
\left| \begin{array}{l}
 \vec \Sigma_{-;\gamma_-, \a_-}(x_-) = \a_- \vec A (x_-) + O\left(b^{-3 - \frac 23} e^{-\frac\pi b} x_-^{\frac 12 -\frac{d-1}{2}} e^{x_-}\right), \\ 
 \vec \Theta_{-;\gamma_-, \a_-}(x_-) = x_-^4 \kappa_{f_{0, -}}  \vec Q (x_-) + \gamma_- \vec \frakE(x_-) + O\left( x_-^{\frac{15}2 - \frac{d-1}{2}} e^{-3x_-}  \right)
 \end{array}\right.
  \label{eqST-intbdry}
\ee
and 
\be
\left| \begin{array}{l}
 \vec \Sigma_{+;\gamma_+, \a_+}(x_+) =- 2b x_+^5 \kappa_{F_{+,0}} \vec D(x_+) + \a_+ \vec A (x_+) + O\left(b x_+^{\frac{7}{2}-\frac{d-1}{2}} e^{-x_+} \right), \\ 
 \vec \Theta_{+;\gamma_+, \a_+}(x_+) = x_+^4 \kappa_{f_{0, +}}  \vec Q (x_+) + \gamma_+ \vec \frakE(x_+) + O\left( x_+^{\frac{15}2 - \frac{d-1}{2}} e^{-3x_+}  \right)
 \end{array}\right.
  \label{eqST-intbdry2}
\ee
where 
\be \kappa_{f_{0, \pm}}, \kappa_{F_{+,0}} = O(1)  \label{eqfF01pm} \ee
are independent of $\gamma_\pm$ and $\a_\pm$. 

\mbox{}

\textit{2.1. Construction of $(\Sigma_{-;\gamma_-, \a_-}, \Theta_{-;\gamma_-, \a_-})$.}

Apply Lemma \ref{lemLpmb} to invert $L_{\pm, b}$ on $B_{x_-}$ in \eqref{eqSigmaTheta2} and add multiple of $A_b$ to obtain
\be
  \left| \begin{array}{l}
       \Sigma = L_{+, b;x_-}^{-1} \left( (G + \upsilon) \Theta\right) + L_{+, b;x_-}^{-1} \left(  \Im P_b + \omega_2 \Im \rho_b \right) + \a_- A_b\\
       \Theta = L_{-, b}^{-1} \left( (G - \upsilon) \Sigma \right) -  L_{-, b}^{-1} \left( \Re P_b +  \omega_2 \Re \rho_b \right) 
\end{array}
\right..\label{eqSTint1}
\ee
We claim that with the parameter $(\gamma_-, \a_-)$ within \eqref{eqgammaarange+}, this is a linear contraction in the following space
\bee 
\| (\Sigma, \Theta)\|_{\mathbb{Z}_-} := \left( x_-^{\frac 12} b^{-4}e^{-\frac{\pi}{b}}\right)^{-1} \| \Sigma\|_{Z_{+, 0; x_-}} + \|\Theta\|_{Z_{-, 4; x_-}}
\eee
Indeed, first notice that by \eqref{eqomega2diff} and \eqref{eqtildeupsilonpm}, 
\[ |\tilde \upsilon_-| \lesssim b^{-2}e^{-\frac \pi b}x_-^\frac 72 e^{-2x_-}, \,\, |\omega_2(\upsilon) - \omega^*_{2, -}| \lesssim   x_-^\frac 72 e^{-2x_-}\,\, \Rightarrow \,\, \upsilon \sim b^{-3}e^{-\frac{\pi}{2b}},\,\, \omega_2 (\upsilon) \sim -1.\]
Further from $x_- = 10|\log b|$ and the boundedness in Lemma \ref{lemLpmb}, Lemma \ref{lemrhob} and Proposition \ref{propQbasymp}, we have the control of source terms 
\bee
  \| \a_- A_b \|_{\tilde Z_{+, 0;x_-}} \lesssim x_-^\frac 12 b^{-4} e^{-\frac \pi b}, \quad 
  \| L_{+, b;x_-}^{-1} \left(  \Im P_b + \omega_2 (\upsilon)\Im \rho_b \right) \|_{\tilde Z_{+, 0;x_-}} \lesssim x_- b^{-3} e^{-\frac \pi b} \\ 
  \| L_{-, b}^{-1} \left(  \Re P_b + \omega^*_{2, -} \Re \rho_b \right) \|_{\tilde Z_{-, 4;x_-}} \lesssim 1 \\
  \| (\omega_2(\upsilon) - \omega_{2,-}^*) L_{-,b}^{-1} \Re \rho_b \|_{\tilde Z_{-, 4;x_-}} \lesssim x_-^{\frac 72} e^{-2x_-} \cdot x_-^{-4} e^{2x_-} = x_-^{-\frac 12}, 
  \eee
and the linear contraction estimate
  \bee
  \| L_{+, b;x_-}^{-1} \left( (G+\upsilon)\Theta \right) \|_{\tilde Z_{+,0;x_-}} \lesssim ( bs_c + |\upsilon|) \| \Theta \|_{Z_{-,4;x_-}} \lesssim b^{-3}e^{-\frac \pi b} \| \Theta \|_{Z_{-,4;x_-}}, \\
  \| L_{-, b}^{-1} ((G-\upsilon)\Sigma) \|_{\tilde Z_{-,4;x_-}} \lesssim (bs_c + |\upsilon|) e^{2x_-} \| \Sigma \|_{Z_{+,0;x_-}} \lesssim b^2 \| \Sigma \|_{Z_{+,0;x_-}},
\eee
namely 
\[ \left\|\left(   L_{+, b;x_-}^{-1} \left( (G+\upsilon)\Theta \right), L_{-, b}^{-1} ((G-\upsilon)\Sigma)\right) \right\|_{\ZZ_-} \lesssim x_-^{-\frac 12} b \| (\Sigma, \Theta) \|_{\ZZ_-}. \]
Therefore, 
\be
   \Sigma_{-;\gamma_-, \a_-} = \a_- A_b + O_{\tilde Z_{+,0;x_-}}(x_- b^{-3} e^{-\frac \pi b}), \qquad \Theta_{-;\gamma_-, \a_-} = F_{-,\gamma_-} + O_{\tilde Z_{-,4;x_-}}(b^{-3} e^{-\frac \pi b}), \label{eqSigmaTheta-int}
\ee
where we denote
\be
\begin{split}
 f_{0,\pm} = -L_{-, b}^{-1} \left( \Re P_b +  \omega_{2,\pm}^* \Re \rho_b \right),\quad f_1 = -L_{-, b}^{-1} \Re \rho_b,  \\
 F_{\pm, \gamma_\pm} = -L_{-, b}^{-1} \left( \Re P_b + \omega_2(\upsilon^*_\pm + \tilde \upsilon_\pm) \Re \rho_b \right) = f_{0,\pm}+ \left(\omega_2(\upsilon) - \omega_{2,\pm}^* \right) f_1.
 \end{split} \label{eqdeff0f1F}
\ee
The smoothness of $\Sigma_{-;\gamma_-, \a_-}, \Theta_{-;\gamma_-, \a_-}$ on $B_{x_-}$ comes from using \eqref{eqSigmaTheta2} to improve regularity near the origin and the smoothness of ODE solution on $r \in (0, x_-)$. 

\mbox{}

Now we evaluate the boundary value of $F_{\pm,\gamma_\pm}$ at $x_\pm$. From the bound of $\Re \rho_b$ \eqref{eqasympXibReint} and choice of $\omega^*_{2,\pm}$ \eqref{eqomega2*choice}, we define and compute
\bee
\kappa_{f_{0, \pm}} &:=& x_\pm^{-4} \int_0^{x_\pm} \left[ \int_0^s  \left[ \left(1 - V_{-, b} L_{-, b}^{-1}\right) \left( \Re P_b +  \omega_{2,\pm}^* \Re \rho_b \right) \right](\tau) Q(\tau) \tau^{d-1} d\tau \right] \frac{ds}{Q^2 s^{d-1}}\\
&=& x_\pm^{-4}  \int_0^1  \left[ \int_0^s  \left[ \left(1 - V_{-, b} L_{-, b}^{-1}\right) \left( \Re P_b +  \omega_{2,\pm}^* \Re \rho_b \right) \right](\tau) Q(\tau) \tau^{d-1} d\tau \right] \frac{ds}{Q^2 s^{d-1}}\\
&-& x_\pm^{-4}  \int_1^{x_\pm}  \left[ \int_s^{x_\pm}  \left[ \left(1 - V_{-, b} L_{-, b}^{-1}\right) \left( \Re P_b +  \omega_{2,\pm}^* \Re \rho_b \right) \right](\tau) Q(\tau) \tau^{d-1} d\tau \right] \frac{ds}{Q^2 s^{d-1}} \\
&=& O(1) 
\eee
so that 
\be f_{0, \pm} (x_\pm) = x_\pm^4 \kappa_{f_{0, \pm}} \vec Q(x_\pm). \label{eqSTbdryint1}
 \ee
Also from Lemma \ref{lemLpmb} and \eqref{eqomega2diff}
\be (\omega_2(\upsilon) - \omega_{2, \pm}^*) f_1 = \gamma_\pm \tilde{\frakE} + O_{\tilde Z_{-, 4; x_\pm}}(\gamma_\pm).    \label{eqSTbdryint2}
\ee
Combined with \eqref{eqSigmaTheta-int} and \eqref{eqestAb}, we obtain \eqref{eqST-intbdry}. 

\mbox{}

\textit{2.2. Construction of $(\Sigma_{+;\gamma_+, \a_+}, \Theta_{+;\gamma_+, \a_+})$.} 

Similarly, we first invert $L_{\pm, b}$ on $B_{x_+}$ for \eqref{eqSigmaTheta2} to obtain the integral equation similar to \eqref{eqSTint1}, then further plug the $\Theta$ equation into the $\Sigma$ one to get
\be
\left|\begin{array}{l}
 \Sigma =
  - L_{+, b;x_+}^{-1} \circ (G + \upsilon) L_{-, b}^{-1} \left( \Re P_b +  \omega_2 \Re \rho_b \right) \\
  \qquad +  L_{+, b;x_+}^{-1} \circ (G + \upsilon) L_{-, b}^{-1}  \circ (G-\upsilon) \Sigma + L_{+, b;x_+}^{-1} \left(   \Im P_b + \omega_2 \Im \rho_b \right) + \a_+ A_b \\
   \Theta = L_{-, b}^{-1} \left( (G - \upsilon) \Sigma \right) -  L_{-, b}^{-1} \left( \Re P_b +  \omega_2 \Re \rho_b \right) 
\end{array}\right. \label{eqSTint2}
\ee

We claim that with $(\gamma_+, \a_+)$ within \eqref{eqgammaarange+}, the $\Sigma$ equation is a linear contraction in $Z_{-, 5;x_+}$. Indeed, by \eqref{eqomega2diff} and \eqref{eqtildeupsilonpm}, 
\be \upsilon =  2b + O(b^{-2}e^{-\frac \pi b}),\quad \omega_2(\upsilon) \sim -1.  \label{equpsilonasymp+} \ee
Then  Lemma \ref{lemLpmb}, Lemma \ref{lemrhob}, Proposition \ref{propQbasymp} and $x_+ = \frac{|\log b|}{10}$ imply
\bee
  \| L_{+, b;x_+}^{-1} \left(  \Im P_b + \omega_2 (\upsilon)\Im \rho_b \right) \|_{\tilde Z_{-, 5;x_+}} &\lesssim& b^{-3} e^{-\frac \pi b} e^{2x_+}, \quad \| \a_+ A_b \|_{\tilde Z_{-, 5;x_+}} \lesssim bx_+^{-\frac 12}, \\ 
  \| L_{-, b}^{-1} \left(  \Re P_b + \omega^*_{2, +} \Re \rho_b \right) \|_{\tilde Z_{-, 4;x_+}} &\lesssim& 1 \\
  \| (\omega_2(\upsilon) - \omega_{2,-}^*) L_{-,b}^{-1} \Re \rho_b \|_{\tilde Z_{-, 4;x_-}} &\lesssim& x_+^{\frac 72} e^{-2x_+} \cdot x_+^{-4} e^{2x_+} = x_+^{-\frac 12},  \\
   \|  L_{+,b;x_+}^{-1} \circ G \|_{Z_{-, 4;x_+} \to \tilde Z_{-, 5;x_+}}  &\lesssim& b^{-\frac 15}e^{-\frac \pi b},
   \quad \| \upsilon L_{+,b;x_+}^{-1}  \|_{Z_{-, 4;x_+} \to \tilde Z_{-, 5;x_+}} \lesssim b,  \\
  \| L_{-,b}^{-1} \circ (G-\upsilon) \|_{Z_{-, 5;x_+} \to \tilde Z_{-, 4;x_+}} &\lesssim& x_+^{-4} e^{2x_+} b \le b^{\frac 45}.
\eee
Consequently,
\bee
  \|   - L_{+, b;x_+}^{-1} \circ (G + \upsilon) L_{-, b}^{-1} \left( \Re P_b +  \omega_2 \Re \rho_b \right) + L_{+, b;x_+}^{-1} \left(   \Im P_b + \omega_2 \Im \rho_b \right) + \a_+ A_b \|_{\tilde Z_{-,5;x_+}} \lesssim b, \\
  \|  L_{+, b;x_+}^{-1} \circ (G + \upsilon) L_{-, b}^{-1}  \circ (G-\upsilon) \Sigma  \|_{\tilde Z_{-,5;x_+}} \lesssim b^{\frac 95} \| \Sigma  \|_{Z_{-,5;x_+}}.
\eee
That verifies the contraction of $\Sigma$ in $Z_{-,5;x_+}$. With the above estimate for \eqref{eqSTint2} and \eqref{eqestAb}, we have (recalling $F_{+, \gamma_+}$ from \eqref{eqdeff0f1F})
\be \Sigma_{+;\gamma_+,\a_+} =  \a_+ A + \upsilon L_{+,b;x_+}^{-1} F_{+,\gamma_+} + O_{\tilde Z_{-,5;x_+}}(b^\frac 43 x_+^{-\frac 12}),\quad \Theta_{+;\gamma_+,\a_+} = F_{+,\gamma_+} + O_{\tilde Z_{-,4;x_+}}(b^{\frac 95}). \label{eqSigmaTheta+int}
\ee
The smoothness of $(\Sigma_{+;\gamma_+,\a_+}, \Theta_{+;\gamma_+,\a_+})$ in $B_{x_+}$ follows similarly by improving regularity via \eqref{eqSigmaTheta2} and smoothness of ODE solution. 

Next, we evaluate the boundary value of $\upsilon L_{+,b;x_+}^{-1} F_{+,\gamma_+}$. From the definition of $L_{+,b;x_+}$ in Lemma \ref{lemLpmb}, we have the functional identity 
\[ L_{+,b;x_+}^{-1} = L_{+;x_+}^{-1} \left( 1 - V_{+,b} L_{+,b;x_+}^{-1}\right). \]
So using \eqref{eqSTbdryint2}, we have
\[ L_{+,b;x_+}^{-1} F_{+,\gamma_+} = L_{+;x_+}^{-1}(f_{0, +} + \gamma_+ \tilde{\frakE}) + O_{\tilde Z_{-,5;x_+}}( |\gamma_+| + b^\frac 13) \]
Define 
\be \kappa_{F_{+, 0}} = x_+^{-5} \int_0^{x_+} f_{0,+} A s^{d-1} ds = O(1),\quad \kappa_{f_{1, +}} = e^{-2x_+} \int_{0}^{x_+} \tilde{\frakE} A s^{d-1} ds \sim 1.  \ee
Then 
\[ \overrightarrow{ L_{+;x_+}^{-1}(f_{0, +} + \gamma_+ \tilde{\frakE})(x_+)} = -\left( x_+^5 \kappa_{F_{+,0}} + \gamma_+  e^{2x_+} \kappa_{f_{1, +}}\right) \vec D(x_+) = -\left( x_+^5 \kappa_{F_{+,0}} + O(x_+^\frac 72)\right)\] 
The asymptotics \eqref{eqST-intbdry2} follows this and \eqref{eqSTbdryint1}, \eqref{eqSTbdryint2}, \eqref{equpsilonasymp+}, \eqref{eqSigmaTheta+int}. 

\mbox{}

\underline{3. Exterior solution.}

Let 
$$\xi := r^{\frac{d-1}{2}} h. $$ 
Then \eqref{eqhhh} turns into 
\[\left( \pa_r^2 - \frac{(d-1)(d-3)}{4r^2} +\frac{b^2 r^2}{4} - 1 - i\upsilon + W_{1, b} \right) \xi + e^{i\frac{br^2}{2}} W_{2, b} \bar \xi = i r^{\frac{d-1}{2}} \left( P_b + \omega_2(\upsilon) \rho_b \right) \]
namely 
\be \left( \tilde H_{b, E} + \calV_{b, E}\right) \xi = i r^{\frac{d-1}{2}} \left( P_b + \omega_2(\upsilon) \rho_b \right)
\label{eqgpmext}
\ee
with $E = 1 + i\upsilon$. 

Recall the definition of $\upsilon^*_\pm$, $\gamma_\pm$ and $\tilde \upsilon_\pm(\gamma_\pm)$ from \eqref{eqmupm*}, \eqref{eqdefgammapm} and \eqref{eqtildeupsilonpm}. For 
\be |\gamma_\pm | \le x_\pm^\frac 72 e^{-2x_\pm}, \label{eqgammapmrange}
\ee
we will construct solutions $\xi^*_{\pm, \gamma_\pm}$ of \eqref{eqgpmext} and solutions $\xi^\Re_{\pm, \gamma_\pm}$, $\xi^\Im_{\pm, \gamma_\pm}$ of the homogeneous version for \eqref{eqgpmext} on $[x_\pm, \infty)$ with $\upsilon = \upsilon^*_\pm + \tilde \upsilon_\pm(\gamma_\pm)$. They are smooth on $[x_\pm, \infty)$ and satisfy 
\be  \| \xi^*_{\pm, \gamma_\pm} \|_{X^{7, N, -}_{\frac 2b, \frac{2+\sqrt b}{b}; b, E}} \lesssim_{N} b^{3-\frac 16} e^{-\frac \pi{2b}},\quad \sum_{\sigma \in \{\Re, \Im\}} \| \xi^\sigma_{\pm, \gamma_\pm}\|_{X^{7, N, -}_{\frac 2b, \frac{2+\sqrt b}{b}; b, E}} \lesssim b^7,\quad \forall\, N \ge 8, \label{eqxipmextest} \ee
and the boundary values are 
\begin{align}
&\left| \begin{array}{l}
 \vec \xi^*_{-,\gamma_-}(x_-) =\left( \kappa^*_{-,0} + O_\RR(b^{-4+\frac 1{12}} e^{-\frac \pi b}) + i O_\RR(x_-^3 b^{1-\frac 16}e^{2S_b(x_-) - \frac \pi{2b}} ) \right) \vec \psi_2^{b, 1} (x_-),\\  
\xi^\Re_{-,\gamma_-}(x_-) = \vec \psi_4^{b, 1} (x_-)  \left (1 + O_\RR(x_-^{-2}) + i O_\RR(x_-^{-1} e^{-2S_b(x_-)})\right) ,\\
\xi^\Im_{-,\gamma_-}(x_-) =\vec \psi_4^{b, 1} (x_-)  \left (i + iO_\RR(x_-^{-2}) + O_\RR(x_-^{-1} e^{-2S_b(x_-)})\right) 
\end{array} \right. \label{eqxi-bdryx-}\\
&\left| \begin{array}{l}
 \vec \xi^*_{+,\gamma_+}(x_+) = \left( \kappa^*_{+,0} + O_\CC(b^{3-\frac 16} e^{-\frac{\pi}{2b}} x_+^4 e^{2S_b(x_+)} ) \right) \vec \psi_2^{b, E_{+,0}} (x_+),\\  
\xi^\Re_{+,\gamma_+}(x_+) =e^{i\Im\eta_{b, E_{+,0}}(x_+)}  \vec \psi_4^{b, E_{+,0}} (x_+) \left( 1 +O_\RR(x_+^{-2}) + iO_\RR(bx_+^{-1})  \right) \\
\xi^\Im_{+,\gamma_+}(x_+) =  e^{i\Im\eta_{b, E_{+,0}}(x_+)} \vec \psi_4^{b, E_{+,0}} (x_+)\left( i +iO_\RR(x_+^{-2}) + O_\RR(bx_+^{-1})  \right) 
\end{array}\right. \label{eqxi+bdryx+}
\end{align}
where $E_{+, 0} = 1 + i \upsilon^*_+$ and 
\bea
  \kappa^*_{-, 0} &=& O_\RR\left(b^{-4 - \frac 16} e^{-\frac \pi{2b}}\right) + iO_\RR\left(x_-^{3} b^{-\frac 16} e^{2S_b(x_-) - \frac \pi {2b}}\right) \label{eqxi-bdryx-2}\\
   e^{i\Im\eta_{b, E_{+,0}}(x_+)}\kappa^*_{+,0} &=&  O_\RR\left(b^{\frac 56} x_+^4 e^{2S_b(x_+)-\frac \pi {2b}}\right)
 +  iO_\RR \left(b^{-\frac 16}  x_+^3 e^{2S_b(x_+)-\frac \pi {2b}}\right) \label{eqxi+bdryx+2}
\eea
are independent of $\gamma_\pm$ within the range of \eqref{eqgammapmrange}.

As in the proof of Lemma \ref{lemrhob}, we define the exterior and middle regions as.
\be I_{ext} = \left[ r_0, \infty \right),\quad I_{mid, \pm,} = \left[x_\pm, r_0\right],\quad {\rm where\,\,} r_0 = 2b^{-1}.  \ee

\mbox{}

\textit{3.1. Construction on $I_{ext}$.} 

This part is almost the same as in Step 2.1 of Proof of Lemma \ref{lemrhob}. We let $r_1 = 2b^{-1} + b^{-\frac 12}$, and define $\xi^{ext, *}_{\pm,\gamma_\pm}$ solving \eqref{eqgpmext} and $ \xi^{ext, \Re}_{\pm,\gamma_\pm}$, $ \xi^{ext, \Im}_{\pm,\gamma_\pm}$ solving the homogeneous version on $I_{ext}$ as 
\bee
  \xi^{ext, *}_{\pm,\gamma_\pm} &=& \tilde \calT^{ext}_{r_0; b, E} \left[ i r^{\frac{d-1}{2}} \left( P_b + \omega_2 \rho_b \right) \right]  - \tilde \calT^{ext}_{r_0; b, E} \calV_{b, E} \xi^{ext, *}_{\pm,\gamma_\pm} \\
  \xi^{ext, \Re}_{\pm,\gamma_\pm} &=& - \tilde \calT^{ext}_{r_0; b, E} \calV_{b, E} \xi^{ext, \Re}_{\pm,\gamma_\pm} + \psi_1^{b, E} \\
  \xi^{ext, \Im}_{\pm,\gamma_\pm}  &=& - \tilde \calT^{ext}_{r_0; b, E} \calV_{b, E} \xi^{ext, \Im}_{\pm,\gamma_\pm} + i\psi_1^{b, E},
\eee
From Lemma \ref{leminvtildeHext} and \eqref{eqasympXibext}, and $\upsilon > bs_c$, $|\omega_2(\upsilon)| \sim 1$ with $\gamma_\pm$ satisfying \eqref{eqgammapmrange}, we have for $\a = 7$, $N \ge 8$, 
 \[  \left\|  \tilde \calT^{ext}_{r_0; b, E} \left[ i r^{\frac{d-1}{2}} \left( P_b + \omega_2(\upsilon) \rho_b \right) \right] \right\|_{ X^{7, N, -}_{r_0, r_1; b, E}}\lesssim_N b^{3-\frac 16}  e^{-\frac{\pi}{2b}}.  \]
 Then Lemma \ref{leminvtildeHext} and \eqref{eqbddcalVext} implies contraction in $X^{7, N, -}_{r_0, r_1;b, E}$ for $N = 8$ and thereafter
\be
  \| \xi^{ext, *}_{\pm,\gamma_\pm} \|_{X^{7, N, -}_{r_0, r_1; b, E}} \lesssim_{N} b^{3-\frac 16} e^{-\frac \pi{2b}}, \quad \sum_{\sigma \in \{\Re, \Im\}} \| \xi^{ext, \sigma}_{\pm,\gamma_\pm} \|_{X^{7, N, -}_{r_0, r_1; b, E}} \lesssim b^7,\quad \forall\, N \ge 8. \label{eqxiextestpm}
\ee
The boundary values are given by Lemma \ref{leminvtildeHext} as
\be
\left| \begin{array}{l}
  \vec  \xi^{ext, *}_{\pm,\gamma_\pm} (r_0) =
  \gamma^{ext, *}_{\pm,\gamma_\pm} \vec \psi_3^{b, E} (r_0), \\ 
  \vec  \xi^{ext, \Re}_{\pm,\gamma_\pm} (r_0) = 
  \vec \psi_1^{b, E} (r_0) + \gamma^{ext, \Re}_{\pm,\gamma_\pm} \vec \psi_3^{b, E} (r_0), \\
   \vec  \xi^{ext, \Im}_{\pm,\gamma_\pm} (r_0) =
   i\vec \psi_1^{b, E} (r_0) + \gamma^{ext, \Im}_{\pm,\gamma_\pm} \vec \psi_3^{b, E} (r_0),
   \end{array}\right. \label{eqxiextbdry3}
\ee
with 
\be
  \gamma^{ext, *}_{\pm,\gamma_\pm} = O\left(b^{-4 + \frac 1{12}} e^{-\frac{\pi}{2b}} \right),\quad 
  \gamma^{ext, \Re}_{\pm,\gamma_\pm}, \gamma^{ext, \Im}_{\pm,\gamma_\pm} = O(b^\frac 54).\label{eqxiextbdry4}
\ee

\mbox{}

\textit{3.2. Construction on $I_{mid, -}$ and linear matching: "-" case.}

Since now $|\upsilon| \lesssim b^{-3}e^{-\frac \pi{b}} \ll b^3 r^{-3} e^{-2S_b(r)}$ on $I_{mid, -}$, this part is almost the same as in Step 2.2-2.4 of Proof of Lemma \ref{lemrhob}, which we omit. The outcome \eqref{eqxipmextest} (for "-" case), \eqref{eqxi-bdryx-} and \eqref{eqxi-bdryx-2} are counterparts of \eqref{eqxiextestcompo}, \eqref{eqxiextbdryx*} and \eqref{eqdefkappa*upsilon}.

\mbox{}

\textit{3.3. Construction on $I_{mid,+}$ and linear matching: "+" case.}

Notice that for any fixed $\gamma_+$  satisfying \eqref{eqgammapmrange}, 
\[ \tilde H_{b, E} + \calV_{b, E} = \tilde H_{b, E_{+,0}} + \calV_{b, E_{+,0}} - i\tilde \upsilon_+(\gamma_+) \]
where 
\[ E = E_{+, \gamma_+} = 1 + i (\upsilon^*_+ + \tilde \upsilon_+(\gamma_+)),\quad E_{+, 0} = 1 + i\upsilon^*_+.\]
We define $\xi^{mid, *}_{+, \gamma_+}$ solving \eqref{eqgpmext} and $\xi^{mid, \sigma, j}_{+, \gamma_+}$ for $j = 2, 4$, $\sigma \in \{ \Re, \Im \}$ solving its homogeneous on $I_{mid,+}$ by
\bee
  \xi^{mid, *}_{+, \gamma_+} &=&  \tilde \calT^{mid, G}_{x_+, r_0; b, E_{+, 0}}  \left[ i r^{\frac{d-1}{2}} \left( P_b + \omega_2 \rho_b \right) \right] -  \tilde \calT^{mid, G}_{x_+, r_0; b, E_{+, 0}} \left( \calV_{b, E_{+, 0}} - i \tilde \upsilon_+ \right) \xi^{mid, *}_{+, \gamma_+} \\
    \xi^{mid, \Re, 4}_{+, \gamma_+} &=& -  \tilde \calT^{mid, G}_{x_+, r_0; b, E_{+, 0}} \left( \calV_{b, E_{+, 0}} - i \tilde \upsilon_+ \right) \xi^{mid, \Re, 4}_{+, \gamma_+} + e^{i\Im \eta_{b, E_{+, 0}}(x_+)} \psi_4^{b, E_{+, 0}}, \\
     \xi^{mid, \Re, 2}_{+, \gamma_+} &=& -  \tilde \calT^{mid, G}_{x_+, r_0; b, E_{+, 0}} \left( \calV_{b, E_{+, 0}} - i \tilde \upsilon_+ \right)\xi^{mid, \Re, 2}_{+, \gamma_+} + \psi_2^{b, E_{+, 0}}, \\
  \xi^{mid, \Im, 4}_{+, \gamma_+} &=& -  \tilde \calT^{mid, G}_{x_+, r_0; b, E_{+, 0}} \left( \calV_{b, E_{+, 0}} - i \tilde \upsilon_+ \right) \xi^{mid, \Im, 4}_{+, \gamma_+} + i  e^{i\Im \eta_{b, E_{+, 0}}(x_+)}  \psi_4^{b, E_{+, 0}},\\
   \xi^{mid, \Im, 2}_{+, \gamma_+} &=& - \tilde \calT^{mid, G}_{x_+, r_0; b, E_{+, 0}} \left( \calV_{b, E_{+, 0}} - i \tilde \upsilon_+ \right) \xi^{mid, \Im, 2}_{+, \gamma_+} + i \psi_2^{b, E_{+, 0}}.
\eee
The phase correction is introduced to make $\xi^{mid, \sigma, 4}_{+, \gamma_+}$ has distinguishable real and imaginary parts near $r = x_+$. 
 
Without distinguishing real and imaginary parts, we can apply the bound in Lemma \ref{leminvtildeHmid}, Lemma \ref{lemrhob} and $|(\calV_{b, E_{+, 0}} - i\tilde \upsilon_+) f | \lesssim r^{-2}|f|$ from \eqref{eqbddcalVext2} and \eqref{eqtildeupsilonpm}, to obtain the existence of solutions by contraction in $C^0_{\omega_{b, E_{+, 0}}^- r^4}(I_{mid,+})$, $C^0_{\omega_{b, E_{+, 0}}^-}(I_{mid,+})$, $C^0_{\omega_{b, E_{+, 0}}^+}(I_{mid,+})$, $C^0_{\omega_{b, E_{+, 0}}^-}(I_{mid,+})$, $C^0_{\omega_{b, E_{+, 0}}^+}(I_{mid,+})$, respectively.  
We claim the following refined estimates
\bea
  \|\Im \xi^{mid, *}_{+, \gamma_+} \|_{C^0_{\omega_{b, 1}^- r^4}(I_{mid,+})} \lesssim b^{-\frac 16 }e^{-\frac{\pi}{2b}}, \quad \| \Re \xi^{mid, *}_{+, \gamma_+} \|_{C^0_{\omega_{b, 1}^- r^5}(I_{mid,+})} \lesssim b^{1-\frac 16 }e^{-\frac{\pi}{2b}}, \label{eqreg+*mid}\\
  \|\Re \xi^{mid, \Re, 4}_{+, \gamma_+} \|_{C^0_{\omega_{b, 1}^- }(I_{mid,+})} \lesssim 1, \quad \| \Im \xi^{mid, \Re, 4}_{+, \gamma_+} \|_{C^0_{\omega_{b, 1}^- r}(I_{mid,+})} \lesssim b. \label{eqreg+1Remid}
\\
  \|\Im \xi^{mid, \Im, 4}_{+, \gamma_+} \|_{C^0_{\omega_{b, 1}^- }(I_{mid,+})} \lesssim 1, \quad \| \Re \xi^{mid, \Im, 4}_{+, \gamma_+} \|_{C^0_{\omega_{b, 1}^- r}(I_{mid,+})} \lesssim b. \label{eqreg+1Immid} \\
  \| \xi^{mid, \Re, 2}_{+, \gamma_+} \|_{C^0_{\omega_{b, 1}^+}(I_{mid, +})} + \| \xi^{mid, \Im, 2}_{+, \gamma_+} \|_{C^0_{\omega_{b, 1}^+}(I_{mid, +})} \lesssim 1. \label{eqreg+2Immid}
\eea
and the difference estimate
\bea
  \| \xi^{mid, *}_{+, \gamma_+} - \xi^{mid, *}_{+, 0} \|_{C^0_{\omega_{b, 1}^- r^4}(I_{mid, +})} \lesssim b^{3-\frac 16}e^{-\frac \pi {2b}} \label{eqg+diff} 
\eea
Indeed, from \eqref{eqetaRe} and \eqref{eqomegapm}, we have $\omega_{b, 1}^\pm \sim \omega_{b, E_{+,0}}^\pm$ on $I_{mid,+}$; so with the contraction estimate above, it remains to check the improved estimates of $\Re \xi^{mid, *}_{+, \gamma_+}$, $\Re \xi^{mid, \Im, 4}_{+, \gamma_+}$ and $\Im \xi^{mid, \Re, 4}_{+, \gamma_+}$ on $\left[ x_+, (2b)^{-1} \right]$. Note that the difference estimate \eqref{eqg+diff} easily follows from the smallness $|\tilde \upsilon_+| \lesssim b^{-2} e^{-\frac \pi b}$ by \eqref{eqtildeupsilonpm} and \eqref{eqgammaarange+}.

We will derive linear contraction estimate for $\Re \xi^{mid, *}_{+, \gamma_+} \big|_{ [x_+, (2b)^{-1}]}$ in $C^0_{\omega_{b, 1}^- r^5}([x_+, (2b)^{-1}])$ and treat $\xi^{mid, *}_{+, \gamma_+} \big|_{[(2b)^{-1}, 2b^{-1}]}$ and $\Im \xi^{mid, *}_{+, \gamma_+} \big|_{ [x_+, (2b)^{-1}]}$ as source terms. Denote 
\[S = ir^\frac{d-1}{2}\left (P_b + \omega_2 \rho_b\right) - (\calV_{b, E_{+, 0}} - i\tilde \upsilon_+) \xi^{mid, *}_{+, \gamma_+}.\] 
We expand $\tilde \calT^{mid, G}_{x_+, r_0; b, E_{+, 0}}$ and compute for $r \in [x_+, (2b)^{-1}]$
\bea
 \Re \xi^{mid, *}_{+, \gamma_+}(r) &=&  \Re \left[ - \psi_4^{b,  E_{+,0}} \int_{x_+}^r \psi_2^{b,  E_{+,0}} S \frac{ds}{W_{42; E_{+,0}}} -    \psi_2^{b,  E_{+,0}} \int_r^{r_0}    \psi_4^{b,  E_{+,0}} S \frac{ds}{  W_{42;  E_{+,0}}} \right] \nonumber\\
 &=& \Re \Bigg[ -\frac{e^{S_b(r)}}{\left(1 - b^2r^2 /4 \right)^{\frac 14}} \int_{x_+}^r \frac{e^{-S_b(s)} e^{i\Im(\eta_{b, E_{+,0}}(s) - \eta_{b, E_{+,0}}(r))}}{2\left(1 - b^2s^2 /4 \right)^{\frac 14}} \left( 1 + O_\CC(b)\right) Sds \nonumber\\
  &-&\frac{e^{-S_b(r)}}{\left(1 - b^2r^2 /4 \right)^{\frac 14}} \int_r^{(2b)^{-1}} \frac{e^{S_b(s)} e^{i\Im(\eta_{b, E_{+,0}}(r) - \eta_{b, E_{+,0}}(s))}}{2\left(1 - b^2s^2 /4 \right)^{\frac 14}} \left( 1 + O_\CC(b)\right) Sds \Bigg] \nonumber \\
 &+& O_\RR\left( \omega_{b, 1}^+(r) \int_{(2b)^{-1}}^{r_0} \omega_{b, 1}^-(s) |S| b^{-\frac 13} ds \right) \label{eqRezetamid*+contract}
  %
\eea
where we exploited the asymptotics of $\psi_j^{b, E}$ from Proposition \ref{propWKB} (5), and used \eqref{eqetaRe} to evaluate $\Re \eta_{b, E_{+,0}}(r) = -S_b(r) (1 + O_\RR(b))$ for $r \le (2b)^{-1}$. 

To show \eqref{eqRezetamid*+contract} is a linear contraction for $\Re \xi^{mid, *}_{+, \gamma_+} \big|_{ [x_+, (2b)^{-1}]}$, we start with controlling the $O_\CC(b)$ part and the last term. Since $\|S\|_{C^0_{\omega_{b, 1}^- r^3}(I_{mid, +})} \lesssim b^{-\frac 16}{e^{-\frac \pi{2b}}}$ from Lemma \ref{lemrhob} and the contraction estimate for $\xi^{mid, *}_{+, \gamma_+}$ above, 
we have for $r \in [x_+, (2b)^{-1}]$,
\bee
&&\left| \omega_{b, 1}^+(r) \int_{(2b)^{-1}}^{r_0} \omega_{b, 1}^-(s) |S| b^{-\frac 13} ds \right|
  \lesssim \omega_{b, 1}^+(r) e^{-2S_b(\frac{1}{2b})} b^{-4-\frac 16} e^{-\frac{\pi}{2b}} \lesssim  \omega_{b, 1}^-(r) r^5 b^{1-\frac 16} e^{-\frac{\pi}{2b}}\\
&&\left| -\frac{e^{S_b(r)}}{\left(1 - b^2r^2 /4 \right)^{\frac 14}} \int_{x_+}^r \frac{e^{-S_b(s)} }{2\left(1 - b^2s^2 /4 \right)^{\frac 14}}  Sds \right|  
  + \left| \frac{e^{-S_b(r)}}{\left(1 - b^2r^2 /4 \right)^{\frac 14}} \int_r^{\frac{1}{2b}} \frac{e^{S_b(s)} }{2\left(1 - b^2s^2 /4 \right)^{\frac 14}} Sds \right|\\
  &\lesssim& \omega_{b, 1}^- r^4 \left(  b^{-\frac 16} e^{-\frac{\pi}{2b}} + \|\Re \xi^{mid, *}_{+, \gamma_+} \|_{C^0_{\omega_{b, 1}^- r^5}([x_+, (2b)^{-1}])}  \right)
\eee

Next, for $r, s \in [x_+, (2b)^{-1}]$, 
\bee
\left|\Im (\eta_{b, E_{+, 0}} (r) - \eta_{b, E_{+,0}}(s)) \right|
= \left| \frac 2b \int_{\frac{bs}{2}}^{\frac{br}{2}} \Im \left[ (E_{+, 0}-\tau^2)^\frac 12 \right] d\tau \right| \lesssim b|r-s|.
\eee
Then for $r \in [x_+, (2b)^{-1}]$, with \eqref{eqasympXibReint}, \eqref{eqasympXibImint} and \eqref{eqbddcalVext2}, \eqref{eqtildeupsilonpm}, we compute
\bee
  &&|\Re S(r)| + |\Im (\eta_{b, E_{+, 0}} (r) - \eta_{b, E_{+,0}}(s))| |\Im S(r)|\\
   &\lesssim& \left(b^{-3}e^{-\frac{\pi}{2b}}e^{-S_b(r)} + r^{-2} |\Re \xi^{mid, *}_{+, \gamma_+}(r)| + b^2 \cdot e^{-\frac{\pi}{2b}} e^{S_b(r)}r^4\right) \\
   &+& b|r-s| \cdot \left( e^{-\frac{\pi}{2b}} e^{S_b(r)}r^3 + b^2 |\Re \xi^{mid, *}_{+, \gamma_+}(r)| + r^{-2} \cdot e^{-\frac{\pi}{2b}} e^{S_b(r)}r^4 \right) \\
  &\lesssim& b e^{-\frac{\pi}{2b}} e^{S_b(r)} r^4 + b^{\frac 16}e^{S_b(r)} r^3 \|\Re \xi^{mid, *}_{+, \gamma_+}\|_{C^0_{\omega_{b, 1}^- r^5}([x_+, (2b)^{-1}])} 
\eee
and hence
\bee
 &&\left| \Re \left[ -\frac{e^{S_b(r)}}{\left(1 - b^2 r^2/4\right)^{\frac 14}} \int_{x_+}^r \frac{e^{-S_b(s)} e^{i\Im(\eta_{b, E_{+,0}}(s) - \eta_{b, E_{+,0}}(r))}}{2\left(1 -b^2 s^2/4\right)^{\frac 14}} Sds \right] \right| \\
 &\lesssim& \left| e^{S_b(r)} \int_{x_+}^r  e^{-S_b(s)} \left( |\Re S(r)| + |\Im (\eta_{b, E_{+,0}}(s)-\eta_{b, E_{+,0}}(r))| |\Im S(r)| \right) ds  \right| \\
 &\lesssim& \omega_{b, 1}^- r^5 \left( b^{1 - \frac 16} e^{-\frac{\pi}{2b}} + r^{-1} \|\Re \xi^{mid, *}_{+, \gamma_+}\|_{C^0_{\omega_{b, 1}^- r^5}([x_+, (2b)^{-1}])}  \right) 
 \eee
and similarly
\bee
 &&\left| \Re \left[   - \frac{e^{-S_b(r)}}{\left(1 - b^2 r^2/4\right)^{\frac 14}} \int_{r}^{\frac{1}{2b}} \frac{e^{S_b(s)} e^{-i\Im(\eta_{b, E_{+,0}}(s) - \eta_{b, E_{+,0}}(r))}}{2\left(1 - b^2 s^2/2\right)^{\frac 14}}S ds  \right] \right| \\
 &\lesssim&  \omega_{b, 1}^- r^4 \left( b^{1 - \frac 16} e^{-\frac{\pi}{2b}} + r^{-1} \|\Re \xi^{mid, *}_{+, \gamma_+}\|_{C^0_{\omega_{b, 1}^- r^5}([x_+, (2b)^{-1}])}  \right) 
\eee

 Plugging these estimates into \eqref{eqRezetamid*+contract}, we obtain
\bee
 \| \Re \xi^{mid, *}_{+, \gamma_+}\|_{C^0_{\omega_{b, 1}^- r^5}([x_+, (2b)^{-1}])} \lesssim b^{1 - \frac 16} e^{-\frac{\pi}{2b}} + x_+^{-1} \| \Re \xi^{mid, *}_{+, \gamma_+}\|_{C^0_{\omega_{b, 1}^- r^5}([x_+, (2b)^{-1}])},
\eee
which implies  \eqref{eqreg+*mid}. For \eqref{eqreg+1Immid} (and \eqref{eqreg+1Remid}), we notice that the asymptotics of $\psi_4^{b, E}$ from Proposition \ref{propWKB} indicates that $i e^{i\Im \eta_{b, E}(x_+)}  \psi_4^{b, E}$ satisfies \eqref{eqreg+1Immid}, and then we can similarly obtain the linear contraction part to conclude the bound for $\Re \xi^{mid, \Im, 4}_{+, \gamma_+}$. So is the estimate for $\Im \xi^{mid, \Re, 4}_{+, \gamma_+}$ in \eqref{eqreg+1Remid}.  

\mbox{}

Thereafter, we can evaluate the boundary data for $j = 2, 4$ and $\sigma \in \{ \Re, \Im\}$ via Lemma \ref{leminvtildeHmid} and the estimates \eqref{eqreg+*mid}-\eqref{eqg+diff}:
\bee
 \vec \xi^{mid,*}_{+,\gamma_+}(r_0) &=& O_\CC (b^{-4 - \frac 16} e^{-\frac\pi{2b}}) \vec \psi_4^{b, E_{+, 0}}(r_0),\\
 \vec \xi^{mid,*}_{+,\gamma_+}(x_+) &=& \left(\kappa^{mid,*}_{+,0} + O_\CC(b^{3-\frac 16} e^{-\frac{\pi}{2b}} x_+^4 e^{2S_b(x_+)} ) \right) \vec \psi_2^{b, E_{+, 0}}(x_+) \\
 \vec \xi^{mid, \sigma, j}_{+,\gamma_+}(r_0) &=& e^{\frac{j-2}{2}i\Im\eta_{b, E_{+,0}}(x_+)} \vec \psi_j^{b, E_{+,0}}(r_0) + \e^{mid, \sigma, j}_{+, \gamma_+} \vec \psi_4^{b, E_{+,0}}(r_0), \\
 \vec \xi^{mid, \sigma, j}_{+,\gamma_+}(x_+) &=& e^{\frac{j-2}{2}i\Im\eta_{b, E_{+,0}}(x_+)} \vec \psi_j^{b, E_{+,0}}(x_+) + \kappa^{mid, \sigma, j}_{+, \gamma_+} \vec \psi_2^{b, E_{+,0}}(x_+). 
\eee
where
\bea
  e^{i\Im\eta_{b, E_{+,0}}(x_+)}\kappa^{mid,*}_{+,0} &=& -\int_{x_+}^{r_0} e^{i\Im\eta_{b, E_{+,0}}(x_+)} \psi_4^{b, E_{+,0}}\left( ir^{\frac{d-1}{2}} (P_b + \omega_{2,+}^* \rho_b) - \calV_{b, E_{+,0}}\xi^{mid, *}_{+,0} \right) \frac{dr}{W_{42;E_{+,0}}}\nonumber \\
  &=& O\left(b^{1-\frac 16} e^{-\frac{\pi}{2b}} x_+^4 e^{2S_b(x_+)}\right)
 +  iO\left(b^{-\frac 16} e^{-\frac{\pi}{2b}} x_+^3 e^{2S_b(x_+)}\right) \nonumber \\
  \e^{mid, \sigma, j}_{+, \gamma_+} &=& \left| \begin{array}{ll}
      O_\CC (x_+^{-1})  & j = 4, \, \sigma \in \{ \Re, \Im\} \\
      O_\CC (b^{1+ \frac 13})  & j = 2,\, \sigma \in \{ \Re, \Im\}
  \end{array}\right.,\nonumber \\
 e^{i\Im \eta_{b, E_{+,0}}(x_+)}  \kappa^{mid, \sigma, j}_{+,\gamma_+} 
 &=& \left| \begin{array}{ll}
  O_\CC(x_+^{-1}) & j = 2, \,\sigma \in \{ \Re, \Im\},\\
      O \left(x_+^{-2}e^{2S_b(x_+)}\right) + iO  \left(bx_+^{-1}e^{2S_b(x_+)}\right) & j = 4,\, \sigma = \Re,  \\
       i O \left(x_+^{-2}e^{2S_b(x_+)}\right) + O  \left(bx_+^{-1}e^{2S_b(x_+)}\right)  & j = 4,\,  \sigma = \Im,
  \end{array}\right..\nonumber
\eea

\mbox{}

Finally, we match the solution on $I_{ext}$ with those on $I_{mid,+}$ at $r_0$. Define 
\bee
  &&\xi^*_{+,\gamma_+} = \left| \begin{array}{ll}
    \xi^{ext, *}_{+,\gamma_+} + d^{*;\Re}_{+,\gamma_+} \xi^{ext, \Re}_{+,\gamma_+}+ d^{*;\Im}_{+,\gamma_+}\xi^{ext, \Im}_{+,\gamma_+}  & r \in I_{ext} \\
    \xi^{mid, *}_{+,\gamma_+} + c^{*;\Re}_{+,\gamma_+} \xi^{mid, \Re, 2}_{+,\gamma_+} + c^{*;\Im}_{+,\gamma_+} \xi^{mid, \Im, 2}_{+,\gamma_+} & r \in I_{mid,-}
\end{array}\right. \\
&&\xi^\sigma_{+,\gamma_+} =  
\left| \begin{array}{ll}
    d^{\sigma;\Re}_{+,\gamma_+} \xi^{ext, \Re}_{+,\gamma_+} + d^{\sigma;\Im}_{+,\gamma_+} \xi^{ext, \Im}_{+,\gamma_+} &   r \in I_{ext}\\
    \xi^{mid, \sigma, 4}_{+,\gamma_+} + c^{\sigma;\Re}_{+,\gamma_+} \xi^{mid, \Re, 2}_{+,\gamma_+} + c^{\sigma;\Im}_{+,\gamma_+} \xi^{mid, \Im, 2}_{+,\gamma_+}   &  r \in I_{mid,-}
\end{array}\right.\quad {\rm for}\,\,\sigma \in \{\Re, \Im\}.
\eee
Similar to Step 2.3-2.4 of the proof of Lemma \ref{lemrhob}, the matching condition can be parametrized under the basis $\{ \vec \psi_4^{b, E_{+,0}}, i\vec \psi_4^{b, E_{+,0}},\vec \psi_2^{b, E_{+,0}}, i\vec \psi_2^{b, E_{+,0}}\}$, and inverting the matrix leading to 
\[ d^{*;\sigma}_{+,\gamma_+}, c^{*;\sigma}_{+,\gamma_+} = O(b^{-4-\frac 16}e^{-\frac \pi{2b}}),\quad d^{\Re;\sigma}_{+,\gamma_+}, d^{\Im;\sigma}_{+,\gamma_+},c^{\Re;\sigma}_{+,\gamma_+}, c^{\Im;\sigma}_{+,\gamma_+} = O(1),\quad {\rm for}\,\,\sigma\in \{ \Re, \Im\}.  \]
Hence the related estimates in \eqref{eqxipmextest} follow these coefficient estimates and \eqref{eqxiextestpm}. 

Further notice from Propostion \ref{propWKB} (5) that for $k = 0, 1$, 
\be
\begin{split} 
e^{i\Im\eta_{b, 1 + i\upsilon^*_+}(x_+)} \pa_r^k \psi_4^{b, 1 + i\upsilon^*_+} (x_+) =& (-1)^{k+1} \kappa_\psi b^\frac 16 e^{\frac\pi{2b} - x_+} (1+ c_{\psi_4, k;b} b); \\ 
e^{-i\Im\eta_{b, 1 + i\upsilon^*_+}(x_+)} \pa_r^k \psi_2^{b, 1 + i\upsilon^*_+} (x_+) =& \kappa_\psi b^\frac 16 e^{-\frac\pi{2b} + x_+} (1+ O_\CC(b)),
\end{split} \label{eqpsix+2}
\ee
where we denote $\kappa_\psi = 2^{-\frac 76} \pi^{-\frac 12}$ and the residual of $\pa_r^k \psi_4^{b, 1 + i\upsilon^*_+}(x_+)$ by $c_{\psi_4, k; b}b $ with $c_{\psi_4,k;b} = O_\CC(1)$ for $k = 0, 1$.
Combined this with the above boundary value evaluation for $\vec \xi^{mid,*}_{+, \gamma_+}, \vec \xi^{mid, \sigma, j}_{+, \gamma_+}$ at $x_+$, we obtain \eqref{eqxi+bdryx+} and \eqref{eqxi+bdryx+2}.

\mbox{}

\underline{4. Nonlinear matching.}

Now we look for $(\gamma_\pm, \a_\pm, \mu_{\pm, \Re}, \mu_{\pm, \Im}) \in \RR^4$ such that 
\be \left| \begin{array}{l} 
x_\pm^{\frac{d-1}{2}}  \left( \Sigma_{\pm; \gamma_\pm, \a_\pm} + i\Theta_{\pm; \gamma_\pm, \a_\pm} \right) (x_\pm) = \left( \xi^*_{\pm,\gamma_\pm} + \mu_{\pm, \Re} \xi^\Re_{\pm,\gamma_\pm} + \mu_{\pm, \Im} \xi^\Im_{\pm,\gamma_\pm} \right) (x_\pm) \\ 
\pa_r \left[ (\cdot)^{\frac{d-1}{2}}  \left( \Sigma_{\pm; \gamma_\pm, \a_\pm} + i\Theta_{\pm; \gamma_\pm, \a_\pm} \right)\right] (x_\pm) = \pa_r \left( \xi^*_{\pm,\gamma_\pm} + \mu_{\pm, \Re} \xi^\Re_{\pm,\gamma_\pm} + \mu_{\pm, \Im} \xi^\Im_{\pm,\gamma_\pm} \right) (x_\pm).
\end{array} \right. \label{eqmatchxipm}
\ee
Again, recall the asymptotics of $\vec A, \vec \frakE, \vec D$ from Lemma \ref{lemLpm} (1), of $\vec \psi_2^{b, 1},  \vec \psi_4^{b, 1}$ from \eqref{eqpsix+1} and of $\vec \psi_2^{b, 1 +\upsilon^*_+}, \vec \psi_4^{b, 1 +\upsilon^*_+}$ from \eqref{eqpsix+2}. 

\mbox{}

\textit{4.1. "-" case.}  Within the range
\bee
|\a_-|\le x_-^\frac 12 b^{-4}e^{-\frac \pi b}\quad |\gamma_-| \le  x_-^{\frac 72} e^{-2x_-} \quad 
 |\mu_{-,\Re}| \le  b^{-4-\frac 16} e^{-\frac{3\pi}{2b} + 2x_-} x_-^{-\frac 12} \quad
|\mu_{-,\Im}| \le b^{-\frac 16} e^{-\frac{\pi}{2b}} x_-^{\frac 92},
\eee
we apply \eqref{eqST-intbdry}, \eqref{eqxi-bdryx-} and \eqref{eqxi-bdryx-2} to write the matching condition \eqref{eqmatchxipm} as
\bee \small
  \left| \begin{array}{l}
        \a_- \kappa_A e^{x_-} (1 + c_A x_-^{-1})
        =  \Re \kappa^*_{-,0} \kappa_\psi b^\frac 16 e^{-\frac \pi{2b}+x_-}  + \mu_{-,\Re} \kappa_\psi b^\frac 16 e^{\frac \pi{2b}-x_-}  + O\left(x_-^{-\frac 32} e^{x_-} b^{-4} e^{-\frac \pi b} \right) \\
         \a_- \kappa_A e^{x_-} (1 + c_A x_-^{-1})
        = \Re \kappa^*_{-,0} \kappa_\psi b^\frac 16 e^{-\frac \pi{2b}+x_-}  - \mu_{-,\Re} \kappa_\psi b^\frac 16 e^{\frac \pi{2b}-x_-}  + O\left(x_-^{-\frac 32} e^{x_-} b^{-4} e^{-\frac \pi b} \right) \\
        x_-^4 \kappa_{f_{0, -}} \kappa_Q e^{-x_-} (1 + c_Q x_-^{-1}) + \gamma_- (2\kappa_Q)^{-1} e^{x_-} = 
        \Im \kappa^*_{-, 0} \kappa_\psi b^\frac 16 e^{-\frac \pi{2b}+x_-} + \mu_{-,\Im} \kappa_\psi b^\frac 16 e^{\frac \pi{2b}-x_-} +  O\left( x_-^{\frac 52} e^{-x_-} \right)  \\
        -x_-^4 \kappa_{f_{0, -}} \kappa_Q e^{-x_-} (1 + c_Q x_-^{-1}) + \gamma_- (2\kappa_Q)^{-1} e^{x_-}  = 
        \Im \kappa^*_{-, 0} \kappa_\psi b^\frac 16 e^{-\frac \pi{2b}+x_-} - \mu_{-,\Im} \kappa_\psi b^\frac 16 e^{\frac \pi{2b}-x_-} +  O\left( x_-^{\frac 52} e^{-x_-} \right)
  \end{array}\right.
\eee
Hence similar to the proof of Lemma \ref{lemrhob}, we take 
\bee
 \left| \begin{array}{l}
     \a_- = \kappa_A^{-1} \left( \Re \kappa^*_{-, 0} \kappa_{\tilde \psi} b^\frac 16 e^{-\frac{\pi}{2b}} + b^{-4} e^{-\frac \pi b} x_-^{-1} \tilde \a_- \right)  \\
    \gamma_- = (2\kappa_Q) x_-^3 e^{-2x_-} \tilde \gamma_- \\
    \mu_{-,\Re} = \kappa_\psi ^{-1} b^{-4-\frac 16} e^{-\frac {3\pi}{2b} + 2x_-} x_-^{-1} \tilde \mu_{-,\Re}\\
    \mu_{-,\Im} = ( \kappa_\psi b^{\frac 16} e^{\frac{\pi}{2b}})^{-1} \left(x_-^4 \kappa_{f_{0,-}} \kappa_Q + x_-^{3} \tilde \mu_{-,\Im}\right)
 \end{array}\right.
\eee
and can find solution $(\tilde \a_-, \tilde \gamma_-, \tilde \mu_{-, \Re}, \tilde \mu_{-, \Im}) \in B^{\RR^4}_{x_-^\frac 14}$ via a Brouwer's fixed point theorem. That provides a first smooth solution of \eqref{eqhhh} with $\upsilon = \upsilon^*_- + \tilde \upsilon_-(\gamma_-) = \upsilon_{\rho_b} + O(b^{-2} e^{-\frac \pi b})$. 

\mbox{}

\textit{4.2. "+" case.}
Now let
\bee
 |\a_+| \le bx_+^\frac 92 e^{-2x_+},\quad |\gamma_+| \le x_+^\frac 72 e^{-2x_+},\quad |\mu_{+,\Re}| \le b^{1-\frac 16} x_+^\frac{11}{2} e^{-\frac{\pi}{2b}} ,\quad |\mu_{+,\Im}| \le  b^{-\frac 16} x_+^\frac 92 e^{-\frac{\pi}{2b}}.
\eee
Via \eqref{eqST-intbdry2}, \eqref{eqxi+bdryx+} and \eqref{eqxi+bdryx+2}, the matching condition \eqref{eqmatchxipm} reads
\bee
  \left| \begin{array}{l}
         -2b x_+^5 \kappa_{F_{+,0}}  (-2\kappa_A)^{-1} e^{-x_+}(1 + c_D x_+^{-1})   + \a_+ \kappa_A e^{x_+} \\
        \quad=  \Re \tilde \kappa^*_{+,0} \kappa_\psi b^\frac 16 e^{-\frac \pi{2b}+x_+}  + \mu_{+,\Re} \kappa_\psi b^\frac 16 e^{\frac \pi{2b}-x_+}  +  \mu_{+, \Im} \kappa_\psi \Im c_{\psi_4, 0; b} b^{1+\frac 16} e^{\frac \pi{2b}-x_+} +  O\left( b x_+^{\frac 72} e^{-x_+} \right)  \\
          2b x_+^5 \kappa_{F_{+,0}} (-2\kappa_A)^{-1} e^{-x_+}(1 + c_D x_+^{-1})  + \a_+ \kappa_A e^{x_+} \\
        \quad=  \Re \tilde \kappa^*_{+,0} \kappa_\psi b^\frac 16 e^{-\frac \pi{2b}+x_+}  - \mu_{+,\Re} \kappa_\psi b^\frac 16 e^{\frac \pi{2b}-x_+}  - \mu_{+, \Im} \kappa_\psi \Im c_{\psi_4, 1; b} b^{1+\frac 16} e^{\frac \pi{2b}-x_+} +  O\left( b x_+^{\frac 72} e^{-x_+} \right)  \\
         x_+^4 \kappa_{f_{0, +}} \kappa_Q e^{-x_+} (1 + c_Q x_+^{-1}) + \gamma_+ (2\kappa_Q)^{-1} e^{x_+} \\
        \quad= \Im \tilde \kappa^*_{+,0} \kappa_\psi b^\frac 16 e^{-\frac \pi{2b}+x_+} + \mu_{+,\Im} \kappa_\psi b^\frac 16 e^{\frac \pi{2b}-x_+} +  O\left( x_+^{\frac 52} e^{-x_+} \right)  \\
         -x_+^4 \kappa_{f_{0, +}} \kappa_Q e^{-x_+} (1 + c_Q x_+^{-1}) + \gamma_+ (2\kappa_Q)^{-1} e^{x_+}  \\
        \quad= \Im \tilde \kappa^*_{+,0}  \kappa_\psi b^\frac 16 e^{-\frac \pi{2b}+x_+} - \mu_{+,\Im} \kappa_\psi b^\frac 16 e^{\frac \pi{2b}-x_+} +  O\left( x_+^{\frac 52} e^{-x_+} \right)
  \end{array}\right.
\eee
where $c_{\psi_4, 0; b}, c_{\psi_4,1; b} = O_\CC(1)$ are from \eqref{eqpsix+2}, and $\tilde \kappa^*_{+,0} = e^{i\Im \eta_{b, 1+i\upsilon^*_+}(x_+)} \kappa^*_{+,0}$ with boundedness \eqref{eqxi+bdryx+2}. Therefore we take 
\bee
 \left| \begin{array}{l}
     \a_+ = \kappa_A^{-1} b x_+^4 e^{-2x_+} \tilde \a_+ \\
    \gamma_+ = (2\kappa_Q) x_+^3 e^{-2x_+} \tilde \gamma_+ \\
    \mu_{+,\Re} = ( \kappa_\psi b^{\frac 16} e^{\frac{\pi}{2b}})^{-1} \left(-2b x_+^5 \kappa_{F_{+,0}} (-2\kappa_A)^{-1} + b x_+^4 \tilde \mu_{+, \Re} \right)\\
    \mu_{+,\Im} = ( \kappa_\psi b^{\frac 16} e^{\frac{\pi}{2b}})^{-1} \left(x_+^4 \kappa_{f_{0,+}} \kappa_Q + x_-^3 \tilde \mu_{+,\Im}\right)
 \end{array}\right.
\eee
and the equation of $(\tilde \a_+, \tilde \gamma_+, \tilde \mu_{+, \Re}, \tilde \mu_{+, \Im})\in [-x_+^\frac 14, x_+^\frac 14]^4$ becomes 
\bee
  &&\left( \begin{array}{cccc}
     1 & -1 & &  \\
     1 & 1 &  &  \\
     & & -1 & -1 \\
     & & -1 & 1 \\
  \end{array}\right)
  \left( \begin{array}{c}
       \tilde \a_+  \\
        \tilde \mu_{+,\Re} \\
        \tilde \gamma_+ \\
        \tilde \mu_{+,\Im}
  \end{array}\right) \\
  &=& 
  \left( \begin{array}{c}
    -\kappa_A^{-1}  \kappa_{F_{+,0}}c_D + \Re \tilde \kappa^*_{+, 0} \kappa_\psi b^{-1 + \frac 16}e^{-\frac \pi{2b} +2x_+} x_+^{-4} + \Im c_{\psi_4, 0;b} \kappa_{f_{0, +}} \kappa_Q \\
     \kappa_A^{-1}  \kappa_{F_{+,0}}c_D + \Re \tilde \kappa^*_{+, 0} \kappa_\psi b^{-1 + \frac 16}e^{-\frac \pi{2b} +2x_+} x_+^{-4} - \Im c_{\psi_4, 1;b} \kappa_{f_{0, +}} \kappa_Q\\
     -\kappa_{f_{0, +}} \kappa_Q c_Q +  \Im \tilde \kappa^*_{+, 0} \kappa_{\psi} b^\frac 16 e^{-\frac{\pi}{2b} + 2x_+} x_+^{-3} \\
     \kappa_{f_{0, +}} \kappa_Q c_Q +  \Im \tilde \kappa^*_{+, 0} \kappa_{\psi} b^\frac 16 e^{-\frac{\pi}{2b} + 2x_+} x_+^{-3}
  \end{array}\right)
  + O(x_+^{-\frac 12}).
\eee
The existence of solution follows Brouwer's fixed point theorem. This provides a second smooth solution of \eqref{eqhhh} with $\upsilon = \upsilon^*_+ + \tilde \upsilon_+(\gamma_+) = 2b + O(b^{-2}e^{-\frac \pi b})$. 

Finally, the smoothness and decay \eqref{eqdecayZkb} of these eigenfunctions follow its construction \eqref{eqeigenansatz} and such smoothness and decay for each component. In particular, the decay for $\xi_{0,b}$, $\xi_{1, b}$ and $\xi_{2, b}$ comes from Proposition \ref{propQbasympref}, for $\xi_{3, b}$ from Lemma \ref{lemrhob} and for $\mathfrak{Z}$ from \eqref{eqxipmextest}. 

\end{proof}

\section{Uniqueness of bifurcated eigenmodes}\label{sec7}

In this section, we will conclude the proof of Theorem \ref{thmmodestabsmallspec} via Jost function argument. To define the Jost function and analyze its properties, we need the construction of the fundamental solutions for \eqref{eqnu} and that of the bifurcated eigenpairs. Our analysis will distinguish low/high spherical classes, motivating the introduction of the following parameters:
\begin{itemize}
    \item Low spherical classes: To apply Proposition \ref{propintfund}, Proposition \ref{propextfund}, Proposition \ref{propextfundin} and Proposition \ref{propbifeigen},  we define for $d \ge 1$ and $\nu_0 \ge 0$ the parameters
    \be
    \left| \begin{array}{l}
    I_0 = K_0 = 10, \\
         s_{c;{\rm low}}^*(d, \nu_0) = \min \{ s_c^{(0)}(d), s_{c;{\rm int}}^{(2)}(d), s_{c;{\rm ext}}^{(1)}(d, I_0, \nu_0, K_0), s_{c;{\rm ext}}^{(3)}(d, I_0), s_{c;{\rm eig}}^{(1)}(d) \},\\
         b_{\rm low}(d, \nu_0) = \min\{ b_{\rm int}(d), b_1(d, I_0, \nu_0, K_0), b_0(d, I_0), (x_*(d, I_0, \nu_0, K_0))^{-2}  \}, \\
         \delta_{\rm low} (d, \nu_0) = \min\{\delta_{\rm int}(d), \delta_1 (d, I_0, \nu_0, K_0), \delta_0(d) \}.
    \end{array}\right. \label{eqdefsclow}
    \ee
    We stress that the dependence of $\nu_0$ comes from the parameters $s_{c;{\rm ext}}^{(1)}, b_1, \delta_1,x_*$ in Proposition \ref{propextfund}. For $d = 1$ we can take $\nu_0 = \frac 12$, and the fundamental solution families for $\nu = -\frac 12$ case ($l = 0$) are determined in Remark \ref{rmkchoice1Dint} and Remark \ref{rmkchoice1Dext}.
    \item High spherical classes: To apply Proposition \ref{propintfund}, Proposition \ref{propextfundh} and Proposition \ref{propextfundin}, we fix define for $d \ge 2$ the parameters
    \be
    \left| \begin{array}{l}
    I_0 = \frac 12, \\
         s_{c;{\rm high}}^*(d) = \min \{ s_c^{(0)}(d), s_{c;{\rm int}}^{(2)}(d), s_{c;{\rm ext}}^{(2)}(d), s_{c;{\rm ext}}^{(3)}(d, I_0) \},\\
         b_{\rm high}(d) = \min\{ b_{\rm int}(d), b_3(d), b_0(d, I_0)  \}, \\
         \delta_{\rm high} (d)= \min\{\delta_{\rm int}(d), \delta_3 (d), \delta_0(d) \}.
    \end{array}\right. \label{eqdefschigh}
    \ee
\end{itemize}

\subsection{Jost function and unstable spectrum}\label{sec53}

In this subsection, we will define the Jost function $\Wfr_\nu(b,\upsilon)$ between interior solutions and exterior solutions, characterize its zeros with multiplicity and connect with the unstable spectrum. 

\begin{lemma}[Definition and basic properties of Jost function]\label{lemJostWronskian}
Let $d \ge 1$, $\nu_0 \ge 0$, and $I_0, K_0, s_{c;{\rm low}}^*, b_{\rm low}, \delta_{\rm low}$ defined in \eqref{eqdefsclow}. Then for 
\[ 0 \le s_c \le  s_{c;{\rm low}}^*,\quad  0 \le b \le b_{\rm low}, \quad \l \in \Omega_{\delta_{\rm low}; I_0, b},\quad  \nu \le \nu_0,\]
we recall $\{\Psi_{k;b,\l,\nu}\}_{1 \le k \le 4}$ from Proposition \ref{propintfund} and $\Phi_{1;b,\l,\nu}, \Phi_{2;b,\l,\nu}$ from Proposition \ref{propextfund} as fundamental solutions of \eqref{eqnu}. Define
\be F_{b, \l, \nu} = [\Psi_{1;b,\l,\nu}, \Psi_{2;b,\l,\nu} ],\quad G_{b, \l, \nu} = [\Phi_{1;b,\l,\nu}, \Phi_{2;b,\l,\nu} ] \label{eqJostdef1} \ee
as $2 \times 2$ matrix solutions of \eqref{eqnu}, and define the Jost function 
\be
    \Wfr_\nu(b, \l, r) = \det \left( \begin{array}{cc}
       F_{b,\l,\nu}(r) & G_{b,\l,\nu}(r)\\
       \pa_r F_{b,\l,\nu}(r) & \pa_r G_{b,\l,\nu}(r) 
    \end{array}\right).\label{eqJostdef2}
\ee
Then we have
\begin{enumerate}
    \item Wronskian properties: $\Wfr_\nu(b, \l, r) = \Wfr_\nu(b,\l)$ is independent of $r$. 
    \item Analyticity and continuity: $\Wfr_\nu(b,\l)$ is analytic w.r.t. $\l$ and satisfies 
    \be
      \sup_{\l  \in \Omega_{\delta_{\rm low},I_0;b}} \left| \pa_\l^k \Wfr_{\nu}(b, \l) - \pa_\l^k \Wfr_{\nu}(0, \l) \right| \lesssim_{I_0, K_0, \nu_0} b^\frac 16,\quad \forall\, 0 \le k \le K_0.
    \ee
    \item Formula via connection coefficients:
    Define the connection coefficients $\iota_{jk;b,\nu}(\l)$ for $j = 1, 2$ and $k = 1, 2, 3, 4$ by 
    \be
      \Phi_{j;b,\l,\nu} = \sum_{k = 1}^4 \Psi_{k;b,\l,\nu} \iota_{jk;b,\nu}(\l),\quad j = 1, 2.  \label{eqdefiotacoeff}
    \ee
    Then $\iota_{jk;b,\nu}$ is analytic w.r.t. $\l$, and 
    \bea
       \Wfr_\nu(b,\l) = \det \left( \begin{array}{cc}
          \iota_{13;b,\nu}(\l)  & \iota_{23;b,\nu}(\l) \\
          \iota_{14;b,\nu}(\l)  & \iota_{24;b,\nu}(\l)
       \end{array} \right). \label{eqrepJW}
    \eea
\end{enumerate}
\end{lemma}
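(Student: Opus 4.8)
\textbf{Proof plan for Lemma \ref{lemJostWronskian}.}

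The plan is to treat the three parts in order, since each builds on the previous one. For part (1), the key observation is that $F_{b,\l,\nu}$ and $G_{b,\l,\nu}$ are both $2\times 2$ matrix solutions of the first-order system obtained by writing \eqref{eqnu} in the variables $(\Phi, \pa_r\Phi) \in \CC^4$. The coefficient matrix of that first-order system is traceless (the system \eqref{eqnu} has no first-order term in $\pa_r$), so by Abel's formula the Wronskian determinant of any four solutions is constant in $r$. Here $\Wfr_\nu(b,\l,r)$ is precisely the Wronskian of the four columns of $(F_{b,\l,\nu}, G_{b,\l,\nu})$ in the $\CC^4$ phase space, hence $r$-independent. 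I would spell this out by checking that $\HH_{b,\nu}$ from \eqref{eqdefHHb} has no $\pa_r$ term, so the associated $4\times 4$ companion matrix has zero trace.

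For part (3), I would first justify that the connection coefficients $\iota_{jk;b,\nu}(\l)$ are well-defined: $\{\Psi_{k;b,\l,\nu}\}_{1\le k \le 4}$ are a basis of the solution space (linear independence from the distinct asymptotics \eqref{eqPsiasympest1}--\eqref{eqPsiasympest3}), so \eqref{eqdefiotacoeff} determines the $\iota_{jk;b,\nu}$ uniquely; their analyticity in $\l$ follows from the analyticity of both families (Proposition \ref{propintfund}(2), Proposition \ref{propextfund}). Then I would substitute \eqref{eqdefiotacoeff} into the determinant \eqref{eqJostdef2}: since each column of $G_{b,\l,\nu}$ is a linear combination $\sum_k \iota_{jk}\Psi_{k}$ (and likewise for $\pa_r$), the $4\times 4$ determinant factors as $\det(\text{Wronskian matrix of }\Psi_1,\dots,\Psi_4)$ times the $2\times 2$ determinant of the submatrix $(\iota_{jk})_{j=1,2;\,k=3,4}$ — the $k=1,2$ rows of $G$'s coefficient expansion just reproduce columns of $F$ and drop out of the determinant by column operations. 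This is a standard Laplace/multilinearity argument. To pin down the normalization, I would evaluate the $r$-independent quantity $\det(\text{Wronskian of }\Psi_1,\dots,\Psi_4)$ near $r\to 0$ using the asymptotics \eqref{eqPsiasympest1}--\eqref{eqPsiasympest3}, which shows that leading constant equals $1$ (the $r$-powers $r^{\nu+\frac12}, r^{\frac12-\nu}$ and their derivatives combine to give a nonzero constant Wronskian; a direct computation with the two pairs of solutions gives exactly $1$), hence \eqref{eqrepJW}.

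For part (2), analyticity of $\Wfr_\nu(b,\l)$ in $\l$ is immediate from \eqref{eqrepJW} together with the analyticity of the $\iota_{jk;b,\nu}$. For the continuity estimate, the cleanest route is to evaluate the determinant \eqref{eqJostdef2} at the matching radius $r = x_*$, where $x_* = x_*(d,I_0,\nu_0,K_0)$ is the fixed radius from Proposition \ref{propextfund}. At $r=x_*$: the interior data $\Psi_{k;b,\l,\nu}(x_*)$ and $\pa_r\Psi_{k;b,\l,\nu}(x_*)$ are continuous in $b$ at $b=0$ up to $K_0$ $\l$-derivatives by Proposition \ref{propintfund}(3) (with error $O(b^{1/6})$, valid since $b \le x_*^{-2}$ is enforced in \eqref{eqdefsclow}); the exterior data is exactly the map $f_{x_*;\nu}$ of Proposition \ref{propextfund}(3), which satisfies the quantitative continuity \eqref{eqbdrymapasymp} with error $O(b^{1/6})$ and the uniform bound \eqref{eqbdrymapest} up to $K_0$ $\l$-derivatives. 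Since $\Wfr_\nu(b,\l)$ is a fixed polynomial (the determinant) in these boundary values, and the $b=0$ boundary values are bounded on $\Omega_{\delta_{\rm low},I_0;b}$, the Leibniz rule gives $\sup_\l |\pa_\l^k(\Wfr_\nu(b,\l) - \Wfr_\nu(0,\l))| \lesssim b^{1/6}$ for $0 \le k \le K_0$, with the implied constant depending on $I_0, K_0, \nu_0$ through the uniform bounds. The main obstacle is bookkeeping: one must make sure the polynomial expressing $\Wfr_\nu$ in terms of the $2d+\cdots$ boundary values has coefficients and arguments that are all controlled uniformly in $b,\l$ (not just pointwise), so that the $O(b^{1/6})$ perturbation of each argument propagates to an $O(b^{1/6})$ perturbation of the determinant uniformly over the degenerate region $\Omega_{\delta_{\rm low},I_0;b}$ — but this is exactly what \eqref{eqbdrymapest} and the $C^0$-part of \eqref{eqPsicontb} provide, so it reduces to a careful application of the product rule.
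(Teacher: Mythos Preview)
Your proposal is correct and follows essentially the same approach as the paper: part (1) via the traceless first-order system and Abel's formula, part (2) by evaluating the determinant at $r=x_*$ and combining \eqref{eqPsicontb} with \eqref{eqbdrymapasymp}, and part (3) by the block-determinant factorization after computing $\det(\vec\Psi_1,\dots,\vec\Psi_4)=1$ from the $r\to 0$ asymptotics. The only detail you glossed over is that the limit computation for the interior Wronskian splits into the cases $\nu>0$, $\nu=0$, and $\nu=-\frac12$ (the last via Remark~\ref{rmkchoice1Dint}), but each is a routine $2\times 2$-block evaluation.
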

\begin{proof}
    (1) Since \eqref{eqnu} can be reforumlated as the first order system 
    \be  \pa_r \left( \begin{array}{cccc}
         \Phi^1  \\
         \Phi^2 \\
         \pa_r \Phi^1 \\
         \pa_r \Phi^2 
    \end{array} \right) = \left( \begin{array}{cccc}
         0& 0&1 &0   \\
         0& 0& 0& 1 \\
         * & * &0 & 0 \\
         * & * &0 &0 
    \end{array} \right)
    \left( \begin{array}{cccc}
         \Phi^1  \\
         \Phi^2 \\
         \pa_r \Phi^1 \\
         \pa_r \Phi^2 
    \end{array} \right). \label{eqnufirstorder}
    \ee
    Since the matrix has zero trace, its Wronskian $\Wfr_\nu(b, \l, r)$ is independent of $r$. 

    (2) From (1), $\Wfr_\nu(b,\l) = \Wfr_\nu(b,\l,x_*)$ with $x_*$ from Proposition \ref{propextfund}. The corresponding analyticity of each component and the continuity estimates \eqref{eqPsicontb}, \eqref{eqbdrymapasymp} yield (2). 

    (3) Since $\{\Psi_{k;b,\l,\nu}\}_{1 \le k \le 4}$ 
    spans the solution spaces of \eqref{eqnu}, we can well-define $\iota_{jk;b,\nu}(\l)$. Their analyticity comes from that of $\Psi_{k;b,\l,\nu}(r)$ for $1 \le k \le 4$ and $\Phi_{j;b,\l,\nu}(r)$ for $j = 1, 2$. 

    For \eqref{eqrepJW}, we first compute the Wronskian for $\{\Psi_{j;b,\l,\nu}\}_{1 \le j \le 4}$. Denote vector form of solution to \eqref{eqnu} as $\vec \Psi = (\Psi^1, \Psi^2, \pa_r \Psi^1, \pa_r \Psi^2)^\top \in \CC^4$. We claim that 
    \be
       \det\left( \begin{array}{cccc} \vec \Psi_{1;b,\l,\nu} & \vec \Psi_{2;b,\l,\nu} & \vec \Psi_{3;b,\l,\nu} & \vec \Psi_{4;b,\l,\nu} \end{array}\right)(r) = 1,\quad \forall r > 0. \label{eqintWron}
    \ee
    Indeed, the Wronskian property indicates its independence of $r$. So use the asymptotics from \eqref{eqPsiasympest1}-\eqref{eqPsiasympest3} to compute
    \bee
     &&{\rm LHS\,\,of\,\,}\eqref{eqintWron} = \lim_{r\to 0} \det\left( \begin{array}{cccc} r^{-\nu} \vec \Psi_{1;b,\l,\nu} & r^{-\nu} \vec \Psi_{2;b,\l,\nu} & r^{\nu} \vec \Psi_{3;b,\l,\nu} & r^{\nu} \vec \Psi_{4;b,\l,\nu} \end{array}\right)(r) \\
      &=&\left| \begin{array}{ll}
      \lim_{r\to 0} \det \left( \begin{array}{cc} 
      r^\frac 12 I_2 + O(r^\frac 52) & (2\nu)^{-1} r^\frac 12 I_2 + o(r^\frac 12) \\
      (\nu+\frac 12) r^{-\frac 12} I_2 + O(r^\frac 32) & \frac{1/2 - \nu}{2\nu}  r^{-\frac 12} I_2 + o(r^{-\frac 12}) 
      \end{array} \right) & \nu > 0 \\
      \lim_{r\to 0} \det \left( \begin{array}{cc} 
      r^\frac 12 I_2 + O(r^\frac 52) &  r^\frac 12 A(r) + O(r^\frac 32) \\
      \frac 12 r^{-\frac 12} I_2 + O(r^\frac 32) &  \frac 12 r^{-\frac 12} (A(r) - 2I_2) + O(r^{\frac 12}) 
      \end{array} \right)  & \nu = 0
      \end{array}\right.\\
      &=& 1
    \eee
    where $I_2 = \left( \begin{array}{cc}
    1 & 0 \\
    0 & 1
    \end{array}\right)$ and $A(r) = \left( \begin{array}{cc}
    -\ln(\sqrt{1+\l}r) + \ln 2 - \gamma & 0 \\
    0 & -\ln(\sqrt{1-\l}r) + \ln 2 - \gamma
    \end{array}\right)$ with $\gamma$ the Euler's constant. For the case $\nu = -\frac 12$, namely $d = 1, l = 1$, we easily seen 
    \[ \det \left( \{ \vec \Psi_{j;b,\l,-\frac12} \}_{1 \le j \le 4} \right) =  \det \left( \{ \vec \Psi_{j;b,\l,\frac12} \}_{1 \le j \le 4} \right) = 1 \] 
    from the choice of $\Psi_{j;b,\l,-\frac12}$ in Remark \ref{rmkchoice1Dint}. 
    
    Now \eqref{eqrepJW} follows the observation 
    \bee
     \Wfr_{\nu}(b,\l) = \det \left[  \left( \begin{array}{cccc}1 &0 & \iota_{11;b,\nu}(\l)  & \iota_{21;b,\nu}(\l)  \\
     0 & 1 & \iota_{12;b,\nu}(\l) & \iota_{22;b,\nu}(\l)  \\
     0 & 0& \iota_{13;b,\nu}(\l)  & \iota_{23;b,\nu}(\l)  \\
     0 & 0& \iota_{14;b,\nu}(\l)  & \iota_{24;b,\nu}(\l) 
      \end{array}\right)
     \left(\vec \Psi_{1;b,\l,\nu} \,\, \vec \Psi_{2;b,\l,\nu} \,\, \vec \Psi_{3;b,\l,\nu} \,\, \vec \Psi_{4;b,\l,\nu}\right) \right]
    \eee
\end{proof}

Next, we can use the zeros of Jost function to characterize the unstable spectrum with the corresponding multiplicity. This connection is established through the following two lemmas. We mention that similar statement was introduced in \cite{MR1852922} without proof, and
the case without multitplicity was proven in \cite[Lemma 5.17]{MR2219305}. 

Firstly, we connect the vanishing order of Jost function to the multiplicity of matching for admissible fundamental solutions.

\begin{lemma}[Vanishing order of Jost function]\label{lemvanishJW} Under the assumption of Lemma \ref{lemJostWronskian}, 
we have  
\be 
  \min_{n\ge 0} \left\{ \pa_\l^n \Wfr_\nu(b, \l) \neq 0 \right\}  = \dim \left( {\rm span} \{ \pa_\l^n \Psi_{j;b,\l,\nu} \}_{\substack{n \ge 0 \\ 1 \le j \le 2}} \cap  {\rm span} \{ \pa_\l^n \Phi_{j;b,\l,\nu} \}_{\substack{n \ge 0 \\ 1 \le j \le 2}}\right).   \label{eqorderJW}
\ee

\end{lemma}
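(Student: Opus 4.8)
The plan is to reduce \eqref{eqorderJW} to a computation with the $2\times2$ connection matrix, after bringing that matrix to a diagonal normal form by changes of basis that are harmless for both sides, and then to read both quantities off the normal form. By Lemma~\ref{lemJostWronskian}(3), $\Wfr_\nu(b,\l)=\det N(\l)$ with $N(\l)$ the $2\times2$ analytic connection matrix of \eqref{eqrepJW}. Fix the value of $\l$ at which \eqref{eqorderJW} is asserted, work with germs of analytic functions there, write $\l'$ for the running spectral variable, and set
\[
V_\Psi:=\mathrm{span}\bigl\{\pa_\l^n\Psi_{j;b,\l,\nu}:n\ge0,\ j=1,2\bigr\},\qquad V_\Phi:=\mathrm{span}\bigl\{\pa_\l^n\Phi_{j;b,\l,\nu}:n\ge0,\ j=1,2\bigr\}.
\]

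First I would record that both sides of \eqref{eqorderJW} are invariant under (a) replacing $(\Phi_1,\Phi_2)$ by $P(\l')(\Phi_1,\Phi_2)$ with $P$ analytic and invertible at $\l$ --- this multiplies $\Wfr_\nu$ by a unit and, since $P,P^{-1}$ are analytic, leaves $V_\Phi$ unchanged; and (b) replacing $(\Psi_3,\Psi_4)$ by $R(\l')(\Psi_3,\Psi_4)$ with $R$ analytic invertible at $\l$ --- this multiplies $N$ on one side by an invertible analytic matrix, leaves $V_\Psi$ untouched (it involves only $\Psi_1,\Psi_2$), and preserves the linear independence of $\{\pa_\l^m\Psi_{k;b,\l,\nu}\}_{1\le k\le4,\,m\ge0}$ from Proposition~\ref{propintfund}(2). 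Over the discrete valuation ring of analytic germs at $\l$, invertible row and column operations bring $N$ to Smith normal form $\mathrm{diag}\bigl((\l'-\l)^{a_1},(\l'-\l)^{a_2}\bigr)$ for some $a_1,a_2\ge0$; via \eqref{eqdefiotacoeff} this means (after the two basis changes)
\[
\Phi_1=\iota_{11}\Psi_1+\iota_{12}\Psi_2+(\l'-\l)^{a_1}\Psi_3,\qquad \Phi_2=\iota_{21}\Psi_1+\iota_{22}\Psi_2+(\l'-\l)^{a_2}\Psi_4,
\]
with $\iota_{jk}$ analytic, and $\mathrm{ord}_\l\Wfr_\nu(b,\cdot)=a_1+a_2$.

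Next I would compute $\dim(V_\Psi\cap V_\Phi)$ from this normal form. Using $\pa_{\l'}^p(\l'-\l)^a\big|_{\l'=\l}=a!\,\delta_{p,a}$ together with Leibniz's rule, one finds for each $n$ that $\pa_\l^n\Phi_1$ equals an element of $V_\Psi$ plus $\tfrac{n!}{(n-a_1)!}\pa_\l^{n-a_1}\Psi_3$ when $n\ge a_1$, and lies in $V_\Psi$ when $n<a_1$; likewise for $\Phi_2$ with $\Psi_4$ and $a_2$. Hence the $a_1+a_2$ functions $\{\pa_\l^n\Phi_1\}_{0\le n<a_1}\cup\{\pa_\l^n\Phi_2\}_{0\le n<a_2}$ lie in $V_\Psi\cap V_\Phi$, and by the linear independence in Proposition~\ref{propextfund}(2) they span an $(a_1+a_2)$-dimensional subspace, so $\dim(V_\Psi\cap V_\Phi)\ge a_1+a_2$. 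For the reverse inequality, take $u=\sum_{n\le M_1}c_{n,1}\pa_\l^n\Phi_1+\sum_{n\le M_2}c_{n,2}\pa_\l^n\Phi_2\in V_\Psi$ with $c_{M_1,1}\ne0$, $c_{M_2,2}\ne0$: expanding $u$ in the linearly independent family $\{\pa_\l^m\Psi_{k;b,\l,\nu}\}$, the coefficient of $\pa_\l^{M_1-a_1}\Psi_3$ (resp.\ $\pa_\l^{M_2-a_2}\Psi_4$) is a nonzero multiple of $c_{M_1,1}$ (resp.\ $c_{M_2,2}$) whenever $M_1\ge a_1$ (resp.\ $M_2\ge a_2$); since $u\in V_\Psi$ has no $\Psi_3,\Psi_4$ components, this forces $M_1<a_1$ and $M_2<a_2$. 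Thus $\dim(V_\Psi\cap V_\Phi)=a_1+a_2=\mathrm{ord}_\l\Wfr_\nu(b,\cdot)$. The degenerate case $\Wfr_\nu(b,\cdot)\equiv0$ near $\l$ should be disposed of separately: then the connection block has rank $\le1$ over the field of meromorphic germs, so clearing denominators gives an analytic vector $(p_1,p_2)\ne(0,0)$ not both vanishing at $\l$ with $\widetilde\Phi:=p_1\Phi_1+p_2\Phi_2\in\mathrm{span}\{\Psi_1,\Psi_2\}$ for all $\l'$, whence all $\pa_\l^n\widetilde\Phi$ lie in $V_\Psi\cap V_\Phi$ and are linearly independent by the same top-order argument, so both sides of \eqref{eqorderJW} are $+\infty$.

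I expect the main obstacle to be Step~(i): verifying carefully that the row/column operations realizing the Smith normal form are genuinely harmless --- that they act on $\Wfr_\nu$ by multiplication by a unit and on $V_\Psi$, $V_\Phi$ trivially, and that they preserve the linear-independence statements of Propositions~\ref{propintfund}(2) and~\ref{propextfund}(2) on which the entire argument rests. Once the normal form is in place, Step~(ii) is a routine, if slightly fiddly, bookkeeping with Leibniz's rule.
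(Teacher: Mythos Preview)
Your proof is correct and takes a genuinely different route from the paper's.

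The paper works directly with the filtration $Z_N := X_N \cap Y_N$, where $X_N,Y_N$ are the truncated spans of $\{\pa_\l^n\Phi_j\}_{n<N}$ and $\{\pa_\l^n\Psi_j\}_{n<N}$. It shows the increments $h_N=\dim Z_N-\dim Z_{N-1}$ form a nonincreasing sequence in $\{0,1,2\}$, extracts two integers $N_1,N_2$ (the lengths of the $h_N=2$ and $h_N=1$ blocks), and derives recursive identities \eqref{eqalphabetarep}--\eqref{eqalphanondeg} on the vectors $\vec\a_{n,j}=(\pa_\l^n\iota_{j3},\pa_\l^n\iota_{j4})^\top$. It then massages these via a convolution with auxiliary weights $\omega_n$ until the wedge products $\vec\a_{*,1}\wedge\vec\a_{*,2}$ appearing in $\pa_\l^N\Wfr_\nu$ can be evaluated explicitly, verifying that the first nonvanishing derivative occurs at order $2N_1+N_2$. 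This is elementary but rather intricate bookkeeping.

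Your approach trades that bookkeeping for a single structural step: Smith normal form of the $2\times2$ connection block over the DVR of analytic germs at $\l$. Once $N$ is diagonal with entries $(\l'-\l)^{a_1},(\l'-\l)^{a_2}$, both sides of \eqref{eqorderJW} equal $a_1+a_2$ by inspection. What you gain is transparency and brevity; the paper's direct computation never invokes Smith normal form or needs to justify the invariances under the two basis changes, which is the only point that requires care in your argument (and which you correctly flag). Your $(a_1,a_2)$ are exactly the paper's $(N_1,N_1+N_2)$ up to ordering. The degenerate case $\Wfr_\nu\equiv0$ is also handled more cleanly in your version. One small expository point: in your upper-bound argument you should explicitly allow one of the two families to be absent (all $c_{n,1}=0$ or all $c_{n,2}=0$), but the same reasoning applies trivially there.
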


\begin{proof} For notational simplicity, we will omit the parameters $b, \nu, \l$ in the proof. We will also use the linear independence of $ \{ \pa_\l^n \Psi_{j;b,\l,\nu} \}_{\substack{n \ge 0\\ 1 \le j \le 4}}$ and of $ \{ \pa_\l^n \Phi_{j;b,\l,\nu} \}_{\substack{ n \ge 0 \\ 1 \le j \le 2}}$ from Proposition \ref{propintfund} and Proposition \ref{propextfund} without restating it.

 Let $\vec \a_{n,j}  = \left( \begin{array}{c} 
 \pa_\l^n\iota_{j3} \\  \pa_\l^n\iota_{j4}
\end{array}\right)$ for $j = 1, 2$. 
Also denote for $N \ge 1$ that
$$X_N =  {\rm span} \{ \pa_\l^n \Phi_{j;b,\l,\nu} \}_{\substack{0 \le n < N \\ 1 \le j \le 2}},\quad Y_N =  {\rm span} \{ \pa_\l^n \Psi_{j;b,\l,\nu} \}_{\substack{0 \le n < N \\ 1 \le j \le 2}},\quad Z_N = X_N \cap Y_N,$$ 
and
\[ h_N = \left| \begin{array}{ll}
    \dim Z_1 & N = 1,\\
    \dim Z_N - \dim Z_{N-1} & N \ge 2,
\end{array}\right. \]

\mbox{}

\underline{Step 1. Structure of $Z_N$.}

We claim that there exists $0 \le N_1, N_2 \le \infty$, such that 
 \[ h_N = \left| \begin{array}{ll}
    2 & 1 \le N \le N_1, \\
    1 & N_1 < N \le N_1 +  N_2,\\
    0 & N > N_1 + N_2,
\end{array}\right. \]
and the coefficients $\vec \a_{n,j}$ satisfy 
\begin{enumerate}
    \item $\vec \a_{n,j} = \vec 0$ for every $0 \le n \le N_1 - 1$, $j = 1, 2$.
\item If $N_2 = 0$ and $N_1 < \infty$, then $\sum_{j=1}^2 v_j \vec \a_{N_1, j} \neq \vec 0$ for any $(v_1, v_2)^\top \in \CC^2 - \{ \vec 0 \}$.
\item If $N_2 > 0$, there exists $\{\vec v_n = (v_{n, 1}, v_{n, 2})^\top \}_{n = 0}^{N_2 - 1} \subset \CC^2$ with $\vec v_0 \neq \vec 0$ such that 
\be
\sum_{n=0}^N \frac{N_1!}{(N_1 + N - n)!} \sum_{j = 1}^2 v_{n, j} \vec \a_{N_1 + N - n, j} = \vec 0,\quad \forall 0\le N \le N_2 - 1. \label{eqalphabetarep}
\ee
In addition, if $N_1, N_2 < \infty$, for any $\vec v_{N_2} = (v_{N_2, 1}, v_{N_2, 2})^\top \in \CC^2$,
 \be \sum_{n = 0}^{N_2} \frac{N_1!}{(N_1 + N_2 - n)!} \sum_{j = 1}^2 v_{n, j} \vec \a_{N_1 + N_2 - n, j}  \neq \vec 0. \label{eqalphanondeg} \ee
\end{enumerate}

Indeed, we first observe that via differentiating \eqref{eqnu},
\be (\HH_{b, \nu} - \l) \pa_\l^{N+1}F_\l = (N+1) \pa_\l^N F_\l,\label{eqdiffl}
\ee
for any $N \ge 0$ and any $F_\l \in \{ \Phi_{j;b,\l,\nu} \}_{j = 1, 2} \cup \{\Psi_{k;b,\l,\nu} \}_{1 \le k \le 4}$.
So for $N \ge 2$, 
\be G_N \in \calZ_N - \calZ_{N-1} \quad \Rightarrow \quad (\HH_{b, \nu} - \l) G_N \in \calZ_{N-1} - \calZ_{N-2}\quad{\rm for\,\,}\calZ = X \,\,{\rm or}\,\, Y. \label{eqXYNiter}\ee
Also for any $N \ge 0$, we have 
\bee (X_{N+1} - X_N) \cap Y_N = X_N \cap (Y_{N+1} - Y_N) = \emptyset \eee
by checking whether the vector vanishes after applying $(\HH_{b, \nu} - \l)^N$ using \eqref{eqdiffl}.  
So we have
\be (X_N - X_{N-1}) \cap Y_N = Z_N - Z_{N-1},  \label{eqZNnondeg}\ee
and hence \eqref{eqXYNiter} implies $(\HH_{b, \nu} - \l ) (Z_N - Z_{N-1}) \subset Z_{N-1} - Z_{N-2}$ and hence the monotonicity $h_{N} \le h_{N-1}$. The existence of $N_1, N_2$ now follows the first bound $h_1 = \dim Z_1 \le \dim X_1 = 2$. 

For (1), notice that $X_N = Y_N = Z_N$ for $1 \le N \le N_1$. Differentiating \eqref{eqdefiotacoeff}, we have
\be
   \pa_\l^N \Phi_j = \sum_{k=1}^4 \sum_{n=0}^N \pa_\l^n \Psi_k  \cdot \binom{N}{n} \pa_\l^{N-n} \iota_{jk},\quad N \ge 0, \,\, j = 1, 2. \label{eqdefiotacoeff2}
\ee
So $\pa_\l^N \Phi_j \in Y_{N+1}$ for $N \in [0, N_1-1]$ indicates the vanishing of coefficient before $\pa_\l^n \Psi_3$, $\pa_\l^n \Psi_4$, resulting in (1).

For (2), $X_{N_1} = Y_{N_1} = Z_{N_1}$ implies that
\bee 
Z_{N_1 + 1} = X_{N_1} \cup \left( {\rm span} \{ \pa_\l^{N_1} \Phi_{j;b,\l,\nu} \}_{\substack{ 1 \le j \le 2}} \cap {\rm span} \{ \pa_\l^n \Psi_{j;b,\l,\nu} \}_{\substack{ 0 \le n < N_1 +1  \\ 1 \le j \le 2}} \right).
\eee
Then $N_2 = 0$ indicates $Z_{N_1 + 1} = Z_{N_1} = X_{N_1}$ and thus  $ {\rm span} \{ \pa_\l^{N_1} \Phi_{j;b,\l,\nu} \}_{\substack{ 1 \le j \le 2}} \cap {\rm span} \{ \pa_\l^n \Psi_{j;b,\l,\nu} \}_{\substack{ 0 \le n < N_1 +1  \\ 1 \le j \le 2}}= \{ 0 \}$, which implies (2) using \eqref{eqdefiotacoeff2} and (1). 

For (3), we prove \eqref{eqalphabetarep} by induction on $N$. Firstly, let $F_0 \in Z_{N_1+1} - Z_{N_1} \subset X_{N_1 + 1} - X_{N_1}$ from \eqref{eqZNnondeg}, which we denote by
$$F_0 =\vec v_0^\top \left( \begin{array}{c}\pa_\l^{N_1} \Phi_1 \\ \pa_\l^{N_1} \Phi_2 \end{array}\right) + \sum_{j = 1}^2 \sum_{n = 0}^{N_1-1} w_{jn} \pa_\l^n \Phi_j.$$
with $\vec v_0 \neq \vec 0$. 
Then from \eqref{eqdefiotacoeff2}, $X_{N_1} = Y_{N_1}$ and (1), we can write
\bee
  F_0 \in \sum_{j=1}^2 v_{0, j} \vec \a_{N_1, j} \cdot \left( \begin{array}{c} \Psi_3 \\ \Psi_4 \end{array}\right)  + Y_{N_1 + 1}
\eee
That $F_0 \in Y_{N_1 + 1}$ yields the $N=0$ case. In particular, we can take 
$$G_0 =\vec v_0^\top \left( \begin{array}{c}\pa_\l^{N_1} \Phi_1 \\ \pa_\l^{N_1} \Phi_2 \end{array}\right) \in Z_{N_1 + 1} - Z_{N_1}. $$
Now for $0 \le N \le N_2 - 2$, suppose \eqref{eqalphabetarep} holds for $0 \le N' \le N$ with $\{\vec v_n\}_{n=0}^N \subset \CC^2$ well-defined and 
\be G_{N'} = \sum_{n = 0}^{N'} \frac{N_1!}{(N_1 + N' - n)!} \vec v_n^\top \left( \begin{array}{c}\pa_\l^{N_1+N'-n} \Phi_1 \\ \pa_\l^{N_1+N'-n} \Phi_2 \end{array}\right) \in Z_{N_1 + N' + 1} - Z_{N_1 + N'},\quad 0 \le N' \le N.  \label{eqdefGN}\ee
To show \eqref{eqalphabetarep} for $N+1$ case and find $\vec v_{N+1}$ and $G_{N+1}$ of the same form, we can take $F_{N+1} \in Z_{N_1 + N + 2} - Z_{N_1 + N + 1}$. Similarly by \eqref{eqZNnondeg}, we write as
\[ F_{N+1} = \sum_{n=0}^{N+1} \vec w_n^\top \left( \begin{array}{c}\pa_\l^{N_1+N+1-n} \Phi_1 \\ \pa_\l^{N_1+N+1-n} \Phi_2 \end{array}\right) + \sum_{m=0}^{N_1-1} \vec x_m^\top \left( \begin{array}{c}\pa_\l^{m} \Phi_1 \\ \pa_\l^{m} \Phi_2 \end{array}\right) \]
with $\vec w_0 \neq \vec 0$. Then \eqref{eqdiffl} implies that $(\HH_{b, \nu} - \l)F_{N+1} \in Z_{N_1 + N + 1} - Z_{N_1 + N}$. Due to $h_{N_1 + N'} = 1$ for $0 \le N' \le N$,  there exists $\{ c_{N'} \}_{0 \le N' \le N} \subset \CC$ with $c_0 \neq 0$ such that  $$(\HH_{b, \nu} - \l)F_{N+1} - \sum_{N'=0}^{N} c_{N-N'} G_{N'} \in Z_{N_1}.$$ 
By normalizing $\vec w_0$, we can assume $c_0 = 1$. Compute
\bee
  (\HH_{b, \nu} - \l)F_{N+1} -  \sum_{N'=0}^{N} c_{N-N'} G_{N'} 
  =\sum_{n=0}^{N} \vec y_n^\top \left( \begin{array}{c}\pa_\l^{N_1+N-n} \Phi_1 \\ \pa_\l^{N_1+N-n} \Phi_2
  \end{array} \right) 
+ \sum_{m=0}^{N_1-1} (\vec x_m')^\top \left( \begin{array}{c}\pa_\l^{m} \Phi_1 \\ \pa_\l^{m} \Phi_2
  \end{array} \right) 
\eee
where 
\[ \vec y_n = (N_1 + N + 1 - n) \vec w_n - \frac{N_1!}{(N_1 + N - n)!} 
 \sum_{m=0}^n c_m \vec v_{n-m}. \]
So $(\HH_{b, \nu} - \l)F_{N+1} - \sum_{N'=0}^N c_{N-N'} G_{N'} \in X_{N_1}$ implies 
\[ \vec y_n = \vec 0 \quad \Rightarrow \quad  \vec w_n  = \frac{N_1!}{(N_1 + N + 1-n)!} \sum_{m=0}^n c_m \vec v_{n-m},\quad \forall \, 0 \le n \le N. \]
Using \eqref{eqdefiotacoeff2}, $X_{N_1} = Y_{N_1}$ and (1), we also have 
\bee F_{N+1} \in \sum_{j=1}^2 \sum_{n=0}^{N+1} w_{n,j} \sum_{l=0}^{N+1-n} \binom{N_1 + N + 1 - n}{N_1+l}  \vec \a_{N_1+l,j} \cdot \left( \begin{array}{c} \pa_\l^{N + 1 - n - l} \Psi_3 \\ \pa_\l^{N + 1 - n - l} \Psi_4 \end{array}\right) + Y_{N_1 + N + 2}
\eee
Since $F_{N+1} \in Y_{N_1 + N + 2}$, the vanishing of coefficient of $(\Psi_3, \Psi_4)^\top$ leads to 
\bee
  0 &=&  \sum_{n=0}^{N+1} \sum_{j=1}^2 w_{n,j} \vec \a_{N_1 + N+1-n, j} 
   =   \sum_{n=0}^{N+1} \frac{N_1!}{(N_1 + N+1-n)!} \sum_{j=1}^2 \sum_{m=0}^n c_m  v_{n-m, j} \vec \a_{N_1 + N+1-n,j} \\
   &=& \sum_{m = 0}^{N+1} c_m \sum_{n' = 0}^{N+1-m} \frac{N_1!}{(N_1 + N+1-m-n')!} \sum_{j=1}^2  v_{n', j} \vec \a_{N_1 + N+1-m-n', j}.
\eee
Noticing that for each $m\ge 1$, the sum over $n'$ cancels due to \eqref{eqalphabetarep} with $N' = N + 1 - m \le N$, the identity above becomes \eqref{eqalphabetarep} for $N+1$ thanks to $c_0 = 1$. To conclude the induction step, we take $\vec v_{n+1} = \vec w_{N+1}$ and $G_{N+1}$ defined as in \eqref{eqdefGN} will satisfy  $G_{N+1} = F_{N+1} - \sum_{m=0}^{N_1-1} \vec x_m^\top \left( \begin{array}{c}\pa_\l^{m} \Phi_1 \\ \pa_\l^{m} \Phi_2 \end{array}\right) \in Z_{N_1 + N + 2} - Z_{N_1 + N + 1}$. 

Finally, we prove the non-degeneracy \eqref{eqalphanondeg} when $N_1, N_2 < \infty$. With the $\{ 
v_n \}_{n=0}^{N_2-1} \subset \CC^2$ constructed above and any $\vec v_{N_2} \in \CC^2$, consider
\[ G_{N_2} = \sum_{n = 0}^{N_2} \frac{N_1!}{(N_1 + N_2 - n)!} \vec v_n^\top \left( \begin{array}{c}\pa_\l^{N_1+N_2-n} \Phi_1 \\ \pa_\l^{N_1+N_2-n} \Phi_2 \end{array}\right).  \]
Then $G_{N_2} \in X_{N_1 + N_2 + 1} - X_{N_1 + N_2}$ since $\vec v_0 \neq \vec 0$. Expand $F_{N_2}$ using \eqref{eqdefiotacoeff2}, $X_{N_1} = Y_{N_1}$ and \eqref{eqalphabetarep}, 
\begin{align*}
 G_{N_2} &\in \sum_{j=1}^2 \sum_{n=0}^{N_2} \frac{N_1!}{(N_1 + N_2 - n)!}  v_{n,j} \sum_{l=0}^{N_2-n} \binom{N_1 + N_2 - n}{N_1 + l}  \vec \a_{N_1 + l,j}\cdot \left( \begin{array}{c} \pa_\l^{N_2 - n - l} \Psi_3 \\ \pa_\l^{N_2 - n - l} \Psi_4 \end{array}\right) + Y_{N_1 + N_2 + 1} \\
 &= \sum_{L=0}^{N_2} \frac{1}{(N_2 - L)!}
 \left( \begin{array}{c} \pa_\l^{N_2 - L} \Psi_3 \\ \pa_\l^{N_2 - L} \Psi_4 \end{array}\right) 
 \cdot \sum_{n=0}^{L} \frac{N_1!}{(N_1 + L-n)!} \sum_{j=1}^2  v_{n, j} \vec \a_{N_1 + L-n,j} + Y_{N_1 + N_2 + 1} \\
 &= \left( \begin{array}{c}  \Psi_3 \\ \Psi_4 \end{array}\right) \cdot \sum_{n=0}^{N_2} \frac{N_1!}{(N_1 + N_2-n)!} \sum_{j=1}^2 v_{n, j} \vec \a_{N_1 + N_2 - n,j} + Y_{N_1 + N_2 + 1} 
\end{align*}
From the definition of $N_2$ and \eqref{eqZNnondeg}, $G_{N_2} \notin Y_{N_1 + N_2 + 1}$, which yields \eqref{eqalphanondeg}.

\mbox{}

\underline{Step 2. End of proof.}

From Step 1, \eqref{eqorderJW} is equivalent to 
\bea
\begin{split}\pa_\l^N \Wfr_\nu(b, \l) = 0,&\quad  \forall\, 0 \le N < 2N_1 + N_2;\\
\quad \pa_\l^{2N_1 + N_2} \Wfr_\nu(b, \l) \neq 0.& \quad \end{split} \label{eqorderJW1} 
\eea
We only prove the most difficult case $N_1 < \infty$ and $0 < N_2 < \infty$, and the other cases ($N_1 < \infty$ and $N_2 = 0$;  $N_1 = \infty$;  $N_1 < \infty$ and $N_2 = \infty$) follows in a similar and simpler way. 

By differentiating \eqref{eqrepJW}, we have
\bee
  \pa_\l^N \Wfr_\nu(b, \l) = \sum_{n=0}^N \binom{N}{n} \vec \a_{N-n, 1} \wedge \vec \a_{n, 2}. 
\eee
From Step 1.1, we have the vanishing of $\pa_\l^N \Wfr$ for $0 \le N < 2N_1$ and 
\be \pa_\l^N \Wfr_\nu (b, \l) = \sum_{n=0}^{N-2N_1} \binom{N}{N_1 + n} \vec \a_{N - N_1 - n, 1} \wedge \vec \a_{N_1 + n, 2},\quad N \ge 2N_1.  \label{eqrepJW2} \ee

Now recall \eqref{eqalphabetarep}-\eqref{eqalphanondeg} and $\vec v_n = (v_{n, 1}, v_{n, 2})$ for $0 \le n \le N_2 - 1$. Due to $\vec v_0 \neq \vec 0$ and linearity of these inequalities, we assume $v_{0, 2} = -1$ without loss of generality. Define $\{ \omega_n \}_{n=0}^{N_2-1}$ inductively by
\[ \omega_0 = 1; \quad \omega_{n+1} = \sum_{k=0}^n \omega_k v_{n+1-k, 2} \quad {\rm for}\,\, 0 \le n \le N_2 - 2; \]
so that for any $1 \le N \le N_2 - 1$, 
\bee 
  \sum_{n = 0}^{N} \omega_{n} v_{N-n, 2} = \sum_{n = 0}^{N-1} \omega_n v_{N-n, 2} - \omega_{N} = 0
\eee
Denote 
$$\vec{\tilde v}_0 = \vec v_0;\quad  \vec{\tilde v}_n = \sum_{n = 0}^{N} \omega_{n} \vec v_{N-n} = (\tilde v_{n, 1}, 0), \quad {\rm for}\,\, 1 \le n \le N_2 - 1; $$
Then for $1 \le N_* \le N_2 - 1$ fixed, we multiply the $N$-th equation of \eqref{eqalphabetarep} by $\omega_{N_*-N}$ for $0 \le N \le N_*$ and sum them up, leading to
\bee
  0 = \sum_{N=0}^{N_*} \omega_{N_* - N} \sum_{n=0}^{N} \frac{N_1!}{(N_1 + N - n)!} \sum_{j = 1}^2  v_{n, j} \vec \a_{N_1 + N - n, j} 
  = \sum_{n=0}^{N_*} \frac{N_1!}{(N_1+N_* - n)!} \sum_{j = 1}^2 \tilde v_{n, j} \vec \a_{N_1 + N^* - n, j}
\eee
Similarly, we view \eqref{eqalphanondeg} as the $N_2$-th equation of \eqref{eqalphabetarep}, and take $\omega_{N_2} = 0$. Convolution of \eqref{eqalphabetarep} with $\{ \omega_n\}_{n = 0}^{N_2}$ implies 
\bee
 0 \neq \sum_{N=0}^{N_2} \omega_{N_2 - N} \sum_{n=0}^{N}\frac{N_1!}{(N_1 + N - n)!} \sum_{j = 1}^2  v_{n, j} \vec \a_{N_1 + N - n, j} = \sum_{n=0}^{N_2} \frac{N_1!}{(N_1+N_2 - n)!} \sum_{j = 1}^2 \tilde v_{n, j}\vec \a_{N_1 + N_2 - n, j}
\eee
with $\vec{\tilde v}_{N_2} = \sum_{n = 0}^{N_2} \omega_{n} \vec v_{N_2-n}$. With $\tilde v_{N_2}$ taken arbitrarily, we indeed have proven that  we can replace $\vec{v}_n$ by $\vec{\tilde v}_n$ in \eqref{eqalphabetarep}-\eqref{eqalphanondeg}. 

Now with $\tilde v_{n, 2} = -\delta_{n,0}$ for $0 \le n \le N_2 - 1$, the above two relations imply
\bee
  \vec \a_{N_1 + N, 2} &=& \sum_{n=0}^N \frac{(N_1 + N)!}{(N_1 + N - n)!} \tilde v_{n,1}\vec \a_{N_1 + N - n, 1},\quad 0 \le N \le N_2 - 1;  \\
  \vec \beta &:=& \vec \a_{N_1 + N_2, 2} - \sum_{n=0}^{N_2 - 1} \frac{(N_1 + N_2)!}{(N_1 + N_2 - n)!} \tilde v_{n,1}\vec \a_{N_1 + N_2 - n, 1} \notin {\rm span}\{\vec \a_{N_1, j}\}_{j = 1, 2}.
\eee
Plugging these relations into \eqref{eqrepJW2}, for $2N_1 \le N \le 2N_1 + N_2 - 1$ we have
\bee
 \pa_\l^N \Wfr_\nu(b,\l) &=& \sum_{n=0}^{N-2N_1} \binom{N}{N_1 + n} \vec \a_{N - N_1 - n, 1} \wedge \left(\sum_{m=0}^n \frac{(N_1 + n)!}{(N_1 + n - m)!} \tilde v_{m,1}\vec \a_{N_1 + n-m, 1} \right)\\
 &=& N! \sum_{m=0}^{N-2N_1} \tilde v_{m, 1} \sum_{n = m}^{N - 2N_1} \frac{\vec a_{N - N_1 - n, 1}}{(N - N_1 - n)!} \wedge \frac{\vec \a_{N_1 + n - m, 1} }{(N_1 + n - m)!}  = 0
\eee
from the anti-symmetry of wedge product; and for $N = 2N_1 + N_2$, 
\bee
   &&\pa_\l^{N} \Wfr_\nu(b,\l) \\
   &=& \sum_{n=0}^{N_2-1} \binom{2N_1 + N_2}{N_1 + n} \vec \a_{N_1 + N_2 - n, 1} \wedge \left(\sum_{m=0}^n \frac{(N_1 + n)!}{(N_1 + n - m)!} \tilde v_{m,1}\vec \a_{N_1 + n-m, 1} \right) \\
&+&  \binom{2N_1 + N_2}{N_1 + N_2} \vec \a_{N_1, 1} \wedge \left( \vec \beta +  \sum_{m=0}^{N_2 - 1} \frac{(N_1 + N_2)!}{(N_1 + N_2 - m)!} \tilde v_{m,1}\vec \a_{N_1 + N_2 -m, 1}\right)\\ 
 &=& (2N_1 + N_2)! \sum_{m=0}^{N_2 - 1} \tilde v_{m, 1} \sum_{n=m}^{N_2} \frac{\vec a_{N_1 + N_2 - n, 1}}{(N_1 + N_2 - n)!} \wedge \frac{\vec \a_{N_1 + n - m, 1} }{(N_1 + n - m)!} + \binom{2N_1 + N_2}{N_1 + N_2} \vec \a_{N_1, 1} \wedge  \vec \beta \\
 &\neq& 0.
\eee
That verifies \eqref{eqorderJW1} and hence concludes the proof. 
\end{proof}

Our next lemma deals with the regularity of solutions to connect with the generalized eigenspace of $\calH_b$.  As preparation, we first define the following projection from each spherical class to its radial part with a quadratic phase: for $d \ge 1$, $0 \le s < \frac d2$, $l \ge 0$ and $b \ge 0$, define
\be \PP_{d,l,b} : (\dot H^s_l(\RR^d))^2 \to (L^1_{loc}(\RR_{>0}))^2,\quad \left( \begin{array}{c}
    f^1(|x|) Y_{d, l,m}(x/|x|)   \\
    f^2(|x|) Y_{d, l,m}(x/|x|) 
\end{array} \right)  \mapsto  r^{\frac{d-1}{2}} \left( \begin{array}{c}
    e^{i\frac{br^2}{4}} f^1(r)   \\
    e^{-i\frac{br^2}{4}} f^2(r) 
\end{array} \right) .\label{eqprojsphrad} \ee
Notice that the change of variable \eqref{eqZPhi} indicates that for smooth function in spherical class $l \ge 0$, we have $\PP_{d,l,b} \calH_b = \HH_{b, \nu} \PP_{d, l, b}$ where $\nu = l + \frac{d-2}{2}$.

\begin{lemma}[Regularity of fundamental solutions]\label{lemregfund}
\mbox{} 

\begin{enumerate}
    \item Low spherical classes: Under the assumption of Lemma \ref{lemJostWronskian},  for $s_c = 0$, $\nu = l + \frac{d-2}{2} \le \nu_0$, $|\l| \le \delta_{\rm low}$, and $1 \le k \le K_0$,
      \be
    \PP_{d,l,0} \ker (\calH_0\big|_{(L^2_l(\RR^d))^2} - \l)^k   
     =  {\rm span} \{ \pa_\l^n \Psi_{j;0,\l,\nu} \}_{\substack{0 \le n < k \\ 1 \le j \le 2}} \cap  {\rm span} \{ \pa_\l^n \Phi_{j;0,\l,\nu} \}_{\substack{0 \le n < k \\ 1 \le j \le 2}};\label{eqchargeneigenb0}
     \ee
     and for $0 < s_c \le s_{c;{\rm low}}^*$, $\sigma \in (s_c, \frac 12)$, $0 < b(s_c, d) \le b_{\rm low}$, $\nu = l + \frac{d-2}{2} \le \nu_0$, $\l \in   \Omega_{\delta_{\rm low};\sigma - s_c,b}$ and $1 \le k \le K_0$,
     \bea
     \PP_{d,l,b} \ker (\calH_b\big|_{(\dot H^\sigma_l(\RR^d))^2} - \l)^k &\supset& {\rm span} \{  \Psi_{j;b,\l,\nu} \}_{\substack{1 \le j \le 2}} \cap  {\rm span} \{ \Phi_{j;b,\l,\nu} \}_{\substack{ 1 \le j \le 2}}  \label{eqchargeneigenb1} \\
    \PP_{d,l,b} \ker (\calH_b\big|_{(\dot H^\sigma_l(\RR^d))^2} - \l)^k &\subset & {\rm span} \{ \pa_\l^n \Psi_{j;b,\l,\nu} \}_{\substack{0 \le n < k \\ 1 \le j \le 2}} \cap  {\rm span} \{ \pa_\l^n \Phi_{j;b,\l,\nu} \}_{\substack{0 \le n < k \\ 1 \le j \le 2}} \label{eqchargeneigenb2}
      \eea 
      \item High spherical classes: For $d \ge 2$, let $(I_0, s_{c;{\rm high}}^*, b_{\rm high}, \delta_{\rm high})$ as in \eqref{eqdefschigh}, $\nu_{\rm int}(d)$ from Proposition \ref{propintfund} (4) and $\Phi_{j;b,\l,\nu}$ from Proposition \ref{propextfundh}. Then \eqref{eqchargeneigenb1}-\eqref{eqchargeneigenb2} hold for $0 < s_c \le s_{c;{\rm high}}^*$, $\sigma \in (s_c, \frac 12)$, $0 < b(s_c, d) \le b_{\rm high}$, $\nu = l + \frac{d-2}{2} \ge \max \{\nu_{\rm int}(d), 1\}$, $\l \in   \Omega_{\delta_{\rm high};\sigma - s_c,b}$ and $k = 1$.
\end{enumerate}
\end{lemma}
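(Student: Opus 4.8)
\textbf{Proof plan for Lemma \ref{lemregfund}.}

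The plan is to reduce each inclusion to a statement about the asymptotic behaviour of solutions of the scalar ODE system \eqref{eqnu} near $r=0$ and near $r=\infty$, using the change of variables \eqref{eqZPhi}--\eqref{eqprojsphrad} which intertwines $\calH_b$ restricted to spherical class $l$ with $\HH_{b,\nu}$, $\nu=l+\frac{d-2}{2}$. The key observation is that membership of a function $Z\in(\dot H^\sigma_l(\RR^d))^2$ (resp.\ $(L^2_l)^2$ when $b=0$) in $\ker(\calH_b-\l)^k$ is, by elliptic regularity and the fact that the potentials $W_{1,b},W_{2,b}$ are bounded away from the origin, equivalent to: (i) $Z$ is a smooth solution of the $k$-th order hierarchy $(\calH_b-\l)^jZ_j = Z_{j-1}$ in each spherical class, (ii) $Z$ has the \emph{admissible} (i.e.\ square-integrable / $\dot H^\sigma$) local behaviour at $r=0$, which singles out the non-singular branch, and (iii) $Z$ has the admissible decay/non-oscillation at $r=\infty$. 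I would first record that by \eqref{eqdiffl} differentiating \eqref{eqnu} in $\l$ produces exactly the solutions of the generalized eigenequation hierarchy, so that $\mathrm{span}\{\pa_\l^n\Psi_{j;b,\l,\nu}\}_{0\le n<k,\,1\le j\le2}$ is precisely the set of solutions of the $k$-step hierarchy that are admissible at $0$ (by the asymptotics \eqref{eqPsiasympest1}--\eqref{eqPsiasympest4} together with \eqref{eqPsipalest}: the $\Psi_1,\Psi_2$ branches and their $\l$-derivatives give the $r^{\nu+\frac12}$-type behaviour, which after multiplying by $r^{-\frac{d-1}{2}}$ and the quadratic phase is exactly the smooth/$L^2_{loc}$ behaviour, while $\Psi_3,\Psi_4$ give the forbidden singular branch). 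Symmetrically, $\mathrm{span}\{\pa_\l^n\Phi_{j;b,\l,\nu}\}_{0\le n<k}$ is the set of hierarchy solutions admissible at $\infty$: for $b=0$ this is the exponentially decaying branch by \eqref{eqb0extbdd}--\eqref{eqnondegb0}, and for $b>0$ this is the branch with the correct $e^{\pm ibr^2/4}$ oscillation and polynomial weight $r^{-1/2+\Im\l/b+\dots}$ by \eqref{eqadmosc}/\eqref{eqadmosch}, whose $\dot H^\sigma$-membership is controlled precisely by the condition $\Im\l<b(\sigma-s_c)$ appearing in the statement (this is where the essential spectrum location from \cite[Prop.~4.5]{li2023stability} enters).

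Given these two characterizations, the two-sided inclusions \eqref{eqchargeneigenb0} and \eqref{eqchargeneigenb2} follow by intersecting: a hierarchy solution lies in $\ker(\calH_b-\l)^k$ iff it is admissible at both ends, i.e.\ it lies in the intersection of the two spans. For \eqref{eqchargeneigenb0} (the $b=0$, $s_c=0$ case) the spectral theory is clean because $\calH_0$ is a genuine matrix Schrödinger operator with exponentially decaying potential and $\sigma_{\mathrm{ess}}(\calH_0)=(-\infty,-1]\cup[1,\infty)$, so for $|\l|\le\delta_{\rm low}$ the only obstruction to $L^2$ is the decay at infinity, and both inclusions hold; this is essentially the classical Jost-function correspondence (cf.\ \cite[Lemma 5.27]{MR2219305}) and I would just cite/adapt that. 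For $b>0$ the subtlety is that $ib\Lambda_0$ is a relatively unbounded perturbation, so I do not claim the reverse inclusion of \eqref{eqchargeneigenb1} in the eigenspace-generates-spans direction beyond \eqref{eqchargeneigenb2}; the inclusion \eqref{eqchargeneigenb1} is the \emph{easy} direction — any $Z$ whose radial profile lies in the intersection of $\mathrm{span}\{\Psi_{j;b,\l,\nu}\}_{j=1,2}$ and $\mathrm{span}\{\Phi_{j;b,\l,\nu}\}_{j=1,2}$ is a smooth solution of \eqref{eqeigenHb} with admissible behaviour at both ends, hence genuinely in $\ker(\calH_b-\l)\subset\ker(\calH_b-\l)^k$, once we check the $\dot H^\sigma$ membership, which is exactly the half-integrability computation near $\infty$ using \eqref{eqadmosc} and $\Im\l<b(\sigma-s_c)$, and near $0$ using \eqref{eqPsiasympest1} (recall $\sigma<\frac d2$ so $r^{\nu+\frac12}r^{-\frac{d-1}{2}}\in\dot H^\sigma_{loc}$). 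For \eqref{eqchargeneigenb2} I would argue by induction on $k$: if $(\calH_b-\l)Z\in\ker(\calH_b-\l)^{k-1}$ then its radial profile is, by the inductive hypothesis, a combination of the $\pa_\l^n$-derivatives with $n<k-1$; writing $Z$'s profile as a particular solution (Duhamel against $\Psi$'s and $\Phi$'s, i.e.\ the inversion operators of Section~\ref{sec5}) plus a homogeneous solution, and imposing admissibility at $0$ and at $\infty$, forces $Z$'s profile into $\mathrm{span}\{\pa_\l^n\Psi_{j;b,\l,\nu}\}_{n<k}$ and into $\mathrm{span}\{\pa_\l^n\Phi_{j;b,\l,\nu}\}_{n<k}$ separately (here one uses that the particular solution built from, say, the $\Psi$-basis already differs from a $\pa_\l^{k-1}\Psi$ by a genuine solution, by the identity \eqref{eqdiffl}), hence into the intersection.

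For the high spherical class case (2), the argument is identical with $k=1$ only: here Proposition~\ref{propintfund}(4) and Proposition~\ref{propextfundh} provide the admissible branches $\check\Psi_{j;b,\l,\nu}$ (interior, exponential growth) and $\Phi_{j;b,\l,\nu}$ (exterior, exponential decay), and $\ker(\calH_b-\l)\big|_{(\dot H^\sigma_l)^2}$ is characterized by matching these; since $k=1$ there is no need for the $\l$-derivative hierarchy, and the proof of \eqref{eqchargeneigenb1}--\eqref{eqchargeneigenb2} reduces to the two local-behaviour checks exactly as above. I expect the main obstacle to be the bookkeeping in the reverse inclusion \eqref{eqchargeneigenb2}: precisely, showing that a hierarchy solution that is admissible at both ends must decompose \emph{simultaneously} into the $\l$-derivative spans on both sides, which requires carefully using the linear independence statements in Proposition~\ref{propintfund}(2) and Proposition~\ref{propextfund} together with \eqref{eqdiffl} to rule out ``mixed'' solutions; the potential $r^{-2}$-tails of $W_{1,b},W_{2,b}$ (difficulty (4) in the introduction) mean the admissibility at infinity is a condition on the leading oscillatory/polynomial asymptotics rather than on exponential decay, so I would phrase admissibility at $\infty$ in terms of the weighted spaces $X^{\alpha,N,\pm}_{r_0;b,E}$ of Definition~\ref{defdiffopspace} and invoke \eqref{eqadmosc}, \eqref{eqnonadmosc} to separate the admissible from the inadmissible branch. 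Everything else is a routine combination of elliptic regularity away from the potentials, the ODE uniqueness in \eqref{eqnufirstorder}, and the asymptotics already established in Part~\ref{part1}.
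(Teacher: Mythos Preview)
Your proposal is correct and follows essentially the same route as the paper: reduce to admissibility at $r=0$ (selecting $\Psi_1,\Psi_2$ and their $\lambda$-derivatives) and at $r=\infty$ (selecting $\Phi_1,\Phi_2$ and their $\lambda$-derivatives), then run the induction on $k$ via the relation $(\HH_{b,\nu}-\lambda)\partial_\lambda^n F = n\,\partial_\lambda^{n-1}F$. The one step you underweight is the \emph{exclusion} of the inadmissible exterior branches $\Phi_3,\Phi_4$ from $\dot H^\sigma$ when $b>0$: since these have only polynomial decay $r^{-d/2-\Im\lambda/b-s_c}$ with quadratic-phase oscillation (not exponential growth), showing $\Phi_j \notin \dot H^\sigma$ is a genuine lower-bound computation on the Gagliardo seminorm, which the paper carries out explicitly rather than reading off from \eqref{eqnonadmosc}.
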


\begin{proof}
  \underline{1. Proof of (1).} 
  
  \underline{\textit{1.1. Reduction of proof.}} We claim that \eqref{eqchargeneigenb0}, \eqref{eqchargeneigenb1} and \eqref{eqchargeneigenb2} are boiled down to the following estimates where $s(0) = 0$ and $s(b) = \sigma$ for $1 \le k \le K_0$:
   \begin{align}
    {\rm span} \{ \pa_\l^n \Psi_{j;0,\l,\nu} \}_{\substack{0 \le n < k \\ 1 \le j \le 2}} \cap  {\rm span} \{  \pa_\l^n \Phi_{j;0,\l,\nu} \}_{\substack{0 \le n < k \\ 1 \le j \le 2}}  \subset \PP_{d,l,0} \left( L^2_l(\RR^d) \cap \dot H^{2}_l(B_1^{\RR_d}) \right)^2;\label{eqcharreg1} \\
      {\rm span} \{  \Psi_{j;b,\l,\nu} \}_{\substack{1 \le j \le 2}} \cap  {\rm span} \{ \Phi_{j;b,\l,\nu} \}_{\substack{ 1 \le j \le 2}}  \subset \PP_{d,l,b} \left( \dot H^{\sigma}_l(\RR^d) \cap \dot H_l^{2}(B_1^{\RR_d}) \right)^2, \quad b \in (0, b_{\rm low}];\label{eqcharreg2} \\
      \Psi_{j;b,\l,\nu}, \Phi_{j;b,\l,\nu} \notin \PP_{d,l,b} \left( \dot H^{s(b)}_l(\RR^d) \cap \dot H^{2}_l(B_1^{\RR_d}) \right)^2, \quad j \in \{ 3, 4\},\quad  b \in [0, b_{\rm low}].\label{eqcharreg3}
   \end{align}

Indeed, we first notice that the change of variable \eqref{eqZPhi} indicates $\PP_{d, l, b}\ker (\calH_b\big|_{(\dot H^s_l(\RR^d))^2}  - \l)^k \subset  \ker(\HH_{b, \nu} \big|_{L^1_{loc}((0,\infty))}  - \l)^k$ for any $0 \le s < \frac d2$ and $k \ge 1$. Further exploiting the improvement of local regularity from the eigenequation \eqref{eqeigenHb} and the smoothness of ODE solution away from the origin, we indeed have
    \be \ker (\calH_b\big|_{(\dot H^s_l(\RR^d))^2}  - \l)^k = \left( \PP_{d, l, b}^{-1} \ker(\HH_{b, \nu} \big|_{(C^\infty_{loc}(\RR_+))^2}  - \l)^k\right)\cap \left( \dot H_l^{s}(\RR^d) \cap \dot H_l^{2}(B_1^{\RR_d}) \right)^2. \label{eqinteregfund}
    \ee
By differentiating \eqref{eqnu} w.r.t. $\l$, we have 
   \be (\HH_{b,\nu}\big|_{(C^\infty_{loc}(\RR_+))^2} - \l)\pa_\l^n F = n \pa_\l^{n-1} F,\qquad  {\rm for}\,\, F \in \{\Psi_{j;b,\l,\nu}, \Phi_{j;b,\l,\nu} \}_{1 \le j \le 2},\quad n \ge 1. \label{eqbwbwbw} \ee
Thus \eqref{eqcharreg1} and \eqref{eqcharreg2} imply the $\supset$ direction of \eqref{eqchargeneigenb0} and \eqref{eqchargeneigenb1}. 

Now it suffices to show the $\subset$ direction of \eqref{eqchargeneigenb0} and \eqref{eqchargeneigenb2}, which we prove by induction on $k$. For $k = 1$, this follows from  \eqref{eqcharreg3} and 
\be \ker(\HH_{b, \nu} \big|_{(C^\infty_{loc}(\RR_+))^2} - \l) 
   = {\rm span} \{ \Psi_{j;b,\l,\nu} \}_{\substack{ 1 \le j \le 4}} \cap  {\rm span} \{ \Phi_{j;b,\l,\nu} \}_{\substack{ 1 \le j \le 4}} \label{eqkernalHHbnu}
   \ee
since $\{ \Psi_{j;b,\l,\nu} \}_{\substack{ 1 \le j \le 4}}$ and $\{ \Phi_{j;b,\l,\nu} \}_{\substack{ 1 \le j \le 4}}$ are linear independent solution basis for the ODE system $(\HH_{b, \nu} - \l) \Phi= 0$. Next, supposing \eqref{eqchargeneigenb0} and \eqref{eqchargeneigenb2} hold for $1 \le k \le K_0 - 1$, for any $F \in  \ker (\calH_b\big|_{(\dot H^{s(b)}_l(\RR^d))^2}  - \l)^{k+1}$, we have 
\[ (\calH_b - \l) F \in  \ker (\calH_b\big|_{(\dot H^{s(b)}_l(\RR^d))^2}  - \l)^{k} \subset \PP_{b, l, b}^{-1} \left({\rm span} \{ \pa_\l^n \Psi_{j;b,\l,\nu} \}_{\substack{0 \le n < k \\ 1 \le j \le 2}} \cap  {\rm span} \{ \pa_\l^n \Phi_{j;b,\l,\nu} \}_{\substack{0 \le n < k \\ 1 \le j \le 2}}  \right). \]
From \eqref{eqbwbwbw} and \eqref{eqkernalHHbnu}, we have 
\[ F \in  \PP_{b, l, b}^{-1} \left({\rm span} \{ \pa_\l^n \Psi_{j;b,\l,\nu}, \Psi_{j+2;b,\l,\nu} \}_{\substack{0 \le n < k+1 \\ 1 \le j \le 2}} \cap  {\rm span} \{ \pa_\l^n \Phi_{j;b,\l,\nu}, \Phi_{j+2;b,\l,\nu} \}_{\substack{0 \le n < k+2 \\ 1 \le j \le 2}}  \right). \]
Excluding $\Psi_{j+2;b,\l,\nu}$ and $\Phi_{j+2;b,\l,\nu}$ via \eqref{eqcharreg3}-\eqref{eqinteregfund} leads to \eqref{eqchargeneigenb0} and \eqref{eqchargeneigenb2} for $k+1$. That concludes the induction.

\mbox{}

\underline{\textit{1.2. Proof of \eqref{eqcharreg1}, \eqref{eqcharreg2} and \eqref{eqcharreg3}.}}

   \textit{Proof of \eqref{eqcharreg1}.} We first show $\left( \begin{array}{c}r^{-\frac{d-1}{2}} e^{-i\frac{br^2}{4}} \pa_\l^n \Psi_{j;b,\l,\nu}^1 Y_{d,l,m} \\ r^{-\frac{d-1}{2}} e^{i\frac{br^2}{4}} \pa_\l^n \Psi_{j;b,\l,\nu}^2 Y_{d,l,m} \end{array}\right) \in (W^{2, \infty}(B_1^{\RR^d}))^2$ for $j = 1, 2$, and any $l \ge 0$, $1 \le m \le N_{d,l}$, $n \ge 0$ and $b \in [0, b_{\rm low}]$, $|\l| \le \delta_{\rm low}$. Due to the smoothness of $r^l Y_{d,l,m}$ and $e^{\pm i\frac{br^2}{4}}$, it suffices to show $ r^{-\frac{d-1}{2} - l} \pa_\l^n \Psi_{j;b,\l,\nu} \in W^{2, \infty}(B_1^{\RR^d})$. Indeed, from Proposition \ref{propintfund} we have
   \[ r^{-\frac{d-1}{2} - l} \pa_\l^n \Psi_{j;b,\l,\nu} = \delta_{n,0} \vec e_j+ O(r^2),\quad \pa_r \left(r^{-\frac{d-1}{2} - l} \pa_\l^n \Psi_{j;b,\l,\nu}\right) = O(r),\quad r \to 0; \]
   and \eqref{eqbwbwbw} further implies $\pa_r^2 \left(r^{-\frac{d-1}{2} - l} \pa_\l^n \Psi_{j;b,\l,\nu}\right) = O(1)$ as $r \to 0$. These three bounds implies the desired regularity. 

   Next, from the exponential decay of $\Phi_{j;b,\l,\nu}$ in Proposition \ref{propextfund}(1), we easily see $r^{-\frac{d-1}{2}} \pa_\l^n \Phi_{j;0,\l,\nu} Y_{d,l,m} \in (L^2(\RR^d - B_1^{\RR^d}))^2$ for $j = 1, 2$, and any $l \ge 0$, $1 \le m \le N_{d,l}$, $n \ge 0$, $|\l|\le \delta_{\rm low}$. The regularity \eqref{eqcharreg1} follows applying the above estimates in $B_1$ and $\RR^d - B_1$ respectively. 

   \mbox{}
   
   \textit{Proof of \eqref{eqcharreg2}.} Let $j = 1$ or $2$. The interior bound for $\Psi_{j;b,\l,\nu}$ has been proven above. So it suffices to show 
   $U := \left( \begin{array}{c} (1-\chi(r))r^{-\frac{d-1}{2}} e^{-i\frac{br^2}{4}}  \Psi_{j;b,\l,\nu}^1 Y_{d,l,m} \\ (1-\chi(r)) r^{-\frac{d-1}{2}} e^{i\frac{br^2}{4}} \Psi_{j;b,\l,\nu}^2 Y_{d,l,m} \end{array}\right) \in (\dot H^{\sigma}(\RR^2))^2$ for $j = 1, 2$, $l = \nu - \frac{d-2}{2} \le \nu_0 - \frac{d-2}{2}$, $1 \le m \le N_{d, l}$ and $0 < b \le b_{\rm low}$, $\l \in   \Omega_{\delta_{\rm low};\sigma - s_c, b}$, where $\chi$ is the smooth cutoff as in \eqref{eqdefchiR}.
   From Proposition \ref{propextfund}(2) and smoothness of $Y_{d,l,m}$, we see 
    \bee
             \left|\pa_{x_i}^n U \right| \lesssim_{b, \l} \la x \ra^{-\frac d2 + \frac{\Im \l}{b} + s_c - n}, \quad \forall \, x\in \RR^d,\quad n = 0, 1,\,\,1 \le i \le d.
         \eee
    Thus we have $U \in L^\infty \cap L^{d(\frac d2 - \frac{\Im \l}b - s_c)^{-1} +} \cap \dot W^{1, \infty} \cap \dot W^{1,d(\frac d2 + 1 - \frac{\Im \l}b - s_c)^{-1} +} \subset \dot H^\sigma$. Here the inclusion follows from Sobolev embedding and $\frac{\Im \l}{b} - s_c < \sigma< \min \{ \frac d2, 1\}$. 
    
\mbox{}

\textit{Proof of \eqref{eqcharreg3}.} Let $j = 3$ or $4$. For interior solution, we show $\left( \begin{array}{c}r^{-\frac{d-1}{2}} e^{-i\frac{br^2}{4}} \Psi_{j;b,\l,\nu}^1 Y_{d,l,m} \\ r^{-\frac{d-1}{2}} e^{i\frac{br^2}{4}}  \Psi_{j;b,\l,\nu}^2 Y_{d,l,m} \end{array}\right) \notin (H^2(B_1^{\RR_d}))^2$ for $0 \le b \le b_{\rm low}$ by their singular asymptotics at $0$ from Proposition \eqref{propintfund}(1). More specifically, for $l \ge 2$, we have $r^{-\frac{d-1}{2}}\Psi_j^{j-2} \sim r^{-(d-2+l)} \notin L^2(\RR^d)$; and for $l = 0$ or $1$, we can use the explicit form of $Y_{d,0,1} = 1$ and $Y_{d,1,m} = r^{-1}x_m$, the asymptotics in Proposition \eqref{propintfund}(1) and the eigenequation $(\HH_{b, \nu} - \l)\Psi_{j;b,\l,\nu} = 0$ to compute $\pa_{x_1}^2 \left( r^{-\frac{d-1}{2}}  e^{(-1)^j i\frac{br^2}{4}} 
 \Psi_{j;b,\l,\nu}^{j-2} Y_{d, l,m}\right) \notin L^2(B_1^{\RR^d})$.


 For exterior solutions, $r^{-\frac{d-1}{2}} \Phi_{j;0,\l,\nu}^{j-2} Y_{d, l, m} \notin L^2(\RR^d)$ directly comes from their exponential growth as in Proposition \ref{propintfund}(1), and we will prove for $0 < b \le b_{\rm low}$, $\l \in   \Omega_{\delta_{\rm low};\sigma - s_c, b}$ that $r^{-\frac{d-1}{2}} e^{(-1)^j i\frac{br^2}{4}} \Phi_{j;b,\l,\nu}^{j-2} Y_{d, l, m} \notin \dot H^\sigma(\RR^d)$. Indeed, from Proposition \ref{propextfundin}(2) and the eigenequation $(\HH_{b, \nu} - \l)\Phi_{j;b,\l,\nu} = 0$ we obtain
         \bee
             \left|\pa_r^n \left(r^{-\frac{d-1}{2}} e^{(-1)^j i\frac{br^2}{4}} \Phi_{j;b,\l,\nu}^{j-2} \right) \right| \sim_{b, \l} r^{-\frac d2 - \frac{\Im \l}{b} - s_c + n},\quad r \gg \frac 4b, \quad n = 0, 1, 2.
         \eee
         For each $Y_{d,l,m}$, we can find a small open set $\calO \subset \SS^{d-1}$ such that $\inf_{\omega \in \calO}|Y_{d,l}^m(\omega)| > 0$. By taking a further smaller open set $\calO' \subset \calO$ and $ c \ll 1$, one can estimate for $u = r^{-\frac{d-1}{2}} e^{(-1)^j i\frac{br^2}{4}} \Phi_{j;b,\l,\nu}^{j-2}$ that
         \bee
           \int_{B_{c^2 r^{-1}}(2cr^{-1}\omega)} \frac{|u(r\omega+y) - u(r\omega)|^2}{|y|^{d+2\sigma}} dy \gtrsim r^{-d + 2 ( \sigma - s_c - \frac{\Im \l}{b})},\quad {\rm for\,\,} \omega \in \calO',\,\, r \gg \frac 4b.
         \eee
         where the constant is independent of $\omega, r$.
         Since $\frac{\Im \l}{b} < \sigma - s_c$, integrating the above estimate yields 
         $$\| u \|_{\dot H^\sigma}^2 \ge \int_{(0,\infty) \times \calO'} \int_{B_{c^2r^{-1}}(2cr^{-1}\omega)} \frac{|u(x+y) - u(x)|^2}{|y|^{d+2\sigma}} dydx = \infty. $$
\mbox{}

  \underline{2. Proof of (2).} The proof is almost identical as above for $k = 1$ and $b > 0$, since $\{\Phi_{j;b,\l,\nu}\}_{j=1,2}$ in  Proposition \ref{propextfund} has the same boundedness as in Proposition \ref{propextfundh}, and $\{\Phi_{j;b,\l,\nu}\}_{j=3,4}$, $\{\Psi_{j;b,\l,\nu}\}_{1 \le j \le 4}$ are still from Proposition \ref{propextfundin}, Proposition \ref{propintfund} respectively.
\end{proof}

We end this subsection with the following lemma, which is a substitution of Rouch\'e's Theorem to show continuity of zeros of continuous families of analytic functions without contour integration. 

\begin{lemma}[Uniqueness of zeros for degenerate bifurcation of analytic function]\label{lemuniqzero}
  Suppose for $b_0 > 0, \delta_0 > 0$, $I_0 > 0$ and $k_0 \in \NN_{\ge 0}$, a family of analytic functions 
  \[ f_0 \in C^{\omega_\CC}(B_{\delta_0}) \quad {\rm and} \quad f_b \in C^{\omega_\CC}(\Omega_{\delta; I_0, b})\quad {\rm for}\,\, 0 < b \le b_0, \] 
  satisfy the following conditions:
  \begin{enumerate}
      \item Existence of zeros for $f_b$: There exist $k_0$ functions $z_j: [0, b_0] \to B_{\delta_0}$ for $1 \le j \le k_0$, such that for each $j$,
      \[ z_j(0) = 0,\quad |z_j(b)| \le \frac 13 {bI_0}\quad {\rm for}\,\, 0 < b \le b_0 \] 
      and $f_b(z_j(b)) = 0$ for $0 \le b \le b_0$. 
      \item Uniqueness of zeros of $f_0$: $g_0(z) := f_0(z) \cdot z^{-k_0}$ is analytic and non-vanishing on $B_{\delta_0}$. 
      \item Uniform boundedness and continuity w.r.t. $b$: \[ \sup_{0 < b \le b_0} \| f_b\|_{C^{k_0+1}(\Omega_{\delta;I_0,b})} < \infty,\qquad \| f_b - f_0\|_{C^{k_0+1}(\Omega_{\delta;I_0,b})} = o_b(1). \]
      \item Equidistribution except at most one pair\footnote{This condition is technical and can likely be relaxed.}: If $k_0 \ge 2$, then $z_j(b) \neq z_k(b)$ for any $j\neq k$ and $b > 0$; moreover, there exists $C_* > 1$ such that 
      \[
       |z_j(b) - z_{j'}(b)| \in [C_*^{-1}b, C_* b],\quad \forall\,1 \le j < j' \le k_0, \,\,(j, j') \neq (1, 2).
      \]
  \end{enumerate}
  Then there exists $0 < \delta_1 \le \delta_0$, $0 < b_1 \le b_0$ such that for $0 < b \le b_1$, 
  $g_b(z) := f_b(z) \cdot \left( \prod_{j=1}^{k_0} z - z_j(b) \right)^{-1}$ is analytic and non-vanishing on $\Omega_{\delta_1;I_0,b}$. 
  \end{lemma}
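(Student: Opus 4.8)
\textbf{Proof strategy for Lemma \ref{lemuniqzero}.}

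The plan is to reduce the statement to a purely quantitative form of the classical argument-principle/Rouch\'e estimate, but carried out \emph{by hand on the level of the functions $g_b$} since the domains $\Omega_{\delta;I_0,b}$ shrink with $b$ and no $b$-independent contour is available. The key observation is that $g_b(z) = f_b(z)\big/\prod_{j=1}^{k_0}(z - z_j(b))$ is a priori only meromorphic on $\Omega_{\delta_1;I_0,b}$, with possible poles only at the $z_j(b)$; but by hypothesis (1) each $z_j(b)$ is a zero of $f_b$, so $g_b$ is in fact analytic (holomorphic), and what we must show is that it has no zero. First I would fix $\delta_1 \in (0,\delta_0]$ small enough that, by hypothesis (2), $|g_0| \ge 2c_0 > 0$ on $\overline{B_{\delta_1}}$ for some constant $c_0$; this is the ``reference lower bound'' we will propagate to $b>0$. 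Then I would split $\Omega_{\delta_1;I_0,b}$ into two regions: the ``outer'' region $\{\,|z| \ge M b\,\}$ for a large fixed constant $M$ (chosen after $C_*$), and the ``inner'' region $\{\,|z| \le Mb\,\}$. On the outer region all the factors $z - z_j(b)$ satisfy $|z - z_j(b)| \sim |z|$ (since $|z_j(b)| \le \tfrac13 b I_0 \ll Mb \le |z|$), so $\prod_j (z - z_j(b)) = z^{k_0}(1 + O(b/|z|))$, and hence $g_b(z) = f_b(z) z^{-k_0}(1 + O(1/M))$. Using hypothesis (3), $f_b z^{-k_0} \to f_0 z^{-k_0} = g_0$ uniformly on the outer region (here the $z^{-k_0}$ is harmless because $|z|\ge Mb$ bounds it, but more importantly $f_b$ and $f_0$ both vanish to order $\ge k_0$ at $0$ — this is where I would invoke that $f_0 = z^{k_0} g_0$ and that $f_b$ has $k_0$ zeros near $0$ to write $f_b(z) = \big(\prod_j (z-z_j(b))\big)\,\tilde g_b(z)$ with $\tilde g_b$ holomorphic and, by the $C^{k_0+1}$ bound plus Taylor expansion, $\tilde g_b \to g_0$ uniformly), so $|g_b| \ge c_0$ on the outer region once $M$ is large and $b$ is small.

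The delicate part is the inner region $\{\,|z|\le Mb\,\}$. There the factors $z - z_j(b)$ can vanish, so the naive estimate fails, and I would instead argue via a rescaling. Set $z = b\,w$ with $|w|\le M$, and $w_j(b) := z_j(b)/b$, which by hypothesis (1) lie in $\overline{B_{I_0/3}}$ and by hypothesis (4) (when $k_0 \ge 2$) are pairwise separated by $[C_*^{-1}, C_*]$ except possibly for the single pair $(w_1,w_2)$. Write $f_b(bw)$ via its order-$k_0$ Taylor expansion at the origin: since $f_b$ has exactly the $k_0$ zeros $z_j(b)$ in $\Omega_{\delta;I_0,b}$ and is $C^{k_0+1}$-close to $f_0 = z^{k_0} g_0$, one gets $b^{-k_0} f_b(bw) = \big(\prod_{j}(w - w_j(b))\big)\cdot h_b(w)$ with $h_b$ holomorphic on $|w|\le M$ and $h_b \to g_0(0) \ne 0$ uniformly as $b\to 0$ (the convergence uses hypothesis (3): the degree-$\le k_0$ Taylor coefficients of $f_b$ converge to those of $f_0$, which are $0,\dots,0,g_0(0)$, and the $C^{k_0+1}$ bound controls the remainder uniformly after dividing by $b^{k_0}$). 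Consequently
\[
g_b(bw) \;=\; \frac{f_b(bw)}{\prod_j (bw - z_j(b))} \;=\; \frac{b^{k_0}\big(\prod_j (w - w_j(b))\big)\,h_b(w)}{b^{k_0}\prod_j (w - w_j(b))} \;=\; h_b(w),
\]
so on the inner region $g_b(z) = h_b(z/b)$, and $|g_b| \ge |g_0(0)|/2 > 0$ once $b$ is small. (The role of hypotheses (4) and pairwise distinctness is precisely to make the factorization $b^{-k_0}f_b(bw) = \prod_j(w-w_j(b)) h_b(w)$ legitimate with $h_b$ nonvanishing: the zeros $z_j(b)$ of $f_b$ are simple or, if two collide, still accounted for by the product, and no extra zero can sneak in because $f_b$ has exactly $k_0$ zeros total in $\Omega_{\delta;I_0,b}$ — which itself follows from Rouch\'e applied on the fixed circle $\{|z| = \delta_1\}$ comparing $f_b$ with $f_0$, valid since on that circle $|f_b - f_0| \le o_b(1) < c_0 \delta_1^{k_0} \le |f_0|$.)

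The main obstacle I anticipate is the bookkeeping in the inner-region factorization, specifically verifying that $h_b$ is genuinely holomorphic and converges uniformly: one must ensure that $f_b$ has \emph{no} zeros in $\Omega_{\delta_1;I_0,b}$ other than the $k_0$ listed ones (so that the quotient is holomorphic, not merely meromorphic away from poles), and this non-existence of extra zeros is exactly a Rouch\'e statement that we are allowed to make only on the \emph{fixed} circle $|z| = \delta_1$, not on a $b$-dependent contour inside $\Omega_{\delta;I_0,b}$. The resolution is that the region $\Omega_{\delta_1;I_0,b}$ is contained in $B_{\delta_1}$, so counting zeros of $f_b$ in $B_{\delta_1}$ via the fixed contour $|z|=\delta_1$ bounds the number of zeros in the smaller set $\Omega_{\delta_1;I_0,b}$; combined with the hypothesis that $f_b$ already has $k_0$ zeros $z_j(b)$ there, and that $f_0$ has exactly $k_0$ zeros in $B_{\delta_1}$ (counted with multiplicity, all at $0$), Rouch\'e forces equality and hence no extra zeros. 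A secondary subtlety is the matching of the two regions at $|z| \sim Mb$: both estimates must be valid on the overlap, which holds because $M$ is chosen independent of $b$ and the outer estimate only needed $|z|\ge Mb$ while the inner one needed $|z|\le Mb$ — but one should double-check the constants line up (the outer bound degrades like $1/M$, the inner bound is uniform, so taking $M$ large first, then $b$ small, closes the argument).
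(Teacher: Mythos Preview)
Your proposal has a genuine gap that undermines the argument, and it stems from misreading the domain of $f_b$. By hypothesis, $f_b$ is analytic only on $\Omega_{\delta;I_0,b}=\{|z|<\delta,\ \Im z < bI_0\}$, which for small $b$ is a disk with its top sliced off at height $bI_0$. The fixed circle $|z|=\delta_1$ is \emph{not} contained in this domain, so your Rouch\'e step comparing $f_b$ with $f_0$ on $\{|z|=\delta_1\}$ is simply unavailable: $f_b$ is not defined there. This is not a technicality --- it is the entire reason the lemma exists. The paper states explicitly (in the outline, Section~1.3, and again in the comments after Theorem~\ref{thmmodestabsmallspec}) that the degenerate domain forbids any $b$-independent contour argument, and Lemma~\ref{lemuniqzero} is introduced precisely as a substitute for Rouch\'e.

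Without Rouch\'e, your inner-region argument must carry the full weight of showing $h_b\to g_0(0)$, and here there is a second gap. You claim this follows from the convergence of the Taylor coefficients of $f_b$ at $0$ together with the $C^{k_0+1}$ bound on the remainder. But hypothesis~(3) only gives $f_b^{(n)}(0)\to f_0^{(n)}(0)=0$ for $n<k_0$ at rate $o_b(1)$, with no control in terms of $b$; after rescaling and dividing by $b^{k_0}$, the term $f_b^{(n)}(0)\,b^{n-k_0}w^n$ is $o(1)\cdot b^{n-k_0}$, which need not be small. The missing ingredient is the $k_0$ interpolation conditions $f_b(z_j(b))=0$: these force the low-order Taylor coefficients to be $O(b^{k_0-n})$, but extracting this requires exactly the kind of Lagrange-interpolation identity the paper uses. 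The paper's proof writes $g_b$ explicitly via the Lagrange formula (equation~\eqref{eqgbrep}), obtaining $g_b(z)=\frac{\partial_z^{k_0}f_b(z_j)}{k_0!}+\text{(remainder terms involving }h_{j;b}(z_k)\text{)}$, where $h_{j;b}$ is the degree-$(k_0{+}1)$ Taylor remainder of $f_b$ at $z_j$. This makes the dependence on the zero locations completely explicit and reduces the non-vanishing to bounding a sum of terms that are each $O(b)$ or $O(|z|)$. Condition~(4), which you do not use in any essential way, enters through the denominators $\prod_{l\ne k}(z_k-z_l)$: when the exceptional pair $z_1,z_2$ can be arbitrarily close, a separate case (Case~2 in the paper) with a difference estimate on $h_{j;b}$ is needed to handle the resulting small denominator.

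In short: the outer/inner split is reasonable, but the proof must proceed by direct estimation of $g_b-g_0$ using the interpolation conditions, not by a contour argument that the domain does not support.
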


  \begin{proof}
    If $k_0 = 0$, then the result simply follows $|f_b(z)| \sim 1$ for $z \in B_{\delta_0/2}$ from condition (2) and the continuity (3). 
    
    Now we consider $k_0 \ge 1$. The core is to use Lagrangian interpolation polynomial to decompose $f_b$ so as to obtain good estimate of $g_b - g_0$. For notational simplicity, we might write $z_j(b)$ simply as $z_j$.

    \mbox{}

    \underline{1. Decomposition of $g_b$.}

    We claim the following decomposition of $g_b$ for $0 < b \le b_0$: for each $1 \le j \le k_0$,
    \be
    g_b(z) = \frac{\pa_z^{k_0}f_b(z_j)}{k_0!} - \sum_{\substack{1 \le k \le  k_0 \\ k \neq j}} \frac{h_{j;b}(z_k)}{(z-z_k)  \cdot  \prod_{\substack{1 \le l \le k_0 \\ l \neq k}}(z_k- z_l)} + \frac{h_{j;b}(z)}{\prod_{1 \le k \le k_0}(z-z_k)}\label{eqgbrep}
    \ee
    where 
    \bee
     h_{j;b}(z) = \frac{1}{(k_0 + 1)!}\int_{z_j}^z \pa_z^{k_0+1} f_b(w) (z-w)^{k_0} dw.
    \eee

Indeed, the Taylor expansion at $z_j(b)$ indicates that
$$P_{j;b}(z) := \sum_{n=0}^{k_0} \frac{\pa_z^n f_b(z_j)}{n!}(z-z_j)^n = f_b(z) - h_{j;b}(z). $$
Notice that $f_b(z_k) = 0$ for $1 \le k \le k_0$ indicate that $P_{j;b}$ is the Lagrangian interpolation polynomial at $(z_k, -h_{j;b}(z_k))$ with highest order coefficient $\frac{\pa_z^{k_0}f_b(z_j)}{k_0!}$, so 
\[ P_{j;b}(z) =  \frac{\pa_z^{k_0}f_b(z_j)}{k_0!} \prod_{k=1}^{k_0}(z-z_k) -\sum_{\substack{1 \le k \le k_0 \\ k \neq j} } h_{j;b}(z_k) \cdot \prod_{\substack{1 \le l \le k_0 \\ l \neq k}} \frac{z - z_l}{z_k - z_l}. \]
Plugging this back to $f_b = P_{j;b} - h_{j;b}$ and $g_b = f_b / (\prod_{k=1}^{k_0}(z-z_k))$ implies \eqref{eqgbrep}.

Besides, a direct Taylor expansion at $z = 0$ implies 
\be
 g_0(z) = \frac{\pa_z^{k_0}f_0(0)}{k_0!} + \frac{z^{-k_0}}{(k_0 + 1)!}\int_{0}^z \pa_z^{k_0+1} f_0(w) (z-w)^{k_0} dw. 
 \label{eqg0rep}
\ee
From condition (2), $A_0 :=  \frac{|\pa_z^{k_0}f_0(0)|}{k_0!} \neq 0$. 

    \mbox{}

    \underline{2. Estimates.}

    Using the representation formula for $g_b$ \eqref{eqgbrep} and for $g_0$ \eqref{eqg0rep}, for each $ 1\le j \le k_0$, 
    \bee
    |g_b(z) - g_0(z)| &\le& \frac{1}{k_0!} \left| \pa_z^{k_0} f_b(z_j) - \pa_z^{k_0} f_b(0) \right| + \left| \frac{z^{-k_0}}{(k_0 + 1)!}\int_{0}^z \pa_z^{k_0+1} f_0(w) (z-w)^{k_0} dw \right| \\
    &+& \left| - \sum_{\substack{1 \le k \le  k_0 \\ k \neq j}} \frac{h_{j;b}(z_k)}{(z-z_k)  \cdot  \prod_{\substack{1 \le l \le k_0 \\ l \neq k}}(z_k- z_l)} + \frac{h_{j;b}(z)}{\prod_{1 \le k \le k_0}(z-z_k)} \right| 
    \eee
    Using boundedness of $\| f_0 \|_{C^{k_0 + 1}(B_{\delta_0/2})}$ and continuity condition (3), the first two terms can be made arbitrarily small when taking $\delta_1$, $b_1$ small enough, uniformly for $0 < b \le b_1$, $z \in \Omega_{\delta_1; I_0, b}$ and $1 \le j \le k_0$. So to guarantee the non-vanishing of $g_b$ on $\Omega_{\delta_1;I_0,b}$,  it suffices to show 
    \be  \sup_{0 < b \le b_1} \sup_{z \in \Omega_{\delta_1; I_0, b}} \min_{1 \le j \le k_0} \left| - \sum_{\substack{1 \le k \le  k_0 \\ k \neq j}} \frac{h_{j;b}(z_k)}{(z-z_k)  \cdot  \prod_{\substack{1 \le l \le k_0 \\ l \neq k}}(z_k- z_l)} + \frac{h_{j;b}(z)}{\prod_{1 \le k \le k_0}(z-z_k)} \right| \le \frac{A_0}{2} \label{eqdegbifres}  \ee
    Depending on the separation of $|z_1 - z_2|$, we identify two cases of $b$. 


    \textit{Case 1. $k_0 = 1$ or $k_0 \ge 2$ with $|z_1 - z_2| \ge \frac{b}{4C_*}$.}

    Denote for $\delta \le \delta_0$
    \[ D_{\delta, b} := \Omega_{\delta;I_0, b} -\left(  \cup_{1 \le k \le k_0} B_{\frac{b}{8C_*}}(z_j(b)) \right). \]
    Recall condition (1) that $|z_k| \lesssim b$ for every $k$. For $z \in D_{\delta, b}$, we have $|z-z_k| \sim \max \{b, |z|\}$ for every $k$. Let $j = 1$, with $|h_{1;b}(z)| \lesssim |z-z_1|^{k_0+1}$ from the boundedness condition (3), we see 
    \bee
      \sum_{\substack{2 \le k \le  k_0}} \left|  \frac{h_{1;b}(z_k)}{(z-z_k)  \cdot  \prod_{\substack{1 \le l \le k_0 \\ l \neq k}}(z_k- z_l)} \right| + \left|\frac{h_{1;b}(z)}{\prod_{1 \le k \le k_0}(z-z_k)} \right|  \lesssim b + |z|,\quad \forall\, z \in D_{\delta, b}.
    \eee
    For $z \in B_{\frac{b}{8C_*}}(z_j(b))$ for $1 \le j \le k_0$, we notice that $|z-z_k| \sim b$ for $k \neq j$ and $|h_{j;b}(z)| \lesssim |z-z_j|^{k_0 + 1}$, so 
    \bee
     \sum_{\substack{1 \le k \le  k_0 \\ k \neq j}}  \left| \frac{h_{j;b}(z_k)}{(z-z_k)  \cdot  \prod_{\substack{1 \le l \le k_0 \\ l \neq k}}(z_k- z_l)} \right| + \left|\frac{h_{j;b}(z)}{\prod_{1 \le k \le k_0}(z-z_k)} \right| \lesssim b+ \frac{|z-z_j|^{k_0}}{b^{k_0-1}} \sim b.
    \eee
    These bounds imply \eqref{eqdegbifres} with $\delta_1$, $b_1$ small enough depending on $A_0$. 

    \textit{Case 2. $k_0 \ge 2$ and $|z_1 - z_2| \le \frac{b}{4C_*}$.}

    We first notice the elementary difference estimates
    \be |h_{j;b}(z) - h_{j;b}(z')| \lesssim |z-z'| \left( |z-z'|^{k_0} + |z-z_j|^{k_0} \right),\quad  \label{eqyeyeye} \ee
    Now let 
     \[ D_{\delta, b}' := \Omega_{\delta;I_0, b} -\left(  \cup_{2 \le k \le k_0} B_{\frac{b}{2C_*}}(z_j(b)) \right). \]
     For $z \in D_{\delta, b}'$, we take $j = 1$ and estimate similarly as in Case 1 using the cancellation $|h_{1;b}(z_2)| (z_2 - z_1)^{-1} \lesssim |z_1 - z_2|^{k_0} \lesssim b^{k_0}$. For $z \in  B_{\frac{b}{2C_*}}(z_j(b)) $ with $j \ge 3$, we compute
     \bee
       \sum_{\substack{3 \le k \le  k_0 \\ k \neq j}}  \left| \frac{h_{j;b}(z_k)}{(z-z_k)  \cdot  \prod_{\substack{1 \le l \le k_0 \\ l \neq k}}(z_k- z_l)} \right| + \left|\frac{h_{j;b}(z)}{\prod_{1 \le k \le k_0}(z-z_k)} \right| \lesssim b. \\
       \left|\sum_{k = 1}^2 \frac{h_{j;b}(z_k)}{(z-z_k)  \cdot  \prod_{\substack{1 \le l \le k_0 \\ l \neq k}}(z_k- z_l)} \right| = \frac{1}{|z_1 - z_2|} \left| \sum_{k=1}^2 \frac{h_{j;b}(z_k) (-1)^k}{(z-z_k) \prod_{l \ge 3}(z_k-z_l)} \right| \lesssim b.
     \eee
     where we exploited \eqref{eqyeyeye}. 
     Finally, for $z \in B_{\frac{b}{2C_*}}(z_2(b))$ and $|z-z_1| \le |z-z_2|$, we have $|z_1 - z_2| \le 2|z-z_2|$ and can take $j = 1$ to compute
     \bee
       \sum_{k \ge 3} \left| \frac{h_{1;b}(z_k)}{(z-z_k)  \cdot  \prod_{\substack{1 \le l \le k_0 \\ l \neq k}}(z_k- z_l)} \right| +
       \left| \frac{h_{1;b}(z)}{ \prod_{\substack{1 \le l \le k_0}} (z - z_l) }\right| + \left| \frac{h_{1;b}(z_2)}{(z-z_2) \prod_{\substack{1 \le l \le k_0 \\ l \neq 2}} (z_2 - z_l) } \right| \lesssim b.
     \eee
     The case of $z \in B_{\frac{b}{2C_*}}(z_2(b))$ and $|z-z_1| \ge |z-z_2|$ follows by the same computation with $j = 2$. 
     These bounds verifies \eqref{eqdegbifres} in case 2 when $\delta_1, b_1$ are taken small enough and hence conclude the proof.  
\end{proof}

\subsection{Proof of Theorem \ref{thmmodestabsmallspec}}
 
\begin{proof}[Proof of Theorem \ref{thmmodestabsmallspec}]\label{sec72}
Since Fourier transform preserves spherical harmonics \cite[Theorem 3.10]{MR0304972}, we have 
\[
\| f \|_{\dot H^\sigma(\RR^d)}^2 = \left| \begin{array}{ll}
    \sum_{l = 0}^\infty \sum_{m = 1}^{N_{d, l}} \| f_{l,m}(\cdot) Y_{d, l, m}(\cdot/|\cdot|)  \|_{\dot H^\sigma(\RR^d)}^2 & d \ge 2 \\
    \sum_{l = 0}^1 \| f_{l,1}(\cdot) Y_{1, l, 1}(\cdot/|\cdot|)  \|_{\dot H^\sigma(\RR)}^2   & d = 1
\end{array} \right.
\]
Therefore, for $d \ge 1$, using  the explicit eigenfunctions from \eqref{eqeigencalHb1}-\eqref{eqeigencalHb3}, we can rewrite \eqref{eqcharacspecHb} and \eqref{eqRieszchar} as 
\be
 \cup_{k \ge 1} \ker (\calH_b\big|_{(\dot H^\sigma_l(\RR^d))^2} - \l)^k = \left| \begin{array}{ll}
{\rm span}\{ \xi_{0, b} \}, & l = 0, \l = 0; \\
{\rm span}\{ \xi_{0, b} + 2b\xi_{1, b} \}, & l = 0, \l = -2bi; \\
{\rm span}\{ \zeta_{0, j, b} \}_{1 \le j \le d},  & l = 1, \l = -bi; \\
 \{ 0 \} & {\rm otherwise;}
 \end{array}\right. \label{eqeigenresult}
\ee
where $l \ge 0, \l \in \Omega_{\delta, \sigma - s_c; b}$. 
Here the vector functions $\xi_{0,b}, \xi_{1, b}, \zeta_{0, j, b}$ are from \eqref{eqdefxi01b} and belongs to $\dot H^\sigma(\RR^d)$ for any $\sigma > s_c$ due to \eqref{eqQbasymp8} (assuming $s_c^*(d) \le s_c^{(0)'}(d)$ to apply Proposition \ref{propQbasympref}). 

We will first consider high spherical classes for $d \ge 2$ and $l \ge l_*(d) \gg 1$. Then for the low spherical classes (including $d=1$ case), we apply the Jost functions argument to bifurcate from the spectrum of $\calH_0$ near zero. We will frequently use the parameter $\nu = l + \frac{d-2}{2}$ as \eqref{eqdefnu} in replacement of $l$.

\mbox{}

\underline{1. High spherical classes.} 

If $d \ge 2$, we recall $(s_{c;{\rm high}}^*, b_{\rm high}, \delta_{\rm high})$ from \eqref{eqdefschigh} and $\nu_{\rm int}(d)$ from Proposition \ref{propintfund} (4). For 
\[ 0 < s_c \le s_{c;{\rm high}}^*, \quad 0 < b(d, s_c) \le b_{\rm high}, \quad \l \in \Omega_{\delta_{\rm high}; \frac 12, b},\quad \nu \ge \max \{ \nu_{\rm int}(d), 1\}, \]
we recall the fundamental solutions $\check \Psi_{j;b,\l,\nu}$ and $\Phi_{j;b,\l,\nu}$ of \eqref{eqnu} from Proposition \ref{propintfund} (4) and Proposition \ref{propextfundh}. 

   Similar to Lemma \ref{lemJostWronskian}, 
   we first define their Jost function as
   \be
    \hat \Wfr_\nu(b, \l, r) = \det \left( \begin{array}{cccc}
       \check \Psi_{1;b,\l,\nu}(r) & \check \Psi_{2;b,\l,\nu}(r) &  \Phi_{1;b,\l,\nu}(r) & \Phi_{2;b,\l,\nu}(r) \\
       \pa_r \check \Psi_{1;b,\l,\nu}(r) & \pa_r \check \Psi_{2;b,\l,\nu}(r) & \pa_r \Phi_{1;b,\l,\nu}(r) & \pa_r \Phi_{2;b,\l,\nu}(r) 
    \end{array}\right).\label{eqJostdef3}
\ee
    As a Wronskian function of \eqref{eqnu}, it is also independent of $r$ from the first-order formulation \eqref{eqnufirstorder}. Evaluating at $r = b^{-\frac 12}$ using the asymptotics \eqref{eqinthighnu1}, \eqref{eqinthighnu2}, \eqref{eqinthighnu1h} and \eqref{eqinthighnu2h}, we see 
   \bee
     \check \Wfr_{\nu}(b, \l) = C_{b, \l,\nu}^* \cdot \det\left[ \left( \begin{array}{cccc} 
    1 & & 1 & \\
    & 1 & & 1 \\
    1 & & -1 & \\
    & 1 & & -1
     \end{array}\right) + O(b^\frac 12 + \nu^{-1})  \right].
   \eee
   where $C_{b, \l, \nu}^* =  \left( \ti_{\nu, 1+\l} \ti_{\nu, 1-\l} \psi_1^{b, 1+\l+ibs_c, \nu} \overline{\psi_1^{b, 1-\bar \l + ibs_c, \nu}} \right)(b^{-\frac 12}) \cdot \sqrt{(1 + b\nu^2)^2 - \l^2} \neq 0$. Hence there exist $\check \nu_{\rm int}(d) \ge \nu_{\rm int}(d) \gg 1$ and $\check b_{\rm high}(d) \le b_{\rm high}(d) \ll 1$, such that 
   \[\check \Wfr_\nu(b, \l) \neq 0,\quad {\rm when}\,\,\, 0 < b(d, s_c) \le \check b_{\rm high},\,\,\,\l \in \Omega_{\delta_{\rm high}; \frac 12, b},\,\,\,\nu \ge \check \nu_{\rm int}. \]
   With \eqref{eqlindepcheckPsi}, the non-vanishing of Wronskian indicates that  
   \[ {\rm span}\{ \Psi_{j;b, \l,\nu} \}_{1 \le j \le 2} \cap {\rm span}\{ \Phi_{j;b, \l,\nu} \}_{1 \le j \le 2} = \{ 0 \}.  \] 
   Lemma \ref{lemregfund} (2) now implies $\ker (\calH_b\big|_{(\dot H^\sigma_l(\RR^d))^2} - \l) = \{ 0 \}$ for 
   \bee 0 < s_c \le s_{c;{\rm high}}^*, \quad 0 < b(d, s_c) \le \check b_{\rm high}, \quad \l \in \Omega_{\delta_{\rm high}; \frac 12, b},\quad \nu \ge  \check  \nu_{\rm int}, \quad \sigma \in \left(s_c, \frac 12\right). \eee
Namely \eqref{eqeigenresult} holds with the parameters in the above range. 

\mbox{}

\underline{2. Low spherical classes.} 

Let $(s_{c;{\rm low}}^*, b_{\rm low}, \delta_{\rm low})$ be from \eqref{eqdefsclow} with $d=1$ or $d \ge 2$ and $\nu_0 = \check \nu_{\rm int}(d)$ from the former step. For 
\[ 0 \le s_c \le  s_{c;{\rm low}}^*,\quad  0 \le b(d, s_c) \le b_{\rm low}, \quad \l \in \Omega_{\delta_{\rm low}; I_0, b},\quad  \nu \le \nu_0 = \check \nu_{\rm int},\]
we have the Jost function $\Wfr_{\nu}(b,\l)$ from Lemma \ref{lemJostWronskian}. We also recall the projection $\PP_{d, \nu, b}$ from \eqref{eqprojsphrad}.

From Proposition \ref{propspecH0}, $0 \in \sigma_{\rm disc}(\calH_0)$, so there exists $\bar \delta > 0$ such that $B_{\bar \delta}^{\CC} \cap \sigma(\calH_0) = \{ 0 \}$. Combined this with Proposition \ref{propspecH0}(2), Lemma \ref{lemvanishJW}, and \eqref{eqchargeneigenb0} from Lemma \ref{lemregfund} (1), we obtain the following characterization of zeros of $\Wfr_\nu(0,\cdot)$:
\begin{enumerate}
    \item \textit{ $\pa_\l^k \Wfr_{\frac{d-2}{2}}(0, 0) = \pa_\l^{k'}\Wfr_{\frac{d}{2}}(0, 0) = 0$
for $k = 0, 1, 2, 3$ and $k' = 0, 1$;}
\item \textit{$\Wfr_{\frac{d-2}{2}}(0,\l) / \l^4$, $\Wfr_{\frac{d}{2}}(0,\l) / \l^2$ and $\Wfr_\nu(b,\l)$ for $d \ge 2$, $2 \le l \le \nu_0-\frac{d-2}{2}$ does not vanish in $|\l| \le \min\{ \delta_{\rm low}, \bar \delta\}$. }
\end{enumerate}

For $0 < s_c  \le  s_{c;{\rm low}}^*$ and $0 < b(d, s_c) \le b_{\rm low}$, recall the radial eigenfunctions $Z_{0,b}:=\xi_{0, b}$, $Z_{1, b}:= \xi_{0, b} + 2b\xi_{1, b}$ from \eqref{eqeigencalHb1} and $Z_{2, b}$, $Z_{3, b}$ from Proposition \ref{propbifeigen}, and the corresponding eigenvalues $\l_{0, b} = 0$, $\l_{1, b} = -2bi$ from \eqref{eqeigencalHb1} and $\l_{2, b} = i(\upsilon_{2, b}-bs_c) = 2bi + iO_\RR(b^{-1}s_c)$, $\l_{3, b} =i(\upsilon_{3,b}- bs_c) = i\upsilon_{3,b} + iO_\RR(b^{-1}s_c)$ with $\upsilon_{3, b} \sim b^{-2} s_c$ from Proposition \ref{propbifeigen}. Due to the smoothness of $Z_{k,b}$ near $0$ and decay of $\pa_r^n (e^{i\frac{br^2}{4}} Z_{k,b}^1)$ at infinity from 
\eqref{eqdecayZkb}, we know these radial functions come from the admissible interior and exterior fundamental solution of $\HH_{b, \frac{d-2}{2}} - \l_{k,b}$, namely
\be \PP_{d,\frac{d-2}{2},b} Z_{k,b} \in {\rm span} \{  \Psi_{j;b,\l_{k, b},\frac{d-2}{2}} \}_{\substack{1 \le j \le 2}} \cap  {\rm span} \{ \Phi_{j;b,\l_{k, b},\frac{d-2}{2}}  \}_{\substack{ 1 \le j \le 2}},\quad \forall k = 0, 1, 2, 3.  \label{eqexterioreigenfunc} \ee
Similarly, the eigenpairs in $l=1$ spherical class $(\tilde \l_{0, b}, \tilde Z_{0, j', b}) = (-bi, \zeta_{0,j',b})$, $(\tilde \l_{1, b}, \tilde Z_{1, j', b}) = (bi, \zeta_{0, j', b} - 2b\zeta_{1,j',b})$ for $1 \le j' \le d$ \eqref{eqeigencalHb3} and their regularity from \eqref{eqQbasymp8} indicate that $\PP_{d,\frac{d}{2},b} \tilde Z_{k',j',b} = \PP_{d,\frac{d}{2},b} \tilde Z_{k',1,b}$ for any $j'$ and 
\[ \PP_{d,\frac{d}{2},b} \tilde Z_{k',1,b} \in {\rm span} \{  \Psi_{j;b,\tilde \l_{k', b},\frac{d}{2}} \}_{\substack{1 \le j \le 2}} \cap  {\rm span} \{ \Phi_{j;b,\tilde \l_{k', b},\frac{d}{2}}  \}_{\substack{ 1 \le j \le 2}},\quad \forall k' = 0, 1. \]
Therefore, Lemma \ref{lemvanishJW} implies 

     \, (3) \textit{ $\Wfr_{\frac{d-2}{2}}(b, \l_{k,b}) = \Wfr_{\frac d2}(b, \tilde \l_{k',b}) = 0$ for $k = 0, 1, 2, 3$ and $k' = 0, 1$. }
     
Denote $\l_{k, 0} = \tilde \l_{k',0} = 0$. Then the analytic bifurcation family $\Wfr_{\frac{d-2}{2}}(b,\cdot)$ with zeros $\{\l_{k, b}\}_{0 \le k \le 3}$,  $\Wfr_{\frac{d}{2}}(b,\cdot)$ with zeros $\{\tilde \l_{k', b}\}_{k' = 0, 1}$, and  $\Wfr_{\nu} (b,\cdot)$ for $\frac{d+2}{2}\le\nu\le \nu_0$ with no known zeros, satisfy the condition (1), (2) and (4) in Lemma \ref{lemuniqzero} from (1)-(3) above, particularly the only non-equidistributed pair being $\l_{0, b}, \l_{3, b}$. The uniform continuity in Lemma \ref{lemJostWronskian}(3) implies condition (3) in Lemma \ref{lemuniqzero}. So we can apply Lemma \ref{lemuniqzero} to see 

\, (4) \textit{There exists $\check b_{\rm low} \le b_{\rm low}$, $\check \delta_{\rm low} \le \min \{ \delta_{\rm low}, \bar \delta\}$, such that for
$0 < b(d, s_c) \le \check b_{\rm low}$, the functions $\Wfr_{\frac{d-2}{2}}(b,\l) /  (\prod_{k=0}^3 (\l -  \l_{k, b}))$, $\Wfr_{\frac{d}{2}}(b,\l) / /  (\prod_{k'=0}^1 (\l - \tilde \l_{k', b}))$ and $\Wfr_\nu(b,\l)$ for $d \ge 2$, $2 \le l \le \nu_0-\frac{d-2}{2}$ do not vanish in $\l \in   \Omega_{\check \delta_{\rm low};10,b}$.}

 Further by Lemma \ref{lemvanishJW}, this implies 

\, (4') \textit{For any $0 \le k \le 3$, $0 \le k' \le 1$, and $0 < b \le b_{\rm low}$, 
\bee
  {\rm span} \{ \pa_\l^n \Psi_{j;b,\l_{k,b},\frac{d-2}{2}} \}_{\substack{n \ge 0 \\ 1 \le j \le 2}} \cap  {\rm span} \{ \pa_\l^n \Phi_{j;b,\l_{k,b},\frac{d-2}{2}} \}_{\substack{n \ge 0 \\ 1 \le j \le 2}} = {\rm span} \{ \PP_{d,\frac{d-2}{2},b} Z_{k,b} \}; \\
  {\rm span} \{ \pa_\l^n \Psi_{j;b,\tilde \l_{k',b},\frac{d}{2}} \}_{\substack{n \ge 0 \\ 1 \le j \le 2}} \cap  {\rm span} \{ \pa_\l^n \Phi_{j;b,\tilde \l_{k',b},\frac{d}{2}} \}_{\substack{n \ge 0 \\ 1 \le j \le 2}} = {\rm span} \{ \PP_{d,\frac{d}{2},b} Z_{k',1,b} \}; 
\eee
}

Now we pick 
\be \epsilon^*(s_c) := b^{-1}\Im \l_{3,b} \sim b^{-3} s_c \ll \frac 12. \label{eqchoiceepsilon*} \ee
For $\sigma \in (s_c, s_c + \epsilon^*(s_c))$, we have
$$\Im \l_{2,b} > \Im \l_{3, b} > b(\sigma - s_c) > \Im \l_{0, b} > \Im \l_{1, b},\quad \Im \tilde \l_{1,b} > b(\sigma - s_c) > \Im \tilde \l_{0, b}.$$
Thus with (4') above, \eqref{eqchargeneigenb1}-\eqref{eqchargeneigenb2} and that
\be
\begin{split}& \ker (\calH_b\big|_{(\dot H^\sigma_l(\RR^d))^2} - \l) =  \ker (\calH_b\big|_{(\dot H^\sigma_l(\RR^d))^2} - \l)^2 \\
\Rightarrow\,\,& \ker (\calH_b\big|_{(\dot H^\sigma_l(\RR^d))^2} - \l) = \cup_{k \ge 1}  \ker (\calH_b\big|_{(\dot H^\sigma_l(\RR^d))^2} - \l)^k, \end{split}\label{eqiterationgeneralizedeig}
\ee
we derive \eqref{eqeigenresult} for 
  \bee 0 < s_c \le s_{c;{\rm low}}^*, \,\, \quad 0 < b(d, s_c) \le \check b_{\rm low}, \,\, \sigma \in (s_c, s_c + \epsilon^*(s_c)), \,\, \l \in   \Omega_{\check \delta_{\rm low}; \sigma - s_c, b},\,\, \nu \le \check \nu_{\rm int}.  \eee

\mbox{}

\underline{3. End of proof of Theorem \ref{thmmodestabsmallspec}} 

We set $\delta := \min \{ \delta_{\rm high}, \check \delta_{\rm low}\}$, and choose $s_c^*(d) \le \min \{ s_{c;{\rm high}}^*, s_{c;{\rm low}}^*\}$ such that $\sup_{0 < s_c \le s_c^*(d)}b(d, s_c) \le \min \{ \check b_{\rm high}, \check b_{\rm low}\}$. Lastly, we choose $\epsilon^*(s_c)$ as in \eqref{eqchoiceepsilon*}. Then the results from the previous two steps imply \eqref{eqeigenresult} for 
\[ 0 < s_c \le s_c^*(d), \quad \sigma \in (s_c, s_c + \epsilon^*(d)), \quad \l \in  \Omega_{\delta;\sigma -s_c, b},\quad l \ge 0.  \]
That concludes the proof of Theorem \ref{thmmodestabsmallspec}.

\mbox{}

\underline{4. Structure of the spectrum}

Finally, we verify the statements in Comment 2 of Theorem \ref{thmmodestabsmallspec}. The existence of bifurcated eigenmodes are proven in Proposition \ref{propbifeigen}, with the estimate of eigenfunction follows \eqref{eqexterioreigenfunc} and \eqref{eqadmosc}. For the uniqueness part, we first notice that the asymptotics in Proposition \ref{propintfund}, Proposition \ref{propextfund} and Proposition \ref{propextfundin} plus interior elliptic regularity indicates that for $\l \in   \Omega_{\delta;10,b}$, 
\bee
  \PP_{d,l,b}^{-1} \left( {\rm span} \{  \Psi_{j;b,\l,\nu} \}_{\substack{1 \le j \le 2}} \cap  {\rm span} \{ \Phi_{j;b,\l,\nu} \}_{\substack{ 1 \le j \le 2}}\right) \subset  C^\infty \cap  \dot H^{11}_{l};\\
  \Psi_{j';b,\l,\nu}, \Phi_{j';b,\l,\nu} \notin C^\infty \cap  \dot H^{11}_{l},\quad  {\rm for}\,\, j' \in \{ 3, 4\}.
\eee
Then the uniqueness statement follows from statements (4), (4') above and the argument \eqref{eqiterationgeneralizedeig}.
\end{proof}

\appendix

\section{Properties of ground state and related linear operators}\label{appA1}

We prove Lemma \ref{lemQasymp}, Lemma \ref{lemcontgs} and Lemma \ref{lemLpm}

\mbox{}

\begin{proof}[Proof of Lemma \ref{lemQasymp}]
\underline{1. Almost sharp decay by comparison theorem.} 
Firstly, we know $|Q(r)| \lesssim e^{-\a r}$ for some $\a > 0$ from \cite[Proposition B.7]{MR2233925}.

Let $\varphi_{\delta,\a} = r^{-\frac{d-1}{2} + \a} e^{-\delta r}$ with $0 < \delta \le 1$ and $\a \ge 0$. Compute 
$$(\delta^2 - \Delta) \varphi_{\delta, \a} = \left(\a \delta r^{-1} + \frac{(d-1 - 2\a)(d-3 + 2\a)}{4r^2}\right) \varphi_{\delta, \a}. $$
Then for $a > 0$, and $ 0 < \delta  < 1$, 
\[ \left(\frac{\delta^2 + 1}{2} - \Delta\right) (a\varphi_{\delta, 0} - Q) = \left(\frac{1-\delta^2}{2} + \frac{(d-1)(d-3)}{4r^2}\right) a\varphi_{\delta, 0} + \left( \frac{1-\delta^2}{2} - Q^{p-1}\right)Q  \]
From the decay of $Q$ and positivity, there exists $r_\delta \gg 1$ such that $\left(\frac{\delta^2 + 1}{2} - \Delta\right) (a\varphi_{\delta, 0} - Q)  > 0$ on $B_{r_\delta}^c$ for any $a > 0$. Choosing $a \gg 1$ such that $(a\varphi_{\delta, 0} - Q)(r_\delta) > 0$, the comparison principle indicates $Q < a \varphi_{\delta, 0}$ on $B_{r_\delta}^c$, namely $Q \lesssim r^{-\frac{d-1}{2}} e^{-\delta r}$ for any $0 < \delta < 1$. 

Next, compare $a \varphi_{1, \a}$ with $Q$, 
\[  (1 - \Delta)(a \varphi_{1, \a} - Q) = \left(\a \delta r^{-1} + \frac{(d-1 - 2\a)(d-3 + 2\a)}{4r^2}\right) \varphi_{\delta, \a} - Q^p. \]
Since $Q^p \lesssim e^{-(p-\e) r}$ for any $\e > 0$, for any $\a > 0$, we can take $r_\a \gg 1$ and $a \gg 1$ such that the comparison principle holds on $B_{r_\a}^c$. To conclude, we have proven 
\be  |Q(r)| \lesssim_\e \la r \ra^{-\frac{d-1}{2} + \e} e^{-r},\quad \forall \, \e > 0. \label{eqQalmostdecay} \ee

\mbox{}

\underline{2. ODE argument.}
Write the nonlinear ODE for $\tilde Q(r) := r^{\frac {d-1}{2}}Q(r)$ 
\[ \left(-\pa_r^2 + \frac{(d-1)(d-3)}{4r^2} + 1\right) \tilde Q + r^{-\frac{(d-1)(p-1)}{2}} \tilde Q^p  = 0.\]
By Duhamel formula, with $F(r) = \frac{(d-1)(d-3)}{4r^2} \tilde Q + r^{-\frac{(d-1)(p-1)}{2}} \tilde Q^p$, for any $r_0 > 0$, there exists $a_\pm(r_0) \in \RR$ such that 
\[ \tilde Q(r) = \sum_{\pm} a_\pm(r_0) e^{\pm r} - e^r \int_{r_0}^r e^{-s} F(s) \frac{ds}{2} + e^{-r} \int_{r_0}^r e^{s} F(s) \frac{ds}{2}. \]
Take $\e = \frac 12$ in \eqref{eqQalmostdecay}, we know both integrals above converge at infinity, and the decay of $\tilde Q$ implies $a_+ - \int_{r_0}^\infty e^{-s} F(s) ds = 0$. So we rewrite 
\be \tilde Q(r) = \left( a_-(r_0) + \int_{r_0}^\infty e^s F(s)\frac{ds}{2} \right) e^{-r} + e^r \int_r^\infty e^{-s} F(s) \frac{ds}{2} - e^{-r} \int_r^\infty e^s F(s) \frac{ds}{2}. \label{eqQduhameldecay}  \ee
Thanks to the $r^{-2}$ decay, the a priori bound \eqref{eqQalmostdecay} now implies the sharp decay $|\tilde Q(r)| \lesssim e^{-r}$. 

Now we can prove \eqref{eqsolitondecay}, which can be reduced to $|\pa_r^k \tilde Q| \lesssim e^{-r}$ for any $k \ge 0$ at infinity. The $k=0$ case has been proven above, and $k = 1$ case follows by differentiating the above equation. For $k \ge 2$, we use the ODE for $\tilde Q$.

\mbox{}

\underline{3. Asymptotic expansion.}

Denote 
$$\kappa_Q = a_-(r_0) + \int_{r_0}^\infty e^s F(s)\frac{ds}{2} $$
from \eqref{eqQduhameldecay}. 
The sharp bound \eqref{eqsolitondecay} and \eqref{eqQduhameldecay} indicate that $R(r) := \tilde Q(r) - \kappa_Q e^{-r} = O(r^{-1} e^{-r})$. Subtracting $\kappa_Q e^{-r}$ from \eqref{eqQduhameldecay},  we obtain
\[  R(r) = \sum_\pm (\pm 1) e^{\pm r} \int_r^\infty e^{\mp s} \left( \frac{(d-1)(d-3)\kappa_Q e^{-s}}{4s^2} + G(s) \right)\frac{ds}{2} \]
where $G(r) = F(r) - \frac{(d-1)(d-3)\kappa_Q e^{-r}}{4r^2} = O(r^{-3}e^{-r})$. So the $\kappa_Q$ gives a leading order $c_Q r^{-1} e^{-r}$ term for $R(r)$ and the residual are bounded by $r^{-2} e^{-r}$. That concludes \eqref{eqQdecay1} for $k=0$, and the case $k=1$ again follows differentiating the equation of $R(r)$. 
\end{proof}

\mbox{}

\begin{proof}[Proof of Lemma \ref{lemcontgs}]
  Consider $F(f, p) :=  f - (-\Delta + 1)^{-1} |f|^{p-1} f$, we claim $F$ is a $C^1$ map from $L^2_{rad} \cap L^\infty_{rad}$ onto itself. Indeed, the Fr\'echet derivatives formally are 
  \[ \pa_f F = 1 - p(-\Delta + 1)^{-1} |f|^{p-1},\quad \pa_p F = -(-\Delta + 1)^{-1}(|f|^{p-1} f \cdot \log (|f|) ).  \]
  Notice that $f \in L^\infty$ implies $|f|^{p-1}f \log(|f|) \in L^\infty$, and $(-\Delta + 1)^{-1}(L^2 \cap L^q) \subset H^2 \cap W^{2, q} \hookrightarrow L^2 \cap L^\infty$ with $q \in (\frac d2,\infty)$, so they are both bounded linear operator on $L^2 \cap L^\infty$. To verify they are actual derivatives, we estimate
  \bee
   &&F(f+h, p) - F(f, p) - \pa_f F(f, p) h \\
   &=& - (-\Delta + 1)^{-1} \left( |f+h|^{p-1} (f+h) - |f|^{p-1} f - p|f|^{p-1}h \right) 
   = O_{L^2 \cap L^\infty}(\| h \|_{L^2 \cap L^\infty}^{\min\{p, 2\}}) \\
   &&F(f, p+\delta) - F(f, p) - \pa_p F(f, p) \delta \\
   &=& -(-\Delta +1)^{-1}\left[  (|f|^{\delta} - 1 - \delta \log(|f|)) |f|^{p-1} f \right]  = O_{L^2 \cap L^\infty} ( \delta^2 (\log \delta)^2)
  \eee
  where we used elementary estimates 
  \bee \left| |f+h|^{p-1}(f+h) - |f|^{p-1}f - p |f|^{p-1}h \right| \lesssim |h|^{\min \{ p, 2\} } |f|^{\max \{ p-2, 0 \} }, \\
  \left| a^\delta - 1 - \delta \log a \right| \lesssim_M \delta^2 \left( (\log \delta)^2 + (\log a)^2 \right),\quad 0 < \delta \le a \le M.
  \eee
  Next, we show $\pa_f F\big|_{(f, p) = (Q, p_0) } = (-\Delta + 1)^{-1} L_+$ is invertible on $L^2 \cap L^\infty$ with $L_+$ defined in \eqref{eqdefLpm}. Here $Q := Q_{p_0}$ for simplicity. Indeed, since $L_+^{-1}$ is bounded on $L^2_{rad}$ \cite[Lemma 2.1]{chang2008spectra}, from the formulation 
  \[L_+^{-1} f =  (-\Delta + 1)^{-1} (pQ^p L_+^{-1} f) +  (-\Delta + 1)^{-1} f, \]
  we can iteratively improve the regularity and show $L_+^{-1} \in \calL(L^2_{rad} \cap L^\infty_{rad})$. Therefore  
  \[ \left((-\Delta + 1)^{-1} L_+  \right)^{-1} = L_+^{-1} (-\Delta + 1) = 1 + p_0 L_+^{-1} \circ ( Q^{p_0} \cdot) \in \calL(L^2 \cap L^\infty). \]
  The implicit function theorem now indicates the existence of $Q_p$ for $|p - p_0| \ll 1$ such that $F(Q_p, p) = 0$ with the estimate \eqref{eqcontinuitysoliton} This $Q_p$ is the ground state of \eqref{eqNLS} with parameter $(d, p)$ thanks to its uniqueness, and that concludes the difference estimate in $L^2 \cap L^\infty$. For high derivatives, it suffices to improve the regularity by iteration using the difference equation $(1 - \Delta)(Q_p - Q_{p_0}) = Q_p^p - Q_{p_0}^{p_0}$, with the help of radial decreasing and non-vanishing of $Q_p$, $Q_{p_0}$. 
\end{proof}

\mbox{}

\begin{proof}[Proof of Lemma \ref{lemLpm}]
    (1) The construction of $A, D$ and their asymptotics are from \cite[Lemma 4.1]{MR4250747}, with the second leading order term near infinity extracted similarly as in the proof of Lemma \ref{lemQasymp}. For $\frakE$, its definition \eqref{eqdefE} directly implies the equation $L_-\frakE = 0$, the asymptotics and Wronskian. 

    (2) The definition of $L_{+;x_*}^{-1}$ and $L_-^{-1}$ comes from Duhamel formula, so the inversion property is straightforward. The bounds \eqref{eqLpmest1}-\eqref{eqLpmest3} follow the asymptotics of $A, D, Q$ above, and we only prove the last one here. For $r \le 2$, we can easily derive $\left| \pa_r^k \left( L_-^{-1} f + |\SS^{d-1}|^{-1}(f, Q)_{L^2(B_{x_*}^{\RR^d})} \frakE \right)\right| \lesssim_\a \| f \|_{Z_{-,\a;x_*}}$ for $k=0,1$ from \eqref{eqsolitondecay}, and for $r \ge 2$, $\tilde{\frakE} = \frakE$, so 
    \bee
      &&L_-^{-1} f (r)+ (f, Q)_{L^2(B_{x_*}^{\RR^d})} \tilde{\frakE} (r)\\
      &=& -Q \int_0^1 \left[\int_0^s f(\tau) Q(\tau) \tau^{d-1} d\tau \right] \frac{ds}{Q^2(s) s^{d-1}} + Q \int_1^r \left[\int_s^{x_*} f(\tau) Q(\tau) \tau^{d-1} d\tau \right] \frac{ds}{Q^2(s) s^{d-1}}\\
      && \pa_r \left( L_-^{-1} f(r) + (f, Q)_{L^2(B_{x_*}^{\RR^d})} \tilde{\frakE}(r)\right) \\
      &=& -Q' \int_0^1 \left[\int_0^s f(\tau) Q(\tau) \tau^{d-1} d\tau \right] \frac{ds}{Q^2(s) s^{d-1}} + Q^{-1}(r) r^{-(d-1)} \int_r^{x_*} f(\tau) Q(\tau) \tau^{d-1} d\tau \\
      &+& Q' \int_1^r \left[\int_s^{x_*} f(\tau) Q(\tau) \tau^{d-1} d\tau \right] \frac{ds}{Q^2(s) s^{d-1}}.
    \eee
    So with \eqref{eqsolitondecay} and $\left|\int_{s}^{x_*} fQ \tau^{d-1} d\tau \right| 
    \lesssim_\a s^{\a} e^{-2s} \| f \|_{Z_{-,\a;x_*}}$ for $s \ge 1$, we can obtain the pointwise estimate on $[2, x_*]$ and concludes  \eqref{eqLpmest3}.
\end{proof}

\section{Algebra of $D_{\pm l}$}

Recall the phase function $\phi_{b, E}$ and differential operator $D_{\pm l; b, E}$ from \eqref{eqdefphi}-\eqref{eqDpm}.

The subscripts $b, E$ will be omitted in the proof below.

\begin{lemma}\label{lemfnk}
  Let $0 < b \le \frac 14$, $|E-1| \le \frac 14$ and $l \ge 1$. There exist smooth functions $\{ f_{n, k; \pm l, b, E}\}_{n \ge 1, 1 \le k \le n}$ such that supposing smooth functions $\{ I_{n; \pm l}\}_{0 \le n \le N}$ satisfy
\be D_{\pm l;b, E} I_{n; \pm l} = \pm il\phi_{b, E}' I_{n+1; \pm l}, \quad \forall 0 \le n \le N-1, \label{eqalgrec} \ee
then
\[ D_{\pm l;b, E}^n I_{0; \pm l} = \sum_{k = 1}^n f_{n, k; \pm l, b, E} I_{k; \pm l}, \quad \forall 1 \le n \le N. \]
Moreover, $f_{n, k; \pm l, b, E}$ satisfies
\be |\pa_r^m f_{n, k; \pm l, b, E}(r)| \lesssim_{m, n, k, l} b^k r^{\frac k2} \left|r-\frac{2\sqrt{E}}{b}\right|^{\frac{3k}{2} - n -m},\quad r > b^{-1},\,\, m \ge 0.\label{eqestfnk} \ee
\end{lemma}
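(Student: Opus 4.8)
The statement is purely algebraic-combinatorial together with an elementary estimate, so the plan is to set up a recursion for the $f_{n,k}$ and close the bound by induction. First I would record the basic commutator identity: from the definition $D_{\pm l;b,E} = e^{\mp il\phi_{b,E}} \pa_r e^{\pm il\phi_{b,E}}$ one gets, for any smooth $g$,
\[
 D_{\pm l}(\phi' g) = \phi'' g + \phi' D_{\pm l} g,
\]
and more generally $D_{\pm l}$ acting on a product $fg$ obeys the Leibniz rule $D_{\pm l}(fg) = (\pa_r f) g + f D_{\pm l} g$, since $D_{\pm l}$ differs from $\pa_r$ only by a zeroth-order multiplier. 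With this in hand, define $f_{1,1;\pm l} = \pm il\phi'$ so that $D_{\pm l} I_{0;\pm l} = \pm il\phi' I_{1;\pm l} = f_{1,1} I_{1;\pm l}$ by hypothesis \eqref{eqalgrec}. Then the recursion is obtained by applying $D_{\pm l}$ once more and using \eqref{eqalgrec} to convert $D_{\pm l} I_{k;\pm l}$ into $\pm il\phi' I_{k+1;\pm l}$:
\[
  f_{n+1,k;\pm l} = \pa_r f_{n,k;\pm l} \pm il\phi' f_{n,k-1;\pm l},\qquad 1\le k\le n+1,
\]
with the convention $f_{n,0}=f_{n,n+1}=0$. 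An easy induction on $n$ then gives $D_{\pm l}^n I_{0;\pm l} = \sum_{k=1}^n f_{n,k;\pm l} I_{k;\pm l}$ for all $1\le n\le N$; the truncation at $n=N$ is exactly because \eqref{eqalgrec} is only assumed for $0\le n\le N-1$, so at most $I_1,\dots,I_N$ appear.

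Next I would establish the pointwise bound \eqref{eqestfnk}. The key input is an estimate on the phase: from \eqref{eqdefphi}, $\phi_{b,E}' = (b^2r^2/4 - E)^{1/2}$, so for $r > b^{-1}$ (hence $|s|=br/(2\sqrt E)$ bounded below, and $|\arg E|\ll1$) one has $|\phi'|\sim b r^{1/2}|r - 2\sqrt E/b|^{1/2}$ and, by differentiating, $|\pa_r^m \phi'| \lesssim_m b r^{1/2}|r-2\sqrt E/b|^{1/2-m}$ for $m\ge0$ — this is the same computation as \eqref{eqphasematcheibx2} and the remarks around \eqref{eqzetaderivest}, so I can quote it. The claim \eqref{eqestfnk}, namely
\[
 |\pa_r^m f_{n,k;\pm l}| \lesssim_{m,n,k,l} b^k r^{k/2}|r-2\sqrt E/b|^{3k/2-n-m},\qquad r>b^{-1},
\]
is then proved by a double induction on $n$ (outer) and $m$ (inner), feeding the recursion above: the term $\pa_r f_{n,k}$ contributes one extra derivative (this is $m\to m+1$ in the inner induction, lowering the exponent of $|r-2\sqrt E/b|$ by $1$, consistent with $n\to n+1$), and the term $\phi' f_{n,k-1}$ contributes the $b r^{1/2}|r-2\sqrt E/b|^{1/2}$ factor from $\phi'$, which bumps $k-1\to k$ while raising $n\to n+1$; one checks the exponents match: $b\cdot b^{k-1} = b^k$, $r^{1/2}\cdot r^{(k-1)/2} = r^{k/2}$, and $|r-2\sqrt E/b|^{1/2}\cdot|r-2\sqrt E/b|^{3(k-1)/2-n} = |r-2\sqrt E/b|^{3k/2-(n+1)}$. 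When $m\ge1$, applying $\pa_r^m$ distributes across the product $\phi' f_{n,k-1}$ by Leibniz and each term is controlled by the phase derivative bound and the inductive hypothesis; the base case $n=1$, $f_{1,1}=\pm il\phi'$, is exactly the phase estimate.

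The main obstacle here is bookkeeping rather than any genuine analytic difficulty: one must be careful that the recursion $f_{n+1,k} = \pa_r f_{n,k} \pm il\phi' f_{n,k-1}$ is stated with the correct conventions at the endpoints $k=1$ and $k=n+1$, and that the induction on $(n,m)$ is organized so that at each step the quantities invoked have already been bounded — in particular that the $m$-induction is nested inside the $n$-induction, since $\pa_r f_{n,k}$ needs the $(n, m+1)$ bound which in the $n$-loop is already available. A secondary point to handle cleanly is that the constants depend on $l$ through the factor $l$ in $\pm il\phi'$, which compounds multiplicatively in $k$, so the final constant is $\lesssim_{m,n,k,l}$ as stated; no uniformity in $l$ is claimed, so this causes no trouble. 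I do not expect to need anything beyond the elementary phase estimates already in the paper.
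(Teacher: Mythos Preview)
Your proposal is correct and follows essentially the same approach as the paper: both set up the recursion $f_{n+1,k} = \pa_r f_{n,k} \pm il\phi' f_{n,k-1}$ with $f_{1,1} = \pm il\phi'$, and both close the estimate \eqref{eqestfnk} using the elementary bound $|\pa_r^m \phi'| \lesssim_m b r^{1/2}|r-2\sqrt E/b|^{1/2-m}$. The only cosmetic difference is that the paper first observes the structural fact $f_{n,k;\pm l} \in \mathrm{span}\{\prod_{j=1}^k \pa_r^{m_j}\phi' : \sum m_j = n-k\}$ and then reads off the bound in one line, whereas you run the double induction directly on the recursion; these are equivalent organizations of the same argument.
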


\begin{proof}
Firstly, we claim that $f_{n, k; \pm l}$ are determined inductively
	\bee f_{1,1;\pm l} = \pm il\phi', \quad f_{n,0;\pm l} = f_{n, n+1; \pm l} = 0,\quad f_{n, k; \pm l} = \pa_r f_{n-1, k; \pm l} \pm il\phi' f_{n-1, k-1; \pm l}. \eee
 Indeed, $f_{1, 1; \pm l}$ and the choice of $f_{n, 0; \pm l}, f_{n, n+1; \pm l}$ are obvious. Then it suffices to compute
    \bee
    &&D_{\pm l}^{n+1} I_{0; \pm l} = \sum_{k = 1}^n D_{\pm l} \left(f_{n, k; \pm l} I_{k; \pm l}\right)\\
    &=& \sum_{k = 1}^n \left( \pa_r f_{n, k; \pm l} \cdot I_{k; \pm l} + f_{n, k; \pm l} \cdot  (\pm il\phi') I_{k+1; \pm l}\right) =: \sum_{k = 1}^{n+1} f_{n+1, k;\pm l} I_{k; \pm l}.
    \eee
  Therefore, we can inductively show that for all $n \ge 1$, $1 \le k \le n$, 
  \[ f_{n,k;\pm l} \in {\rm span} \left\{ \prod_{j = 1}^k \pa_r^{m_j} \phi' \right\}_{\substack{\sum_{j=1}^{k} m_j = n-k \\ m_j \ge 0\,\,\forall j } }.  \]
  The estimate \eqref{eqestfnk} now comes from the elementary estimate: for $m \ge 0$, 
  \[ |\pa_r^m \phi'| \lesssim_m br^\frac 12 \left|r-\frac{2\sqrt{E}}{b}\right|^{\frac 12-m},\quad \forall \, r > b^{-1}. \] 
\end{proof}


\begin{lemma}\label{lemgnk}
    Let $0 < b \le \frac 14$, $|E-1| \le \frac 14$ and $l \ge 1$. Then there exist smooth functions $\{ g_{n, k; \pm l, b, E}\}_{n \ge 0, 0 \le k \le n}$ such that 
    \be (D_{\pm l; b, E} \circ (\pm il\phi_{b, E}')^{-1})^n G = \sum_{k=0}^n g_{n,k; \pm l, b, E}  D_{\pm l; b, E}^k G,\quad \forall n \ge 0. \label{eqlemb22}\ee
    Moreover, $g_{n, k; \pm l}$ satisfies
     \be  |\pa_r^m g_{n, k; \pm l, b, E}(r)| \lesssim_{m,n,k,l} b^{-n} r^{- \frac n2}\left|r-\frac{2\sqrt{E}}{b}\right|^{k-\frac 32 n-m},\quad r > b^{-1},\,\,m \ge 0. \label{eqgnk} \ee
    As a corollary, on $[\frac 4b, \infty)$, we also have
    \bea D_{\pm l;b, E}^n \int_r^\infty G = \sum_{k=1}^n f_{n, k; \pm l, b, E}(r)  \sum_{j=0}^k \int_r^\infty D^j_{\pm l;b, E} G(s) \cdot g_{k,j;\pm l, b, E}(s)ds\label{eqD++commutator} \eea
    provided that $G \in C^n([\frac 4b, \infty))$ satisfies
    \be D_{\pm l;b, E}^j G \cdot r^j \in L^1\left( \left[\frac 4b, \infty\right)\right),\quad 0 \le j \le n. \label{equpupasas} \ee
\end{lemma}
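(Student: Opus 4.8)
The plan is to prove \eqref{eqlemb22} and \eqref{eqgnk} by induction on $n$, and then deduce \eqref{eqD++commutator} by combining the two preceding lemmas with a change-of-order-of-integration argument. For the base case $n=0$ we set $g_{0,0;\pm l}=1$, and for the inductive step we expand
\[
  (D_{\pm l}\circ(\pm il\phi')^{-1})^{n+1} G = D_{\pm l}\!\left[(\pm il\phi')^{-1}\sum_{k=0}^n g_{n,k;\pm l}D_{\pm l}^k G\right],
\]
distribute the outer $D_{\pm l}=\pa_r\pm il\phi'$ via the Leibniz rule, and collect coefficients. This produces the recurrence
\[
  g_{n+1,k;\pm l}=\pa_r\!\left((\pm il\phi')^{-1}g_{n,k;\pm l}\right)+g_{n-1,k-1;\pm l}\cdot(\text{shift terms})+\dots,
\]
more precisely a relation expressing each $g_{n+1,k;\pm l}$ as a linear combination of $\pa_r$-derivatives of $(\pm il\phi')^{-1}g_{n,j;\pm l}$ and of $g_{n,j;\pm l}$ themselves; one should record that $g_{n+1,k;\pm l}=0$ for $k<0$ or $k>n+1$. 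The key structural fact to carry through the induction is that $g_{n,k;\pm l}$ lies in the span of products $(\phi')^{-n}\prod_i\pa_r^{m_i}\phi'$ with the exponents summing appropriately; combined with the elementary bound $|\pa_r^m\phi'|\lesssim_m br^{1/2}|r-2\sqrt E/b|^{1/2-m}$ (already used in Lemma~\ref{lemfnk}) and $|(\phi')^{-1}|\sim b^{-1}r^{-1/2}|r-2\sqrt E/b|^{-1/2}$, a routine power-counting gives \eqref{eqgnk}. The homogeneity in $b$ and in the factors $r^{1/2}$, $|r-2\sqrt E/b|$ makes the bookkeeping mechanical once the recurrence is written down.

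For \eqref{eqD++commutator}, I would start from the observation that for $G$ satisfying \eqref{equpupasas} the function $I_{n;\pm l}(r):=\int_r^\infty D_{\pm l}^n G$ is well defined on $[\tfrac 4b,\infty)$, and that $\{I_{n;\pm l}\}_{0\le n\le N}$ satisfies the algebraic recurrence \eqref{eqalgrec}: indeed $D_{\pm l}I_{n;\pm l}=D_{\pm l}\int_r^\infty D_{\pm l}^n G$, and an integration by parts against the operator $D_{\pm l}\circ(\pm il\phi')^{-1}$ — exactly the manipulation already performed in the proof of Lemma~\ref{leminvtildeHext} to show $\calI_{N+1}^-=\calI_N^-$ — turns $\int_r^\infty D_{\pm l}^n G$ into $\pm il\phi'\int_r^\infty D_{\pm l}^{n+1}G$ plus a boundary term that vanishes at infinity by the integrability hypothesis; the surviving identity is precisely \eqref{eqalgrec}. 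Then Lemma~\ref{lemfnk} applies with $I_{0;\pm l}=\int_r^\infty G$, giving $D_{\pm l}^n\int_r^\infty G=\sum_{k=1}^n f_{n,k;\pm l}(r)\,\int_r^\infty D_{\pm l}^k G$. Finally \eqref{eqlemb22} rewrites $\int_r^\infty D_{\pm l}^k G=\int_r^\infty (D_{\pm l}\circ(\pm il\phi')^{-1})^k(\cdots)$ — or more directly, one inverts \eqref{eqlemb22} to express $\int_r^\infty D_{\pm l}^k G$ as $\sum_{j=0}^k\int_r^\infty D_{\pm l}^j G\cdot g_{k,j;\pm l}$, which is the form stated. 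The only subtlety is checking that every integral appearing converges, which is guaranteed by \eqref{equpupasas} together with the polynomial bounds \eqref{eqestfnk}, \eqref{eqgnk}.

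I expect the main obstacle to be purely organizational rather than conceptual: getting the recurrence for $g_{n+1,k;\pm l}$ written in a clean enough form that the induction for \eqref{eqgnk} goes through without an explosion of indices — in particular keeping careful track of how applying $\pa_r$ to $(\phi')^{-1}$ lowers the exponent of $|r-2\sqrt E/b|$ by one while the product structure keeps the total $b$-power and $r^{1/2}$-power matched. A secondary point requiring care is the precise justification of the boundary-term cancellation at $r=\infty$ when deriving \eqref{eqalgrec}: one must verify that $(\pm il\phi')^{-1}(D_{\pm l}\circ(\pm il\phi')^{-1})^{m}(\,\cdot\,)$ applied to the relevant integrand decays fast enough, which again follows from \eqref{eqgnk} and \eqref{equpupasas} but should be spelled out. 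Neither of these is deep; the lemma is essentially a bookkeeping companion to Lemma~\ref{lemfnk}, packaging the commutator algebra needed in the resolvent estimates of Section~\ref{sec5}.
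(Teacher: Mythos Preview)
Your approach to the first half (the recurrence for $g_{n,k;\pm l}$ and the bound \eqref{eqgnk}) is essentially the paper's: derive a two-term recurrence by expanding one more application of $D_{\pm l}\circ(\pm il\phi')^{-1}$, then read off the polynomial structure in $\pa_r^m[(\phi')^{-1}]$ and power-count. (Minor slip: your displayed recurrence has $g_{n-1,k-1}$ where it should be $g_{n,k-1}$; the clean form is $g_{n+1,k}=\pa_r\bigl((\pm il\phi')^{-1}g_{n,k}\bigr)+(\pm il\phi')^{-1}g_{n,k-1}$.)

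The corollary \eqref{eqD++commutator}, however, has a genuine gap. Your sequence $\tilde I_n:=\int_r^\infty D_{\pm l}^n G$ does \emph{not} satisfy the recurrence \eqref{eqalgrec}. A direct computation gives
\[
  D_{\pm l}\,\tilde I_n \;=\; -\,D_{\pm l}^n G(r)\;\pm\;il\phi'(r)\int_r^\infty D_{\pm l}^n G(s)\,ds,
\]
whereas
\[
  \pm il\phi'(r)\,\tilde I_{n+1}\;=\;\mp il\phi'(r)\,D_{\pm l}^n G(r)\;-\;l^2\phi'(r)\int_r^\infty \phi'(s)\,D_{\pm l}^n G(s)\,ds,
\]
and these disagree whenever $\phi'$ is nonconstant (the $\phi'(r)$ is outside the integral on the left, inside on the right). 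The integration-by-parts identity you invoke from Lemma~\ref{leminvtildeHext} actually says something different: it yields
\[
  D_{\pm l}\int_r^\infty H\;=\;(\pm il\phi')\int_r^\infty \bigl(D_{\pm l}\circ(\pm il\phi')^{-1}\bigr)H,
\]
so the sequence that \emph{does} satisfy \eqref{eqalgrec} is $I_j:=\int_r^\infty\bigl(D_{\pm l}\circ(\pm il\phi')^{-1}\bigr)^j G$, not your $\tilde I_n$. With this correct choice, Lemma~\ref{lemfnk} gives $D_{\pm l}^n\int_r^\infty G=\sum_{k=1}^n f_{n,k}(r)\,I_k$, and then \eqref{eqlemb22} (used forward, not inverted) expands each $I_k=\int_r^\infty\sum_{j=0}^k g_{k,j}(s)\,D_{\pm l}^j G(s)\,ds$, which is exactly \eqref{eqD++commutator}. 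Your ``final step'' of inverting \eqref{eqlemb22} to write $\int_r^\infty D_{\pm l}^k G$ as $\sum_j\int_r^\infty g_{k,j}D_{\pm l}^j G$ is circular and does not produce the claimed identity.
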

\begin{proof}
    Similar as Lemma \ref{lemfnk}, we derive from \eqref{eqlemb22} the recurrence definition of  
	$g_{n, k; \pm l}$
	\bee  && g_{0,0;\pm l} = 1,\quad g_{n, -1;\pm l} = g_{n, n+1; \pm l} = 0,\\ 
 && g_{n, k; \pm l} = (\pm il\phi')^{-1} g_{n-1,k-1; \pm l} + \pa_r((\pm il\phi')^{-1} g_{n-1,k; \pm l}), \eee
 and the components of $g_{n,k;\pm l}$ for $n \ge 1$, $0 \le k \le n$,
\[  g_{n,k;\pm l} \in {\rm span} \left\{ \prod_{j = 1}^n \pa_r^{m_j} \left[(\phi')^{-1}\right] \right\}_{\substack{\sum_{j=1}^{n} m_j = n-k \\ m_j \ge 0\,\,\forall j } }. 
\] 
    The estimate for $g_{n, k; \pm l}$ \eqref{eqgnk} again follows the elementary estimate for $m \ge 0$
    \[ \left| \pa_r^m \left[ (\phi')^{-1} \right] \right| \lesssim_m b^{-1} r^{-\frac 12} \left|r-\frac{2\sqrt{E}}{b}\right|^{-\frac 12 - m},\quad \forall \, r > b^{-1}.   \]
    
    the above induction formula like \eqref{eqestfnk}, here using $\left|\pa_r^m (\phi')^{-1} \right| \lesssim_m b^{-1} r^{-1-m}$ for all $m \ge 0$.

    Now we show \eqref{eqD++commutator}. 
    Via integration by parts
	\[ \int_r^\infty (D_{\pm l} \circ (\pm il\phi')^{-1}) G ds = -\frac{G(r)}{\pm il\phi'} + \int_r^\infty Gds, \]
	we have 
	\be D_{\pm l} \int_r^\infty G = (\pm il\phi') \int_r^\infty \left(D_{\pm l} \circ (\pm il\phi')^{-1} \right) G(s) ds.\label{eqD++commutator1}  \ee
	Then let $I_j = \int_r^\infty (D_{\pm l} \circ (\pm il\phi')^{-1})^jG ds$ for $0 \le j \le n$, which are well-defined by the integrability assumption \eqref{equpupasas} and \eqref{eqlemb22}. Since the recursive relation \eqref{eqalgrec} holds for $\{ I_j\}_{0 \le j \le n}$, so we have
	\be D_{\pm l}^n \int_r^\infty G ds = \sum_{k =1}^n f_{n,k;\pm l}(r) \int_r^\infty  (D_{\pm l} \circ(\pm il\phi')^{-1})^k G ds. \label{eqlemb21} \ee
    Plugging this in \eqref{eqlemb22} yields \eqref{eqD++commutator}.
\end{proof}


\section{Asymptotics of $Q_b$} \label{appQb}

In this section, we prove those two propositions for asymptotics of $Q_b$ Proposition \ref{propQbasymp} and Proposition \ref{propQbasympref}. The proof of the former is based on \cite{MR4250747}, while the latter applies the WKB approximate fundamental solutions constructed in Subsection \ref{sec41}.

\mbox{}

\begin{proof}[Proof of Proposition \ref{propQbasymp}] The existence of $b=b(d, s_c)$ and $Q_b$ is contained in the statement of \cite[Theorem 1]{MR4250747}, and the asymptotics of $s_c$ follows \cite[(1.10)-(1.11)]{MR4250747}. For the estimates, \eqref{eqQbasymp1}-\eqref{eqQbasymp6} can be derived from the construction process in \cite{MR4250747} although not explicitly stated, while the high regularity estimates \eqref{eqQbasymp7}-\eqref{eqQbasympint2} follow from an application of the profile equation \eqref{eqselfsimilar}. We now prove them separately. 

\mbox{}

\underline{1. Proof of \eqref{eqQbasymp1}-\eqref{eqQbasymp6}.} 

The $r \ge b^{-2}$ part asymptotics of \eqref{eqQbasymp1} is explicitly included in \cite[Proposition 2.1]{MR4250747}. 

The $r \le b^{-\frac 12}$ asymptotics \eqref{eqQbasymp5}-\eqref{eqQbasymp6} come from \cite[Proposition 4.1]{MR4250747}. More specifically, we record equation \cite[(4.14)]{MR4250747} 
\[ P_{\rm int} = (Q + \gamma A + \phi_+) + i (b\sigma B + \phi_-). \]
Then \eqref{eqQbasymp5}-\eqref{eqQbasymp6} follow from $Q_b = e^{-i\frac{br^2}{4}} P_{\rm int}$ in \cite[Theorem 2]{MR4250747}, $A$ in \cite[Lemma 4.1]{MR4250747}, $B$ in \cite[Lemma 4.2]{MR4250747}, $\gamma$ in the range \cite[(1.14)]{MR4250747}, $\sigma = s_c$, and the residual estimates $\|(\phi_+, \phi_-) \|_K \le 1, \|(\phi_+', \phi_-')\|_K \le 1$ shown at the end of the proof of \cite[Proposition 4.1]{MR4250747} where the $K$-norm is defined as 
\[ \|(\phi_+, \phi_-) \|_K := \max \left\{ b^{-\frac 13}\| \phi_+/Q \|_{L^\infty([0, b^{-\frac 12}])}; b^{-\frac 54}\sigma^{-1} \| \phi_- \cdot (1+r)^{d-1} Q \|_{L^\infty([0, b^{-\frac 12}])} \right\}. \]

For asymptotics on $r \in [b^{-\frac 12}, b^{-2}]$ \eqref{eqQbasymp2}, recall the change of variables in \cite[Proposition 3.1 Step 1]{MR4250747} and \cite[Theorem 2]{MR4250747}
\bee
\left| \begin{array}{l}
     Q_b(r) = e^{-i\frac{br^2}{4}}e^{i\theta}P_{ext}(r),\\
     P_{ext}(r) = e^{i\theta_{ext}}  r^{-\frac{d-1}{2}} U(r),\\
  W(\tau) = U(r),\quad \tau = 2-br,\\
  X(\zeta(\tau)) = W(\tau) \sqrt{\zeta'(\tau)}, \\
  Y(s) = X(\zeta),\quad \zeta = b^{\frac 23}s,
\end{array} \right. \quad \Rightarrow \quad  Q_b(r) = e^{-i\frac{br^2}{4}} e^{i(\theta_{ext} + \theta)} r^{-\frac{d-1}{2}} \frac{ Y\left(b^{-\frac 23}\zeta(2-br)\right)}{\zeta'(2-br)^{\frac 12}}
\eee
where $\theta$ is from \cite[Theorem 2]{MR4250747} satisfying the estimate \cite[(1.15)]{MR4250747} 
\[ |\theta| \le b^{\frac 16} e^{-\frac{\pi}{b}} e^{\frac{2}{\sqrt b}}, \] 
and 
$\zeta$ is given by equation  \cite[(3.13)]{MR4250747}
\be \zeta(\tau) = \left| \begin{array}{ll}
    \left( \frac 32 \int_0^\tau \frac 12 \sqrt{\tau'(2-\tau')}  d\tau' \right)^\frac 23  & \tau \in [0, 2] \\
    - \left( \frac 32 \int_\tau^0 \frac 12 \sqrt{(-\tau')(2-\tau')}  d\tau' \right)^\frac 23 & \tau \in (-\infty, 0]
\end{array}\right.. \label{eqZetadef} \ee
In particular,
\[ \zeta(\tau) \sim \left| \begin{array}{ll}
   \tau  & \tau\in [-1, 2], \\
   -|\tau|^{\frac 43}  & \tau \le -1;
\end{array}\right.\qquad \zeta'(\tau) \sim \la \tau \ra^{\frac 13},\quad \tau \le 2. \]
Also \cite[Lemma 3.2]{MR4250747} implies 
\[ Y(s) = \beta_I \mathbf{B}(s) \left(1 + O(b^\frac 14) \right),\quad Y'(s) = \beta_I \mathbf{B}'(s)\left(1 + O(b^\frac 14) \right), \]
with 
\[ |\beta_I| = \frac{1}{\sqrt{2\pi}} \rho b^\frac 13 \left(1 + O(b|\ln b|) \right),\quad \rho \sim b^{-\frac 12} e^{-\frac{\pi}{2b}}, \] 
and
\[ |\mathbf{B}(s)| \sim \la s \ra^{-\frac 14} w(s),\quad  |\mathbf{B}'(s)| \lesssim \la s \ra^{\frac 14} w(s),
\]
where $w(s) := e^{\frac 23 \max\{ s, 0 \}^\frac 32}$ for $ s\in \RR$.
Also note that $s_c \ll (\ln b^{-1})^{-1}$ so $r^{s_c} \sim 1$ on this region $r \in [b^{-\frac 12}, b^{-2}]$. We now derive asymptotics of $Q_b$ in different subintervals.

(1) When $r \in [\frac 4b, b^{-2}]$, we have $2-br \le -2$, then $s \sim b^{-\frac 23} |2-br|^{\frac 43}$ and $\zeta'(2-br) \sim |2-br|^\frac 13$. So 
\[ |Q_b| \sim \rho b^\frac 13 r^{-\frac{d-1}{2}} \left| b^{-\frac 23}|2-br|^\frac 43\right|^{-\frac 14} \cdot (|2-br|^\frac 13)^{-\frac 12} \sim r^{-\frac d2}b^{-\frac 12} e^{-\frac{\pi}{2b}}.  \]
(2) When $r \in \left[\frac 2b + b^{-\frac 13}, \frac 4b \right]$, we have $2 - br \in [-2, -b^{\frac 23}]$ and thus $s  \sim b^{-\frac 23}(2-br) \le -1$, $\zeta'(2-br) \sim 1$. Hence 
\[ |Q_b| \sim \rho b^\frac 13 r^{-\frac{d-1}{2}} \left| b^{-\frac 23}(2-br)\right|^{-\frac 14} \sim r^{-\frac{d-1}2}e^{-\frac{\pi}{2b}}|2-br|^{-\frac 14}. 
\]
(3) When $\left| r-\frac 2b\right| \le b^{-\frac 13}$, we see $|s| \lesssim 1$, $\zeta'(2-br) \sim 1$. Then 
\[ |Q_b| \sim \rho b^\frac 13 r^{-\frac{d-1}{2}} \sim r^{-\frac{d-1}{2}} b^{-\frac 16}e^{-\frac{\pi}{2b}}. \]
(4) When $r \in \left[b^{-\frac 12}, \frac 2b - b^{-\frac 13} \right]$, similar to case (2), $s \sim b^{-\frac 23} (2-br) \ge 1$ and $\zeta'(2-br) \sim 1$. Note that 
\[ \frac 23 \left( b^{-\frac 23} \zeta\right)^\frac 32 = b^{-1} \int_0^{2-br} \frac 12\sqrt{\tau'(4-\tau')} d\tau' = \int_r^{\frac 2b} \left(1-\frac{b^2s^2}{4}\right)^\frac 12 ds = S_b(r) \]
and that $\int_0^\frac 2b \left(1-\frac{b^2s^2}{4}\right)^\frac 12 ds = \frac{\pi}{2b}$, we obtain
\[ |Q_b| \sim \rho b^\frac 13 r^{-\frac{d-1}{2}} e^{\frac 23\left( b^{-\frac 32} \zeta\right)^\frac 32} \left| b^{-\frac 23}(2-br)\right|^{-\frac 14}  \sim r^{-\frac{d-1}{2}} e^{-\frac \pi{2b} + S_b(r)}  |2-br|^{-\frac 14}  \]
Compared with \eqref{eqQbasymp2} concludes the proof for $k=0$ case. The $k=1$ case follows similarly, where the extra $\la br \ra$ factor arises from $\pa_r(e^{\frac{ibr^2}{4}})$ or $|\pa_r Y(s)| \lesssim |Y(s)| \cdot \la s \ra^\frac12 |\pa_r s|$. 

\mbox{}

\underline{2. Proof of \eqref{eqQbasymp7}-\eqref{eqQbasympint2}.}    

These estimates follow from improving regularity of their low regularity counterparts \eqref{eqQbasymp1}-\eqref{eqQbasymp6} through the profile equation \eqref{eqselfsimilar}. More specifically, we will use elliptic regularity for the bounds near the origin and use the ODE away from the origin. In particular, the non-vanishing of $Q_b$ from \eqref{eqQbasymp1}-\eqref{eqQbasymp2} is crucial for estimating the nonlinear term. For simplicity, we only prove the most complicated case \eqref{eqQbasympint1}. The other two estimates \eqref{eqQbasymp7} and \eqref{eqQbasympint2} will be similar and simpler. 

From \eqref{eqselfsimilar}, $P_b = e^{i\frac{br^2}{4}}$ satisfies 
\be \left(\Delta + \frac{b^2 r^2}4 - 1 - ibs_c \right) P_b + |P_b|^{p-1} P_b = 0. \ee
Subtracting the equation for mass-critical ground state \eqref{eqgroundstatemasscritical}, we obtain 
\be (1 - \Delta)(P_b - Q) =\left(\frac{b^2 r^2}{4} -ibs_c\right) P_b + \left( |P_b|^{p-1} P_b - Q^{p_0} \right) \label{eqPbQdiff}
\ee
where $p_0 = \frac 4d + 2$.
We will show for any $n \ge 0$, with $R_n := 1+2^{-n}$,
\bea
    \left| \pa_r^{n}(P_b - Q)\right| &\lesssim_n& b^{\frac 13} r^{-\frac{d-1}{2}} e^{-r}\quad {\rm for }\,\, r \in [1, b^{-\frac 13}];\label{eqPbQdiffest1} \\
    \| P_b - Q \|_{H^{2n}(B^{\RR^d}_{R_n})} &\lesssim_n &b^\frac 13.\label{eqPbQdiffest2} 
\eea
Clearly they can imply \eqref{eqQbasympint1}. 

Firstly, we claim the following nonlinear estimates for $n \ge 0$: for $r \le b^{-\frac 13}$, 
\begin{align}
\begin{split}
 &\left|\pa_r^n \left(|P_b|^{p-1} P_b - |P_b|^{p_0 - 1} P_b \right)\right| \\
 \lesssim_n& (p - p_0) |\ln P_b| \sum_{m = 0}^n  |P_b|^{p_0 - 1 - 2m}  \sum_{\substack{\gamma \ge 0, \a \in \ZZ_{\ge 0}^{2m}\\ \gamma + \sum_i \a_i = n}} |\pa_r^\gamma P_b| \cdot \prod_{i=1}^{2m} |\pa_r^{\a_i} P_b|
 \end{split} \label{eqPbnlpoint1} \\
 \begin{split}
 &\left|\pa_r^n \left(|P_b|^{p_0-1} P_b - Q^{p_0} \right)\right| \\
 \lesssim_n&  \sum_{m = 0}^n  |P_b|^{p_0 - 3 - 2m}  \sum_{\substack{\gamma \ge 0, \a \in \ZZ_{\ge 0}^{2m}\\ \gamma + \sum_i \a_i = n}}  |\pa_r^\gamma (P_b - Q)| \cdot \prod_{i=1}^{2m}\left( |\pa_r^{\a_i} P_b| + |\pa_r^{\a_i} Q|\right) 
 \end{split} \label{eqPbnlpoint2}
\end{align}
and 
\bea
\begin{split}
 &\left\||P_b|^{p-1} P_b - |P_b|^{p_0 - 1} P_b \right\|_{\calK_n}\\
 \lesssim_n& (p-p_0) \| \ln P_b \|_{L^\infty(B_{R_n})} \| P_b \|_{L^\infty(B_{R_n})}^{p_0 - 1} \sum_{m=0}^n  \| P_b^{-1} \|_{L^\infty(B_{R_n})}^{2m}\cdot \| P_b \|_{\calK_n}^{2m+1} 
 \end{split} \label{eqPbnlpoint3}\\
 \begin{split}
 &\left\| |P_b|^{p_0-1} P_b -Q^{p_0}  \right\|_{\calK_n}\\
 \lesssim_n&  \| P_b \|_{L^\infty(B_{R_n})}^{p_0 - 3} \sum_{m=0}^n  \| P_b^{-1} \|_{L^\infty(B_{R_n})}^{2m}\cdot \left( \| P_b \|_{\calK_n} + \| Q \|_{\calK_n}\right)^{2m} \| P_b - Q\|_{\calK_n} 
 \end{split} \label{eqPbnlpoint4}
\eea
where $\| f \|_{\calK_n} := \| f \|_{(L^\infty \cap H^n)(B_{R_n})}$. 

Indeed, we first compute the difference as 
\bee
  |P_b|^{p-1} P_b - |P_b|^{p_0 - 1} P_b = \int_{p_0 - 1}^{p-1} \ln (|P_b|) |P_b|^{\tau} P_b d\tau = \int_{p_0 - 1}^{p-1} f_\tau (|P_b|^2) P_b d\tau
\eee
where $f(a) := \frac 12 (\ln a) \cdot a^{\frac \tau 2}$. Then \eqref{eqPbnlpoint1} follow from the Fa\'a di Bruno's formula \eqref{eqFaadiBruno}, Leibniz rule and that $|P_b(r)|^{\tau - p_0} \sim 1$ for $r \le b^{-\frac 13}$ and $0 < \tau - p_0 \le p - p_0 \sim s_c$ (from the asymptotics \eqref{eqbasymp} and \eqref{eqQbasymp5}). Similarly, we compute 
\bee
 &&|P_b|^{p_0-1} P_b -Q^{p_0}  = (|P_b|^{p_0-1}- Q^{p_0 - 1}) P_b + Q^{p_0 - 1} (P_b - Q)  \\
 &=& \frac{p_0 - 1}{2} \int_{0}^1 (\theta |P_b|^2 + (1-\theta) Q^2)^{\frac{p_0 - 3}{2}} d\theta\cdot (|P_b|^2 - Q^2)P_b + Q^{p_0-1} (P_b - Q).
\eee
Noticing that 
\[ |P_b|^2 - Q^2 = \Re (P_b - Q) \cdot \Re (P_b + Q) + \Im (P_b - Q) \cdot \Im (P_b + Q), \]
the pointwise estimate \eqref{eqPbnlpoint1} comes from the Fa\'a di Bruno's formula \eqref{eqFaadiBruno}, Leibniz rule, and that $|Q| \sim |P_b| \sim \la r \ra^{-\frac{d-1}{2}}e^{-r}$ for $r \le b^{-\frac 13}$ (using \eqref{eqQbasymp5} and \eqref{eqsolitondecay}). Since $\calK_n$ is a Banach algebra from an application of Gagliardo-Nirenberg interpolation (see for example \cite[Comments on Chapter 9, 3.C, P.313]{zbMATH05633610}), the pointwise bounds \eqref{eqPbnlpoint1}-\eqref{eqPbnlpoint2} easily implies the norm estimates \eqref{eqPbnlpoint3}-\eqref{eqPbnlpoint4}.

\mbox{}

Finally we are in place to prove \eqref{eqPbQdiffest1} and \eqref{eqPbQdiffest2}. 

\textit{Proof of  \eqref{eqPbQdiffest1}.} We differentiate \eqref{eqPbQdiff} to obtain for $k \ge 0$
 \be
\pa_r^{2+k} (P_b - Q) = \pa_r^{k} \circ \left( 1 - \frac{d-1}{r} \pa_r \right)(P_b - Q) - \pa_r^{k} \left[ \left(\frac{b^2 r^2}{4} -ibs_c\right) P_b + \left( |P_b|^{p-1} P_b - Q^{p_0} \right) \right]. \label{eqbanonbao}
\ee
Then \eqref{eqPbQdiffest1} will follow from an induction on $n$, with $n=0, 1$ proven in \eqref{eqQbasymp5}. 
The only non-trivial bound for this induction is the nonlinearity: Supposing  \eqref{eqPbQdiffest1} holds for $0 \le n \le N$, then from \eqref{eqPbnlpoint1}-\eqref{eqPbnlpoint2} and \eqref{eqQbasymp5}, we have 
\[ \left| \pa_r^N \left( |P_b|^{p-1} P_b - Q^{p_0} \right)\right| \lesssim_N (s_c \cdot |\ln b|  b^{-\frac 13} +  b^\frac 13)\left(r^{-\frac{d-1}{2}} e^{-r}\right)^{p_0} \lesssim b^\frac 13  r^{-\frac{d-1}{2}} e^{-r} \]
for $r \in [ 1, b^{-\frac 13}]$. That concludes the induction step and hence \eqref{eqPbQdiffest1} for all $n \ge 0$. 

\textit{Proof of  \eqref{eqPbQdiffest2}.} From a standard interior elliptic regularity estimate, \eqref{eqPbQdiff} implies 
\bee
  \| P_b - Q \|_{H^{2(n+1)}(B_{R_{n+1}})} \lesssim_n  \| P_b - Q \|_{H^{2n}(B_{R_{n}})} + \left\| \left(\frac{b^2 r^2}{4} -ibs_c\right) P_b + \left( |P_b|^{p-1} P_b - Q^{p_0} \right) \right\|_{H^{2n}(B_{R_{n}})}. 
\eee
So  \eqref{eqPbQdiffest2} follows from again an induction on $n \ge 0$ with $n = 0, 1$ verified through \eqref{eqQbasymp5} and the nonlinearity of induction step is bounded using \eqref{eqPbnlpoint3}-\eqref{eqPbnlpoint4}. 

\end{proof}

\mbox{}

\begin{proof}[Proof of Proposition \ref{propQbasympref}]
Let $Q_b = e^{-i\frac{br^2}{4}}r^{-\frac{d-1}{2}} U(r)$, then $U$ satisfies
    \[ U'' + \left( \frac{b^2 r^2}{4} - 1 - \frac{(d-1)(d-3)}{4r^2} - ibs_c \right)U + r^{-\frac 12 (d-1)(p-1)}|U|^{p-1}U = 0, \]
    namely 
    \be 
 \tilde H_{b, E} U = P_{b, E} U + N(U), \label{eqQbnonlinear}
    \ee
    with $E = 1 + ibs_c$, $\tilde H_{b, E}$ from \eqref{eqdeftildeHbE} and 
        \[ P_{b, E} = -h_{b, E} + \frac{(d-1)(d-3)}{4r^2},\quad N(U) = -r^{-\frac 12 (d-1)(p-1)}|U|^{p-1}U. \]

We recall that the construction of $Q_b$ in \cite[Theorem 2]{MR4250747} is based on matching the interior solutions on $[0, b^{-\frac 12}]$ from Proposition \cite[Proposition 4.1]{MR4250747} with the exterior solutions on $[b^{-\frac 12}, \infty)$ satisfying
\cite[Proposition 2.1, Proposition 3.1]{MR4250747}. Our proof here will reconstruct the exterior solution on $[b^{-\frac 12}, \infty)$ using the approximate WKB solutions from Proposition \ref{propWKB} so as to obtain better estimates.

More specifically, we will construct a family of exterior solutions $U_\varrho$ for every $\varrho \sim b^{-\frac 12} e^{-\frac \pi{2b}}$ satisfying the exterior decay
 \be \left| \begin{array}{l}
        |U_\varrho| \sim b^{-\frac 16} e^{-\frac{\pi}{2b}} r^{-\frac 12 + s_c}\\
         \left|\left(\pa_r - \frac{ibr}{2} \right) U_\varrho \right| \lesssim (br)^{-1} |U(r)|  
     \end{array}\right. \quad r \ge b^{-2}, \label{eqUasymp}\ee
and boundary value at $r_* = b^{-\frac 12}$ 
\be
\left| \begin{array}{l}
  \Re U_\varrho (r_*) = \frac{\varrho b^\frac 12 e^{\frac\pi {2b}}}{2^{1 + \frac{1}{6}} \sqrt \pi } e^{-r_*} \left(1 + O(r_*^{-1}) \right) \\
  \Re U_\varrho' (r_*) = -\frac{\varrho b^\frac 12 e^{\frac\pi {2b}}}{2^{1 + \frac{1}{6}} \sqrt \pi} e^{-r_*} \left(1 + O(r_*^{-1}) \right) \\
  \Im U_\varrho (r_*) = \frac{\varrho b^\frac 12 e^{-\frac\pi {2b}}}{2^{2 + \frac{1}{6}} \sqrt \pi} e^{r_*} \left(1 + O(r_*^{-1}) \right)\\
  \Im U_\varrho' (r_*) = \frac{\varrho b^\frac 12 e^{-\frac\pi {2b}}}{2^{2 + \frac{1}{6}} \sqrt \pi} e^{r_*} \left(1 + O(r_*^{-1}) \right).
  \end{array}\right. \label{eqUbdryr*}
\ee
Then the matching argument at $r_*$ as in \cite[Theorem 2]{MR4250747} leads to $\varrho_b$ satisfying \eqref{eqvarrhob} and 
\be Q_b = e^{i\tilde \theta_b} e^{-i\frac{br^2}{4}} r^{-\frac{d-1}2} U_{\varrho_b},\qquad {\rm with}\quad   |\tilde \theta_b| \lesssim b^\frac 14 e^{-\frac \pi b + 2b^{-\frac 12}}. \label{eqQbandU} \ee

Therefore, the proof is reduced to constructing $U_\varrho$ satisfying \eqref{eqUasymp}-\eqref{eqUbdryr*}, and proving estimates which implies \eqref{eqQbasymp8}-\eqref{eqUabs2} and \eqref{eqQbsharp}.

\mbox{}

\underline{1. Exterior solution.} Let $r_0 = \frac 2b$ and $I_{ext} = [r_0, \infty)$. With the fundamental solutions $\psi_1^{b, E}$, $\psi_3^{b, E}$ of $\tilde H_{b, E}$, we define the exterior solution on $I_{ext}$ via the Duhamel formula
\be
U = c_3 \psi_3^{b, E} + \psi_3^{b, E} \int_r^\infty \psi_1^{b, E}  (PU + N(U))\frac{dr}{W_{31;E}} - \psi_1^{b, E} \int_r^\infty \psi_3^{b, E}  (PU + N(U))\frac{dr}{W_{31;E}},\label{eqQbDuhamel1}
\ee
Take $c_3 = \varrho b^\frac 13 e^{\frac{\pi i}{6} + i \theta_1}$ with any $\varrho \sim b^{-\frac 12} e^{-\frac \pi{2b}}$ and any $|\theta_1| \le \pi$. We claim that \eqref{eqQbDuhamel1} is a contraction mapping in a ball of radius $2|c_3|\sim b^{-\frac 16} e^{-\frac \pi{2b}}$ in $C^0_{\omega_{b, E}^-}(I_{ext})$. Indeed, recall that 
$$\omega_{b, E}^\pm \sim \left\la b^{-\frac 23}(b^2 r^2 - 4E) \right \ra^{-\frac 14} e^{\pm\Re \eta_{b, E}} \lesssim \min \left\{b^\frac 16 (2-br)^{-\frac 14} (br)^{-\frac 14 \mp s_c}, (br)^{-\frac 12 \mp s_c} \right\}.$$
So we compute
\begin{align}
&\left| \int_{r}^\infty \psi_j^{b, E} (P_{b, E}U + N(U))\frac{ds}{W_{31;E}}\right| \nonumber\\
\lesssim& \int_{r}^\infty s^{-2} (bs)^{-\frac 12 + \mu_j s_c} (2-bs)^{-\frac 12} \| U \|_{C^0_{\omega_{b, E}^-}(I_{ext})}  \left( 1 + b^{-2} \| U \|_{C^0_{\omega_{b, E}^-}(I_{ext})}^{p-1}  \right) ds \nonumber \\
\sim& b (br)^{-2+\mu_j s_c}  \| U \|_{C^0_{\omega_{b, E}^-}(I_{ext})}
\label{eqQbintbdd}\end{align}
which implies the onto estimate with smallness $O(b)$. The contraction estimate can be derived similarly with contraction rate $O(b)$. In particular, the Duhamel form indicates 
\be
 \vec U(r_0) = \tilde c_3 \vec \psi_1^{b, E}(r_0) + \beta \vec \psi_3^{b, E}(r_0) \label{eqUbdryr0}
\ee
where $\vec f = (f, \pa_r f)^\top$ and the coefficients being
\be
\begin{split}
\tilde c_3 &= c_3 - \int_{r_0}^\infty \psi_3^{b, E}(P_{b, E} U + N(u)) ds = c_3 + O_\CC(b^{1-\frac 16} e^{-\frac \pi{2b}}), \\
\beta &= \int_{r_0}^\infty \psi_1^{b, E} (P_{b, E} U + N(U)) \frac{dr}{W_{31;E}} = O_\CC(b^{1-\frac 16}e^{-\frac \pi{2b}})
\end{split} \label{eqdeftildec3beta}
\ee
using the estimate \eqref{eqQbintbdd}. Thanks to the phase rotation symmetry of $U$, we can take $U_\varrho$ with $c_3=  \varrho b^\frac 13 e^{\frac{\pi i}{6} + i \theta_1(\varrho)}$ with a fixed $\theta_1(\varrho) = O(b)$ such that 
\be 
  c_3^* := e^{-\frac{\pi i}{6}} \tilde c_3 + e^{\frac{\pi i}{6}} \beta = \varrho b^\frac 13 (1 + O_\RR(b)) \in \RR. \label{eqRRtildec3}
\ee
We now verify \eqref{eqUasymp}. The contraction directly implies the pointwise bound of $|U_\varrho|$, and the derivative bound in \eqref{eqUasymp} follows  the derivative estimates of $\psi_3^{b, E}$ \eqref{eqpsibderiv2} and the extra $(br)^{-2}$ decay of the Duhamel terms \eqref{eqQbintbdd}. 

\mbox{}

\underline{2. Intermediate solution.} Let $x_* = r_* = b^{-\frac 12}$ and $I_{mid} = [x_*, r_0]$. Now we extend $U$ to $I_{mid}$ by solving \eqref{eqQbnonlinear} with prescribed boundary condition on $r_0 = \frac 2b$. Rewrite \eqref{eqQbnonlinear} with real and imaginary parts decoupled as
\be \left| \begin{array}{l}
     \tilde H_{b, 1} \Sigma  - P_{b, 1} \Sigma + bs_c \Theta - N_1(\Sigma + i\Theta) = 0  \\
     \tilde H_{b, 1} \Theta  - P_{b, 1} \Theta - bs_c \Sigma - N_2(\Sigma + i\Theta) = 0
\end{array}\right.
 \label{eqQbDuhamel4}
\ee
where we exploited $\tilde H_{b, E} - P_{b, E} = \tilde H_{b, 1} - P_{b, 1} - ibs_c$ and denote $U = \Sigma + i\Theta$, $N(U) = N_1(U) + iN_2(U)$.
Then we apply the inversion in Lemma \ref{leminvtildeHmid} to obtain
\be
\left| \begin{array}{l}
\Sigma = \sum_{j \in\{ 2, 4 \}} d_j^\Re \psi_j^{b, 1} + \tilde T^{mid, G}_{x_*, r_0;b, 1} \left( P_{b, 1} \Sigma - bs_c \Theta + N_1(\Sigma + i\Theta) \right) \\
\Theta = \sum_{j \in\{ 2, 4 \}} d_j^\Im \psi_j^{b, 1} + \tilde T^{mid, D}_{x_*, r_0;b, 1} \left( P_{b, 1} \Theta + bs_c \Sigma + N_2(\Sigma + i\Theta) \right).
\end{array}\right. \label{eqUDuhamelmid1}
\ee
For boundary condition at $r_0 = \frac 2b$, we choose according to \eqref{eqUbdryr0} that
\bee
  \sum_{j \in \{ 2, 4 \} } (d_j^\Re + id_j^\Im) \vec \psi_j^{b, 1}(r_0) + \a(\Sigma, \Theta) \vec \psi_4^{b, 1}(r_0) 
  = \tilde c_3 \vec \psi_1^{b, E}(r_0) + \beta \vec \psi_3^{b, E}(r_0)  
\eee
where $\tilde c_3$, $\beta$ from \eqref{eqdeftildec3beta}, and the nonlinear $\RR$-valued functional $a = a(\Sigma, \Theta)$ is
\bee
\a (\Sigma, \Theta) = \int_{x_*}^{r_0} \psi_2^{b, 1} \left( P_{b, 1} \Sigma - bs_c \Theta + N_1(\Sigma + i\Theta) \right) \frac{dr}{W_{42;1}},
\eee
Next, we use  \eqref{eqpsibEderiv1}-\eqref{eqpsibEderiv2} and connection formula \eqref{eqconnect} decompose $\vec \psi_1^{b, E}(r_0)$, $\vec \psi_3^{b, E}(r_0)$ with respect to the $\CC^2$-basis $\vec \psi_2^{b, 1}(r_0)$, $\vec \psi_4^{b, 1}(r_0)$,
\[ \left( \begin{array}{c}
\vec \psi_1^{b, E}(r_0) \\ \vec \psi_3^{b, E}(r_0)
\end{array}\right) =  \left[ \left( \begin{array}{cccc}
\frac{e^{\frac{\pi i}{3}}}{2} & e^{-\frac{\pi i}{6}} \\ \frac{e^{-\frac{\pi i}{3}}}{2} & e^{\frac{\pi i}{6}}
\end{array}\right) +\calR_b \right]
\left( \begin{array}{c}
\vec \psi_2^{b, 1}(r_0) \\ \vec \psi_4^{b, 1}(r_0)
\end{array}\right), \quad \calR_b = O_{\CC^{2\times 2}}(b^{-\frac 13} s_c) 
\]
So we parametrize the boundary condition as
\bea
\left( \begin{array}{c}
d_2^\Re + i d_2^\Im \\ d_4^\Re + i d_4^\Im
\end{array}\right) 
 = 
 \left( \begin{array}{c}
\bar d_2^\Re + i \bar d_2^\Im \\ \bar d_4^\Re + i \bar d_4^\Im
\end{array}\right) -
\left( \begin{array}{c}
0 \\ \a(\Sigma, \Theta) 
\end{array}\right)
\label{eqUDuhamelmid2}
\eea
where the $(\Sigma, \Theta)$-independent part is
\be
  \left( \begin{array}{c}
\bar d_2^\Re + i \bar d_2^\Im \\ \bar d_4^\Re + i \bar d_4^\Im
\end{array}\right) =  \left[ \left( \begin{array}{cccc}
\frac{e^{\frac{\pi i}{3}}}{2} & \frac{e^{-\frac{\pi i}{3}}}{2} \\ e^{-\frac{\pi i}{6}}  & e^{\frac{\pi i}{6}}
\end{array}\right) + \calR_b \right]
 \left( \begin{array}{c}
 \tilde c_3  \\ \beta
\end{array}\right) = \left( \begin{array}{c}
  \frac 12 \varrho b^\frac 13 (i + O_\CC(b))  \\ c_3^* (1 + O_{\CC} (b^{-\frac 13} s_c)) 
\end{array}\right) \label{eqdefbardj}
\ee
using  \eqref{eqRRtildec3}. 

Plugging \eqref{eqUDuhamelmid2} into  \eqref{eqUDuhamelmid1}, we claim that \eqref{eqUDuhamelmid1} defines a contraction mapping for $(\Sigma, \Theta)$ in a ball of radius $ \sim b^{-\frac 16}e^{-\frac \pi{2b}}$ in $X_b:=C^0_{\omega_{b, 1}^-}(I_{mid}) \times C^0_{\omega_{b, 1}^+}(I_{mid})$. Indeed, first recall that 
\[|\psi_2^{b, 1}| \lesssim \omega_{b, 1}^+,\quad |\psi_4^{b, 1}| \lesssim \omega_{b, 1}^-,\quad \omega_{b, 1}^+ \le \omega_{b, 1}^- \le 2 e^{\frac \pi b - 2b^{-\frac 12}} \omega_{b, 1}^+,\quad {\rm for}\,\, r \in I_{mid}.  \]
So the fixed source terms satisfy 
\bee
  \|\sum_{j \in \{ 2, 4 \}} \bar d_j^\Re \psi_j^{b, 1} \|_{C^0_{\omega_{b, 1}^-}(I_{mid})} &\lesssim& |\bar d_4^\Re| + |\bar d_2^\Re| = \varrho b^\frac 13(1 + O(b)) \lesssim b^{-\frac 16}e^{-\frac \pi{2b}}, \\
  \|\sum_{j \in \{ 2, 4 \}} \bar d_j^\Im \psi_j^{b, 1} \|_{C^0_{\omega_{b, 1}^+}(I_{mid})} &\lesssim&  e^{\frac \pi b - 2b^{-\frac 12}}  |\bar d_4^\Im| + |\bar d_2^\Im| = \frac 12 \varrho b^\frac 13(1 + O(b)) \lesssim b^{-\frac 16}e^{-\frac \pi{2b}}.
\eee
Next, with $\|(\Sigma, \Theta)\|_{X_b} \lesssim b^{-\frac 16} e^{-\frac \pi{2b}}$, we have $|N_1(\Sigma + i\Theta)| \lesssim r^{-2} |\Sigma|$, $|N_2(\Sigma + i\Theta)| \lesssim r^{-2} |\Theta|$ on $I_{mid}$. Combined with $|P_{b, 1}| \lesssim r^{-2}$ on $I_{mid}$, we obtain
\bee
 |P_{b, 1} \Sigma - bs_c \Theta + N_1(\Sigma + i\Theta)|& \lesssim&  r^{-2} \omega_{b, 1}^- \|\Sigma\|_{C^0_{\omega_{b, 1}^-}(I_{mid})}   + bs_c \omega_{b, 1}^+ \|\Theta\|_{C^0_{\omega_{b, 1}^+}(I_{mid})} \\
  &\lesssim& r^{-2} \omega_{b, 1}^- \| (\Sigma, \Theta)\|_{X_b}, \\
  |P_{b, 1} \Theta + bs_c \Sigma + N_2(\Sigma + i\Theta)| &\lesssim& r^{-2} \omega_{b, 1}^+ \|\Theta\|_{C^0_{\omega_{b, 1}^+}(I_{mid})}   + bs_c \omega_{b, 1}^- \|\Sigma\|_{C^0_{\omega_{b, 1}^-}(I_{mid})} \\
  &\lesssim& r^{-2} \omega_{b, 1}^+ \| (\Sigma, \Theta)\|_{X_b}.
\eee
This implies the onto estimates applying the boundedness of the inversion operators \eqref{eqtildeTmidGest1}, \eqref{eqtildeTmidDest}, 
\bee
  |\a (\Sigma, \Theta)| &\lesssim& x_*^{-1} \| (\Sigma, \Theta)\|_{X_b}, \\
  \left\|  \tilde T^{mid, G}_{x_*, r_0;b, 1} \left( P_{b, 1} \Sigma - bs_c \Theta + N_1(\Sigma + i\Theta) \right) \right\|_{C^0_{\omega_{b, 1}^-}(I_{mid})} &\lesssim& x_*^{-1}  \| (\Sigma, \Theta)\|_{X_b},\\
\left\| \tilde T^{mid, D}_{x_*, r_0;b, 1} \left( P_{b, 1} \Theta + bs_c \Sigma + N_2(\Sigma + i\Theta) \right) \right\|_{C^0_{\omega_{b, 1}^+}(I_{mid})}& \lesssim& x_*^{-1}  \| (\Sigma, \Theta)\|_{X_b},
  \eee
   and the contraction estimate with contraction rate $O(x_*^{-1})$ follows similarly. 
   
   Now we check the boundary values \eqref{eqUbdryr*} at $r_* = b^{-\frac 12}$. We will show for any $r \in I_{mid}$, 
   \be
     \left| \begin{array}{l}
     \Re \vec U(r) = \varrho b^\frac 13 \vec \psi_4^{b, 1}(r) + \omega_{b, 1}^-(r) \cdot O_{\RR^2} (r^{-1})   \\
     \Im \vec U(r) = \frac 12 \varrho b^\frac 13 \vec \psi_2^{b, 1}(r) +  \omega_{b, 1}^+(r) \cdot O_{\RR^2} (r^{-1})  
      \end{array} \right. \label{eqUbdryr*2}
   \ee
   The case $r = r_* = b^{-\frac 14}$ implies \eqref{eqUbdryr*} using the asymptotics of $\psi_2^{b, 1}$, $\psi_4^{b, 1}$ from Proposition \ref{propWKB} (5). Indeed, from \eqref{eqUDuhamelmid1}, \eqref{eqUDuhamelmid2}, we have 
\bee
 \vec \Sigma (r)= \left( \bar d_4^{\Re} + \int_r^{r_0}   \psi_2^{b, 1} \left( P_{b, 1} \Sigma - bs_c \Theta + N_1(\Sigma + i\Theta) \right) \frac{dr}{W_{42;1}}
 \right) \vec \psi_4^{b, 1}(r) \\
 + \left( \bar d_2^\Re -   \int_r^{r_0}  \psi_4^{b, 1}   \left( P_{b, 1} \Sigma - bs_c \Theta + N_1(\Sigma + i\Theta) \right) \frac{dr}{W_{42;1}} \right) \vec \psi_2^{b, 1}(r) \\
  \vec \Theta (r) = \left( \bar d_4^{\Im} + \int_r^{r_0}   \psi_2^{b, 1} \left( P_{b, 1} \Theta + bs_c \Sigma + N_2(\Sigma + i\Theta)  \right) \frac{dr}{W_{42;1}}
 \right) \vec \psi_4^{b, 1}(r) \\
 + \left( \bar d_2^\Im -   \int_r^{r_0}  \psi_4^{b, 1}   \left( P_{b, 1} \Theta + bs_c \Sigma + N_2(\Sigma + i\Theta) \right) \frac{dr}{W_{42;1}} \right) \vec \psi_2^{b, 1}(r) 
\eee
where we exploited the cancellation between $\a(\Sigma, \Theta) \psi_4^{b, 1}$ with $\tilde \calT^{mid, G}_{x_*, r_0;b, 1}$. Therefore \eqref{eqUbdryr*2} follows from the boundedness of $\Sigma, \Theta$ and the evaluation of coefficients \eqref{eqdefbardj}. 

\mbox{}

\underline{3. Conclusion of proof.} We have constructed the family of solutions $U_\varrho$ satisfying \eqref{eqUasymp} and \eqref{eqUbdryr*}. As stated at the beginning of the proof, the matching argument from \cite[Theorem 2]{MR4250747} implies \eqref{eqQbandU}, from which we will check \eqref{eqQbasymp8}-\eqref{eqUabs2} and \eqref{eqQbsharp}. 

The asymptotics \eqref{eqQbsharp} can be read directly from the Duhamel formula on $I_{ext}$ \eqref{eqQbDuhamel1} with $\theta_b = \theta_1(\varrho_b) + \tilde \theta_b$, and the bound \eqref{eqUbdryr*2} with the perturbation $e^{i\tilde \theta_b}$ small enough to be absorbed in the residuals.


Finally,  we prove the derivative estimates on $I_{ext}$ \eqref{eqQbasymp8}-\eqref{eqUabs2}.  Recalling Lemma \ref{leminvtildeHext}, we can rewrite \eqref{eqQbDuhamel1} as
\be U = \tilde c_3 \psi_1^{b, E} + \tilde T^{ext}_{r_0;b, E} \left( P_{b, E} U + N(U) \right) \label{eqQbDuhamel3} \ee
We define $r_1 = \frac 2b + b^{-\frac 12}$ and claim the nonlinear estimates for any $N \ge 0$.
\be 
  \| \pa_r^N (|U|^2) \|_{C^0_{(\omega_{b, E}^-)^2}([r_1, \infty))} \lesssim_N \| U \|_{X^{0,N;-}_{r_0, r_1;b, E}}^2. \label{eqnonestmodU}
\ee
Then from the derivatives estimate of $h_{b, E}$ \eqref{eqbddh}, Leibniz rule and Fa\'a di Bruno's formula \eqref{eqFaadiBruno}, this implies 
\be
  \| P_{b, E} U + N(U) \|_{X^{-2,N;-}_{r_0, r_1;b, E}} \lesssim_{N} \| U \|_{X^{0,N;-}_{r_0, r_1;b, E}} \left(1 + b^{-2} \| U \|_{X^{0,N;-}_{r_0, r_1;b, E}}^{p-1} \right),
\ee
and thereafter 
\[ \| U \|_{X^{0,N;-}_{r_0, r_1;b, E} } \lesssim_N b^{-\frac 16}e^{-\frac \pi{2b}},\quad \forall\,\, N \ge 0, \]
by induction on $N \ge 0$ using \eqref{eqbddHbnuinvext} and \eqref{eqpsibderiv1}. From the definition of the norm and $U$, this implies \eqref{eqQbasymp9} and \eqref{eqUabs2} through \eqref{eqnonestmodU}. For \eqref{eqQbasymp8}, notice that 
\[ D_{-;b, E}^n \left(e^{i\frac{br^2}{4}}Q_b \right) = e^{i\phi_{b, E}} \pa_r^n \left( e^{-i\phi_{b, E} + i\frac{br^2}{4} } Q_b \right)  \]
where $\phi_{b, E}$ is as \eqref{eqdefphi} and that for every $n \ge 1$, 
\[\left| \pa_r^n (-i\phi_{b, E} + i br^2/4)\right|  =  \frac 12 \left|\pa_r^{n-1} \left( \sqrt{b^2 r^2 - 4E} - br \right)\right| \lesssim_n (br)^{-n},\quad {\rm for}\,\, r \ge \frac 4b.\]
Then \eqref{eqQbasymp8} follows \eqref{eqQbasymp9} a simple computation by Leibniz rule and Fa\'a di Bruno's formula \eqref{eqFaadiBruno}.

Now we prove \eqref{eqnonestmodU}, which concludes the derivative estimates \eqref{eqQbasymp8}-\eqref{eqUabs2}. Indeed, the $N=0$ case is straightforward from 
 $\| U \|_{C^0_{\omega_{b, E}^-}(I_{ext})} = \| U \|_{X^{0,0,-}_{r_0,r_1;b, E}}$. So it suffices to bound $\pa_r^n |U|^2$ on $[r_1, \infty)$. Notice that $\phi_{b, 1}$ (defined as \eqref{eqdefphi}) is $\RR$-valued and that for any $n \ge 1$, 
\bee \left| \pa_r^{n} (\phi_{b, E} - \phi_{b, 1}) \right| &=& \left|\pa_r^n \left[\frac{bs_c}{\left( \frac{b^2 r^2}{4} - E \right)^{\frac 12} + \left( \frac{b^2 r^2}{4} - 1 \right)^{\frac 12}} \right]\right| \\
&\lesssim_n& s_c r^{-\frac 12} |r-2b^{-1}|^{\frac 12 - n},\quad {\rm for}\,\, r \ge r_1.  \eee
    Thus we compute
     \bee
       \pa_r^n |U|^2 &=& \pa_r^n (\overline{e^{-i\phi_{b, 1}} U}\cdot e^{-i\phi_{b, 1}} U) = \sum_{k=0}^n \binom{n}{k}\overline{\left( e^{i\phi_{b, 1}} \pa_r^k e^{-i\phi_{b, 1}} \right) U }\cdot \left( e^{i\phi_{b, 1}} \pa_r^{n-k} e^{-i\phi_{b, 1}} \right) U \\
       &=& \sum_{k=0}^n \binom{n}{k}\overline{e^{-i\triangle \phi } D_-^k (e^{i\triangle \phi } U) }\cdot e^{-i\triangle \phi } D_-^{n-k} (e^{i\triangle \phi } U).
     \eee
     where $\triangle \phi := \phi_{b, E} - \phi_{b, 1}$, and \eqref{eqnonestmodU} is an application of Leibniz rule and Fa\'a di Bruno's formula \eqref{eqFaadiBruno}.

\end{proof}

\bibliographystyle{plain}
\bibliography{Bib-1}

\end{document}